\newcommand{\lto}{\longrightarrow}
\newcommand{\lmapsto}{\longmapsto}
\newcommand{\cyl}{\textnormal{cyl}}
\newcommand{\sV}{\mathscr{V}}
\newcommand{\IR}{\mathbb{R}}
\newcommand{\lb}{\linebreak[1]}
\newcommand{\IB}{\mathbb{B}}
\newcommand{\IS}{\mathbb{S}}
\newcommand{\IZ}{\mathbb{Z}}
\newcommand{\YY}{\mathcal{Y}}
\newcommand{\Qu}{\mathsf{Q}}
\newcommand{\DD}{\mathcal{D}}
\newcommand{\PP}{\mathcal{P}}
\newcommand{\UU}{\mathcal{U}}
\newcommand{\MM}{\mathcal{M}}
\newcommand{\bY}{\mathbf{Y}}
\newcommand{\bO}{\mathbf{0}}
\newcommand{\bq}{\mathbf{q}}
\newcommand{\bp}{\mathbf{p}}
\newcommand{\bz}{\mathbf{z}}
\newcommand{\by}{\mathbf{y}}
\newcommand{\bx}{\mathbf{x}}
\newcommand{\be}{\mathbf{e}}
\newcommand{\fp}{\mathfrak{p}}
\newcommand{\eps}{\varepsilon}
\newcommand{\la}{\lambda}
\newcommand{\ov}[1]{\overline{#1}}
\newcommand{\td}[1]{\widetilde{#1}}
\DeclareMathOperator*{\osc}{osc}
\DeclareMathOperator{\sym}{sym}
\DeclareMathOperator{\spt}{spt}
\DeclareMathOperator{\Jac}{Jac}
\DeclareMathOperator{\rot}{rot}
\DeclareMathOperator{\Int}{Int}
\DeclareMathOperator{\tr}{tr}
\DeclareMathOperator{\id}{id}
\DeclareMathOperator{\diam}{diam}
\DeclareMathOperator{\bowl}{bowl}
\DeclareMathOperator{\rank}{rank}
\DeclareMathOperator{\proj}{proj}
\DeclareMathOperator{\reg}{reg}
\DeclareMathOperator{\sing}{sing}
\newcommand{\EMPTY}[1]{}
\newtheorem{Theorem}[equation]{Theorem}
\newtheorem{Lemma}[equation]{Lemma}
\newtheorem{Corollary}[equation]{Corollary}
\newtheorem{Proposition}[equation]{Proposition}
\newtheorem{Claim}[equation]{Claim}
\theoremstyle{definition}
\newtheorem{Definition}[equation]{Definition}
\theoremstyle{remark}
\newtheorem{Remark}[equation]{Remark}
\numberwithin{equation}{section}
\title[The PDE-ODI principle]{The PDE-ODI principle and \\ cylindrical mean curvature flows}
\author{Richard H Bamler and Yi Lai}
\address{Department of Mathematics, UC Berkeley, CA 94720, USA}
\email{rbamler@berkeley.edu}
\address{Department of Mathematics, UC Irvine, CA 92697, USA}
\email{ylai25@uci.edu}
\thanks{R.B. was supported by NSF grant DMS-2204364.
Y.L. was supported by NSF grant DMS-2506832.
This material is based upon work conducted at the Simons Laufer Mathematical Sciences Research Institute in Berkeley, California, during the Fall semester of 2024.}
\date{\today}
\begin{document}

\begin{abstract}
We introduce a new approach for analyzing {ancient solutions and singularities of mean curvature flow that are locally modeled on a cylinder}.
Its key ingredient is a general mechanism, called the \emph{PDE--ODI principle}, which converts a broad class of parabolic differential equations into systems of ordinary differential inequalities.
This principle bypasses many delicate analytic estimates used in previous work, and yields asymptotic expansions to arbitrarily high order.

As an application, we establish the uniqueness of the bowl soliton times a Euclidean factor among ancient, cylindrical flows with dominant linear mode.
This extends previous results on this problem to the most general setting and is made possible by the stronger asymptotic control provided by our analysis.
In the other case, when the quadratic mode dominates, we obtain a complete asymptotic expansion to arbitrary polynomial order, which will form the basis for a subsequent paper.
Our framework also recovers and unifies several classical results.
In particular, we give new proofs of the uniqueness of tangent flows (due to Colding-Minicozzi) and the rigidity of cylinders among shrinkers (due to Colding-Ilmanen-Minicozzi) by reducing both problems to a single ordinary differential inequality, without using the {\L}ojasiewicz-Simon inequality.

Our approach is independent of prior work and the paper is largely self-contained.
\end{abstract}

\maketitle

\tableofcontents

\section{Introduction}
\subsection{Overview}
In recent years, there has been growing interest mean curvature flows that are locally nearly cylindrical---such as flows forming cylindrical singularities or ancient solutions that arise as singularity models.
An early milestone in this direction is due to Colding-Minicozzi~\cite{colding_minicozzi_uniqueness_blowups}, who proved the uniqueness of cylindrical tangent flows.
Subsequent work of Angenent-Daskalopoulos-\v{S}esum \cite{ADS_2019} and Brendle-Choi~\cite{Brendle_Choi_2019} introduced powerful new techniques for analyzing ancient, asymptotically cylindrical flows.
These advances sparked a broader study of almost cylindrical flows and led to major breakthroughs, including the resolution of the Mean Convex Neighborhood Conjecture for 1-cylinders by Choi-Haslhofer-Hershkovits-White~\cite{Choi_Haslhofer_Hershkovits_2022, Choi_Haslhofer_Hershkovits_White_22}.
Despite this progress, many important questions have remained open, such as the complete classification of ancient, asymptotically cylindrical flows and, most importantly, the \emph{Mean Convex Neighborhood Conjecture} in the general case.
One motivation for this paper (and its sequel \cite{Bamler_Lai_MCF2}) is to resolve both of these problems.

From an analytic viewpoint, the main difficulty---one that recurs throughout this area and has been tackled in various ways in related works---is that the flow is only \emph{locally} close to a cylindrical model.
Consequently, one cannot work with a global graphical representation over a fixed model, as the geometry allows such a representation only on portions of the hypersurface.
Any effective approach must therefore begin with a careful localization step.
This step is delicate, because the linearized operator at the cylinder has kernel elements and unstable modes of unbounded growth.
Many existing approaches---impressive as they are---exploit fine structural features of the mean curvature flow equation and have so far been limited to the non-collapsed and convex or 1-cylindrical settings.
These limitations are precisely what have obstructed progress on the remaining conjectures.

\medskip

In this paper, we introduce a new method, called the \emph{PDE-ODI principle,} which overcomes these obstacles.
Rather than relying on delicate, problem-specific analytic estimates, it applies \emph{to a broad class of parabolic PDEs.}
The principle converts such PDEs into systems of ordinary differential inequalities, from which \emph{high-order asymptotics} for the original PDE can be extracted using standard ODE arguments.

To illustrate the strength of our method, we apply it to asymptotically cylindrical mean curvature flows and establish the uniqueness of the bowl soliton times a Euclidean factor in full generality.
Such flows follow a dichotomy~\cite{ADS_2019, Du_Haslhofer_2023, Du_Haslhofer_2024, Du_Zhu_2025}, which we also recover within  our framework: %
their asymptotics may either be governed by a dominant linear (unstable) mode or by a dominant quadratic (neutral) mode.

Our first main result shows that the only possible flow in the dominant linear mode case is a  bowl soliton times a Euclidean factor.
This completes a long line of earlier work by
Brendle-Choi~\cite{Brendle_Choi_2019} (in the 3-dimensional, convex case), Choi-Haslhofer-Hershkovits-White \cite{Choi_Haslhofer_Hershkovits_White_22} (in the 1-cylindrical case) and
Du-Zhu \cite{Du_Zhu_2025}, see also \cite{ChoiHaslhoferHershkovits2024} (in the non-collapsed and therefore convex case).

Our second main result concerns the second case, in which the quadratic mode is dominant.
In this case, we obtain a detailed asymptotic description of up to arbitrary polynomial order, which can be parameterized by an asymptotic invariant, which is a non-negative definite matrix. 
We study this invariant in more detail: we establish key continuity and convergence properties and we show that every possible value of this invariant is realized by an ancient oval. 
These results form the foundation for a complete classification of ancient asymptotically cylindrical flows in our sequel paper~\cite{Bamler_Lai_MCF2}, thereby completing and unifying a long line of work by many authors in this area~\cite{ADS_2019, Brendle_Choi_2019, ADS_2020,Brendle_Choi_higher_dim,zhu2022so,ChoiHaslhoferHershkovits2023,DuHaslhofer2021,Choi_Haslhofer_Hershkovits_2022,Choi_Haslhofer_Hershkovits_White_22,  CHH_2023,Du_Haslhofer_2023,ChoiHaslhofer2024,ChoiHaslhoferHershkovits2024,Du_Haslhofer_2024,ChoiDuZhu2025,Du_Zhu_2025,ChoiDaskalopoulosDuHaslhoferSesum2025,ChoiHaslhoferHershkovits2025, ADS_2025,  CHH_2025, Choi_Haslhofer_2025}.

Finally, our PDE–ODI principle framework streamlines the analysis of many asymptotic problems for ancient solutions and singularities of mean curvature flows that are locally cylindrical. 
As concrete examples, we rederive the uniqueness of tangent flows established by Colding–Minicozzi~\cite{Colding_Minicozzi_12} by reducing it to an ordinary differential inequality---without having to use a {\L}ojasiewicz-Simon inequality.
Similarly, we recover the rigidity of cylinders among shrinkers due to Colding–Ilmanen–Minicozzi~\cite{Colding_Ilmanen_Minicozzi}.

\medskip

We emphasize that our proofs are independent of prior work and the paper is largely self-contained.

\medskip
A crucial aspect of our work is that the PDE–ODI principle yields substantially sharper asymptotic control than previously available, capturing arbitrarily many modes to arbitrarily high order. 
It is helpful to recall why such precision is essential for establishing uniqueness in the dominant linear mode case. 
Such uniqueness arguments typically proceed in two steps: first, one analyzes the nearly cylindrical region using perturbative methods to obtain an asymptotic description; second, one must control the non-cylindrical part of the flow. 
The difficulty arises in the second step, because the geometry away from the cylindrical region rarely resembles an explicit model, leaving the comparison principle as the main tool. 
Applying the comparison principle, however, requires a closeness estimate to the model solution that decays sufficiently fast in both time and space---precisely the type of estimate that has been challenging to obtain in full generality. Our framework provides exactly this, via high-order asymptotic expansions that are uniform in time and have precise spatial control, which renders the comparison-principle step straightforward and leads to a clean uniqueness proof in the linear-mode case.

A final complication in deriving the flow's asymptotics comes from neutral and unstable modes associated with ambient Euclidean transformations. 
Since these modes are not geometrically significant, they must often be removed through a suitable gauging procedure---a recurring challenge in the field that has been addressed in various ways in prior works. 
In our framework, the PDE–ODI principle naturally accommodates these modes, allowing them to be eliminated via a relatively straightforward gauging procedure. 
Consequently, this step is considerably simpler than in most previous approaches.
\medskip

\subsection{The PDE-ODI principle} \label{subsec_intro_pdeodi}
The key estimate of this paper is the \emph{PDE-ODI principle}.
This principle makes precise---in the strongest form---an intuition that has guided much of the field over the past decade.
It transforms certain classes of parabolic PDEs into systems of \emph{ordinary differential inequalities (ODIs)}, which can often be analyzed more directly using standard ODE techniques.
Remarkably, the PDE-ODI principle applies to a broad class of equations and requires only a relatively coarse pseudolocality property, a feature shared by many geometric flows.

We now outline the underlying intuition.
Consider a solution to a nonlinear parabolic equation of the form
\begin{equation}\label{eq:general_PDE}
    \partial_\tau u = Lu + Q(u,\nabla u, \nabla^2 u). 
\end{equation}
Here $Q(u,\nabla u, \nabla^2 u)$ is a non-linear term of at least quadratic order, and the linear part is given by an operator of the following form for the potential $f = \frac14 |\bx|^2$: 
\[ L u = \triangle_f u + A u = \triangle u - \nabla f \cdot \nabla u + Au, \]
where $\triangle_f$ is the Ornstein-Uhlenbeck operator and  $A$ is an arbitrary linear map.
Equations of this type frequently arise in geometric analysis after a Type~I rescaling---for instance, in our setting $u$ represents a perturbation of the standard round cylinder evolving under the rescaled mean curvature flow.

We assume that $u$ is small in a sufficiently large neighborhood of the origin.
Crucially, however, we impose no bounds on $u$ outside this region, and we don't even require the solution to be defined on the entire domain.
To study the dynamics of $u$, one typically considers the eigenspace decomposition of the linearized operator within the $L^2_f$-space, with a Gaussian weight $e^{-f}$; these are often spanned by Hermite polynomials.
One may then wish to study the projection $U^+_\tau$ of $u_\tau$ onto the finite-dimensional direct sum $\sV_{> \la}$ of eigenspaces with eigenvalues greater than some fixed threshold $\la$.
Formally, one expects this projection to satisfy an approximate ODE of the form
\[ \partial_\tau U^+_\tau \approx L U^+_\tau + Q_J^+(U^+_\tau) + O(\Vert U^+_\tau \Vert^{J+1}). \]
Here $Q_J^+$ can roughly be viewed as the $J$-th Taylor approximation of the non-linear term restricted and projected to $\sV_{> \la}$, for some fixed $J \geq 2$.

Making this intuition rigorous has traditionally been challenging.
Past approaches relied on specific structural features of the governing PDE and often required restrictive assumptions.
One key difficulty is that $u_\tau$ may grow rapidly at spatial infinity or may not even be globally defined.
This usually requires a localization step, which is typically accomplished by multiplication with a cutoff function $\omega_{R(\tau)}$ supported in a large ball of radius $R(\tau) \gg 1$ before projection:
\[
U^+_\tau = \PP_{\sV_{> \la}}  (u_\tau  \omega_{R(\tau)}).
\]
However, a localization requires control over $u_\tau$ in the transition region $\{0 < \omega_{R(\tau)} < 1\}$ to estimate the flux between the interior region $\{\omega_{R(\tau)}=1\}$ and the exterior region $\{\omega_{R(\tau)}=0\}$, where $u_\tau$ may become large.
In the earlier work, \cite{ADS_2019,Brendle_Choi_2019}, the localization step was carried out via an ``inverse Poincar\'e inequality'' derived from barrier constructions and the shrinker foliation.
In \cite{colding_minicozzi_uniqueness_blowups}, it relied on an entropy bound and the {\L}ojasiewicz-Simon inequality.
While these methods are highly effective for the goals of these papers, they make essential use of the gradient–flow structure of mean curvature flow, and the asymptotic expansions they produce do not extend to arbitrarily high order.

In this paper, we show that these difficulties can be overcome \emph{in a very general setting}---and the heuristic picture described above can be made {fully} rigorous---once the problem is placed in the right framework.
Our method only requires a relatively coarse pseudolocality property of the original PDE.
Ordinarily, pseudolocality asserts that a local bound over a sufficiently large ball yields control at later times on a smaller ball.
For PDEs of the form \eqref{eq:general_PDE}, which typically arise via rescaling, it is natural for the future ball to be \emph{larger} than the initial one---a well known phenomenon that has been exploited in various settings \cite{colding_minicozzi_uniqueness_blowups, Colding_Minicozzi_RF_2025}. 
This phenomenon also plays a central role in our work, but we have to use it in a more subtle way: to achieve approximations of arbitrarily high order, we have to adjust the size of the approximating ball carefully according to the desired precision of the approximation.

To describe our result, let $\sV_{\leq \lambda}$ denote the orthogonal complement of $\sV_{> \lambda}$ and decompose
$$u_\tau \omega_{R(\tau)} = U^+_\tau + U^-_\tau \in \sV_{> \la} \oplus \sV_{\leq \la}.$$
We moreover introduce the following error term, for some small and fixed $\eps, { \eta} > 0$,
\[ \UU^-_\tau := \Vert U^-_\tau \Vert_{L^2_f} + {\eta}\, e^{-\frac{((1-\eps)R(\tau))^2}{8}}. \]
So $\UU^-_\tau$ is small if and only if the approximation $u_\tau \approx U^+_\tau$ holds up to a small integral error over a large ball. 
In the simplest case, our main result can be roughly summarized as follows (see Theorem~\ref{Thm_PDE_ODI_principle} for more details):

\begin{Theorem}[PDE-ODI principle, vague form]\label{thm:PDE-ODI_vauge}
There are suitable choices of radii $R(\tau)$, depending smoothly on time, such that we have a pointwise bound {$\Vert u \Vert_{C^m(\IB^n_{R(\tau)})} \leq \eta$ on the $R(\tau)$-balls and such that} the following ODIs hold:
\begin{align*}
 \Vert \partial_\tau U^+_\tau - L U^+_\tau - Q_J^+(U^+_\tau) \Vert &\leq C \Vert U^+_\tau \Vert^{J+1} + {C\sqrt\eta}\, \UU^-_\tau \\
 \partial_\tau \UU^-_\tau &\leq (\la+  {C\sqrt\eta})\, \UU^-_\tau + \Vert U^+_\tau \Vert^{J+1} + \Vert Q_J^- (U^+_\tau) \Vert 
\end{align*}
\end{Theorem}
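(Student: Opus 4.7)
My plan is to prove the two ODIs by directly computing the evolution of the projections $U^{\pm}_\tau$, carefully tracking how cutoff commutators and Taylor remainders contribute, and using Gaussian decay of the weight $e^{-f}$ to absorb boundary contributions into the exponential ingredient of $\UU^-_\tau$.

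First, I would construct $R(\tau)$. Starting from an initial smallness of $u$ on a large ball, I would apply a pseudolocality-type argument for equation \eqref{eq:general_PDE}: because the drift $-\nabla f\cdot \nabla u$ encodes a self-similar rescaling, an initial $L^\infty$-smallness on $\IB^n_{R_0}$ propagates forward on an \emph{expanding} family of balls, a phenomenon used in \cite{colding_minicozzi_uniqueness_blowups, Colding_Minicozzi_RF_2025}. The function $R(\tau)$ is chosen smooth and slowly varying so that $\IB^n_{R(\tau)}$ lies inside this expanding domain of smallness, and so that $R'(\tau)$ is small enough to control cutoff derivatives later. Combined with standard parabolic smoothing for \eqref{eq:general_PDE}, this yields the pointwise $C^m$-bound $\Vert u\Vert_{C^m(\IB^n_{R(\tau)})} \leq \eta$.

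For the first ODI, I would compute
\[ \partial_\tau(u\,\omega_{R(\tau)}) = \omega_{R(\tau)}\bigl(Lu + Q(u,\nabla u,\nabla^2 u)\bigr) + u\,\partial_\tau \omega_{R(\tau)}, \]
write $\omega_R Lu = L(u\omega_R) + E_R(u)$ with $E_R(u) := -2\nabla u\cdot \nabla \omega_R - u\Delta\omega_R + u\,\nabla f\cdot \nabla\omega_R$ supported in the annulus $\{0<\omega_R<1\}$, and then project onto the finite-dimensional $\sV_{>\la}$. The commutator and cutoff-time-derivative contributions localize near $|\bx|\approx R(\tau)$; on this annulus the Gaussian weight $e^{-f}$ has size $e^{-R^2/4}$ while the projections pair $u$ against Hermite polynomials of polynomial growth, so each such term is $O(e^{-((1-\eps)R(\tau))^2/8})$ in $L^2_f$ and is absorbed into $C\sqrt\eta\,\UU^-_\tau$ via the exponential ingredient of $\UU^-_\tau$. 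For the nonlinearity, Taylor-expand $Q$ to order $J$ and substitute $u\omega_R = U^+_\tau + U^-_\tau$ on $\{\omega_R=1\}$: the pure $U^+_\tau$-terms after projection yield $Q_J^+(U^+_\tau)$, Taylor remainders give $O(\Vert U^+_\tau\Vert^{J+1})$ using pointwise smallness $|u|\leq \eta$, and cross terms containing $U^-_\tau$ pick up a pointwise $O(\sqrt\eta)$-factor from the remaining $u$-arguments, producing the $C\sqrt\eta\,\UU^-_\tau$ contribution.

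For the second ODI, I would apply $\PP_{\sV_{\leq \la}}$ to the same evolution and take the $L^2_f$ energy derivative:
\[ \tfrac12 \partial_\tau \Vert U^-_\tau\Vert_{L^2_f}^2 = \langle U^-_\tau, LU^-_\tau\rangle_{L^2_f} + \bigl\langle U^-_\tau,\,\PP_{\sV_{\leq \la}}\bigl[\omega_R Q(u) + E_R(u) + u\,\partial_\tau\omega_R\bigr]\bigr\rangle_{L^2_f}. \]
The spectral bound on $\sV_{\leq \la}$ gives $\langle U^-_\tau, LU^-_\tau\rangle_{L^2_f} \leq \la\Vert U^-_\tau\Vert_{L^2_f}^2$; the cutoff terms are again exponentially small and handled by the $e^{-R^2/8}$-ingredient; the nonlinearity contributes $\Vert Q_J^-(U^+_\tau)\Vert$ from the Taylor expansion projected to $\sV_{\leq \la}$, a remainder of size $\Vert U^+_\tau\Vert^{J+1}$, and cross terms bounded by $C\sqrt\eta\,\Vert U^-_\tau\Vert_{L^2_f}$ which absorb into the $(\la+C\sqrt\eta)\UU^-_\tau$-coefficient. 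Differentiating the exponential piece $\eta\,e^{-((1-\eps)R(\tau))^2/8}$ in $\UU^-_\tau$ gives $-\tfrac{(1-\eps)^2}{4}R(\tau)R'(\tau)$ times itself, which is dominated by $(\la+C\sqrt\eta)$ times itself provided $R(\tau)$ grows slowly---a constraint built into the construction of $R(\tau)$.

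The main obstacle is the simultaneous calibration of $R(\tau)$: it must be large enough for pseudolocality plus smoothing to yield $C^m$-smallness on $\IB^n_{R(\tau)}$, yet grow slowly enough that the boundary terms $e^{-((1-\eps)R(\tau))^2/8}$ close self-consistently under differentiation in the $\UU^-_\tau$-equation, perturbing the eigenvalue $\la$ only by a small $C\sqrt\eta$. Choosing $R(\tau)$ smoothly so that both constraints hold uniformly in $\tau$ is the delicate point. Once $R(\tau)$ and $\UU^-_\tau$ are correctly set up, the two ODIs follow from projecting the evolution equation for $u\omega_R$, Taylor-expanding $Q$, and bounding cutoff contributions via Gaussian decay---standard moves that fit together cleanly.
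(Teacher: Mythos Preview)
Your computation of the evolution of $u\,\omega_{R(\tau)}$, the identification of the commutator $E_R(u)$ supported on the annulus, and the idea of absorbing it into the exponential part of $\UU^-_\tau$ are all correct and track the paper's Lemma~\ref{l:evolution_ODE} closely. There is a technical wrinkle you miss in the second ODI: when you pair the nonlinear cross terms with $U^-_\tau$, integration by parts forces you through the $H^1_f$-norm of $U^-_\tau$, not just $L^2_f$. The paper handles this by writing $\langle U^-, LU^-\rangle \leq (\la + C\eps)\Vert U^-\Vert_{L^2_f}^2 - \eps\Vert U^-\Vert_{H^1_f}^2$ with $\eps := C\sqrt\eta$, so the $H^1_f$-term is absorbed at the cost of shifting $\la$ to $\la + C\sqrt\eta$.

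The genuine gap is your construction of $R(\tau)$. You propose to choose $R(\tau)$ \emph{a priori}, slowly growing, based on pseudolocality alone. This cannot work, because the result requires the cutoff error $\eta e^{-(R(\tau)-2)^2/8}$ to be dominated by $\UU^-_\tau$, which can become arbitrarily small as $\tau$ evolves. Concretely, for the second ODI to hold for the augmented $\UU^-_\tau = \Vert U^-_\tau\Vert + \eta E_\tau$ with $E_\tau = e^{-((1-\eps)R(\tau))^2/8}$, you need
\[
\eta\,\partial_\tau E_\tau \leq \la'\,\eta E_\tau + \eta\,\Vert U^-_\tau\Vert + \Vert U^+_\tau\Vert^{J+1}.
\]
If $\Vert U^+_\tau\Vert$ \emph{grows} (which happens whenever an unstable mode is present), then to maintain $\Vert u_\tau\Vert_{C^m(\IB_{R(\tau)})}\leq \eta$ the radius $R(\tau)$ must \emph{shrink}, making $\partial_\tau E_\tau$ large and positive; this can only be absorbed by the right-hand side if $E_\tau$ is kept comparable to $(\Vert U^-_\tau\Vert/\eta + (\Vert U^+_\tau\Vert/\eta)^{J+1})^{1+\eps}$. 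The paper therefore constructs $R(\tau)$ \emph{dynamically} via a feedback loop (Claim~\ref{Cl_construct_R}): when $E_\tau$ exceeds this threshold it evolves by $\partial_\tau E_\tau = \la' E_\tau$ (so $R$ grows slowly), and when it falls below, $R$ is forced to decrease rapidly via $\partial_\tau R = -e^{R/10}$. Pseudolocality is then used not to build $R(\tau)$ but \emph{a posteriori}, to show this iterative construction never terminates by hitting $\Vert u\Vert_{C^m} = \eta$: one finds earlier times where the threshold relation forces quantitative smallness of $u$ on a slightly smaller ball, and pseudolocality then propagates this forward to contradict failure (Claims~\ref{Cl_E_geq_U}--\ref{Cl_mm1_bounds}). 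Your ``slowly growing $R$'' scheme captures only Case~1 of this dichotomy and would break down whenever an unstable mode drives $\Vert U^+_\tau\Vert$ upward.
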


If $\la$ is chosen sufficiently negative, as is often the case, then the second ODI expresses that the error term $\UU^-_\tau$ must roughly decay exponentially in time until it becomes comparable to the inhomogeneous term $\Vert U^+_\tau \Vert^{J+1} + \Vert Q_J^- (U^+_\tau) \Vert$.
Here $Q_J^-(U^+_\tau)$ is the projection of the Taylor expansion of non-linear part of \eqref{eq:general_PDE} onto the subspace $\sV_{\leq \la}$, and is typically\footnote{We are suppressing some details here. The term $Q_J^-(U^+_\tau)$ is generally only bounded by a sum of powers of different components of $U^+_\tau$, where the exponent depends on the component.
Components associated with lower exponents are, however, expected to be controlled by a higher power of $\Vert U^+_\tau \Vert$, which can be established a posteriori through an inductive argument.
Doing so requires a slight reordering of the logical steps in the ODI analysis, but the resulting argument is still standard.}
bounded by a high power of $\Vert U^+_\tau \Vert$.
So after some time, we must have control of the form $\UU^-_\tau \lesssim \Vert U^+_\tau \Vert^{J'}$ for some large $J' \leq J$, and plugging this back into the first ODI implies
\begin{equation} \label{eq_better_Up}
\Vert \partial_\tau U^+_\tau - L U^+_\tau - Q_J^+(U^+_\tau) \Vert \leq C \Vert U^+_\tau \Vert^{J'}. 
\end{equation}
Further analysis of this ODI often leads to a detailed understanding of the asymptotic behavior of $U^+_\tau$ and therefore $u_\tau$.
This is often achieved by analyzing the leading component of $U^+_\tau$ and showing that it---and therefore the norm $\Vert U^+_\tau \Vert$---decays exponentially or polynomially in time.
Consequently, one may often replace the right-hand side of \eqref{eq_better_Up} with a term of the form $Ce^{-J'' |\tau|}$ or $C|\tau|^{-J''}$ for large $J''$, which leads to a precise asymptotic expansion of $U^+_\tau$ to essentially the same order.

Finally, we remark that we also establish a more general version of the PDE-ODI principle, applicable to equations of the form
\[  \partial_\tau u = Lu + Q(u,\nabla u, \nabla^2 u, \bY_\tau),  \]
where $\bY_\tau$ is a finite-dimensional parameter serving as a global input that allows one to influence the flow.
Such parameters are often used to eliminate geometrically insignificant modes via a gauging procedure.
For instance, in the mean curvature flow, $\mathbf{Y}_\tau$ represents a family of ambient Killing fields that can be used to modify the flow and that can be switched on or off at will.
Since this input term may introduce unbounded contributions, it complicates our analysis and requires, among other things, a careful adaptation of our pseudolocality assumption.
For more details, we refer to Sections~\ref{sec_pseudolocality} and \ref{sec_PDE_ODI}.

\medskip
\subsection{Application to almost cylindrical mean curvature flow}
Let us now summarize the results of this paper pertaining to almost cylindrical mean curvature flows.
We will only present the most important results in a simplified form and refer to the detailed theorems in the body of the paper, in Sections~\ref{sec_mode_analysis}--\ref{sec_dom_quadratic}, where they are often accompanied by further theorems, corollaries and explanations.

The first result makes rigorous the dichotomy between flows with a dominant linear mode and those with a dominant quadratic mode.
While this dichotomy already appears in \cite{ADS_2019, Du_Haslhofer_2023, Du_Haslhofer_2024, Du_Zhu_2025}, our theorem goes substantially further: 
we extract a finite-dimensional invariant~$\Qu$, which quantifies the third order asymptotics more precisely in the dominant quadratic case.
Since the invariant vanishes if and only if the we have dominance of the linear mode, it can moreover be used as a clean criterion for distinguishing the two cases. 
The following theorem summarizes results, which are explained in more detail in the beginnings of Sections~\ref{sec_dom_lin} and \ref{sec_dom_quadratic}.

\begin{Theorem}[summary]\label{thm:dichotoy}
Let $\MM$ be an \emph{asymptotically $(n,k)$-cylindrical flow,} that is an ancient mean curvature flow in $\IR^{n+1} \times (-\infty, T)$ such that $(-t)^{-1/2} \MM_t$ converges to a cylinder of the form $\IR^k \times \IS^{n-k}$ as $t \to -\infty$.
Then the asymptotic behavior of the rescaled flow $\td\MM_\tau := e^{\tau/2} \MM_{-e^{-\tau}}$ gives rise to an invariant, which is a symmetric, non-negative definite matrix $\Qu(\MM) \in \IR^{k \times k}_{\geq 0}$ with the following properties:
\begin{enumerate}[label=(\alph*)]
\item If $\Qu(\MM) = 0$, then the rescaled flow converges to the cylinder at rate $\lesssim e^{\tau/2}$ as $\tau \to \-\infty$ and the dominant mode is linear. 
\item If $\Qu(\MM) \neq 0$, then the same convergence occurs at rate $|\tau|^{-1}$  and the dominant mode is a quadratic Hermite polynomial whose asymptotic behavior modulo $O(|\tau|^{-3})$ determines $\Qu(\MM)$.
\end{enumerate}
\end{Theorem}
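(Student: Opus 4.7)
The plan is to reduce the theorem to the PDE--ODI principle (Theorem~\ref{Thm_PDE_ODI_principle}) applied to the perturbation of $\td{\MM}_\tau$ from the cylinder, followed by an elementary ODE analysis on the unstable and neutral eigenspaces. Since $\td{\MM}_\tau \to \IR^k \times \IS^{n-k}$ locally smoothly as $\tau\to-\infty$, for each sufficiently negative $\tau$ there is a radius $R(\tau)\to\infty$ on which $\td{\MM}_\tau \cap \IB^{n+1}_{R(\tau)}$ is a normal graph of a small function $u_\tau$ over a suitably positioned copy of $\IR^k \times \IS^{n-k}$. I would couple this graphical representation to a finite-dimensional gauge parameter $\bY_\tau$ (the ambient translations and rotations used to fix the position of the cylinder), chosen so that $u_\tau$ is $L^2_f$-orthogonal to the kernel modes generated by these ambient Euclidean symmetries. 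The function $u_\tau$ then satisfies an equation of the form
\[
\partial_\tau u = L u + Q\big(u,\nabla u, \nabla^2 u, \bY_\tau\big),
\]
with $L = \triangle_f + A$ and $f=\frac14|\bx|^2$, whose $L^2_f$-eigenfunctions are products of Hermite polynomials on $\IR^k$ with spherical harmonics on $\IS^{n-k}$. After the gauge fixing, the non-negative part of the spectrum reduces to a $k$-dimensional space $\sV_+$ of degree-one Hermite modes in the $\IR^k$ directions (eigenvalue $\tfrac12$) and a $\binom{k+1}{2}$-dimensional space $\sV_0$ of degree-two Hermite modes (eigenvalue $0$), naturally identified with the space of symmetric $k\times k$ matrices.

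I would now apply the PDE--ODI principle with threshold $\la$ just below $0$, so that $\sV_{>\la} = \sV_+ \oplus \sV_0$, and with $J$ large enough to capture the cubic interactions on the neutral mode. The required coarse pseudolocality is exactly the pseudolocality of mean curvature flow. Writing $U^+_\tau = P_\tau + \mathsf{Q}_\tau$ with $P_\tau \in \sV_+$ and $\mathsf{Q}_\tau \in \sV_0$, the principle yields the approximate system
\begin{align*}
\partial_\tau P_\tau &= \tfrac12 P_\tau + O\big(\Vert U^+_\tau\Vert^2\big) + O\big(\sqrt{\eta}\,\UU^-_\tau\big), \\
\partial_\tau \mathsf{Q}_\tau &= \mathsf{B}(P_\tau,P_\tau) + \mathsf{N}(\mathsf{Q}_\tau) + O\big(\Vert U^+_\tau\Vert^3\big) + O\big(\sqrt{\eta}\,\UU^-_\tau\big),
\end{align*}
where $\mathsf{B}$ and $\mathsf{N}$ are the explicit symmetric bilinear and quadratic forms obtained by projecting the Taylor expansion of $Q$ onto $\sV_0$. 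The second ODI of Theorem~\ref{Thm_PDE_ODI_principle} forces $\UU^-_\tau$ to decay exponentially until it is controlled by a high power $\Vert U^+_\tau\Vert^{J'}$, along the bootstrap described in Section~\ref{subsec_intro_pdeodi}.

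Next, I would analyze the resulting autonomous system. The ancient condition $U^+_\tau \to 0$ as $\tau\to-\infty$ forces $\mathsf{Q}_\tau$ to lie on an ancient trajectory of the matrix Riccati ODI $\partial_\tau\mathsf{Q} \approx \mathsf{N}(\mathsf{Q})$; the explicit sign structure of $\mathsf{N}$ (determined by the mean curvature flow nonlinearity) pins $\mathsf{Q}_\tau$ down to be non-negative definite for all sufficiently negative $\tau$ and to satisfy $\mathsf{Q}_\tau = -\frac{1}{c\tau}\,\mathsf{Q}_\infty + o(|\tau|^{-1})$ for a uniquely determined symmetric non-negative definite matrix $\mathsf{Q}_\infty$ and a fixed constant $c$; I would define $\Qu(\MM) := \mathsf{Q}_\infty$. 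If $\Qu(\MM) = 0$, then $\mathsf{Q}_\tau$ decays strictly faster than $|\tau|^{-1}$, and feeding this into the equation for $P_\tau$ yields $\Vert P_\tau\Vert \sim e^{\tau/2}$; the PDE--ODI principle then upgrades the closeness of $\td{\MM}_\tau$ to the cylinder to the same $e^{\tau/2}$ rate, giving case (a). If $\Qu(\MM) \neq 0$, then $\mathsf{Q}_\tau \sim |\tau|^{-1}$ dominates the exponentially smaller $P_\tau$, and the principle delivers the claimed $|\tau|^{-1}$ rate for the full flow. One further iteration, using the next Taylor coefficient of $Q$ on $\sV_0$, refines this expansion to error $O(|\tau|^{-3})$, which yields (b).

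The principal obstacle is the gauge-fixing step. Because $u_\tau$ is only small on the expanding window $\IB^{n+1}_{R(\tau)}$, the parameter $\bY_\tau$ must be chosen smoothly and consistently in $\tau$ and must be compatible with the more delicate pseudolocality hypothesis required by the PDE--ODI principle when a driving input is present, as emphasized in Sections~\ref{sec_pseudolocality}--\ref{sec_PDE_ODI}: in particular the inhomogeneity $\bY_\tau$ induces in the equation for $u$ must remain controllable. A secondary delicate point is upgrading the subsequential behavior of $-c\tau\,\mathsf{Q}_\tau$ to a genuine limit $\mathsf{Q}_\infty$; this requires the next-order correction to the Riccati equation to be integrable against $|\tau|^{-1}\,d\tau$, which is precisely where the arbitrarily-high-order expansions provided by the PDE--ODI principle are essential.
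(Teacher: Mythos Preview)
Your outline is essentially the paper's approach: gauge away the Jacobi modes, apply the PDE--ODI principle to the graph function, and analyze the resulting finite-dimensional system on the non-negative eigenspaces, extracting $\Qu(\MM)$ from the matrix Riccati asymptotics of the neutral mode. Two concrete inaccuracies should be fixed. First, the non-negative spectrum of $L$ on the cylinder is $\{1,\tfrac12,0\}$, not $\{\tfrac12,0\}$: there is a one-dimensional constant mode $\sV_{\rot,1}$ (eigenvalue $1$) that survives gauge-fixing and must sit in your $U^+$; in the paper it appears in $U^{++}=U_1+U_{1/2}+U_0$ and is shown a posteriori to decay like $|\tau|^{-2}$ in the quadratic case. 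Second, the sign is reversed: the quadratic-mode matrix $U_0(\tau)$ (your $\mathsf Q_\tau$) is non-\emph{positive} definite along ancient trajectories, with $\partial_\tau U_0 \approx -\sqrt{2}\,U_0^2$; the invariant $\Qu(\MM)\in\IR^{k\times k}_{\ge 0}$ is obtained from this via a separate bijection, not as the raw limit of $-c\tau\,\mathsf Q_\tau$. Relatedly, taking $\lambda$ ``just below $0$'' is not enough for the $O(|\tau|^{-3})$ refinement in (b): the paper needs $\lambda$ well below $-1$ so that the $\sV_{-1/2},\sV_{-1}$ modes are tracked explicitly and fed back into the third-order ODE for $U_0$, which is what makes the next-order Riccati correction (and hence the well-definedness of $\Qu$) work.
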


In the dominant linear mode case we obtain a full classification:

\begin{Theorem}[Theorem~\ref{Thm_bowl_unique}]\label{thm:unique_of_bowl}
If $\Qu(\MM) = 0$, then $\MM$ is isometric to a parabolic rescaling of the round shrinking cylinder $\MM_{\cyl}^{n,k}$ or to $\IR^{k-1} \times \MM^{n-k+1}_{\bowl}$, where the second factor denotes the (n-k+1)-dimensional bowl soliton.
\end{Theorem}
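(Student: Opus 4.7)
The plan is to leverage the high-order asymptotic control from the PDE--ODI principle, together with the gauging mechanism built into its general formulation, to reduce the rigidity to a comparison-principle identification with the explicit model $\NN := \IR^{k-1}\times\MM^{n-k+1}_{\bowl}$.

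First, I start from Theorem~\ref{thm:dichotoy}(a): under the hypothesis $\Qu(\MM)=0$ the dominant Hermite mode of the rescaled perturbation $u_\tau$ of the cylinder is linear and $\Vert u_\tau\Vert\lesssim e^{\tau/2}$ as $\tau\to -\infty$. I then apply Theorem~\ref{thm:PDE-ODI_vauge} with the parameter $\bY_\tau$ encoding ambient Killing fields, chosen so as to gauge away the geometrically trivial modes (ambient translations and rotations of the cylinder). After this gauging, the residual linear-mode coefficient $V \in \sV_{1/2}$ is a well-defined invariant of $\MM$. Plugging the decay rate $e^{\tau/2}$ back into the ODI with $\la$ chosen slightly below $\tfrac12$, the iteration sketched after equation~\eqref{eq_better_Up} yields an asymptotic expansion of $u_\tau$ of the form $u_\tau = e^{\tau/2} V + O(e^{(1-\delta)\tau})$, and more generally to arbitrarily high exponential order, in arbitrarily many derivatives, on arbitrarily large Gaussian balls.

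Second, I split into two subcases. If $V = 0$, the refined expansion provides decay of $u_\tau$ strictly faster than $e^{(\frac12-\delta)\tau}$ for every $\delta>0$; iterating the ODI bootstrap and using that all other positive eigenmodes have been gauged away or eliminated by $\Qu=0$ forces $u_\tau \equiv 0$ for sufficiently negative $\tau$, and backward uniqueness for the rescaled mean curvature flow identifies $\MM$ with a parabolic rescaling of the round shrinking cylinder. In the case $V \neq 0$, a direct computation of the asymptotics of $\IR^{k-1}\times\MM^{n-k+1}_{\bowl}$ under the same rescaling and gauging yields a nonzero leading linear coefficient $V_\NN$; after a parabolic rescaling of $\NN$ and an ambient translation we may arrange $V_\NN = V$. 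Applying the uniqueness part of the ODI analysis to both flows with prescribed leading coefficient, I conclude that $u^\MM_\tau - u^\NN_\tau$ decays faster than $e^{-J''|\tau|}$ for every $J''$, uniformly in arbitrarily many derivatives on arbitrarily large cylindrical balls.

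The main obstacle---and the step where the arbitrary-order control is genuinely needed---is propagating this cylindrical-region closeness to the non-cylindrical tip of the bowl, where no explicit linear model exists and only the comparison principle is available. I run an avoidance argument between $\MM$ and a one-parameter family of ambient translates of $\NN$ in the direction of the bowl's translation: the super-polynomial spatial and temporal decay of $u^\MM_\tau - u^\NN_\tau$ on the boundary of the cylindrical zone supplies precisely the boundary data required to apply the avoidance principle there, while the strict monotonicity of the bowl profile in its translating direction converts this boundary bound into a global barrier. Shrinking the translation parameter to zero as $\tau \to -\infty$ forces $\MM = \NN$ and completes the classification.
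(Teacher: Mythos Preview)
Your overall plan---high-order expansion in the linear mode, normalization to match the bowl, then avoidance with translates---matches the paper's, but there is a genuine gap in how you pass from the expansion to the comparison step.

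The PDE--ODI expansion centered at a single basepoint controls $u^\MM_\tau - u^\NN_\tau$ only on balls of radius $R(\tau)\sim\sqrt{J|\tau|}$ in the rescaled picture; in the unrescaled picture this is a ball of radius $\sim\sqrt{|t|\log|t|}$ around the origin at time $t$. But the bowl's cylindrical region at any fixed $t$ is \emph{unbounded} in the translating direction $\be_k$, so you have no boundary data away from the origin on which to rest an avoidance argument, and ``shrinking the translation parameter to zero as $\tau\to-\infty$'' does not control the flow at finite times. The paper's Lemma~\ref{l:H^10} fills this gap by applying Proposition~\ref{Prop_PO_ancient} at \emph{every} basepoint $(\bq_0,t_0)$ along the bowl's axis; the key inputs are Propositions~\ref{Prop_dependence_ab}--\ref{Prop_ab_transform}, which identify the asymptotic coefficients $\bar a,\bar b_j$ as basepoint-independent invariants of $\MM$, together with a uniformity argument (Claims~\ref{Cl_rp0_bounded}, \ref{Cl_MMpgraph}) bounding the cylindrical scale of $\MM$ by that of $\NN$. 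The outcome is the uniform spatial bound $|v|\le C\,H^{10}$ over the entire cylindrical part, for all times. Since translates $(\spt\NN)_{t+\Delta T}$ separate from $(\spt\NN)_t$ at rate $c_{\Delta T}\,H$, this confines every intersection $(\spt\MM)_t\cap(\spt\NN)_{t+\Delta T}$ to a uniformly bounded tip neighborhood, which is exactly what the strong avoidance principle needs.

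A secondary point: you normalize only the leading linear coefficient $V$. The paper additionally matches the constant-mode coefficient $\bar a$ (Corollary~\ref{Cor_ab_normalization}; cf.\ Remark~\ref{Rmk_bowl_constant}). By Proposition~\ref{Prop_ab_exist} the expansion contains the term $\big(\bar a-\tfrac12\sum_j\bar b_j^2\,\tau\big)e^\tau\,\mathfrak p^{(0)}$, so an $\bar a$-mismatch leaves a residual of exact order $e^\tau$ and the difference $u^\MM_\tau-u^\NN_\tau$ never becomes $o(e^\tau)$, let alone super-exponential. Matching $\bar a$ costs one more degree of freedom (a time-shift, equivalently a translation along $\be_k$), and the argument that $V=0$ forces the cylinder (your first subcase) is precisely what shows this normalization is always possible when $V\neq0$.
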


In the dominant quadratic mode case, we obtain further characterizations of the flow, which will form the basis of a more complete analysis in this case in subsequent work \cite{Bamler_Lai_MCF2}.

The next result shows that $\Qu(\MM)$ uniquely determines the asymptotics of the convergence of the rescaled flow to the round cylinder up to \emph{arbitrary high polynomial order.}

\begin{Theorem}[Proposition~\ref{Prop_same_Q_close}] \label{thm:quadratic_polynomial}
If two asymptotically $(n,k)$-cylindrical flows $\MM^0$ and $\MM^1$ satisfy $\Qu(\MM^1) = \Qu(\MM^2)$, then for every $A \geq 1$ and $\tau \ll 0$, the rescaled flows $\td\MM^i_\tau := e^{\tau/2} \MM^i_{-e^\tau}$ are $C |\tau|^{-A}$-close in a ball around the origin of radius $A \sqrt{\log |\tau|}$.
\end{Theorem}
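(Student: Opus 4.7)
The plan is to apply the PDE--ODI principle (Theorem~\ref{Thm_PDE_ODI_principle}) separately to each rescaled flow $\td\MM^i$ in the dominant quadratic regime, extract asymptotic expansions of arbitrarily high polynomial order, and then use $\Qu(\MM^0)=\Qu(\MM^1)$ to match them. Concretely, after applying the gauging procedure associated with the ambient Killing parameter $\bY_\tau$ to eliminate translational and rotational modes, I would represent each $\td\MM^i_\tau$ as a graph $u^i_\tau$ over the cylinder $\IR^k\times\IS^{n-k}$ on a parabolic ball of radius $R(\tau)$ as in Section~\ref{subsec_intro_pdeodi}, and apply the PDE--ODI principle with $J$ very large. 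In the dominant quadratic case, Theorem~\ref{thm:dichotoy}(b) gives $\|U^{+,i}_\tau\|\lesssim|\tau|^{-1}$, so the iterative improvement outlined after Theorem~\ref{thm:PDE-ODI_vauge} promotes the tail to $\UU^{-,i}_\tau \lesssim |\tau|^{-J''}$ and the ODI residual to
\begin{equation*}
\bigl\|\partial_\tau U^{+,i}_\tau - L U^{+,i}_\tau - Q_J^+(U^{+,i}_\tau)\bigr\| \;\leq\; C|\tau|^{-J''}
\end{equation*}
for $J''$ arbitrarily large once $J$ is chosen accordingly.

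Next, I would compare the two expansions by setting $W_\tau := U^{+,1}_\tau - U^{+,0}_\tau$, which satisfies the linearized ODI
\begin{equation*}
\bigl\|\partial_\tau W_\tau - L W_\tau - DQ_J^+(\xi_\tau)\,W_\tau\bigr\| \;\leq\; C|\tau|^{-J''}
\end{equation*}
with $\|\xi_\tau\|\lesssim|\tau|^{-1}$. By the characterization in Theorem~\ref{thm:dichotoy}(b), the hypothesis $\Qu(\MM^0)=\Qu(\MM^1)$ already forces the quadratic Hermite coefficients to agree modulo $O(|\tau|^{-3})$, i.e.\ $W_\tau = O(|\tau|^{-3})$ to start. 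A standard bootstrap on the finite-dimensional space $\sV_{>\la}$---projecting onto eigenspaces of $L$ and comparing against the polynomial ansatz for $U^{+,0}_\tau$ (which is determined by $\Qu(\MM^0)$ via a parallel ODE analysis)---then upgrades any bound $\|W_\tau\|\lesssim|\tau|^{-p}$ to $\|W_\tau\|\lesssim|\tau|^{-p-1}$. Iterating yields $\|W_\tau\|\lesssim|\tau|^{-B}$ for any preassigned $B$, and combining with the tail estimates gives $\|u^1_\tau - u^0_\tau\|_{L^2_f}\lesssim|\tau|^{-B}$.

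Finally, I would convert this integral closeness into a pointwise estimate on $\IB_{R_A}$ with $R_A := A\sqrt{\log|\tau|}$. Since the Gaussian weight satisfies $e^{-f}\geq|\tau|^{-A^2/4}$ on $\IB_{R_A}$, the $L^2_f$-bound translates into $\|u^1_\tau - u^0_\tau\|_{L^2(\IB_{R_A})}\lesssim|\tau|^{-B + A^2/8}$. The uniform $C^m$ control on each $u^i_\tau$ over $\IB_{R(\tau)}\supset\IB_{R_A}$ provided by the pseudolocality part of the PDE--ODI principle, combined with standard interpolation between $L^2$ and $C^m$, upgrades this to $\|u^1_\tau - u^0_\tau\|_{C^0(\IB_{R_A})}\lesssim|\tau|^{-B' + A^2/8}$ for any $B'$ slightly less than $B$; taking $B$ sufficiently large relative to $A$ delivers the claimed $C|\tau|^{-A}$ estimate.

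The hard part is the inductive bootstrap in the second step: one must carefully track how matching of one Hermite mode in $U^{+,0}_\tau$ and $U^{+,1}_\tau$ forces matching of all successively higher modes, which requires both a precise description of $Q_J^+$ and a parallel polynomial expansion for $U^{+,0}_\tau$ itself determined by $\Qu(\MM^0)$. The preliminary gauging is also subtle, since the Killing parameters for the two flows must be chosen compatibly so that matching invariants correspond to matching PDE solutions over the same cylindrical background.
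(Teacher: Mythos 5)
Your outline follows the same route as the paper's proof of Proposition~\ref{Prop_same_Q_close}: expand both flows to high polynomial order via the PDE--ODI machinery, compare the finite-dimensional projections, use $\Qu(\MM^0)=\Qu(\MM^1)$ to control the neutral (quadratic) mode of the difference, slave the remaining modes to it, and convert the weighted $L^2_f$ estimate into pointwise closeness on the ball of radius $A\sqrt{\log|\tau|}$ (your Gaussian-weight/interpolation step in the last paragraph is fine and parallels the paper's use of Lemma~\ref{Lem_polynomial_bounds}). However, the central step is asserted rather than proved, and as written it has a gap. The ``standard bootstrap'' upgrading $\Vert W_\tau\Vert\lesssim|\tau|^{-p}$ to $|\tau|^{-p-1}$ glosses over exactly the point where all the work lies: the linearized equation for the neutral component $W_0=\PP_{\sV_0}W$ has coefficient of size $\approx 2.2\,|\tau|^{-1}$ (coming from $Q_2(U',U'')=-\tfrac12 U'U''$ and $\Vert U_0^{(i)}\Vert_{op}\le \tfrac{1.1}{\sqrt2}|\tau|^{-1}$), so its homogeneous solutions decay only like $|\tau|^{-2.3}$ as $\tau\to-\infty$; these correspond precisely to varying $\Qu$, and no projection onto eigenspaces of $L$ or formal iteration can beat this rate. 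What is needed is to integrate the neutral-mode inequality from $\tau=-\infty$ using, as a vanishing boundary condition, the fact that both $U_0^{(i)}$ are $O(|\tau|^{-3})$-close to the \emph{same} ODE trajectory $\ov U$ determined by $\Qu$ (Proposition~\ref{Prop_dom_qu_asymp}\ref{Prop_dom_qu_asymp_d} together with the rigidity of the ODE, Lemma~\ref{Lem_ODE_sol_same}); once the homogeneous part is killed this way, a \emph{single} integration of $\partial_\tau\Vert W_0\Vert_{op}\le 2.3|\tau|^{-1}\Vert W_0\Vert_{op}+C|\tau|^{-J-2}$ already yields $\Vert W_0\Vert\le C|\tau|^{-J-1}$, with no induction. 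You invoke the $O(|\tau|^{-3})$ matching only as a ``starting point'' and then hand the rest to an unspecified iteration; in addition, the slaving of the non-neutral components $W_\pm$ (the paper's three-function ordering argument, with exponential decay of the stable part and a backwards-in-time argument for the unstable part) is not carried out.

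A second, smaller gap is the gauging. You compare the gauged graph functions of the two flows, but the two gauges are independent families of Euclidean motions, so equality of the gauge-independent invariant $\Qu$ does not directly give closeness of the gauged graphs unless you show the two gauges agree to the relevant polynomial order; you flag this but do not resolve it. The paper sidesteps the issue entirely: Propositions~\ref{Prop_PO_ancient} and \ref{Prop_dom_qu_asymp} already undo the gauge (the ambient motions are shown to converge with errors that are negligible compared with $|\tau|^{-J}$ in the quadratic-dominant case), so the comparison is between two \emph{ungauged} graphs over the same cylinder. Restructuring your argument to compare the ungauged expansions, as in Proposition~\ref{Prop_dom_qu_asymp}, removes this difficulty.
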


On the other hand we show that each admissible value of $\Qu$ can be realized by an ancient oval (see also \cite{White_03, DuHaslhofer2021, HaslhoferHershkovits2016} for different parameterizations).

\begin{Theorem}[Theorem~\ref{Thm_existence_oval}]\label{thm:oval_existence}
For any symmetric, non-negative definite matrix $\Qu' \in \IR^{k \times k}_{\geq 0}$, there exists an asymptotically $(n, k)$-cylindrical flow $\MM$ with $\Qu(\MM)=\Qu'$ satisfying the properties: $\MM$ is non-collapsed, convex and rotationally symmetric about the axis $\IR^{k} \times \bO^{n-k+1}$, and invariant under reflections perpendicular to all spectral directions of $\Qu(\MM)$. Moreover, if $\Qu(\MM)$ has non-trivial $l$-dimensional nullspace, then $\MM$ splits as  compact asymptotically $(n-l,k-l)$-cylindrical flow times an $\IR^l$-factor in the nullspace direction; if $l=k$, then $\MM$ is the round shrinking cylinder.
\end{Theorem}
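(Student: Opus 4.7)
The approach is to realize $\MM$ as a subsequential limit of carefully prescribed compact, convex approximating flows and to identify the invariant of the limit via the sharp asymptotic uniqueness from Theorem~\ref{thm:quadratic_polynomial}. First I would reduce to a normal form by using the rotational and reflection symmetries to diagonalize $\Qu'$, writing $\Qu' = \diag(\mu_1,\ldots,\mu_k)$ with $\mu_1 \geq \cdots \geq \mu_k \geq 0$. If the last $l$ eigenvalues vanish, I would produce $\MM$ as the product $\MM_0 \times \IR^l$, where $\MM_0 \subset \IR^{n-l+1}$ is a compact, asymptotically $(n-l, k-l)$-cylindrical flow with strictly positive invariant $\diag(\mu_1, \ldots, \mu_{k-l})$; the borderline case $l=k$ collapses to the round shrinking cylinder by Theorem~\ref{thm:unique_of_bowl}. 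This reduces the problem to the case in which all $\mu_i > 0$, so that the target $\MM$ is compact at every time.

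Next I would construct approximating flows. For each large $R > 0$, choose a smooth, convex, symmetry-invariant initial hypersurface $\td{N}^R_{-R}$ at rescaled time $\tau = -R$, given by a graphical perturbation of the cylinder $\IR^k \times \IS^{n-k}$ whose projection onto the quadratic eigenspace $\sV_2$ of the Ornstein--Uhlenbeck-type linearized operator equals $|\tau|^{-1}$ times the Hermite polynomial encoding $\Qu'$, with all other modes set to zero. Evolving forward by the rescaled mean curvature flow produces a family $\td{\MM}^R$ of convex, non-collapsed compact solutions; convexity and non-collapsedness are preserved by Huisken- and Haslhofer--Kleiner-type estimates. Standard parabolic interior estimates together with Huisken's monotonicity formula yield, along a subsequence $R_j \to \infty$, a smooth limit $\td{\MM}^\infty$ defined for all $\tau \in \IR$ that is convex, non-collapsed, and inherits the prescribed symmetries.

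To identify $\Qu(\MM) = \Qu'$, I would apply the PDE--ODI principle (Theorem~\ref{thm:PDE-ODI_vauge}). By construction the quadratic projection of $\td{N}^R_{-R}$ is $|\tau|^{-1}$ times the prescribed Hermite polynomial, and the PDE--ODI estimates---exactly as in the proof of the dichotomy Theorem~\ref{thm:dichotoy}---force the higher-order and lower-order modes of $\td{\MM}^R_\tau$ to remain subdominant throughout the rescaled interval $[-R, 0]$. Passing to the limit, the quadratic mode of $\td{\MM}^\infty_\tau$ is asymptotic as $\tau \to -\infty$ to $|\tau|^{-1}$ times the Hermite polynomial for $\Qu'$, so $\Qu(\MM) = \Qu'$ by the definition in Theorem~\ref{thm:dichotoy}. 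The product splitting in the nullspace case is built into the reduction step of the first paragraph.

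The main obstacle will be preventing the unstable linear mode of $\td{\MM}^R_\tau$ from growing over the long rescaled interval $[-R, 0]$ and contaminating the limit by pushing it into the $\Qu = 0$ regime of Theorem~\ref{thm:unique_of_bowl}. This requires a careful fine-tuning of the initial data $\td{N}^R_{-R}$ along the unstable direction---a shooting or implicit-function selection---so that the unstable coefficient is chosen to decay to zero at $\tau = 0$. The PDE--ODI principle supplies the continuous dependence of the unstable mode on the initial data needed to carry out such a fixed-point selection, while the contraction of the stable modes under the linearized flow, combined with the quantitative error bounds of the principle, controls the remaining higher-order corrections and guarantees that the resulting limit $\MM$ is a genuine, non-cylindrical ancient oval with the desired invariant.
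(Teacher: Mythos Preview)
Your reduction to the full-rank case matches the paper, but the construction itself is genuinely different from what the paper does, and your sketch has real gaps.

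The paper does \emph{not} prescribe graphical perturbations of the cylinder. Instead it evolves a $k$-parameter family of \emph{ellipsoids} $\MM^{\mathbf b}$, $\mathbf b\in\Delta^k_\delta$, and runs a topological (Brouwer fixed-point) argument on the simplex $\Delta^k_\delta$: it builds an open cover $U_0^{a,\eps},\ldots,U_k^{a,\eps}$ of $\Delta^k_\delta$ via sets $X_j\subset\IR^k_{\geq 0}$ with $X_1\cap\cdots\cap X_k\cap\partial(X_1\cup\cdots\cup X_k)=\{\Qu'\}$, and uses a Brouwer argument to locate $\mathbf b$ in the common intersection. The key analytic input is not mode tracking on a single flow but rather the continuity and \emph{properness} of $\Qu$ under Brakke convergence (Proposition~\ref{Prop_Q_continuous}), which lets one pass to limits along subsequences $a_i\to\infty$. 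No shooting is needed; the degree argument replaces it.

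Your approach has two concrete problems. First, you cannot construct a compact, convex, $\IZ_2^k$-symmetric initial hypersurface as a graphical perturbation of $\IR^k\times\IS^{n-k}$ by $|\tau|^{-1}$ times a quadratic Hermite polynomial: the polynomial grows at spatial infinity, so you must cap the surface, and then neither convexity nor control of the modes near the cap is automatic. The ellipsoid family avoids this entirely. Second, you misidentify the obstacle: with the $\IZ_2^k$ reflection symmetry you impose, the linear mode $U_{\frac12}$ (eigenvalue $\tfrac12$) vanishes identically, so there is nothing to shoot there. The genuinely unstable rotationally symmetric mode is the constant $U_1$ (eigenvalue $1$), and that is killed by normalizing the extinction time, not by shooting. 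What remains is a $k$-dimensional \emph{neutral} selection problem for $U_0$ itself, and for that a one-dimensional shooting argument is the wrong tool; this is precisely why the paper resorts to a $k$-dimensional Brouwer argument coupled with the properness of $\Qu$.
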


We also establish several continuity and convergence properties involving $\Qu$.
First, we show that $\Qu$ is continuous under Brakke convergence:

\begin{Theorem}[Proposition~\ref{Prop_Q_continuous}\ref{Prop_Q_continuous_a}] \label{thm:Q_continuous}
The invariant $\Qu$ is continuous with respect to Brakke convergence: If $\MM^i \to \MM^\infty$ in the Brakke sense and if all flows and their limits are asymptotically $(n,k)$-cylindrical, then $\Qu(\MM^i) \to \Qu(\MM^\infty)$.
\end{Theorem}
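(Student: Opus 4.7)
The plan is to exhibit $\Qu$ as a uniform limit of finite-time approximate invariants, and then combine finite-time continuity with uniform asymptotic approximation via a three-$\eps$ argument. For each sufficiently negative $\tau_0$, I would consider the graph function $u^{\MM}_{\tau_0}$ of the rescaled hypersurface $\td\MM_{\tau_0}$ over the round cylinder $\IR^k \times \IS^{n-k}$, defined on the ball on which the PDE--ODI machinery of Section~\ref{subsec_intro_pdeodi} guarantees graphical representation, and then define $\Qu_{\tau_0}(\MM)$ to be the matrix of quadratic Hermite coefficients of the projection of $u^{\MM}_{\tau_0}$ onto $\sV_{> \la}$, where $\la$ is chosen so that the quadratic eigenspace is the leading nontrivial piece, as in Theorem~\ref{thm:dichotoy}.

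First I would establish \emph{finite-time continuity}: for each fixed $\tau_0$ at which $\td\MM^\infty_{\tau_0}$ is smoothly graphical over the cylinder on the required ball, Brakke convergence $\MM^i \to \MM^\infty$ upgrades, via White's local regularity theorem and the pseudolocality built into the PDE--ODI framework, to smooth convergence on compact subsets of this graphical region. Consequently the graph functions $u^{\MM^i}_{\tau_0}$ converge smoothly to $u^{\MM^\infty}_{\tau_0}$ on any fixed ball, and continuity of the $L^2_f$-projection onto the quadratic Hermite eigenspace yields $\Qu_{\tau_0}(\MM^i) \to \Qu_{\tau_0}(\MM^\infty)$ as $i \to \infty$.

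Next I would prove \emph{uniform asymptotic approximation}: there exists a rate $\eps(\tau_0) \to 0$ as $\tau_0 \to -\infty$ such that $|\Qu_{\tau_0}(\MM) - \Qu(\MM)| \leq \eps(\tau_0)$ for every flow $\MM$ in a suitable precompact family. This is the natural output of the PDE--ODI principle applied to the rescaled mean curvature flow. The second ODI forces $\UU^-_\tau$ to decay until it is controlled by a high power of $\Vert U^+_\tau \Vert$; iterating this bootstrap into the first ODI gives a detailed asymptotic expansion of $U^+_\tau$ whose quadratic Hermite coefficient at time $\tau_0$ agrees with $\Qu(\MM)$ up to an error depending only on the PDE--ODI constants and on the scale at which $\MM$ becomes $\eta$-close to the cylinder.

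The main obstacle will be making this rate $\eps(\tau_0)$ \emph{uniform} across the sequence $\{\MM^i\}$, since a priori different $\MM^i$ may become cylindrical only at increasingly negative times, preventing the PDE--ODI bootstrap from starting at a common scale. I would resolve this by a compactness--contradiction argument: if uniform closeness at a common $\tau_0$ failed along a subsequence, Brakke compactness would extract a further subsequential limit, and the assumption that every Brakke limit of asymptotically $(n,k)$-cylindrical flows is again asymptotically $(n,k)$-cylindrical would force this limit to be $\MM^\infty$ and hence force uniform closeness of $\td\MM^i_{\tau_0}$ to the cylinder for all $i$ large --- precisely the input the PDE--ODI principle needs to produce the uniform rate $\eps(\tau_0)$. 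Combining uniform asymptotic approximation with finite-time continuity by the standard three-$\eps$ argument (choose $\tau_0 \ll 0$ so that $\eps(\tau_0) < \eps/3$ uniformly in $i$, then choose $i$ large so that $|\Qu_{\tau_0}(\MM^i) - \Qu_{\tau_0}(\MM^\infty)| < \eps/3$) then yields $\Qu(\MM^i) \to \Qu(\MM^\infty)$.
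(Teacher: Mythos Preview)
There is a genuine gap in your ``uniform asymptotic approximation'' step. You propose to define $\Qu_{\tau_0}(\MM)$ as the matrix of quadratic Hermite coefficients of $u^{\MM}_{\tau_0}$, i.e.\ essentially $U_0(\tau_0)$, and claim that this agrees with $\Qu(\MM)$ up to an error $\eps(\tau_0) \to 0$. But $U_0(\tau_0)$ decays like $|\tau_0|^{-1}$ (Proposition~\ref{Prop_dom_qu_asymp}\ref{Prop_dom_qu_asymp_c}), so your $\Qu_{\tau_0}(\MM) \to 0$ for \emph{every} flow, while $\Qu(\MM)$ is a fixed nonzero matrix in the dominant-quadratic case. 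The invariant $\Qu(\MM)$ is not the limit of the quadratic coefficients; by Definition~\ref{Def_Qu} and Lemma~\ref{Lem_ODE_UQ} it is $\Qu_k(\ov U)$, where $\ov U$ is the specific solution to the third-order ODE \eqref{eq_barU_ODE} that shadows $U_0(\tau)$ to order $|\tau|^{-3}$ (Proposition~\ref{Prop_dom_qu_asymp}\ref{Prop_dom_qu_asymp_d}), and $\Qu_k$ is a nonlinear homeomorphism on the space of ODE trajectories. The information encoded in $\Qu(\MM)$ sits in the \emph{subleading} asymptotics of $U_0(\tau)$ (cf.\ \eqref{eq_Ubar_asymp}, where the matrix $\mathsf A$ appears only after the universal leading term $(\sqrt 2 \tau)^{-1}\mathsf I_k$), and cannot be read off from $U_0(\tau_0)$ at a single time without further processing.

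The paper's argument fixes this by working at the level of the ODE solutions $\ov U^{(i)}$ rather than the raw coefficients. It applies Proposition~\ref{Prop_dom_qu_asymp} with a \emph{uniform} choice of $\td\tau$ (possible since $\MM^{(\infty)}$ is asymptotically cylindrical and the $\MM^{(i)}$ converge to it), obtains the uniform shadowing bound $\|U_0^{(i)}(\tau) - \ov U^{(i)}(\tau)\| \leq C|\tau|^{-3}$ with $C$ independent of $i$, and then shows that any subsequential limit $\ov U^{(\infty),\prime}$ of the $\ov U^{(i)}$ must satisfy $\|\ov U^{(\infty),\prime}(\tau) - \ov U^{(\infty)}(\tau)\| \leq 3C|\tau|^{-3}$. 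The ODE rigidity Lemma~\ref{Lem_ODE_sol_same} then forces $\ov U^{(\infty),\prime} = \ov U^{(\infty)}$, and continuity of the homeomorphism $\Qu_k$ gives the conclusion. Your three-$\eps$ scaffold could be salvaged if you replaced $\Qu_{\tau_0}(\MM)$ by ``apply $\Qu_k$ to the ODE solution initialized at $U_0(\tau_0)$,'' but then you would also need the ODE uniqueness lemma, and the argument becomes essentially the paper's.
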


As a corollary, we obtain the following properness-type result, which, allows us to conclude that non-trivial Brakke limits are asymptotically cylindrical if $\Qu$ is bounded.

\begin{Corollary}[Proposition~\ref{Prop_Q_continuous}\ref{Prop_Q_continuous_b}, \ref{Prop_Q_continuous_c}]\label{cor:limit_Q_0}
Consider a sequence of asymptotically $(n,k)$-cylindrical flows $\MM^i$ and suppose that the norm $\Vert \Qu(\MM^i)\Vert$ is uniformly bounded.
Then, after passing to a subsequence, we have $\MM^i \to \MM^\infty$ where the limit is one of the following:
\begin{itemize}
\item another $(n,k)$-cylindrical flow,
\item an affine plane, or
\item empty.
\end{itemize}
If, in addition $\Qu(\MM^i) \to 0$, then the limit is either isometric to a round shrinking cylinder, a bowl times a Euclidean factor, an affine plane, or empty.
\end{Corollary}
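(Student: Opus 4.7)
My plan is to extract a Brakke subsequential limit $\MM^\infty$ and then identify it by its invariant $\Qu$. First, by Brakke compactness (using the uniform entropy bound implicit in asymptotic $(n,k)$-cylindricity together with the bound on $\Vert\Qu(\MM^i)\Vert$), pass to a subsequence with $\MM^i \to \MM^\infty$. Pass to a further subsequence so that $\Qu(\MM^i) \to \Qu^\infty$ in $\IR^{k\times k}_{\geq 0}$, which is possible by boundedness. If $\MM^\infty$ is empty or a single affine $n$-plane, two of the three listed conclusions hold immediately, so the task reduces to the case where $\MM^\infty$ is a nontrivial Brakke flow with non-flat geometry, where I aim to show that it is asymptotically $(n,k)$-cylindrical.

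\textbf{Identification of $\MM^\infty$ via comparison with an ancient oval.} Using Theorem~\ref{thm:oval_existence}, choose a reference asymptotically $(n,k)$-cylindrical flow $\MM^\ast$ with $\Qu(\MM^\ast) = \Qu^\infty$. The key step is to prove that for every $A\geq 1$ and every sufficiently negative $\tau$, the rescaled flows $\td\MM^i_\tau$ are $C|\tau|^{-A}$-close to $\td\MM^\ast_\tau$ on the ball of radius $A\sqrt{\log|\tau|}$ about the origin, with constants uniform in $i$. This is a mild stability strengthening of Theorem~\ref{thm:quadratic_polynomial}, upgrading equality of $\Qu$ to convergence. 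Granted this, since $\td\MM^\ast_\tau$ converges to the round $(n,k)$-cylinder as $\tau\to -\infty$, passing first to the limit in $i$ (using Brakke convergence $\MM^i\to \MM^\infty$) and then in $\tau$ shows that $\td\MM^\infty_\tau$ also converges to the round cylinder. Hence $\MM^\infty$ is asymptotically $(n,k)$-cylindrical.

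\textbf{Computing $\Qu(\MM^\infty)$ and handling the $\Qu\to 0$ case.} Once $\MM^\infty$ is identified as asymptotically cylindrical, Theorem~\ref{thm:Q_continuous} yields $\Qu(\MM^\infty) = \Qu^\infty$, which completes the first part of the corollary. For the second part, if in addition $\Qu(\MM^i)\to 0$, then $\Qu(\MM^\infty) = 0$, and Theorem~\ref{thm:unique_of_bowl} identifies $\MM^\infty$ as a parabolic rescaling of the round shrinking cylinder $\MM^{n,k}_{\cyl}$ or as $\IR^{k-1}\times \MM^{n-k+1}_{\bowl}$. Together with the plane/empty cases from the first step, this gives exactly the four possibilities in the statement.

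\textbf{Main obstacle.} The hardest ingredient will be the uniform closeness assertion in the second step, which essentially commutes the limits $i\to \infty$ and $\tau \to -\infty$. It is strictly stronger than Theorem~\ref{thm:quadratic_polynomial} as stated, since it allows the invariants $\Qu(\MM^i)$ only to converge rather than agree, and requires uniformity of the polynomial-order expansion in the parameter $\Qu$. This is, however, precisely the kind of conclusion that the PDE--ODI principle of Section~\ref{subsec_intro_pdeodi} is designed to produce: its high-order asymptotic expansions depend continuously on the global input, and the associated ODI system is parameter-stable. Once this continuous dependence is established, the remainder of the proof is a direct application of Theorems~\ref{thm:unique_of_bowl}, \ref{thm:oval_existence} and \ref{thm:Q_continuous}.
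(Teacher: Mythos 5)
There is a genuine gap in your key step. You claim that, after choosing a reference oval $\MM^\ast$ with $\Qu(\MM^\ast)=\Qu^\infty$, the rescaled flows $\td\MM^i_\tau$ are $C|\tau|^{-A}$-close to $\td\MM^\ast_\tau$ for all sufficiently negative $\tau$ \emph{with constants uniform in $i$}. If this were true, then letting $i\to\infty$ for fixed $\tau$ and then $\tau\to-\infty$ would show that \emph{every} subsequential limit is asymptotically cylindrical, contradicting the plane/empty alternatives that the statement (correctly) allows. The uniformity fails precisely in the degenerate situations: e.g.\ for $\MM^i=\MM-(\bp_i,0)$ with $|\bp_i|\to\infty$, each $\MM^i$ is asymptotically cylindrical with the same bounded $\Qu$, but the closeness to $\td\MM^\ast_\tau$ near the origin only holds for $\tau\le\tau_i$ with $\tau_i\to-\infty$, so no information about $\td\MM^\infty_\tau$ at any fixed $\tau$ is obtained. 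More generally, your argument never addresses the actual dichotomy --- when does the cylindrical regime of $\MM^i$ persist in the limit and when does it escape to $-\infty$ --- which is the whole content of the assertion; ``if the limit is not empty or a plane, show it is asymptotically cylindrical'' presupposes the trichotomy rather than proving it. In addition, invoking Theorem~\ref{thm:oval_existence} here is circular within this paper: its proof (Theorem~\ref{Thm_existence_oval}) uses the continuity \emph{and properness} of $\Qu$, i.e.\ exactly Proposition~\ref{Prop_Q_continuous}\ref{Prop_Q_continuous_b}, which is the statement you are trying to prove.

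The paper's proof avoids both issues by quantifying the degeneration directly: for a small dimensional $\delta_0$ it takes $\la_i>0$ maximal so that $\la\MM^{i}$ is $\delta_0$-close to $M_{\cyl}$ at time $-1$ for all $\la<\la_i$, and splits on $\la_\infty=\lim\la_i$. If $\la_\infty>0$, the limit inherits closeness to a cylinder at a definite scale, and Corollary~\ref{Cor_unique_tangent_infinity} plus Theorem~\ref{Thm_stability_necks} show $\MM^\infty$ is asymptotically $(n,k)$-cylindrical. If $\la_\infty=0$, one instead passes to a limit of $\la_i\MM^{i}$, which is asymptotically cylindrical with $\Qu=\lim\la_i\Qu(\MM^{i})=0$ by part \ref{Prop_Q_continuous_a} and the scaling law $\Qu(\la\MM)=\la\Qu(\MM)$; Theorem~\ref{Thm_bowl_unique} then identifies it as a cylinder or a bowl times a Euclidean factor, and a short regularity/contradiction argument shows the original limit $\MM^\infty$ must be an affine plane or empty. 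Your final step (the $\Qu\to 0$ case via parts \ref{Prop_Q_continuous_a} and Theorem~\ref{thm:unique_of_bowl}) does match the paper, but it rests on the unproved and, as stated, false uniform-closeness claim, so the argument does not go through as proposed.
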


We remark that $\Qu$ is invariant under translation in space and time and its behavior under parabolic rescaling can be characterized by $\Qu(\la \MM) = \la \Qu(\MM)$.
So Corollary~\ref{cor:limit_Q_0} implies that at large scales, asymptotically cylindrical flows can be modeled on cylinders or $\IR^{k-1} \times \MM^{n-k+1}_{\bowl}$.

\begin{Corollary}\label{cor:blow-down}
If $\MM$ is asymptotically cylindrical and consider a sequence of points and times $(\bp_i, t_i) \in \spt \MM$ and scales $\la_i \to 0$.
Then, after passing to a subsequence, we have $\la_i ( \MM - (\bp_i,t_i)) \to \MM^\infty$, which must be isometric to a round shrinking cylinder, a bowl times a Euclidean factor, or an affine plane.
\end{Corollary}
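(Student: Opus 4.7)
The plan is a direct application of Corollary~\ref{cor:limit_Q_0} together with the rescaling and translation behavior of $\Qu$ quoted just before the statement of this corollary. Set $\MM^i := \la_i(\MM - (\bp_i, t_i))$. First, I would check that each $\MM^i$ is itself asymptotically $(n,k)$-cylindrical: this property is preserved under parabolic rescaling and under space-time translation, because both operations act linearly on the time variable and on the rescaling factor $(-t)^{-1/2}$, so the limit cylinder $\IR^k \times \IS^{n-k}$ as $t \to -\infty$ is unchanged. Using the stated invariance $\Qu(\MM - (\bp,t)) = \Qu(\MM)$ and the homogeneity $\Qu(\la\MM) = \la \Qu(\MM)$, we then have
\[
\Qu(\MM^i) \;=\; \la_i \, \Qu(\MM) \;\longrightarrow\; 0 \qquad \text{as } i \to \infty,
\]
since $\la_i \to 0$ and $\MM$ is fixed.

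Next I would invoke Corollary~\ref{cor:limit_Q_0}. The norms $\Vert\Qu(\MM^i)\Vert$ are uniformly bounded (they tend to $0$), so after passing to a subsequence we have Brakke convergence $\MM^i \to \MM^\infty$. Because $\Qu(\MM^i) \to 0$, the ``in addition'' clause of that corollary applies, and the limit $\MM^\infty$ must be one of: a round shrinking cylinder, a bowl times a Euclidean factor $\IR^{k-1} \times \MM^{n-k+1}_{\bowl}$, an affine plane, or empty.

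The only remaining step --- and the step I expect to be the main (albeit mild) obstacle --- is to exclude the empty option, since this is the only possibility not listed in the statement. Here I would use the hypothesis $(\bp_i, t_i) \in \spt\MM$, which means the space-time origin $(\bO, 0)$ lies in $\spt\MM^i$ for every $i$. By Huisken's monotonicity formula the Gaussian density of $\MM^i$ at $(\bO,0)$ is at least $1$, and lower semicontinuity of the Gaussian density under Brakke convergence forces $(\bO,0) \in \spt\MM^\infty$. Hence $\MM^\infty$ is non-empty, and must fall into one of the three remaining categories, completing the proof. The potential subtlety in this last step is ensuring that the lower density bound really survives the Brakke limit in the generality in which Brakke convergence is used throughout the paper; however, this is standard and follows from the basic properties of integral Brakke flows, so no new technical input is required.
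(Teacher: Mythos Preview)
Your proposal is correct and matches the paper's intended argument: the paper presents this corollary as an immediate consequence of Corollary~\ref{cor:limit_Q_0} together with the translation invariance and scaling law for $\Qu$, without spelling out further details. You have correctly identified and handled the one step that requires a word of justification, namely ruling out the empty limit using the hypothesis $(\bp_i,t_i)\in\spt\MM$ and the lower bound on Gaussian density at points of the support.
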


In summary, Theorems~\ref{thm:quadratic_polynomial},  \ref{thm:oval_existence} and Corollary~\ref{cor:blow-down}  give the following partial asymptotic picture:
the flow it is close to an ancient oval with the same $\Qu(\MM)$, up to arbitrarily high polynomial order and in a large neighborhood of the origin, and close to a round cylinder or $\IR^{k-1} \times \MM^{n-k+1}_{\bowl}$ in regions where the curvature is small.
As we will see in \cite{Bamler_Lai_MCF2}, this covers all regions except for the ``cap-region'' in a flying wing soliton.

\medskip

In addition, we also reprove existing results on almost cylindrical flows.
The novelty here is that in our framework, these results reduce to a rather elementary analysis of a system of ODIs.
Recall that a point $\bp_0 \in \IR^{n+1+n'}$ is called \emph{center of a $(n,k,\delta)$-neck at scale $r > 0$ and time $t_0$} of a mean curvature flow $\MM$ if there is an orthogonal map $S \in O(n+1+n')$ such that $r^{-1}S(\MM_{t_0} - \bp_0)$ is $\delta$-close to the standard round cylinder $\IR^k \times \IS^{n-k}$.

The first result is the following rigidity result of cylinders within the space of shrinkers, originally due to Colding-Ilmanen-Minicozzi~\cite{Colding_Ilmanen_Minicozzi}:

\begin{Theorem}[Corollary~\ref{Cor_eternal}]
There is a dimensional constant $\delta > 0$ with the following property.
Let $M \subset \IR^{n+1+n'}$ be an $n$-dimensional shrinker and assume that $\bO$ is a center of an $(n,k,\delta)$-neck at scale~$1$.
Then $M$ is a round cylinder.
\end{Theorem}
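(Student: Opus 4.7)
My plan is to treat the shrinker $M$ as a \emph{static} solution of the rescaled mean curvature flow, apply the PDE--ODI principle directly to it, and exploit the collapse of the two ODIs in the static setting to reduce the entire problem to a finite-dimensional algebraic obstruction.

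First I would exploit self-similarity: the family $\td{M}_\tau := e^{\tau/2} M_{-e^{-\tau}} \equiv M$ is a static (hence eternal) solution of the rescaled mean curvature flow on $\tau\in\IR$. The $(n,k,\delta)$-neck assumption writes $M$ as the graph of a function $u$, with $\Vert u\Vert_{C^m} \leq \varepsilon(\delta)$, over a round cylinder $\IR^k\times\IS^{n-k}$ on an initial ball of radius $r(\delta)\to\infty$ as $\delta\to 0$. Because $u_\tau\equiv u$ is $\tau$-independent, iterating the parabolic pseudolocality input behind the PDE--ODI principle enlarges this ball indefinitely, so for any prescribed $R(\tau)$ I may assume $\Vert u\Vert_{C^m(\IB^n_{R(\tau)})} \leq \eta$ provided $\delta$ is chosen small.

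I would then apply the $\bY_\tau$-gauged version of the PDE--ODI principle from Section~\ref{sec_PDE_ODI}, with a constant gauging parameter chosen to neutralize the zero modes of $L|_{\sV_{>\lambda}}$ generated by ambient translations and rotations of the cylinder, and with threshold $\lambda\in(-\tfrac{1}{2},0)$ so that $\sV_{>\lambda}$ collects precisely the constant, linear and quadratic Hermite modes (eigenvalues $1,\tfrac{1}{2},0$). Staticity, $\partial_\tau U^+_\tau\equiv\partial_\tau\UU^-_\tau\equiv 0$, then collapses the two ODIs of Theorem~\ref{thm:PDE-ODI_vauge} into the algebraic inequalities
\begin{align*}
\Vert L U^+ + Q^+_J(U^+)\Vert &\leq C\Vert U^+\Vert^{J+1} + C\sqrt{\eta}\,\UU^-,\\
0 &\leq (\lambda + C\sqrt{\eta})\,\UU^- + \Vert U^+\Vert^{J+1} + \Vert Q^-_J(U^+)\Vert.
\end{align*}
Since $\lambda + C\sqrt{\eta}<0$, the second line gives $\UU^- \lesssim \Vert U^+\Vert^{J+1} + \Vert Q^-_J(U^+)\Vert$, and plugging this bound back into the first reduces everything to a single nonlinear relation on the finite-dimensional space $\sV_{>\lambda}$.

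To finish, I would argue mode by mode on $\sV_{>\lambda}$. The constant and linear eigenspaces enjoy a spectral gap of $L$, so the relation $LU^+ = -Q^+_J(U^+) + O(\Vert U^+\Vert^{J+1})$ forces those components of $U^+$ to vanish by direct invertibility. The genuine obstacle is the quadratic eigenspace with eigenvalue $0$, where the linear term drops out; here I would project the finite-dimensional relation onto this subspace, use the explicit structure of $Q^+_J$ dictated by the shrinker equation, and extract a definiteness statement for the induced quadratic form on the non-gauge directions, forcing the quadratic component to vanish as well. This last step is the hard part---it is exactly what is usually handled by the {\L}ojasiewicz--Simon inequality of~\cite{Colding_Ilmanen_Minicozzi}, and in the present framework it is reduced to a purely finite-dimensional identity because all spatial-integration and localization issues have been absorbed into the estimate on $\UU^-$. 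Once this is settled, $U^+\equiv 0$ implies $\UU^-\equiv 0$, hence $u\equiv 0$, so $M$ is the round cylinder.
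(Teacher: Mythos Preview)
Your approach differs from the paper's, and it has a genuine gap in the treatment of the Jacobi modes with eigenvalue~$0$.

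\textbf{The gap.} You propose to handle the neutral modes via ``a constant gauging parameter $\bY_\tau$'', but this is not how the gauging works: a constant $\bY$ adds a fixed Jacobi term $\Jac\bY$ to the evolution equation; it does not kill the Jacobi component of $u$. The real issue is the rotation Jacobi modes in $\sV_{\osc,0}\subset\sV_0$. These are genuinely unconstrained by your algebraic relation: if $M=S(M_{\cyl})$ is a slightly rotated cylinder, then $M$ is still an exact shrinker, and expanding the shrinker equation shows that $\PP_{\sV_0}Q_2(U_{\osc,0})\equiv 0$ identically on $\sV_{\osc,0}$. So your ``definiteness statement for the induced quadratic form on the non-gauge directions'' cannot see these modes, and without first gauging them away (by an \emph{orthogonal} transformation, which preserves the shrinker property) the relation $\PP_{\sV_{\rot,0}}Q_2(U_0)\approx 0$ only yields $\Vert c^2\Vert\lesssim \Vert U_{\osc,0}\Vert^2+\text{h.o.t.}$, which is vacuous when $U_{\osc,0}$ dominates. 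The fix is to first apply a rotation (via Lemma~\ref{Lem_gauge_prep}) to set $\PP_{\sV_{\osc,0}}(u\,\omega_{R^\#})=0$; then $SM$ is still a shrinker and your algebraic argument goes through using $\Vert c^2\Vert\geq k^{-1}\Vert c\Vert^2$.

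\textbf{Comparison with the paper.} The paper does \emph{not} use staticity. It proves a stronger statement (the second half of Corollary~\ref{Cor_eternal}): any ancient flow for which $\bO$ is a $\delta$-neck center at scale $\sqrt{-t}$ for all $t<0$ must be a round shrinking cylinder. The argument applies the full time-dependent gauging of Proposition~\ref{Prop_gauging} and then the dynamical ODI analysis of Propositions~\ref{Prop_Tisosc}--\ref{Prop_ODE_subintervals}: on an \emph{eternal} interval $\td I=\IR$, the leading mode cannot be $U_1$ or $U_{1/2}$ (exponential growth contradicts boundedness at $+\infty$), nor $U_0$ with $U_{0,\max}>0$ on $(-\infty,\tau_0)$ (since $\partial_\tau U_{0,\max}^{-1}\geq\sqrt 2-\xi>0$ is incompatible with $U_{0,\max}>0$ on a left half-line), nor $U_0$ with $U_{0,\min}<0$ on $(\tau_0,\infty)$. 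This forces $U^{++}\equiv 0$. Your static route is more direct for shrinkers but does not extend to the eternal-flow statement.
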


The second result is the following stability theorem for $\delta$-necks.
This was originally proved by Colding-Minicozzi \cite{colding_minicozzi_uniqueness_blowups} (see also \cite{Gianniotis_Haslhofer_2020} for an adaptation to this specific form).

\begin{Theorem}[Theorem~\ref{Thm_stability_necks}]\label{thm:unique_tangent_flow}
For every $\eps > 0$ there is an $\delta(\eps) > 0$ with the following property.

Let $\MM$ be an $n$-dimensional mean curvature flow of codimension $n'+1 \geq 1$ in $\IR^{n+1+n'}$ defined over a time-interval of the form $[-T_1, -T_2]$ for $T_1, T_2 > 0$.
Suppose that for $t \in \{-T_1, -T_2 \}$ the point $\bO$ is center of an $(n,k,\delta)$-neck at scale $\sqrt{-t}$ and at time $t$.
Then there is a \emph{uniform} orthogonal map $S \in O(n+1+n')$ such that $(-t)^{-1/2} S\MM_t$ is $\eps$-close to the standard cylinder \emph{for all} times $t \in [-T_1, -T_2]$.
\end{Theorem}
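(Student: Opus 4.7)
The plan is to reduce the stability assertion to a two-sided boundary value analysis for the ODIs produced by the PDE-ODI principle. First, I would pass to the rescaled flow $\td\MM_\tau := (-t)^{-1/2}\MM_t$ on the interval $[\tau_1,\tau_2] := [-\log T_1,-\log T_2]$. By the $\delta$-neck hypotheses at the two endpoints there exist orthogonal maps $S_{\tau_1}, S_{\tau_2}$ so that each $S_{\tau_j}\td\MM_{\tau_j}$ is the normal graph of a function $u(\tau_j,\cdot)$ of size $O(\delta)$ over the standard cylinder $\IR^k \times \IS^{n-k}$ on a large ball around the origin. Fix $S := S_{\tau_1}$ as the candidate uniform orthogonal map, let $u(\tau,\cdot)$ be the normal graph of $S\td\MM_\tau$ over the cylinder on the region where this is defined, and observe that $u$ satisfies a parabolic equation of the form \eqref{eq:general_PDE} to which Theorem~\ref{Thm_PDE_ODI_principle} applies.

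I would then apply the PDE-ODI principle with threshold $\lambda$ strictly below zero, so that $\sV_{>\lambda}$ is the finite-dimensional span of the non-negative eigenmodes of the linearized operator $L$ at the cylinder---the ``scale'' mode of eigenvalue $1$, the linear ``translation/tilt'' modes of eigenvalue $\tfrac12$, and the neutral quadratic modes of eigenvalue $0$. Writing $U^+_\tau = \sum_i a_i(\tau)\phi_i$ in this eigenbasis, each coefficient satisfies a scalar ODI of the form
\[ \bigl| a_i'(\tau) - \lambda_i a_i(\tau)\bigr| \leq C\,\Vert U^+_\tau\Vert^2 + C\sqrt{\eta}\,\UU^-_\tau, \]
together with the decay ODI $\partial_\tau\UU^-_\tau \leq (\lambda + C\sqrt{\eta})\UU^-_\tau + C\Vert U^+_\tau\Vert^{J+1}$ of Theorem~\ref{Thm_PDE_ODI_principle}. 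The endpoint neck hypotheses translate into $\Vert U^+_{\tau_j}\Vert, \UU^-_{\tau_j} \leq C\delta$.

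The heart of the argument is then elementary ODI analysis with two-sided boundary data. A standard bootstrap in the $\UU^-$ equation keeps $\UU^-_\tau \leq C\delta$ for all $\tau\in[\tau_1,\tau_2]$, since $\lambda<0$ and the forcing is a high power of $\Vert U^+_\tau\Vert$. For each strictly unstable coefficient ($\lambda_i>0$), the linearized solution $c\,e^{\lambda_i\tau}$ grows forward in time; since $|a_i(\tau_2)|\leq C\delta$, the constant $c$ is forced to satisfy $|c|\leq C\delta e^{-\lambda_i\tau_2}$, which yields $|a_i(\tau)|\leq C\delta e^{\lambda_i(\tau-\tau_2)}\leq C\delta$ for every $\tau\in[\tau_1,\tau_2]$---crucially, independent of the length of the interval---and a scalar comparison absorbs the small quadratic correction. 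The neutral coefficients ($\lambda_i=0$) are essentially constant along the linearized flow and are therefore bounded by their endpoint values. In particular, the ``tilt'' coefficients remain $O(\delta)$ throughout, so the best approximating cylinder at any intermediate time differs from $S_{\tau_1}^{-1}(\IR^k\times\IS^{n-k})$ by an orthogonal correction of size $O(\delta)$, and choosing $\delta=\delta(\eps)$ small enough gives the desired uniform $\eps$-closeness of $(-t)^{-1/2}S\MM_t$ to the cylinder.

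The main obstacle I expect is the a priori unboundedness of the interval length $\tau_2-\tau_1$: any purely forward-in-time estimate for the unstable modes would allow exponential growth $e^{\lambda_i(\tau-\tau_1)}$, giving no useful control in the middle. The resolution is precisely that the smallness at the \emph{later} endpoint $\tau_2$ is the stronger constraint on an unstable mode and converts the problem into an interior estimate that the scalar comparison principle handles cleanly. A secondary technical difficulty is verifying the pseudolocality hypothesis of Theorem~\ref{Thm_PDE_ODI_principle} in the present setting---where $u$ is only controlled on a large ball and may be undefined or large outside---but this is exactly the situation the PDE-ODI framework is built to accommodate, and it is the reason the whole two-sided ODI argument goes through without any additional global hypothesis on $\MM$.
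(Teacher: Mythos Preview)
Your two-sided ODI idea contains a genuine circularity that prevents it from going through as stated. You fix the frame $S := S_{\tau_1}$ and then assert endpoint smallness $\Vert U^+_{\tau_2}\Vert \leq C\delta$; but the hypothesis at $\tau_2$ only says that $S_{\tau_2}\td\MM_{\tau_2}$---not $S_{\tau_1}\td\MM_{\tau_2}$---is $\delta$-close to the cylinder. In the $S_{\tau_1}$-frame, the graph function $u(\tau_2,\cdot)$ differs from something small by the Jacobi field of the rotation $S_{\tau_1}S_{\tau_2}^{-1}$, and controlling this rotation is precisely the content of the theorem. So you do not have the small data at $\tau_2$ needed to bound unstable modes backward. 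The same problem blocks verification of Assumption~\ref{Thm_PDE_ODI_principle_ii} in Theorem~\ref{Thm_PDE_ODI_principle}: without knowing the flow stays close to \emph{some} cylinder at intermediate times, $u$ may not even be defined on the required ball.

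A second issue: the neutral modes are not ``essentially constant.'' Their ODI reads $|a_i'| \leq C\Vert U^+\Vert^2 + \ldots$, so integrating over an interval of length $\tau_2-\tau_1$ gives drift of order $\int \Vert U^+\Vert^2\,d\tau$, which is not obviously bounded independent of the interval length.

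The paper's proof resolves both issues by a different route. First, a compactness/contradiction argument (using Corollary~\ref{Cor_eternal} and Gaussian area monotonicity) shows that $\bO$ is a center of a $(n,k,\delta')$-neck at \emph{every} intermediate time. This justifies applying the PDE-ODI framework at all. Second, the paper \emph{gauges} the flow via Proposition~\ref{Prop_gauging}: a time-dependent family of Euclidean motions $(S_t)$ is chosen so that the Jacobi modes vanish identically, and the deviation from a fixed frame is encoded in the Killing fields $\td\bY_\tau$. The ODI analysis (including the decomposition into subintervals from Proposition~\ref{Prop_ODE_subintervals}, where the quadratic mode satisfies $|U_{0,\min}| \lesssim (\tau_0-\tau)^{-1}$ so that $\int U_{0,\min}^2\,d\tau$ is finite) then yields $\int \Vert \td\bY_\tau\Vert\,d\tau \leq C\eta$, from which the total rotational drift of $S_t$ is controlled. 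This last integral estimate is exactly what replaces your ``neutral modes are constant'' step, and it genuinely uses the nonlinear structure of the quadratic-mode evolution.
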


This theorem directly implies the uniqueness of cylindrical tangent flows (at a singular point and at $-\infty$); see Corollaries~\ref{Cor_unique_tangent}, \ref{Cor_unique_tangent_infinity}.
{See also \cite{Zhu25_CylindricalSelfShrinkers} and \cite{Ghosh2025Cylindrical} for further alternative proofs. 
For uniqueness of tangent flow on non-cylindrical shrinkers see, e.g., \cite{schulze2014,CS2021_uniqueness, Choi_Mantoulidis_22,lotay2022neck, stolarski2023structure,LZ2024_uniqueness}.
}

\medskip
\subsection{Structure of the paper}
In Section~\ref{sec_Preliminaries}, we introduce notions that will be frequently used throughout the paper, and review some basic facts in mean curvature flow.

In Section~\ref{sec_pseudolocality}, we introduce and motivate the pseudolocality property, which plays a central role in the PDE-ODI principle, and we establish such a property for mean curvature flow.
More precisely, we prove a strong pseudolocality statement for rescaled mean curvature flows and a weaker variant for rescaled \emph{modified} mean curvature flows, which allows an additional smooth family of ambient isometries.
In the conventional (unrescaled) setting, pseudolocality typically asserts that a local bound on a sufficiently large ball at one time yields control at later times on a smaller ball.
In the rescaled setting, however, a key observation for our purposes is that the conclusion may hold on a \emph{larger} ball than the one appearing in the assumption.

In Section \ref{sec_PDE_ODI}, we prove the general PDE-ODI principle for a broad class of PDEs satisfying a pseudolocality property (Theorem~\ref{thm:PDE-ODI_vauge}). We explicitly describe how we track the evolution of certain non-linear PDEs by a finite dimensional ODI.

In Section \ref{sec_mode_analysis}, we apply the PDE–ODI principle to the rescaled mean curvature flow equation, analyze the resulting ODI, and derive strong asymptotic estimates for its solutions.
A crucial preparatory step is the gauging procedure of Subsection \ref{subsec_gauge}, which removes geometrically insignificant neutral and unstable modes by modifying the flow via a family of ambient Euclidean motions. This yields a rescaled \emph{modified} mean curvature flow to which our PDE–ODI principle applies directly.
From this, we obtain asymptotic estimates to arbitrarily high polynomial order in the leading term.
In particular, we show that the Euclidean motions used in the gauging procedure are themselves controlled to arbitrarily high order.
Immediate consequences include classical results such as the dichotomy between a dominant linear mode and a dominant quadratic mode; the rigidity of cylinders among shrinkers, due to Colding–Ilmanen–Minicozzi \cite{Colding_Ilmanen_Minicozzi}; the stability of cylinders; and the uniqueness of cylindrical tangent flows (Theorem~\ref{thm:unique_tangent_flow}), due to Colding-Minicozzi \cite{colding_minicozzi_uniqueness_blowups}.
The same a posteriori control on the ambient Euclidean motions allows us to undo the gauging procedure carried out at the beginning of our construction.
As a result, we obtain the same asymptotic characterization as in the modified setting for the original rescaled \emph{unmodified} flow.
This will be the key result of this section and will play a central role in the remainder of the paper.

In Section \ref{sec_dom_lin}, we discuss the dominant linear mode case and prove the uniqueness of the bowl soliton times a Euclidean factor.
In this case the leading term decays exponentially, which yields asymptotic control of the flow to arbitrary \emph{exponential} order in time.
This implies that any flow with dominant linear mode can be approximated by a bowl soliton times a Euclidean factor, up to an arbitrary high polynomial order \emph{in space.}
Finally, we use a comparison principle to show that the two flows must be equal (Theorem~\ref{thm:unique_of_bowl}).

In Section \ref{sec_dom_quadratic}, we treat the dominant quadratic mode case and find an asymptotic invariant $\Qu$ (a non-negative definite matrix), which determines the behavior of the flow to arbitrary polynomial order.
Our analysis proceeds by modeling the flow using a finite collection of modes $U^{+}(\tau)$, which satisfy an ODE up to an error term that decays to arbitrarily high polynomial order.
In particular, we extract the invariant $\Qu$ from the third-order expansion of this ODE system.
We also further characterize this $\Qu$ and establish results that will be crucial for our sequel \cite{Bamler_Lai_MCF2}.
For example, we show the continuity of the invariant $\Qu$ under the Brakke convergence (Theorem \ref{thm:Q_continuous}) and characterize limits of sequences of asymptotically cylindrical flows depending on bounds on $\Qu$.
Finally, we use our methods, combined with a new topological framework, to construct  ancient ovals that realize every possible value of $\Qu$.

\subsection{Acknowledgements}
The authors want to thank Or Hershkovits, Christos Mantoulidis, Jingze Zhu and Jonathan Zhu for helpful comments and correction of typos.
\bigskip

\medskip
\section{Preliminaries}\label{sec_Preliminaries}
In this paper, we will often use the short form ``$X \leq C(a,b, \ldots)$'' to express ``there is a constant $C$, which can be chosen only depending on constants $a, b, \ldots$, such that $X \leq C$''.
We will usually fix dimensions and omit these in the list of dependencies.
So a statement of the form ``$X \leq C$'' means that the constant $C$ is dimensional or even universal.
Likewise, a statement of the form ``if $\alpha \leq \ov\alpha(a,b,\ldots)$, then \ldots'' means ``there is a constant $\ov\alpha$, which can be chosen depending only on $a,b,\ldots$ and dimensions, such that if $0 < \alpha \leq \ov\alpha$, then \ldots''.
Lower bounds are expressed similarly as ``$A \geq \underline A(a,b,\ldots)$''.
We also use the notation ``$\Psi(a | b)$'' or ``$\Psi(A | b)$'' to denote a generic function that satisfies $\lim_{a \to 0} \Psi(a|b) = 0$ or $\lim_{A \to \infty} \Psi(A|b) = 0$ (depending on the context) for fixed $b$.
\medskip

We will denote the standard, open $r$-ball in $\IR^n$ by $\IB^n_r$, where we sometimes omit the superscript if the context is clear.
Throughout this paper we will also fix a smooth \textbf{cutoff function} $\omega : \IR \to [0,1]$ with $\omega \equiv 1$ on $(-\infty,-0.2]$ and $\omega \equiv 0$ on $[-0.1,\infty)$.
For every $R \geq 1$, we define $\omega_R \in C^\infty(\IR^n)$ by
\[ \omega_R (x) := \omega(|x|- R). \]
So $\omega_R$ is supported on $\IB^n_R$ and $\omega_R \equiv 1$ on $\IB^n_{R-0.2}$.
\medskip

We will frequently consider objects defined on spacetimes of the form $\IR^{n+1+n'} \times I$, for a time-interval $I \subset \IR$.
We denote by $\mathbf t : \IR^{n+1+n'} \times I \to I$ the projection onto the second factor and will use a time-index to denote the restrictions of such objects to the corresponding time-slices.
For example, if $U \subset \IR^{n+1+n'} \times I$ is a subset and $u : U \to \IR$ is a function, then for any $t \in I$ we write $U_t := U \cap \mathbf t^{-1}(t)$ and denote by $u_t = u(\cdot, t) : U_t \to \IR$ restriction of $u$ to this time-slice; here we identify $\mathbf t^{-1}(t) = \IR^{n+1+n'} \times \{ t \} \cong \IR^{n+1+n'}$.
\medskip

Our main objects of study will be \textbf{$n$-dimensional integral Brakke flows,} which are given by a family of measures $(\mu_t)_{t \in I}$, parameterized by a time-interval $I$, on Euclidean space $\IR^{n+1+n'}$; here $1+n'$ is the codimension of the flow.
We will often simply write $\MM := (\mu_t)_{t \in I}$ and introduce $\MM$ as an ``$n$-dimensional integral Brakke flow in $\IR^{n+1+n'} \times I$'' without referencing the measures $\mu_t$.
The \textbf{support} $\spt \MM \subset \IR^{n+1+n'} \times I$ is defined as the closure of $\bigcup_{t \in I} (\spt \mu_t) \times \{t \}$ within $\IR^{n+1+n'} \times I$.
A point $(\bp_0,t_0) \in \spt \MM$ is called \textbf{regular} (otherwise \textbf{singular}) if there exists a smooth family of smooth $n$-dimensional submanifolds $N_t \subset \IR^{n+1+n'}$ for $t \in I$ close to $t_0$ such that for a neighborhood $U \subset \IR^{n+n'+1}$ of $\bp_0$, the intersection $U \cap N_t$ is properly embedded in $U$ and $\mu_t \lfloor U_t = \mathcal{H}^n \lfloor N_t$.
We denote by $\MM^{\reg}$ and $\MM^{\sing} \subset \spt \MM$ the sets of regular and singular points and say that $\MM$ is \textbf{regular on} a subset $U \subset \IR^{n+1+n'} \times I$ if $U \cap \MM^{\sing} = \emptyset$.
It is clear that $\MM^{\reg}$ is a smooth $(n+1)$-dimensional submanifold of $\IR^{n+1+n'} \times I$ such that $\mathbf t$ restricted to $\MM^{\reg}$ has no critical points and whose time-slices $\MM^{\reg}_t$ move by the mean curvature flow equation
\begin{equation} \label{eq_mcf_prelim}
 \partial_{t} \mathbf x = \mathbf H, 
\end{equation}
where the left-hand side denotes the normal velocity and the right-hand side the mean curvature vector of $\MM^{\reg}_t$.

We say that $\MM$ is \textbf{convex} if there is a $T \leq \infty$ (its \textbf{extinction time}) such that $(\spt \MM)_t = \emptyset$ for $t > T$ and $\emptyset \neq (\spt \MM)_{T} = \MM^{\sing}_T$ if $T < \infty$ and for $t < T$ the set $(\spt \MM)_t = \MM^{\reg}_t$ is the boundary of a closed, convex subset $C_t = \IR^{n+1}$ with non-empty interior.
Note that the time-slice $\MM^{\reg}_t$ must have non-negative definite second fundamental form for a suitable choice of co-orientation.

\medskip

We will use the same language as introduced before to characterize solutions to \emph{modified} versions of the mean curvature flow equation.
These will always arise from an integral Brakke flow $(\td\mu_t)_{t \in I}$ by a transformation identity of the form $\td\mu_\tau = (F_\tau)_* \mu_{\phi(\tau)}$, where $\phi :  \td I \to I$ is a smooth diffeomorphism reparameterizing time and $(F_\tau : \IR^{n+1+n'} \to \IR^{n+1+n'})_{\tau \in \td I}$ is a smooth family of homotheties (i.e., isometries and rescalings).
The resulting flows $(\td\mu_\tau)_{\tau \in \td I}$ will be called ``rescaled (modified) mean curvature flows'' and denoted by $\td\MM, \MM'$ or $\td\MM'$.
Their regular and singular points are defined in the same way as discussed in the previous paragraph and it is clear that $\td\MM^{\reg}_{\tau} = F_\tau(\MM^{\reg}_{\phi(\tau)})$; the same is true for $\td\MM^{\sing}$ and the support of $\td\MM$.
Obviously, $\td\MM^{\reg}_\tau$ evolves by a modified version of the evolution equation \eqref{eq_mcf_prelim}, which we will discuss in due time.
\medskip

Consider now an $n$-dimensional integral Brakke flow $\MM$ in $\IR^{n+1+n'} \times I$, given by a family of measures $(\mu_t)_{t \in I}$.
For $(\bp_0, t_0) \in \IR^{n+1+n'} \times I$, we define the \textbf{spacetime translation} $\MM + (\bp_0 , t_0)$ to be the flow given by the meausures $\mu'_{t'} := (T_{\bp_0})_*\mu_{t'-t_0}$, $t' \in I+t_0$, where $T_{\bx_0}(\bx) := \bx + \bp_0$. 
Similarly, for $\la > 0$ we define the \textbf{parabolic scaling} $\la \MM$ to be the flow given measures $\mu'_{t'} := \la^{-n} \la_* \mu_{t/\la^2}$, $t' \in \la^2 I$, where ``$\la_*$'' denotes push-forward under the spatial dilation by $\la$.
Lastly, if $S \in O(n+1+n')$ is an orthogonal map or $S \in E(n+1+n')$ is a Euclidean motion, then we may define $S\MM$ to be the flow given by the measures $\mu'_t := S_* \mu_t$.
It is clear that $(\MM + (\bp_0, t_0))^{\reg} = \MM^{\reg} + (\bp_0, t_0)$ and $(\la\MM)^{\reg} = \la\MM^{\reg}$ and $(S\MM)^{\reg} = S \MM^{\reg}$, where in the second identity the time-coordinate is scaled $\la^2$.
The analogous identities hold for the singular parts and the supports.
We call two flows $\MM', \MM$ \textbf{isometric} if $\MM' = S(\MM+(\bp_0, t_0))$ for a suitable $(\bp_0, t_0) \in \IR^{n+1+n'} \times I$ and $S \in O(n+1+n')$ and \textbf{homothetic} if $\MM'$ and $\la\MM$ are isometric for a suitable $\la > 0$.

A submanifold $M \subset \IR^{n+1}$ or Brakke flow $\MM$ in $\IR^{n+1} \times I$ is called \textbf{$(n,k)$-rotationally symmetric,} if it is invariant under orthogonal maps in $O(n-k+1)$ applied to the second factor of $\IR^{n+1} = \IR^k \times \IR^{n-k+1}$.
The subspace $\IR^k \cong \IR^k \times \bO^{n-k+1}$ is called the \textbf{axis of rotation.}
We will often omit the prefix ``$(n,k)$'' if the dimensions are clear from the context.

We say that $\MM$ has \textbf{uniformly bounded area ratios at scales $< r_0$} if
\[ \sup_{r \in (0,r_0)}  \sup_{(\bp,t) \in \IR^{n+1+n'} \times I} \frac{\mu_t (B(\bp,r))}{r^n} < \infty. \]
This is clearly the case if $\MM$ has bounded support.
If $r_0=\infty$, then we say that $\MM$ has \textbf{uniformly bounded area ratios at all scales.}
We recall the definition of the \textbf{Gaussian area} at $(\bp_0, t_0) \in \IR^{n+1+n'} \times \IR$ for $t \in I$, $t < t_0$
\[ \Theta^{\MM}_{(\bp_0, t_0)} (t_0 - t) := \int_{\IR^{n+1+n'}} \frac1{(4\pi (t_0 - t))^{n/2}} e^{-\frac{|\bx - \bp_0|^2}{4(t_0 - t)}} d\mu_t. \]
If $\MM$ has uniformly bounded area ratios at scales $< r_0$, then the Gaussian area is finite and non-decreasing in its time-parameter (see for example \cite[Proposition~5.15]{Schulze_intro_Brakke}\footnote{If we allow the Gaussian area to attain the value $\infty$, then monotonicity is even true if we remove the uniformly bounded area ratio assumption. This follows from a modification of the proof in \cite{{Schulze_intro_Brakke}}.}), so the limits $\Theta^{\MM}_{(\bp_0, t_0)} (0)$ and $\Theta^{\MM}_{(\bp_0, t_0)} (\infty)$ are both defined, assuming $t_0 > \inf I$.
We recall that $\Theta^{\MM}_{(\bp_0, t_0)} (0) > 0$ if and only if $(\bp_0, t_0) \in \spt \MM$, in which case we even have $\Theta^{\MM}_{(\bp_0, t_0)} (0) \geq 1$.
Moreover, if $(\bp_0, t_0) \in \MM^{\reg}$, then $\Theta^{\MM}_{(\bp_0, t_0)} (0) =1$.
We call $\MM$ \textbf{unit-regular} if the converse is true. 
Recall that Brakke's Regularity Theorem \cite{Brakke_1978} implies that this property is property is closed under Brakke convergence.
The same result shows that if a sequence of unit-regular integral Brakke flows converge in the Brakke sense, $\MM^i \to \MM^\infty$, then the convergence is locally smooth near every regular point of the limit $\MM^\infty$.
Moreover, if $U \subset \IR^{n+1+n'} \times I$ is a bounded open subset whose closure is disjoint from $\MM^{\infty, \sing}$, then $\MM^i$ is regular on $U$ for sufficiently large $i$ and the convergence on $U$ is locally smooth.
So, for example, if $\MM^\infty$ is smooth, then $\MM^i$ is regular on a sequence of larger and larger subsets and the convergence is locally smooth everywhere.

The Gaussian density is constant $\Theta^{\MM}_{(\bp_0, t_0)} (t_0 - t)$ if and only if $\MM$ is a \textbf{shrinker,} that is, if it is invariant under parabolic rescalings based at $(\bp_0, t_0)$, so if $\MM - (\bp_0, t_0) = \la(\MM - (\bx_0, t_0))$ for all $\la > 0$.
The simplest non-trivial example of a shrinker is the \textbf{round shrinking sphere,} whose time-slices are given by $\sqrt{-t} \, \IS^n$, where here and for the remainder of this paper we define $\IS^n \subset \IR^{n+1}$ to be the sphere of radius $\sqrt{2n}$ centered at the origin.
Another class of examples is given by \textbf{the round shrinking $\mathbf{(n,k)}$-cylinder} $\MM_{\cyl}^{n,k}$ whose time-slices are given by $\sqrt{-t} \, M_{\cyl}^{n,k}$, where 
\[ M_{\cyl}^{n,k} := \IR^k \times \IS^{n-k}, \]
which is called the \textbf{round $\mathbf{(n,k)}$-cylinder.}
We will distinguish between ``\emph{the} round shrinking $(n,k)$-cylinder'' and ``\emph{a} round shrinking $(n,k)$-cylinder,'' where the latter is any flow that is homothetic to $\MM_{\cyl}^{n,k}$, i.e., of the form $\la S(\MM^{n,k} + (\bp_0, t_0))$.
We will omit the superscripts ``$n,k$'' or the prefix ``$(n,k)$'' if the dimensions are fixed.
The (constant) Gaussian area (a.k.a. \textbf{entropy}) of $\MM_{\cyl}^{n,k}$ is denoted by $\Theta_{M_{\cyl}^{n,k}}$; recall that $\Theta_{M_{\cyl}^{n,k}} = \Theta_{\IS^{n-k}}$ and
\[ 1 < \Theta_{\IS^n} < \Theta_{\IS^{n-1}} < \ldots < \Theta_{\IS^1} < 2. \]

Consider the cylinder $M_{\cyl}^{n,k} \subset \IR^{n+1} \cong \IR^{n+1} \times \bO^{n'} \subset \IR^{n+1+n'}$ and let $u = (u', u'') \in C^0(\DD; \IR \times \IR^{n'})$ be a vector-valued function on an open subset $\DD \subset M_{\cyl}^{n,k}$.
We define the \textbf{graph} of $u$ over $M^{n,k}_{\cyl}$ as
\begin{equation} \label{eq_Gamma_cyl}
\Gamma_{\cyl}(u) := \big\{ \big(\bx, (1+u'(\bx,\by))\by, u''(\bx, \by) \big) \;\; : \;\; (\bx, \by) \in \DD \big\} \subset \IR^{k} \times \IR^{n-k+1} \times \IR^{n'} = \IR^{n+1+n'}. 
\end{equation}
Recall that by our conventions $(\bx, \by) \in \DD$ implies that $|\by| = \sqrt{2(n-k)}$.
We say that an $n$-dimensional submanifold $M \subset \IR^{n+1+n'}$ is \textbf{$\delta$-close} to the round cylinder $M_{\cyl}^{n,k}$ \textbf{at scale $r > 0$} (which we leave out if $r=1$) if $r^{-1} M \cap \IB^{n+1+n'}_{\delta^{-1}}=\Gamma_{\cyl}(u)$ for some $u \in C^\infty(\DD; \IR \times \IR^{n'})$ with $\Vert u \Vert_{C^{[\delta^{-1}]}} < \delta$ and $\DD \supset M_{\cyl}^{n,k} \cap \IB^{n+1+n'}_{\delta^{-1}-1}$.
We say that a point $\bp_0 \in \IR^{n+1+n'}$ is a \textbf{center of an $(n,k,\delta)$-neck at scale $r> 0$} if $S(M - \bp_0)$ is $\delta$-close to $M_{\cyl}^{n,k}$ at scale $r$ for some $S \in O(n+1+n')$.
Likewise, we say that an integral Brakke flow $\MM$ in $\IR^{n+1+n'} \times I$ is \textbf{$\delta$-close to $M^{n,k}_{\cyl}$ at time $t \in I$ and scale $r > 0$} if $\MM^{\sing}_t \cap \ov\IB^n_{\delta^{-1}} = \emptyset$ and $\MM^{\reg}_t$ is $\delta$-close to $M^{n,k}_{\cyl}$ at scale $r$.
We say that a point $\bp_0 \in \IR^{n+1+n'}$ is a \textbf{center of an $(n,k,\delta)$-neck at time $t \in I$ and scale $r> 0$} if $S (\MM - (\bp_0,0))$ is $\delta$-close to $M^{n,k}_{\cyl}$ at time $t \in I$ and scale $r > 0$.
We adopt the same conventions for rescaled (modified) flows.

We note that much of the previous work in this area relied on the additional assumption that the Brakke flow is \textbf{cyclic}.
By contrast, our results hold without this assumption.
\bigskip

\section{Pseudolocality properties and rescaled (modified) mean curvature flows} \label{sec_pseudolocality}
\subsection{Introduction}
An centrol method of this paper is the \emph{PDE-ODI principle,} which converts a parabolic partial differential equation into a system of ordinary differential inequalities; it will be introduced in Section~\ref{sec_PDE_ODI}.
This principle applies to a broad class of parabolic equations and essentially relies only on a relatively coarse \emph{pseudolocality property,} which is available in many settings.
The purpose of this section is to motivate this pseudolocality property and establish it in the context of almost cylindrical mean curvature flows, the main focus of this paper.

As will become clear later, the presence of unstable modes associated with ambient Euclidean isometries requires us to perform a gauging procedure, resulting in a broader class of flows, called \emph{rescaled modified mean curvature flows.}
These flows evolve via an additional smooth family of ambient isometries.
While they do not satisfy the pseudolocality property in its original form, we show that a weaker version of this property still holds, which is sufficient for our purposes.

We begin by illustrating the pseudolocality property in the simplest, linear case in Subsection~\ref{subsec_lin_case}).
In Subsection~\ref{subsec_resc_mod_mcf}, we introduce the rescaled modified mean curvature flow and reduce it to an equation for graphs over cylinders.
In Subsection~\ref{subsec_pseudoloc_rmmcf}, we derive a pseudolocality property for rescaled and rescaled modified mean curvature flows, which will allow us to apply the PDE-ODI principle to these flows in Section~\ref{sec_mode_analysis}.

\subsection{The linear case} \label{subsec_lin_case}
In this subsection, we examine the most fundamental setup with a pseudolocality property: bounded solutions of the {(rescaled)} linear heat equation.
Let $v \in C^2(\IR^n \times (-T,0))$ be a solution to the linear heat equation%
\begin{equation} \label{eq_v_he}
 \partial_t v = \triangle v . 
\end{equation}
Suppose, in addition, that we have the global bound $|v| \leq 1$.

Such solutions satisfy an \emph{(unrescaled) pseudolocality property,} which can be obtained, for example, by expressing $ v$ as a convolution with the Gaussian heat kernel.
Roughly speaking, this property asserts that a local bound on $v$ over a sufficiently large ball at some time controls $v$ over a smaller ball at later times.
More precisely, we need the following special form: for every $\eta > 0$ there is an $\sigma(\eta) > 0$ such that
\[ \text{If} \qquad  \sup_{B(\bx,\sigma^{-1} \sqrt{-t})} |v|(\cdot, t) \leq \sigma\eta, \qquad \text{then} \qquad \sup_{B(\bx,\sqrt{-t})} |v| (\cdot, t') \leq \eta \quad \text{for all} \quad t' \in [t, e^{-1} t] \cap (-T,0).
\]
We emphasize that the bound $|v| \leq 1$ is crucial here;
general (bounded) solutions to \eqref{eq_v_he} do not obey a pseudolocality property for a uniform choice of $\sigma$.

While the pseudolocality property on $v$ requires the choice of a smaller ball in its conclusion, the situation is much improved if we consider the \emph{rescaled} setting.
In this setting we write $t = - e^{-\tau}$ and set $u(\bx,\tau) := v (e^{-\tau/2} \bx, - e^{-\tau})$, which satisfies the following \emph{rescaled} flow equation:
\[ \partial_\tau u = \triangle u - \tfrac12 \bx \cdot \nabla u =  \triangle_f u, \]
Here $f = \frac14 |\mathbf x|^2$ and $\triangle_f = \triangle - \nabla f \cdot \nabla$ is the Ornstein-Uhlenbeck operator.
The pseudolocality property of $v$ now implies the following rescaled pseudolocality property for $u$.
Using the same constants as before, we have for any radius $R \geq \sigma^{-1}$:
\[ \text{If} \qquad  \sup_{\IB^n_R} |u_\tau|\leq \sigma\eta, \qquad \text{then} \qquad \sup_{\IB^n_{e^{(\tau'-\tau)/2}(R - \sigma^{-1})}} |u_{\tau'}|  \leq \eta \quad \text{for all} \quad \tau' \in [\tau, \tau+1].
\]
Note that, in contrast to the pseudolocality property of $v$, both balls are centered at the origin and for large enough $R$ and $\tau' = \tau+1$, the ball in the conclusion is actually larger than the ball in the assumption, as it arises from  a ball of radius $R - \sigma^{-1}$ after rescaling the domain by a factor of $e$.
This observation will be relevant in the proof of the PDE-ODI principle.
\medskip

\subsection{Rescaled (modified) mean curvature flows} \label{subsec_resc_mod_mcf}
Our goal in this section will be to replicate the phenomenon detailed in the previous subsection in the setting of rescaled mean curvature flows, or more generally, rescaled \emph{modified} mean curvature flows.
These are rescaled mean curvature flows that are adjusted by an additional family of ambient isometries, which are often chosen during a gauging procedure (see Subsection~\ref{subsec_gauge}).

Fix some $n \geq 1$ and $n' \geq 0$ and let $\YY$ be the space of Killing fields on $\IR^{n+1+n'}$, which can be identified with the Lie algebra of the group $E(n+1+n')$ of Euclidean motions on the same space.
Consider an $n$-dimensional, integral Brakke flow $\MM \in \IR^{n+1+n'} \times I$ given by a family of measures $(\mu_t)_{t \in I}$.
For a smooth family $(S_t \in E(n+1+n'))_{t \in I}$ of Euclidean motions, we can consider the modified flow $\MM'$ that is given by the push-forward measures $\mu'_t := (S_t)_* \mu_t$.
Note that $\MM^{\prime, \reg}_t = S_t(\MM_t^{\reg})$, which evolves according to the following \textbf{modified mean curvature flow} equation:
\[ \partial_t \mathbf x = \mathbf H + \frac1{t_0-t} \bY_t^{\perp}, \]
where $t_0$ is some fixed time with $I \subset (-\infty,t_0)$ and $(\bY_t \in \YY)_{t \in I}$ is a smooth family of Killing fields generating $(S_t)_{t \in I}$ in the following sense
\[ (t_0-t) \partial_t S_t = \bY_t \circ S_t. \]

As in the example from the previous subsection, we will now pass to a rescaled version of this flow.
So consider the flow $\td\MM'$ that is defined over 
\[ \td I := \{ -\log(t_0-t) \;\; : \;\; t \in I \} \] 
and given by the push-forward measures $\mu'_\tau := (e^{\tau/2})_* \mu'_{t_0 - e^{-\tau}}$.
Note again that $\td\MM^{\prime, \reg}_\tau =  e^{\tau/2} \MM'_{t_0 - e^{-\tau}}$, which evolves according to the \textbf{rescaled modified mean curvature flow} equation:
\begin{equation} \label{eq_mrmcf_pseudo}
 \partial_\tau \mathbf x = \mathbf H + \frac{\mathbf x^\perp}2 + \td\bY_\tau^\perp, 
\end{equation}
where $\td\bY_\tau = (e^{\tau/2})_* \bY_{-e^{-\tau}}$.
We caution the reader that 
\[ \td\MM^{\prime, \reg}_\tau = e^{\tau/2} S_{t_0-e^{-\tau}}( \MM^{\reg}_{t_0-e^{-\tau}}) =  \td S_{\tau}(e^{\tau/2} \MM^{\reg}_{t_0-e^{-\tau}}) \]
for some family of Euclidean motions $(\td S_\tau)$, which usually satisfies $\partial_\tau \td S_\tau \neq \td\bY_\tau \circ \td S_\tau$.
For example, if $S_t$ is a non-trivial translation that is constant in time, then $\bY_t , \td\bY_\tau \equiv 0$, however $\td S_\tau$ is not constant.

If $S_t \equiv \id$, and hence $\bY_\tau = \td\bY_\tau \equiv 0$, then we call $\td\MM'$ the \textbf{rescaled mean curvature flow} associated with $\MM$ (based at time $t_0$) and write $\td\MM' = \td\MM$.

\medskip
Let us now fix $k \in \{ 1, \ldots, n-1 \}$ and a rescaled modified mean curvature flow $\td\MM'$ associated with a family of Killing fields $(\td\bY)_{\tau \in \td I}$.
Suppose that a subset of this flow can be described as the graph of a function $u = (u',u'') \in C^\infty(\DD ; \IR\times \IR^{n'})$ over the round $(n,k)$-cylinder $M^{n,k}_{\cyl}$; that is $\DD \subset M_{\cyl} \times \IR$ is an open subset whose time-slices are denoted by $\DD_\tau := \DD \cap M_{\cyl} \times \{ \tau \} \subset M_{\cyl} \times \{ \tau \} \cong M_{\cyl}$ and we assume that for all $\tau$
\[ \Gamma_{\cyl} (u_\tau) \subset \td\MM^{\reg,\prime}_\tau. \]
See \eqref{eq_Gamma_cyl} for the definition of $\Gamma_{\cyl}$ and recall our convention that the cross-sectional $\IS^{n-k}$-spheres of $M_{\cyl}^{n,k}$ have radius $\sqrt{2(n-k)}$. 
In this setting, the rescaled modified mean curvature flow equation \eqref{eq_mrmcf_pseudo} can naturally be expressesed as an evolution equation on $u_\tau$ of the form
\begin{equation} \label{eq_MCF_u_eq}
 \partial_\tau u_\tau = L u_\tau + Q[u_\tau, \td\bY_\tau]  
\end{equation}
The goal of the next lemma is to record important structural properties of this equation.

\begin{Lemma} \label{Lem_structure_MCF_graph_equation}
For any $\bY \in \YY$ we denote by $\bY (\bx, \by)$ the value of the vector field $\bY$ evaluated at the point and $(\bx,\by,0) \in M_{\cyl}$.
Decompose the vector $\bY(\bx,\by)$ into its component $\bY_\Vert (\bx,\by)$ that is tangent to $M_{\cyl}$, its component  $\bY'(\bx,\by) \in \IR$ in the normal direction of $M_{\cyl}$ within $\IR^{n+1}$ and its component $\bY''(\bx,\by)$ in the $\IR^{n'}$-direction. Set
\[ \bY_{\perp} (\bx,\by) := \big( \tfrac1{\sqrt{2(n-k)}} \bY'(\bx,\by), \bY''(\bx,\by) \big) \in \IR \times \IR^{n'}. \]
The factor $\tfrac1{\sqrt{2(n-k)}}$ has been chosen due to the fact that $|\by| = \sqrt{2(n-k)}$.
Then $u_\tau$ evolves according to an equation of the form \eqref{eq_MCF_u_eq}.
Using the decomposition $u = (u', u'')$, the linear part of this equation can be expressed as
\[ L (u', u'') = (\triangle_f u' + u', \triangle_f u'' + \tfrac12 u''), \]
where $f = \frac14 |\mathbf x|^2$ and $\triangle_f = \triangle - \nabla f \cdot \nabla$ is the Ornstein-Uhlenbeck operator.
The second, mostly non-linear, part is
\begin{align*}
 Q[u, \bY] &= Q(\bx,\by, u,\partial u, \partial^2 u, \bY) \\
&= Q_0(u, \partial u, \partial^2 u)
- \nabla_{\bY_{\Vert} (\bx,\by)} u + \bY_\perp (\bx,\by)   + Q_1( \by, u, \bY ) + Q_2(\by, u, \partial u ,   \bY ).
\end{align*}
Here $Q_0$, $Q_1$ and $Q_2$ denote smooth functions in the indicated parameters.
The function $Q_0$ and its first partial derivatives vanish at the origin.
The function $Q_1$ is bilinear in $u, \bY$ and $Q_2$ is trilinear in $u, \partial u, \bY$, where we view $\bY$ as an element in the vector space  $\YY$ of Killing fields on $\IR^{n+1+n'}$.
\end{Lemma}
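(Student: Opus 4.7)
The plan is to parametrize the perturbed flow as the graph $\Phi(\bx,\by,\tau) = (\bx, (1+u'(\bx,\by,\tau))\by, u''(\bx,\by,\tau))$ over $M_{\cyl}^{n,k}$ and translate the normal-velocity condition \eqref{eq_mrmcf_pseudo} into an evolution equation for $u = (u',u'')$. Since $\partial_\tau \Phi = (0, (\partial_\tau u')\by, \partial_\tau u'')$ is already orthogonal to $M_{\cyl}^{n,k}$, the natural strategy is to project both sides of \eqref{eq_mrmcf_pseudo} onto a frame $\boldsymbol\nu_j$ of the graph's normal bundle and solve for $\partial_\tau u^j$. I would then treat the three terms $\mathbf H$, $\mathbf x^\perp/2$ and $\td\bY_\tau^\perp$ on the right-hand side separately.

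For $\mathbf H + \mathbf x^\perp/2$, the first step is to apply the standard linearization of the rescaled mean curvature flow operator at a shrinker, which yields the Jacobi-type operator $\triangle_f + |A|^2 + \tfrac12$ acting on normal sections. On the round cylinder $M_{\cyl}^{n,k}$ one has $|A|^2 = \tfrac12$, and this contribution is concentrated in the $\IR^{n+1}$-normal (radial) direction since the cylinder is flat in the $\IR^{n'}$-directions. This produces the asymmetric linearization $Lu' = \triangle_f u' + u'$ and $Lu'' = \triangle_f u'' + \tfrac12 u''$ claimed in the lemma. The full operator at a graph over $M_{\cyl}^{n,k}$ depends smoothly on $(u,\partial u,\partial^2 u)$, and subtracting $Lu$ leaves a remainder $Q_0(u,\partial u,\partial^2 u)$ which, by construction, vanishes together with its first partial derivatives at the origin.

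For the Killing-field term I would exploit that any $\bY \in \YY$ is affine linear on $\IR^{n+1+n'}$: $\bY(\bz) = A\bz + \bb$ with $A$ skew-symmetric and constant. At the graph point $\Phi(\bx,\by) = (\bx,\by,0) + (0, u'\by, u'')$ this gives the exact Taylor identity
\[ \bY(\Phi(\bx,\by)) = \bY(\bx,\by,0) + A(0, u'\by, u''). \]
The correction $A(0,u'\by,u'')$ is bilinear in $(u,\bY)$ through $A$; after projection onto the graph's normal frame it becomes $Q_1(\by,u,\bY)$. The leading piece $\bY(\bx,\by,0)$ splits as $\bY_\Vert + \bY'\hat\by + \bY''$ with $\hat\by = \by/|\by|$. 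Its vertical components, projected onto $\boldsymbol\nu_j$ and re-expressed in the $\partial_\tau u^j$-coordinates, pick up a metric factor $1/|\by| = 1/\sqrt{2(n-k)}$ in the radial component, yielding the inhomogeneous term $\bY_\perp = \bigl(\tfrac{1}{\sqrt{2(n-k)}}\bY', \bY''\bigr)$. The tangential part $\bY_\Vert$ contributes through the tilt $\boldsymbol\nu_j = \bn_j + \delta\boldsymbol\nu_j$ of the graph's normal frame relative to the cylinder's normal frame: $\delta\boldsymbol\nu_j$ is purely tangential and linear in $\nabla u^j$, and since $\bY_\Vert \cdot \bn_j = 0$, the inner product reduces (after cancellation of the same metric factor) to $-\nabla_{\bY_\Vert} u^j$. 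The residual terms---the projection of $A(0,u'\by,u'')$ onto $\delta\boldsymbol\nu_j$ and the projection of $\bY(\bx,\by,0)$ onto the $\nabla u$-quadratic part of $\boldsymbol\nu_j$---are trilinear in $(u,\partial u,\bY)$ and assemble into $Q_2(\by,u,\partial u,\bY)$.

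The main obstacle I expect is the tangential bookkeeping: verifying that the contribution of $\bY_\Vert$ enters in the stated form $-\nabla_{\bY_\Vert} u$ rather than with extraneous $|\by|$-dependent coefficients, and confirming that all residual terms fall into the advertised bilinear/trilinear polynomial structure in $(u,\partial u,\bY)$. Everything reduces to a finite, explicit expansion in the graph variables; once the projection formulas for $\boldsymbol\nu_j$ and the conversion back to $\partial_\tau u$ are laid out carefully, the smoothness and second-order vanishing of $Q_0$, the bilinearity of $Q_1$, and the trilinearity of $Q_2$ follow by direct inspection.
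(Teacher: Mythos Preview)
Your proposal is correct and follows the same overall strategy as the paper: treat $\mathbf H + \mathbf x^\perp/2$ and the Killing-field contribution separately, and exploit the affine linearity of $\bY$ for the latter. The main technical difference lies in how the Killing-field term is extracted. You project onto the graph's normal frame $\boldsymbol\nu_j$ and track the tilt $\delta\boldsymbol\nu_j$; the paper instead passes to cylindrical coordinates $(\bx,\by,\rho,\bz) \leftrightarrow (\bx,(1+\rho)\by,\bz)$, in which the graph is simply the section $(\rho,\bz)=(u',u'')$, and reads off the contribution to $\partial_\tau u$ via the standard Lie-derivative formula
\[
F_2 \;=\; -\,\nabla_{\bY_\Vert(\bx,\by,u',u'')}\,u \;+\; \bY_\perp(\bx,\by,u',u''),
\]
where the decomposition is with respect to the \emph{cylinder's} frame, evaluated at the graph point. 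This completely sidesteps the ``tangential bookkeeping'' you flagged: no tilt of the normal frame and no metric conversion factors appear, because one never projects onto the graph's normals at all. Taylor expanding this identity at $(u',u'')=0$ immediately gives $-\nabla_{\bY_\Vert}u + \bY_\perp + Q_1 + Q_2$ with the asserted bilinear/trilinear structure, and the $\bx$-independence of $Q_1,Q_2$ is transparent since the correction $A(0,u'\by,u'')$ does not involve $\bx$. Your normal-projection route is equivalent and reaches the same decomposition, but the cylindrical-coordinate trick is shorter and removes precisely the step you were worried about.
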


\begin{proof}
Since the mean curvature vector $\mathbf H$ at any point can be calculated from the value and the first and the second derivatives of $u$ at that point, we find that \eqref{eq_mrmcf_pseudo} implies an evolution equation of the form
\[ \partial_\tau u = F_0(u, \partial u, \partial^2 u) + F_1 (\bx, \by, u, \partial u) + F_2 (\bx, \by,u, \partial u,  \bY), \]
A standard variational calculation implies that
\[ F_0 (u, \partial u , \partial^2 u) = \big( - \tfrac12 + \triangle u' + \tfrac12 u', \triangle u'' \big) + Q_0 (u, \partial u, \partial^2 u ), \]
where we write $u = (u', u'')$ as before and where $Q_0$ denotes a smooth function, which vanishes at to origin, together with its first partial derivatives.
The second term can be expressed explicitly as follows:
\[ F_1 ( \bx, \by, u, \partial u)
= \big( \tfrac12 - \tfrac12 \bx \cdot \nabla u' + \tfrac12 u',  - \tfrac12 \bx \cdot \nabla u'' + \tfrac12 u''\big). \]

To express the third term, consider cylindrical coordinates on $\IR^{n+1+n'}$, where for $(\bx,\by,0) \in M_{\cyl}$ the point $(\bx,\by,\rho, \bz)$ corresponds to $(\bx, (1+\rho) \by, \bz)$.
Using the same recipe as in the statement of the lemma, we can extend the  functions $\bY_\Vert (\bx,\by)$ and $\bY_\perp (\bx,\by)$ to functions $\bY_\Vert (\bx,\by,\rho,\bz)$ and $\bY_\perp (\bx,\by,\rho,\bz)$ in such a way that the decomposition of $\bY(\bx,\by, \rho,\bz)$ is taken with respect to the tangential and normal directions at $(\bx,\by,0) \in M_{\cyl}$. 
Note that we have $\bY_{\Vert / \perp} (\bx,\by) = \bY_{\Vert / \perp} (\bx,\by,0,0)$.
We now obtain the explicit representation
\begin{multline*}
 F_2(\bx, \by,u, \partial u,  \bY)
= - \nabla_{\bY_{\Vert} (\bx,\by, u', u'')} u +\bY_\perp (\bx,\by,u', u'') \\
= - \nabla_{\bY_{\Vert} (\bx,\by)} u +\bY_\perp (\bx,\by) + \nabla_{\bY_{\Vert} (\bx,\by, u', u'') - \bY_\Vert (\bx,\by, 0,0)} u 
- \big(\bY_\perp (\bx,\by,u', u'') - \bY_\perp (\bx,\by,0,0) \big). 
\end{multline*}
Since the Killing field $\bY$ is affine linear, we obtain that the last term is bilinear in $u, \bY$ and the second last term is trilinear in $u, \partial u, \bY$.
By the same reason both terms are independent of $\bx$.
\end{proof}
\medskip

\subsection{Pseudolocality for rescaled modified mean curvature flows} \label{subsec_pseudoloc_rmmcf}
We will now derive a pseudolocality property for solutions $u$ to \eqref{eq_MCF_u_eq} similar to the one provided in Subsection~\ref{subsec_lin_case}. 
This property will translate to a statement about closeness of $\td\MM'_\tau$ to $M_{\cyl}$ in a large ball assuming closeness at earlier times.
In contrast to the example from Subsection~\ref{subsec_lin_case}, we now allow the domains $\DD_\tau \subset M_{\cyl}$, over which we can represent $\td\MM'_\tau$ by a function $u_\tau$, to be incomplete.
Consequently, our pseudolocality property also needs to establish control over the size of $\DD_\tau$ at later times.

Let us first describe the construction of the domains $\DD_\tau$.
Suppose that $\td\MM'$ is rescaled modified flow in $\IR^{n+1+n'} \times \td I$ arising from an $n$-dimensional, unit-regular, integral Brakke flow, as explained in the previous subsection.
For each $\tau \in \td I$ choose $R'_\tau \in [0,\infty]$ maximal with the property that $\IB^{n+1+n'}_{R'_\tau} \times \{ \tau \}$ is disjoint from $\td\MM^{\prime,\sing}$ and such that
\[ \td\MM^{\prime,\reg}_\tau \cap \IB^{n+1+n'}_{R'_\tau} = \Gamma_{\cyl} (u_\tau) \]
for some $\DD_\tau \subset M_{\cyl}$ and $u_\tau \in C^\infty(\DD_\tau; \IR \times \IR^{n'})$.
It is not hard to see that $\DD := \bigcup_{\tau \in \td I} \DD_\tau \times \{ \tau \}$ is an open subset of $M_{\cyl} \times \td I$ and that $u$ is a smooth function on $\DD$, which satisfies the evolution equation from Lemma~\ref{Lem_structure_MCF_graph_equation}.

We will now derive a pseudolocality property for $u$ and $\DD$.
We first assume that $\td\bY_\tau \equiv 0$, so $\td\MM := \td\MM'$ is, in fact, a rescaled \emph{(unmodified)} mean curvature flow.
In this case, we obtain a stronger and simpler pseudolocality property:

\begin{Lemma}[Strong pseudolocality if $\td\bY_\tau \equiv 0$] \label{Lem_strong_pseudo}
For $\eta > 0$ and integers $1 \leq k < n $, $n' \geq 0$ and $m \geq 2$ there is a constant $\sigma(\eta,\lb m, \lb  n, \lb n') > 0$ with the following property.
Let $\td\MM$ be a rescaled mean curvature flow in $\IR^{n+1+n'} \times \td I$ corresponding to an $n$-dimensional, unit-regular, integral Brakke flow, as explained in Subsection~\ref{subsec_resc_mod_mcf}.
Consider the domain $\DD$ and the function $u$, defined in the beginning of this subsection.

Suppose that for some $\tau_0 \in \td I$ and $R > \sigma^{-1}$ we have $\IB^{k}_R \times \IS^{n-k} \subset \DD_{\tau_0}$ and
\begin{equation} \label{eq_time_tau_bound_strong}
 |u_{\tau_0}| + \ldots + |\nabla^{m-1} u_{\tau_0}| \leq \sigma \eta \qquad \text{on} \quad \IB^{k}_R \times \IS^{n-k}.   
\end{equation}
Then for all $\tau_2 \in [\tau_0 +\frac12, \tau_0+1] \cap \td I$ we have $\IB^{k}_{e^{(\tau_2-\tau_0)/2} (R - \sigma^{-1})} \times \IS^{n-k} \subset \DD_{\tau_2}$ and
\begin{equation} \label{eq_us_less_eta2}
 |u_{\tau_2}| + \ldots + |\nabla^{m} u_{\tau_2}| < \eta \qquad \text{on} \quad \IB^{k}_{e^{(\tau_2-\tau_0)/2} (R - \sigma^{-1})} \times \IS^{n-k}.   
\end{equation}
\end{Lemma}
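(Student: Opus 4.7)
The plan is to translate the problem to the unrescaled mean curvature flow, apply classical pseudolocality (in the spirit of Ecker--Huisken and Brakke's regularity theorem), and translate the conclusion back to rescaled coordinates. Paralleling the linear heuristic in Subsection~\ref{subsec_lin_case}, a fixed additive shrinkage of the ball in \emph{unrescaled} coordinates becomes the constant $\sigma^{-1}$ in \emph{rescaled} coordinates, while a bounded time increment in $\tau$ produces the stretching factor $e^{(\tau_2-\tau_0)/2}$.

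Concretely, via $\td\MM^{\reg}_\tau = e^{\tau/2}\MM^{\reg}_{t_0 - e^{-\tau}}$, the hypothesis yields a $C^{m-1}$-graphical representation of $\MM^{\reg}_{t_0-e^{-\tau_0}}$ over the scaled cylinder $e^{-\tau_0/2}\, M^{n,k}_{\cyl}$ on the correspondingly scaled domain, with small graph norms at the parabolic scale $e^{-\tau_0/2}$. On any neighborhood of that scale the cylindrical model is $C^\infty$-close to a flat hyperplane, so standard pseudolocality applies locally: given sufficiently small $C^{m-1}$-closeness on a ball of unrescaled radius $r$, one obtains $C^m$-closeness (with controlled constants) on the concentric ball of radius $r - \kappa\sqrt{\Delta t}$ after time $\Delta t \leq \tfrac12 r^2$, where $\kappa$ depends only on $\eta, m, n, n'$. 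Unit-regularity, together with upper-semicontinuity of the Gaussian density (which remains close to $1$ on the graphical region), rules out singularity formation inside the graphical interior during the time step. Applying this with $r = e^{-\tau_0/2} R$ and $\Delta t = e^{-\tau_0} - e^{-\tau_2}$ of order $e^{-\tau_0}$, we get $\sqrt{\Delta t} \lesssim e^{-\tau_0/2}$, so the unrescaled boundary loss is $\kappa\, e^{-\tau_0/2}$, i.e., the constant $\kappa$ in rescaled coordinates. Rescaling back by $e^{\tau_2/2}$, the conclusion lives on the rescaled ball of radius $e^{(\tau_2-\tau_0)/2}(R - \kappa)$ at time $\tau_2$, which is exactly the desired form upon setting $\sigma^{-1} := \kappa$. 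Spatial derivatives in rescaled coordinates differ from unrescaled by factors $e^{(\tau_2-\tau_0)/2} \leq e^{1/2}$, easily absorbed in $\sigma$.

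The main obstacle I anticipate is ensuring that the interior regularity estimate delivers an \emph{additive} radius loss (matching the ``$R-\sigma^{-1}$'' form) rather than a multiplicative fraction. This follows because interior parabolic Schauder-type estimates are localized on parabolic neighborhoods of fixed unrescaled scale $\sqrt{\Delta t}$, and can be applied pointwise via a covering of $\IB^k_R \times \IS^{n-k}$ by balls at this scale and then patched together. The accompanying subtlety is that the $\IR^k$-direction is unbounded, so the argument must be genuinely local---no global energy argument relying on a bounded domain---and the constants must be uniform across the covering; both requirements are met by pointwise interior estimates applied to the graph equation of Lemma~\ref{Lem_structure_MCF_graph_equation}, whose nonlinearity $Q_0$ vanishes to second order at zero, so the equation is uniformly quasilinear parabolic in the small-$C^{m-1}$ regime.
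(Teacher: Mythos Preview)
Your approach is viable in outline but takes a genuinely different route from the paper. The paper argues by \emph{compactness and contradiction}: assuming failure for a sequence $\sigma_i\to 0$, it passes to a Brakke limit $\MM^i-(\bp_i,0)\to\MM^\infty$, invokes White's local regularity theorem to show $\MM^\infty$ is smooth with cylindrical initial data, uses uniqueness to identify $\MM^\infty$ with the round shrinking cylinder, and derives a contradiction from local smooth convergence. In particular, the paper never writes down explicit interior estimates or a covering; all quantitative control is absorbed into the soft limiting step.

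Your direct approach---unrescale, apply Ecker--Huisken/Schauder interior estimates to the graph equation, cover to get additive radius loss, rescale back---could be made to work, and has the merit of being in principle constructive. But two points in your sketch need more care. First, the step ``unit-regularity, together with upper-semicontinuity of the Gaussian density (which remains close to~$1$ on the graphical region), rules out singularity formation'' is doing a lot of work: you are implicitly invoking White's local regularity theorem (or Brakke's) to upgrade the Brakke flow to a smooth flow on the relevant spacetime region before you are entitled to run smooth parabolic estimates. This is exactly the ingredient the paper uses, but there it appears only once, applied to the limit; in your scheme you must apply it uniformly along the sequence, which is fine but should be stated. Second, your covering argument for the additive loss $R\mapsto R-\sigma^{-1}$ is plausible but not quite spelled out: interior Schauder estimates bound $C^m$ by $C^{m-1}$ on a slightly smaller parabolic cylinder, and patching these requires that the closeness to the model (not just the regularity) propagates---this is again where the paper's limit-to-the-cylinder argument is cleaner, since closeness to $\MM_{\cyl}$ is immediate once the limit is identified.

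In short: the paper trades constructivity for a much shorter soft argument via compactness and White's theorem; your approach trades brevity for an explicit pathway, but the current write-up elides the step that guarantees the Brakke flow is smooth before the PDE estimates are applied.
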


\begin{proof}
Without loss of generality, we assume in the following that $\tau_0 = t_0 = 0$.
Let us first consider the corresponding \emph{unrescaled} flow $\MM$, so the flow with $\td\MM^{\reg}_\tau = e^{\tau/2} \MM^{\reg}_{-e^{-\tau}}$.
Note that the bound \eqref{eq_time_tau_bound_strong} (at time $\tau = 0$) implies a similar bound on $\MM$ (at time $t = -1$).

\begin{Claim} \label{Cl_pseudoloc}
If $\sigma \leq \ov\sigma(\eta, m, n,n')$, then 
for any $t \in [-e^{-1/2}, -e^{-1}] \cap I$ the ball $\IB^{n+1+n'}_{R  - \frac12 \sigma^{-1}} \times \{ t \}$ is disjoint from $\MM^{\sing}$ and we can express $\MM^{\reg}_{t} \cap \IB^{n+1+n'}_{R  - \frac12 \sigma^{-1} }$ as the graph of a smooth vector-valued function $v_{t}$ over an open subset $\DD'_t \subset \sqrt{-t} M_{\cyl}$ such that 
\begin{equation} \label{eq_v_dv_weaker}
 |v_{t}| + \ldots + |\nabla^m v_{t} | \leq \eta \qquad \text{on} \quad \IB^k_{R-\sigma^{-1}} \times (\sqrt{-t} \, \IS^{n-k} ) \subset \DD'_t.
\end{equation}
\end{Claim}

\begin{proof}
Fix $\eta,n,n' > 0$ and suppose by contradiction that the claim was false.
So we can find sequences $\sigma_i \to 0$, $\MM^{i}$, $R_i \geq \sigma_i^{-1}$ such that there are points $(\bp_i, t_i) \in \IB^{n+1+n'}_{R_i+1 -\sigma^{-1}_i} \times [-e^{-1/2}, -e^{-1}]$, for which one of the following is true:
\begin{itemize}
    \item $\bp_i \in \MM^{i,\sing}_{t_i}$.
    \item $\bp_i\in\MM^{i,\reg}_{t_i}$ has distance $> 0.01$ from $\sqrt{-t_i} M_{\cyl}$.
    \item $\bp_i\in\MM^{i,\reg}_{t_i}$ has distance $\le 0.01$ from $\sqrt{-t_i} M_{\cyl}$. There is another point within $(\spt \MM^i)_{t_i} \cap \IB^{n+1+n'}_{R_i   - \frac12 \sigma_i^{-1}}$ that has distance $\leq 0.01$ from $\sqrt{-t_i} M_{\cyl}$ and whose nearest-point projection to $\sqrt{-t_i} M_{\cyl}$ has the same image as $\bp_i$.
    \item $\bp_i \in \MM^{i,\reg}_{t_i}$ and the bound \eqref{eq_v_dv_weaker} fails at this point.
    \item $\bp_i \in \IB^k_{R_i-\sigma_i^{-1}} \times (\sqrt{-t_i} \, \IS^{n-k})$, but $\bp_i$ is not in the image of the nearest-point projection of $\MM^{i,\reg}_{t_i} \cap \IB^{n+1+n'}_{R_i  - \frac12 \sigma_i^{-1} }$ to $\sqrt{-t_i} M_{\cyl}$.
\end{itemize}
After passing to a subsequence, we may assume that we have convergence  $\MM^i - (\bp_i, 0) \to \MM^\infty$
(here we use, for example, \cite[Theorem~5.3]{Schulze_intro_Brakke} 
to ensure uniform local area bounds).
The limit $\MM^\infty$ is non-empty by the choice of $\bp_i$. Seeing that $\MM^i_{-1}$ smoothly converges to $M_{\cyl}$, the fact that $\MM^\infty_{-1}$ is non-empty implies the distance between $\bp_i$ and $M_{\cyl}$ must stay bounded.
By White's local regularity theorem \cite{White_local_regularity} (see also \cite{Ecker_regularity_book, Ilmanen_sing_MCF_surf,Haslhofer_lecture_MCF}) there is a $t^* > -1$ such that $\MM^\infty$ restricted to the time-interval $[-1,t^*]$ is smooth with uniformly bounded mean curvature, and the initial time-slice must be isometric to $M_{\cyl}$.
Let $t^{**}$ be the supremum over all $t^* > -1$ with this property.
By a basic uniqueness argument, we obtain that $\MM^\infty$ restricted to the time-interval $[0,t^{**})$ must be isometric to the round cylindrical flow $\MM_{\cyl}$ restricted to the same time-interval.
So using White's regularity theorem again, implies that $t^{**} = 0$ due to its supremal choice and hence $\MM^\infty$ must be isometric to $\MM_{\cyl}$.
Hence we have local smooth convergence $\MM^i \to \MM^\infty$, which yields a contradiction to the assumptions on $(\bp_i,t_i)$ for large $i$.
\end{proof}

The lemma now follows by reexpressing Claim~\ref{Cl_pseudoloc} in terms of the \emph{rescaled} flow $\td\MM$ and adjusting the constants $\eta$ and $\sigma$.
\end{proof}
\medskip

A slightly weaker pseudolocality property holds for rescaled \emph{modified} flows $\td\MM'$, so if $\td\bY_\tau \not\equiv 0$.

\begin{Lemma}[Weak pseudolocality, general case] \label{Lem_mcf_weak_pseudo}
For $\eta , A > 0$ and integers $1 \leq k < n $, $n' \geq 0$ and $m \geq 2$ 
 there is a constant $\sigma(\eta,A,m,n,n') > 0$ with the following property.
Let $\td\MM'$ be a rescaled modified mean curvature flow in $\IR^{n+1+n'} \times \td I$ for a family of Killing fields $(\td\bY_\tau)_{\tau \in \td I}$ corresponding to an $n$-dimensional, unit-regular, integral Brakke flow, as explained in Subsection~\ref{subsec_resc_mod_mcf}.
Consider the domain $\DD$ and the function $u$, defined in the beginning of this subsection.
Fix a norm on the (finite-dimensional) space $\YY$ of Killing fields on $\IR^{n+1+n'}$ and suppose that $\Vert \td\bY_\tau \Vert \leq A$ for all $\tau \in \td I$.

Suppose that for some $\tau_0, \tau_1, \tau_2 \in \td I$ with $\tau_0 \leq \tau_1 \leq \tau_2$ and $\tau_2 - \tau_0 \in [\frac12, 1]$ and $R > \sigma^{-1}$ the following is true for $i = 0,1$:
\begin{equation} \label{eq_u_less_sigma_eta_pseudo}
 |u_{\tau_i}| + \ldots + |\nabla^{m-1} u_{\tau_i}| \leq \sigma\eta  \qquad \text{on} \quad 
 \IB^k_R \times \IS^{n-k} \subset \DD_{\tau_i}
 \end{equation}
Suppose moreover that
\begin{equation} \label{eq_t2t1sR}
 \tau_2 - \tau_1 \leq \sigma R^{-1}. 
\end{equation}
Then 
\[ |u_{\tau_2}| + \ldots + |\nabla^{m} u_{\tau_2}| < \eta  \qquad \text{on} \quad \IB^k_{e^{\sigma (\tau_2 - \tau_0)} (R-\sigma^{-1})} \times \IS^{n-k} \subset \DD_{\tau_2}. \]
\end{Lemma}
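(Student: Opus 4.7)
The plan is to adapt the contradiction scheme of Lemma~\ref{Lem_strong_pseudo} to the modified setting. The two hypotheses, at $\tau_0$ and $\tau_1$, play distinct roles: the bound at $\tau_0$ supplies past compactness and Brakke smoothness for a limiting flow, while the bound at $\tau_1$, together with the short horizon $\tau_2-\tau_1\leq \sigma R^{-1}$, serves as the immediate initial data for propagating to $\tau_2$ while keeping the Killing-field drift under control.

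First I would assume for contradiction that the conclusion fails along sequences $\sigma_i\to 0$, rescaled modified flows $\td\MM^{\prime,i}$ with Killing fields $(\td\bY^i_\tau)$ of norm $\leq A$, radii $R_i\geq \sigma_i^{-1}$, and times $\tau_{0,i}\leq\tau_{1,i}\leq\tau_{2,i}$ satisfying the stated interval and gap conditions, together with failure points $\bp_i$ in the prescribed ball at time $\tau_{2,i}$. Using the translation invariance of $M_{\cyl}$ along $\IR^k$, I would translate spatially so that the failure point lies within a bounded neighborhood of a fixed $\IS^{n-k}$-slice, then shift time so that $\tau_{2,i}=0$, and pass to a Brakke-subsequential limit $\td\MM^{\prime,\infty}$ with a bounded limiting Killing field $\td\bY^\infty_\tau$. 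Because (\ref{eq_u_less_sigma_eta_pseudo}) at $\tau_{0,i}$ and $\tau_{1,i}$ has right-hand side $\sigma_i\eta\to 0$ on balls of radius $R_i\to\infty$, the limit flow must coincide with $M_{\cyl}$ globally at the limit times $\tau_0^\infty\in[-1,-\tfrac12]$ and $\tau_1^\infty$; moreover, $\tau_{2,i}-\tau_{1,i}\leq \sigma_iR_i^{-1}\to 0$ collapses $\tau_1^\infty=\tau_2^\infty=0$.

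Brakke's regularity theorem, started from the cylindrical data at $\tau_0^\infty$ and combined with uniqueness of the modified MCF from cylindrical initial data (valid because the limiting Killing-field family is bounded and smooth), then shows that $\td\MM^{\prime,\infty}$ is smooth on $[\tau_0^\infty,0]$ and equals a translate of $M_{\cyl}$ throughout, whence smooth convergence $\td\MM^{\prime,i}\to\td\MM^{\prime,\infty}$ near $(\bp_\infty,0)$ for large $i$ yields the graphical representation $u^i_{\tau_{2,i}}$ on the target ball. The quantitative contradiction then comes from the graph equation $\partial_\tau u = Lu + Q[u,\td\bY]$ of Lemma~\ref{Lem_structure_MCF_graph_equation}: on the ball of interest, the Killing-field forcing term $\td\bY_\perp(\bx,\by)$ has $L^\infty$ norm $\lesssim AR_i$, so its integrated contribution to $u$ over the short time $\tau_{2,i}-\tau_{1,i}\leq \sigma_iR_i^{-1}$ is $\lesssim A\sigma_i$, while the remaining pieces of $Q$ are controlled by the already small $|u^i|+|\nabla u^i|$ at $\tau_{1,i}$. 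Short-time parabolic Schauder estimates on the ball $\IB^k_{e^{\sigma_i(\tau_{2,i}-\tau_{0,i})}(R_i-\sigma_i^{-1})}\times\IS^{n-k}$ upgrade this to a $C^m$ bound $<\eta$, contradicting the failure at $\bp_i$; here $R_i-\sigma_i^{-1}$ is the interior buffer for the Schauder estimates, and the factor $e^{\sigma(\tau_2-\tau_0)}$ absorbs the mild dilation of balls under the unperturbed linear part.

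The main obstacle is ensuring that $\DD_{\tau_{2,i}}$ contains the target ball, i.e.\ that the graphical description remains valid at $\tau_{2,i}$. This is precisely what the Brakke-smoothness step above delivers: once the limit is smooth and locally a translate of $M_{\cyl}$ near $(\bp_\infty,0)$, local smooth convergence yields the domain containment for large $i$. The tuning of $\sigma$ must then simultaneously accommodate the forcing estimate $A\sigma_i\lesssim \eta$, the Schauder buffer $\sigma^{-1}$, and the smooth-convergence radius provided by Brakke regularity, which is the delicate point of the argument and is exactly why one must require both hypotheses (at $\tau_0$ and $\tau_1$) and the smallness $\tau_2-\tau_1\leq \sigma R^{-1}$ rather than the cleaner single-time hypothesis available in Lemma~\ref{Lem_strong_pseudo}.
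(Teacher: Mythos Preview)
Your approach differs substantially from the paper's and has a genuine gap.

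The paper does \emph{not} run a contradiction argument directly on the modified flow. Instead it writes $\td\MM'_\tau = \td S_\tau(\td\MM_\tau)$ for a family of Euclidean motions $\td S_\tau$ with $\td S_{\tau_0}=\id$, applies the already-proved strong pseudolocality (Lemma~\ref{Lem_strong_pseudo}) to the \emph{unmodified} rescaled flow $\td\MM$, and then uses the hypothesis at $\tau_1$ for exactly one purpose: since both $\td\MM_{\tau_1}$ (by strong pseudolocality) and $\td\MM'_{\tau_1}=\td S_{\tau_1}(\td\MM_{\tau_1})$ (by hypothesis) are Hausdorff-close to $M_{\cyl}$ on $\IB_{0.9R}$, the isometry $\td S_{\tau_1}$ nearly preserves $M_{\cyl}$ on that ball. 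The bound $\Vert\td\bY\Vert\le A$ yields $d(\td S_{\tau_2}\circ\td S_{\tau_1}^{-1},\id)\le C(\tau_2-\tau_1)$, and combined with $\tau_2-\tau_1\le \sigma R^{-1}$ this shows $\td S_{\tau_2}$ also nearly preserves $M_{\cyl}$ on $\IB_{10R}$, with error $\lesssim C\sigma$. Transferring the strong pseudolocality estimate for $\td\MM_{\tau_2}$ through $\td S_{\tau_2}$ gives the conclusion.

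The gap in your scheme is the translation step. When you shift by $\bx_{0,i}\in\IR^k\times\{0\}$ with $|\bx_{0,i}|\sim R_i$ to bring the failure point near the origin, the rescaled modified equation picks up, in addition to the original $\td\bY^i_\tau$, the constant vector field $(\text{linear part of }\td\bY^i_\tau)(\bx_{0,i})+\tfrac12\bx_{0,i}$. The second summand is tangential to $M_{\cyl}$, but the first has a component perpendicular to the cylinder of size $\sim A R_i\to\infty$ whenever $\td\bY^i_\tau$ contains a rotation mixing $\IR^k$ and $\IR^{n-k+1}$. Hence the translated Killing fields are \emph{not} uniformly bounded in $\YY$, there is no limiting bounded $\td\bY^\infty_\tau$, and the ``uniqueness of modified MCF from cylindrical initial data'' step cannot be invoked. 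Equivalently, over the unit-length interval $[\tau_0,\tau_1]$ the perpendicular forcing near the failure point is of order $A R_i$ and its integrated effect is uncontrolled; the $\tau_1$-hypothesis tells you the flow returned to the cylinder, but not that it stayed close in between, which is what your compactness argument would need.

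Your final quantitative paragraph does not rescue the argument. Over the short window $[\tau_1,\tau_2]$ of length $\le\sigma R^{-1}$ you only have $C^{m-1}$-smallness of $u_{\tau_1}$, and parabolic smoothing on such a short interval costs a factor $(\tau_2-\tau_1)^{-1/2}\gtrsim \sigma^{-1/2}R^{1/2}\ge\sigma^{-1}$ per derivative, so you cannot recover a $C^m$ bound $<\eta$. The gain of one derivative, as well as the domain control and the expanding radius $e^{\sigma(\tau_2-\tau_0)}(R-\sigma^{-1})$, all come from the long interval $[\tau_0,\tau_2]$ and must therefore be obtained on the \emph{unmodified} flow, where translation invariance and Lemma~\ref{Lem_strong_pseudo} are available. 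This is precisely what the paper does.
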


We remark that the pseudolocality property of this lemma is indeed weaker than that of Lemma~\ref{Lem_strong_pseudo}, as it only holds under an additional assumption at time $\tau_1$.
This weaker version will be sufficient for our purposes.

\begin{proof}
We may assume without loss of generality that $\tau_0  = t_0 = 0$.
Consider the \emph{unrescaled} and \emph{unmodified} flow $\MM$, so the flow with $\td\MM^{\prime,\reg}_\tau = e^{\tau/2} S_{-e^{-\tau}} (\MM^{\reg}_{-e^{-\tau}})$.
Let moreover $\td\MM$ be the corresponding rescaled flow, so the flow with $\td\MM^{\reg}_\tau = e^{\tau/2} \MM^{\reg}_{-e^{-\tau}}$.
Without loss of generality, we may assume that $S_{-1} = \id$, so $\td\MM_{0}^{\prime, \reg} = \MM^{\reg}_{-1} = \td\MM^{\reg}_{0}$.
It is not hard to see that there is a family $(\td S_\tau)_{\tau \in  \td I}$ of Euclidean motions such that $\td\MM_{\tau}^{\prime,\reg} = \td S_\tau ( \td\MM_{\tau}^{\reg} )$.
The bound $\Vert \td\bY_\tau \Vert \leq A$ moreover implies bounds of the following form, as long as $\sigma \leq \ov\sigma(A, n,n')$,
\begin{equation} \label{eq_St1St2_At2t1}
 d_{E(n+1+n')} \big( \td S_{\tau_2} \circ \td S_{\tau_1}^{-1} , \id \big) \leq C(n,n', A)  (\tau_2 - \tau_1), \qquad d_{E(n+1+n')} \big( \td S_{\tau_i} , \id \big) \leq C(n,n', A) ,
\end{equation}
where $d_{E(n+1+n')}$ is the length-metric of a fixed left-invariant metric on the Lie group $E(n+1+n')$.

We can now apply Lemma~\ref{Lem_strong_pseudo} to the rescaled and \emph{unmodified} flow $\td\MM$, implying a $\Psi (\sigma | n, n')$-closeness of $\td\MM^{\reg}_{\tau_i}$ to $M_{\cyl}$ on $\IB^{n+1+n'}_{e^{\sigma \tau_i}(R - \sigma^{-1}-C(n,n'))}$, $i=1,2$, where here and in the following $\Psi(\sigma |n,n')$ is a generic function with $\Psi(\sigma |n,n')$ as $\sigma \to 0$.
From this we obtain that $\td\MM^{\prime,\reg}_{\tau_1} \cap \IB^{n+1+n'}_{0.9 R} = \td S_{\tau_1} (\td\MM^{\reg}_{\tau_1}) \cap \IB^{n+1+n'}_{0.9 R}$ is $\Psi (\sigma | n, n')$-Hausdorff-close to a subset of $\td S_{\tau_1} (M_{\cyl})$.
On the other hand, the bound from \eqref{eq_u_less_sigma_eta_pseudo} at time $\tau_1$ implies that $\td\MM^{\prime,\reg}_{\tau_1} \cap \IB^{n+1+n'}_{0.9 R}$ is $\Psi(\sigma |n,n')$-Hausdorff-close to an open subset of $M_{\cyl}$.
Therefore, $\td S_{\tau_1} (M_{\cyl}) \cap \IB^{n+1+n'}_{0.8 R}$ must be $\Psi(\sigma |n,n')$-Hausdorff-close to a subset of $M_{\cyl}$.
Combined with the first bound in \eqref{eq_St1St2_At2t1} and \eqref{eq_t2t1sR}, we obtain that $\td S_{\tau_2} (M_{\cyl}) \cap \IB^{n+1+n'}_{10 R}$ has Hausdorff distance from a subset of $M_{\cyl}$ that is at most
\[ \Psi(\sigma |n,n') + C(n,n', A)  (\tau_2 - \tau_1)R \leq \Psi(\sigma |n,n') + C(n,n',A) \sigma. \]
Since $\td\MM^{\prime, \reg}_{\tau_2} = \td S_{\tau_2} (\td\MM^{\reg}_{\tau_2})$, the desired bound now follows from the closeness of $\td\MM^{\reg}_{\tau_2}$ to $M_{\cyl}$ discussed in the beginning of this paragraph, for sufficiently small $\sigma$.
\end{proof}

\bigskip
\section{The PDE-ODI principle} \label{sec_PDE_ODI}
\subsection{Introduction and motivation}
In this section, we establish a general principle for tracking the evolution of certain non-linear PDEs using a \emph{finite-dimensional} ordinary differential inequality (ODI) that governs the behavior of the low-frequency modes.
Specifically, we will focus on non-linear parabolic PDEs whose linearization takes the form $\partial_\tau u = \triangle_f u + Au$, where $\triangle_f u = \triangle u - \nabla f \cdot \nabla u$ is the  Ornstein-Uhlenbeck operator with $f = \frac14 |\bx|^2$, and $Au$ is a zeroth-order term.
Such PDEs naturally arise in the study of rescaled flows, such as the rescaled mean curvature flow from Subsection~\ref{subsec_resc_mod_mcf}.
Our principle will allow us to control \emph{arbitrarily many modes} (i.e., not just leading modes) of solutions to such PDEs, to \emph{arbitrarily high vanishing order} (i.e., with arbitrarily fine asymptotic precision). 
The principle will be entirely local in nature and rely only on a pseudolocality property similar to the one discussed in Section~\ref{sec_pseudolocality}.
\medskip

To motivate the discussion, consider first the linear  case
\begin{equation} \label{eq_PDE_lin_intro}
 \partial_\tau u = L u = \triangle_f u, 
\end{equation}
which is, of course, much simpler than the situations of interest and serves only as a warm-up.
The operator $L$ is self-adjoint on the weighted $L^2_f$-space
\[ \Vert u \Vert_{L^2_f}^2 = \int_{\IR^n} u^2 \, e^{-f} d\bx, \]
with eigenfunctions given by products of Hermite polynomials
\[ \mathfrak p^{(\vec i)}(\mathbf x) = \mathfrak p^{(i_1)} (\bx_1) \cdots \mathfrak p^{(i_n)} (\bx_n), \qquad \la_{\vec i} = - \tfrac12 (i_1 +\ldots + i_n). \]
If $u_\tau \in L^2_f (\IR^n)$, then one may expand
\begin{equation} \label{eq_spectral_expansion}
 u_\tau = \sum_{\vec i} a_{\vec i}(\tau) \mathfrak p^{(\vec i)}, \qquad \text{where} \quad \partial_\tau a_{\vec i} = \la_{\vec i} \, a_{\vec i} . 
\end{equation}
So the PDE \eqref{eq_PDE_lin_intro} reduces to decoupled ODEs for the coefficients $a_{\vec i}$.
This expansion allows us to draw conclusions on the asymptotic behavior of $u$.
For example, if $u$ is defined over a time-interval of the form $I = (-\infty, T)$ and $\Vert u_\tau \Vert_{L^2_f}$ is uniformly bounded, then all $a_{\vec i}$ for $\vec i \neq 0$ must vanish, implying that $u \equiv const$.
\medskip

In contrast to the linear model, the general, non-linear PDE setting that is of interest to our discussion presents substantially greater complexity.
First, the evolution equation for $u$ is usually non-linear, so the coefficients $a_{\vec i}$ must satisfy \emph{non-linear} ODEs, which typically interact and usually don't reduce to finite-dimensional systems.
Second---and far more importantly---one usually lacks global control on $u$, which makes a global spectral expansion like \eqref{eq_spectral_expansion} impossible.
In particular, one typically does \emph{not} have $u_\tau \in L^2_f(\IR^n)$ and $u_\tau$ is usually not even defined on all of~$\IR^n$---it is often only defined on a large ball around the origin, and its spatial growth is uncontrolled.
This presents a serious obstacle; for example, it may occur that $u_{\tau_1}\equiv 0$ wherever defined, yet $u_{\tau_2}\not\equiv 0$ for some later time $\tau_2>\tau_1$.
Lastly, our framework must accommodate additional input terms, such as the family of Killing fields $\td\bY_\tau$  in the rescaled modified mean curvature flow equation (see Subsection~\ref{subsec_resc_mod_mcf}), which often introduce spatially unbounded terms and which further complicate the analysis.

For these reasons, one must instead work with a \emph{localized} spectral expansion, truncated to \emph{finite degree}, and derive evolution equations for the truncated coefficients that include manageable error terms.
The PDE-ODI principle developed in this section achieves exactly this: it performs such a localization in a robust way and ensures that the associated error terms remain sufficiently small.
Consequently, the resulting finite-dimensional ODI system still accurately reflects the asymptotic behavior of the original PDE.
This will be crucial for the applications in this paper and may also be helpful in other geometric settings.

An additional brief overview of the PDE-ODI principle was given in the introduction, in Subsection~\ref{subsec_intro_pdeodi}.

\subsection{Statement of the result} \label{subsec_PDE_ODI_statement}
We will now state the PDE-ODI principle in its most general form.
Let us first explain our setup, which is a generalization of the rescaled (modified) mean curvature flow equation for graphs over the round cylinder, which was discussed in the previous section.

Fix a finite dimensional normed space $(\YY, \Vert \cdot \Vert)$, a compact Riemannian manifold $N$ and integers $n, n' \geq 1$.
Fix the potential function $f(\bx,\by) := \frac14 |\bx|^2$ on $\IR^n \times N$.
Let $I \subset \IR$ be a time-interval and consider an open subset $\DD \subset \IR^n \times N \times I$.
In the following we will consider vector-valued solution  $u \in C^\infty(\DD; \IR^{n'})$ of the following PDE, corresponding to a smooth family $(\mathbf Y_\tau \in \YY)_{\tau \in I}$:
\begin{equation} \label{eq_general_PDE}
 \partial_\tau u_\tau = L u_\tau + Q[u_\tau, \mathbf Y_\tau]. 
\end{equation}
Here we assume that:
\begin{itemize}
\item We have $L = \triangle_f + A$, where $\triangle_f u = \triangle u - \nabla f \cdot \nabla u$ is the  Ornstein-Uhlenbeck operator and $A (\bx,\by) = A (\by) \in \IR^{n' \times n'}_{\sym}$ is a self-adjoint linear operator on $\IR^{n'}$, which is constant in the $\IR^n$- and time-directions, but may smoothly depend on the $N$ factor.
\item The non-linear term $Q[u,\mathbf Y] = Q(\bx,\by,u, \nabla u, \nabla^2 u, \mathbf Y)$ is a smooth, time-independent function, defined for $|u| + |\nabla u | + |\nabla^2 u | + \Vert \bY \Vert < c_0$, for some uniform $c_0 > 0$.
We assume that, on its domain of definition, we have uniform bounds of the form
\begin{equation} \label{eq_derQ_bounds_asspt}
|\partial_{u}^{i_0} \partial_{\nabla u}^{i_1} \partial_{\nabla^2 u}^{i_2} \partial_{\bY}^{i_3}  Q| \leq C_{i_0,i_1,i_2,i_3} (r+1)^{C_{i_0,i_1,i_2,i_3}}  
\end{equation}
and for any $0 \leq i , j \leq 2$
\begin{equation} \label{eq_condition_on_Q}
 |\partial_{\nabla^i u} \nabla^j Q| \leq C_j (r+1)^{2-i} (|u| + |\nabla u| + |\nabla^2 u| + \Vert \bY \Vert ) . 
\end{equation}
Here $\partial_u Q, \partial_{\nabla u} Q, \partial_{\nabla^2 u} Q, \partial_{\bY} Q$ denote the partial derivatives in the respective parameters, $\nabla^j$ denotes arbitrary $j$-th \emph{partial} derivatives in the spatial parameters $\bx, \by$ (not \emph{total} derivatives of $Q[u,\bY]$ as a function in space!) and $r = |\bx|$ is the radial distance on the $\IR^n$-factor.
\end{itemize}

\begin{Remark}
These conditions hold for the rescaled modified mean curvature flow equation \eqref{eq_MCF_u_eq} for graphs over cylinders, which will be of main interest to this paper.
This can seen directly from Lemma~\ref{Lem_structure_MCF_graph_equation}.
\end{Remark}

\begin{Remark}
It is straightforward to generalize our discussion by assuming that $u$ takes values in a Euclidean vector bundle over $\IR^n \times N$, which is equipped with a connection.
In this case, we need to assume that there is an isometric action on this bundle, which leaves $A$ invariant and acts transitively on the $\IR^n$-factor.
\end{Remark}

\medskip

Consider the weighted Hilbert space $L^2_f(\IR^n \times N;\IR^{n'})$ defined by the inner product and norm
\begin{equation} \label{eq_L2f_integration}
 \langle u_1, u_2 \rangle_{L^2_f} := \int_{\IR^n \times N} (u_1 \cdot u_2) e^{-f} dg, \qquad \Vert u \Vert_{L^2_f}^2 = \langle u, u \rangle_{L^2_f} = \int_{\IR^n \times N}|u|^2 e^{-f} dg , 
\end{equation}
where $g$ denotes the Cartesian product metric on $\IR^n \times N$. 
Recall that $L$ is self-adjoint with respect to this inner product and has discrete spectrum, which is bounded from above \cite{Reed_Simon_IV}.
Let us now fix some $\lambda \in \IR$ and consider the  decomposition
\begin{equation} \label{eq_splittingVgeqless}
 L^2_{f} := L^2_f(\IR^n \times N;\IR^{n'}) = \sV_{> \la} \oplus \sV_{\leq \la}, 
\end{equation}
where $\sV_{> \la}$ is the (finite-dimensional) direct sum of eigenspaces corresponding to eigenvalues $> \la$ and $\sV_{\leq \la}$ is its orthogonal complement.
Our goal will be to approximate the solution $u_\tau$ by a smooth family of finite-dimensional elements $U^+_\tau \in \sV_{> \la}$.
For most applications, it will be advantageous to choose $\lambda < 0$ or $\la \ll 0$, where a smaller $\la$ results in a better approximation.

Let us now fix a $J \geq 1$, which will govern the \emph{order} of this approximation, and for any fixed $(\bx,\by) \in \IR^n \times N$ consider the $J$\emph{th} Taylor polynomial $\td Q_J$ of $Q[u,\bY] =Q(\bx,\by, u, \nabla u, \nabla^2 u, \mathbf Y)$  in the remaining parameters:
\[  Q(\bx,\by, u, \nabla u, \nabla^2 u, \mathbf Y) = \td Q_J (\bx,\by, u, \nabla u, \nabla^2 u, \mathbf Y) + O \big( |u|^{J+1} + |\nabla u|^{J+1} + |\nabla^2 u|^{J+1}). \]
Note that $\td Q_J (\bx,\by, u, \nabla u, \nabla^2 u, \mathbf Y)$ is defined for all choices of $(\bx,\by, u, \nabla u, \nabla^2 u, \mathbf Y)$.
It is a polynomial in the last four entries, whose coefficients are smooth functions in $(\bx,\by)$ that are bounded by $C (r+1)^{C}$ for some $C = C(J) > 0$ due to \eqref{eq_derQ_bounds_asspt}.
We can therefore project these coefficients onto $\sV_{> \la}$ and $\sV_{\leq \la}$ and define the following polynomial map
\[ Q_J = Q_J^+ + Q_J^- : \; \; \sV_{> \la} \times \YY \lto L^2_{f} = \sV_{> \la} \oplus \sV_{\leq \la}, \qquad
(U, \mathbf Y) \longmapsto \td Q_J (U, \nabla U, \nabla^2 U, \mathbf Y). \]
This map will can be viewed as the \textbf{$J$\emph{th} Taylor approximation of $Q[u, \bY]$} ``restricted'' to the space $\sV_{> \la} \times \YY$ and it will play a crucial role in controlling the evolution of an approximation to $u$ within $\sV_{> \la}$.

We remark that $Q[u,\bY]$ is often not defined if $u\in \sV_{>\lambda}$, as such functions typically exhibit polynomial growth.
So the true non-linear expression $Q[u,\bY]$ may not make sense globally.
For this reason, $Q_J$ can generally not be obtained by simply restricting $Q[u,\bY]\) to \(\sV_{>\lambda}\times\YY$ and taking its Taylor polynomial in the usual way.
\bigskip

In order to state our main result, we need the following definition. 

\begin{Definition}[Pseudolocality property] \label{Def_pseudolocality}
We say that a function $u \in C^\infty(\DD;\IR^{n'})$ , for some open $\DD \subset \IR^n \times N \times I$, satisfies the \textbf{$(\sigma,\eta, m)$-pseudolocality property,} for some $m \geq 1$, if the following holds.
Suppose that for some $\tau_0, \tau_1, \tau_2 \in I$ with $\tau_0 \leq \tau_1 \leq \tau_2$ and $\tau_2 - \tau_0 \in [\frac12,1]$ and $R > \sigma^{-1}$ the following is true for {$i=0,1$}:
\begin{equation} \label{eq_u_less_sigma_eta}
 |u_{\tau_i}| + \ldots + |\nabla^{m-1} u_{\tau_i}| \leq \sigma\eta  \qquad \text{on} \quad 
 \IB^n_R \times N \subset \DD_{\tau_i} .
\end{equation}
Suppose moreover that
\begin{equation} \label{eq_tau2tau1pseudo}
\tau_2 - \tau_1 \leq \sigma R^{-1/\sigma}. 
\end{equation}
Then 
\[ |u_{\tau_2}| + \ldots + |\nabla^{m} u_{\tau_2}| < \eta  \qquad \text{on} \quad \IB^n_{e^{\sigma (\tau_2 - \tau_0)} (R-\sigma^{-1})} \times N \subset \DD_{\tau_2}. \]
\end{Definition}
\medskip

\begin{Remark}
If \eqref{eq_general_PDE} is the rescaled modified mean curvature flow equation for graphs over the round cylinder and $\DD$ is defined as in Subsection~\ref{subsec_resc_mod_mcf}, then Lemma~\ref{Lem_mcf_weak_pseudo} implies that $u$ satisfies the $(\sigma,\eta, m)$-pseudolocality property whenever $\sigma \leq \ov\sigma(\eta,m,n,n')$.
In the case of \emph{unmodified} mean curvature flow, so when the family $(\mathbf  Y_\tau)$ vanishes, then a stronger version of the pseudolocality property holds; see Lemma~\ref{Lem_strong_pseudo}.
In that stronger version, the condition at time $\tau_1$  in \eqref{eq_u_less_sigma_eta} is omitted.
\end{Remark}

\begin{Remark}
The extra assumption at time $\tau_1$, so the case $i=1$ in \eqref{eq_u_less_sigma_eta}, expresses that a bound on $u$ at time $\tau_2$ is only required to hold if a similar bound holds at a slightly earlier time, in accordance with the polynomial relation \eqref{eq_tau2tau1pseudo}.
This additional requirement makes the pseudolocality property \emph{weaker,} as it applies in fewer situations.
Conversely, omitting the condition at time $\tau_1$ yields a \emph{stronger} version, which may be more intuitive.
Since pseudolocality appears solely as an assumption in Theorem~\ref{Thm_PDE_ODI_principle} below, using the weaker version makes the theorem correspondingly more \emph{general.}
\end{Remark}

\begin{Remark}
Note that the validity of the pseudolocality property remains unchange if we replace $\DD$ with the smaller domain $\{ |u| + \ldots + |\nabla^m u| < \eta' \}$ for $\eta' > \eta$.
\end{Remark}

\begin{Remark}
A pseudolocality condition of the above form is false for solutions of general non-linear PDEs.
For example, even solutions of the linear heat equation fail to satisfy such a property if they don't obey a uniform bound as in Subsection~\ref{subsec_lin_case}.
Moreover, even in settings where a pseudolocality condition does hold, the constant $\sigma$ typically must depend on $\eta$.
Otherwise, by considering a sequence of solutions $u^{(i)} $and rescaling $u^{(i)}/\eta^{(i)}$ with $\eta^{(i)} \to 0$, one would obtain a pseudolocality property for the linearized equation.
\end{Remark}

\medskip

We can now state our key result. 

\begin{Theorem}[PDE-ODI principle] \label{Thm_PDE_ODI_principle}
Consider a PDE of the form described in the begining of this subsection.
There is a constant $c > 0$ such that the following holds if
\[ 
m \geq 4, \quad  
J \geq 1, \quad
\la \in \IR, \quad 
\eta \leq \ov\eta(\la), \quad 
\sigma > 0, \quad 
\eps \leq \ov\eps (\sigma),  
\quad R^* \geq \underline R^*( m, J, \la,   \eta, \sigma,\eps). \]

Let $u \in C^\infty(\DD;\IR^{n'})$ be a solution to \eqref{eq_general_PDE} over the time-interval $I$ for a smooth family $(\mathbf Y_\tau \in \YY)_{\tau \in I}$.
Suppose that:
\begin{enumerate}[label=(\roman*)]
\item \label{Thm_PDE_ODI_principle_0} $\Vert \bY_\tau \Vert \leq c$ for all $\tau \in I$.
\item \label{Thm_PDE_ODI_principle_i} $u$ satisfies the $(\sigma,\eta,m)$-pseudolocality property.
\item\label{Thm_PDE_ODI_principle_ii} For all $\tau \in I$ we have 
\[ \IB_{ R^*}^n \times N  \subset \DD_{\tau} \qquad \text{and} \qquad \| u_{\tau}  \|_{L^2_f(\IB^n_{ R^*} \times N)} \leq \eta e^{-(R^*)^2/8}. \]
Here the weighted $L^2_f$ integral \eqref{eq_L2f_integration} is taken only over the domain $\IB^n_{ R^*} \times N$.
\item \label{Thm_PDE_ODI_principle_iv} If $T_0 = \min I > -\infty$ exists, then for some $R_0 \in [ R^*, \infty]$ the following holds for all $\tau \in [T_0 , T_0 +1]$:
\begin{equation} \label{eq_atT0bound}
 \IB^n_{R_0+1} \times N  \subset \DD_{\tau} \qquad \text{and} \qquad  \| u_{\tau}  \|_{C^{m}(\IB^n_{R_0+1} \times N)} < \eta. 
\end{equation}
If $\inf I = -\infty$, then \eqref{eq_atT0bound} holds for $R_0 = R^*$ and  sufficiently small $\tau$.
\end{enumerate}
Then there is a continuous function of radii $R : I \to [ R^*, \infty]$, which is smooth whenever it is finite, such that the following is true.
We have $R(T_0) = R_0$ if $T_0$ exists and for all $\tau \in I$ we have $\IB^n_{R(\tau)} \times N  \subset \DD_\tau$, so $u_\tau \omega_{R(\tau)}$ can be extended to a smooth function on $\IR^n \times N$ by setting it zero outside its domain.
So we can define
\begin{equation} \label{eq_UpUUm_def}
 U^+_\tau :=  \PP_{\sV_{> \lambda}} (u_{\tau} \omega_{R(\tau)})  , \qquad
\mathcal U^{-}_\tau := \big\|   \PP_{\sV_{\leq \lambda}}  (u_{\tau} \omega_{R(\tau)} ) \big\|_{L^2_{f}} + \eta e^{-\frac{((1-\eps)R(\tau))^2}{8}}. 
\end{equation}
If $R(\tau) = \infty$, then we set $\omega_{R(\tau)} \equiv 1$ and $e^{-\frac{((1-\eps)R(\tau))^2}{8}} = 0$.
The following is true:
\begin{enumerate}[label=(\alph*)]
\item \label{Thm_PDE_ODI_principle_a} We have the bound
$  \| u_{\tau} \|_{C^{m}(\IB^n_{R(\tau)} \times N )} \leq \eta$ for all $\tau \in I$.
\item \label{Thm_PDE_ODI_principle_b} The quantities $U^+_\tau$ and $\mathcal{U}^-_\tau$ satisfy the evolution inequalities
\[ \Big\| \partial_\tau U^+_\tau - L U^+_\tau  - Q_J^+ (U^+_\tau ,\mathbf Y_\tau) \Big\|_{L^2_{f}}
\leq  C(\la, J) \Vert U^+_\tau \Vert^{J+1}_{L^2_{f}} + \big( C(\la) \sqrt\eta + C \Vert \bY_\tau \Vert  \big)  \mathcal{U}^-_\tau  \]
\[  \partial_\tau \mathcal U^-_\tau 
\leq  \big(\la + C(\la) \sqrt\eta + C(\la) \Vert \bY_\tau \Vert  \big) \mathcal U^-_\tau  
+  \Vert Q^-_J(U^+_\tau, \mathbf Y_\tau) \Vert_{L^2_{f}}
+ C(\la,J) \Vert U^+_\tau \Vert^{J+1}_{L^2_{f}}
  \]
\item \label{Thm_PDE_ODI_principle_d} There is a $T' \geq -\infty$ such that $R(\tau) = \infty$ if $\tau \leq T'$ and $R(\tau) < \infty$ if $\tau > T'$. 
\end{enumerate}
\end{Theorem}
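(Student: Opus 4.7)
The plan is to construct the cutoff radius $R(\tau)$ via the expanding-ball mechanism encoded in Definition~\ref{Def_pseudolocality}, and then to derive both ODIs by differentiating $u_\tau\omega_{R(\tau)}$ in time and projecting onto $\sV_{>\la}$ and $\sV_{\leq\la}$ separately.

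First I would define $R(\tau)$ as follows. If $\inf I = -\infty$, let $T'$ be the supremum of times $\tau$ for which hypothesis (iv) propagates a $C^m$-bound globally on $\IR^n \times N$; this yields claim (d). For $\tau > T'$ (or $\tau \geq T_0$ if $T_0$ exists) I would let $R(\tau)$ solve an ODE of the form $R'(\tau) = (\sigma/2)\, R(\tau)$ with initial condition $R_0$ or $R^*$. Claim (a) then follows by a bootstrap in $\tau$: at each step I apply the $(\sigma,\eta,m)$-pseudolocality property from hypothesis (ii) with some $\tau_0 \in [\tau-1, \tau-\tfrac12]$, using the $C^m$-bound from the previous bootstrap step together with interior interpolation to upgrade it to the $C^{m-1} \leq \sigma\eta$ bound required by Definition~\ref{Def_pseudolocality} (possible since $m \geq 4$ and $\eta$ is small). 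The growth rate $\sigma R/2$ is calibrated so that the enlarged ball $\IB^n_{e^{\sigma(\tau-\tau_0)}(R(\tau_0)-\sigma^{-1})}$ from the conclusion of pseudolocality contains $\IB^n_{R(\tau)}$. Hypotheses (iii) and (iv) furnish the base cases.

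For claim (b) I would set $W^-_\tau := \PP_{\sV_{\leq\la}}(u_\tau\omega_{R(\tau)})$ so that $u_\tau\omega_{R(\tau)} = U^+_\tau + W^-_\tau$, and compute
\begin{equation*}
\partial_\tau(u_\tau\omega_{R(\tau)}) = L(u_\tau\omega_{R(\tau)}) + \omega_{R(\tau)}\, Q[u_\tau, \bY_\tau] - [L, \omega_{R(\tau)}]\, u_\tau - R'(\tau)\,\omega'(|\bx| - R(\tau))\, u_\tau.
\end{equation*}
Projecting onto $\sV_{>\la}$ directly yields the evolution of $U^+_\tau$, while projecting onto $\sV_{\leq\la}$, together with the operator bound $L|_{\sV_{\leq\la}} \leq \la \cdot \id$, produces the rate-$\la$ term in the second ODI. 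For the nonlinear term I would use the Taylor expansion $Q = \td Q_J + O(|u|^{J+1} + |\nabla u|^{J+1} + |\nabla^2 u|^{J+1})$ together with the polynomial-coefficient bounds~\eqref{eq_derQ_bounds_asspt}, and then expand $\td Q_J(U^+_\tau + W^-_\tau, \bY_\tau)$ about $(U^+_\tau, \bY_\tau)$. The $(J+1)$-order remainder integrates against $e^{-f}$ to give $C(\la, J)\, \Vert U^+_\tau \Vert_{L^2_f}^{J+1}$, using the pointwise $C^m \leq \eta$ bound from claim (a); the cross terms linear in $W^-_\tau$ are bounded by $C(\la)\sqrt\eta\, \UU^-_\tau$ via Cauchy--Schwarz combined with the $L^\infty$ smallness of $u_\tau$ on the support of $\omega_{R(\tau)}$; and the terms carrying a factor of $\bY_\tau$ contribute $C\Vert \bY_\tau \Vert\, \UU^-_\tau$ by the structural assumptions on $Q$ (specifically, the bound~\eqref{eq_condition_on_Q} for $\partial_{\nabla^i u} \nabla^j Q$).

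Finally, the commutator $[L, \omega_{R(\tau)}] u_\tau = u_\tau\, \triangle_f \omega_{R(\tau)} + 2 \nabla u_\tau \cdot \nabla \omega_{R(\tau)}$ and the cutoff-derivative term $R'(\tau)\omega' u_\tau$ are both supported in the transition annulus $\{|\bx| \in [R(\tau)-0.2, R(\tau)]\}$. Using $e^{-f} \leq e^{-(R(\tau)-0.2)^2/4}$ on this annulus together with $\Vert u_\tau\Vert_{C^2} \leq \eta$ from claim (a), each such error has $L^2_f$-norm at most $C \eta\, e^{-((1-\eps) R(\tau))^2/8}$ provided $R^* \geq \underline R^*(\eps)$. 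This is exactly the Gaussian tail appended to the definition of $\UU^-_\tau$, so the errors are absorbed. The hard part will be to check that the time derivative of this tail itself, $\partial_\tau\bigl(\eta e^{-((1-\eps)R(\tau))^2/8}\bigr) = -\tfrac14(1-\eps)^2 R(\tau) R'(\tau)\cdot \eta e^{-((1-\eps)R(\tau))^2/8}$, contributes at most $(\la + C(\la)\sqrt\eta)\cdot \UU^-_\tau$ to the second ODI. Combined with $R'(\tau) \sim \sigma R(\tau)$ this forces a compatibility condition between $\sigma$, $\la$, $\eps$, $\eta$, and $R^*$ --- precisely the content of the quantifier ordering $\eta \leq \ov\eta(\la)$, $\eps \leq \ov\eps(\sigma)$, $R^* \geq \underline R^*(m, J, \la, \eta, \sigma, \eps)$ appearing in the hypotheses.
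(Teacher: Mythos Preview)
Your treatment of the evolution inequalities in part (b) --- projecting $\partial_\tau(u_\tau\omega_{R(\tau)})$, handling the commutator and nonlinear terms, absorbing annulus errors into the Gaussian tail --- is essentially correct and matches the paper's Lemmas~\ref{lem:Q_initial} and~\ref{l:evolution_ODE}.

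The genuine gap is in your construction of $R(\tau)$. You propose a fixed exponential growth $R'(\tau) = (\sigma/2)R(\tau)$, and justify the persistence of the $C^m\leq\eta$ bound (a) by a bootstrap that invokes pseudolocality at each step. But the pseudolocality property (Definition~\ref{Def_pseudolocality}) requires the \emph{stronger} bound $|u_{\tau_i}|+\cdots+|\nabla^{m-1}u_{\tau_i}|\leq\sigma\eta$ at the earlier times, and your ``interior interpolation'' cannot produce this: knowing only $\|u\|_{C^m}\leq\eta$ on a ball gives at best $\|u\|_{C^{m-1}}\leq C\eta$ on a smaller ball, not $\sigma\eta$. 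To get the factor $\sigma$ you need an additional smallness input --- an integral bound on $u_\tau$ that is tiny compared to $\eta$ on the relevant ball --- and nothing in your scheme provides one. Moreover, a monotonically growing $R(\tau)$ cannot be right in principle: if a component of $U^+_\tau$ grows, the region on which $\|u_\tau\|_{C^m}\leq\eta$ holds must shrink, and assumption~(iii) only guarantees $\IB_{R^*}\subset\DD_\tau$, not arbitrarily large balls.

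The paper's construction is fundamentally \emph{adaptive}: $R(\tau)$ is built piecewise, alternating between a slow-growth regime (solving $\partial_\tau E_\tau = \la' E_\tau$) when the Gaussian tail $E_\tau := e^{-((1-\eps)R(\tau))^2/8}$ dominates $\td\UU^-_\tau/\eta + (\|U^+_\tau\|/\eta)^{J+1}$, and a rapid-shrink regime (solving $\partial_\tau R = -e^{R/10}$) when the reverse holds. The point of this dichotomy is that one can always find nearby times $\tau_0,\tau_1$ at which $E_{\tau_i}$ controls $\td\UU^-_{\tau_i}$ and $\|U^+_{\tau_i}\|^{J+1}$ (Claim~\ref{Cl_E_geq_U}); at such times the $L^2_f$-smallness of $U^-_{\tau_i}$ relative to the Gaussian weight on $\IB_{R(\tau_i)}$ \emph{does} combine with the $C^m\leq\eta$ bound, via Gagliardo--Nirenberg, to yield $C^{m-1}\leq\sigma\eta$ on a slightly smaller ball (Claim~\ref{Cl_mm1_bounds}). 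Only then can pseudolocality be applied to close the bootstrap. This feedback between $R(\tau)$ and the sizes of $U^+_\tau$, $\UU^-_\tau$ is the core mechanism of the proof and is absent from your proposal.
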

\medskip

Let us digest the content of Theorem~\ref{Thm_PDE_ODI_principle}.
The key outcome is the system of evolution inequalities from Assertion~\ref{Thm_PDE_ODI_principle_b}, which control the dynamics of $U^+_\tau$ and $\UU^-_\tau$.
Here $U^+_\tau$ represents a localized approximation of $u_\tau$, obtained by restricting to a ball of radius $\approx R(\tau)$ and projecting to the $> \la$ modes.
The quantity $\UU^-$ measures both the quality of this approximation and the size of the region on which it is valid.
In most applications one may choose $\la < 0$ small enough to ensure that the coefficient in the second evolution inequality satisfies
\[ \la + C(\la) \sqrt\eta + C(\la) \Vert \bY_\tau \Vert   < - c' < 0. \]
Under this condition, the second evolution inequality typically implies that $\UU^-_\tau$ roughly decays exponentially to a number comparable to $\Vert Q^-_J(U^+_\tau, \mathbf Y_\tau) \Vert
+ C(\la,J) \Vert U^+_\tau \Vert^{J+1}$, which is usually bounded by higher power of $\Vert U^+_\tau \Vert$.
So asymptotically, one often has
\[ \UU^-_\tau \lesssim \Vert U^+_\tau \Vert^{J'} \]
for some $J' \leq J$.
Plugging this into the first evolution inequality yields (see also Subsection~\ref{subsec_intro_pdeodi})
\[  \partial_\tau U^+_\tau = L U^+_\tau  + Q_J^+ (U^+_\tau ,\mathbf Y_\tau) + O(\Vert U^+_\tau \Vert^{J'}). \]
The important feature here is that the error term is proportional to $\Vert U^+_\tau \Vert$ (or even to a higher power of it).
This is in sharp contrast with the outcome of a conventional localization procedure, where the error term is typically only comparable to the fixed threshold $\eta$ from Assertion~\ref{Thm_PDE_ODI_principle_a}.
The absence of such an error term is essential for our applications, as it allows to conclude smallness of $\UU^-_\tau$ solely based on a smallness assumption on $\Vert U^{+}_\tau \Vert$. %
This dependence makes the proof of Theorem~\ref{Thm_PDE_ODI_principle} quite subtle, as the radius function $R(\tau)$ must be chosen carefully  \emph{depending on the evolving sizes of $\Vert U^+_\tau \Vert$ and $\UU^-_\tau$.}

The second important outcome is the \emph{unweighted $C^m$-estimate}  in Assertion~\ref{Thm_PDE_ODI_principle_a}, on $u_\tau$ over $\IB^n_{R(\tau)} \times N$---despite the rapidly decaying Gaussian weight used in the definition of $U^+$ and $\UU^-$.
Note that as $\UU^-$ decreases, the radius $R(\tau)$ \emph{increases,} enlarging the region on which this uniform bound holds.
However, this improvement is not guaranteed: if some component of $U^+_\tau$ grows, then $\UU^-_\tau$ typically grows as well, which normally forces $R(\tau)$ to \emph{decrease.}
Thus the domain of validity of the $C^m-$bound is tightly coupled to the evolution of $U^+_\tau$ and $\UU^-_\tau$, which is in turn encoded in Assertion~\ref{Thm_PDE_ODI_principle_b}.
This interdependence shows again why $R(\tau)$ must be chosen depending on these quantities in a careful manner.
\medskip

Among the assumptions, two are particularly crucial.
The first is the pseudolocality property from Assumption~\ref{Thm_PDE_ODI_principle_i}, which is indispensable: without it, no result of this form can hold.
The second is the smallness condition in Assumption~\ref{Thm_PDE_ODI_principle_ii} on the weighted $L^2_f$-bound over a ball of fixed radius $R^*$.
The precise term on the right-hand side is less relevant; the important point here is that the $L^2_f$-norm is bounded by small number that depends on the same parameters as $R^*$.

It is helpful to interpret Assumption~\ref{Thm_PDE_ODI_principle_ii} in light of the dynamic description of $ U^+_\tau $ and $\UU^-$ from Assertion~\ref{Thm_PDE_ODI_principle_b}.
In many settings, the evolution inequalities from this assertion themselves yield bounds on $\Vert U^+_\tau \Vert+\UU^-$ that ensure Assumption~\ref{Thm_PDE_ODI_principle_ii} \emph{a posteriori}---so in such cases Assumption~\ref{Thm_PDE_ODI_principle_ii} can often be omitted.

More concretely, suppose a solution is defined on a time interval $I$ and satisfies Assumptions~\ref{Thm_PDE_ODI_principle_0}, \ref{Thm_PDE_ODI_principle_i} and \ref{Thm_PDE_ODI_principle_iv}, while Assumption~\ref{Thm_PDE_ODI_principle_ii} is known only on $I\cap(-\infty,T)$, up to some time $T$.
Applying the theorem on this subinterval yields evolution inequalities for $ U^+_\tau $ and $\UU^-_\tau$, which sometimes provide control of the form $\Vert U^+_\tau \Vert+\UU^- \leq \delta$.
If $\delta=\delta(\eta, R^*) > 0$ is sufficiently small, then such a bound implies $R(\tau) > R^* + 1$ and 
\[ \| u_{\tau}  \|_{L^2_f(\IB^n_{ R^*} \times N)} \leq \Vert U^+_\tau \Vert+\UU^- <  \eta e^{-(R^*)^2/8}. \]
This is stronger than Assumption~\ref{Thm_PDE_ODI_principle_ii} near $\tau=T$ and therefore shows that the assumption continues to hold beyond time $T$.
Consequently, if $T$ was chosen maximal with the property that Assumption~\ref{Thm_PDE_ODI_principle_ii} holds on $I\cap(-\infty,T)$, then we obtain a contradiction unless $I\cap(-\infty,T) = I$.

In summary, whenever the evolution inequalities ensure sufficient control on $U^+_\tau$ and $\UU^-_\tau$, Assumption~\ref{Thm_PDE_ODI_principle_ii} is automatically propagated forward in time.
Conversely, a failure of this assumption, at some time $\tau$, must be accompanied by an increase in $\|U^+_\tau\|+\UU^-_\tau$, which must typically be due to the growth of an unstable component of $U^+_\tau$.

\medskip

The remainder of this section is devoted to proving Theorem~\ref{Thm_PDE_ODI_principle}.
We will first establish some key preparatory lemmas in Subsection~\ref{subsec_PDEODE_prep} and then finish the proof in Subsection~\ref{subsec_PDEODE_proof}.
\bigskip

\subsection{Preparation of the proof} \label{subsec_PDEODE_prep}
In the following we will fix the dimension $n$, the manifold $N$, and the non-linear PDE \eqref{eq_general_PDE}.
We will omit the dependence of constants on these data in the lemmas leading up to the proof of Theorem~\ref{Thm_PDE_ODI_principle}.
Define $\Vert u \Vert_{H^1_f} := \Vert u \Vert_{L^2_f} + \Vert \nabla u \Vert_{L^2_f}$.

We will frequently use the following two basic lemmas:

\begin{Lemma}[Poincar\'e inequality] \label{Lem_Poincare}
For any (possibly vector-valued) $u \in C^1(\IR^n \times M)$ we have
\[ \Vert (r+1) u \Vert_{L^2_f} \leq C \Vert u \Vert_{H^1_f}. \]
Here $r$ denotes the radial function on the $\IR^n$-factor and $C$ is a dimensional constant.
\end{Lemma}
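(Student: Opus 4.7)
The plan is a direct weighted integration-by-parts argument, pivoting on the identity $|\nabla f|^2 = r^2/4$. The key observation is that the $N$-factor is irrelevant: since $f = \frac14|\bx|^2$ depends only on the $\IR^n$-coordinate, the measure splits as $e^{-f}d\bx\otimes dg_N$, so after Fubini it suffices to prove the estimate pointwise in the $N$-variable, i.e.\ to show
\[
\int_{\IR^n}r^2|u|^2\,e^{-f}d\bx \;\leq\; C\int_{\IR^n}\bigl(|u|^2+|\nabla_\bx u|^2\bigr)e^{-f}d\bx
\]
for each fixed $\by\in N$, and then integrate over $N$. The bound involving $(r+1)$ in place of $r$ then follows from the triangle inequality $\|(r+1)u\|_{L^2_f}\leq \|ru\|_{L^2_f}+\|u\|_{L^2_f}$.

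For the core inequality I would start from the pointwise identity $|\nabla f|^2 = r^2/4$ and the distributional identity $\nabla f\cdot\nabla e^{-f} = -|\nabla f|^2 e^{-f}$. Integration by parts then yields
\[
\int_{\IR^n}\tfrac{r^2}{4}|u|^2 e^{-f}d\bx
= -\int_{\IR^n}|u|^2\,\nabla f\cdot\nabla e^{-f}\,d\bx
= \int_{\IR^n}\bigl(2\,u\cdot(\nabla u)(\nabla f) + |u|^2\Delta f\bigr)e^{-f}d\bx,
\]
using $\Delta f = n/2$ on the $\IR^n$-factor. Applying Cauchy--Schwarz to the cross term in the form $2|u||\nabla u||\nabla f|\leq \tfrac12|u|^2|\nabla f|^2 + 2|\nabla u|^2$ absorbs half of the weighted quantity into the left-hand side, leaving
\[
\int_{\IR^n}\tfrac{r^2}{8}|u|^2 e^{-f}d\bx \;\leq\; \int_{\IR^n}\bigl(2|\nabla u|^2 + \tfrac{n}{2}|u|^2\bigr)e^{-f}d\bx,
\]
which is the desired inequality with an explicit dimensional constant.

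The main technical obstacle is the justification of the integration by parts, since $u$ is only assumed $C^1$ and may grow at infinity. I would handle this by a standard cutoff argument: replace $u$ by $u\,\chi_R$ where $\chi_R$ is a smooth radial cutoff equal to $1$ on $\IB^n_R$ and supported in $\IB^n_{2R}$ with $|\nabla\chi_R|\leq 2/R$. The integration by parts is now rigorous (no boundary terms, everything is compactly supported), and the above estimate gives a uniform bound on $\int \tfrac{r^2}{8}|u|^2\chi_R^2 e^{-f}$ in terms of $\|u\|_{H^1_f}^2$ plus an error of order $R^{-2}\|u\|_{L^2_f}^2$. If $\|u\|_{H^1_f}=\infty$ the inequality is trivial; otherwise, letting $R\to\infty$ and invoking monotone convergence yields the claim.
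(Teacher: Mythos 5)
Your proof is correct and follows essentially the same route as the paper: the same integration by parts against $\nabla e^{-f}$ using $\triangle f = \tfrac n2$ and $|\nabla f| = \tfrac r2$, Cauchy--Schwarz with absorption of half the weighted term, and a cutoff/limiting argument to handle non-compactly supported $u$. The Fubini reduction to the $\IR^n$-factor and the specific choice of cutoff with $|\nabla\chi_R|\leq 2/R$ are cosmetic differences only.
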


\begin{proof}
Suppose first that $u$ is compactly supported.
Using integration by parts, we obtain since $\triangle f = \frac{n}2$,
\begin{multline*}
 \big\| |\nabla f| \, u \big\|_{L^2_f}^2
= \int_{\IR^n \times N} |\nabla f|^2 |u|^2 e^{-f} dg
= - \int_{\IR^n \times N} |u|^2 \nabla f  \cdot \nabla e^{-f} dg \\
= 2\int_{\IR^n \times N} u\cdot \nabla u \cdot \nabla f  \, e^{-f} dg
+ \int_{\IR^n \times N} \triangle f \, |u|^2 \, e^{-f} dg
\leq \tfrac12 \big\| |\nabla f| \, u \big\|^2_{L^2_f} + 2\big\| \nabla u \big\|^2_{L^2_f} + \tfrac{n}2 \big\| u \|_{L^2_f}^2. 
\end{multline*}
Subtracting the first term on the right-hand side on both sides of this inequality, yields the desired bound; note that $|\nabla f| = \tfrac12 r$.
If $u$ is not compactly supported, then we can consider the functions $u_i := u\omega_{R_i}$ for $R_i \to \infty$; note that $\Vert u_i \Vert_{L^2_f} \to \Vert u \Vert_{L^2_f}$ and $\Vert u_i \Vert_{H^1_f} \to \Vert u \Vert_{H^1_f}$.
\end{proof}
\medskip

\begin{Lemma} \label{Lem_polynomial_bounds}
For any $U^+ \in \sV_{> \la}$ we have polynomial bounds of the form
\[ |\nabla^k U^+| \leq C(\la, k) (r+1)^{C(\la)} \Vert U^+ \Vert_{L^2_f}. \]
\end{Lemma}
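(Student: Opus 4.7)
\medskip

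\noindent\textbf{Proof plan.}
The strategy is to reduce the estimate to a count and norm comparison on a finite-dimensional space. Since the subspace $\sV_{>\la}$ is finite-dimensional (as will be verified below), any two norms on it are equivalent; the point of the lemma is simply to make the dependence on $\la$ and on $r$ explicit.

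\medskip

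First I would identify an explicit orthonormal basis of $\sV_{>\la}$ via separation of variables. The linear operator is $L = \triangle_f + A(\by)$, where $\triangle_f$ acts on the $\IR^n$-factor and $A(\by)$ is a smooth family of self-adjoint endomorphisms of $\IR^{n'}$ depending only on $\by \in N$. Thus $L$ splits as the sum of the Ornstein--Uhlenbeck operator in $\bx$ and the elliptic operator $L_N := \triangle_N + A(\by)$ acting on $L^2(N;\IR^{n'})$. Since $N$ is compact, $L_N$ is self-adjoint with discrete spectrum bounded above by some constant $\mu_{\max}$, with smooth eigensections $\psi_\ell \in C^\infty(N;\IR^{n'})$ and eigenvalues $\mu_\ell$. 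The spectrum of $\triangle_f$ on $L^2_f(\IR^n)$ is $\{-\frac12(i_1+\ldots+i_n)\}$ with eigenfunctions given by the products of Hermite polynomials $\mathfrak p^{(\vec i)}(\bx)$. Consequently the eigenfunctions of $L$ on $L^2_f(\IR^n \times N;\IR^{n'})$ are $\mathfrak p^{(\vec i)}(\bx)\,\psi_\ell(\by)$ with eigenvalues $-\tfrac12|\vec i|+\mu_\ell$.

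\medskip

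Next I would verify finite-dimensionality of $\sV_{>\la}$ with an explicit degree cap. An eigenvalue exceeds $\la$ exactly when $|\vec i| < 2(\mu_\ell-\la)$, which in particular forces $\mu_\ell>\la$ (selecting finitely many $\ell$, since $L_N$ has discrete spectrum) and $|\vec i| < 2(\mu_{\max}-\la)=:D(\la)$. Hence $\sV_{>\la}$ has an $L^2_f$-orthonormal basis of the form $\phi_{\vec i,\ell} = c_{\vec i,\ell}\,\mathfrak p^{(\vec i)}(\bx)\,\psi_\ell(\by)$, where the index set is finite, $c_{\vec i,\ell}$ is a normalization constant depending only on $\la$, and every $\phi_{\vec i,\ell}$ is a polynomial in $\bx$ of degree strictly less than $D(\la)$ with coefficients that are smooth in $\by$.

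\medskip

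Finally I would expand $U^+ = \sum_{\vec i,\ell} a_{\vec i,\ell}\,\phi_{\vec i,\ell}$. Orthonormality gives $|a_{\vec i,\ell}| \leq \Vert U^+\Vert_{L^2_f}$ for every coefficient. For each basis element, since $\mathfrak p^{(\vec i)}$ is a polynomial in $\bx$ of degree $|\vec i|<D(\la)$ and $\psi_\ell$ is smooth on the compact manifold $N$, we obtain
\[
\bigl|\nabla^k \phi_{\vec i,\ell}(\bx,\by)\bigr| \leq C(\la,k)\,(r+1)^{D(\la)}.
\]
Summing over the finitely many indices yields
\[
|\nabla^k U^+| \leq \sum_{\vec i,\ell}|a_{\vec i,\ell}|\,|\nabla^k\phi_{\vec i,\ell}| \leq C(\la,k)\,(r+1)^{D(\la)}\,\Vert U^+\Vert_{L^2_f},
\]
which is the desired estimate with the exponent $D(\la)$ depending only on $\la$.

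\medskip

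\noindent\textbf{Main obstacle.} The argument is essentially a packaging of standard spectral theory, so there is no deep difficulty. The only point requiring minor care is the separation-of-variables reduction in the presence of the vector-valued zeroth-order term $A(\by)$; one has to observe that $A$ is constant in $\bx$ and $\tau$, so $\triangle_f$ (in $\bx$) and $L_N = \triangle_N + A(\by)$ (in $\by$) commute on the product $L^2_f$-space, allowing the joint spectral decomposition to proceed exactly as in the scalar case.
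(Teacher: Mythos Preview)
Your proof is correct and follows essentially the same approach as the paper's: both use separation of variables to write $\sV_{>\la}$ as spanned by products of Hermite polynomials in $\bx$ (of degree bounded in terms of $\la$) with eigensections of $L_N = \triangle_N + A$ on $N$, and then read off the polynomial bound. Your version is slightly more explicit about organizing the basis and normalizations, while the paper phrases the reduction via projecting an $L$-eigenfunction onto $L_N$-eigenfunctions, but the content is the same.
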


\begin{proof}
Since $\sV_{> \la}$ is finite dimensional, it suffices to prove this bound for eigenfunctions $L U^+ = \la' U^+$, $\la' > \la$.
Consider an eigenfunction $L_N v = \la_N v$ of the operator $L_N = \triangle_{N} + A$ over $N$.
Then $w := \int_N (U^+ \cdot v) dg_N$ satisfies the eigenvalue equation $\triangle_f w = (\la'-\la_N) w$ on $\IR^n$, so it is a product $\mathfrak p^{(j_1)} (\bx_1) \cdots \mathfrak p^{(j_n)}(\bx_n)$ of Hermite polynomials with $\la' - \la_N = - \frac12( j_1 + \ldots + j_n)$.
Since $\la_N$ is bounded from above, we obtain upper bounds on the degrees $j_1, \ldots, j_n$ and a lower bound on $\la_N$ whenever $w$ is non-zero.
So $U^+$ must be a linear combination of terms of the form $\mathfrak p^{(j_1)} (\bx_1) \cdots \mathfrak p^{(j_n)}(\bx_n) v$, where $v$ is an eigenfunction of $L_N$ whose eigenvalue is bounded by a constant depending on $\la$.
These terms satisfy the asserted polynomial bounds.
\end{proof}
\medskip

The following lemma allows us to relate $Q_J$ with the non-linear term $Q$.

\begin{Lemma}\label{lem:Q_initial}
Suppose that $\la \in \IR$ and $0 < \eta \leq \ov\eta(\la)$ and $J \geq 1$.
Consider a smooth (possibly vector-valued) function $u \in C^\infty (\IR^n \times N)$ with
\[ |u|, |\nabla u|, \ldots, |\nabla^4 u| \leq \eta. \]
Consider the decomposition $u = U^+ + U^-$ according to the splitting \eqref{eq_splittingVgeqless}. 
Then we have the following bounds:
\begin{align}
 \Big\| \PP_{\sV_{> \la}} \Big( Q[U^+ + U^-, \mathbf Y] - Q_J (U^+, \mathbf Y) \Big) \Big\|_{L^2_{f}} 
 &\leq \big( C(\la) \sqrt\eta + C \Vert \mathbf Y \Vert \big) \| U^- \|_{L^2_{f}}  \notag \\
 & \qquad\qquad\qquad + C(\la, J) \Vert U^+ \Vert^{J+1}_{L^2_f}, \label{eq_Q_approx_1} \\
 \Big\langle U^-,  Q[U^+ + U^-, \mathbf Y] - Q_J (U^+, \mathbf Y) \Big\rangle_{L^2_{f}} 
 &\leq \big( C(\la) \sqrt\eta + C \Vert \mathbf Y \Vert \big)  \| U^- \|_{H^1_{f}}^2 \notag \\
 & \qquad\qquad\qquad + C(\la, J) \| U^- \|_{L^2_{f}}  \Vert U^+ \Vert^{J+1}_{L^2_f}. \label{eq_Q_approx_2}
\end{align}
\end{Lemma}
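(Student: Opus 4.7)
The plan is to introduce the formal $J$-th Taylor polynomial $\tilde Q_J(\bx,\by,v,p,q,\bY)$ of $Q$ in its last four arguments at the origin, so that $Q_J(U^+,\bY) = \tilde Q_J(\bx,\by,U^+,\nabla U^+,\nabla^2 U^+,\bY)$, and to decompose the difference as
\[
Q[U^++U^-,\bY] - Q_J(U^+,\bY) = R_{J+1}(u,\bY) + \bigl[\tilde Q_J(u,\bY) - \tilde Q_J(U^+,\bY)\bigr],
\]
where the Taylor remainder $R_{J+1}$ satisfies $|R_{J+1}|\leq C(J)(r+1)^{C(J)}(|u|+|\nabla u|+|\nabla^2 u|+\|\bY\|)^{J+1}$ by \eqref{eq_derQ_bounds_asspt}, and the second bracket is a polynomial in $(U^-,\nabla U^-,\nabla^2 U^-)$ that I further split into its linear part $L_1 = \partial_v\tilde Q_J(U^+,\bY)U^- + \partial_p\tilde Q_J(U^+,\bY)\nabla U^- + \partial_q\tilde Q_J(U^+,\bY)\nabla^2 U^-$ and a higher-order remainder $L_{\geq 2}$. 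The key pointwise estimates I will rely on are $|U^+|+|\nabla U^+|+|\nabla^2 U^+| \leq C(\lambda)(r+1)^{C(\lambda)}\|U^+\|_{L^2_f} \leq C(\lambda)(r+1)^{C(\lambda)}\eta$ from Lemma~\ref{Lem_polynomial_bounds}, together with $|U^-|+|\nabla U^-|+|\nabla^2 U^-|\leq C(\lambda)(r+1)^{C(\lambda)}\eta$, which follows from the same bound combined with the hypothesis on $u$.

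For \eqref{eq_Q_approx_1}, I use the identity $\|\PP_{\sV_{>\lambda}}g\|_{L^2_f} = \sup_{\phi\in\sV_{>\lambda},\,\|\phi\|_{L^2_f}=1}\langle g,\phi\rangle_{L^2_f}$ and note that any such $\phi$ satisfies $|\phi|,|\nabla\phi|,|\nabla^2\phi|\leq C(\lambda)(r+1)^{C(\lambda)}$ by Lemma~\ref{Lem_polynomial_bounds}. For the $L_1$-contribution, \eqref{eq_condition_on_Q} (applied to the Taylor polynomial, whose coefficients inherit the same growth bounds) gives $|D_{v,p,q}\tilde Q_J(U^+,\bY)|\leq C(\lambda,J)(r+1)^{C(\lambda,J)}(\eta+\|\bY\|)$; I then integrate by parts to move the derivatives off $\nabla U^-$ and $\nabla^2 U^-$ onto the polynomial factors $\phi$, $D_{v,p,q}\tilde Q_J$, and $e^{-f}$, and apply Cauchy--Schwarz using that any fixed polynomial weight is in $L^2_f$. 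This produces an estimate of the form $(C(\lambda,J)\eta+C(\lambda,J)\|\bY\|)\|U^-\|_{L^2_f}$, and absorbing $\eta\leq\sqrt\eta$ yields the claimed form. For $L_{\geq 2}$ each extra factor of $|U^-|$ (or its derivatives) contributes another factor of $C(\lambda)\eta(r+1)^{C(\lambda)}$, so these are clearly absorbed into $C(\lambda)\sqrt\eta\|U^-\|_{L^2_f}$. Finally, $R_{J+1}$ is handled by expanding $(|u|+|\nabla u|+|\nabla^2 u|+\|\bY\|)^{J+1}$ monomial by monomial: monomials containing only $U^+$-factors and $\bY$ give---after the pointwise bound $|U^+|\leq C(\lambda)(r+1)^{C(\lambda)}\|U^+\|_{L^2_f}$---the desired $C(\lambda,J)\|U^+\|_{L^2_f}^{J+1}$ contribution; monomials with at least one $U^-$-factor retain $J$ leftover factors of $\eta+\|\bY\|$, which combined with the single $U^-$-factor (paired via the test-function argument) yield the $(C(\lambda)\sqrt\eta + C\|\bY\|)\|U^-\|_{L^2_f}$ contribution.

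For \eqref{eq_Q_approx_2}, I pair the full difference with $U^-$ in $L^2_f$. The $L_1$-contribution has the schematic form $\int U^- a(\bx,\by)(U^-,\nabla U^-,\nabla^2 U^-)\, e^{-f}\,dg$ with $|a|\leq C(\lambda,J)(r+1)^{2-i}(\eta+\|\bY\|)$ (the index $i$ matching the order of derivative of $U^-$, thanks to \eqref{eq_condition_on_Q}). Here the key observation is that the $(r+1)^{2-i}$ growth of the coefficient of $\partial_{\nabla^i u}Q$ is exactly the growth compensated by the Poincar\'e inequality: for $i=0$ the estimate $\int(r+1)^2|U^-|^2e^{-f}\leq C\|U^-\|_{H^1_f}^2$ of Lemma~\ref{Lem_Poincare} produces $\|U^-\|_{H^1_f}^2$; for $i=1$ a Cauchy--Schwarz splitting $\int(r+1)|U^-||\nabla U^-|e^{-f}\leq\|(r+1)U^-\|_{L^2_f}\|\nabla U^-\|_{L^2_f}$ again yields $\|U^-\|_{H^1_f}^2$; for $i=2$ one integration by parts on $\nabla^2 U^-$ converts the integrand into $|\nabla U^-|^2$ plus lower order, again giving $\|U^-\|_{H^1_f}^2$. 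The Taylor remainder and $L_{\geq 2}$ contributions are bounded by Cauchy--Schwarz against $U^-$ in $L^2_f$, producing the $C(\lambda,J)\|U^-\|_{L^2_f}\|U^+\|_{L^2_f}^{J+1}$ term and further contributions absorbable into $(C(\lambda)\sqrt\eta+C\|\bY\|)\|U^-\|_{H^1_f}^2$ via AM--GM and $\|U^-\|_{L^2_f}\leq\|U^-\|_{H^1_f}$.

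The main technical obstacle is the tight bookkeeping required to ensure that the coefficient in front of $\|U^-\|_{L^2_f}$ (resp.\ $\|U^-\|_{H^1_f}^2$) depends on $\eta$ only through $\sqrt\eta$ rather than through powers tied to $J$. This forces extracting the small factor $\eta+\|\bY\|$ from \emph{one} carefully chosen factor in each product while controlling the remaining factors purely by polynomial weights, and it is what dictates the sharp balance between the growth conditions \eqref{eq_condition_on_Q} and the Poincar\'e inequality used in \eqref{eq_Q_approx_2}. The hypothesis $|u|+\cdots+|\nabla^4 u|\leq\eta$ (with up to four derivatives) provides exactly the pointwise regularity needed when derivatives are moved onto the coefficient $D_{v,p,q}\tilde Q_J(U^+,\bY)$ via integration by parts.
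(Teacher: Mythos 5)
There is a genuine gap, and it concerns precisely the step you flag as the "main technical obstacle": your coefficient bound for the linearized term. You claim that the derivatives of the Taylor polynomial evaluated at the untruncated $U^+$ satisfy $|\partial_{\nabla^i u}\td Q_J(U^+,\bY)|\leq C(\la,J)(r+1)^{2-i}(\eta+\Vert\bY\Vert)$, "thanks to \eqref{eq_condition_on_Q}". This is not justified. First, \eqref{eq_condition_on_Q} only controls the \emph{first} derivatives of $Q$ in its $u$-slots, hence only the quadratic part of $\td Q_J$; the higher-degree Taylor coefficients obey only \eqref{eq_derQ_bounds_asspt}, i.e.\ polynomial growth $(r+1)^{C}$ of unspecified degree. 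Second, and more importantly, the arguments $U^+,\nabla U^+,\nabla^2 U^+$ are not uniformly small: by Lemma~\ref{Lem_polynomial_bounds} they are only bounded by $C(\la)(r+1)^{C(\la)}\eta$. Consequently the best available bound is of the form $C(\la,J)(r+1)^{2-i+C(\la,J)}(\eta+\Vert\bY\Vert)$, with excess polynomial growth beyond $(r+1)^{2-i}$. This excess is fatal for \eqref{eq_Q_approx_2}: when you pair against $U^-$, the only tools are $\Vert U^-\Vert_{L^2_f}$, $\Vert U^-\Vert_{H^1_f}$ and the Poincar\'e inequality of Lemma~\ref{Lem_Poincare}, which gains exactly one power of $(r+1)$ per factor of $U^-$ at the cost of one derivative—precisely matching $(r+1)^{2-i}$ and nothing more. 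The extra powers of $(r+1)$ can neither be absorbed by iterating Poincar\'e (you do not control $H^2_f$ of $U^-$, and the right-hand side only allows $\Vert U^-\Vert_{H^1_f}^2$) nor by invoking the pointwise bound $|U^-|\leq C(\la)(r+1)^{C(\la)}\eta$, since that destroys the required multiplicative structure $(\sqrt\eta+\Vert\bY\Vert)\Vert U^-\Vert_{H^1_f}^2$ or $\Vert U^-\Vert_{L^2_f}\Vert U^+\Vert^{J+1}$ (a term like $\eta^{J+1}\Vert U^-\Vert_{L^2_f}$ is not controlled by either). The same issue affects your treatment of the terms containing $\nabla^2 U^-$ and of $L_{\geq 2}$.

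The missing idea is the localization that the paper's proof is built on: one cuts off $U^\pm$ at the radius $R=\Vert U^+\Vert_{L^2_f}^{-1/(2C_0)}$. On $\IB_R$ the polynomial weight $(r+1)^{C_0}$ is at most $R^{C_0}=\Vert U^+\Vert^{-1/2}$, so the truncated functions are \emph{uniformly} $C^4$-small of size $C(\la)\sqrt\eta$; then \eqref{eq_condition_on_Q}, applied to the genuine $Q$ (not its Taylor polynomial) along the segment between the truncated functions, delivers coefficients with exactly the growth $(r+1)^{2-i}$ and the small prefactor $C(\la)\sqrt\eta+C\Vert\bY\Vert$, which the Poincar\'e inequality can absorb in \eqref{eq_Q_approx_2}; meanwhile the truncation error lives outside $\IB_{R-1}$ and is bounded by $Ce^{-(R-2)^2/8}\leq C(J)\Vert U^+\Vert^{J+1}$ by the choice of $R$. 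Without this (or an equivalent) localization your scheme also only yields the weaker constant structure $C(\la,J)\eta+C(\la,J)\Vert\bY\Vert$ in \eqref{eq_Q_approx_1} rather than the stated $C(\la)\sqrt\eta+C\Vert\bY\Vert$, but that is a secondary point; the substantive failure is in \eqref{eq_Q_approx_2}.
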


\begin{proof}
We will denote by $C$ a generic constant that depends on the parameters listed in parentheses.
For convenience we will frequently drop the subscript $L^2_{f}$ next  inner products and norms.

We will prove both inequalities \eqref{eq_Q_approx_1}, \eqref{eq_Q_approx_2} in tandem.
To do this we fix some arbitrary $U' \in \sV_{> \la}$ with $\Vert U' \Vert_{L^2_{f}} = 1$ (for inequality \eqref{eq_Q_approx_1}) and $U' = U^- \in \sV_{\leq \la}$ (for inequality \eqref{eq_Q_approx_2}).
It is then enough to derive an upper bound on
\[ \Big\langle U',  Q[U^+ + U^-, \mathbf Y] - Q_J (U^+, \mathbf Y) \Big\rangle  \]

Choose $c_0 > 0$ such that $Q(u,\nabla u, \nabla^2 u, \mathbf Y)$ is defined whenever $|u|, |\nabla u|, |\nabla^2 u| \leq 2c_0$.
By Lemma~\ref{Lem_polynomial_bounds} there is a constant $C_0(\lambda)$ such that
\begin{equation}\label{eq:higher_derivatives}
    |U^+|, \; |\nabla U^+|, \; \ldots, \; |\nabla^4 U^+| \leq C_0 (r+1)^{C_0} \| U^+ \|,
\end{equation}
where $r$ denotes the radial distance on the $\IR^{n}$-factor.
So if we choose
\begin{equation} \label{eq_defRUp}
 R :=  \| U^+ \|^{-\frac1{2C_0}} , \qquad 
 \omega := \omega_R , \qquad
 \td U^\pm := U^\pm \omega,
\end{equation}
and $\td U^{\pm} := U^{\pm}$ if $U^+ = 0$, then 
\begin{equation} \label{eq_tdUpm_bound}
 \Vert \td U^\pm \Vert_{L^2_f} \leq \Vert U^\pm \Vert_{L^2_f}, \qquad
 \Vert \td U^\pm \Vert_{H^1_f} \leq C \Vert U^\pm \Vert_{H^1_f}
\end{equation}
and we have the following bounds if $\eta \leq \ov\eta(\la)$
\begin{align}
 \big\Vert \td U^+ \big\Vert_{C^4} 
 &\leq C(\la) (R^{C_0}+1) \Vert U^+ \Vert 
 \leq C(\la) \Vert U^+ \Vert^{1/2}
 \leq C(\la)\sqrt{\eta}
 \leq c_0, \label{eq_tdUp_C2} \\ 
\big\Vert \td U^- \big\Vert_{C^4} 
&= \big\Vert u\omega - \td U^+ \big\Vert_{C^4}
\leq C \eta + C(\la) \sqrt{\eta}
\leq C(\la)\sqrt{\eta}
 \leq c_0. \label{eq_tdUm_C2} 
\end{align}
Thus $Q[\td U^+, \mathbf Y]$ is defined on all of $\IR^n \times N$.

Since $Q[ U^+ +  U^-, \mathbf Y] = Q[\td U^+ + \td  U^-, \mathbf Y]$ on $\IB^n_{R-1} \times N$ and since 
$$ \big|Q[ U^+ +  U^-, \mathbf Y] \big|, \; \big|Q[\td U^+ + \td U^-, \mathbf Y] \big| \leq {C(r+1)^C} ,$$
we have 
\begin{multline} 
 \big\|  Q[U^+ + U^-, \mathbf Y]  - Q[\td U^+ + \td U^-, \mathbf Y]  \big\|_{L^2} 
\leq C (R+1)^C \bigg( \int_{(\IR^n \setminus \IB^n_{R-1}) \times N}  e^{-f} dg \bigg)^{1/2}  \\
\leq C(R+1)^C   e^{-(R-2)^2/8} 
\leq C(J) R^{-2C_0(J+1)}
\le C(J) \Vert U^+ \Vert^{J+1}.\label{eq_Quputd} 
\end{multline}

Next, since $Q[\td U^+, \mathbf Y]$ is defined everywhere, we can derive the following bound in a similar manner
\begin{align}
 \big\|  Q[\td U^+, \mathbf Y] &- Q_J(U^+, \mathbf Y) \big\|^2_{L^2} \notag \\
&= \int_{\IB^n_{R-1} \times N} \big| Q[ U^+, \mathbf Y] - Q_J(U^+, \mathbf Y) \big|^2 e^{-f} dg \notag \\
&\qquad+ \int_{(\IR^n \setminus \IB^n_{R-1}) \times N} \big| Q[ \td U^+, \bY] - Q_J(U^+, \bY) \big|^2  e^{-f} dg \notag \\
&\leq C \int_{\IB^n_{R-1} \times N}  \big( |U^+|^{J+1} + | \nabla U^+ |^{J+1} + | \nabla^2 U^+ |^{J+1} \big)^2 (r+1)^C  e^{-f} dg \notag \\
&\qquad+ C(\la, J) \int_{(\IR^n \setminus \IB^n_{R-1}) \times N} \big(  1 + (r+1)^{JC_0}  (r+1)^C \big)^2 e^{-f} dg \notag \\
&\leq C(\la) \| U^+ \|^{2J+2} \int_{\IB^n_{R-1} \times N}   (r+1)^{C_0(2J+2) +C}  e^{-f} dg + C(\la,J)  e^{-(R-2)^2/4} \notag \\
&\leq C(\la,J)  \| U^+ \|^{2J+2} +  C(\la,J)  e^{-(R-2)^2/4}. \label{eq_Qtdupup}
\end{align}
Combining \eqref{eq_Quputd}, \eqref{eq_Qtdupup} and using the definition of $R$ in \eqref{eq_defRUp} implies that
\begin{align}
 \big\langle U', Q[U^+ + U^-,& \bY] - Q[\td U^++ \td U^-, \bY] + Q[\td U^+, \bY] - Q_J (U^+, \bY) \big\rangle \notag \\
 &\leq \Vert U' \Vert_{L^2_f} \cdot \big\Vert Q[U^+ + U^-, \bY] - Q[\td U^+ + \td U^-, \bY] + Q[\td U^+, \bY] - Q_J (U^+,\bY) \big\Vert_{L^2_f} \notag \displaybreak[1] \\
&\leq C(\la,J) \Vert U' \Vert_{L^2_f} 
\big(  \Vert U^+ \Vert^{J+1} + e^{-(R-2)^2/8}  \big), \notag \displaybreak[1]  \\
&\leq C(\la,J) \Vert U' \Vert_{L^2_f} 
\big( \Vert U^+ \Vert^{J+1} + R^{-2C_0(J+1)} \big), \notag \\
&\leq C(\la,J) \Vert U' \Vert_{L^2_f} 
\Vert U^+ \Vert^{J+1} . \label{eq_QQQQ_bound}
\end{align}

It remains to bound $\langle  U', Q[\td U^+ + \td U^-, \bY] - Q[\td U^+, \bY] \rangle$ from above.
Due to the bounds \eqref{eq_tdUp_C2} \eqref{eq_tdUm_C2} we can write
\[ Q[\td U^+ + \td U^-,\bY] - Q[\td U^+,\bY] 
= \sum_{i=0}^2 Q^*_i * \nabla^i \td U^-, \]
where
\[ Q^*_i(\bx,\by) := \int_0^1 (\partial_{\nabla^i u} Q)(\bx, \by, \td U^+ + s \td U^-, \nabla \td U^+ + s \nabla \td U^-,  \nabla^2 \td U^+ + s \nabla^2 \td U^-,  \bY ) ds. \]
are tensor-valued smooth functions and $*$ denotes an unspecified tensor multiplication.
The assumed bounds \eqref{eq_condition_on_Q} on $Q$ combined with \eqref{eq_tdUp_C2} and \eqref{eq_tdUm_C2} imply that, for $i = 0,1,2$,
\[ | Q^*_i | + \ldots + | \nabla^2 Q^*_i | \leq  (r+1)^{2-i} \big( C(\la) \sqrt\eta + C\Vert\bY\Vert \big). \]
So we can write
\begin{equation} \label{eq_sumQstari}
 \big\langle  U', Q[\td U^+ + \td U^-, \bY] - Q[\td U^+, \bY] \big\rangle = \sum_{i=0}^2  \big\langle  U', Q^*_i * \nabla^i \td U^- \big\rangle. 
\end{equation}
In order to bound this term, we need to consider two cases.
\medskip

\textit{Case 1: $U' \in \sV_{> \la}$ \quad}
In this case, we obtain using Lemma~\ref{Lem_polynomial_bounds}, since $\sV_{> \la}$ is finite dimensional and since $\Vert U' \Vert = 1$:
\[  \big\langle  U', Q^*_0 * \td U^- \big\rangle 
\leq (C (\la) \sqrt\eta + C\Vert\bY\Vert) \big\Vert (r+1)^2 U' \big\Vert_{L^2_f} \big\Vert \td U^- \big\Vert_{L^2_f}
\leq (C (\la) \sqrt\eta + C\Vert\bY\Vert) \big\Vert \td U^- \big\Vert_{L^2_f} . \]
The term involving $Q^*_1$ can be bounded via integration by parts as follows:
\begin{align}
  \big\langle  U', Q^*_1 * \nabla \td U^- \big\rangle 
&= \int_{\IR^n \times N} U' * Q^*_1 * \nabla \td U^- \, e^{-f} dg \notag \\
&= \int_{\IR^n \times N} \nabla \big(U' * Q^*_1 * e^{-f} \big)  * \td U^- \,  dg \notag \\
&\leq \big\Vert \nabla U' * Q^*_1 + U' * \nabla Q^*_1 - U' * Q^*_1 * \nabla f \big\Vert_{L^2_f} \big\Vert \td U^- \big\Vert_{L^2_f} \notag \\
 &\leq (C (\la) \sqrt\eta + C\Vert\bY\Vert)  \big\Vert \td U^- \big\Vert_{L^2_f}. \label{eq_Qs1_bound_Case1}
\end{align}
The term involving $Q^*_2$ can be bounded similarly, by applying integration by parts twice:
\[ \big\langle  U', Q^*_2 * \nabla^2 \td U^- \big\rangle 
\leq (C (\la) \sqrt\eta + C\Vert\bY\Vert) \big\Vert \td U^- \big\Vert_{L^2_f}. \]
It follows that
\[ \big\langle  U', Q[\td U^+ + \td U^-, \bY] - Q[\td U^+, \bY] \big\rangle \leq (C (\la) \sqrt\eta + C\Vert\bY\Vert)  \big\Vert \td U^- \big\Vert_{L^2_f}. \]
Combining this bound with \eqref{eq_tdUpm_bound}, \eqref{eq_QQQQ_bound} and taking the supremum over all $U' \in \sV_{> \la}$ with $\Vert U' \Vert = 1$ implies  \eqref{eq_Q_approx_1}.
\medskip

\textit{Case 2: $U' = U^-$ \quad}
In this case, we use the Poincar\'e inequality, Lemma~\ref{Lem_Poincare}, to obtain
\begin{multline*}
  \big\langle  U', Q^*_0 * \td U^- \big\rangle 
\leq (C (\la) \sqrt\eta + C\Vert\bY\Vert) \big\Vert (r+1) U' \big\Vert_{L^2_f} \big\Vert (r+1) \td U^- \big\Vert_{L^2_f} \\
\leq (C (\la) \sqrt\eta + C\Vert\bY\Vert) \big\Vert  U' \big\Vert_{H^1_f} \big\Vert \td U^- \big\Vert_{H^1_f} . 
\end{multline*}
The term involving $Q^*_1$ can be bounded similarly:
\begin{multline*}
 \big\langle  U', Q^*_1 * \nabla \td U^- \big\rangle 
\leq (C (\la) \sqrt\eta + C\Vert\bY\Vert) \big\Vert (r+1)  U' \big\Vert_{L^2_f} \big\Vert  \td U^- \big\Vert_{H^1_f} \\
\leq (C (\la) \sqrt\eta + C\Vert\bY\Vert) \big\Vert  U' \big\Vert_{H^1_f} \big\Vert \td U^- \big\Vert_{H^1_f} .
\end{multline*}
To bound the term involving $Q^*_2$ we only perform a single integration by parts, as in \eqref{eq_Qs1_bound_Case1}:
\begin{align*}
  \big\langle  U', Q^*_2 * \nabla^2 \td U^- \big\rangle 
&\leq \big\Vert \nabla U' * Q^*_2 + U' * \nabla Q^*_2 - U' * Q^*_2 * \nabla f \big\Vert_{L^2_f} \big\Vert  \nabla \td U^- \big\Vert_{L^2_f} \\
&\leq (C (\la) \sqrt\eta + C\Vert\bY\Vert) \big( \Vert  U' \Vert_{H^1_f} + \Vert (r+1) U' \Vert_{L^2_f} \big) \big\Vert \td U^- \big\Vert_{H^1_f} \\
&\leq (C (\la) \sqrt\eta + C\Vert\bY\Vert)  \big\Vert  U' \big\Vert_{H^1_f}  \big\Vert \td U^- \big\Vert_{H^1_f} .
\end{align*}
It follows that
\[ \big\langle  U', Q[\td U^+ + \td U^-, \bY] - Q[\td U^+, \bY] \big\rangle \leq  (C (\la) \sqrt\eta + C\Vert\bY\Vert)  \big\Vert  U' \big\Vert_{H^1_f}  \big\Vert \td U^- \big\Vert_{H^1_f}. \]
Combining this bound again with \eqref{eq_tdUpm_bound}, \eqref{eq_QQQQ_bound} implies  \eqref{eq_Q_approx_2}, finishing the proof of the lemma.
\end{proof}
\medskip

\bigskip
The next lemma establishes the evolution inequalities from Assertion~\ref{Thm_PDE_ODI_principle_b} of Theorem~\ref{Thm_PDE_ODI_principle} under the assumption that the solution is small in the $C^4$-sense.

\begin{Lemma}\label{l:evolution_ODE}
There is a constant $c > 0$ such that the following is true.
Suppose that $\lambda \in \IR$ and $0 < \eta \leq \ov\eta(\la)$ and $J \geq 1$.
Consider a smooth solution $u \in C^\infty(\DD;\IR^m)$ to \eqref{eq_general_PDE} over some time-interval $I$, for a smooth family $(\mathbf Y_\tau \in \YY)_{\tau \in I}$.
Let $R : I \to \IR_+$ be a smooth function of radii and suppose that for all $\tau \in I$ we have
\begin{equation} \label{eq_h_bound}
\Vert \bY_\tau \Vert \leq c, \qquad
 \IB^n_{R(\tau)} \times N  \subset \DD_\tau, \qquad  \Vert u_\tau \Vert_{C^4(\IB^n_{R(\tau)} \times N )} \leq \eta, \qquad |\partial_\tau R(\tau)| \leq 10 e^{R(\tau)/10}.
\end{equation}
Then the quantities (note that $(\td\UU^-_\tau)$ differs from $(\UU^-_\tau)$ in \eqref{eq_UpUUm_def} in the lack of the last term)
\begin{equation} \label{eq_UpUUm_def_without}
 U^+_\tau :=  \PP_{\sV_{> \lambda}} (u_{\tau} \omega_{R(\tau)})  , \qquad
  U^-_\tau :=  \PP_{\sV_{\leq \lambda}} (u_{\tau} \omega_{R(\tau)})  , \qquad
\td\UU^-_\tau := \|   U^- \|_{L^2_{f}} . 
\end{equation}
satisfy the following evolution inequalities: 
\begin{equation} \label{eq_evol_ineq_1_lem}
 \Big\| \partial_\tau U^+_\tau - L U^+_\tau  - Q_J^+ (U^+_\tau ,\mathbf Y_\tau) \Big\|_{L^2_{f}}
\leq C(\la, J) \Vert U^+_\tau \Vert^{J+1}_{L^2_{f}} + (C(\la)\sqrt{\eta} + C \Vert \mathbf Y_\tau \Vert) \, \td\UU^-_\tau  + C \eta e^{-\frac{(R(\tau)-2)^2}8} 
\end{equation}
\begin{equation} \label{eq_evol_ineq_2_lem}
  \partial_\tau \td\UU^-_\tau 
\leq  \big(\la +C (\la) \sqrt\eta + C(\la) \Vert \bY_\tau \Vert \big) \td\UU^-_\tau  
+  \Vert Q^-_J(U^+_\tau, \mathbf Y_\tau) \Vert_{L^2_{f}}
+ C(\la,J) \Vert U^+_\tau \Vert^{J+1}_{L^2_{f}}
+ C \eta e^{-\frac{(R(\tau)-2)^2}8}  
\end{equation}
\end{Lemma}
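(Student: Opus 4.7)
The plan is to differentiate the localized function $u_\tau\omega_{R(\tau)} = U^+_\tau + U^-_\tau$ in time using the PDE \eqref{eq_general_PDE}, project onto the spectral subspaces $\sV_{>\la}$ and $\sV_{\leq\la}$, and invoke Lemma~\ref{lem:Q_initial} to control the resulting nonlinear terms. The three hypotheses in \eqref{eq_h_bound}, namely $\Vert u\Vert_{C^4}\leq\eta$, $\Vert\bY_\tau\Vert\leq c$, and $|\partial_\tau R|\leq 10 e^{R/10}$, are precisely what make both the PDE and Lemma~\ref{lem:Q_initial} applicable and what force the localization errors to be exponentially small.

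Writing $\omega := \omega_{R(\tau)}$, the operator identity $L(u\omega) = \omega Lu + \operatorname{Comm}$, with $\operatorname{Comm} := 2\nabla u\cdot\nabla\omega + u\,\triangle_f\omega$, gives
\[ \partial_\tau(u\omega) - L(u\omega) = Q[u,\bY]\omega + u\,\partial_\tau\omega - \operatorname{Comm}. \]
Both $\operatorname{Comm}$ and $u\,\partial_\tau\omega$ are supported on the thin annulus $\{R-0.2\leq r\leq R\}$, where the $C^4$-bound on $u$, the Gaussian decay of $e^{-f}$, and the hypothesis $|\partial_\tau R|\leq 10 e^{R/10}$ combine to give $L^2_f$-norms $\leq C\eta\,e^{-(R-2)^2/8}$. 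Similarly, $Q[u,\bY]\omega$ and $Q[u\omega,\bY]$ agree on $\{\omega=1\}$ and are both pointwise dominated by $C(r+1)^C$ on the annulus via \eqref{eq_derQ_bounds_asspt}, so (after choosing $c$ small enough that $\Vert\bY\Vert$ contributes at the same order as $\eta$) their difference has $L^2_f$-norm $\leq C\eta\,e^{-(R-2)^2/8}$. I may therefore replace $Q[u,\bY]\omega$ by $Q[u\omega,\bY] = Q[U^++U^-,\bY]$ modulo such errors, and since $u\omega$ is globally $C^4$-small, Lemma~\ref{lem:Q_initial} applies directly.

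For \eqref{eq_evol_ineq_1_lem}, I project the preceding identity onto $\sV_{>\la}$, subtract $Q_J^+(U^+,\bY)$, and apply \eqref{eq_Q_approx_1}, which produces the $(C(\la)\sqrt\eta+C\Vert\bY\Vert)\,\td\UU^-_\tau + C(\la,J)\Vert U^+\Vert^{J+1}$ contribution, while the commutator, cutoff-time-derivative, and $Q$-replacement errors supply the $C\eta\,e^{-(R-2)^2/8}$ tail. Inequality \eqref{eq_evol_ineq_2_lem} is obtained from $\tfrac12\tfrac{d}{d\tau}\Vert U^-\Vert^2_{L^2_f} = \langle U^-,\partial_\tau U^-\rangle$. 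Using the orthogonality $\langle U^-, Q_J(U^+,\bY)\rangle = \langle U^-, Q_J^-(U^+,\bY)\rangle$, the spectral bound $\langle U^-,LU^-\rangle\leq\la\Vert U^-\Vert^2_{L^2_f}$, and estimate \eqref{eq_Q_approx_2}, all unproblematic contributions combine, and one is left with a term of the form $\langle U^-,LU^-\rangle + \epsilon\,\Vert U^-\Vert^2_{H^1_f}$, where $\epsilon := C(\la)\sqrt\eta + C\Vert\bY\Vert$, together with Cauchy--Schwarz cross terms involving $\Vert Q_J^-(U^+,\bY)\Vert$, $\Vert U^+\Vert^{J+1}$, and the exponentially small tail.

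The main obstacle is absorbing this $H^1_f$-contribution into a pure $L^2_f$-estimate: $\Vert\nabla U^-\Vert^2_{L^2_f}$ cannot in general be dominated by $\Vert U^-\Vert^2_{L^2_f}$ on $\sV_{\leq\la}$, and the spectral bound actually forces the inequality in the opposite direction. I resolve this via the identity $\Vert\nabla U^-\Vert^2_{L^2_f} = -\langle U^-,LU^-\rangle + \langle AU^-,U^-\rangle$, which lets me rewrite, for $\epsilon<1$,
\[ \langle U^-,LU^-\rangle + \epsilon\Vert U^-\Vert^2_{H^1_f} = (1-\epsilon)\langle U^-,LU^-\rangle + \epsilon\Vert U^-\Vert^2_{L^2_f} + \epsilon\langle AU^-,U^-\rangle. \]
Bounding the first summand by $(1-\epsilon)\la\Vert U^-\Vert^2_{L^2_f}$ via the spectral bound and the last by $\epsilon C\Vert U^-\Vert^2_{L^2_f}$ via $|A|\leq C$, and choosing $\eta\leq\ov\eta(\la)$ and $c$ small enough that $\epsilon<1$, I obtain the combined upper bound $[\la + C(\la)(\sqrt\eta+\Vert\bY\Vert)]\Vert U^-\Vert^2_{L^2_f}$. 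Dividing by $\Vert U^-\Vert_{L^2_f}$ (the degenerate case $\Vert U^-\Vert = 0$ being handled by a standard limiting argument applied to $\sqrt{\Vert U^-\Vert^2 + \delta}$ and sending $\delta\to 0$) then yields \eqref{eq_evol_ineq_2_lem}.
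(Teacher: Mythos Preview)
Your proof is correct and follows essentially the same route as the paper's: derive the evolution of $u\omega$, collect the annulus-supported error terms into a single $E$ with $\|E\|_{L^2_f}\leq C\eta\,e^{-(R-2)^2/8}$, project onto $\sV_{>\la}$ and $\sV_{\leq\la}$, and invoke the two estimates of Lemma~\ref{lem:Q_initial}. Your absorption of the $H^1_f$-term via the identity $\|\nabla U^-\|^2_{L^2_f}=-\langle U^-,LU^-\rangle+\langle AU^-,U^-\rangle$ is algebraically the same manipulation the paper performs when it writes $\langle U^-,LU^-\rangle=\langle U^-,((1-\eps)L+\eps A)U^-\rangle-\eps\|\nabla U^-\|^2_{L^2_f}$ and then chooses $\eps=C(\la)\sqrt\eta+C\|\bY\|$; both produce the coefficient $\la+C(\la)\eps$ in front of $\|U^-\|^2_{L^2_f}$ after cancelling the gradient term.
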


\begin{proof}
As always, we will denote by $C$ a generic constant and we will indicate dependencies in parentheses.
For convenience we will write $\omega_\tau = \omega_{R(\tau)}$ and we will frequently drop the ``$\tau$''-subscript in time-dependent quantities.
When the context is clear, we will also frequently omit the subscript ``$L^2_{f}$'' on norms and inner products.

By direct computation, we obtain the following evolution equation for $u\omega$: 
\begin{equation} \label{eq_evol_eq_withE}
\partial_\tau (u\omega)=L (u\omega) + Q[u\omega, \mathbf Y] + E, 
\end{equation}
where
\begin{equation}
   E= \partial_\tau  \omega \cdot  u - 2\nabla \omega \cdot \nabla u -\Delta_{f} \omega \cdot u   + \big( Q[u, \mathbf Y] \omega - Q[u\omega, \mathbf Y] \big).
\end{equation}
Note that $E_\tau$ is supported on $(\IB^n_{R(\tau) } \setminus \IB^n_{ R(\tau) -1}) \times N$, where we can use the bounds $e^{-f}\leq e^{-(R(\tau)-1)^2/4}$ and $|\partial_\tau \omega| \leq Ce^{R(\tau)/10}$.
Combining this with \eqref{eq_h_bound}, we obtain
\begin{equation} 
\Vert E \Vert 
  \leq \Big(  C \eta^2  R^{C} e^{\frac{R}{10}} e^{-\frac{( R-1)^2}{4}} \Big)^{1/2} 
   \leq  C\eta e^{-\frac{( R-2)^2}8}. \label{eq_I4}
\end{equation}

Projecting \eqref{eq_evol_eq_withE} to $\sV_{>\lambda},\sV_{\leq\lambda}$ and noting that $L$ commutes with $\PP_{\sV_{>\lambda}},\PP_{\sV_{\leq\lambda}}$ yields
\begin{alignat*}{2}
 \partial_\tau U^+ &= L U^+ + Q^+_J(U^+, \mathbf Y) &&+ \PP_{\sV_{> \la}} \big(   Q[U^+ + U^-, \mathbf Y] - Q_J(U^+, \mathbf Y) \big)  
+ \PP_{\sV_{> \la}}  E , \\
 \partial_\tau U^- &= L U^-  + Q^-_J(U^+, \mathbf Y) &&+ \PP_{\sV_{\leq \la}} \big(   Q[U^+ + U^-, \mathbf Y] - Q_J(U^+, \mathbf Y) \big) 
 + \PP_{\sV_{\leq \la}}  E.
\end{alignat*}
We therefore obtain using \eqref{eq_Q_approx_1} from Lemma \ref{lem:Q_initial} and assuming $0 <  \eta \leq \ov\eta(\la)$ that
\begin{multline*}
 \Big\| \partial_\tau U^+ - L U^+ - Q_J^+(U^+, \mathbf Y) \Big\| 
\leq \big\| \PP_{\sV_{> \la}} \big( Q[U^+ + U^-, \mathbf Y] - Q_J(U^+, \mathbf Y) \big) \big\|   
 +\| \PP_{\sV_{>\lambda}} E  \| \\
\leq (C(\la) \sqrt\eta + C\Vert\bY\Vert )\| U^- \|_{L^2_{f}} 
+ C(\la, J) \Vert U^+ \Vert^{J+1} 
+ \Vert E \Vert_{L^2_f} ,
\end{multline*}
which shows \eqref{eq_evol_ineq_1_lem} in combination with \eqref{eq_I4}.
Also by \eqref{eq_Q_approx_2}
\begin{align}
\| U^- \|^2_{L^2_f} \cdot \partial_\tau \| U^- \|_{L^2_f} &=
 \tfrac12 \partial_\tau \| U^- \|^2_{L^2_f}  \notag \\
&= \big\langle U^-, L U^- \big\rangle 
+ \big\langle U^-, Q_J^- (U^+, \mathbf Y) \big\rangle \notag \\
&\qquad + \big\langle U^- , Q[U^+ + U^-, \mathbf Y] - Q_J(U^+, \mathbf Y) \big\rangle
+ \big\langle U^-, E \big\rangle \notag \\
&\leq \big\langle U^-, L U^- \big\rangle 
+ \Vert U^- \Vert_{L^2_f} \cdot \Vert Q_J^-(U^+, \mathbf Y) \Vert_{L^2_f}  + (C(\la) \sqrt\eta + C\Vert \bY \Vert) \| U^- \|_{H^1_f}^2 \notag \\
&\qquad + C(\la, J) \| U^- \|_{L^2_f} \cdot \Vert U^+ \Vert^{J+1}+ \Vert U^- \Vert_{L^2_f} \cdot \Vert E \Vert_{L^2_f}. \label{eq_Um_lem_many}
\end{align}
It remains to bound the terms $\langle U^-, L U^- \rangle$ and $E$.
Let $\eps \in [0,1)$ be a constant whose value we will determine later.
Since that the maximal eigenvalue of $ L - \eps \triangle_{f} = (1-\eps) L  + \eps A $ restricted to $\sV_{\leq \la}$ is at most $(1-\eps)\lambda  + C \eps \leq \la + C(\la)\eps$, for some generic constant $C$, we have
\begin{equation}\label{eq_UmLumeps}
 \big\langle U^-, L U^- \big \rangle  
    =  \big\langle U^-, ( L - \eps \triangle_{f} )  U^-   \big\rangle  - \eps \|\nabla U^- \|^2_{L^2_f}  \le (\la  + C(\la) \eps) \|U^- \|^2_{L^2_f}-\eps \big\| U^- \big\|_{H^1_f}^2.
\end{equation}
Let us now assume that $\eta \leq \ov\eta(\la)$ and $\Vert \bY \Vert \leq c$ for some suitable constant $c > 0$ such that the term $\eps := C(\la) \sqrt\eta + C \Vert \mathbf Y\Vert$ in \eqref{eq_Um_lem_many} is bounded by $0.1$.
Combining \eqref{eq_I4}, \eqref{eq_Um_lem_many} and \eqref{eq_UmLumeps} for this choice of $\eps$ yields \eqref{eq_evol_ineq_2_lem}.
\end{proof}

\bigskip

\subsection{Proof of Theorem~\ref{Thm_PDE_ODI_principle}}
\label{subsec_PDEODE_proof}

\begin{proof}[Proof of Theorem~\ref{Thm_PDE_ODI_principle}]
For convenience we will omit all ``$L^2_f$''-subscripts.
Choose $T' \geq -\infty$ maximal such that $u_\tau \equiv 0$ and $\DD_\tau = \IR^n \times N$ for all $\tau \leq T'$ and write $\IB_R = \IB^n_R$.
Since we can set $R(\tau) = \infty$ for $\tau \leq T'$, {we may shrink $I$ and assume $T'=\min I$.}
Hence it suffices to construct $R(\tau)$ with $R(\tau) < \infty$ for $\tau > \min I$.

Let us now assume for a moment that we have already chosen $R^*$ and constructed the function $R : I \to [R^*,\infty]$ such that the bounds \eqref{eq_h_bound} from Lemma~\ref{l:evolution_ODE} hold.
Define
\begin{equation} \label{eq_UpUmE}
  U^+_\tau :=  \PP_{\sV_{> \lambda}} (u_{\tau} \omega_{R(\tau)})  , \qquad
\td\UU^-_\tau := \big\|   \PP_{\sV_{\leq \lambda}}  (u_{\tau} \omega_{R(\tau)} ) \big\|, \qquad E_\tau := e^{-\frac{((1-\eps)R(\tau))^2}{8}}.  
\end{equation}
So $\UU^-_\tau = \td\UU^- + \eta E_\tau$.
The evolution inequalities from Assertion~\ref{Thm_PDE_ODI_principle_b} of our theorem differ from \eqref{eq_evol_ineq_1_lem} and \eqref{eq_evol_ineq_2_lem} as they involve the quantity $\UU^-_\tau = \td\UU^-_\tau + \eta E_\tau$ instead of $\td\UU^-_\tau$.
If we adjust the constants $C(\la)$ and $C(\la, J)$ in front of the last term of the second inequality, then the bounds \eqref{eq_evol_ineq_1_lem} and \eqref{eq_evol_ineq_2_lem} imply Assertion~\ref{Thm_PDE_ODI_principle_b} if the following additional bounds can be guaranteed for all $\tau \in I$ (with $\tau \neq T_0$ if $R_0 = \infty$):
\begin{align} \label{eq_eR8_leq_E}
 e^{-\frac{(R(\tau)-2)^2}{8}} &\leq \eta e^{-\frac{((1-\eps)R(\tau))^2}{8}} = \eta E_\tau , \\ 
 \label{eq_need}
 \eta \partial_\tau  E_\tau
&\leq   \la'  \eta E_\tau  + \eta \td\UU^-_\tau  
+ \| U^+_\tau \|^{J+1}.
\end{align}
Here we set $\la' := \min \{ \la , -1 \}$ for reasons that will become clear later.
To complete the proof of the theorem, it therefore suffices to construct a function $R$ such that these bounds hold and such that for all $\tau \in I$
\begin{equation} \label{eq_dtR_between}
-e^{R(\tau)/10} \leq \partial_\tau R(\tau) \leq 1, 
\end{equation}
\begin{equation} \label{eq_BND_ueta}
\IB_{R(\tau)} \times N \subset \DD_\tau \qquad \text{and} \qquad \Vert u_\tau \Vert_{C^{m}(\IB_{R(\tau)} \times N )} \leq  \eta.
\end{equation}

In the remainder of the proof we will construct a function $R : I \to [R^*, \infty]$, for a suitable choice of $R^* (m,J,\la,\eta, \sigma,\eps)$, that satisfies Properties~\eqref{eq_eR8_leq_E}--\eqref{eq_BND_ueta} assuming $\eps \leq \ov\eps(\sigma)$.
Once such a function has been produced, the theorem follows.

{
Next, we reduce the construction of such a function to the case where $I = [T_0, T_1]$ is compact and $R_0 < \infty$, and we argue that it suffices to construct $R(\tau)$ to be locally Lipschitz whenever it takes finite values.
To see this, observe that the required conditions are equivalent to a collection of inequalities involving $R(\tau)$ and $\partial_\tau R(\tau)$; for example, \eqref{eq_BND_ueta} is equivalent to a pointwise upper bound on $R(\tau)$.
If $R(\tau)$ is only locally Lipschitz where finite, then it can be smoothened, and the resulting function will still satisfy slightly weakened versions of Properties~\eqref{eq_eR8_leq_E}--\eqref{eq_dtR_between}, which are sufficient for our purposes. Moreover, the smoothing can be chosen so as not to increase $R(\tau)$ pointwise, which ensures that Property~\eqref{eq_BND_ueta} remains preserved. 
Thus it suffices to construct $R(\tau)$ locally Lipschitz wherever it is finite.
Next, we argue that we may assume without loss of generality that $R_0 < \infty$.
Indeed, suppose that $R_0 = \infty$ and assume that for every finite initial value we can construct $R(\tau)$ satisfying Properties~\eqref{eq_eR8_leq_E}--\eqref{eq_BND_ueta}.
Choose a sequence of such functions $R_i(\tau)$ with $R_i(T_0) \to \infty$.
The bound \eqref{eq_dtR_between} implies that the functions $(R_i(\tau) + 1)^{-1}$ (with $(\infty+1)^{-1} = 0$) are equicontinuous and uniformly bounded. 
So by Arzela-Ascoli, a subsequence converges locally uniformly to a function of the form $(R_\infty(\tau)+1)^{-1}$, where $R_\infty(\tau)$ has the desired properties and $R_\infty(T_0)= \infty$.
It is also clear that Assertion~\ref{Thm_PDE_ODI_principle_d} passes to the limit due to the bound \eqref{eq_dtR_between}.
Finally, if $I$ is not compact, we can approximate it by a sequence of intervals $[T_{0,i},T_{1,i}] \subset I$, find $R_i(\tau)$ for each of these intervals and pass to a a limit as before.}

The following claim accomplishes such a construction up to a certain time $T$.

\begin{Claim} \label{Cl_construct_R}
There is a constant $c > 0$ such that if we assume $\Vert \bY_\tau \Vert \leq c$ for all $\tau \in I$, then the following is true for $R^* \geq \underline{R}^*(\la,\eta,\eps)$.
Suppose that the constant $R_0$ from Assumption~\ref{Thm_PDE_ODI_principle_iv} satisfies $R_0 \geq R^*$.
Then there is a $T \leq T_1$ and a piecewise smooth function $R : [T_0,T] \to [R^*,\infty)$ with $R(T_0) = R_0$ such that 
for all $\tau \in [T_0, T]$ the Properties~\eqref{eq_eR8_leq_E}--\eqref{eq_BND_ueta} hold and such that:
\begin{enumerate}[label=(\alph*)]
\item \label{Cl_construct_R_b} If $T < T_1$, then one of the following is true: 
\begin{equation}\label{eq:contra_u_large}
 \ov{\IB}_{R(T)} \times N \not\subset \DD_{T} \qquad \text{or} \qquad   \Vert u_{T} \Vert_{C^{m}(\IB_{R(T)} \times N )} = \eta.
\end{equation}
\item \label{Cl_construct_R_e} If $E_\tau <  ( \frac{\td\UU^-_{\tau}}{\eta} + (\frac{\|U^+_{\tau}\|}{\eta})^{J+1} )^{1+\eps}$ for some $\tau \in I$, then $\partial_\tau R(\tau) = - e^{R(\tau)/10}$.
\end{enumerate}
\end{Claim}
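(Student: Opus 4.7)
The plan is to define $R(\tau)$ via a piecewise-smooth ODE starting from $R(T_0) = R_0$, and to take $T \in (T_0, T_1]$ as the first time one of the alternatives in~\eqref{eq:contra_u_large} occurs (or $T_1$ if none does). The ODE is: in the \emph{bad regime} (where $E_\tau < (\td\UU^-_\tau/\eta + (\|U^+_\tau\|/\eta)^{J+1})^{1+\eps}$), set $\partial_\tau R(\tau) = -e^{R(\tau)/10}$; in the \emph{good regime}, set $\partial_\tau R(\tau) = \max\{0,\, 4|\la'|/((1-\eps)^2 R(\tau))\}$. The choice in the bad regime matches assertion~\ref{Cl_construct_R_e} by construction, while the positive rate in the good regime will make~\eqref{eq_need} automatic. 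Transitions occur when $E_\tau$ crosses the regime boundary, so $R(\tau)$ is piecewise smooth.

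The verification of Properties~\eqref{eq_eR8_leq_E}--\eqref{eq_BND_ueta} on $[T_0, T]$ proceeds as follows. Property~\eqref{eq_eR8_leq_E} reduces to $(R-2)^2 - ((1-\eps)R)^2 \geq 8\log(1/\eta)$, which holds for $R \geq R^*$ when $R^* \geq \underline R^*(\eta,\eps)$. Property~\eqref{eq_dtR_between} is clear in the bad regime and holds in the good regime because $4|\la'|/((1-\eps)^2 R) \leq 1$ for $R$ large. For~\eqref{eq_need}, using the identity $\partial_\tau E_\tau = -\tfrac{(1-\eps)^2 R}{4} E_\tau\,\partial_\tau R$, the good-regime choice ensures $\partial_\tau E_\tau + |\la'| E_\tau \leq 0$; in the bad regime, \eqref{eq_need} reduces to
\[ \tfrac{(1-\eps)^2 R}{4} E_\tau e^{R/10} + |\la'| E_\tau \le \td\UU^-_\tau/\eta + (\|U^+_\tau\|/\eta)^{J+1}, \]
which follows from the bad-regime inequality $E_\tau^{1/(1+\eps)} \le \td\UU^-_\tau/\eta + (\|U^+_\tau\|/\eta)^{J+1}$ together with the super-Gaussian decay of $E_\tau$ in $R$, provided $R \geq R^* \geq \underline R^*(\la, \eps)$. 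Finally, \eqref{eq_BND_ueta} holds on $[T_0, T]$ by the stopping rule, with the strict inequality from~\eqref{eq_atT0bound} at $T_0$ and continuity providing $T > T_0$.

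The main obstacle will be ensuring that $R(\tau) \geq R^*$ throughout $[T_0, T]$---so that the stopping time $T$ is governed only by~\eqref{eq:contra_u_large} and not by $R$ dropping below~$R^*$. My key observation is that near $R = R^*$, the bad regime cannot occur. Indeed, Assumption~\ref{Thm_PDE_ODI_principle_ii} gives $\|u_\tau\|_{L^2_f(\IB^n_{R^*} \times N)} \leq \eta e^{-(R^*)^2/8}$ at all times, and combining this with the $C^0$-bound from~\eqref{eq_BND_ueta} together with a Gaussian tail estimate on the annulus $\IB^n_R \setminus \IB^n_{R^*}$ yields, for $R \in [R^*, R^*+1]$, an estimate of the form $\|u_\tau \omega_R\|_{L^2_f} \leq C\eta (R^*)^{C} e^{-(R^*)^2/8}$, and hence $A := \td\UU^-_\tau/\eta + (\|U^+_\tau\|/\eta)^{J+1} \leq C(R^*)^{C} e^{-(R^*)^2/8}$. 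Comparing with $E_\tau = e^{-((1-\eps)R)^2/8}$ and using the identity $(1+\eps) - (1-\eps)^2 = \eps(3-\eps)$, the ratio $A^{1+\eps}/E_\tau$ decays like $(R^*)^{C}e^{-\eps(3-\eps)(R^*)^2/8}$, which is $\ll 1$ for $R^* \geq \underline R^*(\eps)$. Thus the good-regime condition $E_\tau \geq A^{1+\eps}$ holds whenever $R(\tau)$ is close to $R^*$, forcing $R(\tau)$ to be non-decreasing there and preventing it from crossing below~$R^*$. Assertion~\ref{Cl_construct_R_b} then follows from the definition of $T$, and assertion~\ref{Cl_construct_R_e} holds by construction.
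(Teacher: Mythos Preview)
Your plan is essentially the paper's approach: the same two ODEs for $R(\tau)$ (your good-regime rate $4|\la'|/((1-\eps)^2 R)$ is exactly the paper's choice $\partial_\tau E_\tau = \la' E_\tau$), the same stopping rule for $T$, the same verifications of \eqref{eq_eR8_leq_E}--\eqref{eq_need}, and the same reason $R$ cannot drop below $R^*$ (near $R=R^*$ Assumption~\ref{Thm_PDE_ODI_principle_ii} forces the good-regime inequality, so $R$ is nondecreasing there).

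The one real gap is in the switching. You change regimes exactly on the level set $E_\tau = \big(\td\UU^-_\tau/\eta + (\|U^+_\tau\|/\eta)^{J+1}\big)^{1+\eps}$ and then simply assert ``so $R(\tau)$ is piecewise smooth.'' But this level set depends on $R(\tau)$ itself (through $\omega_{R(\tau)}$ in $U^\pm_\tau$ and through $E_\tau$), so you are solving a discontinuous ODE of Filippov type; moreover the good-regime flow \emph{decreases} $E_\tau$ while the bad-regime flow \emph{increases} it, which is exactly the configuration in which sliding modes or Zeno-type chattering can occur. Nothing in your plan rules out infinitely many switches accumulating in finite time, and without that the object $R(\tau)$ is not well defined, let alone piecewise smooth. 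The paper handles precisely this point by introducing hysteresis: it uses separated thresholds (factors $2$ and $4$ of the basic quantity) to decide which ODE to run, builds $R$ inductively on successive intervals $(\tau_i,\tau_{i+1}]$, and then proves explicitly that the $\tau_i$ cannot accumulate. Adding such a hysteresis band to your construction would close the gap. (A minor side point: in your bad-regime reduction of \eqref{eq_need} the $\eta$-normalisation is off; the paper first bounds the left side by $\eta^{J+1} E_\tau^{1/(1+\eps)}$ before invoking the bad-regime inequality, which is also why $\underline R^*$ must depend on $\eta$.)
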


This claim reduces our proof to showing that $T = T_1$.

\begin{proof}
The constant $c$ is the same constant as in Lemma~\ref{l:evolution_ODE}.
The bound \eqref{eq_eR8_leq_E} can be guaranteed easily by assuming a bound of the form $R(\tau) \geq R^* \geq \underline{R}^*(\eta)$.
We will now describe an iterative process to construct the function $R$.

Set $\tau_0 := T_0$ and choose $R(\tau_0) := R_0$.
We will define $R$ successively on time-intervals of the form $(\tau_i, \tau_{i+1}]$ based on the value $R(\tau_i)$ and the solution $u$ at time $\tau_i$.
So suppose that $R$ has already been chosen on $[\tau_0,\tau_i] = \{ \tau_0 \} \cup (\tau_0, \tau_1] \cup \ldots \cup (\tau_{i-1}, \tau_i]$.
Use this function to define $(U^+_\tau)_{\tau \in [T_0,\tau_i]}$, $(\td\UU^-_\tau)_{\tau \in [T_0,\tau_i]}$ and $(E_\tau)_{\tau \in [T_0, \tau_i]}$ via \eqref{eq_UpUmE}.
If $\tau_i = T_1$ or if one of the bounds of Assertion~\ref{Cl_construct_R_b} holds for $T = \tau_i$, then we are done.
Otherwise, we consider two cases.
\medskip

\textit{Case 1:  
$E_{\tau_i} >  2( \frac{\td\UU^-_{\tau_i}}{\eta} + (\frac{\|U^+_{\tau_i}\|}{\eta})^{J+1} )^{1+\eps}$ \quad}
In this case we first extend $R(\tau)$ onto $[T_0,T_1]$ such that  for all $\tau \in (\tau_i, T_1]$
\begin{equation} \label{eq_E_choice}
\partial_\tau E_{\tau} = \la' E_{\tau}.
\end{equation}
This guarantees \eqref{eq_need} and ensures that $\partial_\tau R > 0$ (since $\la' < 0$), so the bound   $R(\tau) \geq R^*$ remains preserved.
Moreover, Property~\eqref{eq_dtR_between} follows if we assume a bound of the form $R^* \geq \underline R^*(\la)$.

Choose $\tau'_{i+1} \geq \tau_i$ maximal such that Property~\eqref{eq_BND_ueta} holds for all $\tau \in [\tau_i, \tau'_{i+1})$.
Since we have assumed that $\tau_i < T_1$ and that none of the bounds in Assertion~\ref{Cl_construct_R_b} hold at time $\tau_i$, we must have $\tau'_{i+1} > \tau_i$.
Let us now extend $(U^+_\tau)$, $(\UU^-_\tau)$ and $(E_\tau)$ to $[T_0,\tau'_{i+1}]$ via \eqref{eq_UpUmE}.
Choose $\tau_{i+1} \in (\tau_i, \tau'_{i+1}]$ maximal such that 
\begin{equation} \label{eq_stop_case1}
E_\tau \geq  \bigg(  \frac{\td\UU^-_\tau}{\eta}  + \Big( \frac{\|U^+_\tau\|}{\eta} \Big)^{J+1} \bigg)^{1+\eps}\qquad \text{for all} \quad \tau \in [\tau_i, \tau_{i+1}].
\end{equation}
So Assertion~\ref{Cl_construct_R_e} is vacuously true for $\tau \in [\tau_i, \tau_{i+1}]$.
\medskip

\textit{Case 2:  
$E_{\tau_i} \le  2( \frac{\td\UU^-_{\tau_i}}{\eta} + (\frac{\|U^+_{\tau_i}\|}{\eta})^{J+1} )^{1+\eps}$ \quad} 
The construction in this case is similar, except that  instead of \eqref{eq_E_choice}, we solve
\[ \partial_\tau R(\tau) = - e^{R(\tau)/10}. \]
Moreover, instead of \eqref{eq_stop_case1} we require that
\begin{equation} \label{eq_stop_case2}
E_\tau \leq  4\bigg(  \frac{\td\UU^-_\tau}{\eta}  +  \Big( \frac{\|U^+_\tau \|}{\eta} \Big)^{J+1}  \bigg)^{1+\eps} \qquad \text{for all} \quad \tau \in [\tau_i, \tau_{i+1}].
\end{equation}
Then Properties~\eqref{eq_dtR_between}, \eqref{eq_BND_ueta} and Assertion~\ref{Cl_construct_R_e} hold by definition.
It remains to establish the bound $R(\tau) \geq R^*$ and Property~\eqref{eq_need} for $\tau \in [\tau_i, \tau_{i+1}]$.
To see this suppose that $R(\tau) = R^*$ for such a time $\tau$.
By Assumption~\ref{Thm_PDE_ODI_principle_ii} and \eqref{eq_BND_ueta} $$\Vert U^+_\tau \Vert = \Vert \PP_{\sV_{> \la}} (u_\tau \omega_{R(\tau)}) \Vert_{L^2_f} \leq \Vert u_\tau \omega_{R^*} \Vert_{L^2_f} \leq  \eta e^{-(R^*)^2/8}$$
and similarly $\td\UU^-_\tau \leq \eta e^{-(R^*)^2/8}$.
Therefore, \eqref{eq_stop_case2} implies a bound of the form $E_\tau \leq C e^{-(1+\eps)(R^*)^2/8}$, which implies $R(\tau) > R^*$ for $R^* \geq \underline{R}^*( \eps)$, in contradiction to our assumption.
To see Property~\eqref{eq_need}, we find that, assuming  $R^* \geq \underline{R}^*(\la, \eta, \eps)$,
\begin{multline*}
 \eta \partial_\tau E_\tau - \la' \eta E_\tau \leq \big( R(\tau) e^{R(\tau)/10} +|\la'| \big) \eta E_\tau 
\leq  \eta^{J+1} E_\tau^{\frac1{1+\eps}} \\
\leq  \eta^{J+1} \bigg(   \frac{\td\UU^-_\tau}{\eta}  +  \Big( \frac{\|U^+_\tau \|}{\eta} \Big)^{J+1} \bigg)
\leq \eta\td\UU^-_\tau +  \|U^+_\tau\|^{J+1}.  
\end{multline*}
\medskip

Let us now argue that our iterative process must terminate after a finite number of steps.
Assume by contradiction that the process would go on indefinitely, producing an increasing sequence $\tau_i \to \tau_\infty < \infty$.
Then by \eqref{eq_dtR_between}, the resulting function $R : [0,\tau_\infty) \to \IR_+$ can be extended continuously onto the closed interval $[0, \tau_\infty]$, so $(U^+_\tau)$, $(\UU^-_\tau)$ and $(E_\tau)$ also allow continuous extensions.
In each step of our iteration, we must have $\tau_{i+1} < \tau'_{i+1}$, because otherwise the process would terminate in the next step.
So if Case~1 applies over $[\tau_i, \tau_{i+1}]$, then we must have equality in \eqref{eq_stop_case1} for $\tau =\tau_{i+1}$.
However, since $E_{\tau_i} > 2  (\frac{\td\UU^-_{\tau_i}}{\eta} + (\frac{\|U^+_{\tau_i}\|}{\eta})^{J+1})^{1+\eps}$ and since the quantities on both sides of \eqref{eq_stop_case1} are continuous, we find that Case~1 cannot apply for infinitely many $i$.
The same reasoning shows that Case~2 can only apply for finitely many $i$, which yields the desired contradiction.
\end{proof}
\medskip

It remains to show that $T=T_1$ if $R^* \geq \underline R^* (m,J,\la, \eta, \sigma,\eps)$ and $\eps \leq \ov\eps(\sigma)$, so assume by contradiction that $T < T_1$, which implies \eqref{eq:contra_u_large}.

\begin{Claim}
$T \geq T_0 + 1$.
\end{Claim}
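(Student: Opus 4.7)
The plan is to exploit the fact that the radius $R(\tau)$ cannot grow too quickly, combined with the initial regularity bound from Assumption~\ref{Thm_PDE_ODI_principle_iv} over a slightly enlarged ball. Concretely, I would begin by integrating the upper bound in \eqref{eq_dtR_between}, namely $\partial_\tau R(\tau) \leq 1$, starting from $R(T_0) = R_0$. This gives the pointwise estimate
\[ R(\tau) \;\leq\; R_0 + (\tau - T_0) \qquad \text{for all} \quad \tau \in [T_0, T]. \]
In particular, for every $\tau \in [T_0, \min\{T, T_0 + 1\})$ one has the strict inequality $R(\tau) < R_0 + 1$, so the closed ball $\overline{\IB}_{R(\tau)}$ is contained in the open ball $\IB^n_{R_0+1}$.

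Next I would invoke Assumption~\ref{Thm_PDE_ODI_principle_iv}, which tells us that for every $\tau \in [T_0, T_0+1]$ the inclusion $\IB^n_{R_0+1} \times N \subset \DD_\tau$ holds and that the strict bound $\Vert u_\tau \Vert_{C^m(\IB^n_{R_0+1} \times N)} < \eta$ is satisfied. Combining with the previous step, I obtain for every $\tau \in [T_0, \min\{T, T_0+1\})$ the two conclusions
\[ \overline{\IB}_{R(\tau)} \times N \;\subset\; \IB^n_{R_0+1} \times N \;\subset\; \DD_\tau, \qquad \Vert u_\tau \Vert_{C^m(\IB_{R(\tau)} \times N)} \;\leq\; \Vert u_\tau \Vert_{C^m(\IB^n_{R_0+1} \times N)} \;<\; \eta. \]

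Both of these rule out the alternatives in \eqref{eq:contra_u_large} throughout the half-open interval $[T_0, \min\{T, T_0+1\})$. Hence, if one supposes for contradiction that $T < T_0 + 1$, then at time $T$ itself one still has $R(T) < R_0 + 1$, and by continuity of $R$ and $u$ the strict inequalities above persist up to and including $\tau = T$, contradicting \eqref{eq:contra_u_large}. Therefore $T \geq T_0 + 1$, as claimed.

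There is no real obstacle here: the statement is purely a propagation-of-smallness assertion on the short time interval $[T_0, T_0+1]$, and it is designed to match precisely the initial-regularity input provided by Assumption~\ref{Thm_PDE_ODI_principle_iv}. The only subtlety worth flagging is the use of the \emph{upper} bound $\partial_\tau R \leq 1$ in \eqref{eq_dtR_between} (rather than the lower bound, which is what plays a role elsewhere in the argument); this is the reason the assumption is formulated over the slightly enlarged ball of radius $R_0 + 1$ rather than $R_0$.
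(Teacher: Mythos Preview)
Your proof is correct and follows essentially the same approach as the paper: assume $T < T_0 + 1$, use the upper bound $\partial_\tau R \leq 1$ from \eqref{eq_dtR_between} to get $R(T) < R_0 + 1$, and observe that Assumption~\ref{Thm_PDE_ODI_principle_iv} then directly contradicts \eqref{eq:contra_u_large}. The paper states this in two lines; your continuity argument for passing from $[T_0,\min\{T,T_0+1\})$ to $\tau = T$ is unnecessary, since $T \in [T_0,T_0+1]$ so Assumption~\ref{Thm_PDE_ODI_principle_iv} applies at $\tau = T$ directly.
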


\begin{proof}
Suppose $T < T_0 +1$.
Then \eqref{eq_dtR_between} implies $R(T) < R_0 + 1$ and \eqref{eq:contra_u_large} contradicts Assumption~\ref{Thm_PDE_ODI_principle_iv}.
\end{proof}

In the following, we will use the pseudolocality property from Assumption~\ref{Thm_PDE_ODI_principle_i} applied  times $\tau_0, \tau_1$, which are chosen according to the following claim.

\begin{Claim} \label{Cl_E_geq_U}
Assuming $R^* \geq \underline R^*(\sigma)$, there are times $\tau_0, \tau_1 \in [T-1, T]$ with $\tau_0 \leq \tau_1$ such that the following is true for each $i = 0,1$:
\begin{alignat}{1} \label{eq_E_geq_UUU}
 E_{\tau_i} &\geq \bigg(\frac{\td\UU^-_{\tau_i}}{\eta} +  \Big( \frac{ \|U^+_{\tau_i}\|}{\eta} \Big)^{J+1}  \bigg)^{1+\eps} , \\
 T - \tau_0 &\geq \tfrac12,  \label{eq_Ts2mtau1} \\
 T - \tau_1 &\leq \sigma  R^{-1/\sigma}(T)  .  \label{eq_RTs2leq}
\end{alignat}
\end{Claim}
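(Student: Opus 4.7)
The plan is to proceed by contradiction, exploiting the contrapositive of Claim~\ref{Cl_construct_R}\ref{Cl_construct_R_e}: whenever \eqref{eq_E_geq_UUU} fails at some $\tau \in [T-1,T]$, the radius must evolve by $\partial_\tau R(\tau) = -e^{R(\tau)/10}$, a decay that is extremely rapid given the standing lower bound $R\ge R^*$. The key quantity to monitor is $\phi(\tau) := 10\, e^{-R(\tau)/10}$: whenever $R'(\tau) = -e^{R(\tau)/10}$, a direct chain-rule computation yields $\phi'(\tau)=1$, while $R\ge R^*$ forces the uniform ceiling $\phi\le 10e^{-R^*/10}$ throughout $I$.

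To produce $\tau_0 \in [T-1, T-1/2]$ satisfying \eqref{eq_E_geq_UUU}, I suppose for contradiction that the inequality fails throughout $[T-1,T-1/2]$. Integrating $\phi' \equiv 1$ over this half-interval yields $\phi(T-1/2)-\phi(T-1)=1/2$, hence $\phi(T-1/2)\ge 1/2$, which in turn forces $R(T-1/2)\le 10\log 20$. Once $R^*$ exceeds this absolute constant, this contradicts $R(T-1/2)\ge R^*$, so such a $\tau_0$ must exist; the requirement $T-\tau_0\ge 1/2$ in \eqref{eq_Ts2mtau1} is built into the interval.

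For $\tau_1$, I set $\delta := \sigma R(T)^{-1/\sigma}$ and suppose \eqref{eq_E_geq_UUU} fails throughout $[T-\delta, T]$. The same integration yields $\phi(T)-\phi(T-\delta)\ge \delta$, i.e.\ $R(T)^{1/\sigma} e^{-R(T)/10}\ge \sigma/10$. Since the left-hand side tends to $0$ as $R(T)\to\infty$ for fixed $\sigma>0$ (exponential beats polynomial), this fails once $R(T)\ge R^*\ge\underline R^*(\sigma)$, producing the desired contradiction and thereby a time $\tau_1\in[T-\delta,T]$ satisfying \eqref{eq_RTs2leq} and \eqref{eq_E_geq_UUU}. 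The ordering $\tau_0\le\tau_1$ is automatic provided $\delta\le 1/2$, which likewise holds for $R^*\ge \underline R^*(\sigma)$.

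The main (and essentially only) obstacle is the bookkeeping of $\underline R^*(\sigma)$: the constraints $\delta\le 1/2$, $R^{1/\sigma}e^{-R/10}<\sigma/10$, and $10 e^{-R^*/10}< 1/2$ all reduce to requiring $R^*$ to be larger than an explicit, slowly growing function of $\sigma^{-1}$. Crucially, no other parameter of the theorem ($\lambda$, $\eta$, $J$) enters this step, because the only structural input is the Case~2 decay law $R' = -e^{R/10}$ from the iterative construction in Claim~\ref{Cl_construct_R}.
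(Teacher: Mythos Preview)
Your proof is correct and follows essentially the same contradiction strategy as the paper. The one notable difference is the auxiliary quantity: you track $\phi(\tau)=10e^{-R(\tau)/10}$, the natural integrating factor for the ODE $R'=-e^{R/10}$ (giving $\phi'\equiv 1$ exactly), whereas the paper tracks $R(\tau)^{-1/\sigma}$ and uses the weaker inequality $e^{R/10}\ge 2R^{1+1/\sigma}$ (valid for $R\ge\underline R^*(\sigma)$) to obtain $\partial_\tau R^{-1/\sigma}\ge 2/\sigma$. The paper's choice is tailored so that integrating on $(\tau_1,T]$ directly yields $R^{-1/\sigma}(T)\ge\sigma^{-1}(T-\tau_1)$, which is exactly \eqref{eq_RTs2leq}; your route instead produces $\tau_1$ by contradiction on the prescribed interval $[T-\sigma R(T)^{-1/\sigma},T]$ and then reads off \eqref{eq_RTs2leq} from the interval length. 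Both arguments reduce to the same elementary fact that the rapid decay $R'=-e^{R/10}$ cannot persist long once $R\ge R^*$ is large, and neither requires any input beyond Claim~\ref{Cl_construct_R}\ref{Cl_construct_R_e} and the lower bound $R\ge R^*$.
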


\begin{proof}
Suppose first that there is no $\tau_0 \in [T -  1, T - \tfrac12 ]$ such that \eqref{eq_E_geq_UUU} holds for $i = 0$. 
So the reverse inequality must hold over the entire time-interval $[T-1, T-\frac12]$.
Therefore, by Claim~\ref{Cl_construct_R}\ref{Cl_construct_R_e}, and assuming that $R^* \geq \underline R^*(\sigma)$, we have the following bound on this interval:
\begin{equation} \label{eq_partial_tau_R_2sigma}
 \partial_\tau R(\tau) = -e ^{R(\tau)/10} \leq -2 R^{1+1/\sigma}(\tau) \qquad \Rightarrow \qquad \partial_\tau  R^{-1/\sigma}(\tau) \geq  \tfrac{2}{\sigma}. 
\end{equation}
Assuming $R^* > 1$, we can ensure that  $R^{-1/\sigma}(\tau) < 1$, so the differential inequality in \eqref{eq_partial_tau_R_2sigma} cannot hold over time-intervals of length $\frac12 \geq \frac{\sigma}2$ and hence not on all of $[T-1, T-\frac12]$. 
So $\tau_0$ can be chosen as required.

Next choose $\tau_1 \in [T - \tfrac12,T]$ to be maximal with the property that \eqref{eq_E_geq_UUU} holds; if no such time exists, set $\tau_1 :=T-\frac12$.
As before, we obtain the differential inequality \eqref{eq_partial_tau_R_2sigma} on $(\tau_1, T]$ and we conclude that $T - \tau_1 < \frac{\sigma}2$, which rules out $\tau_1 = T-\frac12$ and implies \eqref{eq_E_geq_UUU} for $i=1$.
Lastly, we integrate \eqref{eq_partial_tau_R_2sigma} and obtain
\[ R^{-1/\sigma}(T) \geq   \tfrac{1}{\sigma} ( T - \tau_1) + R^{-1/\sigma}(\tau_1) \geq \tfrac{1}{\sigma} (T - \tau_1) , \]
which proves \eqref{eq_RTs2leq}.
\end{proof}
\medskip

In the next claim we derive bounds on $u$ at the times $\tau_0$ and $\tau_1$ over a balls whose radii are slightly smaller than $R(\tau_0)$ and $R(\tau_1)$.

\begin{Claim} \label{Cl_mm1_bounds}
Assuming $R^* \geq \underline{R}^*(m,J,\la,\sigma,\eps)$, we have for $i=0,1$
\[ |u_{\tau_i}|+ \ldots + |\nabla^{m-1} u_{\tau_i}| \leq \sigma\eta  \qquad \text{on} \qquad 
\IB_{\frac{(1-\eps)^2}{1+\eps} R(\tau_i)} \times N .
\]
\end{Claim}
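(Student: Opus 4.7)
The plan is to convert the upper bound on $E_{\tau_i}$ coming from inequality \eqref{eq_E_geq_UUU} of Claim~\ref{Cl_E_geq_U} into an $L^2_f$-smallness estimate on the localized function $u_{\tau_i}\omega_{R(\tau_i)}$, and then to upgrade this to a pointwise $C^{m-1}$-bound on the ball $\IB_{\rho}\times N$ with $\rho := \tfrac{(1-\eps)^2}{1+\eps} R(\tau_i)$ by interpolating against the uniform $C^m$-control from Property~\eqref{eq_BND_ueta}.

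From inequality \eqref{eq_E_geq_UUU} one immediately extracts
\[
\Vert U^+_{\tau_i}\Vert_{L^2_f} \leq \eta\, E_{\tau_i}^{\frac{1}{(1+\eps)(J+1)}},\qquad \td\UU^-_{\tau_i} \leq \eta\, E_{\tau_i}^{\frac{1}{1+\eps}},
\]
where $E_{\tau_i} = e^{-((1-\eps)R(\tau_i))^2/8}$. Set $U^-_{\tau_i} := \PP_{\sV_{\leq\la}}(u_{\tau_i}\omega_{R(\tau_i)})$ and treat the two components separately. Since $U^+_{\tau_i}\in\sV_{>\la}$, Lemma~\ref{Lem_polynomial_bounds} gives, for all points in $\IB_\rho\times N$ and every $k\leq m$,
\[
|\nabla^k U^+_{\tau_i}| \leq C(\la,k)\, R(\tau_i)^{C(\la)}\, \Vert U^+_{\tau_i}\Vert_{L^2_f} \leq C(\la,k)\, R(\tau_i)^{C(\la)}\, \eta\, E_{\tau_i}^{\frac{1}{(1+\eps)(J+1)}},
\]
and for $R(\tau_i)\geq R^*\geq \underline R^*(m,J,\la,\sigma,\eps)$ sufficiently large the Gaussian factor defeats the polynomial in $R(\tau_i)$, producing the bound $\Vert U^+_{\tau_i}\Vert_{C^{m-1}(\IB_\rho\times N)} \leq \tfrac{\sigma}{2}\eta$.

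For $U^-_{\tau_i}$ the argument is more subtle and is the main technical point. Assuming $R^*\geq\underline R^*(\eps)$ so that $\rho+2 \leq R(\tau_i)-0.2$, the cutoff $\omega_{R(\tau_i)}$ equals $1$ on $\IB_{\rho+2}$, so $u_{\tau_i} = U^+_{\tau_i}+U^-_{\tau_i}$ there. Combining Property~\eqref{eq_BND_ueta} with the pointwise $C^m$-bound on $U^+_{\tau_i}$ from the previous step yields $\Vert U^-_{\tau_i}\Vert_{C^m(\IB_{\rho+2}\times N)} \leq C(\la,m)\, R(\tau_i)^{C(\la)}\,\eta$. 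Converting the weighted $L^2_f$-norm to the unweighted $L^2$-norm on this ball costs a factor $e^{(\rho+2)^2/8}$, so
\[
\Vert U^-_{\tau_i}\Vert_{L^2(\IB_{\rho+2}\times N)} \leq \eta\,\exp\!\Bigl(\tfrac{(\rho+2)^2}{8} - \tfrac{(1-\eps)^2 R(\tau_i)^2}{8(1+\eps)}\Bigr).
\]
A short algebraic computation using the identity $\tfrac{(1-\eps)^2}{1+\eps} < 1$ shows that the exponent is negative and of order $-\eps\,R(\tau_i)^2$ for large $R(\tau_i)$, so this $L^2$-norm decays super-exponentially. A standard Gagliardo--Nirenberg interpolation $\Vert v\Vert_{C^{m-1}} \leq C\Vert v\Vert_{C^m}^{1-\alpha}\Vert v\Vert_{L^2}^{\alpha}$, applied on unit-sized subballs covering $\IB_\rho$, then produces $\Vert U^-_{\tau_i}\Vert_{C^{m-1}(\IB_\rho\times N)} \leq \tfrac{\sigma}{2}\eta$ for $R^*$ sufficiently large, since the super-exponential decay of the $L^2$-factor absorbs any polynomial blow-up in $R(\tau_i)$ coming from the $C^m$-factor.

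Adding the two estimates gives the desired bound $\Vert u_{\tau_i}\Vert_{C^{m-1}(\IB_\rho\times N)} \leq \sigma\eta$. The main obstacle is the exponent balance in the penultimate display: the weight factor $e^{(\rho+2)^2/8}$ paid for converting $L^2_f$ to $L^2$ must be strictly dominated by the Gaussian decay $e^{-(1-\eps)^2 R(\tau_i)^2/(8(1+\eps))}$ coming from $E_{\tau_i}^{1/(1+\eps)}$. This is precisely the reason why the claim is formulated with the apparently peculiar radius $\rho = \tfrac{(1-\eps)^2}{1+\eps} R(\tau_i)$: the same $\eps$-factor appearing in both the numerator and the denominator leaves a net gap of order $-\eps\,R(\tau_i)^2$ in the exponent rather than a cancellation, which is exactly what provides the super-exponential decay needed to complete the argument.
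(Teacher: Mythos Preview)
Your proof is correct and follows essentially the same approach as the paper: split $u_{\tau_i}\omega_{R(\tau_i)}=U^+_{\tau_i}+U^-_{\tau_i}$, handle $U^+$ via the polynomial pointwise bounds of Lemma~\ref{Lem_polynomial_bounds}, convert the weighted $L^2_f$-bound on $U^-$ to an unweighted $L^2$-bound on a slightly larger ball using the exponent gap produced by the particular radius $\rho=\tfrac{(1-\eps)^2}{1+\eps}R(\tau_i)$, and then interpolate against the uniform $C^m$-control from \eqref{eq_BND_ueta}. The only cosmetic difference is that the paper passes through the intermediate ball of radius $\tfrac{(1-\eps)^2}{\sqrt{1+\eps}}R(\tau_i)$ before shrinking to $\rho$, whereas you work directly on $\IB_{\rho+2}$; both choices yield a net negative exponent of order $-\eps R(\tau_i)^2$ and the remainder of the argument is identical.
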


\begin{proof}
Writing $u_{\tau_i} \omega_{R(\tau_i)} = U^+_{\tau_i} + U^-_{\tau_i} \in \sV_{> \la} \oplus \sV_{\leq \la}$ and setting $B := \IB_{\frac{(1-\eps)^2}{\sqrt{1+\eps}} R(\tau_i)} \times N$, we find, using \eqref{eq_E_geq_UUU},
\begin{multline*}
 \int_{B} |U^-_{\tau_i}|^2 dg
\leq   \exp\bigg( \frac{(1-\eps)^4}{1+\eps}  \, \frac{R^2(\tau_i)}{4} \bigg)\int_{B} |U^-_{\tau_i}|^2 e^{-f} dg \\
\leq (E_{\tau_i})^{-\frac{2(1-\eps)^2}{1+\eps}} (\td\UU^-_{\tau_i})^2
\leq (E_{\tau_i})^{\frac{2}{1+\eps} - \frac{2(1-\eps)^2}{1+\eps}} \eta^2
= E_{\tau_i}^{2\eps'(\eps)} \eta^2,
\end{multline*}
where $\eps' (\eps) > 0$.
On $B$ we also have the following bounds due to Lemma~\ref{Lem_polynomial_bounds} and \eqref{eq_E_geq_UUU}
\[ |U^+_{\tau_i}| + \ldots + |\nabla^{m} U^+_{\tau_i}| 
\leq C(\la,\sigma,m)  R^{C(\la,\sigma,m)}(\tau_i) \| U^+_{\tau_i} \|
\leq C(\la,\sigma,m) R^{C(\la,\sigma,m)}(\tau_i)  E_{\tau_i}^{\frac{1}{(1+\eps)J}} \eta . \]
So if we denote by $\xi \in (0,1)$ a constant, which we will determine later, then assuming a bound of the form $R^* \geq \underline R^*(\la, \sigma, \eps, m, J, \xi)$ we have
\begin{equation}\label{eq:U-_integral}
    \int_{B} |U^-_{\tau_i}|^2 dg
\leq \xi \eta^2, \qquad \sup_{B} \big( |U^+_{\tau_i}| + \ldots |\nabla^{m} U^+_{\tau_i}| \big)  \leq  \xi \eta.
\end{equation}
On the other hand, we have the following bound due to Property~\eqref{eq_BND_ueta} 
\[ \sup_B \big( |u_{\tau_i} | + \ldots + |\nabla^{m} u_{\tau_i}| \big) \leq \eta. \]
Combining this with the second bound in \eqref{eq:U-_integral} implies, since $u_{\tau_i} = U^+_{\tau_i} + U^-_{\tau_i}$ on $B$ for $R^* \geq \underline R^*(\eps)$,
\[ \sup_B  \big( |U^-_{\tau_i}| + \ldots + |\nabla^{m} U^-_{\tau_i}| \big)  \leq 2\eta. \]
Assuming $\xi \leq \ov\xi (\sigma, \eps)$, we can use the integral bound in \eqref{eq:U-_integral} to improve this bound at the cost of losing one derivative and slightly shrinking the domain:
\[ \sup_{\IB_{\frac{(1-\eps)^2}{1+\eps}R(\tau_i)} \times N} \big( |U^-_{\tau_i}| + \ldots + |\nabla^{m-1} U^-_{\tau_i}| \big) \leq \tfrac12 \sigma\eta.\]
This step follows from a basic limit argument or the Galiardo-Nirenberg inequality; see also Lemma~\ref{Lem_GN}.
Lastly, combining this bound again with the second bound in \eqref{eq:U-_integral}, and assuming $\xi \leq \frac{1}{2} \sigma$, implies the claim.
\end{proof}
\medskip

Since $\partial_\tau R \leq 1$ we have for $R^* \geq \underline{R}^*(\eps)$ and $i = 0,1$
\[ R(\tau_i) \geq R(T) - 1 \geq (1+\eps)^{-1} R(T). \]
So Claim~\ref{Cl_mm1_bounds} implies that
\begin{equation} \label{eq_u_bound_RTs2}
 |u_{\tau_i}|+ \ldots + |\nabla^{m-1} u_{\tau_i}| \leq \sigma\eta  \qquad \text{on} \qquad 
\IB_{\frac{(1-\eps)^2}{(1+\eps)^2} R(T)} \times N 
\end{equation}
We are now in a position to apply the $(\sigma, \eta, m)$-pseudolocality property, Definition~\ref{Def_pseudolocality} at times $\tau_0, \tau_1$ and $\tau_2 = T$.
The assumptions of Definition~\ref{Def_pseudolocality} hold for $R = \frac{(1-\eps)^2}{(1+\eps)^2} R(T) < R(T)$ due to \eqref{eq_RTs2leq} and \eqref{eq_u_bound_RTs2}.
As a result, we obtain control over the $C^m$-norm of $u_{T}$ over a ball of radius, see \eqref{eq_Ts2mtau1},
\[ e^{\sigma(T-\tau_0)} \Big(\tfrac{(1-\eps)^2}{(1+\eps)^2} R(T) - \tfrac1\sigma \Big)
\geq  e^{\sigma/2} \Big(\tfrac{(1-\eps)^2}{(1+\eps)^2} R(T) - \tfrac1\sigma \Big). \]
Assuming bounds of the form $\eps \leq \ov\eps(\sigma)$ and $R^* \geq \underline{R}^*(\sigma, \eps)$ this is larger than $R(T)$, in contradiction to \eqref{eq:contra_u_large}.
This concludes the proof.
\end{proof}

\bigskip

\section{Mode analysis of mean curvature flow}\label{sec_mode_analysis}
\subsection{Overview}
In this section we apply the PDE-ODI principle (Theorem~\ref{Thm_PDE_ODI_principle}) to nearly cylindrical rescaled modified mean curvature flows.
This reduction allows us to describe the evolution of the flow via a finite-dimensional vector $U^+_\tau \in \sV_{>\la}$, for some $\la < 0$, which encodes the modes associated to eigenvalues greater than $\la$.
By analyzing the resulting ODI system, we will extract geometric information and establish several of our main theorems.

A central theme is that the asymptotics of the ODI---and hence of the flow---are governed by a small number of dominant modes.
These modes decay either \emph{exponentially} (with rates $e^\tau$ or $e^{\tau/2}$) or \emph{polynomially} (with rate $|\tau|^{-1}$) as $\tau \to -\infty$.
All remaining modes are controlled by powers of these dominant ones and therefore decay strictly faster.
In particular, we will see that all non-rotationally symmetric modes must decay faster than any power of the dominant mode.

A key step in our argument is the construction of an appropriate gauge obtained by modifying of the flow by a carefully chosen family of Killing fields $(\bY_t)_{t \in I}$.
This gauge allows us to control the geometrically insignificant, neutral and unstable Jacobi modes corresponding to infinitesimal ambient Euclidean motions.
The gauging procedure is somewhat technical, but it is confined to this section and does not appear explicitly in the main results that will be needed in subsequent sections.
\medskip

We now summarize the main outcomes of this section.
Because several of the results require further definitions and motivation, they will be presented progressively.
The central result is Proposition~\ref{Prop_PO_ancient}, together with its addendum Proposition~\ref{Prop_Add_tau2}.
These propositions compile the key outcomes of this section in the case of ancient, asymptotically cylindrical mean curvature flows and are stated in their most general, yet elementary form without reference to gauging or the rescaled \emph{modified} flows.
They also justify the notions of flows with ``dominant linear or quadratic mode''; see Definition~\ref{Def_dominant_modes}.

In addition, in Subsections~\ref{subsec_eternal} and \ref{subsec_stability_neck} we show how our framework yields straight-forward proofs of classical results of Colding-Ilmanen-Minicozzi \cite{Colding_Ilmanen_Minicozzi} on the rigidity of  cylinders among shrinkers (Corollary~\ref{Cor_eternal}), and Colding-Minicozzi \cite{colding_minicozzi_uniqueness_blowups} on the stability of cylinders, and uniqueness of cylindrical tangent flows (Theorem~\ref{Thm_stability_necks} and Corollaries~\ref{Cor_unique_tangent}, \ref{Cor_unique_tangent_infinity}) without the use of a {\L}ojasiewicz-Simon inequality.
\medskip

This section is organized as follows.
In Subsection~\ref{subsec_linearization}, we discuss the setup used throughout this section and establish basic results on the linearization of the rescaled modified mean curvature flow equation and the associated spectral decomposition.
In Subsection~\ref{subsec_gauge}, we define our gauge and show that every almost cylindrical mean curvature flow can be gauged by a suitable family of ambient Euclidean motions, resulting in a gauged, rescaled \emph{modified} flow.
In Subsection~\ref{subsec_PDEODE_to_MCF}, we apply the PDE-ODI principle to the gauged rescaled modified flow and show that the Jacobi modes arising from ambient Euclidean motions must in fact be small.
Subsections~\ref{subsec_higher_modes} and \ref{subsec_evol_leading} contain the analysis of the resulting ODI system:
we first bound the higher and non-rotationally symmetric modes by powers of the leading modes
and then characterize the finer asymptotics of the leading neutral or unstable, rotationally symmetric modes.
In Subsections~\ref{subsec_eternal} and \ref{subsec_stability_neck}, we apply these results to reprove the rigidity and stability of the cylinder.
Finally, in Subsection~\ref{subsec_asymp_cyl} we apply theory developed so far to ancient, asymptotically cylindrical flows and summarize the resulting picture in a more elementary and useful language.

\subsection{Setup, linearization and mode decomposition} \label{subsec_linearization}
Let us review fundamental facts and introduce definitions that will be used throughout this section and the remainder of this paper.
Throughout this section we fix dimensions $1 \leq k < n$ and $n' \geq 0$ and we will omit dimensional dependencies henceforth.
Recall the round cylinder 
\[ M_{\cyl}  = M^{n,k}_{\cyl} = \IR^k \times \IS^{n-k} \subset \IR^{n+1} \cong \IR^{n+1} \times \bO^{n'} \subset \IR^{n+1+n'}, \] 
where $\IS^{n-k}$ is chosen to have radius $\sqrt{2(n-k)}$.
Let $f(\bx, \by) = \frac14 |\bx|^2$ be the standard quadratic potential on $M_{\cyl}$ and define the weighted $L^2_f$-space as in \eqref{eq_L2f_integration}.
We will always consider $n$-dimensional mean curvature flows  of codimension $1+n'$ in $\IR^{n+1+n'}$.
Denote by $\YY$ the space of Killing fields on the ambient space $\IR^{n+1+n'}$, which we equip with an arbitrary fixed norm $\Vert \cdot \Vert$.
\medskip

The discussion in this section will be based on the notion of rescaled (modified) mean curvature flows $\td\MM'$ associated with a family of Killing fields $(\td\bY_\tau \in \YY)$,  as introduced in Subsection~\ref{subsec_pseudoloc_rmmcf}, and the corresponding PDE \eqref{eq_MCF_u_eq}, which expresses this flow in terms of the graph of a function $u \in C^\infty(\DD; \IR \times \IR^{n'})$ over the cylinder $M_{\cyl}$.
Recall that this PDE takes the form
\begin{equation} \label{eq_main_ev_eq}
  \partial_\tau u_\tau = L u_\tau + Q[u_\tau, \td\bY_\tau]   
\end{equation}
We will fix the linear term $L$ and non-linear term $Q$ henceforth.
Recall from Lemma~\ref{Lem_structure_MCF_graph_equation} that the linearization takes the form
\begin{equation} \label{eq_L_recall}
 L(u',u'') = (\triangle_f u' + u', \triangle_f u'' + \tfrac12 u''), 
\end{equation}
For any $\la \in \IR$ and $J \geq 1$, the procedure outlined in the beginning of Subsection~\ref{subsec_PDE_ODI_statement} produces the $J$\emph{th} Taylor expansion of the non-linear term, which we will study in more detail throughout this section:
\[ Q_J = Q_J^+ + Q_J^- : \sV_{> \la} \times \YY \lto \sV_{>\la} \oplus \sV_{\leq \la}. \]
\medskip

Next, consider the action of $O(n-k+1) \times O(n')$ on $\IR^{n+1+n'} = \IR^{k} \times \IR^{n-k+1} \times \IR^{n'}$, which preserves the cylinder $M_{\cyl}$.
This action naturally induces an action on $C^\infty (M_{\cyl}; \IR \times \IR^{n'})$ via the formula 
\[ \big((A,B). (u',u'') \big)(\bx, \by) = \big( u'(\bx, A^{-1} \by), B u''(\bx, A^{-1} \by) \big), \qquad (A,B) \in O(n-k+1) \times O(n'). \]
Likewise, it induces an action on the weighted $L^2$-space $L^2_f(M_{\cyl}; \IR \times \IR^{n'})$.
We write
\[ L^2_f(M_{\cyl}; \IR \times \IR^{n'})= \sV_{
\rot} \oplus \sV_{\osc}, \]
where $\sV_{\rot}$ is the set of fixed points under this action and $\sV_{\osc}$ is its $L^2_f$-orthogonal complement.
It is not hard to see that $u = (u', u'') \in \sV_{\rot}$ if and only if $u'' \equiv 0$ and $u'(\bx,\by)= u'(\bx)$ is independent of $\by$.

For any $\la \in \IR$, we denote by $\sV_\la \subset L^2_f(M_{\cyl}; \IR \times \IR^{n'})$ the eigenspace of $L$ corresponding to the eigenvalue $\la$.
As in the last section, we will frequently consider the decomposition
\[ L^2_f(M_{\cyl}; \IR \times \IR^{n'}) = \sV_{> \la} \oplus \sV_{\leq \la}, \]
into the finite-dimensional sum of eigenspaces $\sV_{\la'}$ of $L$ corresponding to eigenvalues $\la' > \la$ and its orthogonal complement.
Since $L$ is equivariant under the $O(n-k+1) \times O(n')$-action, it preserves the spaces {$\sV_\la, \sV_{> \la}$ and $\sV_{\leq \la}$}.
So we obtain invariant orthogonal splittings of the form
\[ \sV_{ \la} =  \sV_{\rot,  \la} \oplus \sV_{\osc,  \la}, \qquad
\sV_{> \la} = \sV_{\rot, > \la} \oplus \sV_{\osc, > \la}, \qquad
\sV_{\leq \la} =  \sV_{\rot, \leq \la} \oplus \sV_{\osc, \leq \la}, \]
where $\sV_{\rot, \la} = \sV_{\rot} \cap \sV_\la$, $\sV_{\osc, \la} = \sV_{\osc} \cap \sV_\la$, and so on.

\medskip

Consider now the case $u_\tau \equiv 0$ in equation \eqref{eq_main_ev_eq}.
Lemma~\ref{Lem_structure_MCF_graph_equation} shows that the map 
\begin{equation} \label{eq_bYtoQbY}
\Jac : \bY \longmapsto Q[0, \bY] 
\end{equation}
is linear.
In the picture of the rescaled modified flow, this map corresponds to the map $\bY \mapsto \frac1{\sqrt{2(n-k)}} \bY^\perp |_{M_{\cyl}}$, which assigns to every ambient Killing field the corresponding Jacobi field along $M_{\cyl}$, expressed pair consisting of its radial part in the $\bO^k \times \IR^{n-k+1} \times \bO^{n'}$-direction and its part in the $\bO^{n+1}  \times \IR^{n'}$-direction.
More specifically, consider a smooth family of Euclidean motions $(S_\tau \in E(n+1+n'))_{\tau \approx 0}$ with $S_0 = \id$ and $\partial_\tau |_{\tau=0} S_\tau = \bY$ and write $S_\tau(M_{\cyl})$ locally as the graph $\Gamma_{\cyl}(u_\tau)$ over $M_{\cyl}$; so $u_0 \equiv 0$.
Then $\partial_\tau |_{\tau = 0} u_\tau = \Jac \bY$.

It is not hard to see that $\Jac$ is equivariant under the natural $O(n-k+1) \times O(n')$-actions on $\YY$ and $M_{\cyl}$.
The kernel of \eqref{eq_bYtoQbY} is the subspace $\YY_{\cyl} \subset \YY$ of Killing fields whose flow preserves $M_{\cyl}$, so linear combinations of infinitesimal translations and rotations in the $\IR^{k}$-factor, infinitesimal rotations in the $\IR^{n-k+1}$-factor and infinitesimal rotations in the $\IR^{n'}$-factor.
We can write $\YY = \YY_{\cyl} \oplus \YY_\perp$, where the latter space denotes the span of infinitesimal translations orthogonal to $\IR^{k} \times \bO^{n-k+1+n'}$ and rotations of the form $e_i \wedge e_j$ where $e_i$ and $e_j$ are contained in different factors of $\IR^{k} \times \IR^{n-k+1} \times \IR^{n'}$.
Denote the image of \eqref{eq_bYtoQbY} by $\sV_{\Jac} \subset C^\infty(M_{\cyl}; \IR \times \IR^{n'})$.
Then the restriction of \eqref{eq_bYtoQbY} to $\YY_\perp$
\[ \Jac |_{\YY_\perp} : \YY_\perp \lto \sV_{\Jac}\]
is an isomorphism of vector spaces and $\sV_{\Jac}$ is invariant under the $O(n-k+1) \times O(n')$-action.

\medskip

The next lemma characterizes the space $\sV_{\Jac}$ and the eigenspaces $\sV_\la$ for $\la \geq 0$.

\begin{Lemma} \label{Lem_mode_dec}
The first four eigenvalues of $L$, viewed as an operator on $L^2_f(M_{\cyl}; \IR \times \IR^{n'})$, are $1, \frac12, 0, - \frac1{n-k}$.
We have
\[ \sV_{\osc, 1} = 0, \qquad \sV_{\Jac} = \sV_{\osc,\frac12} \oplus V_{\osc, 0} \]
and $\sV_{\rot} = \sV_{\rot,1} \oplus \sV_{\rot,\frac12} \oplus \ldots$, where $\sV_{\rot, -\frac{i}2}$, for $-\frac{i}2 \leq 1$, is spanned by elements of the form $(u',0)$, where $u'(\bx,\by) = u'(\bx)$ is a polynomial of degree $i+2$ on $\IR^k$, which is a product of Hermite polynomials in each coordinate.
Specifically, $\sV_{\rot, 1}$, $\sV_{\rot, \frac12}$ and $\sV_{\rot, 0}$ are spanned by elements of the form $(\mathfrak p^{(0)}, 0)$, $(\mathfrak p^{(1)}_i, 0)$ and $(\mathfrak p^{(2)}_{ij}, 0)$, where
\begin{equation} \label{eq_Hermite}
 \mathfrak p^{(0)}(\bx) = 1, \qquad \mathfrak p^{(1)}_i(\bx) = \frac1{\sqrt{2}} \bx_i, \qquad \mathfrak p^{(2)}_{ij} (\bx) =  \frac1{2\sqrt{2}} (\bx_i \bx_j -2\delta_{ij}) . 
\end{equation}
The space $\sV_{\osc, \frac12}$ is spanned by elements of the form $(u', 0)$, where $u'(\bx,\by) = u'(\by)$ is a first spherical harmonic on $\IS^{n-k}$ and elements $(0, u'')$, where $u''$ is constant.
The space $\sV_{\osc, 0}$ is spanned by elements of the form $(u',0)$, where $u'(\bx,\by) = a(\bx)b(\by)$ is product of a linear function and a first spherical harmonic, and elements $(0, u'')$, where $u''(\bx,\by) = u''(\bx)$ is a linear function.
\end{Lemma}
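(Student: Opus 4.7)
The plan is to carry out a standard separation-of-variables computation on the product manifold $M_{\cyl} = \IR^k \times \IS^{n-k}$, followed by a direct identification of the Jacobi space. Since the operator $L$ from \eqref{eq_L_recall} acts diagonally on the two components, namely $L(u',u'') = (L_1 u', L_2 u'')$ with $L_1 = \triangle_f + 1$ and $L_2 = \triangle_f + \tfrac12$, the spectral problem for $L$ splits into independent spectral problems for $L_1$ on scalar functions and $L_2$ on $\IR^{n'}$-valued functions. I will analyze each separately and then assemble the results.

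For either component, because $f = \tfrac14 |\bx|^2$ depends only on $\bx$, the weighted Laplacian splits as $\triangle_f = \triangle_{f,\IR^k} \otimes \id + \id \otimes \triangle_{\IS^{n-k}}$. I would then invoke two classical spectral facts: (i) the $L^2_f$-eigenfunctions of $\triangle_{f,\IR^k}$ on $\IR^k$ are products $\mathfrak p^{(i_1)}(\bx_1)\cdots \mathfrak p^{(i_k)}(\bx_k)$ of Hermite polynomials with eigenvalue $-\tfrac12(i_1+\cdots+i_k)$ (normalized as in \eqref{eq_Hermite}), and (ii) the $L^2$-eigenfunctions of $\triangle_{\IS^{n-k}}$ on the sphere of radius $\sqrt{2(n-k)}$ are the spherical harmonics $Y_l$ of degree $l\ge0$ with eigenvalue $-\tfrac{l(l+n-k-1)}{2(n-k)}$. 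Hence the eigenvalues of $L_1$ are $\mu^{(1)}_{|i|,l} = 1 - \tfrac{|i|}{2} - \tfrac{l(l+n-k-1)}{2(n-k)}$ and those of $L_2$ are $\mu^{(2)}_{|i|,l} = \tfrac12 - \tfrac{|i|}{2} - \tfrac{l(l+n-k-1)}{2(n-k)}$, with eigenfunctions given by tensor products. Next I would enumerate the largest values: from $L_1$ we get $1$ at $(0,0)$ (constants in $\sV_{\rot,1}$), $\tfrac12$ at $(1,0)$ (linear Hermite polynomials $\mathfrak p^{(1)}_i$ in $\sV_{\rot,1/2}$) and at $(0,1)$ (first spherical harmonics in $\sV_{\osc,1/2}$), $0$ at $(2,0)$ (quadratic Hermite polynomials $\mathfrak p^{(2)}_{ij}$ in $\sV_{\rot,0}$) and at $(1,1)$ (products linear$\times$first harmonic in $\sV_{\osc,0}$), and $-\tfrac1{n-k}$ at $(0,2)$; from $L_2$ we get $\tfrac12$ at $(0,0)$ (constant $u''$ in $\sV_{\osc,1/2}$) and $0$ at $(1,0)$ (linear $u''(\bx)$ in $\sV_{\osc,0}$) and $(0,1)$. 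Checking that no other $(|i|,l)$ pair yields an eigenvalue above $-\tfrac1{n-k}$ is a one-line computation and simultaneously proves $\sV_{\osc,1}=0$ (no oscillatory contribution reaches eigenvalue $1$). The description of $\sV_{\rot,-i/2}$ as the span of degree-$(i+2)$ products of Hermite polynomials is immediate from (i) together with the characterization of $\sV_{\rot}$ as the space of $(u',0)$ with $u'=u'(\bx)$.

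The identification $\sV_{\Jac} = \sV_{\osc,1/2}\oplus \sV_{\osc,0}$ I would carry out explicitly by evaluating $\Jac \bY = \tfrac1{\sqrt{2(n-k)}} \bY^\perp|_{M_{\cyl}}$, decomposed per Lemma~\ref{Lem_structure_MCF_graph_equation}, on a basis of $\YY_\perp$. A translation by a unit vector in the $\IR^{n-k+1}$-direction contributes a pure first spherical harmonic on $\IS^{n-k}$ in the first slot, landing in $\sV_{\osc,1/2}$; a translation in the $\IR^{n'}$-direction contributes a constant in the second slot, also in $\sV_{\osc,1/2}$. A mixed rotation $e_i\wedge e_j$ with $e_i\in\IR^k,\ e_j\in\IR^{n-k+1}$ contributes $\tfrac1{\sqrt{2(n-k)}}\bx_i \by_j$ in the first slot, a product of a linear function and a first harmonic, landing in $\sV_{\osc,0}$; a rotation with $e_i\in\IR^k,\ e_j\in\IR^{n'}$ contributes a linear $u''(\bx)=\bx_i e_j$ in the second slot, also in $\sV_{\osc,0}$; and a rotation with $e_i\in\IR^{n-k+1},\ e_j\in\IR^{n'}$ contributes a first harmonic in $\by$ in the $e_j$-component of $u''$, also in $\sV_{\osc,0}$. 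Together with the fact that $\Jac|_{\YY_\perp}$ is injective (by definition of $\YY_\perp$) and a dimension count matching $\dim\YY_\perp$ against $\dim\sV_{\osc,1/2}+\dim\sV_{\osc,0}$, this verifies both the containment and the equality.

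The entire proof is bookkeeping, and I do not anticipate any serious obstacle. The only mildly delicate point is the Jacobi identification in the last step, where one must keep careful track of the factor $\tfrac1{\sqrt{2(n-k)}}$ coming from the normalization of $\bY_\perp$ in Lemma~\ref{Lem_structure_MCF_graph_equation} and ensure that the normal component of each Killing field is computed correctly at a point $(\bx,\by,0)\in M_{\cyl}$ with $|\by|=\sqrt{2(n-k)}$; once this is done, the matching with the explicit eigenfunctions listed above is automatic.
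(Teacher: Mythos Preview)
Your proposal is correct and follows essentially the same approach as the paper: separation of variables for $\triangle_f$ on the product $\IR^k\times\IS^{n-k}$ using Hermite polynomials and spherical harmonics, enumeration of the top eigenvalues, explicit evaluation of $\Jac$ on translations and mixed rotations to show $\sV_{\Jac}\subset\sV_{\osc,1/2}\oplus\sV_{\osc,0}$, and a dimension count for equality. The only cosmetic difference is that you split $L$ into $L_1,L_2$ on the two components at the outset, whereas the paper first analyzes $\triangle_f$ and then shifts by the constants; the content is the same.
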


\begin{proof}
Due to the diagonal form of $L$ (see \eqref{eq_L_recall}), it is enough to study eigenfunctions $v \in L^2_f(\IR^{k} \times \IS^{n-k})$ with $\triangle_f v = \la v$.
By separation of variables, any such eigenfunction is a linear combination of products $v_i (\bx,\by) = a_i(\bx) b_i(\by)$, where $a_i \in L^2_f(\IR^{k})$ and $b_i \in L^2(\IS^{n-k})$ with
\[ \triangle_f a_i = \alpha_i a_i, \qquad \triangle b_i = \beta_i b_i, \qquad \alpha_i + \beta_i = \la. \]
The eigenvalues of $\triangle_f$ on $\IR^{k}$ are $0, -\frac12, -1, \ldots$ and the eigenfunctions are given by products of Hermite polynomials in each coordinate.
As $\IS^{n-k}$ is the sphere of radius $\sqrt{2(n-k)}$, the first three eigenvalues of the Laplacian on $\IS^{n-k}$ are $0, -\frac12, - \frac{n-k+1}{n-k} = -1 -\frac1{n-k}$.
So the first four eigenvalues for $\triangle_f$ are $\la = 0, -\frac12, -1, -1 -\frac1{n-k}$; if $\la \in \{ 0, -\frac12, -1 \}$, then $\alpha_i \in \{ 0, -\frac12, -1 \}$ and $\beta_i \in \{ 0, -\frac12 \}$.

Due to \eqref{eq_L_recall}, the first four eigenvalues of $L$ must therefore be $1, \frac12, 0, -\frac1{n-k}$.
The eigenvalue $1$ can only be obtained for eigenfunctions of the form $u = (u', 0)$, where $\triangle_f u' = 0$, so $u'$ must be constant.
This proves the characterization of $\sV_{\rot, 1}$ and $\sV_{\osc, 1}$.
The spaces $\sV_{\rot,\frac12}$ and $\sV_{\rot,0}$ consists of functions of the form $u = (v, 0)$, where $v(\bx, \by) = a(\bx)$ for $\triangle_f a = -\frac12 v$ and $\triangle_f a = - v$, respectively.
This implies that these spaces are spanned by the corresponding Hermite polynomials.

The space $\YY$ is spanned by infinitesimal translations and rotations.
If $\td\bY \in \YY$ is an infinitesimal translation, then one can see easily that $\Jac \bY = (u' , u'')$, where $u' (\bx, \by) = a'(\bx) b'(\by)$ for constant $a'$ and $\triangle b' = -\frac12 b'$ and $u'' \equiv const$.
So $\Jac \bY \in \sV_{\osc,\frac12}$.
If $\td\bY \in \YY$ is an infinitesimal rotation, then $\Jac \bY = (u', u'')$, where $u' (\bx, \by) = a'(\bx) b'(\by)$ for linear $a'$ and $\triangle b' = -\frac12 b'$, as well as $u'' (\bx, \by) = a''(\bx) b''(\by)$ for linear $a''$ and constant $b''$ or constant $a''$ and $\triangle b'' = - \frac12 b''$.
So $\Jac \bY \in \sV_{\osc, 0}$.
In summary, $\sV_{\Jac} = \Jac(\YY) \subset \sV_{\osc, \frac12} \oplus \sV_{\osc, 0}$.
In order to see that we have equality, note that a dimension count gives
\[ \dim \sV_{\Jac} = \dim \YY_{\perp} = (n-k+1+n')+k(n-k+1) + kn' + (n-k+1)n' = \dim (\sV_{\osc, \frac12} \oplus \sV_{\osc, 0}). \]
This finishes the proof of the lemma.
\end{proof}
\medskip

Lemma~\ref{Lem_mode_dec} implies that the neutral and unstable directions of the linearized rescaled mean curvature flow equation $\partial_\tau u = Lu$ consist only of rotationally symmetric modes, which are polynomials on $\IR^k$ of degree $\leq 2$, together with the modes in the image $\sV_{\Jac}$ of the Jacobi-operator.
With a suitable choice of gauge, which will be chosen in the next subsection, the modes in $\sV_{\Jac}$ can be controlled, so that ultimately only the rotationally symmetric modes govern the dynamics.
\medskip

\subsection{Gauging the flow} \label{subsec_gauge}
We will now describe a class of rescaled modified mean curvature flows for which the Killing fields $(\td\bY_\tau)$ are chosen in such a way that the mode of $u$ within $V_{\Jac}$ almost vanishes.
Recall the notion of a rescaled modified mean curvature flow from Subsection~\ref{subsec_resc_mod_mcf}.
We will use the following notion:

\begin{Definition}[$R^\#$-gauge] \label{Def_gauged}
Let $M \subset \IR^{n+1+n'}$ be an $n$-dimensional, not necessarily properly, embedded submanifold.
We say that $M$ is \textbf{$R^{\#}$-gauged,} for some $R^\# > 1$, if there is an $R' > 0$ such that $M \cap {\IB^{n+1+n'}_{R'}}$ is properly embedded in ${\IB^{n+1+n'}_{R'}}$ and can be expressed as the graph $\Gamma_{\cyl}(u)$ of some smooth, vector-valued function $u \in C^\infty(\DD; \IR \times \IR^{n'})$ with the property that $\IB^{k}_{R^\#} \times \IS^{n-k} \subset \DD \subset M_{\cyl}$ and
\[ |u|+ |\nabla u| < (R^\#)^{-1} \qquad \text{and} \qquad \PP_{\sV_{\Jac}} (u \omega_{R^{\#}})  = 0. \]
We say that a rescaled modified mean curvature flow $\td\MM'$ corresponding to a family of Killing fields $(\td\bY_\tau)_{\tau \in \td I}$ is \textbf{$R^{\#}$-gauged} if $\td\MM^{\prime,\reg}_\tau$ is $R^{\#}$-gauged for all $\tau \in \td I$ and if in addition $\td\bY_\tau \in \YY_\perp$ for all $\tau \in \td I$.
\end{Definition}

The following proposition shows that we can always modify a rescaled mean curvature flow by a slowly varying family of Euclidean motions so that the resulting flow is $R^\#$-gauged, as long as the flow is sufficiently close to a cylinder.
The proposition summarizes all the necessary tools that are needed throughout this paper.

\begin{Proposition} \label{Prop_gauging}
For any $\eps\leq \ov\eps$ and  $R^\# \geq \underline{R}^\#$ there is a constant $\delta(\eps, R^\#) > 0$ such that the following holds.

Consider a unit-regular, integral, $n$-dimensional Brakke flow $\MM$ in $\IR^{n+1+n'} \times I$ and a point $(\bq_0,t_0) \in \IR^{n+1+n'} \times \IR$ with $I \subset (-\infty, t_0)$.
Suppose that some fixed $\bq_0 \in \IR^{n+1+n'}$ is a center of a $(n,k,\delta)$-neck at scale $\sqrt{t_0-t}$ of $\MM$ at all times $t \in I$.
Then there is a smooth family of Euclidean motions $(S_t \in E(n+1+n'))_{t \in I}$ such that the following is true for the rescaled modified flow $(\td\MM'_\tau := e^{\tau/2}  S_{t_0-e^{-\tau}}( \MM_{t_0-e^{-\tau}}))_{\tau \in \td I}$, where $\td I := \{ -\log(t_0-t) \;\; : \;\; t \in I\}$:
\begin{enumerate}[label=(\alph*)]
\item \label{Prop_gauging_a} $\td\MM^{\prime}$ is $R^\#$-gauged.
\item \label{Prop_gauging_b} $(t_0-t) \partial_t S_t = \bY_t \circ S_t$ for some smooth  family $(\bY_t \in \YY)_{t \in  I}$ and $\td\MM^{\prime,\reg}$ evolves by the following rescaled modified mean curvature flow equation for $\td \bY_\tau =  (e^{\tau/2})_* \bY_{t_0-e^{-\tau}}$:
\[ \partial_\tau \mathbf x=\mathbf H+\frac{\mathbf x^\perp}{2} + \td\bY^\perp. \]
\item \label{Prop_gauging_c} We have the bounds $\| \td\bY_\tau \| \leq \eps$ and $|S_t (\bq_0) | < \eps \sqrt{t_0-t}$.
\item \label{Prop_gauging_d} If $T_1 := \sup I \in I$ and $S' \in E(n+1+n')$ is a Euclidean motion with $|S' (\bq_0) | < \eps \sqrt{t_0-T_1}$ such that $(t_0-T_1)^{-1/2} S' ( \MM^{\reg}_{T_1})$ is $R^\#$-gauged, then we may choose $S_{T_1} = S'$, which determines $(S_t)_{t \in I}$ uniquely.
\item \label{Prop_gauging_f} For any $\eps' > 0$ there is a constant $\delta'(\eps', R^\#) > 0$ with the following property. Suppose that $\bq_0$ is a center of a $\delta'$-neck at scale $\sqrt{t_0 - t}$ of $\MM$ at some time $t \in I$.
Then $\td\MM^{\prime}$ is $\eps'$-close to $M^{n,k}_{\cyl}$ at time $- \log(t_0 - t)$.
\end{enumerate}
\end{Proposition}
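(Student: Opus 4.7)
The plan is to construct $(S_t)$ pointwise in $t$ via the implicit function theorem, using Lemma~\ref{Lem_mode_dec} to invert the Jacobi operator on the complement $\YY_\perp$, and then to identify the generating family $(\bY_t)$ from a linear equation forced by the gauge condition.

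First I would establish a static gauging result. For any $n$-dimensional submanifold $M \subset \IR^{n+1+n'}$ that is sufficiently close to $M_{\cyl}^{n,k}$ at scale $1$, I consider the map
\[ \Phi(S, M) \; := \; \PP_{\sV_{\Jac}} \big( u_{S(M)} \, \omega_{R^\#} \big) \; \in \; \sV_{\Jac}, \]
where $u_{S(M)}$ is the graph function representing $S(M)$ over $M_{\cyl}^{n,k}$. At $(\id, M_{\cyl}^{n,k})$, the derivative of $\Phi$ in the $E(n+1+n')$-direction, restricted to $\YY_\perp$, is $\PP_{\sV_{\Jac}} \circ \Jac|_{\YY_\perp}$, which is an isomorphism by Lemma~\ref{Lem_mode_dec} (for $R^\# \geq \underline R^\#$, the cutoff $\omega_{R^\#}$ is essentially $1$ on the support of any fixed basis of $\sV_{\Jac}$). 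The implicit function theorem thus yields a unique Euclidean motion $S$ near the identity such that $S(M)$ is $R^\#$-gauged, with quantitative estimates. Since the parameter $\delta$ controls how close $(t_0-t)^{-1/2}(\MM_t - \bq_0)$ is to $M_{\cyl}^{n,k}$, this applies at every $t \in I$ to produce a smooth family $(S_t)_{t \in I}$ giving Assertion~\ref{Prop_gauging_a}.

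Next, writing $(t_0-t)\partial_t S_t = \bY_t \circ S_t$ with $\bY_t \in \YY$ gives Assertion~\ref{Prop_gauging_b} by the construction in Subsection~\ref{subsec_resc_mod_mcf}. To pin down $\bY_t$ and impose $\bY_t \in \YY_\perp$, I would differentiate the gauge condition $\PP_{\sV_{\Jac}}(u_\tau \omega_{R^\#}) \equiv 0$ in $\tau$ and substitute the evolution equation from Lemma~\ref{Lem_structure_MCF_graph_equation}, using that $Q[0,\td\bY] = \Jac\td\bY$. This produces a linear equation
\[ \big( \PP_{\sV_{\Jac}} \circ \Jac \big) \td\bY_\tau \; = \; - \PP_{\sV_{\Jac}} \Big( L u_\tau \, \omega_{R^\#} + Q[u_\tau, \td\bY_\tau] \, \omega_{R^\#} - \Jac\td\bY_\tau \Big), \]
which, restricted to $\YY_\perp$, has an invertible leading operator. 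The right-hand side is quadratically small in the deviation of $u_\tau$ from $0$, so $\Vert \td\bY_\tau \Vert \leq \eps$ follows from the smallness delivered by the $(n,k,\delta)$-neck hypothesis for $\delta \leq \ov\delta(\eps, R^\#)$; the estimate $|S_t(\bq_0)| < \eps\sqrt{t_0 - t}$ follows because $\bq_0$ is the center of the neck and $S_t$ is close to the identity in the rescaled picture.

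For Assertion~\ref{Prop_gauging_d}, I solve the ODE $(t_0-t)\partial_t S_t = \bY_t \circ S_t$ backward from $S_{T_1} = S'$, with $\bY_t$ always defined by the linear equation above. The gauge condition is preserved by construction, since $\bY_t$ was chosen precisely to annihilate $\partial_\tau \PP_{\sV_{\Jac}}(u_\tau \omega_{R^\#})$. The main obstacle in the argument is global existence: one must show that the ODE stays in the regime where the static gauging is valid throughout $I$. This is handled by a continuity/openness argument: the set of $t \in I$ where a unique $R^\#$-gauging exists is open by the implicit function theorem, and it is closed because the a~priori bound $\Vert \td\bY_\tau \Vert \leq \eps$ together with the uniform $\delta$-neck hypothesis ensures $S_t$ cannot drift out of the neighborhood where Step~1 applies. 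Finally, Assertion~\ref{Prop_gauging_f} follows from the uniqueness and continuity of the static gauge: if $\bq_0$ is the center of a $\delta'$-neck at time $t$, then there exists \emph{some} Euclidean motion bringing $\MM_t$ to within $\Psi(\delta' | R^\#)$ of the cylinder, and $S_t$ coincides (up to the action of $\YY_{\cyl}$) with this motion by uniqueness near the identity, yielding $\eps'$-closeness for $\delta' \leq \ov\delta'(\eps', R^\#)$.
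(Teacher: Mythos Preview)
Your sketch follows essentially the same strategy as the paper: a static gauging lemma via the implicit function theorem using the invertibility of $\Jac|_{\YY_\perp}$ (this is the paper's Lemma~\ref{Lem_gauge_prep}), then determining $\td\bY_\tau \in \YY_\perp$ by differentiating the gauge condition, then solving the ODE backward from $T_1$ with a continuity argument for global existence. Assertion~\ref{Prop_gauging_f} is indeed a consequence of the static rigidity statement (Lemma~\ref{Lem_gauge_prep}\ref{Lem_gauge_prep_a}).

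The one point you gloss over is the drift bound $|S_t(\bq_0)| < \eps\sqrt{t_0-t}$, which you treat as a consequence of ``$S_t$ close to the identity in the rescaled picture'' and later of ``the a~priori bound $\Vert\td\bY_\tau\Vert \leq \eps$''. This is the closedness step in your open--closed argument, and it does not follow from smallness of $\td\bY_\tau$ alone: integrating $\Vert\td\bY_\tau\Vert \leq \eps$ over an unbounded $\tau$-interval gives no control on $e^{\tau/2}|S_{t_0-e^{-\tau}}(\bq_0)|$. The paper closes this via a barrier computation: setting $f(\tau) := e^{\tau/2}|S_{t_0-e^{-\tau}}(\bq_0)|$ and using $|\partial_\tau S_{t_0-e^{-\tau}}(\bq_0)| \leq C e^{-\tau/2}\Vert\td\bY_\tau\Vert$, one gets $f'(\tau) \geq \tfrac12 f(\tau) - \Psi(\delta|R^\#)$, so if $f$ ever reached $\eps$ at the minimal time $\tau'$ then $f'(\tau') > 0$, contradicting $f(\tau) \leq \eps$ for $\tau > \tau'$. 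The favorable $+\tfrac12$ coefficient from the parabolic rescaling is what makes this work; without it the continuity argument does not close.
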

\bigskip

The proof of Proposition~\ref{Prop_gauging} relies on the following lemma, which shows that the gauging condition can be achieved via an application of the implicit function theorem.

\begin{Lemma} \label{Lem_gauge_prep}
Suppose that $R^\# \geq \underline R^\#$, $\eps >0$ and $\delta \leq \ov\delta(\eps, R^\#)$.
There is an open neighborhood $W \subset E(n+1+n')$ of $\id$, whose choice only depends on $R^\#, n,n'$, such that the following holds.

Let $M \subset \IR^{n+1+n'}$ be a, not necessarily properly, embedded $n$-dimensional submanifold.

\begin{enumerate}[label=(\alph*)]
\item \label{Lem_gauge_prep_a} If $M$ is $R^\#$-gauged and some point $\bq_0 \in \IR^{n+1+n'}$ with $| \bq_0| \leq \eps^{-1}$ is a center of an $(n,k,\delta)$-neck at scale $1$ of $M$, then $M$ is $\eps$-close to $M^{n,k}_{\cyl}$.
\end{enumerate}
Next, suppose that $M$ is $\delta$-close to $M^{n,k}_{\cyl}$.
Then there is an $S \in W$ such that $S(M)$ is $R^\#$-gauged.
Moreover, there is an $R' > 0$ such that for any $S \in W$ we can express $ S(M) \cap \IB^{n+1+n'}_{R'} = \Gamma_{\cyl} (u_{S})$ as the graph of a function $u_{S} \in C^\infty(\DD_{S}; \IR \times \IR^{n'})$ with $\IB^{k}_{R^\#} \times \IS^{n-k} \subset \DD_{S} \subset M^{n,k}_{\cyl}$ and $|u_S| + |\nabla u_S | < (R^\#)^{-1}$.
Consider the following smooth map:
\[ F^M : W \lto \sV_{\Jac}, \qquad S \lmapsto \PP_{\sV_{\Jac}}(u_{S} \omega_{R^\#}). \] 
Observe that the condition that $S (M)$ is $R^\#$-gauged  is equivalent to $F(S) = 0$.
The map $F^M$ has the following properties:
\begin{enumerate}[label=(\alph*), start=2]
\item \label{Lem_gauge_prep_c}  The differential $(dF^M)_{\id}$ is surjective.
\item \label{Lem_gauge_prep_d} If we identify the tangent space of $E(n+1+n')$ at $\id$ with $\YY$ in the natural way, then $(dF^M)_{\id} |_{\YY_\perp}$ is invertible with inverse bounded by some  universal constant.
\item \label{Lem_gauge_prep_e} $\Vert F^M \Vert_{C^{m}(W)} \leq C(R^\#, m)$ for $m \leq \frac12 \delta^{-1}$.
\end{enumerate}
\end{Lemma}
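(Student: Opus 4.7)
First, I would establish part \ref{Lem_gauge_prep_a} by a compactness-and-contradiction argument. Suppose no $\ov\delta(\eps, R^\#)$ works; then there are sequences $\delta_i \to 0$, $R^\#$-gauged submanifolds $M_i$, and neck centers $\bq_{0,i}$ with $|\bq_{0,i}| \leq \eps^{-1}$, satisfying the $(n,k,\delta_i)$-neck condition at scale $1$, but with each $M_i$ failing to be $\eps$-close to $M^{n,k}_{\cyl}$. The neck condition yields smooth $C^m$-convergence of $S_i(M_i - \bq_{0,i})$ to $M^{n,k}_{\cyl}$ on balls $\IB^{n+1+n'}_{\delta_i^{-1}}$ for suitable $S_i \in O(n+1+n')$, while $R^\#$-gauging provides uniform $C^1$-control of the graph function $u_i$ over $\IB^{k}_{R^\#} \times \IS^{n-k}$. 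Passing to a subsequence with $\bq_{0,i} \to \bq_{0,\infty}$, $S_i \to S_\infty$, I would extract a limit $M_\infty$ that is an isometric image of $M^{n,k}_{\cyl}$ translated by $\bq_{0,\infty}$ and is moreover $R^\#$-gauged. A classification step then identifies all $R^\#$-gauged cylinders: they must coincide set-theoretically with $M^{n,k}_{\cyl}$, because any nontrivial orthogonal transformation or translation in a direction not along the axis $\IR^k \times \bO^{n-k+1+n'}$ produces a nonzero image under $\Jac|_{\YY_\perp}$ and hence a nonvanishing projection onto $\sV_{\Jac}$ after cutoff (once $R^\#$ is large). Thus $M_\infty = M^{n,k}_{\cyl}$ as sets, contradicting the supposed $\eps$-distance for large $i$.

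For the remaining parts, the setup is standard. When $M$ is $\delta$-close to $M^{n,k}_{\cyl}$ it is graphical over $M^{n,k}_{\cyl}$ on a large ball with small $C^m$-norm (for $m \leq \frac12 \delta^{-1}$). For any $S$ in a sufficiently small neighborhood $W$ of $\id$ in $E(n+1+n')$, whose size depends only on $R^\#$, the image $S(M)$ is graphical on a uniform ball of radius $R'$, producing the smooth map $F^M : W \to \sV_{\Jac}$. To prove \ref{Lem_gauge_prep_c} and \ref{Lem_gauge_prep_d}, I would first compute $(dF^{M^{n,k}_{\cyl}})_{\id}$: writing $S_t = \exp(t \bY)$, a variational calculation of the same type as in Lemma~\ref{Lem_structure_MCF_graph_equation} gives $\partial_t |_{t=0} u_{S_t} = \Jac \bY$, so $(dF^{M^{n,k}_{\cyl}})_{\id}(\bY) = \PP_{\sV_{\Jac}}(\Jac \bY \cdot \omega_{R^\#})$. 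Since $\Jac|_{\YY_\perp} : \YY_\perp \to \sV_{\Jac}$ is an isomorphism by Subsection~\ref{subsec_linearization}, and since every element of $\sV_{\Jac}$ has at most linear growth on $\IR^k$, the tail $(1-\omega_{R^\#})v$ has $L^2_f$-norm bounded by $C(R^\#)^C e^{-(R^\#)^2/8}$, so the map $v \mapsto \PP_{\sV_{\Jac}}(v \omega_{R^\#})$ is close to the identity on $\sV_{\Jac}$ for $R^\# \geq \underline R^\#$. Composing with $\Jac|_{\YY_\perp}$, the restricted differential is invertible with a universal bound on its inverse. Standard perturbation (using that $F^M$ is $C^1$-close to $F^{M^{n,k}_{\cyl}}$) extends this to general $M$ that is $\delta$-close to $M^{n,k}_{\cyl}$, which proves \ref{Lem_gauge_prep_d} and therefore \ref{Lem_gauge_prep_c}.

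Existence of an $S \in W$ with $F^M(S) = 0$, i.e.\ such that $S(M)$ is $R^\#$-gauged, then follows from the quantitative implicit function theorem applied along the complement $\YY_\perp$: since $F^{M^{n,k}_{\cyl}}(\id) = 0$ and $(dF^M)_{\id}|_{\YY_\perp}$ is uniformly invertible, a unique zero of $F^M$ exists in a small neighborhood of $\id$ inside $\exp(\YY_\perp)$. For \ref{Lem_gauge_prep_e}, I would note that $F^M$ factors as (i) the smooth map $S \mapsto u_S$, whose $C^m$-size is controlled by the $C^{m+2}$-size of the graph function of $M$ on a fixed bounded domain, (ii) multiplication by the fixed cutoff $\omega_{R^\#}$, and (iii) projection onto the fixed finite-dimensional space $\sV_{\Jac}$. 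The condition $m \leq \frac12 \delta^{-1}$ ensures the required $C^{m+2}$-closeness of $M$ to $M^{n,k}_{\cyl}$ coming from the $\delta$-closeness definition.

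The main obstacle is the classification step inside the contradiction argument for \ref{Lem_gauge_prep_a}. Because translations along the axis $\IR^k$ genuinely fix $M^{n,k}_{\cyl}$ as a set, the gauging condition alone does not pin down the limit uniquely in all directions, and one must carefully combine the neck bound $|\bq_{0,\infty}| \leq \eps^{-1}$ with the vanishing of $\PP_{\sV_{\Jac}}(u_\infty \omega_{R^\#})$ to rule out the remaining ambient motions in $\YY_\perp$. This ultimately rests on the same isomorphism $\Jac|_{\YY_\perp} : \YY_\perp \to \sV_{\Jac}$ and the near-identity behavior of the cutoff projection that underlies parts \ref{Lem_gauge_prep_c} and \ref{Lem_gauge_prep_d}, so the two halves of the proof share the same analytic core.
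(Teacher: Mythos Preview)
Your plan is correct and matches the paper's proof closely: compactness--contradiction for \ref{Lem_gauge_prep_a}, direct computation of $(dF^{M_{\cyl}})_{\id}$ plus perturbation for \ref{Lem_gauge_prep_c}--\ref{Lem_gauge_prep_d}, and routine estimates for \ref{Lem_gauge_prep_e}. For the classification step you flag as the main obstacle, the paper isolates a separate Claim (any $R^\#$-gauged isometric copy $S(M_{\cyl})$ equals $M_{\cyl}$) and proves it by precomposing $S$ with an element of the cylinder's symmetry group to reduce to $S=\exp(\bY)$ with small $\bY\in\YY_\perp$, then linearizing $u_S=\Jac\bY+O((r\|\bY\|)^2)$ so that the gauging condition forces $\bY=0$; note that axis translations are harmless since they preserve $M_{\cyl}$ as a set, and the neck-center bound $|\bq_{0,\infty}|\le\eps^{-1}$ plays no role in this step---it is used only earlier to obtain the convergence $M_i\to S(M_{\cyl})$.
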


\begin{proof}
Assertion~\ref{Lem_gauge_prep_c} is a direct consequence of Assertion~\ref{Lem_gauge_prep_d}.
The fact that $S(M) \cap \IB^{n+1+n'}_{R'}$ is a graph over a subset of $M_{\cyl}$ as claimed is clear and the bound in Assertion~\ref{Lem_gauge_prep_e} is also clear.
So it remains to prove Assertions~\ref{Lem_gauge_prep_a} and \ref{Lem_gauge_prep_d} for appropriate choices of $\ov\delta(\eps,R^\#)$ and $W$.

Consider first the case in which $M = M_{\cyl}$.
Then Assertion~\ref{Lem_gauge_prep_a} holds trivially.
To see Assertion~\ref{Lem_gauge_prep_d}, consider a Killing field $\bY \in \YY_{\perp}$ and note that for $R^\# \geq \underline R^\#$
\[ \big\Vert (dF^M)_{\id} (\bY) - \Jac (\bY) \big\Vert =  \big\Vert \PP_{\sV_{\Jac}} \big( \Jac(\bY) \omega_{R^\#} \big) - \Jac (\bY) \big\Vert
\leq \Vert \Jac(\bY) (\omega_{R^\#} - 1) \Vert \leq \tfrac12 \Vert \Jac (\bY) \Vert. \]

\begin{Claim}
If $R^\# \geq \underline R^\#$, then whenever $M = S (M_{\cyl})$ is $R^\#$-gauged for some $S \in E(n+1+n')$, then we must have $S(M_{\cyl}) = M_\cyl$.
\end{Claim}
\begin{proof}
Suppose not and choose a sequence of counterexamples $S_i \in E(n+1+n')$ for some $R^\#_i \to \infty$.
After precomposing each $S_i$ with a suitable element of the subgroup of $E(n+1+n')$ that fixes $M_{\cyl}$, we may assume that $S_i \to \id$ and moreover that $S_i = \exp (\bY_i)$ for some non-zero sequence $\bY_i \in \YY_\perp$, which we view as elements of the Lie algebra of $E(n+1+n')$.
Then over $\IB^k_{R^\#_i} \times \IS^k$ we can express $S_i (M_{\cyl})$ as $\Gamma_{\cyl}(u_i)$ with $| u_i - \Jac \bY_i | \leq C (r \| \bY_i \|)^2$ for some uniform $C$.
So
\[ 0 = \PP_{\sV_{\Jac}} (u_i \omega_{R^\#_i}) = \bY_i + \PP_{\sV_{\Jac}} ((\Jac \bY_i)(\omega_{R_i^\#} -1)) + \PP_{\sV_{\Jac}} ((u_i - \Jac \bY_i )\omega_{R_i^\#}) \]
Since
\begin{align*} 
\| \PP_{\sV_{\Jac}} ((\Jac \bY_i)(\omega_{R_i^\#} -1)) \|_{L^2_f} 
&\leq \| (\Jac \bY_i)(\omega_{R_i^\#} -1) \|_{L^2_f} \leq \| \bY_i \| \cdot \| \omega_{R_i^\#} -1 \|_{L^2_f}, \\
\| \PP_{\sV_{\Jac}} ((u_i - \Jac \bY_i )\omega_{R_i^\#})  \|_{L^2_f} 
&\leq \| (u_i - \Jac \bY_i )\omega_{R_i^\#}  \|_{L^2_f}
\leq C \| \bY_i \|^2,
\end{align*}
and since  $ \| \omega_{R_i^\#} -1 \|_{L^2_f} \to 0$, we obtain a contradiction for large $i$.
\end{proof}

The claim shows that Assertion~\ref{Lem_gauge_prep_a} and \ref{Lem_gauge_prep_c} hold in the case $M = S(M_{\cyl})$ for an appropriate choice of $\underline R^\#$.

To see the general case, we argue by contradiction.
Fix $R^\# \geq \underline R^\#$ and $\eps > 0$ and consider a sequence of counterexamples $M_i$ violating Assertion~\ref{Lem_gauge_prep_a} or \ref{Lem_gauge_prep_d} for $\delta_i \to 0$.
In Assertion~\ref{Lem_gauge_prep_a} we have subsequential smooth convergence $M_i \to S(M_{\cyl})$ for some $S \in E(n+1+n')$ and in Assertion~\ref{Lem_gauge_prep_d} we even have $M = M_{\cyl}$.
So the conclusions hold by the previous two paragraphs for large enough $i$.
\end{proof}
\bigskip

We can now prove Proposition~\ref{Prop_gauging}.

\begin{proof}[Proof of Proposition~\ref{Prop_gauging}.]
After application of a translation and time-shift, we may assume that $(\bq_0, t_0) = (\bO, 0)$.
We first choose $S_{T_1}$ at the final time, if it exists.

\begin{Claim} \label{Cl_final_T0}
If $\delta \leq \ov\delta(\eps, R^\#)$ and $T_1:=\sup I\in I$, then there is a Euclidean motion $S_{T_1} \in E(n+1+n')$ such that $(-T_1)^{-1/2} \MM^{\reg}_{T_1}$ is $R^\#$-gauged and $|S_{T_1}(\bO)| < \eps \sqrt{-T_1}$.
If $S'$ is given in Assertion~\ref{Prop_gauging_d}, then we can choose $S_{T_1} = S'$.
\end{Claim}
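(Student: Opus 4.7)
The plan is to reduce the claim to Lemma~\ref{Lem_gauge_prep}, after normalizing by the parabolic scale at time $T_1$. If a Euclidean motion $S'$ is already furnished as in Assertion~\ref{Prop_gauging_d}, then by hypothesis $S_{T_1}:=S'$ satisfies both the gauging condition and the translation bound, so this half of the claim is immediate.

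Otherwise, set $\la:=(-T_1)^{-1/2}$. The neck assumption at $\bq_0=\bO$ at scale $\sqrt{-T_1}$ and time $T_1$ produces an orthogonal $S_0\in O(n+1+n')$ such that the submanifold
\[ M := S_0\big(\la\cdot \MM^{\reg}_{T_1}\big) \subset \IR^{n+1+n'} \]
is $\delta$-close to $M^{n,k}_{\cyl}$. Provided $\delta\le\ov\delta(\eps,R^\#)$, Lemma~\ref{Lem_gauge_prep} yields a Euclidean motion $S_1\in W$ (a neighborhood of $\id$ depending only on $R^\#$) such that $S_1(M)$ is $R^\#$-gauged. The next step is to define $S_{T_1}\in E(n+1+n')$ as the unique Euclidean motion making
\[ T_\la \circ S_{T_1} \;=\; S_1 \circ S_0 \circ T_\la \]
commute, where $T_\la(\bx):=\la\bx$; writing $S_1\circ S_0(\bx)=A\bx+\bb$ one checks that $S_{T_1}(\bx)=A\bx+\la^{-1}\bb$ is indeed a Euclidean motion, and by construction $\la\cdot S_{T_1}(\MM^{\reg}_{T_1})=S_1(M)$ is $R^\#$-gauged.

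For the translation bound, since $S_0$ is orthogonal we have $S_{T_1}(\bO)=\la^{-1}\bb=\la^{-1}S_1(\bO)$, so the desired inequality $|S_{T_1}(\bO)|<\eps\sqrt{-T_1}$ reduces to $|S_1(\bO)|<\eps$. This is where the main subtlety lies: Lemma~\ref{Lem_gauge_prep} a priori only places $S_1$ inside the fixed neighborhood $W$, which depends on $R^\#$ but not on $\eps$. The fix will come from the quantitative implicit function theorem ingredients in that lemma---the uniform lower bound on $(dF^M)_{\id}|_{\YY_\perp}$ from Assertion~\ref{Lem_gauge_prep_d} together with the uniform bound $\|F^M\|_{C^m(W)}\le C(R^\#,m)$ from Assertion~\ref{Lem_gauge_prep_e}. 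These imply that the solution $S_1$ of $F^M(S_1)=0$ depends continuously on $M$ in the $\delta$-topology, with $S_1\to\id$ as $M\to M^{n,k}_{\cyl}$. Shrinking $\ov\delta(\eps,R^\#)$ accordingly forces $|S_1(\bO)|<\eps$ and completes the proof.
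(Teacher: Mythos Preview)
Your proof is correct and follows essentially the same approach as the paper. The paper's proof is much terser: it simply notes that after a rotation one may assume $\MM$ is $\delta$-close to $M_{\cyl}$ at scale $\sqrt{-T_1}$, observes that $\Vert F^{(-T_1)^{-1/2}\MM^{\reg}_{T_1}}(\id)\Vert \le \Psi(\delta\mid R^\#)$, and then invokes the implicit function theorem together with Lemma~\ref{Lem_gauge_prep}. Your version makes the same argument explicit---writing out the conjugation by the dilation $T_\la$, and spelling out why the quantitative bounds in Assertions~\ref{Lem_gauge_prep_d} and~\ref{Lem_gauge_prep_e} force the solution $S_1$ to lie close to $\id$ (hence $|S_1(\bO)|<\eps$) once $\delta$ is small enough.
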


\begin{proof}
After application of a rotation, we may assume without loss of generality that $ \MM$ is $\delta$-close to $M_{\cyl}$ at scale $\sqrt{-T_1}$ at time $T_1$.
Then the function $F^{(T_1)^{-1/2} \MM^{\reg}_{T_1}}$ from Lemma~\ref{Lem_gauge_prep} is defined and satisfies
$\Vert F^{(T_1)^{-1/2} \MM^{\reg}_{T_1}} (\id) \Vert \leq \Psi(\delta | R^\#)$.
So the claim follows from the implicit function theorem and Lemma~\ref{Lem_gauge_prep}.
\end{proof}

Next, we show that the gauge can be chosen locally.

\begin{Claim} \label{Cl_S_small_I}
If $R^\# \geq \underline R^\#$ and $\delta \leq \ov\delta(\eps, R^\#)$, then the following holds.
Fix some $t^* = - e^{-\tau^*} \in I$ and a Euclidean motion $S_{t^*} \in E(n+1+n')$ such that $(-t^*)^{-1/2} S_{t^*} \MM^{\reg}_{t^*}$ is $R^\#$-gauged and $|S_{t^*}(\bO)| < \eps \sqrt{-t^*}$.

Then there is a small neighborhood $t^* \in I^* \subset I$ on which $S_{t^*}$ can be extended uniquely to a smooth family $(S_{t^*})_{t^* \in I^*}$ such that $(-t)^{-1/2} S_{t}( \MM^{\reg}_{t})$ is $R^\#$-gauged and such that if $\partial_t S_t = \bY_t \circ S_t$ and $\td\bY_\tau =  (e^{\tau/2})_* \bY_{-e^{-\tau}}$, then $\td\bY_\tau \in \YY_\perp$.
Moreover, we have a bound of the form $\| \td\bY_{\tau^*} \| \leq \Psi(\delta |R^\#)$.
\end{Claim}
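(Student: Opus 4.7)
The plan is to recast the gauging requirement as an implicit equation for $\td\bY_\tau \in \YY_\perp$ and to integrate the resulting first-order ODE. Write the candidate extension as $S_t = \exp(\eta(t)) \cdot S_{t^*}$ with $\eta(t^*) = 0$, and let $u_\tau$ denote the graph function of $(-t)^{-1/2} S_t(\MM^{\reg}_t)$ over $M_{\cyl}$, where $\tau = -\log(-t)$. The hypothesis that $(-t^*)^{-1/2} S_{t^*}(\MM^{\reg}_{t^*})$ is $R^\#$-gauged, combined with continuity, guarantees that $u_\tau$ is well-defined, smooth, and small on a neighborhood of $(\IB^k_{R^\#} \times \IS^{n-k}) \times \{\tau^*\}$, and satisfies the rescaled modified flow equation from Lemma~\ref{Lem_structure_MCF_graph_equation}.

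Requiring $(-t)^{-1/2} S_t(\MM^{\reg}_t)$ to be $R^\#$-gauged for all nearby $t$ is equivalent to $\PP_{\sV_{\Jac}}(u_\tau\,\omega_{R^\#}) \equiv 0$. Differentiating in $\tau$, substituting $\partial_\tau u_\tau = L u_\tau + Q[u_\tau,\td\bY_\tau]$, and using the structural decomposition
\[
Q[u,\bY] = Q_0(u,\nabla u,\nabla^2 u) - \nabla_{\bY_\Vert(\bx,\by)} u + \Jac(\bY) + Q_1(\by,u,\bY) + Q_2(\by,u,\nabla u,\bY)
\]
from Lemma~\ref{Lem_structure_MCF_graph_equation}, I obtain
\[
\PP_{\sV_{\Jac}}\!\big(\Jac(\td\bY_\tau)\,\omega_{R^\#}\big) = -\PP_{\sV_{\Jac}}\!\big((L u_\tau + Q_0 + Q_1 + Q_2 - \nabla_{\td\bY_{\tau,\Vert}} u_\tau)\,\omega_{R^\#}\big).
\]
By Lemma~\ref{Lem_gauge_prep}\ref{Lem_gauge_prep_d}, the map $\bY \mapsto \PP_{\sV_{\Jac}}(\Jac(\bY)\,\omega_{R^\#})$ restricted to $\YY_\perp$ is an isomorphism onto $\sV_{\Jac}$ with uniformly bounded inverse. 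The $\td\bY_\tau$-dependent terms on the right (from $Q_1$, $Q_2$, and $-\nabla_{\td\bY_{\tau,\Vert}} u_\tau$) are each bounded by $C(R^\#)\,\|u_\tau\|_{C^1}\,\|\td\bY_\tau\|$, so they can be absorbed into the left-hand side whenever $\|u_\tau\|_{C^1}$ is sufficiently small. This produces a smooth map $\td\bY_\tau = \mathcal{G}(u_\tau,\nabla u_\tau,\nabla^2 u_\tau) \in \YY_\perp$ with $\mathcal{G}(0,0,0)=0$ and $\|\mathcal{G}(u,\nabla u,\nabla^2 u)\| \leq C(R^\#)\,\|u\|_{C^2(\IB^k_{R^\#}\times\IS^{n-k})}$.

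Inserting this identity into $(-t)\partial_t S_t = \bY_t\circ S_t$ with $\bY_t = (e^{-\tau/2})_*\td\bY_\tau$ and noting that $u_\tau$ depends smoothly on $\eta(t)$ yields a first-order ODE for $\eta(t)$ with smooth coefficients; standard ODE theory provides a unique smooth solution on a small neighborhood $I^* \ni t^*$. Since $\YY_\perp$ is preserved under parabolic rescaling, $\td\bY_\tau \in \YY_\perp$ throughout $I^*$, giving the desired family $(S_t)_{t\in I^*}$. The quantitative bound $\|\td\bY_{\tau^*}\| \leq \Psi(\delta|R^\#)$ follows by evaluating $\mathcal{G}$ at $\tau = \tau^*$: because $\bO$ is a center of an $(n,k,\delta)$-neck of $\MM$ at scale $\sqrt{-t^*}$ and $|S_{t^*}(\bO)| < \eps\sqrt{-t^*}$, the rescaled point $S_{t^*}(\bO)/\sqrt{-t^*}$ has norm $< \eps$ and is a center of an $(n,k,\delta)$-neck of $(-t^*)^{-1/2}S_{t^*}(\MM^{\reg}_{t^*})$ at scale $1$. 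Lemma~\ref{Lem_gauge_prep}\ref{Lem_gauge_prep_a} then gives $\Psi(\delta|R^\#)$-closeness of this rescaled flow to $M_{\cyl}$, and the built-in $C^m$-smallness in the definition of $\delta$-closeness yields $\|u_{\tau^*}\|_{C^2(\IB^k_{R^\#}\times\IS^{n-k})} \leq \Psi(\delta|R^\#)$.

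The main obstacle lies in the implicit appearance of $\td\bY_\tau$ in the nonlinearity $Q[u_\tau,\td\bY_\tau]$: since the differentiated gauge condition involves $\td\bY_\tau$ on both sides, one must verify that all $\td\bY_\tau$-dependent contributions on the right are controlled by $\|u_\tau\|_{C^1}\,\|\td\bY_\tau\|$ and can be absorbed into the invertible leading term on the left, which forces a quantitative choice of $\ov\delta(\eps,R^\#)$ comparable to the reciprocal of the operator norm from Lemma~\ref{Lem_gauge_prep}\ref{Lem_gauge_prep_d}.
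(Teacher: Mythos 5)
Your proposal is correct and takes essentially the same route as the paper: both arguments rest on Lemma~\ref{Lem_gauge_prep}\ref{Lem_gauge_prep_a} for the $\Psi(\delta\,|\,R^\#)$-closeness of the gauged, rescaled surface to $M_{\cyl}$ and on the invertibility (with uniformly bounded inverse) of the linearized gauge map on $\YY_\perp$ from Lemma~\ref{Lem_gauge_prep}\ref{Lem_gauge_prep_d}, combined with an implicit-function/ODE argument and the same derivation of the bound on $\td\bY_{\tau^*}$. The only difference is packaging: you differentiate the gauge constraint, solve the resulting affine-linear equation for the generator $\td\bY_\tau\in\YY_\perp$ via the graph equation of Lemma~\ref{Lem_structure_MCF_graph_equation}, and then integrate and propagate the constraint, whereas the paper applies the implicit function theorem to the map $F(S'',\tau)$ and selects the unique curve in $F^{-1}(0)$ whose velocity lies in the prescribed deformation of $\YY_\perp$.
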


\begin{proof}
Consider the smooth family of submanifolds $\td M_\tau := e^{\tau/2} S_{t^*}( \MM^{\reg}_{-e^{-\tau}})$ for $\tau \approx \tau^*$ evolving by rescaled mean curvature flow.
Lemma~\ref{Lem_gauge_prep}\ref{Lem_gauge_prep_a} implies that $\td M_{\tau^*}$ is $\Psi(\delta |  R^\#)$-close to $M_{\cyl}$, so the same is true for $\tau \approx \tau^*$.
So for $\delta \leq \ov\delta(R^\#)$ and for a sufficiently small neighborhood $\tau^* \in \td I^* \subset \td I$ the map
\[ F : W \times \td I^* \lto \sV_{\Jac}, \qquad (S'', \tau) \mapsto F^{\td M_{\tau}} (S'') \]
is defined and smooth with $F(\id, \tau^*) = 0$.
Lemma~\ref{Lem_gauge_prep}\ref{Lem_gauge_prep_d} combined with the implicit function theorem implies that $F^{-1}(0)$ is a smooth submanifold whose tangent space at $(\id, \tau^*)$ is complementary to $\YY_\perp \times 0$, after possibly shrinking $W$ and $\td I^*$.

In order to construct $S_t$ for $t \approx t^*$, let us make the Ansatz $S_{-e^{-\tau}} = S'_\tau \circ S_{t^*}$ for some $S'_\tau(\mathbf x) = A'_\tau \mathbf x + \mathbf b'_\tau$ with $A'_\tau \in O(n+1+n')$ and $\mathbf b'_\tau \in \IR^{n+1+n'}$, which we need to determine and which needs to satisfy $S'_{\tau^*} = \id$.
If we set $S''_\tau (\mathbf x) := A'_\tau \mathbf x + e^{\tau/2} \mathbf b'_\tau$, then 
\[ e^{\tau/2} S_{-e^{-\tau}} (\MM^{\reg}_{-e^{-\tau}}) = S''_\tau (\td M_\tau), \]
so $\td\bY_\tau =(\partial_\tau S''_\tau)\circ (S''_\tau)^{-1}$ and the gauging condition at each $\tau \approx \tau^*$ is equivalent to $F(S''_\tau, \tau) = 0$.

We also need to satisfy the condition $\td\bY_\tau \in \YY_\perp$.
At $\tau = \tau^*$ we have $\mathbf b'_{\tau^*} = 0$, so $S''_{\tau^*}=\id$ and this condition at time $\tau = \tau^*$ is equivalent to 
\[ \td\bY_{\tau^*} = \partial_\tau |_{\tau = \tau^*} S''_\tau \in \YY_\perp. \]
For $\tau \approx \tau^*$ the condition $\td\bY_\tau \in \YY_\perp$ can be expressed in the form  $\partial_\tau S''_\tau \in \YY_{\perp, S''_\tau,\tau}$ for some smooth and time-dependent subbundle $S'' \mapsto \YY_{\perp, S'',\tau}$ of the tangent bundle of $W \subset E(n+1+n')$, where $\YY_{\perp, \id, \tau^*} = \YY_\perp$.
Since $\YY_\perp \times 0$ is complementary to the tangent space of $F^{-1}(0)$ at $(\id, \tau^*)$, we obtain
by solving the equation
\[0=\tfrac{d}{d\tau} F(S''_\tau,\tau)=(dF)_{(S''_\tau,\tau)}(\partial_\tau S''_\tau,0)+\partial_\tau  F(\id,\tau),\]
the existence of a unique family $( S''_\tau)_{\tau \in \td I^*}$ with $(S''_\tau, \tau) \in F^{-1}(0)$ such that $\partial_\tau S''_\tau \in \YY_{\perp, S''_\tau,\tau}$, possibly after shrinking $\td I^*$.
Since $\td M_{\tau}$, $\tau\in\td I^*$, is $\Psi(\delta|R^\#)$-close to $M_{\cyl}$ by Lemma \ref{Lem_gauge_prep}\ref{Lem_gauge_prep_a}, and $\tau \mapsto \td M_{\tau}$ is a rescaled mean curvature flow, we obtain moreover that $\|\partial_\tau |_{\tau = \tau^*} F(\id,\tau) \| \leq \Psi(\delta|R^\#)$, 
which implies that $\| \td\bY_{\tau^*} \| \leq \Psi(\delta |R^\#)$.
\end{proof}

Let us now focus on the setting of the proposition and suppose first that $T_1 = -e^{-\tau_1} := \sup I \in I$.
By Claims~\ref{Cl_final_T0}, \ref{Cl_S_small_I}, we may find a maximal solution $(S_t)_{t \in (T', T_1]}$, $T' \geq -\infty$, satisfying Assertions~\ref{Prop_gauging_a}--\ref{Prop_gauging_d}.
Let $(\bY_t)$ and $(\td\bY_\tau)$ be the corresponding families of Killing fields.

\begin{Claim}
If $\delta \leq \ov\delta(\eps, R^\#)$, then $T' = - \infty$ and $(S_t)$ extends to a smooth family on $I \cap (-\infty,T_1]$.
\end{Claim}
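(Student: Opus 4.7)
The plan is to argue by contradiction: assuming $T' > \inf I$, I will extract a limit $S_{T'}\in E(n+1+n')$ that strictly satisfies all the gauging bounds of Assertions~\ref{Prop_gauging_a}--\ref{Prop_gauging_c}, and then invoke Claim~\ref{Cl_S_small_I} at $t^*=T'$ with $S_{t^*}=S_{T'}$ to extend $(S_t)$ smoothly past $T'$, contradicting maximality.

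The first step is to obtain the uniform bound $\|\td\bY_\tau\|\le \eps/4$ on all of $(T',T_1]$. Running Claim~\ref{Cl_S_small_I} at each $t^*\in (T',T_1]$ with $S_{t^*}$ read off from the existing family, uniqueness of the local gauged extension identifies it with the maximal one, and the bound $\|\td\bY_{\tau^*}\|\leq \Psi(\delta\mid R^\#)$ produced by that claim becomes uniform; shrinking $\delta \leq \ov\delta(\eps, R^\#)$ yields the desired smallness.

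The second step controls the translation part $\bc_t:=S_t(\bO)$ on $(T',T_1]$. Writing $\bY_t(\bx)=A_t\bx+\bb_t$ and $S_t(\bx)=B_t\bx+\bc_t$, Assertion~\ref{Prop_gauging_b} gives $(-t)\dot \bc_t=A_t\bc_t+\bb_t$, which in the coordinate $\tau=-\log(-t)$ reads $\partial_\tau W_\tau = A_{-e^{-\tau}}W_\tau+\bb_{-e^{-\tau}}$ for $W_\tau:=\bc_{-e^{-\tau}}$. The relation $\td\bY_\tau=(e^{\tau/2})_*\bY_{-e^{-\tau}}$ forces $|A_{-e^{-\tau}}|\leq\|\td\bY_\tau\|$ and $|\bb_{-e^{-\tau}}|\leq e^{-\tau/2}\|\td\bY_\tau\|$, and $A_{-e^{-\tau}}$ is skew-symmetric, so $|\partial_\tau|W_\tau||\leq e^{-\tau/2}\|\td\bY_\tau\|$. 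Integrating backward from $\tau_1$ and using $|S_{T_1}(\bO)|<\eps\sqrt{-T_1}/4$---which I will arrange by applying Claim~\ref{Cl_final_T0} with $\eps/4$ in place of $\eps$---yields $|\bc_t|<\eps\sqrt{-t}/2$ strictly on $(T',T_1]$. Together with $B_t\in O(n+1+n')$ and the derivative bound $|\dot B_t|\leq\|\td\bY_\tau\|/(-T')$ from the orthogonal part of the same ODE, this makes $(S_t)$ uniformly Lipschitz on $(T',T_1]$, so it extends to a smooth limit $S_{T'}$.

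For the final step, continuity of the functional $S\mapsto \PP_{\sV_{\Jac}}(u_S\omega_{R^\#})$ ensures that the $R^\#$-gauging condition passes to the limit, so Assertions~\ref{Prop_gauging_a} and \ref{Prop_gauging_c} hold strictly at $t=T'$, and Claim~\ref{Cl_S_small_I} then provides the desired extension past $T'$. The main technical obstacle is the translation bound in the second step: the $1/(-t)$ factor in the ODE for $\bc_t$ could a priori produce $(-t)^{\|\td\bY\|}$-type growth backward in time and so violate the strict bound $|\bc_t|<\eps\sqrt{-t}$. The cancellation that saves the argument is that $\td\bY_\tau\in\YY_\perp$ has skew-symmetric rotational part and translational part of size $\sqrt{-t}\,\|\td\bY_\tau\|$, which together reduce the forcing in the $W_\tau$-equation to the integrable term $\|\td\bY_\tau\|\, e^{-\tau/2}$.
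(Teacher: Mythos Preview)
Your approach is essentially the paper's: establish $S_t\to S_{T'}$, show the strict bound $|S_{T'}(\bO)|<\eps\sqrt{-T'}$, then extend via Claim~\ref{Cl_S_small_I} to contradict maximality. The variation is in the second step. The paper argues by contradiction locally at $\tau'$: if $e^{\tau'/2}|W_{\tau'}|=\eps$, then the differential inequality $\partial_\tau(e^{\tau/2}|W_\tau|)\ge\tfrac12 e^{\tau/2}|W_\tau|-\tfrac\eps4$ (obtained from the same derivative bound you derive) forces this quantity to increase past $\eps$ for $\tau$ slightly above $\tau'$, contradicting the strict bound $e^{\tau/2}|W_\tau|<\eps$ already known on $(\tau',\tau_1]$. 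You instead integrate $|\partial_\tau|W_\tau||\le e^{-\tau/2}\|\td\bY_\tau\|$ globally from $\tau_1$ backward.

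One issue with your write-up: you propose arranging $|S_{T_1}(\bO)|<\tfrac\eps4\sqrt{-T_1}$ via Claim~\ref{Cl_final_T0}, but in the setting of Assertion~\ref{Prop_gauging_d} the terminal motion $S_{T_1}=S'$ is \emph{prescribed} with only $|S'(\bO)|<\eps\sqrt{-T_1}$, so you are not free to tighten it. Fortunately your integration works without the tightening: with $\|\td\bY_\tau\|\le\eps/4$ one gets $|W_\tau|<|W_{\tau_1}|+\tfrac\eps2(e^{-\tau/2}-e^{-\tau_1/2})<\tfrac\eps2 e^{-\tau_1/2}+\tfrac\eps2 e^{-\tau/2}$, which is strictly below $\eps e^{-\tau/2}$ for all $\tau<\tau_1$, including in the limit $\tau\to\tau'$. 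The paper's local argument has the advantage of never referencing the terminal value at all. (A minor slip: your bound $|\dot B_t|\le\|\td\bY_\tau\|/(-T')$ goes the wrong way since $-t<-T'$, but integrability of $\|\td\bY_\tau\|/(-t)$ on $(T',T_1]$ is all you need for convergence of $B_t$.)
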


\begin{proof}
Suppose by contradiction that $-e^{-\tau'} := T' > -\infty$.
Due to the uniform bound on $\td\bY_\tau$, we obtain that $S_t \to S_{T'}$ for $t \searrow T'$.
If $|S_{T'}(\bO)| < \eps \sqrt{-T'}$, then we can apply Claim~\ref{Cl_S_small_I} for $t^* = T'$ and extend $(S_t)$ smoothly and uniquely to a slightly larger time-interval.
So by minimality of $T'$, we must have $|S_{T'}(\bO)| =\eps \sqrt{-T'}$.
To obtain a contradiction to this note that the bound on $\td\bY_\tau$ from Claim~\ref{Cl_S_small_I} implies that for some dimensional constant $C$
\begin{multline*}
 \bigg|\frac{d}{d\tau} S_{-e^{-\tau}} (\bO) \bigg| 
= e^{-\tau} |\partial_t S_{-e^{-\tau}} (\bO)| 
=  |\bY_{-e^{-\tau}} (S_{-e^{-\tau}} (\bO))| 
\leq e^{-\tau/2} \big|\td\bY_{\tau} (e^{\tau/2} S_{-e^{-\tau}} (\bO)) \big| \\
\leq C e^{-\tau/2} \Vert \td\bY_{\tau} \Vert 
\leq \Psi(\delta | R^\#) e^{-\tau/2} .
\end{multline*}
Note that in the second last inequality, we have used that $|S_{-e^{-\tau}}(\bO)| \leq \eps e^{-\tau/2}$ for $\tau > \tau'$.
Therefore, if $\delta \leq \ov\delta(\eps, R^\#)$, then for $\tau \approx \tau'$ we have
\[ \frac{d}{d\tau} \Big( e^{\tau/2} | S_{-e^{-\tau}} (\bO)|   \Big) 
\geq \frac12 e^{\tau/2} | S_{-e^{-\tau}} (\bO)| - \frac\eps4   > 0.  \]
Since $e^{\tau/2} | S_{-e^{-\tau}} (\bO)| \leq \eps$ for $\tau \geq \tau'$, this implies that $e^{\tau'/2} | S_{-e^{-\tau'}} (\bO)| < \eps$, giving us the desired contradiction.
\end{proof}

This concludes the proof in the case $T_1 = -e^{-\tau_1} := \sup I \in I$; Assertion~\ref{Prop_gauging_f} is a direct consequence of Lemma~\ref{Lem_gauge_prep}\ref{Lem_gauge_prep_a}.

Let us now consider the case in which $T_1 := \sup I \not\in I$.
In this case we can apply our conclusions to the flow restricted to time-intervals $I_i = I \cap (-\infty, T_{1,i}]$ for $T_{1,i} \nearrow T_1$.
This leads to families of solutions $(S_{i,t})_{t \in I_i}$, which satisfy the uniform bounds from Assertion~\ref{Prop_gauging_c}.
We can now construct $(S_t)_{t \in I}$ by passing to a subsequential limit.
\end{proof}
\bigskip

\subsection{The PDE-ODI principle for rescaled modified mean curvature flows} \label{subsec_PDEODE_to_MCF}
In this subsection we apply the PDE-ODI principle (Theorem~\ref{Thm_PDE_ODI_principle}) to the rescaled modified mean curvature flow equation \eqref{eq_main_ev_eq}.
This will allow us to describe the flow via the evolution of a finite-dimensional vector $U^+_\tau \in \sV_{> \la}$.
In the case in which the flow is $R^\#$-gauged---which will be our primary concern---we will show that the Jacobi projections $\PP_{\sV_{\Jac}} U^+_\tau$ and the Killing fields $\td\bY_\tau$ are small.

The following is a direct consequence of Theorem~\ref{Thm_PDE_ODI_principle} combined with Lemmas~\ref{Lem_structure_MCF_graph_equation} and \ref{Lem_mcf_weak_pseudo}.

\begin{Proposition}[PDE-ODI principle for rescaled modified mean curvature flow] \label{Prop_PDE_ODI_MCF}
There is a constant $c > 0$ such that the following holds if
\[ 
m \geq 4,\quad J \geq 1, \quad 
\la \in \IR,  \quad 
\eta \leq \ov\eta(\la), \quad 
\eps \leq \ov\eps(\eta,m),  \quad 
R^* \geq \underline R^*(m,J,\la, \eta, \eps). \]

Let $\td\MM^{\prime}$ be a unit-regular, integral rescaled modified mean curvature flow in $\IR^{n+1+n'} \times \td I$ for a smooth family of Killing fields $(\td\bY_\tau \in \YY)_{\tau \in \td I}$ (compare with Subsection~\ref{subsec_pseudoloc_rmmcf}).
Consider the domain $\DD \subset M_{\cyl} \times \td I$ and the function $u \in C^\infty(\DD;  \IR^{1+n'})$ whose time slices $\DD_\tau$ and $u_\tau \in C^\infty(\DD_\tau; \IR^{1+n'})$, for each $\tau \in \td I$, are chosen such that there is a maximal $R'_\tau \in [0,\infty]$ with the property that $\IB^{n+1+n'}_{R'_\tau} $ is disjoint from $\td\MM^{\prime,\sing}_\tau$ and such that
\[ \td\MM^{\prime,\reg}_\tau \cap \IB^{n+1+n'}_{R'_\tau} = \Gamma_{\cyl} (u_\tau). \]
So $u$ satisfies the equation \eqref{eq_main_ev_eq}. Suppose that:
\begin{enumerate}[label=(\roman*)]
\item \label{Prop_PDE_ODI_MCF_i} $\Vert \td\bY_\tau \Vert \leq c$ for all $\tau \in \td I$.
\item \label{Prop_PDE_ODI_MCF_ii}
For all $\tau \in \td I$ we have 
\[ \IB_{ R^*}^k \times \IS^{n-k}  \subset \DD_{\tau} \qquad \text{and} \qquad \| u_{\tau}  \|_{L^2_f(\IB^k_{ R^*} \times \IS^{n-k})} \leq \eta e^{-(R^*)^2/8}. \]
\item \label{Prop_PDE_ODI_MCF_iii} If $T_0 = \min \td I > -\infty$ exists, then for some $R_0 \in [ R^*, \infty]$ the following holds for all $\tau \in [T_0 , T_0 +1]$:
\begin{equation} \label{eq_atT0bound2}
 \IB^k_{R_0+1} \times \IS^{n-k}  \subset \DD_{\tau} \qquad \text{and} \qquad  \| u_{\tau}  \|_{C^{m}(\IB^k_{R_0+1} \times \IS^{n-k})} < \eta. 
\end{equation}
If $\inf \td I = -\infty$, then \eqref{eq_atT0bound} holds for $R_0 = R^*$ and  sufficiently small $\tau$.
\end{enumerate}
Then there is a continuous function $R : \td I  \to [R^*, \infty]$ that is smooth when finite and satisfies $R(T_0) = R_0$ if $T_0$ exists, with the following properties.
For all $\tau \in \td I$ we have $\IB^{k}_{ R(\tau)} \times \IS^{n-k}  \subset \DD_\tau$, so $u_\tau \omega_{R(\tau)}$ can be extended to a smooth function on $M_{\cyl}$ by setting it zero outside its domain.
So we can set
\begin{equation} \label{eq_UpUUm_def_apply}
 U^+_\tau :=  \PP_{\sV_{> \lambda}} (u_{\tau} \omega_{R(\tau)}) , \qquad
\mathcal U^-_\tau := \big\|   \PP_{\sV_{\leq \lambda}}  (u_{\tau} \omega_{R(\tau)} ) \big\|_{L^2_{f}} + \eta e^{-\frac{((1-\eps)R(\tau))^2}{8}}. 
\end{equation}
If $R(\tau) = \infty$, then we set $\omega_{R(\tau)} \equiv 1$ and $e^{-\frac{((1-\eps)R(\tau))^2}{8}} = 0$.
The following is true:
\begin{enumerate}[label=(\alph*)]
\item \label{Prop_PDE_ODI_MCF_a} We have the bound
$  \| u_{\tau} \|_{C^{m}(\IB^k_{R(\tau)} \times \IS^{n-k} )} \leq \eta$. 
\item \label{Prop_PDE_ODI_MCF_b}  The quantities $U^+_\tau$ and $\mathcal{U}^-_\tau$ satisfy the evolution inequalities
\[ \Big\| \partial_\tau U^+_\tau - L U^+_\tau  - Q_J^+ (U^+_\tau ,\td\bY_\tau) \Big\|_{L^2_{f}}
\leq  C(\la, J) \Vert U^+_\tau \Vert^{J+1}_{L^2_{f}} + \big( C(\la) \sqrt\eta + C \Vert \td\bY_\tau \Vert  \big)  \mathcal{U}^-_\tau  \]
\[  \partial_\tau \mathcal U^-_\tau 
\leq  \big(\la + C(\la) \sqrt\eta + C(\la) \Vert \td\bY_\tau \Vert  \big) \mathcal U^-_\tau  
+  \Vert Q^-_J(U^+_\tau,  \td\bY_\tau) \Vert_{L^2_{f}}
+ C(\la,J) \Vert U^+_\tau \Vert^{J+1}_{L^2_{f}}
  \]
\item \label{Prop_PDE_ODI_MCF_c} There is a $T' \geq -\infty$ such that $R(\tau) = \infty$ and $u_\tau \equiv 0$ if $\tau \leq T'$ and $R(\tau) < \infty$ if $\tau > T'$.
\end{enumerate}
\end{Proposition}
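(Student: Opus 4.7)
The proposition is a specialization of the general PDE-ODI principle, Theorem~\ref{Thm_PDE_ODI_principle}, to the rescaled modified mean curvature flow equation for graphs over the round cylinder $M^{n,k}_{\cyl}$. My plan is to match the two statements by (i) verifying that the graph equation fits the abstract framework of Subsection~\ref{subsec_PDE_ODI_statement}, (ii) supplying the required pseudolocality property via Lemma~\ref{Lem_mcf_weak_pseudo}, and (iii) invoking the theorem and absorbing constants.

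For step (i), in the notation of Subsection~\ref{subsec_PDE_ODI_statement} we take $N = \IS^{n-k}$ (the round sphere of radius $\sqrt{2(n-k)}$), replace the $\IR^n$-factor by $\IR^k$, and treat $u = (u', u'')$ as $\IR^{1+n'}$-valued. Lemma~\ref{Lem_structure_MCF_graph_equation} then identifies the linear part as $L = \triangle_f + A$ with $A$ a constant symmetric operator on $\IR^{1+n'}$ that is independent of $\bx$ and $\by$, and writes the non-linear part explicitly as
\[
Q[u,\bY] = Q_0(u,\partial u,\partial^2 u) - \nabla_{\bY_\Vert(\bx,\by)} u + \bY_\perp(\bx,\by) + Q_1(\by,u,\bY) + Q_2(\by,u,\partial u,\bY).
\]
Since a Killing field on $\IR^{n+1+n'}$ is affine in $\bx$, every summand depends at most polynomially of bounded degree on $\bx$, while the $\by$-dependence lies on a compact factor, giving the derivative bounds \eqref{eq_derQ_bounds_asspt}. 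The condition \eqref{eq_condition_on_Q} follows term by term: $Q_0$ vanishes to second order at the origin by construction, while the remaining terms vanish whenever $(u,\partial u,\partial^2 u, \bY) = 0$ and exhibit the correct $(r+1)^{2-i}$ growth from the affine structure of $\bY$.

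For step (ii), Lemma~\ref{Lem_mcf_weak_pseudo} provides exactly a pseudolocality statement of the form demanded by Definition~\ref{Def_pseudolocality}, with the single difference that its time-interval condition reads $\tau_2 - \tau_1 \leq \sigma R^{-1}$ while Definition~\ref{Def_pseudolocality} asks for the strictly stronger restriction $\tau_2 - \tau_1 \leq \sigma R^{-1/\sigma}$. The latter implies the former for any $\sigma \leq 1$ and $R \geq 1$, so the lemma yields the $(\sigma,\eta,m)$-pseudolocality property. Enforcing $\|\td\bY_\tau\| \leq c$ for a suitable universal $c > 0$ lets us use $A = c$ in Lemma~\ref{Lem_mcf_weak_pseudo}, producing a valid $\sigma = \sigma(\eta, m)$.

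For step (iii), with this $\sigma = \sigma(\eta, m)$ the assumption $\eps \leq \ov\eps(\sigma)$ in Theorem~\ref{Thm_PDE_ODI_principle} becomes $\eps \leq \ov\eps(\eta, m)$, and $R^* \geq \underline R^*(m, J, \la, \eta, \sigma, \eps)$ collapses to $R^* \geq \underline R^*(m, J, \la, \eta, \eps)$, matching the proposition. Hypotheses \ref{Prop_PDE_ODI_MCF_i}--\ref{Prop_PDE_ODI_MCF_iii} then correspond one-to-one with hypotheses \ref{Thm_PDE_ODI_principle_0}, \ref{Thm_PDE_ODI_principle_ii}, \ref{Thm_PDE_ODI_principle_iv}, while \ref{Thm_PDE_ODI_principle_i} is supplied by step (ii). The conclusions \ref{Prop_PDE_ODI_MCF_a}--\ref{Prop_PDE_ODI_MCF_c} follow directly from \ref{Thm_PDE_ODI_principle_a}--\ref{Thm_PDE_ODI_principle_d}; the only addition is the claim in \ref{Prop_PDE_ODI_MCF_c} that $u_\tau \equiv 0$ on $\{R = \infty\}$, which is immediate from the construction of $R$ at the start of the proof of Theorem~\ref{Thm_PDE_ODI_principle}, where that interval was chosen precisely to be the maximal one on which $u \equiv 0$ and $\DD = \IR^n \times N$. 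Since the argument is a verification and translation, I expect no substantive obstacle; the only care-demanding step is the explicit checking in (i) that the structure of $Q$ from Lemma~\ref{Lem_structure_MCF_graph_equation} satisfies the polynomial-growth conditions of the abstract framework.
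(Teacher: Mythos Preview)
Your proposal is correct and follows essentially the same approach as the paper: invoke Lemma~\ref{Lem_mcf_weak_pseudo} to supply the pseudolocality hypothesis and then apply Theorem~\ref{Thm_PDE_ODI_principle}. The paper's own proof is a two-line version of this (applying Lemma~\ref{Lem_mcf_weak_pseudo} with $A=1$), while your step~(i) and the remark on $u_\tau\equiv 0$ in~(c) spell out checks that the paper leaves to earlier remarks and to the construction inside the proof of Theorem~\ref{Thm_PDE_ODI_principle}.
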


\begin{proof}
Lemma~\ref{Lem_mcf_weak_pseudo} (applied for $A =1$) shows that we can choose $\sigma = \sigma(\eta, m)$ to fulfill the $(\sigma, \eta, m)$-pseudolocality property.
Thus we need to assume that $\eps \leq \ov\eps(\sigma) \leq \ov\eps(\eta, m)$.
\end{proof}
\medskip

The following addendum to Proposition~\ref{Prop_PDE_ODI_MCF_gauged} further characterizes the gauged case.

\begin{Proposition}[Addendum to Proposition~\ref{Prop_PDE_ODI_MCF_gauged}] \label{Prop_PDE_ODI_MCF_gauged}
If in Proposition~\ref{Prop_PDE_ODI_MCF}, we additionally assume that $R^\# \geq \underline R^\#$, $\la < 0$, $\eta \leq \ov\eta(\la, R^\#)$ and $R^* \geq R^\# + 10$, then the following bound holds if $\td\MM'$ is $R^\#$-gauged
\begin{equation*}
 \Vert \PP_{\sV_{\Jac}} U^+_\tau \Vert_{L^2_f}  + \Vert \td\bY_{\tau} \Vert_{L^2_f}
 \leq \Psi(R^\#) \big( \| \PP_{\sV_{\osc, < 0}} U^+_\tau \|_{L^2_f} + \UU^-_\tau \big) . %
\end{equation*}
Here $\Psi(R^\#)$ is a universal function with $\Psi(R^\#) \to 0$ as $R^\# \to \infty$.
\end{Proposition}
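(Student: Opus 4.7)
The plan is to establish the two estimates separately: the bound on $\PP_{\sV_{\Jac}}U^+_\tau$ via a static argument using the gauge condition pointwise in $\tau$, and the bound on $\td\bY_\tau$ via a dynamic argument exploiting that the gauge is preserved along the flow.

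For the Jacobi projection, the key preliminary observation is that $R(\tau) \geq R^* \geq R^\# + 10$ forces $\omega_{R^\#} = \omega_{R^\#}\omega_{R(\tau)}$, and hence $\omega_{R(\tau)} - \omega_{R^\#} = \omega_{R(\tau)}(1-\omega_{R^\#})$. Fix an $L^2_f$-orthonormal basis $\{V_i\}$ of the finite-dimensional space $\sV_{\Jac}$. Since $\sV_{\Jac} \subset \sV_{>\la}$ for any $\la < 0$, I have $\langle U^+_\tau, V_i\rangle_{L^2_f} = \langle u_\tau\omega_{R(\tau)}, V_i\rangle_{L^2_f}$; invoking the gauging identity $\PP_{\sV_{\Jac}}(u_\tau\omega_{R^\#}) = 0$ then reduces this to $\langle U^+_\tau + U^-_\tau, V_i(1-\omega_{R^\#})\rangle_{L^2_f}$. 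Two structural observations now conspire to give the desired bound. First, rotational invariance of $(1-\omega_{R^\#})$ under the $O(n-k+1)\times O(n')$-action ensures $V_i(1-\omega_{R^\#}) \in \sV_{\osc}$, killing the $\sV_{\rot}$-part of $U^+_\tau$. Second, spectral orthogonality $\langle U^-_\tau, V_i\rangle = 0$ allows me to rewrite $\langle U^-_\tau, V_i(1-\omega_{R^\#})\rangle = -\langle U^-_\tau, \PP_{\sV_{\leq\la}}(V_i\omega_{R^\#})\rangle$ where $\PP_{\sV_{\leq\la}}(V_i\omega_{R^\#}) = -\PP_{\sV_{\leq\la}}(V_i(1-\omega_{R^\#}))$, transferring the $(1-\omega_{R^\#})$-factor onto $V_i$. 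In both cases the polynomial growth of Jacobi modes (Lemma~\ref{Lem_polynomial_bounds}) combined with Gaussian decay yields $\|V_i(1-\omega_{R^\#})\|_{L^2_f} \leq \Psi(R^\#)$. Cauchy--Schwarz and summation over the finite basis give
\[
\|\PP_{\sV_{\Jac}}U^+_\tau\|_{L^2_f} \leq \Psi(R^\#)\bigl(\|\PP_{\sV_{\Jac}}U^+_\tau\|_{L^2_f} + \|\PP_{\sV_{\osc,<0}}U^+_\tau\|_{L^2_f} + \UU^-_\tau\bigr),
\]
and for $R^\# \geq \underline{R}^\#$ the self-referential term is absorbed to the left.

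For the Killing field bound, I differentiate the gauging identity in time: $\partial_\tau\PP_{\sV_{\Jac}}(u_\tau\omega_{R^\#}) = 0$ yields $\PP_{\sV_{\Jac}}(\omega_{R^\#}(Lu_\tau + Q[u_\tau,\td\bY_\tau])) = 0$. Splitting $Q[u_\tau,\td\bY_\tau] = \Jac\td\bY_\tau + \td Q[u_\tau,\td\bY_\tau]$ per Lemma~\ref{Lem_structure_MCF_graph_equation}, and using that $\Jac\td\bY_\tau \in \sV_{\Jac}$ is polynomially bounded to obtain $\PP_{\sV_{\Jac}}(\omega_{R^\#}\Jac\td\bY_\tau) = \Jac\td\bY_\tau + O(\Psi(R^\#)\|\td\bY_\tau\|)$, the bounded invertibility of $\Jac|_{\YY_\perp}$ allows me to solve for $\|\td\bY_\tau\|$ modulo the two remaining terms. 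The $\omega_{R^\#}Lu_\tau$ contribution is handled by the pivotal identity $\PP_{\sV_{\Jac}}(L(\omega_{R^\#}u_\tau)) = L\PP_{\sV_{\Jac}}(\omega_{R^\#}u_\tau) = 0$, which follows directly from gauging; this reduces the estimate to the commutator $[\omega_{R^\#}, L]u_\tau$, supported on the thin annulus $\IB^k_{R^\#}\setminus\IB^k_{R^\#-0.2}$. Integration by parts against each $V_i$, combined with the rotational-invariance trick used above and polynomial/Gaussian bounds, then yields a bound of the form $\Psi(R^\#)(\|\PP_{\sV_{\osc,<0}}U^+_\tau\|_{L^2_f} + \UU^-_\tau)$ after re-invoking the Jacobi bound just proved. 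The $\td Q$ contribution exploits the pointwise gauging bound $|u_\tau|+|\nabla u_\tau| < (R^\#)^{-1}$ on $\IB^k_{R^\#}$ together with the at-least-quadratic structure of $\td Q$ to extract a factor of $\Psi(R^\#)$; any residual linear-in-$\td\bY_\tau$ cross-term is absorbed to the left-hand side.

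The Jacobi bound is essentially linear algebra once the right decompositions are in place, and I expect it to go through with minor fuss. The more delicate step is the $\td\bY_\tau$ bound: the main obstacle is ensuring that the nonlinear remainder $\td Q$ does not introduce rotationally symmetric contributions to the right-hand side. The key simplification is that $\PP_{\sV_{\Jac}}$ projects onto an oscillatory subspace, so purely rotationally symmetric portions of $\td Q$ vanish under projection, leaving only mixed rotational--oscillatory and oscillatory--oscillatory terms, both of which inherit their smallness from the $(R^\#)^{-1}$ gauging bound on $u_\tau$ rather than from $\eta$ alone.
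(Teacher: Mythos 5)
Your proposal is correct and follows essentially the same route as the paper's proof: a static estimate of $\PP_{\sV_{\Jac}} U^+_\tau$ obtained by comparing the cutoffs $\omega_{R(\tau)}$ and $\omega_{R^\#}$ and using equivariance, then differentiating the gauge identity in time, inserting the evolution equation, shifting the cutoff commutator onto the polynomially growing Jacobi test functions by self-adjointness of $L$, killing the purely rotational part of the nonlinearity by equivariance, and absorbing the self-referential Jacobi and $\td\bY_\tau$ terms via the bounded invertibility of $\Jac|_{\YY_\perp}$. One small correction: the gauge bound $|u_\tau|+|\nabla u_\tau|<(R^\#)^{-1}$ does not control $\nabla^2 u_\tau$, so the quadratic part of the remainder involving second derivatives must be made small using the hypothesis $\eta \leq \ov\eta(\la,R^\#)$ (as the paper does, yielding a factor $C(\la,R^\#)\,\eta \leq \Psi(R^\#)$) rather than from the gauging bound alone.
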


\begin{proof}
Since $R(\tau) \geq R^* \geq R^\#$, we have $\omega_{R(\tau)} \omega_{R^\#} = \omega_{R^\#}$, so the $R^{\#}$-gauge property is equivalent to
\begin{equation} \label{eq_PPuomega0}
 \PP_{\sV_{\Jac}}( u_\tau \omega_{R(\tau)} \omega_{R^{\#}}) =\PP_{\sV_{\Jac}}( u_\tau \omega_{R^{\#}})=0.
\end{equation}
Choose a basis $\{ \ov U_i \}_{i=1}^N \subset \sV_{\Jac}$ consisting of functions of bounded linear growth.
Then \eqref{eq_PPuomega0} is equivalent to
\begin{equation} \label{eq_gauge_ovU}
 \big\langle u_\tau, \ov U_i \omega_{R^\#} \big\rangle_{L^2_f} = 0 \qquad \text{for all} \quad i = 1, \ldots, N. 
\end{equation}
In the next claim we consider this pairing with every term of the evolution equation for $u_\tau$.
For convenience, we will drop the ``$L^2_f$''-subscript.

\begin{Claim}
We have the following bounds:
\begin{align}
\big| \langle u_\tau \omega_{R(\tau)} , \ov U_i \rangle\big| &\leq \Psi(R^\#) \big( \Vert \PP_{\sV_{\osc}} U^+_\tau \Vert + \UU^-_\tau \big) \label{eq_withovU_1} \\
\big| \langle \Jac \td\bY_\tau , \ov U_i \omega_{R^\#}\rangle - \langle \Jac \td\bY_\tau,  \ov U_i \rangle\big| &\leq \Psi(R^\#) \Vert \td\bY_\tau \Vert \label{eq_withovU_2} \\
 \langle \partial_\tau u_\tau, \ov U_i \omega_{R^\#} \rangle  &= 0 \label{eq_withovU_3} \\
 \big| \langle L u_\tau, \ov U_i \omega_{R^\#} \rangle \big| &\leq \Psi(R^\#) \big( \Vert \PP_{\sV_{\osc}} U^+_\tau \Vert + \UU^-_\tau \big) \label{eq_withovU_4} \\
  \big| \langle Q[u_\tau, \td\bY_\tau] - \Jac \td\bY_\tau, \ov U_i \omega_{R^\#} \rangle \big| &\leq C(\la, R^\#) \eta\big( \Vert \PP_{\sV_{\osc}} U^+_\tau \Vert + \UU^-_\tau \big) + C(\la, R^\#) \eta \Vert \td\bY_\tau \Vert  \label{eq_withovU_5} 
\end{align}
\end{Claim}

\begin{proof}
Fix $\tau$ and write $R = R(\tau)$, $\td\bY = \td\bY_\tau$, $U^\pm = U^\pm_\tau$, $\UU^- = \UU^-_\tau$ and $u = u_\tau = u_{\rot} + u_{\osc}$, where $u_{\rot}$ is the average of $u$ under the action of the group $O(n-k+1) \times O(n')$, wherever defined.
Note that $u_{\rot} \omega_R$ (respectively $u_{\rot} \omega_{R^\#}$, $(L u_{\rot}) \omega_{R^\#}$ or $L(u_{\rot} \omega_{R^\#})$) is the $L^2_f$-projection of $u \omega_R$ (respectively $u \omega_{R^\#}$, $(L u) \omega_{R^\#}$ or $L(u \omega_{R^\#})$) onto $\sV_{\rot}$.
So since $\sV_{\Jac} \subset \sV_{\osc} \perp \sV_{\rot}$ and due to \eqref{eq_gauge_ovU}, the following pairings vanish 
\[ \langle u_{\rot}\omega_{R}, \ov U_i  \rangle = \langle u_{\rot} \omega_{R^\#}, \ov U_i \rangle = \langle u_{\osc}\omega_{R^\#}, \ov U_i  \rangle = \langle (L u_{\rot}) \omega_{R^\#}, \ov U_i  \rangle = \langle L (u_{\rot} \omega_{R^\#}), \ov U_i  \rangle = 0. \]
We also note that
\[ \Vert u_{\osc} \omega_{R} \Vert \leq \Vert \PP_{\sV_{\osc}} ( U^+ + U^- ) \Vert 
\leq \Vert \PP_{\sV_{\osc}}  U^+ \Vert + \Vert \PP_{\sV_{\osc}}  U^- \Vert
\leq \Vert \PP_{\sV_{\osc}}  U^+ \Vert + \UU^-. \] 

The bound \eqref{eq_withovU_1} now follows using Cauchy-Schwarz:
\begin{multline*}
 \big| \langle u \omega_{R}, \ov U_i  \rangle\big|
=  \big| \langle u_{\osc} \omega_{R}, \ov U_i  \rangle - \langle u_{\osc} \omega_{R^\#}, \ov U_i  \rangle\big|
= \big| \langle  u_{\osc} \omega_R, \ov U_i (1- \omega_{R^\#}) \rangle\big| \\
\leq \Vert u_{\osc} \omega_R \Vert \, \Vert \ov U_i (\omega_R- \omega_{R^\#}) \Vert
\leq \Psi(R^\#) \big( \Vert \PP_{V_{\osc}} U^+ \Vert + \UU^- \big) .
\end{multline*}
The bound \eqref{eq_withovU_2} follows similarly:
\begin{equation*}
  \big| \langle \Jac \td\bY, \ov U_i \omega_{R^\#} -\ov U_i \rangle\big| 
\leq \Vert \Jac \td\bY \Vert \, \big\Vert \ov U_i (\omega_{R^\#} -1) \big\Vert
\leq \Psi(R^\#) \Vert \td\bY \Vert.  
\end{equation*}
Equation~\eqref{eq_withovU_3} follows directly from \eqref{eq_gauge_ovU}.
For the bound \eqref{eq_withovU_4} we need to use that $L \sV_{\Jac} \subset \sV_{\Jac}$, so $\PP_{\sV_{\Jac}}(L(u \omega_{R^\#})) = 0$, which implies $\langle L (u \omega_{R^\#}), \ov U_i \rangle = 0$.
Therefore
\begin{align*}
 \big| \langle L u, \ov U_i \omega_{R^\#} \rangle \big|
 &= \big| \langle L u_{\rot} + L u_{\osc}, \ov U_i \omega_{R^\#} \rangle -  \langle L(u_{\osc}\omega_{R^\#} + u_{\rot}\omega_{R^\#}), \ov U_i \rangle \big| \\
 &= \big| \langle L(u_{\osc} \omega_R), \ov U_i \omega_{R^\#} \rangle - \langle u_{\osc} \omega_{R^\#}, (L\ov U_i) \rangle \big|\\
 &= \big| \langle   u_{\osc} \omega_R, L( \ov U_i \omega_{R^\#}) \rangle -  \langle u_{\osc} \omega_R, (L\ov U_i) \omega_{R^\#} \rangle \big| \\
  &\leq \| u_{\osc} \omega_R \| \, \big\|  L( \ov U_i \omega_{R^\#}) - (L\ov U_i) \omega_{R^\#}  \big\|  \\
  & 
\leq \Psi(R^\#)   \big( \Vert \PP_{V_{\osc}} U^+ \Vert + \UU^- \big).
\end{align*}
To see \eqref{eq_withovU_5} we write $u \omega_R = \PP_{\sV_{\rot}} U^+ + (\PP_{\sV_{\osc}} U^+ + U^-) = U^+_{\rot} + u'$; note that $\Vert u' \Vert \leq \Vert \PP_{\sV_{\osc}} U^+ \Vert + \UU^-$. 
As in the proof of Lemma~\ref{lem:Q_initial}, we can write over $\IB^k_{R^\#} \times \IS^{n-k} \subset \IB^k_{R-1} \times \IS^{n-k}$ and as long as $\eta \leq \ov\eta(\la, R^\#)$
\[   Q[u,\td\bY]  - Q[U^+_{\rot},\td\bY] 
 = \sum_{j=0}^2 Q^*_j * \nabla^j u', \]
where $Q^*_j = Q^*_j(\mathbf x, \mathbf y, u, \nabla u, \nabla^2 u, U_{\rot}, \nabla U_{\rot}, \nabla^2 U_{\rot}, \td\bY)$ depend smoothly on the indicated parameters.
Since the linearization of of $Q[u,\td\bY]$ at $u \equiv 0$ and $\td\bY = 0$ is given by $\Jac \td\bY$ (see Lemma~\ref{Lem_mode_dec}), we must have $Q^*_j(\mathbf x, \mathbf y, 0, 0,0, 0,0,0,0) = 0$.
Therefore,
\[ \Vert Q^*_j \Vert_{C^2(\IB^{k}_{R^\#} \times \IS^{n-k})} \leq C(\la, R^\#) \big( \Vert u \Vert_{C^4(\IB^{k}_{R^\#} \times \IS^{n-k})} + \Vert U^+_{\rot} \Vert + \Vert \td\bY \Vert \big)
 \leq C(\la, R^\#) \big( \eta + \Vert \td\bY \Vert \big). \]
It follows using integration by parts that
\begin{multline} \label{eq_QUrotu}
 \big| \langle Q[U^+_{\rot},\td\bY] - Q[u,\td\bY], \ov U_i \omega_{R^\#} \rangle \big| 
\leq \sum_{j=0}^2 \bigg| \int_{M_{\cyl}} \nabla^j (Q^*_j * \ov U_i \omega_{R^\#} e^{-f} ) * u' \, dg_{\cyl} \bigg| \\
\leq C(\la,R^\#) \big( \eta + \Vert \td\bY \Vert \big) \Vert u' \Vert 
\leq C(\la, R^\#) \big( \eta + \Vert \td\bY \Vert \big) \big( \Vert \PP_{\sV_{\osc}} U^+ \Vert + \UU^- \big).
\end{multline}
For symmetry reasons, the term $Q[U^+_{\rot}, 0]$ must be invariant under the action of $O(n-k+1) \times O(n')$, so as before
\[ \langle Q[U^+_{\rot}, 0], \ov U_i \omega_{R^\#} \rangle = 0. \]
So since $Q[0, \td\bY] = \Jac \td\bY$ and since $U^+_{\rot} \in \sV_{> \la}$ is an element of a finite-dimensional space, it follows that
\begin{align}
 \big| \langle Q[U^+_{\rot}, \td\bY] - \Jac \td\bY, \ov U_i \omega_{R^\#} \rangle \big|
&= \big| \langle Q[U^+_{\rot}, \td\bY] - Q[U^+_{\rot}, 0] - Q[ 0,\td\bY], \ov U_i \omega_{R^\#} \rangle \big| \notag \\
&\leq \int_0^1 \Big|  \big\langle \partial_{\td\bY} Q[ U^+_{\rot}, s\td\bY ] -  \partial_{\td\bY} Q[0,s\td\bY], \ov U_i \omega_{R^\#} \big\rangle\Big| ds \cdot \Vert \td\bY \Vert \notag \\
&\leq C(\la, R^\#) \Vert U^+_{\rot} \Vert \cdot \Vert \td\bY \Vert 
\leq C(\la,R^\#) \eta \Vert \td\bY \Vert. \label{eq_QUrotY}
\end{align}
Combining \eqref{eq_QUrotu} and \eqref{eq_QUrotY} implies \eqref{eq_withovU_5}.
\end{proof}

Combining the bounds from the claim with the evolution equation for $u$ gives us for $\eta \leq\ov\eta(\la, R^\#)$
\begin{align*}
\Vert \PP_{\sV_{\Jac}} U^+_\tau \Vert + \Vert \Jac \td\bY_\tau \Vert &\leq 
 \sum_{i=1}^N |\langle U^+_\tau, \ov U_i \rangle| + \sum_{i=1}^N |\langle  \Jac \td\bY_\tau , \ov U_i \rangle | \\
 &\leq (\Psi(R^\#)+ C(\la, R^\#) \eta) \big( \Vert \PP_{\sV_{\osc}} U^+_\tau \Vert + \UU^-_\tau  + \Vert \td\bY_\tau \Vert \big) \\
&\leq \Psi(R^\#) \big( \Vert \PP_{\sV_{\osc, < 0}} U^+_\tau \Vert + \Vert \PP_{\sV_{\Jac}} U^+_\tau \Vert  + \UU^-_\tau  + \Vert \td\bY_\tau \Vert \big).  
\end{align*}
Since $\td\bY_\tau \in \YY_{\perp}$, we have $\Vert \td\bY_\tau \Vert \leq C \Vert \Jac \td\bY_\tau \Vert$, so the desired bound follows for $R^\# \geq \underline R^\#$.
\end{proof}
\bigskip

\subsection{Bounding higher modes} \label{subsec_higher_modes}
While the functions $U^+_\tau$ (and $\UU^-_\tau$) from Proposition~\ref{Prop_PDE_ODI_MCF} evolve by a \emph{finite-dimensional} ODI system, which in principle could be analyzed using standard ODE techniques, the dimension of this system---depending on $\la$---may need to be chosen quite large in order to obtain strong asymptotic bounds.
To simplify matters, we will now refine the analysis and condense $U^+_\tau$ further.
We will see that stable modes of $U^+_\tau$, such as non-rotationally symmetric modes and rotationally symmetric modes corresponding to Hermite polynomials of degree $\geq 3$, decay exponentially fast provided the flow is appropriately gauged.
As a result the component of $U^+_\tau$ within $\sV_{\rot, 1} \oplus \sV_{\rot,\frac12} \oplus \sV_{\rot, 0}$ typically dominates, which reduces our analysis to the study of the projection of $U^+_\tau$ onto this subspace.

For the remainder of this section, we will frequently consider $R^\#$-gauged mean curvature flows as in Propositions~\ref{Prop_PDE_ODI_MCF} and \ref{Prop_PDE_ODI_MCF_gauged}.
Since we will often analyze projections of $U^+_\tau$ onto various subspaces of $\sV_{>\la}$, we adopt the following notation for convenience: the relevant subspace will appear as a subscript of $U^+$, and the dependence on $\tau$ will be shown in parentheses, or omitted when clear.
For example, we will write $U^+_{\rot, \frac12} (\tau)$ instead of $\PP_{\sV_{\rot,\frac12}} U^+_\tau$ and define $U^+_{\osc, \leq \la} (\tau)$, $U^+_{\Jac}(\tau)$, $\UU^-(\tau)$, etc., similarly.
Then we have $U^+ = U^+_{\rot} + U^+_{\osc}$ and for $\la \in \frac12\IZ$ and $-\frac12 i > \la$
\[ U^+_{\rot, \leq -\frac12 i} = U^+_{\rot, - \frac12 i} + \ldots + U^+_{\rot, \la + \frac12}. \]
Note that since  $U^+_{\rot, \leq - \frac12i}$ is a projection of $U^+ \in \sV_{> \la}$, it even lies in $\sV_{\leq -\frac12 i} \cap \sV_{> \la}$.
We also recall that (see Lemma~\ref{Lem_mode_dec})
\[ U^+_{\osc} = U^+_{\Jac} + U^+_{\osc,<0}. \]
Next, we define $\sV^{++} := \sV_{\rot, 1} \oplus \sV_{\rot,\frac12} \oplus \sV_{\rot, 0}$ and write
\[ U^{++} := \PP_{\sV^{++}} U^+ = U^+_{\rot, 1} + U^+_{\rot,\frac12} + U^+_{\rot,0}, \] 
so
\[ U^+_{\rot} = U^{++} + U^+_{\rot, \leq -\frac12}. \]
For convenience we will drop the subscript ``$L^2_f$'' in norms and inner products when there is no chance of confusion.
\medskip

The following proposition, which is the main result of the subsection, states that the terms $\| U^+_{\osc, < 0} \|$, $\| U^+_{\rot, \leq -\frac12 i} \|$ and $\UU^-$ decay exponentially until they are eventually bounded by a sufficiently high power of the leading term $\| U^{++} \|$.

\begin{Proposition} \label{Prop_Tisosc}
Consider the setting of Proposition~\ref{Prop_PDE_ODI_MCF} and its addendum, Proposition~\ref{Prop_PDE_ODI_MCF_gauged}, where we assume in addition that $\la < 0$ and $\la \in \frac12 \IZ$ and bounds $R^\# \geq \underline R^\#(J)$ and $\eta \leq \ov\eta(\la, J,  R^\#)$.
Set $J_0 := \min \{ 2J , -  2\la  \}$ and consider the time-dependent quantities
\begin{equation} \label{eq_tdUUi_def}
 \td\UU_i := \| U^+_{\osc, < 0} \| + \| U^+_{\rot, \leq -\frac{i}2} \| + \UU^-, \qquad i = 1, \ldots, J_0. 
\end{equation}
Then there are times $ T_0 := \inf \td I = T'_0 \leq T'_1 \leq \ldots \leq T'_{J_0} \leq T_1 := \sup \td I$
such that for any $i = 1, \ldots, J_0$ the following is true for some constant $C_i = C_i (\la, J)$:
\begin{enumerate}[label=(\alph*)]
\item \label{Prop_Tisosc_a} For all $\tau \in (T'_{i-1}, T'_i)$ we have $\td\UU_i(\tau) \geq C_i \|U^{++}(\tau)\|^{\lceil i/2 \rceil +1}$ and the following exponential decay:
\[ \partial_\tau \td\UU_i \leq -\tfrac1{10(n-k)} \td\UU_i \]
\item \label{Prop_Tisosc_b} If $T'_i < T_1= \sup \td I$, then 
\begin{equation} \label{eq_tdUUppfortauTi}
\td\UU_i(\tau) \leq C_i \|U^{++}(\tau)\|^{\lceil i/2 \rceil +1} \qquad \text{for} \quad \tau \geq T'_i, \quad \tau \in \td I. 
\end{equation}
\item \label{Prop_Tisosc_c} If $T_0 = \inf \td I = -\infty$, then $T'_0 = \ldots = T'_{J_0} = -\infty$ and for all $\tau \in \td I$ we have
\begin{align*}
 \| U^+_{\osc} \| + \UU^- + \| \td\bY \| 
 &\leq C(\la, J) \| U^{++} \|^{\lceil J_0/2 \rceil + 1} \\
 \| U^+_{\rot, \leq - \frac12 i} \| 
 &\leq C(\la, J) \| U^{++} \|^{\lceil i/2 \rceil + 1}, \qquad i = 1, \ldots, J_0 . 
\end{align*}
\end{enumerate}
\end{Proposition}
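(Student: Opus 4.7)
\medskip
\noindent\textbf{Proof proposal.} The plan is to run an induction on $i$ that uses, at each step, the two evolution inequalities of Proposition~\ref{Prop_PDE_ODI_MCF}\ref{Prop_PDE_ODI_MCF_b}, the gauge estimate $\Vert U^+_{\Jac}\Vert + \Vert\td\bY\Vert \leq \Psi(R^\#)(\Vert U^+_{\osc,<0}\Vert + \UU^-)$ of Proposition~\ref{Prop_PDE_ODI_MCF_gauged}, the eigenvalue structure of $L$ given in Lemma~\ref{Lem_mode_dec} (in particular, $L|_{\sV_{\osc,<0}}$ has spectral bound $-1/(n-k)$ and $L|_{\sV_{\rot,\leq-i/2}}$ has spectral bound $-i/2$), and a crucial equivariance observation. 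The observation is that the nonlinear term $Q[u,\td\bY]$ is equivariant under $O(n-k+1)\times O(n')$: in particular, $Q_J^+(U^{++},0)\in\sV_{\rot}$, so the projections of $Q_J^+(U^{++},0)$ onto $\sV_{\osc,<0}$ vanish identically. Moreover, since each element of $\sV^{++}=\sV_{\rot,1}\oplus\sV_{\rot,1/2}\oplus\sV_{\rot,0}$ is a Hermite polynomial of degree $\leq 2$, the $p$-fold product $(U^{++})^p$ can only contribute to $\sV_{\rot,\leq-i/2}$ when $p\geq \lceil i/2\rceil+1$; this is precisely the exponent appearing in the proposition.

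\medskip
\noindent Next, I would derive the key ODI. Applying $\PP_{\sV_{\osc,<0}}$ and $\PP_{\sV_{\rot,\leq-i/2}}$ to the evolution inequality for $U^+$, and combining with the evolution inequality for $\UU^-$, yields for each $i\in\{1,\dots,J_0\}$ an estimate of the schematic form
\[
\partial_\tau\td\UU_i \leq \Big(\mu_i + \Psi(R^\#) + C(\la)\sqrt\eta\Big)\td\UU_i + C(\la,J)\Vert U^{++}\Vert^{\lceil i/2\rceil+1}\td\UU_i^0 + C(\la,J)\Vert U^{++}\Vert^{J+1}.
\]
Here $\mu_i := \max\{-1/(n-k),-i/2\}$ comes from the spectral bound, the $\Psi(R^\#)$ absorbs the $\td\bY$-contribution after invoking the gauge estimate, and the forcing terms involving $\Vert U^{++}\Vert^{\lceil i/2\rceil+1}$ arise precisely from those monomials $(U^{++})^p$ that can contribute to the relevant stable subspace, as explained above. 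The factor of $10$ in the decay rate $-1/(10(n-k))$ is there to absorb the universal constants, provided $\eta\leq\ov\eta(\la,R^\#)$ and $R^\#\geq\underline R^\#$.

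\medskip
\noindent With the ODI in hand, the times $T'_i$ are defined recursively: set $T'_0=T_0$, and let $T'_i$ be the supremum of times $\tau\in[T'_{i-1},T_1]$ for which $\td\UU_i(\tau')\geq C_i\Vert U^{++}(\tau')\Vert^{\lceil i/2\rceil+1}$ throughout $[T'_{i-1},\tau]$. On $(T'_{i-1},T'_i)$ the forcing term is absorbed into $\td\UU_i$ (with a factor $\leq C_i^{-1}$, chosen small), yielding the exponential decay $\partial_\tau\td\UU_i\leq -\tfrac{1}{10(n-k)}\td\UU_i$ of assertion \ref{Prop_Tisosc_a}. For assertion \ref{Prop_Tisosc_b}, once $T'_i<T_1$, a continuity/barrier argument (comparing $\td\UU_i$ with the function $C_i\Vert U^{++}\Vert^{\lceil i/2\rceil+1}$ and using the same ODI together with a separately-derived evolution inequality for $\Vert U^{++}\Vert$) propagates the bound forward. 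To run the induction, one uses the inductive bound $\td\UU_{i-1}\leq C_{i-1}\Vert U^{++}\Vert^{\lceil(i-1)/2\rceil+1}$ valid on $[T'_{i-1},T_1]$ to sharpen the control of the nonlinear terms feeding into $\td\UU_i$, thereby allowing the larger exponent $\lceil i/2\rceil+1$.

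\medskip
\noindent Finally, assertion \ref{Prop_Tisosc_c} follows from the one-sided nature of the decay. If $T_0=-\infty$ and $T'_i>-\infty$ for some $i$, then the exponential decay on $(-\infty,T'_i)$ run backward in time would force $\td\UU_i$ to grow unboundedly as $\tau\to-\infty$, contradicting Assumption~\ref{Prop_PDE_ODI_MCF_ii} of Proposition~\ref{Prop_PDE_ODI_MCF} (which provides uniform smallness of the relevant $L^2_f$ norms at arbitrarily negative times). Hence all $T'_i=-\infty$ and the persistent bounds of \ref{Prop_Tisosc_b} apply on all of $\td I$, giving \ref{Prop_Tisosc_c} after using the gauge estimate once more to control $\Vert\td\bY\Vert$. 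I expect the main obstacle to be the careful bookkeeping in the ODI for $\td\UU_i$: one must enumerate all ways that $Q_J^+(U^+,\td\bY)$, after projection, can produce forcing, verify via the equivariance argument and the Hermite-degree count that the dangerous terms carry either a factor $\td\UU_i$ (absorbable into the decay) or at least $\lceil i/2\rceil+1$ powers of $U^{++}$ (absorbable into the forcing), and show that the gauge estimate simultaneously handles all $\td\bY$-contributions uniformly in $i$.
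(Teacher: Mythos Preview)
Your proposal is correct and follows essentially the same approach as the paper: the paper packages the evolution inequalities you describe into a separate Lemma~\ref{lem:all_decay_modes} (whose proof contains precisely the equivariance argument, the Hermite-degree count, and the gauge-estimate absorption you outline), and then defines the $T'_i$ by minimality with respect to Assertion~\ref{Prop_Tisosc_b} rather than by your supremum for \ref{Prop_Tisosc_a}, but the resulting barrier argument comparing $\partial_\tau\td\UU_i$ against $\partial_\tau\|U^{++}\|^{\lceil i/2\rceil+1}$ at a hypothetical crossing time is the same. Your identification of the ``main obstacle'' is accurate---the bulk of the paper's effort goes exactly into the careful enumeration of forcing terms in $Q_J^+$ that you flag.
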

\medskip

The proof of Proposition~\ref{Prop_Tisosc} relies on the following lemma, which characterizes the evolution of the quantities $U^{++}(\tau)$, \lb $U^+_{\osc,< 0}(\tau)$, \lb $U^+_{\rot, \leq -\frac12 i}(\tau)$ and $\UU^-$.

\begin{Lemma}\label{lem:all_decay_modes}
Consider the setting of Proposition~\ref{Prop_PDE_ODI_MCF} and its addendum, Proposition~\ref{Prop_PDE_ODI_MCF_gauged}.
In addition we assume that $\la < 0$ and $\la \in \frac12 \IZ$ and bounds of the form $\xi > 0$, $R^\# \geq \underline{R}^\#(\xi)$ and $\eta \leq \ov\eta(\la,J, \xi)$.
Then we have the following evolution bounds for $-2 \leq i \leq -2\la$, for some constant $C = C(\lambda, J)$
\begin{alignat}{2}
	\partial_\tau \| U^{++} \| 
&\geq -\xi \|U^{++}\| - C \big( \|U^+_{\osc,<0}\|^2 + \| U^+_{\rot,< 0}\|^2 \big) &&-C \|U^+ \|^{J+1}-\xi \UU^-  \label{eq_U^++}\\
	\partial_\tau \|U^+_{\osc,<0}\| 
&\leq (-\tfrac1{n-k}+\xi) \| U^+_{\osc,<0} \| && + C \| U^+ \|^{J+1}+\xi \UU^-  \label{eq_U^+_osc}\\
	\partial_\tau \big\| U^+_{\rot,\leq -\frac12 i} \big\| 
&\leq -\tfrac12 i  \big\| U^+_{\rot, \leq - \frac12 i} \big\| + \xi \| U^+_{\osc, < 0} \| && \notag \\
&\qquad  + C \sum_{\substack{-2 \leq i_1, \ldots, i_l < -2\la, \; l \geq 2 \\  (i_1+2) + \ldots + (i_l+2) \geq i+2 }}  \| U^+_{\rot, - \frac12 i_1} \| \cdots \| U^+_{\rot, - \frac12 i_l} \|  &&+C \| U^+ \|^{J+1} + \xi \UU^- \label{eq_U^+_rot} \\
    \partial_\tau \UU^-
&\leq (\lambda + \xi) \UU^- 
+ \xi \| U^+_{\osc,<0} \|
\notag \\
&\qquad  + C \sum_{\substack{-2 \leq i_1, \ldots, i_l < -2\la, \; l \geq 2 \\  (i_1+2) + \ldots + (i_l+2) \geq - 2\la + 2 }}  \| U^+_{\rot, -\frac12 i_1} \| \cdots \| U^+_{\rot, - \frac12 i_l} \|  &&+C \| U^+ \|^{J+1} \label{eq_U^-},
\end{alignat}
Moreover, 
\begin{multline} \label{eq_evolution_sum}
	\partial_\tau \Big( \|U^+_{\osc,<0}\| + \big\| U^+_{\rot,\leq - \frac12 i} \big\| + \UU^- \Big)
\leq  -\tfrac1{3(n-k)} \Big( \|U^+_{\osc,<0}\| + \big\| U^+_{\rot,\leq - \frac12 i} \big\| + \UU^- \Big)
 \\
+ C \sum_{\substack{-2 \leq i_1, \ldots, i_l < -2\la , \; l \geq 2 \\  (i_1+2) + \ldots + (i_l+2) \geq i+2 }}  \| U^+_{\rot, - \frac12 i_1} \| \cdots \| U^+_{\rot, - \frac12 i_l}\| +C \| U^+ \|^{J+1}.
\end{multline}
\end{Lemma}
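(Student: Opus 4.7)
The plan is to apply the PDE--ODI evolution from Proposition~\ref{Prop_PDE_ODI_MCF}\ref{Prop_PDE_ODI_MCF_b},
$$\partial_\tau U^+ = LU^+ + Q_J^+(U^+,\td\bY_\tau) + E_1, \qquad \partial_\tau \UU^- \leq (\la + \xi)\UU^- + \|Q_J^-(U^+,\td\bY_\tau)\|_{L^2_f} + C\|U^+\|^{J+1},$$
with $\|E_1\|_{L^2_f} \leq C\|U^+\|^{J+1} + \xi\UU^-$ after absorbing $C(\la)\sqrt\eta + C\|\td\bY_\tau\|$ into $\xi$, and to project the first (vector) identity orthogonally onto each of the $L$-invariant subspaces $\sV^{++}$, $\sV_{\osc,<0}$, and $\sV_{\rot,\leq -i/2}$. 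Pairing each projection with $U^+_*/\|U^+_*\|$ converts it into a scalar ODI whose leading linear coefficient is the extremal eigenvalue of $L$ on $\sV_*$, which by Lemma~\ref{Lem_mode_dec} is $0$ (a lower bound, since $L \geq 0$ on $\sV^{++}$), $-1/(n-k)$, and $-i/2$, respectively.

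Next, I would use the gauge estimate from Proposition~\ref{Prop_PDE_ODI_MCF_gauged}, namely $\|\td\bY_\tau\| + \|U^+_{\Jac}\| \leq \Psi(R^\#)(\|U^+_{\osc,<0}\| + \UU^-)$, to absorb every factor of $\td\bY_\tau$ or $U^+_{\Jac}$ that appears in $Q_J^+$. Since $\Psi(R^\#)$ can be made smaller than any prescribed $\xi$ by choosing $R^\# \geq \underline R^\#(\xi)$, such factors produce precisely the $\xi\|U^+_*\|$, $\xi\|U^+_{\osc,<0}\|$, and $\xi\UU^-$ cross-terms on the right-hand sides of \eqref{eq_U^++}--\eqref{eq_U^-}. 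After this reduction, the only remaining contribution is the projection of the polynomial part $Q_0(u,\partial u,\partial^2 u)$ from Lemma~\ref{Lem_structure_MCF_graph_equation}, which vanishes to second order at $u=0$.

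To estimate this polynomial contribution I would expand $U^+$ in the eigenbasis of Lemma~\ref{Lem_mode_dec} and use the pointwise polynomial bound from Lemma~\ref{Lem_polynomial_bounds} to control $L^2_f$-norms of products by products of norms. The key combinatorial observation is that a product of $l$ Hermite polynomials on $\IR^k$ of degrees $d_1,\ldots,d_l$ is itself a polynomial of degree at most $d_1+\ldots+d_l$; since $\sV_{\rot,\leq -i/2}$ consists of polynomials of degree $\geq i+2$, such a product has non-zero projection there only when $d_1+\ldots+d_l \geq i+2$, which upon setting $d_j = i_j+2$ gives exactly the index constraint in \eqref{eq_U^+_rot}. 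The analogous constraint $\geq -2\la + 2$ for \eqref{eq_U^-} arises because $\sV_{\rot,\leq\la}$ sits at polynomial degree $\geq -2\la+2$. Oscillatory factors are organized by $O(n-k+1)\times O(n')$-equivariance: in any rotationally invariant projection they must occur in pairs, giving $\|U^+_{\osc,<0}\|^2 \leq \eta\|U^+_{\osc,<0}\|$, absorbed into the $\xi\|U^+_{\osc,<0}\|$ terms for $\eta \leq \xi$; for the $\UU^-$ projection on the oscillatory side of $\sV_{\leq\la}$, a single oscillatory factor can pair with a $U^{++}$ factor, and $\|U^{++}\| \leq \eta$ produces the same $\xi\|U^+_{\osc,<0}\|$ contribution that appears in \eqref{eq_U^-}.

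The same mechanism, applied to the lower bound on $\partial_\tau\|U^{++}\|$, yields \eqref{eq_U^++}: pure $\sV_{\rot,<0}$-pair and pure $\sV_{\osc,<0}$-pair products project onto $\sV^{++}$ producing the quadratic sources $\|U^+_{\rot,<0}\|^2 + \|U^+_{\osc,<0}\|^2$, while mixed $\sV^{++}$-with-$\sV_{\rot,<0}$ products are estimated by $\|U^{++}\|\|U^+_{\rot,<0}\| \leq \eta\|U^{++}\|$ and absorbed into the $\xi\|U^{++}\|$ term. Finally, the combined estimate \eqref{eq_evolution_sum} follows by summing \eqref{eq_U^+_osc}, \eqref{eq_U^+_rot}, and \eqref{eq_U^-}: the three linear rates $-1/(n-k)$, $-i/2$, and $\la$ are each at most $-1/(n-k)$ once $i \geq 2$ and $\la \leq -1/(n-k)$, while the accumulated $\xi$-cross-terms can be bounded by a small fraction of the summed quantity by choosing $\xi \leq 1/(20(n-k))$. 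The main obstacle is not analytic but purely combinatorial: the right-hand side of each of \eqref{eq_U^++}--\eqref{eq_U^-} is assembled from many product types in $Q_0$, each of which must be classified by how many of its factors lie in $\sV^{++}$, $\sV_{\rot,<0}$, $\sV_{\osc,<0}$, or $\YY_\perp$, and one must verify that the hierarchy $\Psi(R^\#) \ll \xi$, $\eta \ll \xi$ is applied consistently across all of them.
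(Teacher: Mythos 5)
Your overall route is the same as the paper's: project the evolution inequality of Proposition~\ref{Prop_PDE_ODI_MCF}\ref{Prop_PDE_ODI_MCF_b} onto the $L$-invariant subspaces and use the extremal eigenvalues from Lemma~\ref{Lem_mode_dec}, absorb the gauge terms via Proposition~\ref{Prop_PDE_ODI_MCF_gauged} with $\Psi(R^\#)\le\xi$, split off the oscillatory factors by $O(n-k+1)\times O(n')$-equivariance, and control the purely rotational part by the Hermite degree count that yields the index constraint $(i_1+2)+\ldots+(i_l+2)\ge i+2$. All of these steps appear, in the same roles, in the paper's proof.

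There is, however, one concrete gap in how you treat the $\td\bY_\tau$- and $U^+_{\Jac}$-dependent terms. You claim that absorbing every such factor via the gauge bound $\|\td\bY_\tau\|+\|U^+_{\Jac}\|\le\Psi(R^\#)\big(\|U^+_{\osc,<0}\|+\UU^-\big)$ produces ``precisely the $\xi\|U^+_*\|$, $\xi\|U^+_{\osc,<0}\|$, and $\xi\UU^-$ cross-terms on the right-hand sides of \eqref{eq_U^++}--\eqref{eq_U^-}.'' But \eqref{eq_U^++} does \emph{not} admit a bare $\xi\|U^+_{\osc,<0}\|$ term: its right-hand side only contains $\|U^+_{\osc,<0}\|^2$, and $\Psi(R^\#)\|U^+_{\osc,<0}\|$ cannot be dominated by $C\|U^+_{\osc,<0}\|^2$ when $\|U^+_{\osc,<0}\|$ is small. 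Your crude absorption would therefore only give a weaker inequality than \eqref{eq_U^++} as stated. The missing ingredient is structural: the part of $Q_J$ that is independent of $u$ is exactly the Jacobi term $Q_J(0,\td\bY)=\Jac\td\bY\in\sV_{\Jac}$, which is $L^2_f$-orthogonal to $\sV^{++}$, $\sV_{\osc,<0}$, $\sV_{\rot,\le-\frac{i}2}$ and $\sV_{\le\la}$ and hence drops out of every projection; all remaining $\td\bY$- or $U^+_{\Jac}$-dependent terms carry at least one factor of $U^+$, so the gauge bound only ever enters in products of the form $\Psi(R^\#)\,\|U^+\|\,\big(\|U^+_{\osc,<0}\|+\UU^-\big)$, which can be split (using $\|U^+\|\le C\eta$ and Young's inequality) into $\xi\|U^{++}\|+C\big(\|U^+_{\osc,<0}\|^2+\|U^+_{\rot,<0}\|^2\big)+\xi\UU^-$. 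This is exactly the content of the paper's reduction to the case $\td\bY=U^+_{\Jac}=0$, and you need it to obtain \eqref{eq_U^++} in the stated form. Two smaller points: in the $\sV^{++}$ projection you omit the pure $U^{++}$-pair contribution (handled by the same $\|U^{++}\|\le C\eta$ absorption you already use), and for \eqref{eq_evolution_sum} your threshold ``$i\ge2$ and $\la\le-\tfrac1{n-k}$'' imposes a hypothesis not assumed in the lemma; the weaker decay rate $-\tfrac1{3(n-k)}$ is chosen precisely so that $i\ge1$ and $\la\le-\tfrac12$ (which follows from $\la<0$, $\la\in\tfrac12\IZ$) already suffice after taking $\xi$ small.
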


\begin{proof}
We will frequently use the following bound, which follows from Proposition~\ref{Prop_PDE_ODI_MCF_gauged} if $R^\# \geq \underline R^\#(\xi)$
\begin{align}
 \Vert U^+ \Vert 
&\leq \Vert U^{++} \Vert + \Vert U^+_{\Jac} \Vert + \Vert U^+_{\rot, < 0} \Vert + \Vert U^+_{\osc, < 0} \Vert \notag \\
&\leq \Vert U^{++} \Vert + \Psi (R^\#) \big( \Vert U^+_{\osc,<0} \Vert + \UU^- \big) + \Vert U^+_{\rot, < 0} \Vert + \Vert U^+_{\osc, < 0} \Vert \notag \\
&\leq \Vert U^{++} \Vert + \Vert U^+_{\rot, < 0} \Vert + 2\Vert U^+_{\osc, < 0} \Vert + \xi \UU^-. \label{eq_rough_Up}
\end{align}
Moreover, we will use the bounds
\[ \Vert U^+ \Vert, \; \Vert U^{++} \Vert, \; \Vert U^+_{\rot, <0} \Vert, \; \Vert U^+_{\osc, < 0} \Vert, \; \UU^- \leq C \Vert u \Vert_{C^0(\IB^k_{R} \times \IS^{n-k})} \leq C \eta. \]

Let us now consider the inequalities \eqref{eq_U^++}--\eqref{eq_U^-}.
Since we can adjust $\xi$ by a fixed factor, it suffices to show the desired bounds with $\xi$ replaced by $10\xi$.
Denote by $\sV$ one of the subspaces $\sV^{++}$, $\sV_{\osc, < 0}$ or $\sV_{\rot, \leq - \frac{i}2}$ and let $\la_{\min/\max, \sV}$ be the minimal and maximal eigenvalue of $L$ restricted to $\sV$.
Using Assertion~\ref{Prop_PDE_ODI_MCF_b} of Proposition~\ref{Prop_PDE_ODI_MCF} we find that
\begin{multline*}
 \partial_\tau \Vert \PP_{\sV} U^+ \Vert 
 = \frac{\langle \partial_\tau \PP_{\sV} U^+, \PP_{\sV} U^+ \rangle}{\Vert \PP_{\sV} U^+ \Vert } 
=  \frac{\langle \PP_{\sV}(  \partial_\tau  U^+ - LU^+ - Q^+_J(U^+, \td\bY) ), \PP_{\sV} U^+ \rangle}{\Vert \PP_{\sV} U^+ \Vert } \\+ \frac{\langle L \PP_{\sV} U^+, \PP_{\sV} U^+ \rangle}{\Vert \PP_{\sV} U^+ \Vert }  + \frac{\langle  \PP_{\sV} Q^+_J(U^+, \td\bY), \PP_{\sV} U^+ \rangle}{\Vert \PP_{\sV} U^+ \Vert } \\
\geq - C(\la,J) \Vert U^+ \Vert^{J+1} - \big( C(\la) \sqrt{\eta} + C \Vert \td\bY \Vert  \big) \UU^- + \la_{\min,{\sV}} \Vert \PP_{\sV} U^+ \Vert  - \big\Vert \PP_{\sV} Q^+_J(U^+, \td\bY) \big\Vert.
\end{multline*}
Assuming $\eta \leq \ov\eta(\la, \xi)$ and using Proposition~\ref{Prop_PDE_ODI_MCF_gauged} if $R^\# \geq \underline{R}^\# (\xi)$, we have
\[ C(\la) \sqrt{\eta} + C \| \td\bY \|  
\leq \xi +  C(\Vert U^+ \Vert + \UU^-) 
\leq 2 \xi. \]
Therefore,
\begin{equation} \label{eq_dtPVgeq}
 \partial_\tau \Vert \PP_{\sV} U^+ \Vert \geq - C(\la, J) \Vert U^+ \Vert^{J+1} - 2\xi \UU^- + \la_{\min, \sV} \Vert \PP_{\sV} U^+ \Vert  - \big\Vert \PP_{\sV} Q^+_J(U^+, \td\bY) \big\Vert. 
\end{equation}
Similarly, we obtain
\begin{align} 
\partial_\tau \Vert \PP_{\sV} U^+ \Vert &\leq  C(\la, J) \Vert U^+ \Vert^{J+1} + 2\xi \UU^- + \la_{\max, \sV} \Vert \PP_{\sV} U^+ \Vert  + \big\Vert \PP_{\sV} Q^+_J(U^+, \td\bY) \big\Vert,  \label{eq_dtPVleq} \\
\partial_\tau \UU^- &\leq  C(\la, J) \Vert U^+ \Vert^{J+1} + (\la + 2\xi) \UU^-  + \big\Vert Q^-_J(U^+, \td\bY) \big\Vert. \label{eq_dtUUm}
\end{align}
Next, observe that (see Lemma~\ref{Lem_mode_dec})
\begin{equation} \label{eq_lambdas}
 \la_{\min, \sV^{++}} = 0, \qquad
\la_{\max, \sV_{\osc, < 0}} = - \tfrac1{n-k}, \qquad
\la_{\max, \sV_{\rot, \leq-\frac12 i}} = - \tfrac12 i. 
\end{equation}

So it remains to bound the norms of the projections of $Q_J(U^+, \td\bY)$.
The next claim allows us to reduce this task to the case in which $U^+_{\Jac} = \td\bY = 0$.
Note that $U^+ - U^+_{\Jac}$ is the projection of $U^+$ onto the orthogonal complement of $\sV_{\Jac}$ in $\sV_{>\la}$.

\begin{Claim} \label{Cl_Q_diff}
If $\eta \leq \ov\eta(\la,J, \xi)$ and $R^\# \geq \underline R^\#(\xi)$, then the term 
\[ \big\Vert \PP_{\sV} \big( Q_J (U^+, \td\bY ) - Q_J(U^+ - U^+_{\Jac}, 0) \big) \big\Vert \]
is bounded by each of the following three terms, where $C= C(\la,J)$:
\begin{itemize}
\item $C \Psi(R^\#) \big(\Vert U^{++} \Vert + \Vert U^+_{\rot, < 0} \Vert + 2\Vert U^+_{\osc, < 0} \Vert \big) \Vert U^{+}_{\osc,<0} \Vert  + 2\xi \UU^-$,
\item $\xi \Vert U^{++} \Vert + C \big( \Vert U^+_{\osc, < 0} \Vert^2 + \Vert U^+_{\rot, <0} \Vert^2 \big) + 2 \xi \UU^- $,
\item  $\xi \Vert U^+_{\osc,<0} \Vert + 2 \xi \UU^-$.
\end{itemize}
\end{Claim}

\begin{proof}
Let us view $Q_J$ as a polynomial function on the finite dimensional vector space $\sV_{>\la} \times \YY_\perp$.
Due to the structure of $Q[u, \td\bY]$, as detailed in Lemma~\ref{Lem_structure_MCF_graph_equation}, and Proposition~\ref{Prop_PDE_ODI_MCF_gauged}, we have $Q_J(0,\td\bY) = \Jac \td\bY \in \sV_{\Jac}$ and $U^+ \mapsto Q_J(U^+,0)$ has no constant or linear term.
So since $\sV \perp \sV_{\Jac}$, we have for $\eta \leq \ov\eta( \xi)$ and $R^\# \geq \underline R^\#(\xi)$ and for a generic constant $C= C(\la,J)$
\begin{align*}
 \big\Vert \PP_{\sV} & \big( Q_J (U^+, \td\bY ) - Q_J(U^+ - U^+_{\Jac}, 0) \big) \big\Vert \\
&\leq \big\Vert \PP_{\sV} \big( Q_J (U^+, \td\bY ) - Q_J(U^+ , 0) \big)  \big\Vert + \big\Vert \PP_{\sV} \big( Q_J (U^+, 0 ) - Q_J(U^+ - U^+_{\Jac}, 0) \big) \big\Vert \\
&\leq C \Vert U^+ \Vert \, \big( \Vert \td\bY \Vert  + \Vert U^+_{\Jac} \Vert \big) \\
&\leq C  \Vert U^+ \Vert \cdot \Psi (R^\#) \big( \Vert U^+_{\osc,<0} \Vert + \UU^- \big) \\
&\leq C \Psi(R^\#)   \Vert U^+ \Vert \,  \Vert U^+_{\osc,<0} \Vert +  C \eta \Psi(R^\#) \UU^- \\
&\leq C \Psi(R^\#) \big(\Vert U^{++} \Vert + \Vert U^+_{\rot, < 0} \Vert + 2\Vert U^+_{\osc, < 0} \Vert + \xi \UU^- \big) \Vert U^{+}_{\osc,<0} \Vert + \xi \UU^- \\
&\leq C \Psi(R^\#) \big(\Vert U^{++} \Vert + \Vert U^+_{\rot, < 0} \Vert + 2\Vert U^+_{\osc, < 0} \Vert \big) \Vert U^{+}_{\osc,<0} \Vert  + 2\xi \UU^-.
\end{align*}
This establishes the first desired bound.
The second and third bounds follow from this bound.
\end{proof}

By combining \eqref{eq_dtPVgeq}--\eqref{eq_dtUUm} with \eqref{eq_lambdas} and Claim~\ref{Cl_Q_diff}, we can reduce the lemma to the case in which $\td\bY = U^+_{\Jac}=0$.
That is if we set $Q_J(U^+) := Q_J(U^+,0)$, then we need to show the following bounds, for any $U^+ \in \sV_{> \la}$ with $U^+_{\Jac} = 0$, where $C=C(\la,J)$,
\begin{align}
	\big\Vert \PP_{\sV^{++}}  Q_J(U^+) \big\Vert
&\leq \xi \Vert U^{++} \Vert + C \big( \Vert U^+_{\osc, < 0} \Vert^2 + \Vert U^+_{\rot, < 0} \Vert^2 \big)   \label{eq_PUppQ} \\
	\big\Vert \PP_{\sV_{\osc, <0}}  Q_J(U^+) \big\Vert 
&\leq \xi \Vert U^+_{\osc, <0} \Vert  \label{eq_PoscQ}  \\
	\big\Vert \PP_{\sV_{\rot, \leq - \frac12 i}}  Q_J(U^+) \big\Vert 
&\leq  \xi \| U^+_{\osc, < 0} \| + C \sum_{\substack{-2 \leq i_1, \ldots, i_l < -2\la , \;  l \geq 2 \\  (i_1+2) + \ldots + (i_l+2) \geq i+2 }}  \| U^+_{\rot, - \frac12 i_1} \| \cdots \| U^+_{\rot, - \frac12 i_l} \|  \label{eq_ProtQ}  \\
	\big\Vert \PP_{\sV_{< \la}}  Q_J(U^+) \big\Vert 
&\leq 
\xi \| U^+_{\osc,<0}\|
    + C \sum_{\substack{-2 \leq i_1, \ldots, i_l < -2\la, \; l \geq 2 \\  (i_1+2) + \ldots + (i_l+2) \geq - 2\la + 2 }}  \| U^+_{\rot, -\frac12 i_1} \| \cdots \| U^+_{\rot, - \frac12 i_l} \|   \label{eq_PllaQ}
\end{align}
Note that these bounds purely concern the polynomial function $U^+ \mapsto Q_J(U^+)$.
The following claim allows us to reduce these bounds to the case in which $U^+_{\osc} = 0$.

\begin{Claim} \label{Cl_rotoscquadratic}
We have the following bounds
\begin{align}
 \big\Vert \PP_{\sV_{\rot}} \big( Q_J(U^+) - Q_J(U^+_{\rot}) \big) \big\Vert &\leq C(\la, J) \Vert U^+_{\osc, < 0} \Vert^2     \label{eq_PProt_quadratic} \\
 \big\Vert \PP_{\sV_{\osc}} Q_J(U^+)  \big\Vert = \big\Vert \PP_{\sV_{\osc}} \big( Q_J(U^+) - Q_J(U^+_{\rot}) \big) \big\Vert &\leq C(\la, J) \Vert U^+ \Vert \, \Vert U^+_{\osc, < 0} \Vert   \label{eq_PPosc_quadratic} 
\end{align}
\end{Claim}

\begin{proof}
As $Q_J$ is a polynomial, we can write
\[ Q_J(U^+_{\rot} + U^+_{\osc}) - Q_J(U^+_{\rot}) = \sum_{j=0}^{J} A_j (U^+_{\rot}) (U^+_{\osc}, \ldots, U^+_{\osc}), \]
where the $A_j (U^+_{\rot})$ are $j$-linear multilinear forms in $U^+_{\osc}$, whose coefficients are polynomials in $U^+_{\rot}$.
Setting $U^+_{\osc} = 0$ implies that $A_0(U^+_{\rot}) = 0$. 
Moreover, $A_1(0) (U^+_{\osc}) = 0$ since $U^+ \mapsto Q_J(U^+)$ has no linear term.
This shows \eqref{eq_PPosc_quadratic}.
Note that $U^+_{\osc} = U^+_{\osc, < 0}$, since $U^+_{\Jac} = 0$.

To see \eqref{eq_PProt_quadratic} we claim that 
\begin{equation} \label{eq_PPVrot_A1}
\PP_{\sV_{\rot}} \big (A_1(U^+_{\rot})(U^+_{\osc}) \big) = 0.
\end{equation}
To see this, consider the action of $O(n-k+1) \times O(n')$ on $\sV_{>\la} \subset L^2(M_{\cyl} ; \IR \times \IR^{n'})$, as defined in Subsection~\ref{subsec_linearization}, and note that $Q_J$ is equivariant with respect to this action.
Hence the maps $U^+ \mapsto A_j (U^+_{\rot}) (U^+_{\osc}, \ldots, U^+_{\osc})$ are equivariant as well.
Since $U^+_{\rot}$ is invariant under this action, it follows that for fixed $U^+_{\rot}$ and arbitrary fixed $\ov U \in \sV_{\rot}$ the linear map $\sV_{\osc, > \la} \to \IR$, $U^+_{\osc} \mapsto \langle A_1 (U^+_{\rot}) (U^+_{\osc}), \ov U \rangle$ must also be equivariant.
But there is an $\langle \ov U' \in \sV_{\osc, > \la}$ such that $A_1 (U^+_{\rot}) (U^+_{\osc}), \ov U \rangle = \langle U^+_{\osc}, \ov U' \rangle$ for all $U^+_{\osc} \in \sV_{\osc, > \la}$.
So the map $U^+_{\osc} \mapsto \langle U^+_{\osc}, \ov U' \rangle$ must be equivariant under the same group action and hence $\ov U'$ must be invariant under it, implying that $\ov U' = 0$.
This proves \eqref{eq_PPVrot_A1} and hence the bound \eqref{eq_PProt_quadratic}.
\end{proof}

Claim~\ref{Cl_rotoscquadratic} implies \eqref{eq_PoscQ} if $\eta \leq \ov\eta(\la, J, \xi)$, and it shows that in order to show \eqref{eq_PUppQ}, \eqref{eq_ProtQ}, \eqref{eq_PllaQ}, we may assume that $U^+_{\osc} = 0$.
The bound \eqref{eq_PUppQ} now follows if $\eta \leq \ov\eta( \la,J, \xi)$
\begin{multline*}
 \big\Vert \PP_{\sV^{++}}  Q_J(U^+) \big\Vert 
\leq  C(\la,J) \Vert U^+ \Vert^2 
\leq C(\la, J) \big( \Vert U^{++} \Vert^2 + \Vert U^+_{\rot, <0} \Vert^2 \big) \\
\leq \xi \Vert U^{++} \Vert + C(\la,J)  \Vert U^+_{\rot, <0} \Vert^2.  
\end{multline*}
The bounds \eqref{eq_PUppQ}, \eqref{eq_ProtQ} are a consequence of the following claim.

\begin{Claim}
For any $-2 \leq i \leq -2\la$ and any $U^+ \in \sV_{> \la}$ with $U^+_{\osc} = 0$ we have the bound
\[ \big\Vert \PP_{ \sV_{\leq - \frac{i}2}} Q_J(U^+) \big\Vert \leq C(\la,J) \sum_{\substack{-2 \leq i_1, \ldots, i_l < -2\la, \; l\geq 2 \\  (i_1+2) + \ldots + (i_l+2) \geq i+2 }}  \| U^+_{\rot, - \frac12 i_1} \| \cdots \| U^+_{\rot, - \frac12 i_l} \|. \]
\end{Claim}

\begin{proof}
We recall that
\[ Q_J (U^+) = \td Q_J(U^+, \nabla U^+, \nabla^2 U^+, 0), \]
where the latter is a polynomial of degree $J$.
Since
\[ U^+ = U^+_{\rot, 1} + U^+_{\rot, \frac12} + \ldots + U^+_{\rot,  \la + \frac12}, \]
we can express $Q_J(U^+)$ as a sum of terms of the form
\[ Q_{m_1, \ldots, m_l; i_1, \ldots, i_l} := \alpha_{m_1, \ldots, m_l; i_1, \ldots, i_l} ( \nabla^{m_1} U^+_{\rot, -\frac12 i_1}, \ldots, \nabla^{m_l} U^+_{\rot, -\frac12 i_l} ), \]
where $\alpha_{m_1, \ldots, m_l; i_1, \ldots, i_l}$ is a spatially constant multilinear form, $m_1, \ldots, m_l \in \{0,1,2\}$ and $-2 \leq i_1, \ldots, i_l < -2\la$ and $l \geq 2$.
By Lemma~\ref{Lem_mode_dec}, we know that $U^+_{\rot, -\frac12 i'} (\bx,\by) = U^+_{\rot, -\frac12 i'} (\bx)$ is a polynomial of degree $i'+2$.
So $\nabla^{m'}  U^+_{\rot, -\frac12 i'}$ has degree $\leq i'+2-m'$ and hence the degree of $Q_{m_1, \ldots, m_l; i_1, \ldots, i_l}$ is at most $(i_1+2) + \ldots + (i_l+2)$.
It follows that $Q_{m_1, \ldots, m_l; i_1, \ldots, i_l} \in \sV_{> -\frac{i}2}$ and thus $\PP_{\sV_{\leq -\frac{i}2}} Q_{m_1, \ldots, m_l; i_1, \ldots, i_l} = 0$ unless $(i_1+2) + \ldots + (i_l+2) \geq i+2$, in which case we have
\[ \Vert Q_{m_1, \ldots, m_l; i_1, \ldots, i_l} \Vert \leq C(\la,J) \| U^+_{\rot, - \frac12 i_1} \| \cdots \| U^+_{\rot, - \frac12 i_l} \|. \]
Summing over all possible terms implies the claim.
\end{proof}

This concludes the proof of \eqref{eq_U^++}--\eqref{eq_U^-}.
Adding these bounds implies \eqref{eq_evolution_sum} for $\xi \leq \ov\xi$.
\end{proof}
\medskip

\begin{proof}[Proof of Proposition~\ref{Prop_Tisosc}.]
Assume that the constants $C_i$ are already given; we will determine their values depending only on $\la, J$ in the course of this proof.
Set $T'_0 := T_0$ and iteratively choose $T'_1, T'_2, \ldots, T'_{J_0}$ minimal such that Assertion~\ref{Prop_Tisosc_b} holds and such that $T'_0 \leq T'_1  \leq \ldots \leq T_1$.
Note that we may need to choose $T'_{i} = T'_{i+1} = \ldots = T = T_1$ for some $i$ if the bound from Assertion~\ref{Prop_Tisosc_b} is violated near $T_1$.
Then
\begin{equation} \label{eq_tdUUUpp_equality}
 \td\UU_i(T'_i) = C_i \| U^{++} (T'_i) \|^{\lceil i/2 \rceil +1} \qquad \text{or} \qquad T'_i \in \{ T'_{i-1}, T_1 \}. 
\end{equation}

We will now prove Assertion~\ref{Prop_Tisosc_a}; so fix some $i \in \{ 1, \ldots, J_0 \}$ and assume that $T'_{i-1} < T'_i$ and assume that the constants $C_1, \ldots, C_{i-1}$ have already been chosen.
The bound \eqref{eq_evolution_sum} from Lemma~\ref{lem:all_decay_modes} implies that for some generic constant $C = C(\la, J)$ we have the following bound on $(T'_{i-1}, T'_i)$
\begin{equation*}
 \partial_\tau \td\UU_i \leq -\tfrac1{3(n-k)} \td\UU_i 
+ C \sum_{\substack{-2 \leq i_1, \ldots, i_l < -2\la , \; l \geq 2 \\  (i_1+2) + \ldots + (i_l+2) \geq i+2 }}  \| U^+_{\rot, - \frac12 i_1} \| \cdots \| U^+_{\rot, - \frac12 i_l}\| +C \| U^+ \|^{J+1}.  
\end{equation*}
Let us analyze the sum on the right-hand side.
Any summand containing an index $i_{j} \geq i$ can be bounded by $C(\la, J) \eta \td\UU_i$.
So if $\eta \leq \ov\eta(\la, J, \eps)$, then all such terms can be absorbed into the first term on the right-hand side at the cost of increasing the factor $-\frac1{3(n-k)} $ to $-\frac1{4(n-k)}$.
So it remains to consider only those summands with $-2 \leq i_1, \ldots, i_l < i$.
Fix such a summand.
Any factor of the form  $\| U^+_{\rot, - \frac12 i_j} \|$ can be bounded by $\td\UU_{i_j} \leq C_{i_j} \| U^{++} \|^{\lceil i_j/2 \rceil + 1}$ if $i_j \in \{1, \ldots, i-1 \}$ and by $\| U^{++} \|$ if $i_j \in \{ -2,-1,0 \}$.
After rearranging these factors, we may assume that $i_1, \ldots, i_j \in \{ -2,-1,0 \}$ while $i_{j+1}, \ldots, i_{l} > 0$.
Then
\[ 2j + (i_{j+1}+2) + \ldots + (i_l+2) \geq (i_1+2) + \ldots + (i_l+2) \geq i+2, \]
which implies
\begin{multline*}
 j + \bigg( \bigg\lceil \frac{i_{j+1}}2 \bigg\rceil + 1 \bigg) + \ldots + \bigg( \bigg\lceil \frac{i_{l}}2 \bigg\rceil + 1 \bigg) 
 = \bigg\lceil \frac{2j}{2} \bigg\rceil + \bigg\lceil \frac{i_{j+1}+2}2 \bigg\rceil + \ldots + \bigg\lceil \frac{i_{l}+2}2 \bigg\rceil \\
 \geq \bigg\lceil \frac{2j + (i_{j+1}+2) + \ldots + (i_l+2)}{2} \bigg\rceil = \bigg\lceil \frac{i}2 \bigg\rceil + 1,
\end{multline*}
so
\begin{multline*}
  \| U^+_{\rot, - \frac12 i_1} \| \cdots \| U^+_{\rot, - \frac12 i_l}\|
\leq C( C_1, \ldots, C_{i-1}) \| U^{++} \|^{j + ( \lceil i_{j+1}/2 \rceil +1) + \ldots + ( \lceil i_{l}/2 \rceil +1) } \\
\leq C(\la, C_1, \ldots, C_{i-1}) \| U^{++} \|^{\lceil i/2 \rceil + 1 }. 
\end{multline*}
Next, consider the term $\| U^+ \|^{J+1}$.
As derived in \eqref{eq_rough_Up} we have for $R^\# \geq \underline R^\#$
\begin{equation} \label{eq_UpbyUpptdU1}
\Vert U^+ \Vert \leq \Vert U^{++} \Vert + \Vert U^+_{\rot, <0} \Vert + 2 \Vert U^+_{\osc, <0}\Vert + \eps \UU^- \leq \Vert U^{++} \Vert + 2 \td\UU_1. 
\end{equation}
Therefore, and since $\td\UU_1 \leq C_1 \Vert U^{++} \Vert^2$ for $\tau \geq T'_1$, we obtain that on $(T'_{i-1}, T'_i)$
\begin{equation} \label{eq_UppJp1bound}
 \| U^+ \|^{J+1} \leq C(\la, J) \|U^{++} \|^{J+1} + C(\la, J) \td\UU^{J+1}_1 
\leq \begin{cases} C(\la,J) \|U^{++} \|^{J+1} + C(\la,J) \eta \, \td\UU_i &\text{if $i=1$} \\
C(\la,J,C_1) \|U^{++} \|^{J+1} &\text{if $i>1$} \end{cases} 
\end{equation}
It follows that if $\eta \leq \ov\eta(\la,J)$, then for some $C'_i = C'_i(\la, J, C_1, \ldots, C_{i-1})$ we have on $(T'_{i-1}, T'_i)$
\begin{equation*}
 \partial_\tau \td\UU_i \leq -\tfrac1{5(n-k)} \td\UU_i 
+ C'_i \| U^{++} \|^{\lceil i/2 \rceil +1}.  
\end{equation*}
So if $C_i \geq 10(n-k) C'_i$, then for all $\tau \in (T'_{i-1}, T'_i)$ we have
\begin{equation} \label{eq_dttdUor}
 \partial_\tau \td\UU_i(\tau) \leq -\tfrac1{10(n-k)} \td\UU_i(\tau) \qquad \text{if} \qquad \td\UU_i (\tau) \geq  \tfrac12 C_i\| U^{++}(\tau) \|^{\lceil i/2 \rceil +1}. 
\end{equation}
So Assertion~\ref{Prop_Tisosc_a} holds once we can show that for $\tau \in (T'_{i-1}, T'_i)$ we have
\begin{equation}  \label{eq_tdUUC_want}
\td\UU_i (\tau) \geq  \tfrac12 C_i\| U^{++}(\tau) \|^{\lceil i/2 \rceil +1} 
\end{equation}

If $T'_i = T_1$, then this bound holds for $\tau$ close to $T'_i$ due to the minimal choice of $T'_i$ and if $T'_i < T_1$, then the same is true due to \eqref{eq_tdUUUpp_equality}.
So we can choose a minimal $\tau^*\in [T'_{i-1}, T'_i)$ such that \eqref{eq_tdUUC_want} is true for all $\tau \in [\tau^*, T'_i)$.
Suppose by contradiction that $\tau^* > T'_{i-1}$, so 
\begin{equation} \label{eq_attimetaustar} 
\td\UU_i (\tau^*) =  \tfrac12 C_i\| U^{++}(\tau^*) \|^{\lceil i/2 \rceil +1}, \qquad \partial_\tau |_{\tau = \tau^*} \td\UU_i (\tau) \geq  \tfrac12 C_i \,  \partial_\tau |_{\tau = \tau^*}  \| U^{++}(\tau) \|^{\lceil i/2 \rceil +1}. 
\end{equation}
We will now control the derivative of $\| U^{++} \|$ using \eqref{eq_U^++} from Lemma~\ref{lem:all_decay_modes}.
Fix some $\xi > 0$, which we will determine later, and assume that $R^\# \geq \underline R^\#(\xi)$ and $\eta \leq \ov\eta (\la, J, \xi)$ according to that lemma.
Again, with the help of \eqref{eq_UpbyUpptdU1} or \eqref{eq_UppJp1bound} we obtain the following bound for $\eta \leq \ov\eta(\la,J,\xi)$
\begin{align*}
 \partial_\tau |_{\tau = \tau^*} \| U^{++}(\tau) \| &\geq - \xi \| U^{++}(\tau^*) \| - C(\la, J) \td\UU_1^2(\tau^*)  - C(\la,J) \| U^{+} (\tau^*)\|^{J+1} - \xi \td\UU_1(\tau^*) \\
&\geq - \xi \| U^{++}(\tau^*) \| - \big( C(\la,J) \eta + \xi \big) \td\UU_1(\tau^*)  - C(\la, J) \eta \Vert U^{++} (\tau^*)\Vert \\
&\geq - 2\xi \| U^{++}(\tau^*) \| - 2\xi  \td\UU_1(\tau^*).
\end{align*}
If $i > 1$, then we can apply \eqref{eq_tdUUppfortauTi} at time $\tau^*$ and obtain that $\td\UU_1(\tau^*) \leq C_1 \Vert U^{++}(\tau^*) \Vert^2 \leq CC_1\eta  \Vert U^{++}(\tau^*) \Vert$.
If $i = 1$, then the first identity in \eqref{eq_attimetaustar} implies $\td\UU_1(\tau^*) = \frac12 C_1 \Vert U^{++}(\tau^*) \Vert^2 \leq CC_1\eta  \Vert U^{++}(\tau^*) \Vert$.
So in both cases if $\eta \leq \ov\eta(\la,J)$, then
\[  \partial_\tau |_{\tau = \tau^*} \| U^{++}(\tau) \| \geq - 3\xi \| U^{++}(\tau^*) \|. \]
Combining this with \eqref{eq_dttdUor} and the second identity in \eqref{eq_attimetaustar} implies
\begin{multline*} -\tfrac1{10(n-k)} \td\UU_i(\tau^*) 
\geq \partial_\tau |_{\tau = \tau^*} \td\UU_i(\tau)
\geq \tfrac12 C_i \, \partial_\tau |_{\tau = \tau^*} \Vert U^{++}(\tau) \Vert^{\lceil i/2 \rceil +1} \\
\geq - \tfrac32 C_i \xi J_0  \Vert U^{++}(\tau)\Vert^{\lceil i/2 \rceil +1} = -3\xi J_0  \td\UU_i(\tau^*). 
\end{multline*}
So if $\xi \leq \ov\xi(J_0)$ (which implies the requirements $R^\# \geq \underline R^\# (J)$ and $\eta \leq \ov\eta(\la, J)$), then we must have $\td\UU_i(\tau^*) = 0$.
But by \eqref{eq_dttdUor} this would imply that $\td\UU_i$ vanished on $[\tau^*, T'_i)$, in contradiction to the minimal choice of $T'_i$.
Therefore $\tau^* = T'_{i-1}$ and \eqref{eq_tdUUC_want} must hold on all of $(T'_{i-1}, T'_i)$, which implies Assertion~\ref{Prop_Tisosc_a} via \eqref{eq_dttdUor}.

Note that the condition $C_i \geq 10(n-k) C'_i (\la, J, C_1, \ldots, C_{i-1})$ can be used to determine $C_i$ successively.
Since we require $\eta \leq \ov\eta(C_i)$, the constant $\eta$ must be chosen in the end according to a bound of the form $\eta \leq\ov\eta(\la, J)$.

For Assertion~\ref{Prop_Tisosc_c} consider the case $T_0 = \inf \td I = - \infty$.
Suppose by contradiction that $T'_i > -\infty$ and choose $i \geq 1$ minimal with this property. 
So the exponential decay from Assertion~\ref{Prop_Tisosc_a} holds on  $(T'_{i-1}, T'_i) = (-\infty,T'_i)$.
Since $\td\UU_i$ is uniformly bounded, this implies $\td\UU_i \equiv 0$ on $(-\infty,T'_i]$ in contradiction to the minimal choice of $T'_i$.
The remaining bounds follow directly from those in  Assertion~\ref{Prop_Tisosc_b} combined with Proposition~\ref{Prop_PDE_ODI_MCF_gauged}; recall again that $U^+_{\osc}  =  U^+_{\osc, <0} + U^+_{\Jac}$.
\end{proof}
\bigskip

\subsection{Evolution of the leading modes} \label{subsec_evol_leading}
As established in Proposition~\ref{Prop_Tisosc}, the leading mode
\[ U^{++} \in \sV^{++} =  \sV_{\rot, 0} \oplus  \sV_{\rot, \frac12} \oplus \sV_{\rot, 1}   \]
eventually dominates the evolution of $U^+$.
In this subsection we analyze the dynamics of this leading mode.
We will see that the time interval $[T'_3, \infty) \cap \td I$ from Proposition~\ref{Prop_Tisosc} can be divided into three successive intervals, on which $U^+_{\rot,0}$, $U^+_{\rot,\frac12}$, and $U^+_{\rot,1}$ dominate, respectively (in this order). 
The first evolves according to a polynomial law of the form $\frac1\tau$, while the latter two grow approximately exponentially at rates $\frac12$ and $1$.

To obtain these estimates, we explicitly calculate the quadratic Taylor approximation $Q_2(U^{++})$ of the nonlinear term applied to the leading modes.
In Sections~\ref{sec_dom_lin} and \ref{sec_dom_quadratic} we will refine the asymptotic description of $U^+_0$ and reuse these calculations.

The next proposition summarizes the main result of this subsection.

\begin{Proposition} \label{Prop_ODE_subintervals}
Consider the setting of Proposition~\ref{Prop_PDE_ODI_MCF}, its addendum, Proposition~\ref{Prop_PDE_ODI_MCF_gauged} and Proposition~\ref{Prop_Tisosc} with the corresponding assumptions on the parameters.
Suppose that $J \geq 2$, $\xi > 0$ and $\eta \leq \ov\eta (\la,J,\xi)$ and let $T'_0=T_0, T'_1, T'_2, \ldots$ be the times obtain from Proposition~\ref{Prop_Tisosc}.
Recall that $U^{++} = U_{\rot, 0}^+ + U_{\rot,\frac12}^+ + U_{\rot, 1}^+$ and write $U_0 := U^+_{\rot, 0}$, $U_{\frac12} = U^+_{\rot, \frac12}$, etc., for convenience.
Write $U_0(\tau) = \sum_{i,j} c_{ij}(\tau) \mathfrak p^{(2)}_{ij}$ for a time-dependent symmetric matrix $(c_{ij}(\tau))$ and the Hermite polynomials from \eqref{eq_Hermite} and denote by $U_{0,\min/\max}(\tau)$ the minimal and maximal spectral values of the matrix $(c_{ij}(\tau))$.

If $\td I = (-\infty, \infty)$, then $U^+, \UU^-, u \equiv 0$, so $(\spt \td\MM')_\tau = \td\MM^{\prime, \reg}_\tau = M_{\cyl}$ and $\td\bY_\tau = 0$ for all $\tau$.
Suppose now that $u \not\equiv 0$.
Then there is a constant $c_0(\la, J, \xi) >0$ and unique times $T'_3 \leq \tau_0 \leq \tau_{\frac12} \leq  \tau_1 \leq  T_1 = \max \td I$ such that the following is true:
\begin{enumerate}[label=(\alph*)]
\item \label{Prop_ODE_subintervals_a} If $\td I$ is not finite, then the following is true:
\begin{itemize}
\item \emph{(Ancient case)} \quad If $T_0 = \inf \td I = -\infty$, then $T'_3 = \tau_0 = -\infty$.
\item \emph{(Immortal case)} \quad If $T_1 = \sup \td I = \infty$, then $\tau_0 = \tau_{\frac12} = \tau_1 = \infty$.
\end{itemize}
\item \label{Prop_ODE_subintervals_b} \emph{ 
(Quadratic mode dominates)}  \quad For all $\tau \in (T'_3, \tau_{\frac12})$ we have $\|U_{\frac12} (\tau) \|, \| U_{1} (\tau)\| \leq c_0 \| U_{0}(\tau) \| $ and
\begin{equation} \label{eq_U_evol}
  \big\| \partial_\tau U_{0}(\tau) \big\| \leq \xi \|U_{0}(\tau) \|. 
\end{equation}
Moreover, we have the following behaviors depending on the dominance of $U_{0,\min}$ and $U_{0,\max}$:
\begin{itemize}
\item If $\tau \in (T'_3, \tau_0)$, then $|U_{0,\min}|(\tau) \leq U_{0,\max}(\tau)$ and
\begin{equation} \label{eq_cmax_evol}
 \big| \partial_\tau U_{0,\max}(\tau) + \sqrt{2} U_{0,\max}^2(\tau) \big| \leq \xi U_{0,\max}^2(\tau). 
\end{equation}
\item If $\tau \in (\tau_0, \tau_{\frac12})$, then $U_{0,\min}(\tau) \leq -|U_{0,\max}|(\tau)$ and
\begin{equation} \label{eq_cmin_evol}
 \big| \partial_\tau U_{0,\min}(\tau) + \sqrt{2} U_{0,\min}^2(\tau) \big| \leq \xi U_{0,\min}^2(\tau). 
\end{equation}
\end{itemize}
\item \label{Prop_ODE_subintervals_c} \emph{(Linear mode dominates)} \quad If $\tau \in (\tau_{\frac12}, \tau_1)$, then then $c_0 \|U_{0} (\tau)\|, \| U_{1} (\tau)\| \leq \| U_{\frac12} (\tau) \| $ and
\begin{equation} \label{eq_U12_evol}
 \big\| \partial_\tau U_{\frac12}(\tau) - \tfrac12 U_{\frac12}(\tau) \big\| \leq \xi \|U_{\frac12} (\tau)\|. 
\end{equation}
\item \label{Prop_ODE_subintervals_d} \emph{(Constant mode dominates)} \quad If $\tau \in (\tau_1, T_1)$, then $ c_0 \|U_{0} (\tau) \|, \| U_{\frac12} (\tau) \| \leq \| U_{1} (\tau) \|$ and
\begin{equation} \label{eq_U1_evol}
 \big\| \partial_\tau U_{1}(\tau) -  U_{1}(\tau) \big\| \leq \xi \|U_{1}(\tau) \|. 
\end{equation}
\end{enumerate}
\end{Proposition}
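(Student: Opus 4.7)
The plan is to first reduce the PDE to a finite-dimensional ODE for $U^{++}$, then compute the quadratic nonlinearity explicitly, and finally identify the dominance regimes. Taking $\la$ sufficiently negative so that $J_0\geq 3$, on the interval $(T'_3,T_1)$ Proposition~\ref{Prop_Tisosc}\ref{Prop_Tisosc_b} yields $\td\UU_3\leq C\|U^{++}\|^3$, and Proposition~\ref{Prop_PDE_ODI_MCF_gauged} gives $\|U^+_{\Jac}\|+\|\td\bY\|\leq C\|U^{++}\|^3$. Substituting both into the evolution inequality of Proposition~\ref{Prop_PDE_ODI_MCF}\ref{Prop_PDE_ODI_MCF_b} with $J\geq 2$ and projecting onto $\sV^{++}$ produces
\[
\bigl\|\partial_\tau U^{++}-LU^{++}-\PP_{\sV^{++}}Q_2(U^{++},0)\bigr\|_{L^2_f}\leq C\|U^{++}\|^3.
\]
The eternal case $\td I=(-\infty,\infty)$ with $u\not\equiv 0$ is ruled out at the end: any nonzero $U_1$ or $U_{1/2}$ grows exponentially forward in time and eventually violates the uniform bound $\|U^{++}\|\leq C\eta$ from Proposition~\ref{Prop_PDE_ODI_MCF}\ref{Prop_PDE_ODI_MCF_a}, while nonzero $U_0$ blows up in finite past time via the matrix ODE below.

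Next I compute $Q_2(U^{++},0)$ explicitly. Since $\sV^{++}\subset\sV_{\rot}$, each $U^{++}=u'$ is a polynomial of degree $\leq 2$ in $\bx\in\IR^k$ describing a rotationally symmetric profile of cross-sectional radius $r=\sqrt{2(n-k)}(1+u')$. Expanding the rescaled MCF equation in this coordinate gives $\partial_\tau u'=Lu'-\tfrac{1}{2}(u')^2+O((u')^3+|\nabla u'|^2\Delta u')$, so $Q_2(U^{++},0)=-\tfrac{1}{2}(U^{++})^2$. Writing $U_0=\sum c_{ij}\mathfrak p^{(2)}_{ij}$ with symmetric matrix $C=(c_{ij})$, and $U_{1/2}=\sum a_i\mathfrak p^{(1)}_i$, $U_1=b\,\mathfrak p^{(0)}$, an elementary Hermite identity yields $\PP_{\sV_{\rot,0}}[U_0^2]=2\sqrt{2}\sum_{ij}(C^2)_{ij}\mathfrak p^{(2)}_{ij}$, together with analogous identities for $U_0U_{1/2}$, $U_0U_1$, $U_{1/2}^2$, etc. Collecting everything produces the coupled ODE system
\[
\partial_\tau C=-\sqrt{2}\,C^2+O\bigl(\|U_0\|(\|U_1\|+\|U_{1/2}\|)+\|U_{1/2}\|^2+\|U^{++}\|^3\bigr),
\]
\[
\partial_\tau U_{1/2}=\tfrac{1}{2}U_{1/2}+O(\|U^{++}\|^2),\qquad\partial_\tau U_1=U_1+O(\|U^{++}\|^2),
\]
so by diagonalising $C$, the extremal eigenvalues satisfy the scalar ODE $\partial_\tau\lambda=-\sqrt{2}\lambda^2$ modulo the same errors.

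I then define $\tau_1$ as the infimum of $\tau\in(T'_3,T_1)$ beyond which $\|U_1\|\geq c_0^{-1}(\|U_{1/2}\|+\|U_0\|)$, with $c_0=c_0(\la,J,\xi)$ small; similarly $\tau_{1/2}$ using $U_{1/2}$, and $\tau_0$ as the crossover where $U_{0,\max}=|U_{0,\min}|$. The growth hierarchy---$\|U_1\|\sim e^\tau$, $\|U_{1/2}\|\sim e^{\tau/2}$, $\|U_0\|\sim 1/|\tau|$ in their respective dominance intervals---guarantees that each dominance, once attained, is self-sustaining forward in time, yielding uniqueness and the ordering $T'_3\leq\tau_0\leq\tau_{1/2}\leq\tau_1\leq T_1$. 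On each regime the dominance absorbs cross-terms as higher-order errors: in $(\tau_1,T_1)$, $\|U^{++}\|^2\leq C\eta\|U_1\|\leq\xi\|U_1\|$ gives \eqref{eq_U1_evol}; in $(\tau_{1/2},\tau_1)$, the same bootstrap gives \eqref{eq_U12_evol}; in $(T'_3,\tau_{1/2})$, $\|U^{++}\|\leq(1+c_0)\|U_0\|$ gives \eqref{eq_U_evol}, and the eigenvalue ODE yields \eqref{eq_cmax_evol} and \eqref{eq_cmin_evol} on $(T'_3,\tau_0)$ and $(\tau_0,\tau_{1/2})$ respectively. For Assertion~\ref{Prop_ODE_subintervals_a}: in the ancient case $T'_i=-\infty$ by Proposition~\ref{Prop_Tisosc}\ref{Prop_Tisosc_c}, and $\partial_\tau\lambda=-\sqrt{2}\lambda^2$ with $\lambda>0$ blows up in finite past time, so $U_{0,\max}$-dominance cannot extend to $-\infty$, forcing $\tau_0=-\infty$; in the immortal case, exponential growth of $U_1$ or $U_{1/2}$ would exceed $\eta$ as $\tau\to+\infty$, forcing $\tau_{1/2}=\tau_1=\infty$.

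The main technical obstacle will be the matrix ODE $\partial_\tau C=-\sqrt{2}\,C^2+\text{error}$: since $U_{0,\max/\min}$ are only Lipschitz (not smooth) functions of $C$, one cannot differentiate directly at points of eigenvalue coalescence. I plan to handle this via a Loewner-order comparison argument---$C_1\leq C_2$ implies $-\sqrt{2}C_1^2$ and $-\sqrt{2}C_2^2$ are comparable on the appropriate semidefinite cone---which yields the scalar ODE for the extremal eigenvalues as a differential inequality valid in the barrier sense, sufficient to deduce \eqref{eq_cmax_evol} and \eqref{eq_cmin_evol}.
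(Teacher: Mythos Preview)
Your proposal is correct and follows essentially the same approach as the paper: reduce to a finite-dimensional ODE for $U^{++}$ via Propositions~\ref{Prop_Tisosc} and~\ref{Prop_PDE_ODI_MCF_gauged}, compute $Q_2(U^{++},0)=-\tfrac12(U^{++})^2$ explicitly (the paper records this in Lemma~\ref{Lem_Q2}), and then analyze the three dominance regimes.

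Two small points to flag. First, in the immortal case you correctly force $\tau_{\frac12}=\tau_1=\infty$ from exponential growth, but you do not address $\tau_0=\infty$; the paper handles this by noting that on $(\tau_0,\tau_{\frac12})$ the inequality $\partial_\tau U_{0,\min}^{-1}\geq\sqrt{2}-\xi$ with $U_{0,\min}^{-1}<0$ forces finite-forward-time blowup of $|U_{0,\min}|$, violating the uniform $\eta$-bound. Second, your Loewner-order plan for the eigenvalue issue is not quite the right tool (squaring is not operator-monotone); the clean way is the variational characterization $\lambda_{\max}=\max_{|v|=1}v^T C v$, which yields \eqref{eq_cmax_evol}--\eqref{eq_cmin_evol} as barrier-sense inequalities since $v^T C^2 v=\lambda_{\max}^2$ at the maximizing eigenvector. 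The paper itself does not spell this out and simply passes from the matrix inequality $|\partial_\tau U_0+\sqrt{2}U_0^2|\leq\xi\max\{U_{0,\min}^2,U_{0,\max}^2\}$ to the scalar ones, so your instinct to be more careful here is well-placed.
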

\medskip

The proof of Proposition~\ref{Prop_ODE_subintervals} relies on an explicit calculation of the second Taylor approximation $Q_2(U^{++})$ of the non-linear term in the rescaled mean curvature flow equation \eqref{eq_main_ev_eq} (we will also calculate $Q_3(U^{++})$ for future use).
To do so, we first derive this evolution equation in the rotationally symmetric codimension $1$ case and for $\td\bY \equiv 0$.

\begin{Lemma} \label{Lem_MCF_u_explicit}
If $n'=0$ and $u(\bx,\by) = u(\bx)$ is invariant under rotations on the $\IS^{n-k}$-factor, then the evolution equation \eqref{eq_evolution_rMCF_rot} of $u$ under rescaled mean curvature flow becomes
\begin{equation} \label{eq_evolution_rMCF_rot}
 \partial_\tau u = \triangle_f u + u- \frac{ \frac12 u^2}{1 + u} - \frac{\nabla^2 u(\nabla u, \nabla u)}{(2(n-k))^{-1}+|\nabla u|^2} . 
\end{equation}
If $u(\bx,\by)$ is not rotationally invariant, then we have for small perturbations
\begin{equation} \label{eq_evolution_rMCF_non_rot}
\partial_\tau u = \triangle_f u + \triangle_{\IS^{n-k}} u + u- \tfrac12 u^2 -  u \cdot \triangle_{\IS^{n-k}} u - |\nabla_{\by} u|^2   + O\big( (|u| +|\nabla u|+|\nabla^2 u|)^3 \big).
\end{equation}
Here $\triangle_f$ denotes the $f$-Laplacian on $\IR^k$, $\triangle_{\IS^{n-k}}$ the spherical Laplacian and $\nabla_{\by} u$ the projection of $\nabla u$ onto the $\bO^k \times \IS^{n-k}$-factor.
\end{Lemma}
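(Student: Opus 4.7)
The plan is to parameterize the graph $\Gamma_{\cyl}(u)$ by $F(\bx,\by)=(\bx,(1+u(\bx,\by))\by)$ over $M_{\cyl}$, compute the outward unit normal $\nu$, and project the rescaled mean curvature flow equation $\partial_\tau \mathbf x=\mathbf H+\tfrac12\bx^\perp$ onto $\nu$ to obtain a scalar PDE for $u$. A direct computation of the tangent frame $\{F_{\bx_i},F_\alpha\}$ shows that the outward unit normal takes the form
\[
\nu=\bigl(2(n-k)(1+2(n-k)|\nabla u|^2)\bigr)^{-1/2}\bigl(-2(n-k)\nabla_\bx u,\;\by-2(n-k)\nabla_\by u\bigr),
\]
and that both the normal velocity $\partial_\tau F\cdot\nu$ and the projected position term $\tfrac12 F\cdot\nu$ carry a common factor $\sqrt{2(n-k)}/\sqrt{1+2(n-k)|\nabla u|^2}$ that cancels cleanly in the final equation.

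For the rotationally symmetric case \eqref{eq_evolution_rMCF_rot}, the surface is a hypersurface of revolution in $\IR^{n+1}$ with radius function $r(\bx)=(1+u(\bx))\sqrt{2(n-k)}$. I would apply the classical formula
\[
H=-\frac{n-k}{r\sqrt{1+|\nabla r|^2}}+\frac{\triangle r}{\sqrt{1+|\nabla r|^2}}-\frac{\nabla^2 r(\nabla r,\nabla r)}{(1+|\nabla r|^2)^{3/2}},
\]
with $\mathbf H=H\nu$, where the first term captures the $n-k$ sphere principal curvatures and the remaining two terms capture the graph contribution over $\IR^k$. Substituting $r = (1+u)\sqrt{2(n-k)}$ and multiplying through by $\sqrt{1+2(n-k)|\nabla u|^2}/\sqrt{2(n-k)}$ produces all the desired terms up to a combination of constant and linear pieces, which collapse via the algebraic identity $\tfrac12(1+u)-\tfrac1{2(1+u)}=u-\tfrac{u^2/2}{1+u}$ to yield \eqref{eq_evolution_rMCF_rot}.

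For the non-rotationally symmetric case \eqref{eq_evolution_rMCF_non_rot}, the surface is no longer a surface of revolution, so I would compute the induced metric and second fundamental form directly on $M_{\cyl}$ in an adapted orthonormal frame (one piece along $\IR^k$ and one tangent to $\IS^{n-k}$ of radius $\sqrt{2(n-k)}$), and Taylor-expand to quadratic order in $(u,\nabla u,\nabla^2 u)$, discarding cubic remainders. The linear part of the trace of the shape operator reproduces $\triangle_{\IR^k}u+\triangle_{\IS^{n-k}}u+u$, where the final summand comes from the constant sectional curvature of $\IS^{n-k}(\sqrt{2(n-k)})$; this matches the operator $L$ from Lemma~\ref{Lem_mode_dec}. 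The three quadratic contributions come from: (i) the expansion of the sphere-radius term $-1/(2(1+u))$ yielding $-\tfrac12u^2$; (ii) the coupling of the spherical perturbation with the intrinsic curvature of $\IS^{n-k}$ giving $-u\triangle_{\IS^{n-k}}u$; and (iii) the first-order correction to the normal from $\nabla_\by u$ producing $-|\nabla_\by u|^2$ through the denominator of the shape operator.

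The main technical obstacle is the careful book-keeping in the non-rotationally symmetric case, especially distinguishing intrinsic from extrinsic derivatives of $u$ on the sphere, tracking the factor $\sqrt{2(n-k)}$ due to the non-unit sphere radius, and verifying that all cubic and higher terms in the expansion are uniformly absorbed into the advertised $O((|u|+|\nabla u|+|\nabla^2 u|)^3)$ remainder. As a consistency check, specializing \eqref{eq_evolution_rMCF_non_rot} to rotationally symmetric $u$ (where $\nabla_\by u=0$ and $\triangle_{\IS^{n-k}}u=0$) should recover the quadratic Taylor expansion of \eqref{eq_evolution_rMCF_rot}, namely $-\tfrac12 u^2$, and the linear part should agree with Lemma~\ref{Lem_structure_MCF_graph_equation}.
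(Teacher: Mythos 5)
Your treatment of the rotationally symmetric identity \eqref{eq_evolution_rMCF_rot} is correct and is essentially the paper's argument: parameterize the graph over $M_{\cyl}$, compute the normal, and project $\partial_\tau \mathbf x = \mathbf H + \tfrac12\mathbf x^\perp$ onto it; invoking the classical surface-of-revolution formula for $H$ with radius function $(1+u)\sqrt{2(n-k)}$ is a legitimate shortcut for the second-fundamental-form computation the paper does by hand, the common prefactor $\sqrt{2(n-k)}/\sqrt{1+2(n-k)|\nabla u|^2}$ does cancel, and your identity $\tfrac12(1+u)-\tfrac1{2(1+u)}=u-\tfrac{u^2/2}{1+u}$ is exactly the right bookkeeping.

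For the non-rotationally symmetric identity \eqref{eq_evolution_rMCF_non_rot}, however, there is a genuine gap: the quadratic expansion is asserted rather than computed, and the term-by-term attributions you give would not survive the actual computation. First, the $u\cdot\triangle_{\IS^{n-k}}u$ term does not come from ``coupling with the intrinsic curvature of $\IS^{n-k}$''; it comes from the conformal stretching of the induced metric in the sphere directions, $h_{\alpha\beta}\approx (1+u)^2 g_{\alpha\beta}$, hence $h^{\alpha\beta}\approx (1-2u)g^{\alpha\beta}$ contracted against the spherical Hessian. Carrying this out (with $\triangle_{\IS^{n-k}}$ the Laplacian of the radius-$\sqrt{2(n-k)}$ sphere) produces the coefficient $-2u\,\triangle_{\IS^{n-k}}u$, which is precisely what the paper's own proof obtains in its final display for $\mathbf H\cdot\mathbf n$ via the expansion of $(D_1+E_1)^{-1}$; note that the printed statement and the proof's display disagree on this coefficient, and the computation supports the proof's $-2$, so matching the printed statement is not evidence of correctness. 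Second, the sphere-curvature term $-\tfrac{(n-k)r^{-2}}{1+u}$ accounts only for $+\tfrac12 u-\tfrac12 u^2$; the other $+\tfrac12 u$ of the linear term comes from $\tfrac12\mathbf x^\perp\cdot\nu$, so your claim that the $+u$ ``comes from the constant sectional curvature'' is off by the same kind of factor-of-two bookkeeping. Third, $-|\nabla_{\by}u|^2$ arises from two competing contributions (the tilt of the exact normal entering $A_{\alpha\beta}$, giving $-2|\nabla_{\by}u|^2$, and the first-order correction to $h^{\alpha\beta}$, giving $+|\nabla_{\by}u|^2$), not solely ``through the denominator of the shape operator''; relatedly, the normal you wrote down is only the linearization — the exact normal carries a factor $(1+u)^{-1}$ on the spherical gradient, and its use (or a normal-coordinate frame) is needed to keep the quadratic bookkeeping covariant. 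Finally, your proposed consistency check (specializing to rotationally symmetric $u$) is blind to all three of these terms, since they vanish in that case, so it cannot certify the delicate part of the lemma. The plan of an adapted-frame expansion is viable and close in spirit to the paper's perturbative matrix inversion, but as written the second identity — which is the actual content here — is not established.
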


The identity \eqref{eq_evolution_rMCF_non_rot} will not be used in this paper, but in \cite{Bamler_Lai_MCF2}.

\begin{proof}
Set $r := \sqrt{2(n-k)}$ and recall that 
$$\Gamma_{\cyl}(u) = \big\{ \big(\bx, (1+u(\bx))y \big) \;\; : \;\; \bx \in \IR^{k}, \by \in \IR^{n-k+1}, |\by| = r \big\}.$$
We will now express each term in the rescaled mean curvature flow equation $\partial_\tau \mathbf z = \mathbf H + \frac12 \mathbf z^\perp$ in terms of $u$; here $\bz = (\bx, \by) \in \IR^{n+1}$ denotes a generic point.
Fix a point $(\bx,\by) \in M_{\cyl}$ and assume without loss of generality $\by = r \mathbf e_{n+1}$ and $\partial_2 u (\bx,\by) = \ldots = \partial_{k} u(\bx,\by) = \partial_{k+1} u(\bx,\by) = \ldots = \partial_{n-1} u(\bx,\by) = 0$; set $u := 1+u(\bx,\by)$ and $u_x := \partial_1 u(\bx,\by)$, $u_y := \partial_{n} u(\bx,\by)$, so $\nabla u = u_x \mathbf e_1 + u_y \mathbf e_{n}$.
Then the tangent space of $\Gamma(u)$ at $(\bx,(1+u)\by)$ is spanned by $\mathbf e_1 + u_x r \, \mathbf e_{n+1}, \mathbf e_2, \ldots, \mathbf e_{n-1}, \mathbf e_{n}+ u_y r \, \mathbf e_{n+1}$ and the unit normal vector is 
\[ \mathbf n := \frac1{\sqrt{1 + u_x^2 r^2 + u_y^2 r^2}} \big( - u_x r \, \mathbf e_1 - u_y r \, \mathbf e_{n} + \mathbf e_{n+1} \big). \]
It follows that
\begin{equation} \label{eq_dtx_n}
 \partial_\tau \mathbf z \cdot \mathbf n =\frac{\partial_\tau u( \by \cdot \mathbf e_{n+1})}{\sqrt{1+u_x^2 r^2 + u_y^2 r^2}}  =  \frac{\partial_\tau u}{\sqrt{r^{-2}+|\nabla u|^2}}   
\end{equation}
and
\begin{equation} \label{eq_xperp_n}
 \mathbf z^\perp \cdot \mathbf n
 = (\bx, (1+u) \by) \cdot \mathbf n 
= \frac{ \bx \cdot (-u_x r\be_1)  +  (1+u)( \by \cdot \be_{n+1})}{\sqrt{1 + u_x^2 r^2 + u_y^2 r^2}} 
= \frac{- \bx \cdot \nabla u  + 1 + u}{\sqrt{r^{-2} + |\nabla u|^2}}.  
\end{equation}
Lastly, we need to compute $\mathbf H \cdot \mathbf n$.
To do this, let $(v_1, \ldots, v_n) \in \IR^n$ and consider a  curve
\[ s \mapsto \bigg(\bx+ \sum_{i=1}^k s v_i \be_i, \Big(1+u \Big(\bx+ \sum_{i=1}^k s v_i \be_i, \gamma_{v_{k+1}, \ldots, v_{n}} (s) \Big) \Big) \gamma_{v_{k+1}, \ldots, v_{n}} (s) \bigg), \]
where $\gamma_i : \IR \to \IS^{n-k}$ is a unit-speed geodesic with $\gamma_{v_{k+1}, \ldots, v_n} (0)=\by = r e_{n+1}$ and velocity $\gamma'_{v_{k+1}, \ldots, v_n}(0) =  \sum_{i=k+1}^n v_i \be_i$.
At $s = 0$ its velocity is contained in the tangent space at $(\bx, (1+u)\by)$ and the square of its norm is
\begin{equation} \label{eq_quad_form1}
 v_1^2 + \ldots + v_k^2 + (1+u)^2  v_{k+1}^2 + \ldots + (1+u)^2  v_n^2 + u_x^2 r^2 v_1^2  +  u_y^2 r^2 v_{n}^2 + 2 u_x u_y  r^2  v_1 v_{n} 
\end{equation}
and its second derivative is
\[ \bigg( \bO, \sum_{i,j=1}^n \partial^2_{ij} u \, v_i v_j  \, \by + 2 \sum_{i=1}^n \sum_{j=k+1}^n \partial_i u \, v_i v_j   \, \be_j - (1+u) r^{-2} \sum_{i=k+1}^n v_i^2 \, \by \bigg). \]
The inner product of this with $\mathbf n$ is equal to 
\begin{equation} \label{eq_quad_form2}
 \frac1{\sqrt{1+ |\nabla u|^2r^2}} \bigg( \sum_{i,j=1}^n \partial^2_{ij} u \, r \,  v_i v_j -  2 u_x u_y r \, v_1 v_{n}  -  2    u_y^2 r \, v_{n}^2 - (1+u) r^{-1} \sum_{i=k+1}^n v_i^2 \bigg).  
\end{equation}
Now $\mathbf H \cdot \mathbf n$ is equal to the trace of the quotient of the matrices representing the quadratic forms \eqref{eq_quad_form2} and \eqref{eq_quad_form1}.
If $u_y = 0$, then the second matrix is diagonal and 
\begin{multline} \label{eq_H_n}
 \mathbf H \cdot \mathbf n = \frac{r}{\sqrt{1 + |\nabla u|^2 r^2}} \bigg( \frac{\partial^2_{11} u}{1 + u_x^2 r^2 } + \sum_{i=2}^{k} \partial^2_{ii} u + \sum_{i=k+1}^n \frac{\partial^2_{ii} u}{(1+u)^2} - \frac{(n-k)r^{-2}}{1+u} \bigg) \\
  = \frac1{\sqrt{r^{-2} + |\nabla u|^2}}  \bigg( \triangle_{\IR^k} u - \frac{\nabla^2 u (\nabla u, \nabla u)}{r^{-2} + |\nabla u|^2} - \frac{(n-k)r^{-2}}{1+u}  + \sum_{i=k+1}^n \frac{\partial^2_{ii} u}{(1+u)^2} \bigg). 
\end{multline}
The identity \eqref{eq_evolution_rMCF_rot}, in the rotationally symmetric case, follows by combining \eqref{eq_dtx_n}, \eqref{eq_xperp_n}, \eqref{eq_H_n}.

To see identity \eqref{eq_evolution_rMCF_non_rot} note that \eqref{eq_dtx_n}, \eqref{eq_xperp_n} still hold in the non-rotationally symmetric case.
In order to generalize \eqref{eq_H_n} denote the matrices representing the quadratic forms  \eqref{eq_quad_form1} and \eqref{eq_quad_form2} by $D_1 + E_1$ and $D_2+ E_2$, respectively, where $D_1, D_2$ denote the matrices from before (obtained by setting $u_y = 0$) and $E_1, E_2$ denote the remaining terms involving $u_y$.
Note $D_1$ is diagonal, $D_2$ is block-diagonal and the entries of $E_1, E_2$ vanish to order $O(u_x^2+u_y^2)$.
Therefore
\begin{multline*}
 (D_1+E_1)^{-1} = D_1^{-1} (\mathsf I + E_1 D_1^{-1} )^{-1}
= D_1^{-1} \big(\mathsf I - E_1 D_1^{-1} + O(\| E_1 D_1^{-1} \|) \big) \\
= D_1^{-1} - D_1^{-1} E_1 D_1^{-1} + O(\| E_1 \|^2 )
= D_1^{-1} - D_1^{-1} E_1 D_1^{-1} + O(u_x^4+u_y^4), 
\end{multline*}
so
\begin{multline*}
 \mathbf H \cdot \mathbf n
= \tr  (D_2 + E_2)(D_1^{-1} +E_1)^{-1} 
= \tr (D_2 + E_2) (D_1^{-1} - D_1^{-1} E_1 D_1^{-1}) + O(u_x^4+u_y^4) \\
= \tr D_2 D_1^{-1} + \tr E_2 D_1^{-1} - \tr D_2 D_1^{-1} E_1 D_1^{-1} +  O(u_x^4+u_y^4).
\end{multline*}
The term $\tr D_2 D_1^{-1}$ is the same as the right-hand side of \eqref{eq_H_n} and it is not hard to verify that
\begin{align*}
 \tr E_2 D_1^{-1} &= \frac{1}{\sqrt{1+|\nabla u|^2 r^2}} \, \frac{-2 u_y^2r}{(1+u)^2} = \frac{1}{\sqrt{r^{-2}+|\nabla u|^2}} \big( {- 2u_y^2 \big)} \big) , \\
\tr D_2 D_1^{-1} E_1 D_1^{-1} &= \frac{1}{\sqrt{1+|\nabla u|^2 r^2}} \, \frac{- (1+u) r^{-1}u_y^2 r^2}{(1+u)^4}  = \frac{1}{\sqrt{r^{-2}+|\nabla u|^2}} \, \big( {-u_y^2  + O\big( (|u| + |u_y|)^3\big)} \big).
\end{align*}
Putting everything together gives us
\begin{multline*}
 \mathbf H \cdot \mathbf n =  \frac1{\sqrt{r^{-2} + |\nabla u|^2}} \Big( \triangle_{\IR^k} u - \tfrac12 + \tfrac12 u - \tfrac12 u^2 + \triangle_{\IS^{n-k}} u - 2 u \,  \triangle_{\IS^{n-k}} u - u_y^2  \\
 + O\big( (|u| + |\nabla u| + |\nabla^2 u|)^3\big) \Big),
 \end{multline*}
which combined with \eqref{eq_dtx_n}, \eqref{eq_xperp_n} proves \eqref{eq_evolution_rMCF_non_rot}.
\end{proof}
\medskip

We can now calculate the second Taylor approximation $Q_2(U^{++})$ of the non-linear term in \eqref{eq_evolution_rMCF_rot}.
We will use Hermite polynomials $\fp^{(0)}, \fp^{(1)}_i$ and $\fp^{(2)}_{ij}$ from \eqref{eq_Hermite}.

\begin{Lemma} \label{Lem_Q2}
If $\la < 0$, then we have for any $U \in \sV_{\rot,> \la}$
\begin{equation} \label{eq_Q2}
 Q_2(U) = Q_2(U,0) = - \tfrac12 U^2.  
\end{equation}
Moreover, if $U = a \mathfrak p^{(0)} + \sum_i b_i \mathfrak p^{(1)}_i + \sum_{i,j} c_{ij} \mathfrak p^{(2)}_{ij}$, then
\[ \PP_{\sV_{\geq 0}} Q_2(U) = \ov a \fp^{(0)} + \sum_i \ov b_i \fp^{(1)}_i + \sum_{i,j} \ov c_{ij} \fp^{(2)}_{ij}, \]
 where
\begin{align}
 \ov a &= - \tfrac12 a^2 - \tfrac12 \sum_i  b_i^2 - \tfrac12 \sum_{i,j} c_{ij}^2  \label{eq_Q2p0} \\
 \ov b_i &= - ab_i- \sqrt{2} \sum_l c_{il} b_l  \\
 \ov c_{ij} &= - \sqrt{2} \sum_l c_{il} c_{lj} - \tfrac{1}{\sqrt{2}} b_i b_j - a c_{ij} \label{eq_Q2p2}
\end{align}
\end{Lemma}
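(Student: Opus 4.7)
\emph{Plan.} The identity \eqref{eq_Q2} follows directly from Lemma~\ref{Lem_MCF_u_explicit}: since elements of $\sV_{\rot,>\la}$ have vanishing second component by Lemma~\ref{Lem_mode_dec}, the rotationally symmetric evolution equation \eqref{eq_evolution_rMCF_rot} applies. Subtracting the linear part $Lu = \triangle_f u + u$, the nonlinear remainder is
\[ Q[u,0] \;=\; -\frac{\tfrac12 u^2}{1+u} \;-\; \frac{\nabla^2 u(\nabla u, \nabla u)}{(2(n-k))^{-1}+|\nabla u|^2}. \]
Expanding each term in Taylor series in $(u,\nabla u,\nabla^2 u)$ about the origin, the first summand contributes $-\tfrac12 u^2 + O(u^3)$, while the second is manifestly cubic in its arguments. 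Hence the quadratic Taylor approximation is $Q_2(U,0) = -\tfrac12 U^2$.

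To derive the projection formulas I would expand $U^2$ in the explicit polynomial representation
\[ U \;=\; a + \tfrac{1}{\sqrt 2}\,\mathbf b \cdot \bx + \tfrac{1}{2\sqrt 2}\bigl(\bx^T c\,\bx - 2\tr c\bigr), \]
with $\mathbf b=(b_i)$ and symmetric $c = (c_{ij})$, and then identify its component in $\sV_{\geq 0}$, which is spanned by $\{\mathfrak{p}^{(0)}, \mathfrak{p}^{(1)}_i, \mathfrak{p}^{(2)}_{ij}\}$. Since these basis elements are (up to normalization) the Hermite polynomials of degrees $0, 1, 2$ in the rescaled variables $y_i := x_i/\sqrt 2$, projection onto $\sV_{\geq 0}$ coincides with extracting the Hermite-degree-$\leq 2$ part of the polynomial $U^2$. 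The product rule $\mathfrak{p}^{(1)}_i \mathfrak{p}^{(1)}_j = \sqrt 2\, \mathfrak{p}^{(2)}_{ij} + \delta_{ij}\mathfrak{p}^{(0)}$ is immediate from $x_i^2 = 2\sqrt 2\,\mathfrak{p}^{(2)}_{ii} + 2$. For the higher products $\mathfrak{p}^{(1)}_i \mathfrak{p}^{(2)}_{jk}$ (Hermite degree $3$) and $\mathfrak{p}^{(2)}_{ij} \mathfrak{p}^{(2)}_{kl}$ (Hermite degree $4$), I would apply Wick's theorem for the Gaussian weight $e^{-|y|^2/2}$: the degree-$\leq 2$ part arises entirely from pair contractions encoded by Kronecker deltas.

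The main obstacle is the quartic piece $U_0^2 = \tfrac{1}{8}(\bx^T c\bx - 2\tr c)^2$, where both disconnected and connected Wick contractions of $(\bx^T c\bx)^2$ appear. The role of the $-2\tr c$ correction built into $U_0$ is precisely to cancel the disconnected contractions contributing $(\tr c)^2$ and $\tr c \cdot \bx^T c\bx$, leaving only the connected piece $\bx^T c^2 \bx - \tr c^2$. Rewriting this in the $\mathfrak{p}^{(\alpha)}$-basis via $\bx^T c^2 \bx = 2\sqrt 2\sum_{ij}(c^2)_{ij}\mathfrak{p}^{(2)}_{ij} + 2\tr c^2$ yields $\PP_{\sV_{\geq 0}}(U_0^2) = 2\sqrt 2\sum_{ij}(c^2)_{ij}\mathfrak{p}^{(2)}_{ij} + (\tr c^2)\mathfrak{p}^{(0)}$, which produces the $-\sqrt 2\sum_l c_{il}c_{lj}$ contribution to $\ov c_{ij}$ and the $-\tfrac12\sum_{ij}c_{ij}^2$ contribution to $\ov a$ via the identity $\tr c^2 = \sum_{ij} c_{ij}^2$. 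The cross-terms $2aU_0$, $2aU_{\frac12}$, $U_{\frac12}^2$, and $2U_{\frac12}U_0$ contribute the remaining coefficients through the simpler product rules established above. A verification in dimension $k=1$ provides a convenient sanity check.
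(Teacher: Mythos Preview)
Your proposal is correct and follows essentially the same route as the paper: both derive $Q_2(U)=-\tfrac12 U^2$ from the rotationally symmetric evolution equation \eqref{eq_evolution_rMCF_rot} and then compute the projection by expanding $U^2$ in the Hermite basis and evaluating Gaussian moments. The only organizational difference is that the paper first diagonalizes $(c_{ij})$ and records the needed inner products $\langle \fp^{(\alpha)}\fp^{(\beta)},\fp^{(\gamma)}\rangle$ one by one, whereas you keep $c$ general and package the same Gaussian integrals as Wick contractions; the two are equivalent bookkeeping for the same calculation.
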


\begin{proof}
In the following, we will use the \emph{standard} inner product $$\langle u_1, u_2 \rangle = \int_{\IR^k} u_1 u_2 (4\pi)^{-k/2} e^{-f} dx^1\cdots dx^k$$ on $\IR^k$ equipped with the standard Gaussian weight.
This differs from the inner product $\langle \cdot, \cdot \rangle_{L^2_f}$ by a dimensional factor, especially because the latter involves integration over $\IS^{n-k}$.
The Hermite polynomials $\fp^{(0)}, \fp^{(1)}_i$ and $\fp^{(2)}_{ij}$ are orthogonal with respect to $\langle \cdot, \cdot \rangle$ and $\fp^{(0)}, \fp^{(1)}_i$ and $\fp^{(2)}_{ii}$ have norm 1; the norm of $\fp^{(2)}_{ij}$, for $i \neq j$ equals $\frac12$.

The identity \eqref{eq_Q2} follows from \eqref{eq_evolution_rMCF_rot}.
To see the remaining identities, we may assume without loss of generality that the matrix $(c_{ij})$ is diagonal and compute, using the Einstein summation convention,
\[ U^2 = a^2 \mathfrak p^{(0)} + b_i b_j  \mathfrak p_i^{(1)} \mathfrak p_j^{(1)} + c_{ii} c_{jj} \mathfrak p^{(2)}_{ii}  \mathfrak p^{(2)}_{jj} + 2 ab_i \mathfrak p_i^{(1)} + 2 ac_{ii}  \mathfrak p_{ii}^{(2)} + 2 b_l c_{ii} \mathfrak p_l^{(1)} \mathfrak  p_{ii}^{(2)} \]
The lemma now follows from the identities
\begin{align*}
 \mathfrak p_i^{(1)} \mathfrak p_j^{(1)} &= \sqrt{2} \mathfrak  p^{(2)}_{ij} + \delta_{ij} \mathfrak p^{(0)}, \displaybreak[1] \\ 
\langle \mathfrak p_{ii}^{(2)} \mathfrak p^{(2)}_{jj}, \mathfrak p^{(0)} \rangle 
&=  \langle \mathfrak p_{ii}^{(2)}, \mathfrak p^{(2)}_{jj} \rangle = \delta_{ij} , \displaybreak[1] \\
\langle \mathfrak p_{ii}^{(2)} \mathfrak p^{(2)}_{jj}, \mathfrak p^{(1)}_l \rangle  
&= 0, \displaybreak[1] \\
\langle \mathfrak p_{ii}^{(2)} \mathfrak p^{(2)}_{jj}, \mathfrak p^{(2)}_{lm} \rangle &= 0  \qquad \text{unless $i=j=l=m$,} \displaybreak[1] \\
\langle \mathfrak p_{ii}^{(2)} \mathfrak p^{(2)}_{ii}, \mathfrak p^{(2)}_{ii} \rangle &=  {\textstyle \frac1{\sqrt{4\pi}}\int_{-\infty}^\infty  \frac1{16  \sqrt{2}} (x^2-2)^3 e^{-x^2/4}dx = 2 \sqrt{2}},  \displaybreak[1] \\
\langle \mathfrak p_l^{(1)} \mathfrak p_{ii}^{(2)} , \mathfrak p^{(0)} \rangle
= \langle \mathfrak p_l^{(1)} \mathfrak p_{ii}^{(2)} , \mathfrak p^{(2)}_{jj} \rangle
 &=  0, \displaybreak[1] \\
 \langle \mathfrak p_l^{(1)} \mathfrak p_{ii}^{(2)} , \mathfrak p_m^{(1)} \rangle
 &=  \langle  \mathfrak p_{ii}^{(2)} , \mathfrak p_l^{(1)} \mathfrak p_m^{(1)} \rangle
 = \langle  \mathfrak p_{ii}^{(2)} , \sqrt{2} \mathfrak p_{lm}^{(2)} + \delta_{ij} \mathfrak p^{(0)} \rangle  = \delta_{il} \delta_{im} \sqrt{2}. 
\end{align*}
This concludes the proof.
\end{proof}
\medskip

We also record the following refinement of Lemma~\ref{Lem_Q2}, which will be required to further study the evolution of the quadratic mode in Section~\ref{sec_dom_quadratic}.
It will not be used in this section.

\begin{Lemma}
If $U = \sum_{i} c_{ii} \mathfrak p^{(2)}_{ii}$ is in diagonal form, then
\begin{align}
 \PP_{\sV_1} \big( Q_2(U) \big) &= -\tfrac12 \sum_j c_{jj}^2  \mathfrak p^{(0)} \label{eq_Q2_V1} \\
 \PP_{\sV_{\frac12}} \big( Q_2(U) \big) &= \PP_{\sV_{-\frac12}} \big( Q_2(U) \big) = 0 \label{eq_Q2_V12} \\
 \PP_{\sV_{-1}} \big( Q_2(U) \big) &=  -\tfrac12 \sum_{i \neq j} c_{ii} c_{jj} \mathfrak p_{ii}^{(2)} \mathfrak p_{jj}^{(2)} - \tfrac{\sqrt{6}}2 \sum_i c_{ii}^2 \mathfrak p^{(4)}_{iiii},   \label{eq_Q2_Vm1}
\end{align}
where $\mathfrak p^{(4)}_{iiii} (\bx) = \frac1{8 \sqrt{6}} (\bx_i^4-12\bx_i^2+12)$ denotes the 4th pure Hermite polynomial.

Moreover, 
If $\la < -1$, then we have for any $U \in \sV_{\rot, >\la}$
\begin{equation} \label{eq_3_taylor}
  Q_3(U) = Q_3(U, 0) = - \tfrac12 U^2 + \tfrac12 U^3 - 2(n-k) \, \nabla^2 u (\nabla u, \nabla u). 
\end{equation}
If for some coefficients $a, v_{ij}, w_i \in \IR$ with $v_{ii} = 0$ we have
\[ U = a \mathfrak p^{(0)} + \sum_{i} c_{ii} \mathfrak p^{(2)}_{ii} + \sum_{i,j} v_{ij} \mathfrak p_{ii}^{(2)} \mathfrak p_{jj}^{(2)} + \sum_i w_i \mathfrak p_{iiii}^{(4)}, \]
then, assuming that $|a|, |c_{ii}|, |v_{ij}|, |w_i| < 1$, we have for some  dimensional and universal constants $C_1(n-k), C_2 \in \IR$
\begin{align}
 \PP_{\sV_0} \big( Q_3(U) \big) &= \sum_i \Big( - \sqrt{2} c_{ii}^2 - ac_{ii} + \tfrac32 \sum_j c_{jj}^2 c_{ii} + C_1(n-k) c_{ii}^3    - 2 \sum_j c_{jj} v_{ji}  + C_2 c_{ii} w_i \Big) \mathfrak p_{ii}^{(2)} \notag \\
 &\qquad  + O\big(a^2+ |\vec v|^2 + |\vec w|^2 \big) + O\big(  (|a|+ |\vec v| + |\vec w|) |\vec c|^2 \big).  \label{eq_Q3_V0}
\end{align}
\end{Lemma}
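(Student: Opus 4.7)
The proof is a direct computation in the Hermite polynomial basis; I would treat the four identities \eqref{eq_Q2_V1}, \eqref{eq_Q2_V12}, \eqref{eq_Q2_Vm1}, \eqref{eq_3_taylor} as warm-ups and concentrate my effort on \eqref{eq_Q3_V0}. The algebraic backbone is the pair of product identities
\begin{equation*}
 \mathfrak p^{(2)}_{ii}\,\mathfrak p^{(2)}_{ii} = \sqrt 6\,\mathfrak p^{(4)}_{iiii} + 2\sqrt 2\,\mathfrak p^{(2)}_{ii} + \mathfrak p^{(0)},
\qquad
 \mathfrak p^{(2)}_{ii}\,\mathfrak p^{(4)}_{iiii} = \sqrt{6}\,\mathfrak p^{(2)}_{ii} + (\textrm{Hermite modes of degree} \geq 4),
\end{equation*}
which I would verify by expanding as polynomials in $x_i$ and matching coefficients against the one-dimensional Gaussian moments $\int x_i^{2m}(4\pi)^{-1/2} e^{-x_i^2/4}\,dx_i=(2m-1)!!\,2^m$. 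With these in hand, \eqref{eq_Q2_V1}--\eqref{eq_Q2_Vm1} follow by applying \eqref{eq_Q2} to a diagonal $U$, writing $U^2=\sum_{i=j}+\sum_{i\neq j}$, and observing that each off-diagonal product $\mathfrak p^{(2)}_{ii}\mathfrak p^{(2)}_{jj}$ (for $i\neq j$) is itself an eigenfunction of $L$ with eigenvalue $-1$ (since $\triangle_f$ acts as $-1$ on each factor). The vanishing of the $\sV_{\pm 1/2}$ components is automatic, because the monomials appearing in $U^2$ have only even-degree content in each $x_i$.

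For \eqref{eq_3_taylor}, I would Taylor expand the two rational nonlinearities in the explicit equation \eqref{eq_evolution_rMCF_rot}:
\begin{equation*}
 -\frac{\tfrac12 u^2}{1+u} = -\tfrac12 u^2 + \tfrac12 u^3 + O(u^4), \qquad
 \frac{\nabla^2 u(\nabla u,\nabla u)}{(2(n-k))^{-1}+|\nabla u|^2} = 2(n-k)\,\nabla^2 u(\nabla u,\nabla u) + O(|\nabla u|^4|\nabla^2 u|).
\end{equation*}
The second error term is already fifth-order in derivatives of $u$, so it does not survive truncation at order $J=3$; collecting the remaining terms, together with the linear cancellation $\triangle_f u + u = L u$, yields \eqref{eq_3_taylor}.

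The substantive computation is \eqref{eq_Q3_V0}. Writing $U = U_a + U_c + U_v + U_w$ according to the four families of coefficients, I would compute, term by term, the $\mathfrak p^{(2)}_{ii}$-coefficient of $Q_3(U) = -\tfrac12 U^2 + \tfrac12 U^3 - 2(n-k)\nabla^2 U(\nabla U,\nabla U)$, retaining only monomials that are \emph{not} absorbed by the admissible error $O(a^2+|\vec v|^2+|\vec w|^2)+O((|a|+|\vec v|+|\vec w|)|\vec c|^2)$. Concretely: (i) from $-\tfrac12 U^2$ the square identity produces the $-\sqrt 2\, c_{ii}^2$ and $-a c_{ii}$ coefficients; the cross term involving $c_{kk}v_{lm}$ can project non-trivially onto $\mathfrak p^{(2)}_{ii}$ only when the triple of indices $\{k,l,m\}$ consists of $i$ and a repeated index $\neq i$, which by the square identity and the convention $v_{ii}=0$ gives exactly $-2\sum_j c_{jj}v_{ji}$; the cross term $c_{kk}w_j$ forces $k=j=i$ (again by orthogonality in every coordinate), producing $-\sqrt{6}\,c_{ii}w_i$, so $C_2=-\sqrt 6$. (ii) From $\tfrac12 U^3$, only the purely-in-$\vec c$ contribution survives, which I would evaluate via the iterated square identity $\mathfrak p^{(2)}_{ii}{}^3 = \sqrt 6\,\mathfrak p^{(2)}_{ii}\mathfrak p^{(4)}_{iiii} + 2\sqrt 2\,\mathfrak p^{(2)}_{ii}{}^2 + \mathfrak p^{(2)}_{ii}$, giving a $\frac32 c_{ii}\sum_j c_{jj}^2$ contribution plus a further pure $c_{ii}^3$ piece. (iii) From $-2(n-k)\nabla^2 U(\nabla U,\nabla U)$, restricting to $U_0 = \sum c_{kk}\mathfrak p^{(2)}_{kk}$ yields $\nabla U_0 = \sum c_{ii}x_i\,\mathbf e_i/\sqrt 2$ and $\nabla^2 U_0 = (1/\sqrt 2)\,\mathrm{diag}(c_{11},\ldots,c_{kk})$, so $\nabla^2 U_0(\nabla U_0,\nabla U_0) = (2\sqrt 2)^{-1}\sum_i c_{ii}^3 x_i^2$, whose $\mathfrak p^{(2)}_{ii}$-coefficient equals $c_{ii}^3$; the contributions involving any $a,\vec v,\vec w$ factor are at least of the error order. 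Combining these three pieces folds the two $c_{ii}^3$ contributions into a single $C_1(n-k)c_{ii}^3$ term and gives \eqref{eq_Q3_V0}.

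The main obstacle is \emph{not} any single product identity but the combinatorial bookkeeping of which triple products project non-trivially onto $\mathfrak p^{(2)}_{ii}$. My plan is to extract once and for all the selection rule: a product $\mathfrak p^{(2)}_{k_1 k_1}\mathfrak p^{(2)}_{k_2 k_2}\mathfrak p^{(2)}_{k_3 k_3}$ has a nonzero $\mathfrak p^{(2)}_{ii}$-coefficient only when either all $k_\alpha$ equal $i$, or exactly one of them equals $i$ and the other two are equal and distinct from $i$ (and the analogous rule for products involving one $\mathfrak p^{(4)}$). Once this rule is in place, each cross-term reduces to one of the three one-dimensional integrals $\langle\mathfrak p^{(2)}_{ii}{}^2,\mathfrak p^{(2)}_{ii}\rangle$, $\langle \mathfrak p^{(2)}_{ii}\mathfrak p^{(4)}_{iiii},\mathfrak p^{(2)}_{ii}\rangle$, $\langle \mathfrak p^{(2)}_{ii}{}^3,\mathfrak p^{(2)}_{ii}\rangle$, all computable from the identity above, and the proof assembles mechanically.
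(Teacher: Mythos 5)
Your proposal is correct and follows essentially the same route as the paper: Taylor-expand the rotationally symmetric graph equation to get \eqref{eq_3_taylor}, then do Hermite-basis bookkeeping with the one-dimensional product identities and the selection rule for which triple products hit $\mathfrak p^{(2)}_{ii}$, with the $c\cdot v$ and $c\cdot w$ cross-terms coming only from the quadratic part $-\tfrac12 U^2$ (your sharper value $C_2=-\sqrt6$ is consistent with the paper, which leaves $C_2$ unspecified). The only step you should add explicitly is the one the paper's proof of \eqref{eq_Q3_V0} opens with: since $U$ is even in each coordinate and rotationally symmetric and $Q_3$ is equivariant, $\PP_{\sV_0}(Q_3(U))$ contains no oscillatory modes and no off-diagonal $\mathfrak p^{(2)}_{ij}$ with $i \neq j$, so computing the $\mathfrak p^{(2)}_{ii}$-coefficients (as you do) determines the full projection; this is exactly the parity argument you already invoke for \eqref{eq_Q2_V12}, so it is a one-line addition rather than a gap.
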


\begin{proof}
Identity \eqref{eq_3_taylor} follows again from \eqref{eq_evolution_rMCF_rot}.

For simplicity, we will write $c_{i} = c_{ii}$.
Note that in each scenario $U$ is invariant under the coordinate transformation $\bx_j \mapsto -\bx_j$ for some fixed $j$.
Since $Q_2$ and $Q_3$ are equivariant under this transformation, we obtain that the images $Q_2(U)$ and $Q_3(U)$ are invariant as well, so they must be polynomials on $\IR^k$ that are even in every coordinate.

The identity \eqref{eq_Q2_V1} follows directly from \eqref{eq_Q2p0} and the identities \eqref{eq_Q2_V12} follow from the fact that $Q_2(U)$ is even in each coordinate, while $\sV_{\frac12}$ and $\sV_{-\frac12}$ are spanned by Hermite polynomials that are odd in at least one coordinate.

To see \eqref{eq_Q2_Vm1}, note that the subspace of $\sV_{-1}$ consisting of polynomials that are even in each coordinate is spanned by the degree 4 Hermite polynomials $\mathfrak p_{ii}^{(2)} \mathfrak p_{jj}^{(2)}$, for $i \neq j$ and $\mathfrak p^{(4)}_{iiii}$.
Since the image $Q_2(U)$ is a polynomial of degree 4, it suffices to consider its leading terms, which are
\[ -\frac1{16} \sum_{i, j} c_i c_j x_i^2 x_j^2 
= -\frac1{16} \sum_{i \neq j} c_i c_j x_i^2 x_j^2  - \frac1{16} \sum_{i} c_i^2 x_i^4. \]
Since the leading terms of $\mathfrak p_{ii}^{(2)} \mathfrak p_{jj}^{(2)}$ and $\mathfrak p^{(4)}_{iiii}$ are $\frac18 x_i^2 x_j^2$ and $\frac1{8 \sqrt{6}} x_i^4$, respectively, the identity \eqref{eq_Q2_Vm1} follows.

Let us now verify identity \eqref{eq_Q3_V0}.
Since $\PP_{\sV_0} (Q_3(U))$ is an even polynomial in each coordinate function, it must be a linear combination of the Hermite polynomials $\mathfrak p_{ii}^{(2)}$.
Let us first consider the case in which $v_{ij} = w_i = 0$.
By direct computation, and using the same inner product as in the proof of Lemma~\ref{Lem_Q2},
\[ \langle \tfrac12 U^3, \mathfrak p_{ii}^{(2)} \rangle
= \tfrac12 \sum_{j, l, m} c_j c_l c_m \langle  \mathfrak p_{jj}^{(2)} \mathfrak p_{ll}^{(2)} \mathfrak p_{mm}^{(2)}, \mathfrak p_{ii}^{(2)} \rangle
+ O(|a| \, |\vec c|^2 + |a|^2 \, |\vec c|) \]
The inner product is only non-zero if $i$ is equal to one of the numbers $j, l, m$ and the other two numbers are equal.
Therefore we have for some universal constant $C \in \IR$
\[ \langle \tfrac12 U^3, \mathfrak p_{ii}^{(2)} \rangle
= \tfrac32 \sum_{j} c_j^2 c_i \langle  \mathfrak p_{ii}^{(2)} \mathfrak p_{jj}^{(2)} \mathfrak p_{jj}^{(2)}, \mathfrak p_{ii}^{(2)} \rangle -  c_i^3 \langle  \mathfrak p_{ii}^{(2)} \mathfrak p_{ii}^{(2)} \mathfrak p_{ii}^{(2)}, \mathfrak p_{ii}^{(2)} \rangle 
=  \tfrac32 \sum_{j} c_j^2 c_i^2 - C c_i^4. \]
We can also verify easily that
\[ \langle \nabla^2 U (\nabla U, \nabla U) , \mathfrak p^{(2)}_{ii} \rangle = c_i^3 \]
Combining these calculations with \eqref{eq_3_taylor} and Lemma~\ref{Lem_Q2} implies \eqref{eq_Q3_V0} if $v_{ij} = w_i = 0$.

Consider now the general case in which $v_{ij}, w_i$ don't vanish altogether.
Recall that $\PP_{\sV_0} (Q_3(U))$ is a polynomial of degree 3 in $a, c_i, v_{ij}, w_i$.
It suffices to study only those monomials that contain at least one $v_{ij}$ or $w_i$ factor.
Due to the first $O(\cdots)$-term on the right-hand side of \eqref{eq_Q3_V0}, we can  ignore all monomials containing at least two such factors, or monomials containing one such factor and the factor $a$.
This only leaves monomials containing exactly one factor $v_{ij}$ or $w_i$ and one or two $c_i$-factors.
Due to the second $O(\cdot)$, we can further ignore those monomials containing two $c_i$-factors reducing our analysis to only studying monomials of the form $c_l v_{ij}$ and $c_i w_j$.
These monomials already appear in the projection $\PP_{\sV_0} (Q_2(U))$ of the \emph{second} Taylor approximation.
We compute that (using again Lemma~\ref{Lem_Q2})
\begin{align*}
 \langle Q_2(U), \mathfrak p_{ii}^{(2)} \rangle
&= - \tfrac12 \langle U^2, \mathfrak p_{ii}^{(2)} \rangle \\
&= - \sqrt{2} c_i^2 - \Big\langle \Big( \sum_m c_m \mathfrak p_{mm}^{(2)} \Big) \Big( \sum_{j,l} v_{jl} \mathfrak p_{jj}^{(2)} \mathfrak p_{ll}^{(2)} + \sum_j w_j \mathfrak p_{jjjj}^{(4)} \Big) , \mathfrak p_{ii}^{(2)} \Big\rangle \\
&\qquad + O\Big( |\vec v|^2 + |\vec w|^2 + |a| (|a| + |\vec v| + |\vec w|) \Big).
\end{align*}
As before, we find that the inner product on the right-hand side is equal to
\begin{multline*}
 \sum_{j,l,m} c_m v_{jl}  \langle \mathfrak p_{mm}^{(2)} \mathfrak p_{jj}^{(2)} \mathfrak p_{ll}^{(2)}, \mathfrak p^{(2)}_{ii} \rangle 
+ \sum_{m,j} c_m w_j \langle \mathfrak p_{mm}^{(2)} \mathfrak p_{jjjj}^{(4)}, \mathfrak p_{ii}^{(2)} \rangle \\
= - 2 \sum_j c_j v_{ji}  - \sum_j c_i v_{jj} + 2 c_i v_{ii} + c_i w_i \langle \mathfrak p_{ii}^{(2)} \mathfrak p_{iiii}^{(4)}, \mathfrak p_{ii}^{(2)} \rangle .
\end{multline*}
The terms involving $v_{jj}$ and $v_{ii}$ vanish.
The remaining terms are the additional terms on the right-hand side of \eqref{eq_Q3_V0}.
\end{proof}
\medskip

\begin{proof}[Proof of Proposition~\ref{Prop_ODE_subintervals}.]
Without loss of generality, we may assume that $\xi < \frac1{10}$.
By Proposition~\ref{Prop_PDE_ODI_MCF_gauged} and Proposition~\ref{Prop_Tisosc}\ref{Prop_Tisosc_b} we have for $\tau \geq T'_3$
\begin{align}
 \Vert U^+ - U^{++} \Vert &\leq  \Vert U^+_{\Jac} \Vert  + \Vert U^+_{\osc, < 0} \Vert + \Vert U^+_{\rot, \leq -\frac12}  \Vert \leq C(\la, J) \Vert U^{++} \Vert^2, \label{eq_UpUpp} \\
  \UU^- + \Vert \td\bY \Vert &\leq  C(\la, J) \Vert U^{++} \Vert^3. \label{eq_UmYpp}
\end{align}
So by Proposition~\ref{Prop_PDE_ODI_MCF}\ref{Prop_PDE_ODI_MCF_b}
\[ \big\| \partial_\tau U^+ - L U^+ - Q^+_J(U^+,\bY) \big\| \leq C(\la, J) \Vert U^+ \Vert^{J+1} +  C(\la) \UU^- \leq  C(\la, J) \Vert U^{++} \Vert^{3}.   \]
For the same reasons we have
\begin{multline*}
\big\| Q^+_J (U^+, \bY ) - Q^+_2(U^{++},0) \big\| 
\leq \big\| Q^+_J (U^+, \bY ) - Q^+_J(U^{++},0) \big\| + \big\| Q^+_J (U^{++}, 0 ) - Q^+_2(U^{++},0) \big\| \\
\leq C(\la, J ) \Vert U^{++} \Vert^3
\end{multline*}
It follows that for $\tau \geq T'_3$
\begin{equation} \label{eq_evol_simplified}
 \big\| \partial_\tau U^+ - L U^+ - Q^+_2(U^{++},0) \big\|  \leq  C(\la, J) \Vert U^{++} \Vert^{3}.   
\end{equation}
In the next claim we use this bound to derive the desired evolution inequalities for $U_{0}, U_{\frac12}$ and $U_1$.

\begin{Claim} \label{Cl_U0U12U1}
There is a constant $c_0(\la, J,\xi) > 0$  such that for $\eta \leq \ov\eta(\la, J, \xi, c_0)$ the following is true:
\begin{enumerate}[label=(\alph*)]
\item \label{Cl_U0U12U1_a} Whenever $\|U_{\frac12} \|, \| U_{1} \|  \leq c_0 \| U_{0} \| $,  the bound \eqref{eq_U_evol} holds. Moreover, we have \eqref{eq_cmax_evol} or \eqref{eq_cmin_evol}, depending on whether $|U_{0,\max}|$ or $|U_{0,\min}|$ is larger.
\item \label{Cl_U0U12U1_b} Whenever $c_0 \| U_{0} \|,  \| U_{1} \|  \leq  \|U_{\frac12} \|$,  the bound \eqref{eq_U12_evol} holds.
\item \label{Cl_U0U12U1_c} Whenever $c_0 \| U_{0} \|,  \| U_{\frac12} \|  \leq  \|U_{1} \|$,  the bound \eqref{eq_U1_evol} holds.
\end{enumerate}
\end{Claim}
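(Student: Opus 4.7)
The starting point is the consolidated evolution inequality \eqref{eq_evol_simplified},
\[ \big\| \partial_\tau U^+ - L U^+ - Q^+_2(U^{++},0) \big\|  \leq  C(\la, J) \Vert U^{++} \Vert^{3}, \]
which I will project onto the three orthogonal subspaces $\sV_{\rot,1}, \sV_{\rot,\frac12}, \sV_{\rot,0}$.
Since $L$ acts as multiplication by $1, \frac12, 0$ on these spaces respectively, the result is a triple of ODIs for $U_1, U_{\frac12}, U_0$, with driving terms given by $\PP_{\sV_{\rot, \bullet}} Q_2(U^{++},0)$ and error term of size $O(\|U^{++}\|^3)$.  By Lemma~\ref{Lem_Q2}, writing $U^{++}=a\mathfrak p^{(0)}+\sum_i b_i \mathfrak p^{(1)}_i+\sum_{i,j}c_{ij}\mathfrak p^{(2)}_{ij}$, these driving terms are exactly \eqref{eq_Q2p0}--\eqref{eq_Q2p2}.

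For parts \ref{Cl_U0U12U1_b} and \ref{Cl_U0U12U1_c}, the analysis is straightforward and I will dispose of them first. When $c_0\|U_0\|, \|U_1\| \leq \|U_{\frac12}\|$ we have $\|U^{++}\|\leq C(c_0^{-1})\|U_{\frac12}\|$ and $\PP_{\sV_{\rot,\frac12}}Q_2(U^{++})$ is quadratic in $U^{++}$, so $\|\PP_{\sV_{\rot,\frac12}}Q_2(U^{++})\|\leq C\|U^{++}\|^2\leq C(c_0^{-1})\eta\|U_{\frac12}\|$, yielding \eqref{eq_U12_evol} as long as $\eta\leq \ov\eta(\la,J,\xi,c_0)$. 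The same argument applies verbatim to $U_1$, giving \eqref{eq_U1_evol}.

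For part \ref{Cl_U0U12U1_a}, the assumption $\|U_{\frac12}\|, \|U_1\| \leq c_0\|U_0\|$ translates to $|a|, |\vec b| \leq C c_0 \|c\|$, where $c=(c_{ij})$. Then by \eqref{eq_Q2p2}, we have $\ov c_{ij}=-\sqrt{2}(c^2)_{ij}+O(c_0\|c\|^2)$, so in matrix form the ODE for $c$ reads
\[ \partial_\tau c = -\sqrt 2\, c^2+O(c_0\|c\|^2)+O(\|U^{++}\|^3). \]
Taking the norm of this gives \eqref{eq_U_evol}, using $\|c\|^2\leq \eta\|c\|$ and the fact that $\|U^{++}\|^3\leq \eta^2\|U_0\|$. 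The main obstacle is the eigenvalue bound \eqref{eq_cmax_evol}/\eqref{eq_cmin_evol}, because $U_{0,\max}$ and $U_{0,\min}$ are only Lipschitz (not smooth) functions of $c$, with non-smoothness precisely at times when the maximal/minimal eigenvalue has multiplicity $\geq 2$. I will handle this by working with a one-sided (upper, resp.\ lower) Dini derivative: at each $\tau$ choose a unit eigenvector $v$ realizing $U_{0,\max}(\tau)=v^\top c(\tau)v$, and use the variational characterization to obtain the upper Dini derivative bound
\[ \limsup_{h\to 0^+} \tfrac{1}{h}\big(U_{0,\max}(\tau+h)-U_{0,\max}(\tau)\big)\leq v^\top(\partial_\tau c)v=-\sqrt 2\,(v^\top c\, v)^2 v^\top v+O(\cdot)=-\sqrt 2\,U_{0,\max}^2+O(\cdot). \]
A symmetric lower-Dini bound provides the matching inequality, and since the hypothesis $|U_{0,\min}|\leq U_{0,\max}$ yields $\|c\|\leq C U_{0,\max}$, the error terms are $O((c_0+\eta)U_{0,\max}^2)$, which is $\leq\xi U_{0,\max}^2$ for $c_0\leq \ov c_0(\xi)$ and $\eta\leq \ov\eta(\la,J,\xi,c_0)$. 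An analogous argument with signs flipped treats $U_{0,\min}$ when $|U_{0,\min}|\geq U_{0,\max}$. Finally, the dichotomy in \ref{Cl_U0U12U1_a} follows from the elementary observation that for any symmetric matrix, either $U_{0,\max}\geq |U_{0,\min}|$ or $U_{0,\min}\leq -|U_{0,\max}|$, with the two cases covering all possibilities.
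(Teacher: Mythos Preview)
Your proof is correct and follows essentially the same route as the paper: project \eqref{eq_evol_simplified} onto $\sV_{\rot,1},\sV_{\rot,\frac12},\sV_{\rot,0}$, use Lemma~\ref{Lem_Q2} to identify the quadratic driving terms, and absorb the remainders using the dominance hypotheses together with $\|U^{++}\|\leq C\eta$. The only noticeable difference is that the paper simply asserts that the matrix inequality $|\partial_\tau U_0+\sqrt2\,U_0^2|\leq (Cc_0+C(\lambda,J)\eta)\|U_0\|^2\leq \xi\max\{U_{0,\min}^2,U_{0,\max}^2\}$ implies \eqref{eq_cmax_evol}/\eqref{eq_cmin_evol}, whereas you spell out this passage via Dini derivatives of the extreme eigenvalues; that is a legitimate (and arguably more careful) way to justify the same step, not a different argument. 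One cosmetic point: your expression $-\sqrt2\,(v^\top c\,v)^2\,v^\top v$ for $v^\top(-\sqrt2\,c^2)v$ is correct only because $v$ is a unit eigenvector (so $v^\top c^2 v=U_{0,\max}^2$); writing it directly as $-\sqrt2\,v^\top c^2 v=-\sqrt2\,U_{0,\max}^2$ would be cleaner.
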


\begin{proof}
Consider first the setting of Assertion~\ref{Cl_U0U12U1_a}.
If we view $U_0$ as a time-dependent symmetric matrix as in the statement of the proposition, then by \eqref{eq_evol_simplified} and Lemma~\ref{Lem_Q2}
\begin{align*}
| \partial_\tau U_0  + \sqrt 2 U_0^2 |
&\leq  \big\| - \sqrt 2 U_0^2 + \PP_{\sV_0} Q_2^+(U^{++},0)  \big\| + C(\la,J) \| U^{++} \|^3  \\
&\leq 
C \| U_{\frac12} \|^2 + C \| U_{1} \| \cdot \| U_0 \| + C(\la, J) \| U_0 \|^3 \\
&\leq 
C c_0^2 \| U_{0} \|^2 + C c_0  \| U_{0} \|^2 + C(\la, J) \| U_0 \|^3 \\
&\leq \big(Cc_0 + C(\la, J) \eta \big) \|U_0\|^2.
\end{align*}
If we choose $c_0 \leq \ov c_0(\xi)$ and then $\eta \leq \ov\eta(  \la, J,  \xi, c_0)$, then the right-hand side is $\leq \xi \max\{ U_{0,\min}^2, U_{0,\max}^2 \}$.
This shows \eqref{eq_cmax_evol} or \eqref{eq_cmin_evol}.
The bound \eqref{eq_U_evol} follows directly if $\eta \leq \ov\eta$.

Next, consider the setting of Assertion~\ref{Cl_U0U12U1_b} or \ref{Cl_U0U12U1_c} and let $U_{i}$, $i \in \{ \frac12, 1 \}$, be the dominant mode and let $i' \in  \{ \frac12 ,1 \}$ be the other index.
Again, by \eqref{eq_evol_simplified}
\begin{multline*}
 \Vert \partial_\tau U_i - i U \Vert
\leq \Vert \PP_{\sV_{i}} Q^+_2(U^{++},0) \Vert + C(\la, J) \Vert U^{++} \Vert^3
\leq C(\la, J) \Vert U^{++} \Vert^2 \\
\leq C(\la, J) \eta \big( \Vert U_0 \Vert + \Vert U_{i'} \Vert + \Vert U_i \Vert \big)
\leq C(\la, J) c_0^{-1} \eta \Vert U_i \Vert. 
\end{multline*}
This shows \eqref{eq_U12_evol} or \eqref{eq_U1_evol} again if $\eta \leq \ov\eta(  \la, J,  \xi, c_0)$.
\end{proof}

Combining the bounds from Claim~\ref{Cl_U0U12U1} implies that in the barrier sense
\[ \Big| \partial_\tau \max \big\{ c_0 \Vert U_0 \Vert, \Vert U_{\frac12} \Vert, \Vert U_1 \Vert \big\} \Big| \leq (1+\xi)  \max \big\{ c_0 \Vert U_0 \Vert, \Vert U_{\frac12} \Vert, \Vert U_1 \Vert \big\} \]
So either 
\begin{equation} \label{eq_max_not_zero}
 \max \big\{ c_0 \Vert U_0 (\tau) \Vert, \Vert U_{\frac12} (\tau) \Vert, \Vert U_1 (\tau) \Vert \big\}  \neq 0 \qquad \text{for all} \quad \tau \in \td I \end{equation}
or $\Vert U_0 \Vert = \Vert U_{\frac12} \Vert = \Vert U_1 \Vert\equiv 0$ at all times, which implies that $u \equiv 0$ and $R \equiv \infty$ due to \eqref{eq_UpUpp} and \eqref{eq_UmYpp}.
Assume for the rest of the proof that \eqref{eq_max_not_zero} is true.
It remains to choose the times $\tau_0, \tau_{\frac12}$ and $\tau_1$.

\begin{Claim}
Consider a time $\tau \in \td I$, $\tau \geq T'_3$ at which at least \emph{two} of the inequalities  from Claim~\ref{Cl_U0U12U1}, involving the quantities $\|U_{\frac12} \|$, $\| U_{1} \|$  and $c_0 \| U_{0} \|$, hold.
Then, for slightly smaller (resp.~larger) times, only the inequality appearing earlier (resp.~later) in this claim remains valid.
\end{Claim}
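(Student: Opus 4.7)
The plan is to reduce the claim to strict monotonicity of the three logarithmic ratios $\log(\|U_{\frac12}\|/\|U_0\|)$, $\log(\|U_1\|/\|U_{\frac12}\|)$ and $\log(\|U_1\|/\|U_0\|)$ in a neighborhood of any transition time $\tau\ge T'_3$. To that end, I would first extract pointwise evolution inequalities for the \emph{individual} norms that are valid at \emph{all} times $\tau\ge T'_3$, not only inside the respective regions. Starting from Proposition~\ref{Prop_PDE_ODI_MCF}\ref{Prop_PDE_ODI_MCF_b}, projecting onto $\sV_i$ for $i\in\{0,\tfrac12,1\}$, using $LU_i=iU_i$ together with Lemma~\ref{Lem_Q2} (which yields $\|\PP_{\sV_i}Q^+_2(U^{++},0)\|\le C(\la,J)\|U^{++}\|^2$), and bounding the remaining errors $\|U^+ - U^{++}\|$, $\UU^-$, $\|\td\bY\|$ by $C(\la,J)\|U^{++}\|^2$ via \eqref{eq_UpUpp}--\eqref{eq_UmYpp}, one obtains
\[
\bigl|\partial_\tau\|U_i\|-i\,\|U_i\|\bigr|\le C(\la,J)\,\|U^{++}\|^2,\qquad i\in\{0,\tfrac12,1\},
\]
interpreted in the barrier sense where $U_i$ vanishes. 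This already gives each claimed evolution \eqref{eq_U_evol}--\eqref{eq_U1_evol} \emph{at} $\tau$ without reference to which region we are in.

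Next, at a transition time $\tau\ge T'_3$, the hypothesis that at least two of the three quantities $c_0\|U_0\|,\|U_{\frac12}\|,\|U_1\|$ attain their common maximum $M>0$ (positivity being guaranteed by \eqref{eq_max_not_zero}) forces $\|U^{++}\|\le 3c_0^{-1}M$. Combined with the global $C^m$-bound $\|U^{++}\|\le C\eta$, this yields $\|U^{++}\|^2\le C c_0^{-1}\eta\,\|U_i\|$ for any $i$ with $\|U_i\|$ comparable to $M$. Plugging this into the evolution inequality gives
\[
\bigl|\partial_\tau\log\|U_i\|-i\bigr|\le C(\la,J)\,c_0^{-1}\eta \qquad\text{whenever } \|U_i\|\ge c_0 M/2.
\]
For $\eta\le\ov\eta(\la,J,c_0,\xi)$ chosen small, the right-hand side is $\le\tfrac1{16}$, so for any pair $(i,j)$ with $i>j$ and both comparable to $M$,
\[
\partial_\tau\bigl(\log\|U_i\|-\log\|U_j\|\bigr)\ge (i-j)-\tfrac18\ge\tfrac38,
\]
uniformly on a time-neighborhood of $\tau$ of size controlled by $\la,J,c_0,\xi$. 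Hence each of the three pairwise ratios (whose logs are defined at $\tau$) is a \emph{strictly increasing} function of $\tau$ on such a neighborhood, with positive-lower-bounded derivative.

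The proof now concludes by a short case analysis over the four possible configurations at $\tau$: (a)+(b) only, (b)+(c) only, (a)+(c) only, and the degenerate triple (a)+(b)+(c). In each case the equality sign is carried by the ratio whose log has derivative at least $\tfrac38$, so the ratio strictly crosses its threshold at $\tau$; strict inequalities on the other two comparisons are preserved by continuity. A careful check shows that just before $\tau$ the three quantities are ordered as $\|U_1\|<\|U_{\frac12}\|<c_0\|U_0\|$ (so only (a) holds), and just after $\tau$ they reorder to $c_0\|U_0\|<\|U_{\frac12}\|<\|U_1\|$ (so only (c) holds) in the triple-tie case; the two-way ties are handled analogously, and in each case the pre- and post-transition region is precisely the ``earlier'' resp.\ ``later'' of the two inequalities holding at $\tau$.

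I expect the main obstacle to be the degenerate triple tie $c_0\|U_0\|=\|U_{\frac12}\|=\|U_1\|$, where one must verify that no interval of region (b) opens up on either side of $\tau$---that is, that the single transition time handles \emph{both} crossings simultaneously. This is exactly what the uniform lower bound $\partial_\tau(\log\|U_1\|-\log\|U_{\frac12}\|)\ge\tfrac38$ buys us: the faster exponential growth of $\|U_1\|$ keeps it strictly above $\|U_{\frac12}\|$ just after $\tau$, so the post-transition configuration is (c), not (b); symmetrically just before $\tau$ both $\|U_{\frac12}\|$ and $\|U_1\|$ lie strictly below $c_0\|U_0\|$, giving (a). A secondary technical point is taking derivatives of $\|U_i\|$ when $U_i(\tau)=0$, but since at the transition at least one of the two tied quantities is positive, the relevant ratios are well-defined, and the remaining norm---if it vanishes---is controlled by the linear comparison $\|U_1(\tau+h)\|\le C h\|U^{++}(\tau)\|^2$ extracted from the evolution equation, which is far smaller than the dominant terms.
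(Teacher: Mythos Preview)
Your proposal is correct and follows essentially the same idea as the paper: at a transition time, use the evolution inequalities for the individual norms $\|U_i\|$ to compare their growth rates and conclude that one side of the tie grows strictly faster. The paper's proof is a bit more economical, however: rather than re-deriving a global inequality $|\partial_\tau\|U_i\|-i\|U_i\||\le C(\la,J)\|U^{++}\|^2$ and then passing to log-ratios, the paper simply observes that at a transition time \emph{both} of the relevant region conditions from Claim~\ref{Cl_U0U12U1} hold simultaneously, so both corresponding evolution bounds \eqref{eq_U_evol}--\eqref{eq_U1_evol} are available there directly. A short three-case computation (the pairs (a)+(b), (b)+(c), (a)+(c), with the triple tie falling out of these) then yields $\partial_\tau(c_0\|U_0\|)<\partial_\tau\|U_{\frac12}\|$, etc., immediately. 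Your more explicit handling of the triple-tie and vanishing-norm edge cases is fine, but the paper's argument covers these implicitly since the pairwise derivative comparisons apply whenever the pairwise ties hold.
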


\begin{proof}
We consider three cases.
\medskip

\textit{Case 1: $\|U_{\frac12} (\tau) \| , \| U_{1} (\tau)\| \leq c_0 \| U_{0}(\tau) \| $ and $ c_0 \|U_{0} (\tau)\|, \| U_{1} (\tau)\| \leq \| U_{\frac12} (\tau) \| $} \\ 
So $\| U_{1} (\tau)\| \leq c_0 \| U_{0}(\tau) \| = \|U_{\frac12} (\tau) \|$.
Using the inequalities \eqref{eq_U_evol} and \eqref{eq_U12_evol}, we obtain that 
\[ \partial_\tau \big(c_0 \Vert U_{0}(\tau) \Vert \big) \leq  \xi \big( c_0 \Vert U_{0}(\tau) \Vert \big), 
\qquad 
\partial_\tau \Vert U_{\frac12}(\tau) \Vert \geq (\tfrac12 - \xi) \Vert U_{\frac12}(\tau) \Vert  = (\tfrac12 - \xi) \big( c_0 \Vert U_{0}(\tau) \Vert \big). \]
Thus $\partial_\tau (c_0 \Vert U_{0}(\tau) \Vert ) < \partial_\tau \Vert U_{\frac12}(\tau) \Vert$, which implies that $c_0 \Vert U_{0}(\tau') \Vert < \Vert U_{\frac12}(\tau') \Vert$ for $\tau' > \tau$ sufficiently close to $\tau$ and the reverse inequality holds for $\tau' < \tau$ sufficiently close to $\tau$.

\medskip

\textit{Case 2: $ c_0 \|U_{0} (\tau)\|, \| U_{1} (\tau)\| \leq \| U_{\frac12} (\tau) \| $ and $c_0 \|U_{0} (\tau) \|, \| U_{\frac12} (\tau) \| \leq \| U_{1} (\tau) \|$ \quad}\\ 
So $c_0 \Vert U_0(\tau) \Vert \leq \Vert U_{\frac12} (\tau) \Vert = \Vert U_1 (\tau) \Vert$, and we obtain as before that
\[ \partial_\tau \Vert U_{\frac12}(\tau) \Vert \leq (\tfrac12 + \xi) \Vert U_{\frac12}(\tau) \Vert, 
\qquad 
\partial_\tau \Vert U_{1}(\tau) \Vert \geq (1 - \xi) \Vert U_{1}(\tau) \Vert  = (1 - \xi) \Vert U_{\frac12}(\tau) \Vert  \]
Thus $\partial_\tau \Vert U_{\frac12}(\tau) \Vert < \partial_\tau \Vert U_{1}(\tau) \Vert$, which implies that $\Vert U_{\frac12}(\tau') \Vert < \Vert U_{1}(\tau') \Vert$ for $\tau' > \tau$ sufficiently close to $\tau$ and the reverse inequality holds for $\tau' < \tau$ sufficiently close to $\tau$.

\medskip

\textit{Case 3: $ \|U_{\frac12} (\tau) \|, \| U_{1} (\tau)\| \leq c_0 \| U_{0}(\tau) \| $ and $c_0 \|U_{0} (\tau)\|, \| U_{\frac12} (\tau)\| \leq \| U_{1} (\tau) \| $} \\ 
The discussion in this case is similar to that in Case~1.
\end{proof}

It follows that the times at which the inequalities from Claim~\ref{Cl_U0U12U1} hold partition $\td I \cap [T'_3, \infty)$ into subintervals, arranged in the order in which the corresponding inequalities appear in the claim.
We now choose the partition points to be $\tau_{\frac12}$ and $\tau_1$.

Next, let us discuss the choice of $\tau_0 \in [T'_3, \tau_{\frac12}]$.
At every time $\tau \in  [T'_3, \tau_{\frac12}] \cap \td I$, at least one of the inequalities $|U_{0,\min} | (\tau) \leq U_{0,\max}(\tau)$ and $U_{0,\min}(\tau) \leq - |U_{0,\max}|(\tau)$ must hold.
Consider a time at which both hold, so $U_{0,\max}(\tau) = - U_{0,\min}(\tau) > 0$.
Then \eqref{eq_cmax_evol} and \eqref{eq_cmin_evol} imply that
\[ \partial_\tau U_{0,\max}(\tau) \leq (-\sqrt{2}+\xi) U_{0,\max}^2 (\tau), \quad
\partial_\tau (- U_{0,\min} (\tau) ) \geq (\sqrt{2} -\xi) U_{0,\min}^2(\tau) = (\sqrt{2} -\xi) U_{0,\max}^2(\tau). \]
It follows that $\partial_\tau U_{0,\max}(\tau) < \partial_\tau (- U_{0,\min}(\tau))$, so as before we have $|U_{0,\min}|(\tau') \leq U_{0,\max} (\tau')$ for $\tau' < \tau$ close enough to $\tau$ and $U_{0,\min} \leq - |U_{0,\max}|(\tau)$ for $\tau' > \tau$ close enough to $\tau$.
So, as before, the times where each of these bounds holds partition $[T'_3, \tau_{\frac12}] \cap \td I$ into  consecutive subintervals and $\tau_0$ is the partition point.

It remains to prove Assertion~\ref{Prop_ODE_subintervals_a} and the characterization for $\td I = (-\infty, \infty)$.

Suppose first that $T_0 = -\infty$, so also $T'_3 = -\infty$ by Proposition~\ref{Prop_Tisosc}\ref{Prop_Tisosc_c}.
If $\tau_0 > -\infty$, then on $(-\infty, \tau_0)$ we have
\[ \partial_\tau U_{0,\max} \leq (-\sqrt{2}+\xi) U_{0,\max}^2
\qquad \Longrightarrow \qquad
\partial_\tau U_{0,\max}^{-1} \geq \sqrt{2} - \xi. \]
This is impossible, since $U_{0,\max}$ must be positive on $(-\infty, \tau_0)$.
So we must have $\tau_0 = -\infty$.

Next, suppose that $T_1 = \infty$.
If $\tau_1 < \infty$, then on $(\tau_1, \infty)$ we have $\partial_\tau \Vert U_1 \Vert \geq (1-\xi) \Vert U_1 \Vert$, which contradicts the fact that $\Vert U_1 \Vert$ must be uniformly bounded.
So $\tau_1 = \infty$ and similarly, we obtain $\tau_{\frac12} =\infty$.
If $\tau_0 < \infty$, then on $(\tau_0, \infty)$ we have
\[ \partial_\tau U_{0,\min} \leq (-\sqrt{2}+\xi) U_{0,\min}^2
\qquad \Longrightarrow \qquad
\partial_\tau U_{0,\min}^{-1} \geq \sqrt{2} + \xi, \]
which contradicts the fact that $U_{0,\min} < 0$ on $(\tau_0, \infty)$.
So $\tau_0 = \infty$.

Lastly, if $T_0 = -\infty$ and $T_1 = \infty$, we obtain a contradiction for the choice of $\tau_0$, which contradicts our assumption \eqref{eq_max_not_zero}.
\end{proof}
\medskip

\subsection{Eternal flows and rigidity of cylinders}  \label{subsec_eternal}
As a direct corollary to Proposition~\ref{Prop_PDE_ODI_MCF_gauged}, we can give an alternative proof of a theorem due to Colding-Ilmanen-Minicozzi~\cite{Colding_Ilmanen_Minicozzi}.
\begin{Corollary} \label{Cor_eternal}
There is a dimensional constant $\delta > 0$ with the following property.
Let $M \subset \IR^{n+1+n'}$ be an $n$-dimensional shrinker and assume that $\bO$ is a center of an $(n,k,\delta)$-neck at scale~$1$.
Then $M$ is a round cylinder.

More generally, the following is true:
Let $\MM$ be an $n$-dimensional, unit-regular Brakke flow in $\IR^{n+1+n'} \times \IR_-$.
Suppose that for all $t < 0$ the point $\bO$ is a center of an $(n,k,\delta)$-neck at scale $\sqrt{-t}$ at time $t$.
Then $\MM = S \MM_{\cyl}^{n,k}$ for some $S \in O(n+1+n')$.
\end{Corollary}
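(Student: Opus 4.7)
The first assertion is a special case of the second: if $M$ is an $n$-dimensional shrinker with $\bO$ a center of an $(n,k,\delta)$-neck at scale $1$, then the associated shrinking Brakke flow $\MM$ with $\MM_{-1}=M$ is self-similar based at $(\bO,0)$, so by parabolic invariance $\bO$ is automatically a center of an $(n,k,\delta)$-neck at scale $\sqrt{-t}$ for every $t<0$. Given the conclusion $\MM=S\MM^{n,k}_{\cyl}$ of the second part, one recovers $M=\MM_{-1}=S(M^{n,k}_{\cyl})$, a round cylinder.

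For the main statement, the plan is to run $\MM$ through the machinery of Subsections~\ref{subsec_gauge}--\ref{subsec_evol_leading} and to exploit that the resulting rescaled flow is \emph{eternal}. Choose the parameters in the order $J=2$, $\la<0$ small, $\xi>0$ small, then $\eta>0$ small, then $R^\#$ and $R^*$ large, so that the hypotheses of Propositions~\ref{Prop_PDE_ODI_MCF}, \ref{Prop_PDE_ODI_MCF_gauged}, and \ref{Prop_ODE_subintervals} can all be met. Then take $\delta$ sufficiently small (depending on all of the above) and apply Proposition~\ref{Prop_gauging} with $(\bq_0,t_0)=(\bO,0)$ to produce a smooth family of Euclidean motions $(S_t)_{t<0}$ such that the rescaled modified flow
\[
\td\MM'_\tau \;:=\; e^{\tau/2}\,S_{-e^{-\tau}}\bigl(\MM_{-e^{-\tau}}\bigr), \qquad \tau\in\td I=(-\infty,\infty),
\]
is $R^\#$-gauged, carries a family $(\td\bY_\tau)$ with $\|\td\bY_\tau\|$ uniformly small and $|S_t(\bO)|<\eps\sqrt{-t}$, and, by Proposition~\ref{Prop_gauging}\ref{Prop_gauging_f}, is arbitrarily close to $M^{n,k}_{\cyl}$ at every rescaled time.

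With this in hand, Propositions~\ref{Prop_PDE_ODI_MCF} and \ref{Prop_PDE_ODI_MCF_gauged} apply to $\td\MM'$ and yield the radius function $R(\tau)$ and the truncated modes $U^+_\tau,\UU^-_\tau$. Since $\td I=(-\infty,\infty)$ is simultaneously ancient and immortal, the eternal case of Proposition~\ref{Prop_ODE_subintervals} applies and gives \emph{immediately} that $u\equiv 0$, $\td\bY_\tau\equiv 0$, and $\td\MM^{\prime,\reg}_\tau=M^{n,k}_{\cyl}$ for all $\tau$. The vanishing of $\td\bY_\tau$ forces $\bY_t\equiv 0$ and hence $\partial_t S_t\equiv 0$, so $S_t\equiv S$ is a single Euclidean motion. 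Unwinding the definition of $\td\MM'$ then yields $S(\MM_t)=\sqrt{-t}\,M^{n,k}_{\cyl}=\MM^{n,k}_{\cyl,t}$ for every $t<0$, i.e.\ $\MM=S^{-1}\MM^{n,k}_{\cyl}$. Letting $t\to 0^-$ in $|S(\bO)|<\eps\sqrt{-t}$ gives $S(\bO)=\bO$, so $S\in O(n+1+n')$, as required.

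The main difficulty is not conceptual but organisational: one must verify that the chain of smallness/largeness requirements on $\la,\xi,\eta,R^\#,R^*,\delta$ can be arranged consistently, and in particular that Assumption~\ref{Prop_PDE_ODI_MCF_ii} of Proposition~\ref{Prop_PDE_ODI_MCF} (weighted $L^2_f$-smallness on $\IB^k_{R^*}\times\IS^{n-k}$) follows from Proposition~\ref{Prop_gauging}\ref{Prop_gauging_f} once $\delta$ is sent to $0$ after the other parameters are fixed. Once the hypotheses line up, the eternal nature of $\td\MM'$ kills every mode in Proposition~\ref{Prop_ODE_subintervals}, and the corollary follows with essentially no further work.
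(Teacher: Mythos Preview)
Your proposal is correct and follows essentially the same route as the paper: gauge via Proposition~\ref{Prop_gauging}, feed the resulting eternal $R^\#$-gauged flow through Propositions~\ref{Prop_PDE_ODI_MCF}, \ref{Prop_PDE_ODI_MCF_gauged}, \ref{Prop_Tisosc}, \ref{Prop_ODE_subintervals}, and invoke the eternal case $\td I=(-\infty,\infty)$ to kill everything. Two small corrections: (i) your parameter order is slightly off---the constraints $\eta\le\ov\eta(\la,R^\#)$ in Propositions~\ref{Prop_PDE_ODI_MCF_gauged} and \ref{Prop_Tisosc} force $R^\#$ to be fixed \emph{before} $\eta$, so the correct order is $\xi,\,m,\,J,\,\la,\,R^\#,\,\eta,\,\eps,\,R^*,\,\delta$; (ii) you should explicitly pass through Proposition~\ref{Prop_Tisosc}, since Proposition~\ref{Prop_ODE_subintervals} presupposes its setting and conclusions (in particular the times $T'_i$). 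Your final unwinding to show $S_t\equiv S\in O(n+1+n')$ via $\td\bY_\tau\equiv 0$ and $|S(\bO)|<\eps\sqrt{-t}\to 0$ is a nice explicit touch that the paper leaves implicit.
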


\begin{proof}
Let $\eps', R^\# > 0$ be constants which we will determine later and apply Proposition~\ref{Prop_gauging} for $(\bq_0, t_0) = (\bO, 0)$ and with $\eps$ replaced by $\eps'$, assuming a bound of the form 
\[ \delta \leq \ov\delta(\eps', R^\#). \]
We will now study the resulting eternal, $R^\#$-gauged, rescaled modified flow $(\td\MM'_\tau = e^{\tau/2} S_{-e^{-\tau}} (\MM_{-e^{-\tau}}))_{\tau \in \IR}$ using Propositions~\ref{Prop_PDE_ODI_MCF}, \ref{Prop_PDE_ODI_MCF_gauged}, \ref{Prop_Tisosc} and \ref{Prop_ODE_subintervals}.
To do so, we fix arbitary constants $m, J \geq 4$ and $\la < 0$, $\la \in \frac12 \IZ$ and we let $\eps, \eta, R^*, \xi > 0$ be constants whose values we will determine later.
Assume $$\eps' \leq \ov\eps'(m, \eta, R^*, R^\#)$$ to ensure the bounds on $\td\bY_\tau$ and $u_\tau$ from Propositions~\ref{Prop_PDE_ODI_MCF}\ref{Prop_PDE_ODI_MCF_i}--\ref{Prop_PDE_ODI_MCF_iii}.
We also assume 
\[ \eta \leq \ov\eta(\la), \quad \eps \leq \ov\eps(\eta,m),  \quad R^* \geq \underline R^*(m,J,\la, \eta,\eps) \]
according to this proposition.
In order to be compliant with Proposition~\ref{Prop_PDE_ODI_MCF_gauged}, we moreover assume bounds of the form 
\[ R^\# \geq \underline R^\#, \quad \eta \leq \ov\eta (\la, R^\#), \quad  R^* \geq R^\# + 10, \] 
in order to apply Proposition~\ref{Prop_Tisosc}, we assume that 
\[ R^\# \geq \underline R^\# (J), \quad \eta \leq \ov\eta(\la, J, R^\#) \]
and for Proposition~\ref{Prop_ODE_subintervals} we assume
\[ \eta \leq \ov\eta(\la, J, \xi). \]
Note that these bounds can be fulfilled by choosing parameters in this order:
\[ \xi, \; m, \; J, \; \la,\; R^\#, \;  \eta, \; \eps, \; R^*, \; \eps', \; \delta. \]
Now Proposition~\ref{Prop_ODE_subintervals} is applicable and implies $(\spt \td\MM')_\tau = \td\MM^{\prime, \reg}_\tau = M_{\cyl}$ and $\td\bY_\tau \equiv 0$, which implies the corollary.
\end{proof}
\medskip

\subsection{Stability of necks} \label{subsec_stability_neck}
Another direct application of Proposition~\ref{Prop_PDE_ODI_MCF_gauged} is the following alternative proof of a result by Colding-Minicozzi~\cite{colding_minicozzi_uniqueness_blowups}.
The following statement is slightly more general, as it gives an angle bound between two finite scales.
It was first derived in \cite[Proposition~4.1]{Gianniotis_Haslhofer_2020} based on methods from \cite{colding_minicozzi_uniqueness_blowups}. 

\begin{Theorem} \label{Thm_stability_necks}
For every $\eps > 0$ there is an $\delta(\eps) > 0$ with the following property.

Let $\MM$ be an $n$-dimensional, unit-regular Brakke flow in $\IR^{n+1+n'}$ defined over a time-interval of the form $[-T_1, -T_2] \subset \IR_-$.
Suppose that for $t \in \{-T_1, -T_2 \}$ the point $(\bO,t)$ is center of a $\delta$-neck at scale $\sqrt{-t}$.
Then there is an $S \in O(n+1+n')$ such that $\MM$ is $\eps$-close to $S M_{\cyl}$ at time $t$ and scale $\sqrt{-t}$ for all $t \in [-T_1, -T_2]$.
\end{Theorem}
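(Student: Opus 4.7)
The plan is to argue by contradiction and reduce the problem, via the gauging of Subsection~\ref{subsec_gauge}, to the ODI system governing the leading modes of the rescaled modified flow. Suppose the theorem fails: for some $\eps_0>0$ there exist sequences $\delta_i\to 0$ and unit-regular Brakke flows $\MM^i$ on $[-T_{1,i},-T_{2,i}]$ with $\bO$ a center of a $\delta_i$-neck at both endpoints at scale $\sqrt{-t}$, but such that no $S\in O(n+1+n')$ renders $\MM^i$ $\eps_0$-close to $SM_{\cyl}$ throughout. After rotating by the orthogonal map witnessing the neck at $t=-T_{1,i}$, I may assume $\MM^i$ is $\delta_i$-close to $M_{\cyl}$ at its early endpoint. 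The first step is a pre-stability lemma: for any $\eps'>0$ and $\delta$ small enough, at every intermediate $t\in[-T_1,-T_2]$ the point $\bO$ is a center of an $\eps'$-neck at scale $\sqrt{-t}$ (with possibly rotating axis). This follows from Huisken monotonicity of $\Theta^{\MM^i}_{(\bO,0)}(-t)$ combined with the endpoint density bounds, which squeeze this density to $\Theta_{\IS^{n-k}}+o(1)$ uniformly; blowing up at any purported failure time, Brakke compactness and Corollary~\ref{Cor_eternal} force the limit to be a round shrinking cylinder, and smooth convergence away from the singular set contradicts the failure.

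With pre-stability in hand, Proposition~\ref{Prop_gauging} produces a $R^\#$-gauged, rescaled modified mean curvature flow $\td\MM'$ on $[\tau_{\mathrm{start}},\tau_{\mathrm{end}}]=[-\log T_1,-\log T_2]$ with smooth Killing fields $(\td\bY_\tau)$. I then apply Propositions~\ref{Prop_PDE_ODI_MCF}, \ref{Prop_PDE_ODI_MCF_gauged}, \ref{Prop_Tisosc} and \ref{Prop_ODE_subintervals}. By Proposition~\ref{Prop_gauging}\ref{Prop_gauging_f}, the gauged flow is $\Psi(\delta\,|\,R^\#)$-close to $M_{\cyl}$ at each rescaled endpoint, so the leading-mode norms $\|U^{++}(\tau_{\mathrm{start}})\|$ and $\|U^{++}(\tau_{\mathrm{end}})\|$ are small. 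The sub-interval partition from Proposition~\ref{Prop_ODE_subintervals} dictates monotone behavior of the dominant mode---exponential growth forward in the linear and constant regimes, polynomial growth forward in the quadratic-$U_{0,\min}$ regime, and polynomial decay forward in the quadratic-$U_{0,\max}$ regime---so by propagating the endpoint bounds against these monotonicity directions, together with the comparability of competing modes at the transition times $\tau_0,\tau_{1/2},\tau_1$, I obtain $\|U^{++}(\tau)\|\leq C\Psi(\delta\,|\,R^\#)$ uniformly on $[\tau_{\mathrm{start}},\tau_{\mathrm{end}}]$.

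The main obstacle is converting this pointwise control into a bound on the \emph{total rotation} $\int_{\tau_{\mathrm{start}}}^{\tau_{\mathrm{end}}}\|\td\bY_\tau\|\,d\tau$ of the gauge family $(S_t)$. By Proposition~\ref{Prop_PDE_ODI_MCF_gauged} together with Proposition~\ref{Prop_Tisosc} for $i=2$, the norm $\|\td\bY_\tau\|$ is controlled by $\td\UU_2$, which decays exponentially on the initial window $(\tau_{\mathrm{start}},T'_2)$ from a small initial value and is bounded by $C\|U^{++}\|^2$ thereafter. In the quadratic-$U_{0,\max}$ sub-interval, $\|U^{++}\|^2$ grows backward like $(\tau_0-\tau)^{-2}$ and is only borderline integrable, but the explicit antiderivative of $(c-\tau)^{-2}$ telescopes the integral to the boundary value $U_{0,\max}(\tau_{\mathrm{start}})\leq\Psi(\delta\,|\,R^\#)$; the analogous estimates in the other sub-intervals (using the exponential and $1/(c'-\tau)$ dynamics) are routine, and combined they yield $\int\|\td\bY_\tau\|\,d\tau\leq C\Psi(\delta\,|\,R^\#)$. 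Hence the gauge $(S_t)$ changes by at most $\Psi(\delta\,|\,R^\#)$ across $[-T_1,-T_2]$, so the flow is $\eps$-close to $SM_{\cyl}$ for a uniform $S$ (inherited from any fixed time), contradicting the bad-time hypothesis and proving the theorem.
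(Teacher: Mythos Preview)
Your proposal is correct and follows essentially the same route as the paper: first a pre-stability step (neck at every intermediate time) via Gaussian density monotonicity and Corollary~\ref{Cor_eternal}, then gauging by Proposition~\ref{Prop_gauging} and the full PDE--ODI machinery (Propositions~\ref{Prop_PDE_ODI_MCF}--\ref{Prop_ODE_subintervals}), and finally bounding $\int\|\td\bY_\tau\|\,d\tau$ by summing the contributions over the sub-intervals, using exponential decay/growth in the linear and constant regimes and the telescoping integral of $U_{0,\min/\max}^2$ in the quadratic regimes. The paper's write-up differs only in bookkeeping: it works directly with a fixed small threshold $\eta$ (chosen depending on $\eps$) and bounds everything by $C\eta$, rather than passing to $\Psi(\delta\,|\,R^\#)$ via a contradiction sequence, and its pre-stability argument is organized around the \emph{first} failure time (which supplies the area-ratio bounds needed for the Brakke limit), a point your sketch elides but which is easily incorporated.
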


We remark that, crucially, $\eps$ does not depend on the times $T_1$ and $T_2$.

\begin{proof}
Let $\delta' > 0$ be a constant whose value we will determine later.

\begin{Claim}
If $\delta' \leq \ov\delta'$ and $\delta \leq \ov\delta(\delta')$, then the point $\bO$ is a center of a $(n,k,\delta')$-neck at scale $\sqrt{-t}$ at all times $t \in [-T_1, -T_2]$.
\end{Claim}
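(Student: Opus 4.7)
My plan is to prove the claim by a standard compactness-and-contradiction argument, using the rigidity of cylindrical shrinkers (Corollary~\ref{Cor_eternal}). Suppose the claim fails. Then for a fixed $\delta' > 0$ (chosen small), there exist $\delta_i \to 0$, unit-regular Brakke flows $\MM^i$ in $\IR^{n+1+n'} \times [-T_1^i, -T_2^i]$, and intermediate times $t^*_i \in [-T_1^i, -T_2^i]$ such that $\bO$ is a center of an $(n,k,\delta_i)$-neck at scale $\sqrt{-t}$ at both endpoints, but $\bO$ fails to be a center of an $(n,k,\delta')$-neck at scale $\sqrt{-t^*_i}$ at time $t^*_i$. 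After parabolically rescaling by $(-t^*_i)^{-1/2}$, I obtain flows $\td\MM^i := (-t^*_i)^{-1/2} \MM^i$ defined on $[-s_1^i, -s_2^i] \ni -1$ where $s_1^i := T_1^i/(-t^*_i) \geq 1$ and $s_2^i := T_2^i/(-t^*_i) \leq 1$, with the property that $\bO$ is not an $(n,k,\delta')$-neck at scale $1$ at time $-1$ in $\td\MM^i$. After passing to a subsequence, I may assume $s_1^i \to s_1^\infty \in [1,\infty]$ and $s_2^i \to s_2^\infty \in [0,1]$.

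The $\delta_i$-neck conditions at the endpoints give uniform local area-ratio bounds, which by Huisken's monotonicity formula propagate to all intermediate times in a bounded parabolic region around $(\bO,0)$. Brakke compactness then yields a subsequential limit $\td\MM^\infty$ on $(-s_1^\infty,-s_2^\infty)$. I will use monotonicity of the Gaussian density $s \mapsto \Theta^{\td\MM^i}_{(\bO,0)}(s)$: since the $\delta_i$-neck conditions force both endpoint values to converge to $\Theta_{\IS^{n-k}}$, the limit density must be identically $\Theta_{\IS^{n-k}}$ on $(s_2^\infty,s_1^\infty)$. The equality case of the monotonicity formula then implies $\td\MM^\infty$ is self-similar with respect to $(\bO,0)$ on this interval, and in particular extends by self-similarity to an ancient shrinker defined on $(-\infty,0)$ with $\bO$ as a center of a $0$-neck at scale $1$ (by the $\delta_i \to 0$ behavior at the endpoints propagated along the self-similar structure). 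Corollary~\ref{Cor_eternal} then forces $\td\MM^\infty$ to be a round shrinking cylinder centered at the origin.

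Finally, since $\td\MM^\infty$ is a round shrinking cylinder, its time $-1$ slice equals $M_{\cyl}^{n,k}$ up to rotation, and $\bO$ is in particular a center of a $0$-neck at scale $1$ at time $-1$. Unit-regularity and Brakke's local regularity theorem imply that the convergence $\td\MM^i \to \td\MM^\infty$ is smooth on any compact subset of $\td\MM^{\infty,\reg}$, so for $i$ sufficiently large, $\bO$ is a center of an $(n,k,\delta')$-neck at scale $1$ at time $-1$ in $\td\MM^i$, contradicting the choice of $t^*_i$. The main technical obstacle I foresee is the degenerate case $s_1^\infty < \infty$, in which the limit flow is a priori only defined on a finite time interval; the plan there is to observe that the $\delta_i \to 0$ convergence gives the limit an \emph{exact} cylindrical time slice at $s = -s_1^\infty$, which combined with the self-similar structure established from the equality case of monotonicity uniquely identifies $\td\MM^\infty$ with the round shrinking cylinder and legitimizes the application of Corollary~\ref{Cor_eternal}.
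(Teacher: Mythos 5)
Your overall skeleton (contradiction, parabolic rescaling, Brakke compactness, constancy of the Gaussian density, rigidity via Corollary~\ref{Cor_eternal}, then smooth convergence from unit-regularity to contradict the failure at time $-1$) matches the paper's, but there is a genuine gap at the central step. You claim that the $\delta_i$-neck conditions at the two endpoints force both endpoint Gaussian densities to converge to $\Theta_{\IS^{n-k}}$, hence that the limit has constant density. The neck hypothesis at time $-s_1^i$ controls the flow only inside the ball of radius $\delta_i^{-1}\sqrt{s_1^i}$; outside that ball there is no area-ratio control at all, so the Gaussian integral $\Theta^{\td\MM^i}_{(\bO,0)}(s_1^i)$ has an uncontrolled---possibly infinite---exterior contribution. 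The easy \emph{lower} bound $\geq \Theta_{\IS^{n-k}}-o(1)$ at the small endpoint is fine, but the \emph{upper} bound you need at the large endpoint has no source. Relatedly, when $s_1^i\to\infty$, propagating area bounds forward from the initial slice by local area estimates only gives $\mu_{-1}(B_r)\lesssim (s_1^i)^{n/2}$ on a fixed ball, which is not the uniform local mass bound needed for Brakke compactness near $(\bO,-1)$, nor for passing Gaussian densities to the limit; your appeal to Huisken's (global) monotonicity formula cannot supply this, since that formula itself presupposes control of the Gaussian tail.

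The missing idea is the first-failure-time structure. The paper takes $t^*$ maximal so that the $\delta'$-neck conclusion holds on $[-T_1,t^*)$ and rescales so $t^*=-1$; the counterexample flows then carry the $\delta'$-neck condition at \emph{all} times before $-1$, not only at the endpoints. This intermediate control enters twice: first, local area bounds applied on parabolic cylinders anchored at intermediate times (where the flow is close to a cylinder at scale comparable to the spatial radius) give area bounds $\leq C r^n$ for all $r\geq\max\{\sqrt{-t},1\}$, so the Brakke limit exists, has bounded area ratios, and admits a blow-down; second, that blow-down is a shrinker carrying a $2\delta'$-neck, so Corollary~\ref{Cor_eternal} identifies it as a round cylinder and yields the upper bound $\Theta^{\MM_\infty}(\infty)\leq\Theta_{\IR^k\times\IS^{n-k}}$, which combined with the lower bound propagated up from the $-T_{2,i}$ endpoint forces constancy of the density, self-similarity, extension to all of $\IR_-$, and a second application of Corollary~\ref{Cor_eternal}. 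Your degenerate case $s_1^\infty<\infty$ also leans on the (unavailable) equality case of monotonicity; the paper instead uses pseudolocality and forward uniqueness of the flow starting from an exact cylinder, as in Claim~\ref{Cl_pseudoloc}. If you restructure the contradiction around the first failure time and import these two uses of the intermediate-time control, your argument becomes essentially the paper's; as written, the constancy-of-density step would fail.
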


\begin{proof}
Choose $\ov\delta'$ smaller than $\frac1{10}$ times the constant from Corollary~\ref{Cor_eternal}.
Suppose that the claim was false for $\delta' \leq \ov\delta'$ and some $\delta < \delta'$.
Let $t^*$ be the maximum over all $t \in [-T_1,-T_2]$ such that the assertion of the claim is true for all $t \in [-T_1, t^*)$.
After parabolic rescaling we may assume that $t^* = -1$.
Fix $\delta' > 0$ and a sequence $\delta_i \to 0$ and pick a sequence of counterexamples $\MM_i$ defined over $[-T_{1,i}, -T_{2,i}]$ with $T_{1,i} > 1$ and $T_{2,i} < 1$.
If $T_{1,i}$ remains bounded for a subsequence, then by basic pseudolocality (see also the proof of Claim~\ref{Cl_pseudoloc}) we have smooth subsequential convergence $\MM_i \to \MM_\infty$ to a flow whose initial condition is a round cylinder.
By uniqueness $\MM_\infty$ must be a round shrinking cylinder in contradiction to our assumption at time $t^* = -1$.
So we must have $T_{1,i} \to \infty$. 
For any $t \leq -T_{2,i}$, $r \geq \max\{\sqrt{-t},1\}$ and large $j$, we can use \cite[Theorem~5.3]{Schulze_intro_Brakke} applied to the parabolic domain $\IB^{n+1}_{C'r} \times [t-(C'r)^2, t]$ (for some uniform $C'$) to derive an upper bound of the form $C r^n$ on the area of $\MM_i$ at time $t$ within $\IB^{n+1}_r$; note that the flow at the initial time of the parabolic domain is close to a cylinder at scale $\sim r$.
So we can pass to a subsequence such that we have convergence $\MM_i \to \MM_\infty$ in the Brakke sense.
Taking this bound to the limit implies a polynomial area bound for $\MM_\infty$ and hence a uniform upper bound on the Gaussian area $\Theta^{\MM_\infty}_{(\bO,0} (-t)$.
Therefore its blow-down $\MM'_\infty$ (so any subsequential Brakke limit $\la_j \MM_\infty \to \MM'_\infty$ for $\la_j \to 0$) must be a shrinker with an $(n,k,2 \delta')$-neck at the origin.
So Corollary~\ref{Cor_eternal} implies that $\MM'_\infty$ must be a round cylinder.
It follows that $\Theta^{\MM_{\infty}}(\infty) \leq \Theta_{\IR^k \times \IS^{n-k}}$.
On the other hand, we obtain that for all $t \leq -T_{2,i}$  we have $\Theta^{\MM_{\infty}}_{(\bO,0)}(-t) \geq \liminf_{i \to \infty}\Theta^{\MM_i}_{(\bO,0)} (-t) \geq \liminf_{i \to \infty} \Theta^{\MM_i}_{(\bO,0)} (-T_{2,i}) \geq \Theta_{\IR^k \times \IS^{n-k}}$; here we have used again that $\MM_\infty$ has uniformly bounded area ratios.
So $\Theta^{\MM_{\infty}}_{(\bO,0)}(-t)$ must be constant and thus $\MM_\infty$ can be extended to a shrinker onto the time-interval $\IR_-$.
By construction, $\bO$ is center of a $2\delta'$-neck at scale $\sqrt{-t}$ at all times $t < 0$, so Corollary~\ref{Cor_eternal} implies that $\MM_\infty$ is a round shrinking cylinder, which is again a contradiction.
\end{proof}

As explained in the proof of Corollary~\ref{Cor_eternal}, we can now apply Proposition~\ref{Prop_gauging}, pass to a gauged flow $(\td\MM'_\tau = e^{\tau/2} S_{-e^{-\tau}} (\MM_{-e^{-\tau}}))_{\tau \in [-\log (T_1), -\log(T_2)]}$ and analyze this flow via Propositions~\ref{Prop_PDE_ODI_MCF}, \ref{Prop_PDE_ODI_MCF_gauged}, \ref{Prop_Tisosc} and \ref{Prop_ODE_subintervals}.
The choice of constants is the same as explained in this proof, except that instead of $\delta$-closeness to $M_{\cyl}$ we require a $(n,k,\delta')$-neck condition; we also require $\xi \leq \frac1{10}$.
The order of these constants also allows us to adjust the size of $\eta$, as long as $\eps, R^*, \eps'$ and $\delta'$ are adjusted accordingly.
The following claim summarizes the relevant conlusions on the functions $U^+ : [-T_1, -T_2] \to  \sV_{> \la}$ and $\UU^- : [-T_1, -T_2] \to [0,\infty)$.

\begin{Claim}
There is a dimensional constant $C > 0$ such that for $\eta \leq \ov\eta$ and $\delta' \leq \ov\delta'(\eta)$ we have
\begin{equation} \label{eq_St_0_eta}
 |S_t (\bO) | \leq \eta \sqrt{-t}, \qquad \text{for} \quad t \in [-T_2, -T_1]. 
\end{equation}
and 
\begin{equation} \label{eq_UpUmCetaY}
 \Vert U^+ \Vert + \UU^- \leq C \eta, \qquad \Vert \td\bY \Vert \leq C \big( \Vert U^+_{\osc,<0}  \Vert + \UU^- \big). 
\end{equation}
Moreover, there are times $-\log(T_1) = T'_0 \leq T'_1 \leq T'_2 \leq T'_3 \leq \tau_0 \leq \tau_{\frac12} \leq \tau_1 \leq -\log(T_2)$ such that the quantities $\td\UU_1, \td\UU_2, \td\UU_3$ from \eqref{eq_tdUUi_def} satisfy the following bounds:
\begin{alignat}{2}
 \partial_\tau \td\UU_i &\leq - \tfrac1{10(n-k)} \td\UU_i \qquad &&\text{on} \quad (T'_{i-1}, T'_i) \label{eq_tdUUdecay_2nk} \\
 \td\UU_1 &\leq C \Vert U^{++} \Vert^2 \qquad &&\text{on} \quad (T'_{3}, -\log(T_2)) \notag
\end{alignat}
On the time-intervals $(T'_3, \tau_0)$, $(\tau_0, \tau_{\frac12})$, $(\tau_{\frac12}, \tau_1)$ and $(\tau_1, -\log(T_2))$ the norm $\Vert U^{++} \Vert$ is bounded by $CU_{0,\max}$, $- CU_{0,\min}$, $C\Vert U_{\frac12} \Vert$, $C\Vert U_1 \Vert$, respectively.
Moreover, the bounds \eqref{eq_U_evol}, \eqref{eq_cmax_evol}, \eqref{eq_cmin_evol}, \eqref{eq_U12_evol} and \eqref{eq_U1_evol}, respectively, hold for $\xi \leq \frac1{10}$.
\end{Claim}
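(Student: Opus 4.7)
The proof is essentially a consolidation of the four propositions proved earlier in this section, applied to the gauged rescaled modified flow $\td\MM'$ produced by Proposition~\ref{Prop_gauging}. The plan is to choose the parameters in the exact order listed after the claim, then verify the hypotheses of Propositions~\ref{Prop_PDE_ODI_MCF}, \ref{Prop_PDE_ODI_MCF_gauged}, \ref{Prop_Tisosc}, and \ref{Prop_ODE_subintervals} in turn and simply read off the conclusions. Since we need to reach at least $\td\UU_3$ in the assertion, I would take $J \geq 2$ and $\la \in \tfrac12 \IZ$ with $\la \leq -\tfrac32$, so that the integer $J_0 = \min\{2J, -2\la\}$ from Proposition~\ref{Prop_Tisosc} satisfies $J_0 \geq 3$; I also fix $\xi \leq \tfrac1{10}$ and $m \geq 4$.

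Once those are fixed, I would choose $R^\#$ large, then $\eta$ small, then $\eps$ small, then $R^*$ large, then $\eps'$ small (in particular $\eps' \leq \eta$), and finally $\delta' \leq \ov\delta'(\eps', R^\#)$ so that Proposition~\ref{Prop_gauging} applies. This yields the family $(S_t)$ with $|S_t(\bO)| < \eps'\sqrt{-t} \leq \eta \sqrt{-t}$, which is \eqref{eq_St_0_eta}, and simultaneously $\|\td\bY_\tau\| \leq \eps'$, which is the smallness assumption~\ref{Prop_PDE_ODI_MCF_i} of Proposition~\ref{Prop_PDE_ODI_MCF}. The initial data assumption~\ref{Prop_PDE_ODI_MCF_iii} at $T_0 = -\log T_1$ comes from Proposition~\ref{Prop_gauging}\ref{Prop_gauging_f}: choosing $\delta'$ still smaller if needed, the time-slice $\td\MM'_{T_0}$ is $\eps''$-close to $M^{n,k}_\cyl$ for any prescribed $\eps''$, which gives a $C^m$-bound smaller than $\eta$ on a ball of radius $R_0 + 1$ for some large $R_0$; basic smooth pseudolocality for the rescaled modified flow (Lemma~\ref{Lem_mcf_weak_pseudo}, applied on a short initial interval together with the $\|\td\bY\|$-bound) propagates this estimate to all $\tau \in [T_0, T_0 + 1]$. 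Assumption~\ref{Prop_PDE_ODI_MCF_ii} follows from the same closeness together with Gaussian integration, again shrinking $\delta'$ if necessary.

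With Propositions~\ref{Prop_PDE_ODI_MCF} and~\ref{Prop_PDE_ODI_MCF_gauged} in force, Assertion~\ref{Prop_PDE_ODI_MCF_a} of the former gives $\|u_\tau\|_{C^m(\IB^k_{R(\tau)} \times \IS^{n-k})} \leq \eta$, whence $\|U^+\| \leq \|u_\tau \omega_{R(\tau)}\|_{L^2_f} \leq C\eta$ by a direct weighted integration, and similarly $\UU^- \leq C\eta$ (the Gaussian tail term in the definition of $\UU^-$ is harmless since $R(\tau) \geq R^*$); this yields the first half of \eqref{eq_UpUmCetaY}. The second half, $\|\td\bY\| \leq C(\|U^+_{\osc,<0}\| + \UU^-)$, is exactly the bound of Proposition~\ref{Prop_PDE_ODI_MCF_gauged} after absorbing $\Psi(R^\#)$ into $C$ for $R^\# \geq \underline R^\#$. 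Now Proposition~\ref{Prop_Tisosc} produces the times $T'_0 \leq T'_1 \leq T'_2 \leq T'_3$ and the exponential decay~\eqref{eq_tdUUdecay_2nk} on each subinterval $(T'_{i-1}, T'_i)$, and gives $\td\UU_1 \leq C \|U^{++}\|^2$ on $(T'_3, -\log T_2)$ (Assertion~\ref{Prop_Tisosc_b} with $i = 1$ and $\lceil 1/2 \rceil + 1 = 2$). Finally, Proposition~\ref{Prop_ODE_subintervals} produces $\tau_0, \tau_{\frac12}, \tau_1$ and the evolution bounds \eqref{eq_U_evol}--\eqref{eq_U1_evol} on the four successive subintervals; the stated upper bounds $\|U^{++}\| \leq C U_{0,\max}$, etc., are the equivalences $U^{++} \sim U_0, U_{1/2}, U_1$ on these subintervals built into the alternative from that proposition, together with the trivial comparison of the matrix spectral values $U_{0,\min/\max}$ with $\|U_0\|$.

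The only subtle point in the argument is verifying that the parameter hierarchy $\xi, m, J, \la, R^\#, \eta, \eps, R^*, \eps', \delta'$ can simultaneously satisfy the collection of lower and upper bounds demanded by all four propositions; since each parameter only depends on those chosen before it, the order above is consistent and such a choice exists. The rest is routine bookkeeping: Propositions~\ref{Prop_PDE_ODI_MCF}--\ref{Prop_ODE_subintervals} do all the analytic work, and what remains here is to translate their conclusions into the statement of the claim with a single dimensional constant $C$.
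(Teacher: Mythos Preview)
Your proposal is correct and matches the paper's approach exactly: the paper states this claim as a direct summary of the conclusions of Propositions~\ref{Prop_PDE_ODI_MCF}, \ref{Prop_PDE_ODI_MCF_gauged}, \ref{Prop_Tisosc}, and \ref{Prop_ODE_subintervals} applied to the gauged flow from Proposition~\ref{Prop_gauging}, with the parameter hierarchy you describe, and gives no further argument. Your only superfluous step is invoking Lemma~\ref{Lem_mcf_weak_pseudo} to propagate the initial $C^m$-bound to $[T_0,T_0+1]$; this is unnecessary because the $(n,k,\delta')$-neck condition already holds at \emph{all} times by the first claim in the proof, so Proposition~\ref{Prop_gauging}\ref{Prop_gauging_f} gives the required closeness on the whole interval directly.
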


We obtain the following consequence.

\begin{Claim}
There is a dimensional constant $C > 0$ such that
\begin{equation} \label{eq_inttdYY_bound}
 \int_{-\log(T_1)}^{-\log(T_2)} \Vert \td\bY (\tau) \Vert d\tau \leq C \eta. 
\end{equation}
\end{Claim}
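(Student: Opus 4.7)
The plan is to bound $\|\td\bY\|$ at each time by an appropriate quantity $\td\UU_j$ (which dominates $\|U^+_{\osc,<0}\| + \UU^-$ by the definition \eqref{eq_tdUUi_def}, valid for any $j = 1, 2, 3$, and gives $\|\td\bY\| \leq C\td\UU_j$ via \eqref{eq_UpUmCetaY}). I would then partition $[-\log T_1, -\log T_2]$ at the six points $T'_1, T'_2, T'_3, \tau_0, \tau_{1/2}, \tau_1$ and estimate $\int \|\td\bY\|\, d\tau$ on each sub-interval separately.

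On the first three pieces $(T'_{j-1}, T'_j)$ for $j = 1, 2, 3$, I would combine $\|\td\bY\| \leq C\td\UU_j$ with the exponential decay \eqref{eq_tdUUdecay_2nk}. Integrating gives $\int_{T'_{j-1}}^{T'_j} \|\td\bY\|\, d\tau \leq C\td\UU_j(T'_{j-1})$. The initial value stays of size $O(\eta)$ by induction in $j$: one starts from $\td\UU_1(T'_0) \leq C\eta$ (via \eqref{eq_UpUmCetaY}), and for $j \geq 2$ uses the chain $\td\UU_j(T'_{j-1}) \leq \td\UU_{j-1}(T'_{j-1}) \leq \td\UU_{j-1}(T'_{j-2}) \leq C\eta$, where the first inequality uses monotonicity of $\td\UU_j$ in $j$ and the second the exponential decay on the preceding sub-interval.

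On the remaining interval $[T'_3, -\log T_2]$, I would use the sharper bound $\|\td\bY\| \leq C\td\UU_3 \leq C\|U^{++}\|^3$ (Proposition~\ref{Prop_Tisosc}\ref{Prop_Tisosc_b}) and the four-piece dynamical picture of $U^{++}$ from Proposition~\ref{Prop_ODE_subintervals}. On $(T'_3, \tau_0)$ and $(\tau_0, \tau_{1/2})$ the quadratic mode dominates, so $\|U^{++}\|$ is controlled by $U_{0,\max}$ or $|U_{0,\min}|$; the inequalities \eqref{eq_cmax_evol} and \eqref{eq_cmin_evol} (with $\xi$ small) give $\partial_\tau U_{0,\max}^{-1} \geq 1$ respectively $\partial_\tau |U_{0,\min}|^{-1} \leq -1$, so changing variables to $v = U_{0,\max}^{-1}$ (respectively $v = |U_{0,\min}|^{-1}$) converts $\int \|U^{++}\|^3\, d\tau$ into a multiple of $\int v^{-3}\, dv$, bounded by $\tfrac12 U_{0,\max}(T'_3)^2 \leq C\eta^2$. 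On $(\tau_{1/2}, \tau_1)$ and $(\tau_1, -\log T_2)$ the linear and constant modes dominate with $\|U_{1/2}\|$ or $\|U_1\|$ growing exponentially at rate $\tfrac12 - \xi$ or $1 - \xi$ by \eqref{eq_U12_evol}, \eqref{eq_U1_evol}, so integrating the cube backward from the terminal bound $\leq C\eta$ yields $\int \|U^{++}\|^3\, d\tau \leq C\eta^3$.

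Summing the three exponentially decaying pieces and the four pieces in the post-$T'_3$ regime yields \eqref{eq_inttdYY_bound}. The only subtle step is the inductive propagation of the initial bound $\td\UU_j(T'_{j-1}) \leq C\eta$ across the successive sub-intervals; once this is established, the rest reduces to elementary ODE integration and a routine change of variables.
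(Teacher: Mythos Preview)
Your proposal is correct and follows essentially the same strategy as the paper: partition the time interval at $T'_1,T'_2,T'_3,\tau_0,\tau_{1/2},\tau_1$, bound $\|\td\bY\|$ by $C\td\UU_j$ via \eqref{eq_UpUmCetaY}, and integrate using exponential decay on $(T'_{j-1},T'_j)$, the $1/\tau$-type behavior of the quadratic mode on $(T'_3,\tau_{1/2})$, and exponential growth on $(\tau_{1/2},-\log T_2)$.

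Two minor differences are worth noting. First, on $[T'_3,-\log T_2]$ the paper uses the weaker bound $\|\td\bY\|\le C\td\UU_1\le C\|U^{++}\|^2$ (squares rather than cubes); this already suffices since $\int (a+s)^{-2}\,ds\le a^{-1}\le C\eta$, and avoids the need to track $\td\UU_3$ specifically. Your use of $\td\UU_3\le C\|U^{++}\|^3$ gives the sharper bound $C\eta^2$ but is not required. Second, your ``inductive propagation'' of $\td\UU_j(T'_{j-1})\le C\eta$ is unnecessary: the definition \eqref{eq_tdUUi_def} together with \eqref{eq_UpUmCetaY} gives $\td\UU_j\le \|U^+\|+\UU^-\le C\eta$ uniformly at \emph{all} times, so the initial bound is immediate. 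The paper simply appeals to this uniform bound.
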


\begin{proof}
Let $C$ be a generic dimensional constant.
By \eqref{eq_UpUmCetaY} and the definition of $\td\UU_i$ we have $\Vert \td\bY \Vert \leq C \td\UU_i$, so it suffices to show that
\begin{multline*}
 \sum_{i=1}^3 \int_{T'_{i-1}}^{T'_i} \td\UU_i(\tau) d\tau + \int_{T'_{3}}^{\tau_0} U^2_{0,\max}(\tau) d\tau + \int_{\tau_0}^{\tau_{\frac12}} U^2_{0,\min}(\tau) d\tau \\ + \int_{\tau_{\frac12}}^{\tau_1} \Vert U_{\frac12}(\tau) \Vert^2 d\tau + \int_{\tau_1}^{-\log(T_2)} \Vert U_{1}(\tau) \Vert^2 d\tau \leq C \eta.
\end{multline*}
The bounds on the first three and last two integrals follows easily, because its integrands are bounded by $C \eta$ due to \eqref{eq_UpUmCetaY}, and because \eqref{eq_tdUUdecay_2nk}, \eqref{eq_U12_evol} and \eqref{eq_U1_evol} dictate exponential decay and growth of these integrands.
To obtain the bound on the fourth integral, we rewrite \eqref{eq_cmax_evol} to obtain
\[ |\partial_\tau U_{0,\max}^{-1} - \sqrt{2} | \leq \tfrac1{10}. \]
It follows that for $\tau \in (T'_3, \tau_0)$ we have
\[ U_{0,\max}(\tau) \leq ( U_{0,\max}^{-1}(T'_3) + \tau - T'_3 )^{-1} \leq (C^{-1} \eta^{-1}   + \tau - T'_3 )^{-1}. \]
This readily implies the desired bound on the fourth integral.
The fifth integral can be bounded similarly.
\end{proof}

The rotational part $dS_t \in (n+1+n')$ satisfies a bound of the form $\Vert \partial_\tau S_{-e^{-\tau}} \Vert \leq \Vert \td\bY_\tau \Vert$.
So the bound \eqref{eq_inttdYY_bound} implies that $\Vert dS_{-T_2} - dS_t \Vert \leq C \eta$ for all $t \in [-T_1, -T_2]$.
Combined with \eqref{eq_St_0_eta}, we obtain that $\Vert S_{t} - S \Vert \leq C \eta$ for all $t \in [-T_1, -T_2]$ if we set $S := dS_{-T_2}$.
The theorem now easily follows by choosing $\delta' \leq \ov\delta'(\eps)$ and $\eta \leq \ov\eta(\eps)$.
\end{proof}
\medskip

We record the following direct consequences, which show that cylindrical tangent flows at a singular point and at infinity are unique; see \cite{colding_minicozzi_uniqueness_blowups} for the original results.

\begin{Corollary}[Uniqueness of cylindrical blow-ups] \label{Cor_unique_tangent}
There is a dimensional constant $\delta > 0$ with the following property.
Suppose that $\MM$ is an $n$-dimensional, unit-regular Brakke flow in $\IR^{n+1+n'} \times I$ and let $t_0 > \inf I$.
Suppose that for some sequence $t_i \nearrow t_0$ some point $\bp_0$ is a center of an $(n,k,\delta)$-neck at scale $\sqrt{t_0- t_i}$ and at time $t_i$.
Then there is an $S \in O(n+1+n')$ such that $\la S(\MM - (\bp_0, t_0)) \to \MM_{\cyl}^{n,k}$ smoothly as $\la \to \infty$.
\end{Corollary}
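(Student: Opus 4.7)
The plan is to derive Corollary~\ref{Cor_unique_tangent} directly from the stability theorem, Theorem~\ref{Thm_stability_necks}, combined with the rigidity of cylindrical shrinkers, Corollary~\ref{Cor_eternal}. After a spacetime translation reducing to $(\bp_0, t_0) = (\bO, 0)$, the hypothesis supplies $(n,k,\delta)$-necks centered at $\bO$ at the times $t_i \nearrow 0$ and scales $\sqrt{-t_i}$. I would fix the dimensional constant $\delta$ so that $\delta \le \delta(\eps_0)$, where $\delta(\eps_0)$ is the constant produced by Theorem~\ref{Thm_stability_necks} from a dimensional $\eps_0$ chosen below the rigidity threshold of Corollary~\ref{Cor_eternal}.

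The central step is to produce a single $S \in O(n+1+n')$ for which $\MM$ is $\eps_0$-close to $S M_{\cyl}^{n,k}$ at every time $t \in [t_1, 0)$ and scale $\sqrt{-t}$. For each pair $i < j$, Theorem~\ref{Thm_stability_necks} furnishes an $S_{ij} \in O(n+1+n')$ with this property on the interval $[t_i, t_j]$. Fixing $i = 1$ and using compactness of $O(n+1+n')$, I extract a subsequence $j_l \to \infty$ along which $S_{1 j_l}$ converges to some $S$; passing to the limit at each fixed time $t \in [t_1, 0)$ then yields the uniform $\eps_0$-closeness to $S M_{\cyl}^{n,k}$ on the entire interval $[t_1, 0)$.

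With this $S$ in hand, I would consider any subsequential Brakke limit $\MM_\infty$ of the parabolic rescalings $\la_n S(\MM - (\bO,0))$ as $\la_n \to \infty$. The uniform $\eps_0$-closeness provides local area bounds, so such Brakke limits exist, and Huisken's monotonicity identifies $\MM_\infty$ as a self-shrinking tangent flow at $(\bO, 0)$. By construction $\MM_\infty$ inherits an $(n,k,\eps_0)$-neck centered at $\bO$ at every negative time and corresponding scale, so Corollary~\ref{Cor_eternal} forces $\MM_\infty = R\, M_{\cyl}^{n,k}$ for some $R \in O(n+1+n')$, and the $\eps_0$-closeness forces $R$ to lie near $\Stab(M_{\cyl}^{n,k})$. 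After absorbing a fixed element of $\Stab(M_{\cyl}^{n,k})$ into $S$, I may assume $R = \id$, so every subsequential limit equals $M_{\cyl}^{n,k}$ and the full Brakke convergence $\la\, S\,\MM \to M_{\cyl}^{n,k}$ follows; smoothness is then immediate from Brakke's regularity theorem applied to a smooth limit for a unit-regular flow. The main subtlety will be the stabilizer bookkeeping: one must verify that the $\Stab(M_{\cyl}^{n,k})$-ambiguity introduced by Corollary~\ref{Cor_eternal} can be fixed once and for all inside the choice of $S$, so that no rotational drift is allowed as $\la_n \to \infty$; this is precisely what the uniform closeness on all of $[t_1, 0)$, rather than merely at a single scale, ensures.
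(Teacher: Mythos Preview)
Your approach is essentially the same as the paper's, and the main ingredients—Theorem~\ref{Thm_stability_necks} to pass from a sequence of neck times to all nearby times, and Corollary~\ref{Cor_eternal} to identify subsequential tangent flows as round cylinders—are correctly identified. However, there is a genuine gap in the final step, precisely at the point you flag as ``the main subtlety.''

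Your claim is that uniform $\eps_0$-closeness of $S\MM$ to $M_{\cyl}$ on all of $[t_1,0)$ rules out rotational drift of the tangent cylinders. This is not correct as stated. If two subsequential tangent flows are $R_1 M_{\cyl}$ and $R_2 M_{\cyl}$, the $\eps_0$-closeness only tells you that each $R_i M_{\cyl}$ is $\eps_0$-close to $M_{\cyl}$, hence that each $R_i$ lies in a fixed neighborhood of $\Stab(M_{\cyl})$. It does \emph{not} force $R_1 M_{\cyl} = R_2 M_{\cyl}$: a small rotation of the axis produces a distinct cylinder still $\eps_0$-close to $M_{\cyl}$. Absorbing one stabilizer element into $S$ normalizes one subsequential limit, not all of them.

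The fix, which is what the paper does, is to insert an intermediate step: once you know that every subsequential tangent flow is \emph{some} round cylinder, it follows that the neck quality improves, i.e.\ there is a function $\delta(t)\to 0$ as $t\nearrow t_0$ such that $\bp_0$ is a center of an $(n,k,\delta(t))$-neck at scale $\sqrt{t_0-t}$. With this in hand, you can re-apply Theorem~\ref{Thm_stability_necks} for arbitrarily small $\eps$, not just for the fixed $\eps_0$. It is this second application—with $\eps\to 0$—that pins down a single $S$ and yields the genuine smooth convergence $\la\,S(\MM-(\bp_0,t_0))\to\MM_{\cyl}^{n,k}$.
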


\begin{Corollary}[Uniqueness of cylindrical blow-downs] \label{Cor_unique_tangent_infinity}
There is a dimensional constant $\delta > 0$ with the following property.
Suppose that $\MM$ is an ancient, $n$-dimensional, unit-regular Brakke flow in $\IR^{n+1+n'} \times (-\infty, T)$.
Suppose that there is a sequence $t_i \to -\infty$ such that some point $\bp$ is a center of an $(n,k,\delta)$-neck at scale $\sqrt{- t_i}$ and at time $t_i$.
Then there is an $S \in O(n+1+n')$ such that $\la S\MM  \to \MM_{\cyl}^{n,k}$ smoothly as $\la \to 0$.
\end{Corollary}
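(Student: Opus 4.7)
The plan is to argue parallel to the proof of Corollary~\ref{Cor_unique_tangent}: promote the sequential neck condition at $t_i\to-\infty$ to a uniform-in-time neck condition via Theorem~\ref{Thm_stability_necks}, extract a single orthogonal direction $S_*$ as $t\to-\infty$ by a Cauchy argument in the stabilizer quotient, and then deduce smooth blow-down convergence via Brakke compactness and Corollary~\ref{Cor_eternal}.

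After translating so that $\bp = \bO$, I would first fix $\eps > 0$ and assume $\delta \le \delta(\eps)$ from Theorem~\ref{Thm_stability_necks}. For any $i<j$, that theorem applied to the backward interval $[t_i,t_j]$ yields an orthogonal map $S_{i,j}\in O(n+1+n')$ such that $\MM$ is $\eps$-close to $S_{i,j}M_{\cyl}^{n,k}$ at every $t\in[t_i,t_j]$ and scale $\sqrt{-t}$. At any common time $t_k$ shared by two such intervals, the flow is simultaneously $\eps$-close to $S_{i,j}M_{\cyl}^{n,k}$ and to $S_{i',j'}M_{\cyl}^{n,k}$, forcing $S_{i,j}^{-1}S_{i',j'}$ into an $O(\eps)$-neighborhood of $\Stab(M_{\cyl}^{n,k})$. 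By compactness of $O(n+1+n')$, any accumulation point $[S_\eps]$ of $\{S_{i,j}\}$ in the quotient $O(n+1+n')/\Stab(M_{\cyl}^{n,k})$ is well-defined modulo an $O(\eps)$-drift, and $\MM$ is $C\eps$-close to $S_\eps M_{\cyl}^{n,k}$ at all sufficiently negative $t$ at scale $\sqrt{-t}$.

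To pin down a unique limit, I would iterate this bootstrap. Once the above closeness holds with tolerance $\eps_n$ at all $t\le-T(\eps_n)$, every such $t$ is in particular a center of an $(n,k,\eps_n)$-neck, so a re-application of Theorem~\ref{Thm_stability_necks} at a strictly smaller tolerance $\eps_{n+1}$ (chosen so that $\delta(\eps_{n+1})\le\eps_n$) produces a refined map $S_{\eps_{n+1}}$ that differs from $S_{\eps_n}$ by an $O(\eps_n)$-stabilizer element. The sequence $\{[S_{\eps_n}]\}$ is therefore Cauchy in $O(n+1+n')/\Stab(M_{\cyl}^{n,k})$, and any lift $S_*\in O(n+1+n')$ of its limit class satisfies: for every $\eps>0$, $\MM$ is $\eps$-close to $S_*M_{\cyl}^{n,k}$ at every $t\le-T(\eps)$ and scale $\sqrt{-t}$.

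Finally, setting $S := S_*^{-1}$, I would deduce the blow-down statement. For any fixed $t<0$ and $\la\to 0$, the time-$t$ slice $(\la S\MM)_t = \la S\MM_{t/\la^2}$ is $\eps$-close to $M_{\cyl}^{n,k}$ at scale $\sqrt{-t}$ for any prescribed $\eps$, once $\la$ is sufficiently small. Huisken's monotonicity formula combined with unit-regularity of $\MM$ provides uniform local area bounds on $\{\la S\MM\}_\la$, so every subsequential Brakke limit is a well-defined ancient integral Brakke flow whose time-slices are arbitrarily close to $M_{\cyl}^{n,k}$ at every scale, hence equal to $\MM_{\cyl}^{n,k}$ by Corollary~\ref{Cor_eternal}. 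Brakke's regularity theorem upgrades this to smooth convergence on compact subsets of spacetime. The main technical hurdle is the Cauchy/iteration step above: Theorem~\ref{Thm_stability_necks} alone only controls the geometry modulo an $O(\eps_0(\delta))$-drift in the stabilizer quotient, and without the iterated refinement one cannot rule out the existence of multiple subsequential blow-down directions.
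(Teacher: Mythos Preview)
Your iteration step has a genuine gap. Theorem~\ref{Thm_stability_necks} is a stability result, not a self-improving one: it says that a $\delta(\eps)$-neck at the endpoints yields $\eps$-closeness on the whole interval, and one should think of $\delta(\eps)\ll\eps$. Having established $\eps_n$-closeness throughout only gives you $\eps_n$-necks, and to re-apply the theorem with output $\eps_{n+1}<\eps_n$ you would need $\delta(\eps_{n+1})$-necks at the endpoints, i.e.\ the condition $\eps_n\le\delta(\eps_{n+1})$. Your stated choice ``$\delta(\eps_{n+1})\le\eps_n$'' is the reverse inequality; it is trivially satisfiable but does not let you invoke the theorem. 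In particular, you cannot force $\eps_n\to0$ by iterating Theorem~\ref{Thm_stability_necks} alone.

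The paper obtains the improvement $\delta(t)\to0$ by a different mechanism, which you already have in your last paragraph but in the wrong logical position. After the first application of Theorem~\ref{Thm_stability_necks} gives a uniform $\eps_0$-neck at all sufficiently negative $t$, take any sequence $\la_i\to0$; the subsequential Brakke limit of $\la_i\MM$ is a shrinker (by Huisken monotonicity) with an $(n,k,2\eps_0)$-neck at the origin, hence a round cylinder by Corollary~\ref{Cor_eternal}. This forces $\delta(t)\to0$ as $t\to-\infty$. Only now can you run your Cauchy argument: for each target $\eps_{n+1}$, pick endpoints far enough in the past that genuine $\delta(\eps_{n+1})$-necks are available, apply Theorem~\ref{Thm_stability_necks}, and compare successive $S_{\eps_n}$ modulo $\Stab(M_{\cyl}^{n,k})$ as you described. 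With this reordering your argument matches the paper's.
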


\begin{proof}[Proofs of Corollaries~\ref{Cor_unique_tangent}, \ref{Cor_unique_tangent_infinity}.]
By Theorem~\ref{Thm_stability_necks} and after adjusting $\delta$, we can assume without loss of generality that the $(n,k,\delta)$-neck condition holds for $t < t_0$ sufficiently close to $t_0$ (resp. $t$ sufficiently small)---as opposed to for a sequence $t_i \nearrow t_0$ (resp. $t_i \to -\infty$).
So for every sequence $\la_i \to \infty$ (resp. $\la_i \to 0$) we can find a subsequence such that we have convergence $\la_i (\MM - (\bp_0, t_0)) \to \MM_{\infty}$ (resp. $\la_i \MM  \to \MM_{\infty}$).
This limit is a shrinker and if $\delta$ is chosen sufficiently small, then $\MM_\infty$ satisfies the assumptions from Corollary~\ref{Cor_eternal}, so $S\MM_\infty =  \MM^{n,k}_{\cyl}$ for some $S \in O(n+1+n')$.
It follows that there is a function $\delta(t) \to 0$ as $t \to 0$  (resp. $t \to -\infty$) such that $\bp_0$ (resp. $\bO$) is a center of an $(n,k,\delta(t))$-neck at scale $\sqrt{t_0- t}$  (resp.  $\sqrt{- t}$).
The desired convergence now follows from Theorem~\ref{Thm_stability_necks}.
\end{proof}
\bigskip

\subsection{Asymptotically cylindrical flows} \label{subsec_asymp_cyl}
In this subsection, we summarize the results obtained so far for \emph{ancient,} asymptotically cylindrical flows.
The main outcome is Proposition~\ref{Prop_PO_ancient} and its addendum, Proposition~\ref{Prop_Add_tau2}, which motivates Definition~\ref{Def_dominant_modes}.
These are the only statements from this section that are needed in the remainder of this paper and in the sequel~\cite{Bamler_Lai_MCF2}.
Compared with the preceding discussion, the proposition is more accessible: although its proof requires gauging the flow and introducing a family of Killing fields, none of these technical features appear in the final statement.

We begin with the basic definition:

\begin{Definition}[Asymptotically cylindrical flows]
An \textbf{asymptotically $(n,k)$-cylindrical mean curvature flow} $\MM$ in $\IR^{n+1+n'} \times (-\infty,T)$ (or $\IR^{n+1+n'} \times (-\infty,T]$) is an $n$-dimensional, integral, unit-regular Brakke flow with the property that as $\la \to 0$ the parabolic rescalings satisfy $\la \MM \to \MM_{\cyl}^{n,k}$ locally smoothly.
We will frequently leave out the prefix ``$(n,k)$'' if the context is clear.
\end{Definition}

Note that our convention differs slightly from other conventions in that we require the blow-down to be \emph{equal} to $\MM^{n,k}_{\cyl}$ and not a rotation of it; this is purely for convenience.
We recall the following useful properties:

\begin{Lemma} \label{Lem_cyl_properties}
If $\MM$ is asymptotically $(n,k)$-cylindrical, then the following is true:
\begin{enumerate}[label=(\alph*), start=1]
\item \label{Lem_cyl_properties_b} $\MM$ has uniformly bounded area ratios at all scales and $\Theta^\MM(\infty) = \Theta_{\IR^k \times  \IS^{n-k}} = \Theta_{\IS^{n-k}}$.

\item \label{Lem_cyl_properties_c} For any $\eps > 0$ there is a $\delta (n,k,\eps) > 0$ with the following property.
Suppose $\MM$ is $\delta$-close to $M_{\cyl}^{n,k}$ at scale $\sqrt{-t_0}$ for some time $t_0 < 0$.
Then $\MM$ is $\eps$-close to $M_{\cyl}^{n,k}$ at scale $\sqrt{-t}$ for all times $t \leq t_0$.
\end{enumerate}
\end{Lemma}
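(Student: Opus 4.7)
The plan is to prove the two assertions separately, with (a) following from the generalized monotonicity formula together with the identification of the blow-down, and (b) reducing, via the asymptotic cylindricality, to the finite-scale stability statement of Theorem~\ref{Thm_stability_necks}.

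For assertion (a), I will first show that $\Theta^\MM_{(\bp,t)}(\infty) = \Theta_{\IS^{n-k}}$ for every base point $(\bp,t)$ with $t > -\infty$. The key observation is that
\[ \Theta^\MM_{(\bp,t)}(\la^{-2}) = \Theta^{\la(\MM-(\bp,t))}_{(\bO,0)}(1), \]
and $\la(\MM-(\bp,t)) = \la\MM - (\la\bp,\la^2 t)$ converges locally smoothly to $\MM_{\cyl}^{n,k}$ as $\la \to 0$ (using the defining convergence $\la\MM \to \MM_{\cyl}^{n,k}$ and the fact that the shift $(\la\bp,\la^2 t)$ tends to zero while $\MM_{\cyl}^{n,k}$ is translation-invariant in $\IR^k\times\IR$). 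Smooth convergence together with the rapid decay of the Gaussian kernel allows passage to the limit, yielding $\Theta^\MM_{(\bp,t)}(\infty) = \Theta_{\IS^{n-k}}$. The footnote on monotonicity then shows that $\sigma\mapsto\Theta^\MM_{(\bp,t)}(\sigma)$ is non-decreasing even without an a priori area ratio bound (interpreting values as possibly $\infty$), so $\Theta^\MM_{(\bp,t)}(\sigma)\le \Theta_{\IS^{n-k}}<\infty$ for every $\sigma>0$. Restricting the Gaussian integral defining $\Theta^\MM_{(\bp,t)}(r^2)$ to the ball $B(\bp,r)$ yields the standard elementary estimate
\[ \frac{\mu_{t-r^2}(B(\bp,r))}{r^n} \le (4\pi)^{n/2} e^{1/4}\,\Theta^\MM_{(\bp,t)}(r^2) \le (4\pi)^{n/2} e^{1/4}\,\Theta_{\IS^{n-k}}, \]
which, since $(\bp,t,r)$ is arbitrary, gives uniformly bounded area ratios at all scales.

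For assertion (b), given $\eps>0$, let $\delta'=\delta'(\eps/2)$ be the constant from Theorem~\ref{Thm_stability_necks} and set $\delta := \tfrac12\min\{\delta',\eps\}$ (the factor $\tfrac12$ is a comfort margin used in the final step below). Suppose $\MM$ is $\delta$-close to $M_{\cyl}^{n,k}$ at time $t_0$ and scale $\sqrt{-t_0}$, and fix any $t<t_0$. By the defining local smooth convergence $\la\MM\to\MM_{\cyl}^{n,k}$, there exists $t^*\le t$ (depending on $\MM$) such that $\MM$ is $\delta'$-close to $M_{\cyl}^{n,k}$ at time $t^*$ and scale $\sqrt{-t^*}$; in particular $\bO$ is a center of an $(n,k,\delta')$-neck at time $t^*$ at scale $\sqrt{-t^*}$, and the same is true at time $t_0$. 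Theorem~\ref{Thm_stability_necks} applied on $[t^*,t_0]$ then produces a uniform $S\in O(n+1+n')$ with $(-t)^{-1/2}S\MM_{t}$ being $(\eps/2)$-close to $M_{\cyl}^{n,k}$ for every $t\in[t^*,t_0]$.

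The main obstacle is the rotational indeterminacy introduced by $S$: Theorem~\ref{Thm_stability_necks} does not guarantee $S$ is close to the identity, whereas the statement requires closeness to $M_{\cyl}^{n,k}$ itself. To dispose of this, I will compare the two closeness statements at time $t_0$: $\MM$ is simultaneously $\delta$-close to $M_{\cyl}^{n,k}$ and $(\eps/2)$-close to $SM_{\cyl}^{n,k}$ at scale $\sqrt{-t_0}$, which forces $SM_{\cyl}^{n,k}$ to agree with $M_{\cyl}^{n,k}$ in an increasingly large ball as $\eps\to 0$. The stabilizer of $M_{\cyl}^{n,k}$ inside $O(n+1+n')$ is the closed subgroup $O(k)\times O(n-k+1)\times O(n')$, and a straightforward compactness argument shows that if $SM_{\cyl}^{n,k}$ is sufficiently close to $M_{\cyl}^{n,k}$ in a large ball then $S$ differs from an element of this stabilizer by a small rotation; replacing $S$ by the nearest stabilizer element gives an exact equality $SM_{\cyl}^{n,k}=M_{\cyl}^{n,k}$ while worsening the closeness by only $\Psi(\eps)$. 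Absorbing this controllable loss into the factor $1/2$ chosen above yields that $\MM$ is $\eps$-close to $M_{\cyl}^{n,k}$ at time $t$ and scale $\sqrt{-t}$, as desired. Since $t<t_0$ was arbitrary, this completes the proof.
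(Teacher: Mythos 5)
Your part (b) follows the paper's route (apply Theorem~\ref{Thm_stability_necks} on intervals $[t',t_0]$ with $t'\ll t_0$), and your explicit treatment of the rotational ambiguity of $S$ is a reasonable way to spell out a point the paper leaves implicit; the only caveat is the final quantifier handling: $\Psi(\eps)$ need not be $\le \eps/2$, so rather than "absorbing the loss into the factor $1/2$" you should run the argument with an auxiliary parameter $\eps'$ chosen so small that $\eps'+\Psi(\eps')\le\eps$ and then take $\delta=\delta(\eps')$. That is a routine fix.

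Part (a), however, has a genuine gap. The blow-down convergence $\la\MM\to\MM_{\cyl}$ is only \emph{locally} smooth, so it controls the rescaled measures on compact sets only and gives no information about the mass of $\MM_t$ far from the origin at a fixed time. The time-slices of a Brakke flow are merely Radon measures and may a priori have arbitrarily fast area growth, in which case the Gaussian areas $\Theta^{\MM}_{(\bp,t)}(\sigma)$ are infinite, and your "passage to the limit" giving $\Theta^{\MM}_{(\bp,t)}(\infty)=\Theta_{\IS^{n-k}}$ fails: the decay of the Gaussian kernel only beats the tails if one already knows polynomial (or at least sub-Gaussian) area growth, which is precisely what you are trying to establish. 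Monotonicity with values in $[0,\infty]$ does not help, since it only yields $\Theta(\sigma)\le\Theta(\infty)$ and you have no finite bound at any scale to begin with; in effect your argument derives the area ratio bound from a Gaussian density bound whose finiteness presupposes area control, so the logic is circular. The paper avoids this by first proving the area bound directly: for $t\ll 0$ and $r\ge\sqrt{-t}$ it applies the local area estimate \cite[Theorem~5.3]{Schulze_intro_Brakke} to the parabolic domain $\IB^{n+1}_{C'r}\times[t-(C'r)^2,t]$, whose \emph{initial} time-slice is close to a cylinder at scale $\sim r$ by the blow-down convergence, thereby propagating a bound $\mu_t(\IB_r)\le Cr^n$ forward to time $t$. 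Only with this polynomial bound in hand are the Gaussian areas finite and monotone in the usual sense, and only then is the blow-down identification of $\Theta^{\MM}(\infty)=\Theta_{\IS^{n-k}}$ (and hence the uniform area ratio bound) legitimate. You need this forward-in-time localized area estimate, or an equivalent substitute, as the first step of (a).
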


\begin{proof}
To see Assertion~\ref{Lem_cyl_properties_b} notice that for any $t \ll 0$ and  $r \geq \sqrt{-t}$, we can use \cite[Theorem~5.3]{Schulze_intro_Brakke} applied to the parabolic domain $\IB^{n+1}_{C'r} \times [t-(C'r)^2, t]$ (for some uniform $C'$) to derive an upper bound of the form $C r^{n}$ on the area of $\MM$ at time $t$ within $\IB^{n+1}_r$; note that the flow at the initial time of the parabolic domain is close to a cylinder at scale $\sim r$.
This establishes a uniform bound on the Gaussian area and the bound on $\Theta^\MM(\infty)$ follows from a blow-down limit argument.
The area ratio bound follows from this.

Assertion~\ref{Lem_cyl_properties_c} follows by applying Theorem~\ref{Thm_stability_necks} to time-intervals of the form $[t', t_0]$ for $t' \ll t_0$.
\end{proof}

The following is the main result of this subsection:

\begin{Proposition}[PDE-ODI principle for ancient, asymptotically cylindrical flows]\label{Prop_PO_ancient}
Fix $1 \leq k < n \leq n$ and $n' \geq 0$; we will omit dependencies on these constants in the following.
Let $J, m \geq 0$ be integers and choose $0 < \eta, \xi \leq \frac1{10}$. 
Then there is a constant $\delta (J,  m,\eta,\xi)> 0$ such that the following is true.

Let $\MM$ be an asymptotically $(n,k)$-cylindrical mean curvature flow in $\IR^{n+1+n'} \times (-\infty,T)$.
Let $(\bq_0, t_0) \in \IR^{n+1+n'} \times \IR$ be a point and $r_0 > 0$ a scale such that $\MM - (\bq_0, t_0)$ is $\delta$-close to $M_{\cyl}$  at time $- r_0^2$ and at  scale $r_0$.

If $\MM$ is rotationally symmetric and if $\bq_0 \in \IR^k \times \bO^{n-k+1}$ lies on the axis of rotation, then set $\bq := \bq_0$.
Otherwise we can find a point $\bq \in \IR^{n+1+n'}$ with
\begin{equation} \label{eq_bqmbq0}
 |\bq - \bq_0 | \leq \eta r_0 
\end{equation}
and with the following property.

Consider the rescaled (but unmodified!) flow $\td\MM$ arising from $\MM - (\bq, t_0 )$ over the time-interval $(-\infty, \td\tau]$ for $\td\tau := -2\log r_0$; so $\td\MM^{\reg}_\tau = e^{\tau/2}( \MM^{\reg}_{t_0  - e^{-\tau}} - \bq)$.
There is a smooth function
$$ U^+ = U_1 + U_{\frac12}  + U_0  + \ldots + U_{-J}: (-\infty, \td\tau] \lto  \sV_{ \rot,\geq -J}  = \sV_{\rot, 1} \oplus \sV_{\rot, \frac12} \oplus \ldots \oplus \sV_{\rot, -J}$$
such that if we set
\[ U^{++}:= U_1 + U_{\frac12} + U_0, \qquad
e^{-R^2(\tau)} :=  \Vert U^{++} (\tau) \Vert_{L^2_f}^J \]
(here we allow $R(\tau) = \infty$ if $U^{++} (\tau) = 0$), 
then the following is true for  all $\tau \leq \td\tau$:
\begin{enumerate}[label=(\alph*)]
\item \label{Prop_PO_ancient_a} There is a smooth function $u_\tau : \DD_\tau \to \IR^{1+n'}$ with $\IB^k_{R(\tau)-1} \times \IS^{n-k} \subset \DD_\tau \subset \IB^k_{R(\tau)} \times \IS^{n-k}$ such that
\[ \Gamma_{\cyl}(u_\tau) = (\spt \td\MM)_\tau \cap \IB^{n+1+n'}_{R(\tau)} \subset \td\MM_\tau^{\reg} \]
and 
\begin{equation} \label{eq_u_UP_prop}
   \Vert u_\tau - U^{+}(\tau) \Vert_{C^m (\DD_\tau)} \leq C( J, m) \Vert U^{++}(\tau) \Vert_{L^2_f}^{J+1}. 
\end{equation}
We also have the following bounds
\begin{equation} \label{eq_prop_u_remaining_bounds}
  \Vert u_\tau \Vert_{C^m(\DD_\tau)} + \Vert U^+ (\tau) \Vert_{L^2_f} + \Vert U^+ (\tau) \Vert_{C^m(\DD_\tau)} + e^{-R^2(\tau)} \leq \eta. 
\end{equation}
\item \label{Prop_PO_ancient_b}  The evolution of $U^+$ is controlled by the following ODI
\[ \big\| \partial_\tau U^+ - L U^+  - Q_J^+ (U^+) \big\|_{L^2_{f}}
\leq  C( J) \Vert U^{++} \Vert^{J+1}_{L^2_{f}}   \]
\item \label{Prop_PO_ancient_c} We have $\Vert U^+ \Vert \leq 10 \Vert U^{++} \Vert$ and for $i=1, \ldots, 2 J$
\[ \Vert U_{-\frac12 i} \Vert_{L^2_f} \leq C( J) \Vert U^{++} \Vert^{\lceil i/2 \rceil + 1}_{L^2_f}.  \]
\item  \label{Prop_PO_ancient_d}
If $U^{++} \equiv 0$, then $\MM = (\MM_{\cyl} + (\bq, t_0))|_{(-\infty, T)}$.
Otherwise there is a constant $c_0( J, \xi) > 0$  and there are unique times $-\infty \leq \tau_{\frac12} \leq \tau_1  \leq \td\tau$ such that the following is true:
\begin{enumerate}[label=(d\arabic*)]
\item On $(-\infty, \tau_{\frac12})$ we have $\Vert U_{\frac12} \Vert_{L^2_f}, \Vert U_{1} \Vert_{L^2_f} \leq c_0 \Vert U_{0} \Vert_{L^2_f}$.
View $U_0(\tau) = \sum_{i,j} c_{ij}(\tau) \fp^{(2)}_{ij}$ as a time-dependent symmetric matrix, using the Hermite polynomials from \eqref{eq_Hermite}, and let $U_{0,\min/\max}(\tau)$ be its minimal/maximal spectral value.
Then on $(-\infty, \tau_{\frac12})$ we have $U_{0,\min} \leq -|U_{0,\max}|$ and
\[ \partial_\tau \Vert U_0 \Vert_{L^2_f} \leq \xi \Vert U_0 \Vert_{L^2_f}, \qquad |\partial_\tau U_{0,\min} + \sqrt{2} U_{0,\min}^2 | \leq \xi U_{0,\min}^2.  \]
\item 
On $(\tau_{\frac12}, \tau_1)$ we have $ c_0 \Vert U_{0} \Vert_{L^2_f} , \Vert U_{1} \Vert_{L^2_f}  \leq \Vert U_{\frac12} \Vert_{L^2_f}$ and
\[ \Vert \partial_\tau U_{\frac12} - \tfrac12 U_{\frac12} \Vert_{L^2_f} \leq \xi \Vert U_{\frac12} \Vert_{L^2_f}. \]
\item 
On $(\tau_{1}, \td\tau)$ we have $c_0 \Vert U_{0} \Vert_{L^2_f}, \Vert U_{\frac12} \Vert_{L^2_f} ,  \leq \Vert U_{1} \Vert_{L^2_f}$ and
\[ \Vert \partial_\tau U_{1} -  U_{1} \Vert_{L^2_f} \leq \xi \Vert U_{1} \Vert_{L^2_f}. \]
\end{enumerate}
\end{enumerate}
\end{Proposition}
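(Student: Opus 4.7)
My plan is to derive the proposition from the gauged PDE--ODI machinery of the preceding subsections, after which the only substantive remaining task is to invert the gauge so as to rephrase the conclusion in terms of the \emph{unmodified} rescaled flow about a suitable point $\bq$. First I apply Proposition~\ref{Prop_gauging} to $\MM - (\bq_0, t_0)$ with auxiliary constants $R^\#, \la, \eta_0, \eps, R^*, \eps'$ chosen in the order used in the proof of Corollary~\ref{Cor_eternal}, taking $-\la \geq J$. By Lemma~\ref{Lem_cyl_properties}\ref{Lem_cyl_properties_c}, the $\delta$-closeness to $M_\cyl$ at scale $r_0$ propagates backwards, so the neck hypothesis of Proposition~\ref{Prop_gauging} and the initial-$L^2_f$ smallness in Proposition~\ref{Prop_PDE_ODI_MCF}\ref{Prop_PDE_ODI_MCF_ii} both hold uniformly. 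This produces a smooth family $(S_t)_{t \leq t_0 - r_0^2}$ of Euclidean motions, an $R^\#$-gauged rescaled modified flow $\td\MM'$ with Killing fields $(\td\bY_\tau \in \YY_\perp)_{\tau \leq \td\tau}$, modes $U^{+,\mathrm{mod}}_\tau \in \sV_{> \la}$ and a remainder $\UU^-_\tau$. With $J_0 := \min\{2J, -2\la\} = 2J$, Proposition~\ref{Prop_Tisosc}\ref{Prop_Tisosc_c} then yields the decisive polynomial hierarchy
\[
\| U^{+,\mathrm{mod}}_\osc \|_{L^2_f} + \UU^-_\tau + \| \td\bY_\tau \| \leq C(\la,J) \, \| U^{++} \|_{L^2_f}^{J+1}, \qquad
\| U^{+,\mathrm{mod}}_{\rot, \leq -i/2} \|_{L^2_f} \leq C(\la,J) \, \| U^{++} \|_{L^2_f}^{\lceil i/2 \rceil + 1}
\]
for $1 \leq i \leq 2J$, and Proposition~\ref{Prop_ODE_subintervals} supplies the trichotomy (d); since $\inf \td I = -\infty$, the intermediate time $\tau_0$ of that proposition is $-\infty$, leaving only the transition times $\tau_{1/2}, \tau_1$ and forcing the sign condition $U_{0,\min} \leq -|U_{0,\max}|$ on $(-\infty, \tau_{1/2})$.

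The principal obstacle is the passage from $\td\MM'$ to the unmodified rescaled flow $\td\MM$ about a suitable point $\bq$. The bound on $\|\td\bY_\tau\|$ above is summable in $\tau$ on $(-\infty, \td\tau]$, so $S_t$ converges to some $S_{-\infty} \in E(n+1+n')$ as $t \to -\infty$; combined with the growth bound $|S_t(\bq_0)| \leq \eps' \sqrt{t_0-t}$ of Proposition~\ref{Prop_gauging}\ref{Prop_gauging_c}, this pins down both the orthogonal and translational components. Since $\MM$ is asymptotically cylindrical, $S_{-\infty}$ must stabilize $M_\cyl^{n,k}$, so $S_{-\infty} \in \Isom(M_\cyl)$ decomposes into a translation along the axis $\IR^k$ and an orthogonal map in $O(k) \times O(n-k+1) \times O(n')$. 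I absorb the axis translation by defining $\bq$ to be the corresponding shift of $\bq_0$; integrating the translational part of $\td\bY_\tau$ and choosing $\eps' \leq \eta / (2C(\la,J))$ yields $|\bq - \bq_0| \leq \eta r_0$. The orthogonal part of $S_{-\infty}$ is absorbed by a fixed rotation of the ambient frame, which preserves every rotationally symmetric subspace $\sV_{\rot, j/2}$ and hence the mode decomposition. In the rotationally symmetric case with $\bq_0$ on the axis, the uniqueness clause of Proposition~\ref{Prop_gauging}\ref{Prop_gauging_d} applied with $S' = \id$ forces $S_t \equiv \id$, so $\bq = \bq_0$. Under this change of base, the graph representations of $\td\MM'_\tau$ and $\td\MM_\tau$ on $\IB^k_{R(\tau)-1} \times \IS^{n-k}$ differ by at most $\int_{-\infty}^\tau \| \td\bY_{\tau'} \| \, d\tau' = O(\|U^{++}(\tau)\|^{J+1})$, which is negligible compared to the error term in \eqref{eq_u_UP_prop}.

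With the un-gauging complete, set $U^+_\tau := \PP_{\sV_{\rot, \geq -J}} U^{+,\mathrm{mod}}_\tau$; by the hierarchy above, $\|U^{+,\mathrm{mod}}_\tau - U^+_\tau\|_{L^2_f} \leq C(\la,J) \|U^{++}\|^{J+1}$. Take $R(\tau) := \sqrt{-J \log \|U^{++}(\tau)\|_{L^2_f}}$, which satisfies $e^{-R^2(\tau)} = \|U^{++}\|^J$; this is bounded above by the PDE--ODI radius from Proposition~\ref{Prop_PDE_ODI_MCF} for $\|U^{++}\|$ small enough, so the uniform $C^m$-bound of Proposition~\ref{Prop_PDE_ODI_MCF}\ref{Prop_PDE_ODI_MCF_a} persists on $\IB^k_{R(\tau)} \times \IS^{n-k}$. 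Assertion~\ref{Prop_PO_ancient_a} then follows by combining this $C^m$-bound with the $L^2_f$-approximation $\|u_\tau \omega_{R(\tau)} - U^{+,\mathrm{mod}}_\tau\|_{L^2_f} \leq \UU^-_\tau$ via Gagliardo--Nirenberg interpolation on the slightly smaller ball $\IB^k_{R(\tau)-1} \times \IS^{n-k}$. Assertion~\ref{Prop_PO_ancient_b} is obtained by projecting the evolution inequality in Proposition~\ref{Prop_PDE_ODI_MCF}\ref{Prop_PDE_ODI_MCF_b} onto $\sV_{\rot, \geq -J}$ and observing that $\|Q_J^+(U^{+,\mathrm{mod}}, \td\bY) - Q_J^+(U^+, 0)\|_{L^2_f} = O(\|U^{++}\|^{J+1})$, again by the hierarchy. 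Assertion~\ref{Prop_PO_ancient_c} is a direct restatement of Proposition~\ref{Prop_Tisosc}\ref{Prop_Tisosc_c}, and \ref{Prop_PO_ancient_d} has already been treated.
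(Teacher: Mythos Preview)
Your overall architecture matches the paper's: gauge via Proposition~\ref{Prop_gauging}, feed the gauged flow through Propositions~\ref{Prop_PDE_ODI_MCF}, \ref{Prop_PDE_ODI_MCF_gauged}, \ref{Prop_Tisosc}, \ref{Prop_ODE_subintervals}, then invert the gauge. The un-gauging step, however, has a genuine gap.

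You assert that summability of $\|\td\bY_\tau\|$ implies $S_t \to S_{-\infty}$ in $E(n+1+n')$. This is false for the translational component. Writing $S_{t_0-e^{-\tau}}(\bx)=A_\tau(\bx-\bq_0)+\mathbf b_\tau$, one has $|\partial_\tau \mathbf b_\tau| \leq C e^{-\tau/2}\|\td\bY_\tau\|$. On the quadratic-mode interval $(-\infty,\tau_{\frac12})$ the bound $\|\td\bY_\tau\|\leq C\|U^{++}\|^{J+1}\sim C|\tau|^{-(J+1)}$ gives $|\partial_\tau \mathbf b_\tau|\lesssim e^{-\tau/2}|\tau|^{-(J+1)}$, which is \emph{not} integrable as $\tau\to-\infty$; indeed $|\mathbf b_\tau|$ generically grows like $e^{-\tau/2}$, consistent with $|S_t(\bq_0)|\leq\eps'\sqrt{t_0-t}$ but ruling out convergence. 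Only the rotational part $A_\tau$ converges. Consequently your definition of $\bq$ via ``the axis translation in $S_{-\infty}$'' is undefined, and the claim that the graph representations of $\td\MM'_\tau$ and $\td\MM_\tau$ differ by $\int_{-\infty}^\tau\|\td\bY\|\,d\tau'$ ignores the $e^{-\tau/2}$ weight in the translational drift. The paper instead sets $\bq:=\bq_0-\mathbf b_{\tau_{1/2}}$ (or $\mathbf b_{-\infty}$ when $\tau_{\frac12}=-\infty$) and proves the \emph{rescaled} bound $e^{\tau/2}|\mathbf b_\tau-(\bq_0-\bq)|\leq C\|U^{++}(\tau)\|^{2(J+1)}$ by a two-regime argument: for $\tau\geq\tau_{\frac12}$ integrate $\int_{\tau_{1/2}}^\tau e^{(\tau-\tau')/2}\|\td\bY\|\,d\tau'$ using exponential growth of $\|U^{++}\|$, and for $\tau\leq\tau_{\frac12}$ integrate $\int_\tau^{\tau_{1/2}} e^{-(\tau'-\tau)}\|\td\bY\|\,d\tau'$ using $\partial_\tau\|U_0\|\leq\xi\|U_0\|$. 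This, together with $\|A_\tau-\id\|\leq\int_{-\infty}^\tau\|\td\bY\|$ and Lemma~\ref{Lem_up_minus_u}, is what actually yields $\|u'_\tau-u_\tau\|_{C^m}\leq C\|U^{++}\|^{J+1}$.

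A secondary issue: taking merely $-\la\geq J$ is not enough for Assertion~\ref{Prop_PO_ancient_a}. Converting the $L^2_f$-bound $\|U^{-,\prime}\|_{L^2_f}\leq C\|U^{++}\|^{\lceil J_0/2\rceil+1}$ into a $C^m$-bound on $\IB^k_{2R(\tau)}\times\IS^{n-k}$ costs a factor $e^{(2R(\tau))^2/4}=\|U^{++}\|^{-2J}$ to strip the Gaussian weight, followed by an exponent $\tfrac{1}{n/2+m+2}$ from Gagliardo--Nirenberg (Lemma~\ref{Lem_GN}). To still land at $\|U^{++}\|^{J+1}$, the paper applies the machinery with $J':=2(n+m+4)J$ and $\la':=-J'-\tfrac12$ in place of your $-\la\geq J$.
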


The following addendum to Proposition~\ref{Prop_PO_ancient} states that the property whether $\tau_{\frac12} = -\infty$ in Assertion~\ref{Prop_PO_ancient_d} is an inherent property of the flow $\MM$, so it is independent of the chosen constants.

\begin{Proposition} \label{Prop_Add_tau2}
Consider the setting of Proposition~\ref{Prop_PO_ancient}.
Then in Assertion~\ref{Prop_PO_ancient_d} the property whether $\tau_2 = -\infty$ depends only on the flow $\MM$, and not on the choice of $(\bq_0, t_0)$, $r_0$, or the auxiliary constants.  

In other words, apply Proposition~\ref{Prop_PO_ancient} with two different choices of $(\bq_0^{(i)}, t_0^{(i)})$, $r_0^{(i)}$, and constants $J^{(i)}, m^{(i)} , \eta^{(i)} , \xi^{(i)}$, for $i=0,1$.  
Let $\bq^{(i)}$ and $U^{+,(i)}$ be the point and the function satisfying the assertions of Proposition~\ref{Prop_PO_ancient} for these choices and let $-\infty \leq \tau_{\frac12}^{(i)} \leq \tau_1^{(i)}$ be as in Assertion~\ref{Prop_PO_ancient_d}.  
Then
\[
\tau_{\frac12}^{(0)} = -\infty \;\;\Longleftrightarrow\;\; \tau_{\frac12}^{(1)} = -\infty.
\]
\end{Proposition}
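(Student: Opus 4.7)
The plan is to argue by symmetry: it suffices to show that $\tau_{\frac12}^{(0)} = -\infty$ implies $\tau_{\frac12}^{(1)} = -\infty$. The underlying idea is that $\tau_{\frac12} = -\infty$ is equivalent to exponential decay of the cylindrical graph function $u_\tau$, while $\tau_{\frac12} > -\infty$ forces polynomial decay $\sim |\tau|^{-1}$; since two realizations differ only by an ambient homothety that tends to the identity exponentially fast as $\tau \to -\infty$, this dichotomy cannot change.

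First I will translate the assumption $\tau_{\frac12}^{(0)} = -\infty$ into a quantitative bound. By Proposition~\ref{Prop_PO_ancient}\ref{Prop_PO_ancient_d}, either (d2) or (d3) governs the evolution on a neighborhood of $-\infty$, and in either case integrating the displayed ODI for $U_{\frac12}^{(0)}$ (resp.\ $U_1^{(0)}$) gives
\[
\Vert U^{++,(0)}(\tau) \Vert_{L^2_f} \leq C_1 \, e^{(\frac12 - \xi^{(0)})\tau}
\]
for $\tau$ sufficiently negative. Combined with Assertions \ref{Prop_PO_ancient_a} and \ref{Prop_PO_ancient_c} and the finite-dimensionality of $\sV_{\rot,\geq -J}$ (which converts $L^2_f$-bounds on $U^{+,(0)}$ into $C^m$-bounds, at the cost of polynomial factors in $R^{(0)}(\tau)$), this yields
\[
\Vert u^{(0)}_\tau \Vert_{C^0(\DD^{(0)}_\tau)} \leq C_2 \, e^{(\frac12 - 2\xi^{(0)})\tau}.
\]

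Next I will compare the two rescaled flows. Writing $\tau^{(0)}, \tau^{(1)}$ for the rescaled times corresponding to the same physical time (so that $e^{-\tau^{(1)}} - e^{-\tau^{(0)}} = t_0^{(1)} - t_0^{(0)}$), a direct computation shows
\[
\td\MM^{(1),\reg}_{\tau^{(1)}} = \lambda \, \td\MM^{(0),\reg}_{\tau^{(0)}} - \bv,
\qquad \lambda := e^{(\tau^{(1)}-\tau^{(0)})/2}, \qquad \bv := e^{\tau^{(1)}/2}(\bq^{(1)} - \bq^{(0)}),
\]
with $|\lambda - 1| = O(e^{\tau})$ and $|\bv| = O(e^{\tau/2})$ as $\tau \to -\infty$. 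Since $R^{(1)}(\tau) = O(\sqrt{|\tau|})$, the Hausdorff distance between $\td\MM^{(1),\reg}_\tau$ and $\td\MM^{(0),\reg}_{\tau^{(0)}}$ inside $\IB^{n+1+n'}_{R^{(1)}(\tau)}$ is $O(e^{\tau/2})$. Combining this with the first step and expressing $\td\MM^{(1),\reg}_\tau$ as a graph over $M_{\cyl}$ (which is possible once the surfaces are $C^0$-close to the cylinder), I obtain $\Vert u^{(1)}_\tau \Vert_{C^0} \leq C_3 \, e^{(\frac12 - \xi')\tau}$ for some $\xi' > 0$. Since $U^{+,(1)}(\tau)$ is obtained by $L^2_f$-projection of a cutoff of $u^{(1)}_\tau$ onto a fixed finite-dimensional subspace,
\[
\Vert U^{++,(1)}(\tau) \Vert_{L^2_f} \leq C_4 \, e^{(\frac12 - \xi')\tau}.
\]

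Finally, I derive a contradiction from $\tau_{\frac12}^{(1)} > -\infty$. In that case Assertion~(d1) applies on $(-\infty, \tau_{\frac12}^{(1)})$, giving $|\partial_\tau U^{(1)}_{0,\min} + \sqrt{2}(U^{(1)}_{0,\min})^2| \leq \xi^{(1)} (U^{(1)}_{0,\min})^2$ with $U^{(1)}_{0,\min} \leq -|U^{(1)}_{0,\max}|$; integrating this Riccati-type inequality shows $|U^{(1)}_{0,\min}(\tau)| \geq c\,|\tau|^{-1}$ for some $c > 0$ as $\tau \to -\infty$, which is incompatible with the exponential bound on $\Vert U^{++,(1)}(\tau) \Vert_{L^2_f}$. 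Hence $\tau_{\frac12}^{(1)} = -\infty$. The main technical obstacle will be making the comparison step rigorous: specifically, showing that the $O(e^{\tau/2})$ Hausdorff closeness between the two rescaled surfaces on a ball of radius $R^{(1)}(\tau) \sim \sqrt{|\tau|}$ translates into a graphical $C^0$-bound (and, with a bit more work, into the regularity needed to apply the $L^2_f$-projection). This uses smoothness of $\MM$ on its regular part together with the uniform $C^m$-bounds on $u^{(0)}$ from Proposition~\ref{Prop_PO_ancient}\ref{Prop_PO_ancient_a} and standard interpolation.
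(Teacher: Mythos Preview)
Your approach is correct and matches the paper's: both derive exponential decay of the graph function from $\tau_{\frac12}^{(0)}=-\infty$, transfer it to the other realization via the ambient translation/dilation relating $\td\MM^{(0)}$ and $\td\MM^{(1)}$, and contradict the $|\tau|^{-1}$ lower bound coming from the Riccati inequality in (d1). One correction: in Proposition~\ref{Prop_PO_ancient} the function $U^{+,(1)}(\tau)$ is \emph{not} obtained as an $L^2_f$-projection of a cutoff of $u^{(1)}_\tau$ (it is built from a gauged modified flow), so your passage from smallness of $u^{(1)}$ to smallness of $U^{++,(1)}$ must instead go through the bound \eqref{eq_u_UP_prop}; the paper sidesteps this by working on a fixed ball $\IB^k_1\times\IS^{n-k}$ and comparing $\|u^{(0)}_\tau\|_{C^m}$ directly to $\|u^{(1)}_\tau\|_{C^0}$ via Lemma~\ref{Lem_up_minus_u}, which is slightly cleaner than tracking the growing domains $\DD^{(i)}_\tau$.
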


Proposition~\ref{Prop_Add_tau2} motivates the following definition.

\begin{Definition} \label{Def_dominant_modes}
Let $\MM$ be an asymptotically cylindrical mean curvature flow in $\IR^{n+1+n'} \times (-\infty,T)$.  
We say that $\MM$ has \textbf{dominant quadratic mode} if for some (equivalently, for every) choice of $(\bq_0, t_0)$, $r_0$, $\bq$, $J$, $m$, $\eta$, $\xi$ and associated function $U^+$, which satisfy the assertions of Proposition~\ref{Prop_PO_ancient}, we have $\tau_{\frac12} > -\infty$.  
We say that $\MM$ has \textbf{dominant linear mode} if it does not have dominant quadratic mode and is not homothetic to a round shrinking cylinder (so \emph{the} round shrinking cylinder $\MM_{\cyl}$ shifted in time and/or space).
\end{Definition}

We will study flows with dominant linear mode more closely in Section~\ref{sec_dom_lin} and flows with dominant quadratic mode in Section~\ref{sec_dom_quadratic}.

\medskip

The proof of Proposition~\ref{Prop_PO_ancient} relies on two lemmas.
The first is a special case of the Gagliardo-Nirenberg inequality.

\begin{Lemma} \label{Lem_GN}
If $u \in C^{m+1}(\IB_1^n)$ for $m \geq 0$, then
\[ \Vert u \Vert_{C^{m}(\IB_{\frac12}^n)} \leq C(m,n)  \Vert u \Vert_{C^{m+1}(\IB_1^n)}^{1-\frac1{n/2+m+1}} \Vert u \Vert_{L^2(\IB_1^n)}^{\frac1{n/2+m+1}}. \]
\end{Lemma}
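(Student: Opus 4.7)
The plan is to derive this inequality as a standard consequence of Gagliardo--Nirenberg interpolation. The exponent $\theta := 1/(n/2+m+1)$ is dictated by scaling: for $u_\lambda(x) := u(\lambda x)$ the left-hand side scales like $\lambda^{-m}$ (ignoring the harmless change of domain), while the right-hand side scales like $\lambda^{-(m+1)(1-\theta)+(n/2)\theta}$; equating the two exponents forces precisely $\theta = 1/(n/2+m+1)$, which is a useful sanity check.

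I would first establish the base case $m=0$ by an elementary argument. Set $M := \Vert u \Vert_{C^0(\IB_{1/2}^n)}$ and $L := \Vert u \Vert_{C^1(\IB_1^n)}$ and pick $x_0 \in \overline{\IB_{1/2}^n}$ with $|u(x_0)| = M$. If $M \leq L$, then on $B(x_0, r)$ with $r := M/(2L) \leq 1/2$ the mean value inequality gives $|u| \geq M/2$, so
\[ c_n (M/2)^2 r^n \leq \Vert u \Vert_{L^2(\IB_1^n)}^2. \]
Substituting $r = M/(2L)$ yields $M^{n+2} \leq C L^n \Vert u \Vert_{L^2(\IB_1^n)}^2$, i.e.\ $M \leq C L^{n/(n+2)} \Vert u \Vert_{L^2}^{2/(n+2)}$, which is exactly the claimed bound since $2/(n+2) = 1/(n/2+1)$. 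In the opposite regime $M > L$ one has $\Vert u \Vert_{C^1} \leq M$ and the desired bound becomes trivial, since in that case $M$ is itself bounded by a universal multiple of $\Vert u \Vert_{L^2(\IB_1^n)}$ via standard reverse estimates (or, more simply, one may absorb the extra factor directly).

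For general $m \geq 1$, I would iterate the base case on derivatives $\partial^\alpha u$, $|\alpha| \leq m$, on shrinking balls $\IB_{1/2 + (m-|\alpha|)/(2m+2)}^n$. At each stage one trades a $C^{|\alpha|+1}$-norm in favor of a $C^{|\alpha|+2}$-norm and an $L^2$-norm via the $m=0$ bound; the exponents compose geometrically, and a short induction using the scaling constraint above shows they add up to exactly $\theta = 1/(n/2+m+1)$. Alternatively, one can simply invoke the general Gagliardo--Nirenberg interpolation inequality with parameters $(j, \ell, p, q, r) = (m, m+1, \infty, 2, \infty)$, which produces exactly the stated exponent by the scaling constraint.

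The only technical point is the bookkeeping of the exponents through the induction, which is entirely routine; no genuinely new idea is required beyond the elementary $m=0$ case.
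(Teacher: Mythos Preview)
Your $m=0$ argument is clean and correct (in fact the case $M>L$ cannot occur, since the $C^1$ norm already dominates the $C^0$ norm on the larger ball). For general $m$, however, the iteration you sketch is not as routine as you suggest. Applying the $m=0$ case to $\nabla^m u$ produces a factor $\Vert \nabla^m u\Vert_{L^2}^{\theta_0}$ rather than $\Vert u\Vert_{L^2}^{\theta_0}$; if one then bounds $\Vert \nabla^m u\Vert_{L^2}$ by the natural $L^2$ interpolation $\Vert\nabla^m u\Vert_{L^2}\leq C\Vert u\Vert_{C^{m+1}}^{m/(m+1)}\Vert u\Vert_{L^2}^{1/(m+1)}$, the resulting $L^2$-exponent is $\theta_0/(m+1)=2/((n+2)(m+1))$, which is \emph{strictly smaller} than the target $2/(n+2m+2)$ for all $m\geq1$. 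So the exponents do not ``compose geometrically'' to the sharp value; obtaining the correct $\theta_m$ by iteration requires a more careful bootstrap than what you describe. Your fallback of invoking the full Gagliardo--Nirenberg inequality is of course correct, but since the lemma is itself an instance of that inequality, this amounts to citing the result rather than proving it.

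The paper takes a different, self-contained route that handles all $m$ at once without iteration. After normalizing $\Vert u\Vert_{C^{m+1}(\IB_1^n)}=1$, one reduces to bounding the jets $\sum_{i\leq m}|\nabla^i u|(\bO)$ by $\Vert u\Vert_{L^2}^{\theta_m}$. Letting $v$ denote the $m$-th Taylor polynomial of $u$ at the origin, one has $|u-v|\leq C r^{m+1}$ on $\IB^n_r$, and hence the rescaled polynomial $v_r(x):=v(x/r)$ satisfies $\Vert v_r\Vert_{L^2(\IB^n_1)}\leq r^{-n/2}\Vert u\Vert_{L^2}+Cr^{m+1}$. Since $v_r$ lives in the finite-dimensional space of polynomials of degree $\leq m$, its $L^2$ norm controls all its coefficients, which are exactly the rescaled jets $r^i|\nabla^i u|(\bO)$. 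Choosing $r\sim\Vert u\Vert_{L^2}^{1/(n/2+m+1)}$ then balances the two terms and yields the sharp exponent directly. This Taylor-polynomial argument is what makes the proof elementary; your $m=0$ case is in fact the special case of it where $v$ is a constant.
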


\begin{proof}
After multiplying $u$ with an appropriate constant, we may assume that $\Vert u \Vert_{C^{m+1}(\IB_1^n)}=1$.
So after shifting $u$ and restricting to $\IB^n_{\frac12}$, it suffices to bound its first $m$ derivatives at the origin, so we need to show
\begin{equation} \label{eq_GN_simplified}
 \Vert u \Vert_{C^{m+1}(\IB_{\frac12}^n)}, \; \Vert u \Vert_{L^2(\IB^n_{\frac12})} \leq 1 \quad \Longrightarrow \quad \sum_{i=0}^m |\nabla^i u |(\bO) \leq C(m,n) \Vert u \Vert_{L^2(\IB^n_{\frac12})}^{\frac1{n/2+m+1}}. 
\end{equation}
Let $v$ be the $m$-th Taylor polynomial of $u$.
Then for $0 < r \leq \frac12$ we have
\[ \sup_{\IB^n_r} |u-v| \leq C(m,n) r^{m+1}, \]
so if $v_r (x) := v(r^{-1} x)$, then for $r \in (0,\frac12]$
\[ \Vert v_r \Vert_{L^2(\IB^n_{1})} 
= r^{-n/2} \Vert v \Vert_{L^2(\IB^n_r)} 
\leq r^{-n/2} \big( \Vert u \Vert_{L^2(\IB^n_r)} +  \Vert u - v \Vert_{L^2(\IB^n_r)} \big)
\leq r^{-n/2} \Vert u \Vert_{L^2(\IB^n_{\frac12})} + C(m,n) r^{m+1}. \]
Since the restriction map from space of polynomials of degree $\leq m$ into $L^2(\IB^n_{1})$ is an injection, we get
\[ \sum_{i=0}^m r^{i} |\nabla^i u |(\bO)  = \sum_{i=0}^m |\nabla^i v_r |(\bO) \leq C(m,n) \Vert v_r \Vert_{L^2(\IB^n_{1})} 
\leq C(m,n) r^{-n/2} \Vert u \Vert_{L^2(\IB^n_{\frac12})} + C(m,n) r^{m+1}. \]
Dividing both sides by $r^m$ and setting $r = \frac12 \Vert u \Vert_{L^2(\IB^n_{\frac12})}^{\frac1{n/2+m+1}}$ implies \eqref{eq_GN_simplified}.
\end{proof}

The second lemma characterizes the effect of Euclidean transformations on graphs over cylinders.

\begin{Lemma} \label{Lem_up_minus_u}
If $R \geq \underline R$, $\eps \leq \ov\eps$ and $m \geq 0$, then the following is true.

Consider a submanifold $M' = \Gamma_{\cyl} (u') \subset \IR^{n+1+n'}$ for some smooth function $u' : \IB^k_R \times \IS^{n-k} \to \IR^{1+n'}$ with $\Vert u' \Vert_{C^{m+1}} \leq 1$.
Consider $A \in O(n+1+n')$ and $\mathbf b \in \IR^{n+1+n'}$ such that $\Vert A-\id \Vert \leq \frac{\eps}R$ and $|\mathbf b| \leq \eps$.
Then $M := A M' + \mathbf b = \Gamma_{\cyl}(u)$ for some smooth function $u : \DD \to \IR^{1+n'}$ with $\IB^k_{\frac12 R} \times \IS^{n-k} \subset \DD$ and we have
\[ \Vert u' - u \Vert_{C^m (\IB^k_{\frac12 R} \times \IS^{n-k})} \leq C(m) \eps. \]
\end{Lemma}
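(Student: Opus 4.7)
The plan is to write $M'$ explicitly as the image of the parameterization $\Phi : \IB^k_R \times \IS^{n-k} \to \IR^{n+1+n'}$ given by $\Phi(\bx,\by) := (\bx, (1+a'(\bx,\by))\by, b'(\bx,\by))$, where I decompose $u' = (a', b')$ with $a' : \IB^k_R \times \IS^{n-k} \to \IR$ and $b' : \IB^k_R \times \IS^{n-k} \to \IR^{n'}$. Then $M = A\Phi(\IB^k_R \times \IS^{n-k}) + \bb$ is the image of $\td\Phi := A\Phi + \bb$, and the perturbation $\Delta := \td\Phi - \Phi = (A-\id)\Phi + \bb$ satisfies $\Vert \Delta \Vert_{C^0} \leq (\eps/R)(R+C) + \eps \leq C\eps$, together with the stronger bound $\Vert \nabla^j \Delta \Vert_{C^0} \leq C(m) \eps / R$ for all $1 \leq j \leq m+1$. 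This uses $\Vert A - \id \Vert \leq \eps/R$, $|\bb| \leq \eps$, and $\Vert u' \Vert_{C^{m+1}} \leq 1$, provided $R \geq \underline R$.

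Decomposing $\td\Phi$ component-wise as $\td\Phi = (\bx + \Delta_1, (1+a')\by + \Delta_2, b' + \Delta_3)$, the cylindrical projection defines the reparameterization
\begin{equation*}
 \td\bx(\bx,\by) := \bx + \Delta_1(\bx,\by), \qquad \td\by(\bx,\by) := \sqrt{2(n-k)}\, \frac{(1+a'(\bx,\by))\by + \Delta_2(\bx,\by)}{\big| (1+a'(\bx,\by))\by + \Delta_2(\bx,\by) \big|},
\end{equation*}
together with the prospective new graph function
\begin{equation*}
 \td a(\bx,\by) := \frac{\big| (1+a'(\bx,\by))\by + \Delta_2(\bx,\by) \big|}{\sqrt{2(n-k)}} - 1, \qquad \td b(\bx,\by) := b'(\bx,\by) + \Delta_3(\bx,\by).
\end{equation*}
For $\eps \leq \ov\eps$ the denominator stays uniformly bounded away from $0$, so these maps are smooth. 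In the unperturbed case $\Delta \equiv 0$ the reparameterization is the identity (since the projection of a positive multiple of $\by$ returns $\by$), so adding $\Delta$ makes $(\bx,\by) \mapsto (\td\bx,\td\by)$ an $O(\eps/R)$-perturbation of the identity in $C^{m+1}$ with zeroth-order displacement of size $O(\eps)$, and likewise $\Vert \td a - a' \Vert_{C^m} + \Vert \td b - b' \Vert_{C^m} \leq C(m) \eps$ by direct differentiation.

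Inverting $(\bx,\by) \mapsto (\td\bx,\td\by)$ via the implicit function theorem, for $\eps \leq \ov\eps$ and $R \geq \underline R$ it is a $C^{m+1}$-diffeomorphism from a neighborhood of $\IB^k_{R-1} \times \IS^{n-k}$ onto an image containing $\IB^k_{R/2} \times \IS^{n-k}$ (surjectivity onto the target ball follows from the fact that the perturbation moves each point by at most $C\eps$, combined with a standard open-mapping argument), with inverse $\Psi$ satisfying $\Vert \Psi - \id \Vert_{C^{m+1}} \leq C(m) \eps$. Setting $u(\td\bx,\td\by) := (\td a, \td b)(\Psi(\td\bx,\td\by))$ then yields the desired graph function, and the decomposition
\begin{equation*}
 u - u' = \big[(\td a, \td b) - u'\big] \circ \Psi + u' \circ \Psi - u'
\end{equation*}
gives $\Vert u - u' \Vert_{C^m} \leq C(m) \eps$ by the chain rule together with the two previous $C^m$-bounds and $\Vert u' \Vert_{C^{m+1}} \leq 1$. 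The main point (rather than an obstacle) is that one must carefully exploit the factor $1/R$ in derivatives of $\Delta$: without the hypothesis $\Vert A - \id \Vert \leq \eps/R$ (as opposed to merely $\leq \eps$), the Jacobian of the reparameterization would fail to be close to the identity on a ball of large radius $R$, and the inversion step would break. The extra derivative in $\Vert u' \Vert_{C^{m+1}} \leq 1$ accounts for the derivative loss in the implicit function theorem and in the composition $u' \circ \Psi$; all remaining estimates are routine applications of the chain rule.
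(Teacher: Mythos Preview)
Your proof is correct and follows essentially the same route as the paper's: parameterize $M'$ via $\Phi$ (the paper's $\psi$), compose with the rigid motion, project back to the cylinder to obtain a reparameterization $\chi$ that is $C^{m+1}$-close to the identity, invert it, and compare $u$ with $u'$ via the chain rule. Your explicit component-wise formulas for $(\td\bx,\td\by,\td a,\td b)$ and your final splitting $u-u' = [(\td a,\td b)-u']\circ\Psi + u'\circ\Psi - u'$ are minor cosmetic variants of the paper's more compact estimate $\Vert u'-u\Vert_{C^m}\le C\Vert A\psi\chi^{-1}+\mathbf b - \psi\Vert_{C^m}$, but the content is the same. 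One small remark: your closing explanation that the factor $\eps/R$ is needed for the \emph{Jacobian} to be close to the identity is slightly off---even with $\Vert A-\id\Vert\le\eps$ the first derivatives of $\Delta$ would still be $O(\eps)$; the real reason is that without the $1/R$ the \emph{zeroth-order} displacement $\Delta$ would be of size $\eps R$, which (since it mixes the large $\bx$-component into the bounded $\by$-component) would destroy the graphical structure and make $u'\circ\Psi-u'$ of size $\eps R$ rather than $\eps$.
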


\begin{proof}
Fix $m$.
In the following, we denote by $C$ a generic constant, which may depend on $m$.
It is not hard to see that for sufficiently small $\eps$ the projection $\proj_{\cyl} : M \to M_{\cyl}$ is a diffeomorphism onto its image and the image contains $\IB^k_{\frac12 R} \times \IS^{n-k}$.
Specifically, write $u'(\bx, \by) = (u'_1(\bx,\by), u'_2(\bx, \by)) \in \IR \times \IR^{n'}$ and consider the parameterization of $M'$ given by
\[ \psi : \IB^k_R \times \IS^{n-k} \to M', \qquad (\bx,\by) \mapsto \big(\bx,  (1+u'_1(\bx, \by)) \by, u'_2(\bx, \by) \big)  . \]
Then for sufficiently large $R$ the map
\[ \chi:= \proj_{M_{\cyl}} (A \psi + \mathbf b) :  \IB^k_R \times \IS^{n-k} \lto \DD \]
is a diffeomorphism with $\Vert \chi - \id \Vert_{C^{m+1}} \leq C \frac{\eps}{R} R +  C \eps \leq C \eps$, where the difference is taken with respect to the ambient vector space structure on $\IR^{n+1}$.
If $\eps$ is sufficiently small, then the implicit function theorem implies a comparable bound for the inverse map: 
\[ \Vert \chi^{-1} - \id \Vert_{C^{m+1}} \leq C \eps. \]

If we write $u(\bx, \by) = (u_1(\bx,\by), u_2(\bx, \by)) \in \IR \times \IR^{n'}$, then
\begin{equation*} 
  A \psi ( \chi^{-1}(\bx, \by)) + \mathbf b =  \big(\bx,  (1+u_1(\bx, \by)) \by, u_2(\bx, \by) \big)  
\end{equation*}
So over the domain $\IB^k_{\frac12 R} \times \IS^{n-k}$
\begin{align*}
   \Vert u' - u \Vert_{C^m} 
   &\leq C \Vert A \circ \psi \circ \chi^{-1} + \mathbf b - \psi \Vert_{C^m} \\
&\leq C \Vert \nabla (A \circ \psi) \Vert_{C^{m}} \Vert \chi^{-1} - \id \Vert_{C^{m}}  + C \Vert A \circ \psi  - \psi \Vert_{C^m} + C\eps  \\
&\leq C \eps   + \Vert A - \id \Vert \cdot \Vert \psi \Vert_{C^m} +C \eps
\leq  C\eps + C \tfrac{\eps}R R  + C\eps
\leq C\eps.
\end{align*}
This finishes the proof.
\end{proof}
\bigskip

\begin{proof}[Proof of Proposition~\ref{Prop_PO_ancient}.]
By Lemma~\ref{Lem_cyl_properties} we may assume without loss of generality that $\MM$ is $\delta$-close to $M_{\cyl}$  at time $t_0 - r^2$ and \emph{at all  scales $r \geq r_0$.}
Fix $J, m,   \eta, \eps$ as in the statement of the proposition and choose
\[ J' := 2(n+m+4) J, \qquad \la' := -  J' - \tfrac12   \]
Let $R^\#, \eta', \eps', R^*, > 0$ be constants whose values we will determine in the course of the proof.
Throughout the argument, we will impose  bounds on these constants and on the constant $\delta$ while respecting the following order
\[ \eta,\; \xi,\; m,\; J',\; \la',\; R^\#,\; \eta',\; \eps',\; R^*,\; \delta. \]
The first five constants from this list have already been chosen and will remain fixed.
For each of the other constants, we will only require bounds involving constants appearing earlier in this list.
So, for example, $R^\#$ may only be chosen based on $\eta,\xi, m, J', \la'$.
This will ensure that these constants can be chosen successively in the indicated order to fulfill all required bounds.
We will frequently omit the ``$L^2_f$'' subscript when there is no chance of confusion.

Apply Proposition~\ref{Prop_gauging}, assuming that $\delta \leq \ov\delta (\eta, R^\#)$ (where we substitute $\eps \leftarrow \frac12 \eta$ in this proposition) to the flow $\MM$ restricted to $(-\infty, t_0 - r_0^2]$ and consider the resulting family $(S_t \in E(n+1+n'))_{t \leq t_0 - r_0^2}$ of Euclidean motions and Killing fields $(\bY_t)_{t \leq t_0 - r_0^2}$ with $(t_0 - t) \partial_t S_t = \bY_t \circ S_t$, as well as the rescaled modified flow $\td\MM'_\tau = e^{\tau/2} S_{t_0 - e^{-\tau}} (\MM_{t_0 - e^{-\tau}} )$ with respect to $\td\bY_\tau = (e^{\tau/2})_* \bY_{t_0 - e^{-\tau}}$.
By Proposition~\ref{Prop_gauging}\ref{Prop_gauging_c}, \ref{Prop_gauging_f}, we know that $\td\MM^{\prime,\reg}_\tau \to M_{\cyl}$ locally smoothly as $\tau \to -\infty$.
Since the same is true for $\MM^{\reg}_t$, we find that, for some sequence $t_j \to -\infty$ with the following property:
Consider the rotational components $dS_{t_j} = S_{t_j} - S_{t_j} (\bO) \in O(n+1+n')$.
Then we have $dS_{t_j} \to S'$ for some limit $S' \in O(n+1+n')$ with $S' (M_{\cyl}) = M_{\cyl}$.
So $S' \in O(k) \times O(n-k+1) \times O(n')$.
Since the gauging condition is invariant under these rotations, we may replace $(S_t)$ with $( S^{\prime -1} \circ S_t )$ and assume without loss of generality that $S' = \id$.
So to summarize
\begin{equation} \label{eq_Stjtoid}
 dS_{t_j} = S_{t_j} - S_{t_j} (\bO) \lto \id. 
\end{equation}
We also record the following consequence from Proposition~\ref{Prop_gauging}\ref{Prop_gauging_c}:
\begin{equation} \label{eq_Sq0}
 |S_{t_0 - r_0^2}(\bq_0)| \leq \tfrac12 \eta r_0. 
\end{equation}

Let us now apply Proposition~\ref{Prop_PDE_ODI_MCF} with the parameters $m+2, J', \la', \eta' \leq \ov\eta(\la'), \eps' \leq \ov\eps(\eta',m+2)$  and some $R^* \geq \underline R^* (m+2, \lb J', \lb \la', \lb, \eta', \lb \eps')$.
Assumptions~\ref{Prop_PDE_ODI_MCF_i}--\ref{Prop_PDE_ODI_MCF_iii} can be ensured via Proposition~\ref{Prop_gauging}\ref{Prop_gauging_c}, \ref{Prop_gauging_f} if we assume that $\delta \leq \ov\delta (m+2, \eta', R^*)$.
Call the resulting functions $R', \UU^{-,\prime} :(-\infty,  \td\tau] \to [R^*, \infty)$ and $U^{+,\prime} : (-\infty, \td\tau] \to \sV_{> \la'}$ and $U^{-,\prime} : (-\infty, \td\tau] \to \sV_{\leq \la'}$.
We will also denote the function representing $\td\MM^{\prime, \reg}_\tau$ as a graph over the cylinder by $u'_\tau : \DD'_\tau \to \IR^{1+n'}$.
The purpose of the prime notation is to emphasize that these functions are not the same as the ones in the statement of the proposition.
Set
\[ U^+ (\tau) := \PP_{\sV_{\rot, \geq - J}} U^{+,\prime}(\tau) . \]
We record that for $\tau \leq \td\tau$
\[ \IB^k_{R'(\tau)} \times \IS^{n-k} \subset \DD'_\tau, \qquad
\Vert u'_\tau \Vert_{C^{m+2} (\IB^k_{R'(\tau)} \times \IS^{n-k})} \leq \eta'. \]

Next, we apply Propositions~\ref{Prop_PDE_ODI_MCF_gauged}, assuming in addition that $R^\# \geq \underline R^\#, \eta' \leq \ov\eta(\la', R^\#)$ and $R^* \geq R^\#+10$ and then Proposition~\ref{Prop_Tisosc}\ref{Prop_Tisosc_c} assuming $R^\# \geq \underline R^\#(J')$ and $\eta' \leq \ov\eta(J', \la', R^\#)$ and finally Proposition~\ref{Prop_ODE_subintervals} assuming $\eta' \leq \ov\eta(\la, J, \xi)$.
We obtain the following bounds, which hold on the time-interval $(-\infty, \td\tau]$,
\begin{align}
 \| \PP_{\sV_{\osc}} U^{+,\prime} \| + \UU^- + \| \td\bY \| 
 &\leq C( J) \| U^{++} \|^{\lceil J'/2\rceil + 1} \label{eq_Jp_pol_1} \\
 \| \PP_{\sV_{\rot, \leq - \frac12 i}} U^{+,\prime} \| 
 &\leq C( J) \| U^{++} \|^{\lceil i/2 \rceil + 1}, \qquad i = 1, \ldots, J' .  \label{eq_Jp_pol_2} 
\end{align}
Assertion~\ref{Prop_PO_ancient_c} of this proposition now follows directly from \eqref{eq_Jp_pol_2}; the bound 
\begin{equation} \label{eq_Up10Uppproof}
\Vert U^+ \Vert \leq 10 \Vert U^{++} \Vert
\end{equation}
follows for $\eta' \leq \ov\eta' (\la', J')$.
For Assertion~\ref{Prop_PO_ancient_b} is a consequence of Proposition~\ref{Prop_PDE_ODI_MCF}\ref{Prop_PDE_ODI_MCF_b}, via \eqref{eq_Jp_pol_1} and the following bound, which is implied by \eqref{eq_Jp_pol_1} and \eqref{eq_Jp_pol_2}:
\begin{equation} \label{eq_UppUpJp2} 
\Vert U^{+,\prime} - U^+ \Vert \leq \| \PP_{\sV_{\osc}} U^{+,\prime} \| +  \| \PP_{\sV_{\rot, \leq - J-\frac12}} U^{+,\prime} \| \leq C(J') \Vert U^{++} \Vert^{J+2}. 
\end{equation}
Assertion~\ref{Prop_PO_ancient_d} is a restatement of Proposition~\ref{Prop_ODE_subintervals}.

It remains to prove the bound \eqref{eq_bqmbq0} and Assertion~\ref{Prop_PO_ancient_a}.
To do so, we first establish similar bounds on $u'_\tau$.

\begin{Claim} \label{Cl_upUP}
If $R^* \geq \underline R^*(J', \eta')$, then for any $\tau \leq  \td\tau$ we have $\IB^k_{2R(\tau)} \times \IS^{n-k} \subset \DD'_\tau$ and
\begin{align}
 \Vert u'_\tau - U^+(\tau) \Vert_{C^{m+1}(\IB^k_{2R(\tau)} \times \IS^{n-k})} &\leq C(J,m) \Vert U^{++} (\tau) \Vert^{J+1} \label{eq_upmUp} \\
  \Vert U^+(\tau) \Vert_{C^{m+1}(\IB^k_{2R(\tau)} \times \IS^{n-k})} &\leq C(J,m) \sqrt{\eta'} \label{eq_Upsqrt}
\end{align}
\end{Claim}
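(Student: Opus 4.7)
The plan has three main ingredients: a lower bound on $R'(\tau)$ to secure the inclusion $\IB^k_{2R(\tau)}\times\IS^{n-k}\subset\DD'_\tau$; an orthogonal decomposition on this ball that pairs polynomial modes with Gaussian-concentrated error terms; and a Gagliardo--Nirenberg interpolation, calibrated through the choice $J'=2(n+m+4)J$, that converts the tiny $L^2_f$ estimates into $C^{m+1}$ estimates over balls of radius $\approx 2R(\tau)$.

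First I would use that $\UU^{-,\prime}(\tau)\geq \eta' e^{-((1-\eps')R'(\tau))^2/8}$ together with the refined bound $\UU^{-,\prime}\leq C(J)\|U^{++}\|^{\lceil J'/2\rceil+1}$ from \eqref{eq_Jp_pol_1}. Taking logarithms and inserting $|\log\|U^{++}\||=R(\tau)^2/J$ yields $R'(\tau)^2\geq \tfrac{8(\lceil J'/2\rceil+1)}{(1-\eps')^2 J}R(\tau)^2 - C(J,\eta')$. Since $\lceil J'/2\rceil+1\geq(n+m+4)J$, this forces $R'(\tau)\geq 2R(\tau)+1$ for $R(\tau)\geq \underline R(J,m,\eta')$, and the lower bound $R(\tau)^2\geq J|\log\eta'|$ (available since $\|U^{++}\|\leq C\eta'$) gives this once $R^*\geq\underline R^*(J',\eta')$. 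On this domain $\omega_{R'(\tau)}\equiv 1$, hence $u'_\tau=U^{+,\prime}+U^{-,\prime}$ and
\[ u'_\tau-U^+ = \PP_{\sV_{\osc,>\la'}}U^{+,\prime} + \PP_{\sV_{\rot,\leq -J-\frac12}}U^{+,\prime} + U^{-,\prime}. \]
The first term is bounded in $L^2_f$ by $C(J)\|U^{++}\|^{(n+m+4)J+1}$ (via \eqref{eq_Jp_pol_1}), the second by $C(J)\|U^{++}\|^{J+2}$ (via \eqref{eq_Jp_pol_2} at $i=2J+1$), and the third by $C(J)\|U^{++}\|^{(n+m+4)J+1}$.

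For the two polynomial terms, Lemma~\ref{Lem_polynomial_bounds} converts $L^2_f$ bounds to pointwise bounds up to a factor $R(\tau)^{C(J)}$; since $R(\tau)^{C(J)}\leq C(J)\|U^{++}\|^{-\epsilon}$ for any $\epsilon>0$ and $\|U^{++}\|$ small, both contributions are dominated by $C(J,m)\|U^{++}\|^{J+1}$. For the third, non-polynomial term $U^{-,\prime}$, I would cover $\IB^k_{2R(\tau)}\times\IS^{n-k}$ by unit balls $B$ with $2B\subset\IB^k_{2R(\tau)+1}\times\IS^{n-k}$, and apply Lemma~\ref{Lem_GN} with exponent $\theta=1/(n/2+m+2)$. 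The $C^{m+2}$ factor is bounded by $\|u'_\tau\|_{C^{m+2}}+\|U^{+,\prime}\|_{C^{m+2}}\leq C(J,m)\|U^{++}\|^{-\epsilon}$ thanks to Proposition~\ref{Prop_PDE_ODI_MCF}\ref{Prop_PDE_ODI_MCF_a} and polynomial growth of $U^{+,\prime}$. The unweighted $L^2$ factor is controlled by the Gaussian conversion $\|U^{-,\prime}\|_{L^2(2B)}\leq e^{(2R(\tau)+1)^2/8}\|U^{-,\prime}\|_{L^2_f}\leq C(J,m)\|U^{++}\|^{(n+m+3.5)J+1-\epsilon}$, where the factor $e^{R(\tau)^2/2}=\|U^{++}\|^{-J/2}$ absorbs $J/2$ exponents. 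The main delicate point---and the reason $J'$ had to be chosen proportional to $(n+m)J$ rather than merely to $J$---is to guarantee that after the GN loss by factor $\theta$, the resulting exponent $\theta[(n+m+3.5)J+1]$ still exceeds $J+1$; provided $R^*\geq\underline R^*(J',\eta')$ is chosen large enough (so that $\|U^{++}\|$ is correspondingly small), this holds.

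Finally, for \eqref{eq_Upsqrt}, the space $\sV_{\rot,\geq -J}$ consists of polynomials in $\bx\in\IR^k$ of degree $\leq 2J+2$, so Lemma~\ref{Lem_polynomial_bounds} combined with \eqref{eq_Up10Uppproof} yields
\[ \|U^+(\tau)\|_{C^{m+1}(\IB^k_{2R(\tau)}\times\IS^{n-k})}\leq C(J,m)(R(\tau)+1)^{2J+2}\|U^{++}(\tau)\|. \]
Substituting $R(\tau)^2=J|\log\|U^{++}\||$ gives an upper bound by $C(J,m)(J|\log\|U^{++}\||)^{J+1}\|U^{++}\|$. Elementary analysis of $x\mapsto x|\log x|^{J+1}$ on $(0,\eta']$ shows this is maximized at the endpoint once $\eta'\leq e^{-(J+1)}$, yielding $\eta'|\log\eta'|^{J+1}\leq C(J,m)\sqrt{\eta'}$ for $\eta'\leq\ov\eta'(J,m)$. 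This is the required bound, and completes the proof of both inequalities.
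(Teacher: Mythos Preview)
Your overall strategy matches the paper's: control $R'(\tau)$ via the $\UU^{-,\prime}$ bound, decompose $u'_\tau-U^+$ into the polynomial piece $U^{+,\prime}-U^+$ (handled by Lemma~\ref{Lem_polynomial_bounds}) and the infinite-dimensional piece $U^{-,\prime}$ (handled by Gaussian conversion followed by Gagliardo--Nirenberg). However, there is a genuine arithmetic gap in the $U^{-,\prime}$ step.

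You read the exponent in \eqref{eq_Jp_pol_1} as $\lceil J'/2\rceil+1=(n+m+4)J+1$. But Proposition~\ref{Prop_Tisosc}\ref{Prop_Tisosc_c}, applied here with $J\leftarrow J'$ and $\lambda\leftarrow\lambda'=-J'-\tfrac12$, gives $J_0=\min\{2J',2J'+1\}=2J'$, so the actual exponent is $\lceil J_0/2\rceil+1=J'+1=2(n+m+4)J+1$; the display \eqref{eq_Jp_pol_1} is understated by a factor of~$2$, and the paper's own computation in the claim tacitly uses the stronger bound. With your weaker exponent, after losing $J/2$ in the Gaussian conversion you are left with $(n+m+3.5)J+1$, and the Gagliardo--Nirenberg loss by $\theta=1/(n/2+m+2)$ forces you to verify
\[
(n+m+3.5)J+1\;\geq\;(J+1)(n/2+m+2),\qquad\text{i.e.}\quad (n/2+1.5)J\geq n/2+m+1.
\]
This \emph{fails} whenever $m$ is large relative to $J$ (e.g.\ $n=2$, $J=2$, $m=10$). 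Your closing remark that choosing $R^*$ large rescues the inequality is incorrect: this is a comparison of fixed exponents, independent of $\|U^{++}\|$ or $R^*$.

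The repair is immediate: use the true exponent $J'+1$. After conversion you then have $(2n+2m+7.5)J+1$, and the required inequality becomes $(3n/2+m+5.5)J\geq n/2+m+1$, valid for every $J\geq 1$. (The paper works on the larger ball $B_4$ and loses $2J$ in the conversion rather than your $J/2$, but once the correct starting exponent is in place either route closes.) Your treatment of \eqref{eq_Upsqrt} is correct, though the paper's version---writing $R^{C}\|U^{++}\|=R^C e^{-R^2/(2J)}\|U^{++}\|^{1/2}\leq C\sqrt{\eta'}$ directly---avoids the extra smallness assumption on $\eta'$.
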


\begin{proof}
Fix $\tau \leq \td\tau$ and write $U^{\pm,\prime} = U^{\pm,\prime}(\tau)$, $U^+ = U^+(\tau)$ and $U^{++} = U^{++} (\tau)$.
Recall that by the statement of the proposition we have $e^{-R^2(\tau)} := \Vert U^{++} (\tau) \Vert^J$.
So if $R(\tau) \geq R^* \geq \underline R^*(J', \eta')$, then the bound on $\UU^-$ from \eqref{eq_Jp_pol_1} implies that 
\[ R'(\tau) \geq 4 R(\tau). \]
Therefore if we write $B_a := \IB^k_{a R(\tau)} \times \IS^{n-k}$, then
\[ u'_{\tau} = u'_{\tau} \omega_{R'(\tau)} = U^{+,\prime} + U^{-,\prime}  \qquad \text{on} \quad B_{4}. \]
The bound \eqref{eq_Jp_pol_1} implies that
\[ \Vert U^{-,\prime} \Vert_{L^2_f (B_{4})}  \leq C( J) \Vert U^{++} \Vert^{(n+2m+6) J}. \]
So since $e^{-f} \geq e^{-(4R(\tau))^2/4}$ on $B_4$, we obtain
\begin{multline*}
 \Vert U^{-,\prime} \Vert_{L^2(B_4)} = \bigg( \int_{B_{4}} |U^{-,\prime}|^2 dg_{\cyl}  \bigg)^{1/2} 
\leq e^{(4R(\tau))^2/8} \bigg( \int_{B_{4}} |U^{-,\prime}|^2 e^{-f} dg_{\cyl}  \bigg)^{1/2}  \\
\leq C(J) \Vert U^{++} \Vert^{-2J} \Vert U^{++} \Vert^{(n+m+4) (J+1)}
\leq C(J)  \Vert U^{++} \Vert^{(n+m+2) (J+1)}.
\end{multline*}
On the other hand, using Lemma~\ref{Lem_polynomial_bounds} and \eqref{eq_Up10Uppproof}, we find that (recall $\la' = - J' - \tfrac12$)
\begin{multline} \label{eq_Upp_pol}
   \Vert U^{+,\prime} \Vert_{C^{m+2}(B_4)} 
\leq C(J,m) R^{C(J,m)}(\tau) \Vert U^{+,\prime} \Vert_{L^2_f} 
\leq C(J,m) R^{C(J,m)}(\tau) \Vert U^{++} \Vert_{L^2_f} \\
= C(J,m) R^{C(J,m)}(\tau)  e^{-\frac{R^2(\tau)}{2J}} \Vert U^{++} \Vert_{L^2_f}^{1/2} 
\leq C( J,m) \sqrt{\eta'}.
\end{multline}
In the same way we can bound $U^+ = \PP_{\sV_{\rot,\geq - J}} U^{+,\prime}$, proving \eqref{eq_Upsqrt}, and we obtain
\[  \Vert U^{-,\prime} \Vert_{C^{m+2}(B_{4})} 
\leq \Vert u'_\tau \Vert_{C^{m+2}(B_{4})} + \Vert U^{+,\prime} \Vert_{C^{m+2}(B_{4})} 
\leq \eta' + C(J,m) \sqrt{\eta'}
\leq C( J,m). \]
So using Lemma~\ref{Lem_GN}, we obtain that
\begin{equation} \label{eq_Ump_Cmp1}
    \Vert U^{-,\prime} \Vert_{C^{m+1}(B_{2})}
\leq C( J, m) \Vert U^{-,\prime} \Vert_{L^2(B_{4})}^{\frac1{n/2+m+2}}  
\leq C(J,m)  \Vert U^{++} \Vert^{J+1} . 
\end{equation}

Now as in \eqref{eq_Upp_pol} we can bound, using \eqref{eq_UppUpJp2},
\begin{multline*} 
    \Vert  U^{+,\prime} - U^+ \Vert_{C^{m+1}(B_2)}
\leq C(J,m) R^{C(J,m)}(\tau) \Vert  U^{+,\prime} - U^+ \Vert 
\leq C(J,m) R^{C(J,m)}(\tau) \Vert U^{++} \Vert^{J+2} \\
= C(J,m) R^{C(J,m)}(\tau) e^{-R^2(\tau)} \Vert U^{++} \Vert^{J+1} 
\leq C(J,m) \Vert U^{++} \Vert^{J+1}.
\end{multline*}
The same bound holds for $\PP_{\sV_{\rot,< -J}} U^+$.
Combining this with \eqref{eq_Ump_Cmp1} implies
\begin{equation*}
 \Vert u'_\tau - U^{+} \Vert_{C^{m+1}(B_2)}
\leq \Vert U^{+,\prime} - U^+ \Vert_{C^{m+1}(B_2)} + \Vert U^{-,\prime} \Vert_{C^{m+1}(B_2)}  
\leq  C( J,m) \Vert U^{++} \Vert^{J+1}.
\end{equation*}
This shows \eqref{eq_upmUp}.
\end{proof}

Next, we need to convert the bound \eqref{eq_upmUp} on $u'_\tau$ into a bound on $u_\tau$.
To do so, we will write
\[ S_{t_0-e^{-\tau}} (\bx) = A_\tau (\bx - \bq_0)  +  \mathbf b_\tau \]
for a time-dependent rotational part $A_\tau \in O(n+1+n')$ and vector $\mathbf b_\tau = S_{t_0-e^{-\tau}} (\bq_0) \in \IR^{n+1+n'}$.
We need the following claim.

\begin{Claim} \label{Cl_S_converges}
If $\eta' \leq \ov\eta' ( J)$, then there is a $\bq \in \IR^{n+1+n'}$ satisfying \eqref{eq_bqmbq0} such that for $\tau \leq \td\tau$
\begin{align} \label{eq_S_converges}
 \big\| A_\tau  - \id \big\|  & \leq C(J) \Vert U^{++}(\tau) \Vert^{2(J+1)} \\
e^{\tau/2}  \big| \mathbf b_\tau - (\bq_0 - \bq) \big| &\leq C( J) \Vert U^{++}(\tau) \Vert^{2(J+1)} \label{eq_b_converges}
\end{align}
\end{Claim}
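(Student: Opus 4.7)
The plan is to derive ODEs for the rotational and translational parts $A_\tau, \mathbf{b}_\tau$ of $S_{t_0-e^{-\tau}}$ and then analyze them using the rapid decay of the gauge Killing fields $\td\bY_\tau$ supplied by \eqref{eq_Jp_pol_1}. Writing a general Killing field as $\td\bY_\tau(\bx) = M_\tau\bx + \mathbf{v}_\tau$ with $M_\tau$ skew-symmetric, combining $\td\bY_\tau = (e^{\tau/2})_*\bY_{t_0-e^{-\tau}}$ with $(t_0-t)\partial_t S_t = \bY_t\circ S_t$ and $S_t(\bx) = A_\tau(\bx-\bq_0) + \mathbf{b}_\tau$ and matching linear and constant parts in $\bx$ yields
\[ \partial_\tau A_\tau = M_\tau A_\tau, \qquad \partial_\tau \mathbf{b}_\tau = M_\tau \mathbf{b}_\tau + e^{-\tau/2}\mathbf{v}_\tau. \]
Equivalently, $\td{\mathbf{b}}_\tau := e^{\tau/2}\mathbf{b}_\tau$ satisfies $\partial_\tau\td{\mathbf{b}}_\tau = (\tfrac12 I + M_\tau)\td{\mathbf{b}}_\tau + \mathbf{v}_\tau$, with $|\td{\mathbf{b}}_\tau|\leq \eta$ by Proposition~\ref{Prop_gauging}\ref{Prop_gauging_c} and $\|M_\tau\|, |\mathbf{v}_\tau| \leq \|\td\bY_\tau\|$.

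Next I will establish the integral estimate $\int_{-\infty}^\tau \|\td\bY_s\|\, ds \leq C(J)\|U^{++}(\tau)\|^{2(J+1)}$ for $\tau \leq \td\tau$. From \eqref{eq_Jp_pol_1} we have $\|\td\bY_s\|\leq C(J)\|U^{++}(s)\|^{p}$ with $p := \lceil J'/2\rceil + 1 = (n+m+4)J + 1 \geq 2(J+1) + 1$. A case analysis of Assertion~\ref{Prop_PO_ancient_d} supplies the necessary asymptotics: on $(-\infty,\tau_{\frac12})$, the differential inequality $|\partial_s U_{0,\min} + \sqrt 2 U_{0,\min}^2| \leq \xi U_{0,\min}^2$ yields $\|U^{++}(s)\| \sim 1/|s|$, so $\int_{-\infty}^\tau \|U^{++}(s)\|^p ds \sim |\tau|^{-(p-1)} \sim \|U^{++}(\tau)\|^{p-1}$; on the exponential regimes $(\tau_{\frac12},\tau_1)$ and $(\tau_1,\td\tau)$, $\|U^{++}\|$ grows at rate $\tfrac12$ or $1$, so the integral is dominated by $\|U^{++}(\tau)\|^p$. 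Since $p-1 \geq 2(J+1)$, this gives the claim. Combining this with $\partial_\tau A_\tau = M_\tau A_\tau$, the orthogonality $\|A_s\|=1$, and the subsequential convergence $A_{\tau_j}\to \id$ from \eqref{eq_Stjtoid}, I obtain $A_\tau - \id = \int_{-\infty}^\tau M_s A_s \, ds$ and hence
\[ \|A_\tau - \id\| \leq \int_{-\infty}^\tau \|\td\bY_s\|\, ds \leq C(J)\|U^{++}(\tau)\|^{2(J+1)}. \]

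For the translational part, I will first show $\lim_{\tau'\to-\infty}\td{\mathbf{b}}_{\tau'} = 0$ using the backward integral formula
\[ \td{\mathbf{b}}_{\tau'} = e^{-(\tau-\tau')/2}\td{\mathbf{b}}_\tau - \int_{\tau'}^\tau e^{(\tau'-s)/2}(M_s\td{\mathbf{b}}_s + \mathbf{v}_s)\,ds. \]
The first term decays exponentially since $|\td{\mathbf{b}}_\tau|\leq \eta$, and after the substitution $u = s-\tau'$ the integrand is dominated by $2e^{-u/2}\sup_s\|\td\bY_s\|$ and tends pointwise to $0$ because $\|\td\bY_{u+\tau'}\|\to 0$ as $\tau'\to -\infty$; dominated convergence then yields the limit $0$. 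Thus I may set $\bq := \bq_0$, so \eqref{eq_bqmbq0} is satisfied trivially.

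The hard part will be proving the rate bound $|\td{\mathbf{b}}_\tau| \leq C(J)\|U^{++}(\tau)\|^{2(J+1)}$, since the unstable eigenvalue $+\tfrac12$ in the ODE for $\td{\mathbf{b}}$ forbids a direct Duhamel representation from $-\infty$: the naive integral $\int_{-\infty}^\tau e^{(\tau-s)/2}\mathbf{v}_s\,ds$ diverges when $\|\mathbf{v}_s\|$ decays only polynomially. The plan is to use a backward iteration instead: applying the backward formula on unit intervals gives
\[ |\td{\mathbf{b}}_\tau| \leq e^{-1/2}|\td{\mathbf{b}}_{\tau+1}| + 2\sup_{[\tau,\tau+1]}\|\td\bY\|, \]
and iterating $N = \lfloor \td\tau - \tau\rfloor$ times yields
\[ |\td{\mathbf{b}}_\tau| \leq e^{-N/2}\eta + 2\sum_{k=0}^{N-1} e^{-k/2}\sup_{[\tau+k,\tau+k+1]}\|\td\bY\|. \]
In the ancient regime, with $\|\td\bY_s\|\leq C/|s|^p$, splitting the sum at $k = |\tau|/4$ will show that for small $k$ the supremum is $\leq C/|\tau|^p$ and the geometric sum of $e^{-k/2}$ contributes only a constant factor, while for large $k$ the weight $e^{-k/2}$ is exponentially small in $|\tau|$ and absorbs any polynomial growth of the source; the total is then $\leq C\|U^{++}(\tau)\|^p \leq C\|U^{++}(\tau)\|^{2(J+1)}$. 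In the exponential regimes $\|\td\bY_s\|$ itself decays exponentially going backward in time, so the weighted sum is dominated by its first terms and the same bound holds. Finally the $e^{-N/2}\eta$ contribution is always negligible compared with $\|U^{++}(\tau)\|^{2(J+1)}$.
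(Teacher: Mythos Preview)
Your treatment of the rotational part $A_\tau$ is essentially the paper's argument, and your integral estimate $\int_{-\infty}^\tau \|\td\bY_s\|\,ds \leq C(J)\|U^{++}(\tau)\|^{2(J+1)}$ is correct and established in the same way.

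However, there is a genuine gap in the translational part: your choice $\bq = \bq_0$ is wrong in general. The issue is your final assertion that ``the $e^{-N/2}\eta$ contribution is always negligible compared with $\|U^{++}(\tau)\|^{2(J+1)}$.'' This fails in the exponential regimes. Suppose for instance $\tau_{\frac12}=-\infty$; then by Assertion~\ref{Prop_PO_ancient_d} one has, with $\xi\leq\tfrac1{10}$, the two-sided bound $\|U^{++}(\tau)\| \asymp e^{(\frac12\pm\xi)(\tau-\td\tau)}\|U^{++}(\td\tau)\|$, so that $\|U^{++}(\tau)\|^{2(J+1)}$ decays at least like $e^{0.8(J+1)(\tau-\td\tau)}$, while $e^{-N/2}\eta \sim e^{(\tau-\td\tau)/2}\eta$. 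Since $0.8(J+1)\geq 1.6 > \tfrac12$ for $J\geq 1$, the homogeneous term $e^{-N/2}\eta$ dominates $\|U^{++}(\tau)\|^{2(J+1)}$ as $\tau\to -\infty$, and your bound cannot hold with any uniform $C(J)$. Equivalently: the ODE for $\td{\mathbf b}_\tau$ has eigenvalue $+\tfrac12$, so going backward the homogeneous part decays only like $e^{\tau/2}$; this means $|\td{\mathbf b}_\tau|=e^{\tau/2}|\mathbf b_\tau|\sim e^{\tau/2}|\mathbf b_{-\infty}|$ whenever $\mathbf b_{-\infty}\neq 0$, and there is no reason for $\mathbf b_{-\infty}$ to vanish---the gauging procedure only guarantees $|\mathbf b_\tau|\leq \tfrac12\eta\, e^{-\tau/2}$.

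The paper circumvents this by choosing $\bq$ adaptively: it sets $\bq := \bq_0 - \mathbf b_{\tau_{\frac12}}$ (with $\mathbf b_{\tau_{\frac12}}:=\mathbf b_{-\infty}$ when $\tau_{\frac12}=-\infty$), so that one must bound $e^{\tau/2}|\mathbf b_\tau - \mathbf b_{\tau_{\frac12}}|$ instead. This difference is controlled by integrating $|\partial_\tau \mathbf b_\tau|\leq Ce^{-\tau/2}\|\td\bY_\tau\|$ over the appropriate interval; on $[\tau_{\frac12},\tau]$ this produces exactly the weighted integral $\int_{\tau_{\frac12}}^\tau e^{(\tau-\tau')/2}\|\td\bY_{\tau'}\|\,d\tau'$, which the paper bounds by $C(J)\|U^{++}(\tau)\|^{2(J+1)}$ using the exponential growth of $\|U^{++}\|$ on that interval. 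Your argument in the polynomial regime $(-\infty,\tau_{\frac12})$ is essentially fine, but you must switch to this subtracted quantity and handle the exponential intervals as the paper does.
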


\begin{proof}
We use the bounds from Assertion~\ref{Prop_PO_ancient_d} of this proposition, where we temporarily set $\xi := (10(J+1))^{-1} \leq \frac1{10}$.
On the time-interval $(\tau_1, \td\tau)$ (if non-empty) we have $\partial_\tau \Vert U_1 \Vert \geq 0.4 \Vert U_1 \Vert$, so for $\tau', \tau \in [\tau_1, \td\tau]$ with $\tau' \leq \tau$
\[ \Vert U_1 (\tau') \Vert
\leq e^{-0.4(\tau-\tau')} \Vert U_1(\tau) \Vert 
 . \]
The same bound follows for $U_{\frac12}$ over the time-interval $[\tau_{\frac12}, \tau_1]$.
Combining these two bounds and using the fact that $\Vert U_{1}(\tau_{1}) \Vert =\Vert U_{\frac12}(\tau_{1}) \Vert$ implies that for $\tau', \tau \in [\tau_{\frac12}, \td\tau]$ with $\tau' \leq \tau$ we have
\begin{equation} \label{eq_Uppexp}
   \Vert U^{++} (\tau') \Vert \leq C e^{-0.4(\tau-\tau')} \Vert U^{++}(\tau) \Vert. 
\end{equation}
So using \eqref{eq_Jp_pol_1} we obtain for any $\tau \in [ \tau_{\frac12}, \td\tau]$
\begin{multline} \label{eq_tau12totau}
 \int_{\tau_{\frac12}}^\tau e^{(\tau-\tau')/2} \Vert \td\bY(\tau') \Vert d\tau' 
\leq C( J) \int_{\tau_{\frac12}}^\tau e^{(\tau-\tau')/2}  \Vert U^{++}(\tau') \Vert^{2(J+1)} d\tau'  \\
\leq C(J) \int_{\tau_{\frac12}}^\tau e^{(\tau-\tau')/2} e^{-0.8(J+1)(\tau-\tau')} \Vert U^{++}(\tau) \Vert^{2(J+1)} d\tau' 
\leq C( J) \Vert U^{++}(\tau) \Vert^{2(J+1)}. \end{multline}
On the time-interval $(-\infty, \tau_{\frac12})$, Assertion~\ref{Prop_PO_ancient_d} implies that $\partial_\tau U_{0,\min}^{-1} \geq  1$, so for $\tau' \leq \tau \leq \tau_{\frac12}$
\[ | U_{0,\min} (\tau') | \leq \frac{1}{-U_{0,\min}^{-1} (\tau)+\tau - \tau'}. \]
So using \eqref{eq_Jp_pol_1} we obtain that for any $\tau \leq \tau_{\frac12}$
\begin{align}
 \int_{-\infty}^{\tau} \Vert \td\bY(\tau') \Vert d\tau'
&\leq C( J)  \int_{-\infty}^\tau \Vert U^{++}(\tau') \Vert^{2J+3} d\tau' \notag \\
&\leq C( J) \int_{-\infty}^{\tau}  \frac{d\tau'}{(-U_{0,\min}^{-1}(\tau)+\tau - \tau')^{2J+3}} \notag \\
&\leq C( J) |U_{0,\min}(\tau)|^{2J+2} \notag \\
&\leq C( J) \Vert U^{++}(\tau) \Vert^{2J+2}.\label{eq_integralinfinitytau}
\end{align} 
Combining \eqref{eq_tau12totau} (note that the term $e^{(\tau-\tau')/2}$ in the integrand is $\geq 1$), \eqref{eq_integralinfinitytau} and \eqref{eq_Uppexp} implies that for any $\tau \leq \td\tau$
\begin{equation} \label{eq_int_Y_all}
   \int_{-\infty}^{\tau} \Vert \td\bY(\tau') \Vert d\tau' 
\leq C(J) \Vert U^{++}(\tau) \Vert^{2(J+1)}. \end{equation}
On the other hand, Assertion~\ref{Prop_PO_ancient_d} also gives us the coarser bound $\partial_\tau \Vert U_0 \Vert \leq (10J)^{-1} \Vert U_0 \Vert$, which implies that whenever $\tau \leq \tau' \leq \tau_{\frac12}$, then
\[ \Vert U^{++} (\tau') \Vert \leq C\Vert U_0 (\tau') \Vert \leq C e^{(10(J+1))^{-1}(\tau'-\tau)} \Vert U_0(\tau) \Vert \leq C e^{(10(J+1))^{-1}(\tau'-\tau)} \Vert U^{++}(\tau) \Vert. \]
Integrating this bound implies as before that for $\tau \leq \tau_{\frac12}$
\begin{multline*}
 \int_\tau^{\tau_{\frac12}} e^{-(\tau' -\tau)} \Vert \td\bY(\tau') \Vert d\tau' 
 \leq C( J) \int_\tau^{\tau_{\frac12}} e^{-(\tau' -\tau)} \Vert U^{++} (\tau') \Vert^{2(J+1)} d\tau' \\
 \leq  C( J) \int_\tau^{\tau_{\frac12}} e^{-(\tau' -\tau)} e^{0.2(\tau'-\tau)}\Vert U^{++} (\tau) \Vert^{2(J+1)} d\tau'
 \leq C( J)\Vert U^{++} (\tau) \Vert^{2(J+1)}.
 \end{multline*}

The bound \eqref{eq_S_converges} now follows from the fact that $\|\partial_\tau A_\tau \circ A_{\tau}^{-1}\| \leq C \Vert \td\bY (\tau) \Vert$ together with \eqref{eq_int_Y_all} and \eqref{eq_Stjtoid}.
Next, we bound the evolution of $\mathbf b_\tau$ as follows, recall that $\td\bY_\tau = (e^{\tau/2})_* \bY_{t_0 - e^{-\tau}}$,
\[ |\partial_\tau \mathbf b_\tau| 
= |\partial_\tau S_{t_0 - e^{-\tau}} (\bq_0)|
=  |\bY_{t_0 - e^{-\tau}} (S_{t_0 -e^{-\tau}}(\bq_0))|
= e^{-\tau/2} |\td\bY_{\tau} (e^{\tau/2} S_{t_0 -e^{-\tau}}(\bq_0))|. \]
Due to Proposition~\ref{Prop_gauging}\ref{Prop_gauging_c} we have $|e^{\tau/2} S_{t_0 -e^{-\tau}}(\bq_0)| \leq \frac12\eta \leq 1$ so
\begin{equation} \label{eq_dtb}
   |\partial_\tau \mathbf b_\tau| \leq Ce^{-\tau/2} \Vert \td\bY_\tau \Vert. 
\end{equation}
If $\tau_{\frac12} = -\infty$, then \eqref{eq_tau12totau} implies that $\mathbf b_{\tau_{\frac12}} = \mathbf b_{-\infty} := \lim_{\tau \to -\infty} \mathbf b_\tau$ exists.
Let us now set $\bq := \bq_0- \mathbf b_{\tau_\frac12}$, which also makes sense if $\tau_{\frac12} > -\infty$.
If $\tau > \tau_{\frac12}$, then the bound \eqref{eq_b_converges} follows by combining \eqref{eq_dtb} with \eqref{eq_tau12totau}.
If $\tau \leq \tau_{\frac12}$, then it follows by combining \eqref{eq_dtb} with \eqref{eq_int_Y_all}.

The bound \eqref{eq_bqmbq0} follows from \eqref{eq_Sq0} and \eqref{eq_b_converges} for $\tau = \td\tau$ as long as $\eta' \leq \ov\eta' ( J)$:
\begin{multline*} 
   |\bq_0-\bq| 
\leq  |\mathbf b_{\td\tau} |  +  \big| \mathbf b_{\td\tau} - (\bq_0 - \bq) \big| 
=  |S_{t_0 - r_0^2} (\bq_0) | + \big| \mathbf b_{\td\tau} - (\bq_0 - \bq) \big| \\
\leq \tfrac12 \eta r_0 + C( J) r_0 \Vert U^{++} (\td\tau) \Vert^{2(J+1)} 
\leq \tfrac12 \eta r_0 + C( J) r_0 (\eta')^{2(J+1)} 
\leq \eta r_0. \qedhere
\end{multline*}
\end{proof}
\medskip

We will now prove Assertion~\ref{Prop_PO_ancient_a}.
Note that
\begin{multline*}
 \td\MM^{\prime, \reg}_{\tau} 
= e^{\tau/2} A_{\tau} (\MM^{\reg}_{t_0 - e^{-\tau}} - \bq_0) + e^{\tau/2} \mathbf b_\tau
=  A_{\tau} \big(\td\MM^{\reg}_{\tau} - e^{\tau/2} (\bq_0-\bq) \big) + e^{\tau/2} \mathbf b_\tau \\
=  A_{\tau} \td\MM^{\reg}_{\tau} + e^{\tau/2} \big( \mathbf b_\tau - A_\tau (\bq_0-\bq)  \big)
\end{multline*}
and, using \eqref{eq_S_converges}, \eqref{eq_b_converges} and \eqref{eq_bqmbq0},
\begin{multline} \label{eq_bAqq_small}
   e^{\tau/2} \big|\mathbf b_\tau - A_\tau (\bq_0-\bq) \big|
\leq e^{\tau/2} \big|\mathbf b_\tau - (\bq_0-\bq) \big| + C e^{\td\tau/2} \big| A_\tau - \id \big| \cdot |\bq_0 - \bq| \\
\leq C( J) \Vert U^{++}(\tau) \Vert^{2(J+1)}.
\end{multline}
Assuming $R^* \geq \underline R^*$, we have $R(\tau) \leq e^{R^2(\tau)} = \Vert U^{++}(\tau) \Vert^{-J-1}$.
So we can apply Lemma~\ref{Lem_up_minus_u}, Claim~\ref{Cl_upUP}, \eqref{eq_S_converges} and \eqref{eq_bAqq_small} to obtain that
\[ \Vert u'_\tau - u_\tau \Vert_{C^m(\IB_{R(\tau)}^k \times \IS^{n-k})} \leq C( J', m) \Vert U^{++}(\tau) \Vert^{J+1}. \]
Combining this with \eqref{eq_upmUp} implies \eqref{eq_u_UP_prop}.
The bounds \eqref{eq_prop_u_remaining_bounds} follow for $\eta' \leq \ov\eta ( J, m, \eta)$, using \eqref{eq_Upsqrt}.
\end{proof}
\bigskip

\begin{proof}[Proof of Proposition~\ref{Prop_Add_tau2}.]
In the following $c, C$ will be generic constants that may depend on the flow $\MM$, the parameters $(\bq_0^{(i)}, t_0^{(i)})$, $r_0^{(i)}$, $J^{(i)}, m^{(i)}$,  $\eta^{(i)}$ and $\xi^{(i)}$, but not on time. 
Suppose by contradiction and without loss of generality that $\tau_{\frac12}^{(0)} = -\infty$, but $\tau_{\frac12}^{(1)} > -\infty$.
Then by integrating the bounds in Assertion~\ref{Prop_PO_ancient_d} of Proposition~\ref{Prop_PO_ancient} as in the proof of Claim~\ref{Cl_S_converges}, we obtain asymptotic bounds of the following form (recall that we have conveniently assumed $\xi^{(i)} \leq \frac1{10}$):
\[ \big\Vert U^{++, (0)}(\tau) \big\Vert_{L^2_f} \leq C e^{ (\frac12 - \xi^{(0)}) \tau/2} \leq C e^{c\tau}, \qquad \frac{c}{|\tau|} \leq \big\Vert U^{+, (1)}(\tau) \big\Vert_{L^2_f} \leq \frac{C}{|\tau|} . \]
Consider the functions $u_\tau^{(i)} : \DD^{(i)}_\tau \to \IR^{1+n'}$, which express $\td\MM^{(i),\reg}_\tau = e^{\tau/2} ( \MM^{\reg}_{t_0^{(i)} - e^{-\tau}} - \bq^{(i)})$ intersected with $\IB^{n+1+n'}_{R^{(i)} (\tau)}$ as a graph over the standard cylinder.
Due to Assertion~\ref{Prop_PO_ancient_a} of Proposition~\ref{Prop_PO_ancient} we have similar asymptotic bounds of the form
\begin{equation} \label{eq_cmco}
 \big\Vert u^{(0)}_\tau \big\Vert_{C^m(\IB_2^k \times \IS^{n-k})} \leq C e^{c\tau}, \qquad \big\Vert u^{(1)}_\tau \big\Vert_{C^0(\IB_1^k \times \IS^{n-k})} \geq \frac{c}{|\tau|}. 
\end{equation}
However if $t = t_0^{(0)} - e^{-\tau^{(0)}} = t_0^{(1)} - e^{-\tau^{(1)}}$, then $\td\MM^{(1), \reg}_{\tau^{(1)}} =\td\MM^{(0), \reg}_{\tau^{(1)}} - e^{\tau/2} (\bq^{(0)} - \bq^{(1)})$, so Lemma~\ref{Lem_up_minus_u} implies
\[  \big\Vert u^{(1)}_{\tau^{(1)}} \big\Vert_{C^0(\IB_1^k \times \IS^{n-k})} 
\leq  \big\Vert  u^{(0)}_{\tau^{(0)}} \big\Vert_{C^0(\IB_1^k \times \IS^{n-k})} +  \big\Vert u^{(1)}_{\tau^{(1)}} - u^{(0)}_{\tau^{(0)}} \big\Vert_{C^0(\IB_1^k \times \IS^{n-k})}
\leq C e^{c\tau^{(0)}}, \]
so due to \eqref{eq_cmco} we have for $t \ll 0$
\[  \frac{c}{\log(t^{(1)}_0 - t)} = 
\frac{c}{|\tau^{(1)}|} \leq C e^{c\tau^{(0)}} = C \big(t^{(0)}_0-t \big)^{-c}, \]
which yields a contradiction for $t \to -\infty$.
\end{proof}
\bigskip

\section{Flows with dominant linear mode} \label{sec_dom_lin}
\subsection{Statement of the results}
In this section we will consider asymptotically cylindrical mean curvature flows with dominant linear mode.
Our main result is the following:

\begin{Theorem} \label{Thm_bowl_unique}
If $\MM$ is an asymptotically $(n,k)$-cylindrical mean curvature flow in $\IR^{n+1} \times (-\infty,T)$ with dominant linear mode, then $\MM$ is homothetic to $\IR^{k-1} \times \MM_{\bowl}^{n-k+1}$, where $ \MM_{\bowl}^{n-k+1}$ is the $(n-k+1)$-dimensional bowl soliton.
\end{Theorem}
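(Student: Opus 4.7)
The plan is to apply Proposition~\ref{Prop_PO_ancient} to $\mathcal{M}$ with large $J$ and $m$ to extract an arbitrarily accurate asymptotic expansion, to identify this expansion with the corresponding one for the model $\mathcal{M}^\star := \mathbb{R}^{k-1}\times\mathcal{M}^{n-k+1}_{\bowl}$, and finally to deduce that $\mathcal{M}=\mathcal{M}^\star$ (up to a homothety) by a comparison argument.

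\emph{Normalization of the leading linear mode.} By Definition~\ref{Def_dominant_modes} and Assertion~\ref{Prop_PO_ancient_d} of Proposition~\ref{Prop_PO_ancient}, the dominant-linear-mode hypothesis gives $\tau_{\frac12}=-\infty$, hence for $\tau\ll 0$ one has $\|U^{++}(\tau)\|\asymp\|U_{\frac12}(\tau)\|\asymp e^{\tau/2}$. Since $\sV_{\rot,\frac12}$ is spanned by $\mathfrak{p}^{(1)}_1,\dots,\mathfrak{p}^{(1)}_k$, I may precompose $\mathcal{M}$ with an $O(k)$-rotation of $\mathbb{R}^{n+1}$ (which preserves the cylindrical structure) so that the leading direction of $U_{\frac12}(\tau)$ is $\mathfrak{p}^{(1)}_1$, giving $U_{\frac12}(\tau)=b_0 e^{\tau/2}\mathfrak{p}^{(1)}_1+o(e^{\tau/2})$ with $b_0>0$. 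A parabolic rescaling and time-shift then normalize $b_0$ to match the analogous linear-mode coefficient of a suitably translated $\mathcal{M}^\star$.

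\emph{Matching the full asymptotic expansion.} Both $U^+_{\mathcal{M}}(\tau)$ and the corresponding vector $U^+_{\mathcal{M}^\star}(\tau)$ satisfy the same ODE $\partial_\tau U^+ = LU^+ + Q_J^+(U^+)+O(\|U^{++}\|^{J+1})$ of Assertion~\ref{Prop_PO_ancient_b}, and they share the same leading term $b_0 e^{\tau/2}\mathfrak{p}^{(1)}_1$. A standard order-by-order argument, expanding $U^+$ as a formal series in $e^{\tau/2}$ (with $\tau$-polynomial factors allowed at resonant orders) and matching coefficients inductively, yields $U^+_{\mathcal{M}}(\tau)-U^+_{\mathcal{M}^\star}(\tau) = O(e^{N\tau/2})$ for arbitrary $N\le J$. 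Combined with Assertion~\ref{Prop_PO_ancient_a}, whose ball radius $R(\tau)\ge c\sqrt{J|\tau|}$ grows without bound, this produces the following quantitative closeness: for every $A\ge 1$ and every $N\ge 1$ there exists $\tau_{A,N}$ such that the graphs representing $\tilde{\mathcal{M}}_\tau$ and $\tilde{\mathcal{M}}^\star_\tau$ over $\mathbb{B}^k_A\times\mathbb{S}^{n-k}$ differ in $C^m$ by at most $C_{A,N,m}\,e^{N\tau/2}$ for all $\tau\le \tau_{A,N}$. In the unrescaled picture this is super-polynomial closeness, in $(-t)^{-1}$, on arbitrarily large bounded spatial regions.

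\emph{Comparison argument.} Let $\mathbf{v}\in\mathbb{R}^{n+1}$ denote the direction in which $\mathcal{M}^{n-k+1}_{\bowl}$ translates, and set $\mathcal{M}^\star_\varepsilon := \mathcal{M}^\star + \varepsilon\mathbf{v}$. By strict convexity of the bowl, $\mathcal{M}^\star_\varepsilon$ and $\mathcal{M}^\star$ are pairwise disjoint at every time. To apply the avoidance principle to $\mathcal{M}$ and $\mathcal{M}^\star_\varepsilon$ one must check disjointness at some initial time $t_0(\varepsilon)\ll 0$: the estimate from the previous step provides this on a large bounded ball, while Corollary~\ref{cor:blow-down} (together with Corollary~\ref{Cor_unique_tangent_infinity}) handles the complement, where both flows are close to the same translate of the round cylinder. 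The Brakke-flow avoidance principle then propagates disjointness to all subsequent times, and letting $\varepsilon\to 0$ from either side of $\mathbf{v}$ forces $\mathcal{M}=\mathcal{M}^\star$. The main obstacle is precisely this spatial-infinity check: it requires \emph{uniform} closeness of $\mathcal{M}$ to the cylindrical model on spatial regions that grow without bound as $t\to-\infty$, and this is exactly what the PDE--ODI principle delivers (via Assertion~\ref{Prop_PO_ancient_a} applied with arbitrarily large $J$), whereas prior localization methods, yielding only finite-order asymptotics, could not.
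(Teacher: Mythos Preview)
Your outline follows the right broad strategy, but the comparison step has a real gap that the paper's argument is specifically designed to close. Applying Proposition~\ref{Prop_PO_ancient} at a \emph{single} basepoint gives closeness of $\tilde\MM_\tau$ to $\tilde\MM^\star_\tau$ only on a ball of radius $R(\tau)\sim\sqrt{J|\tau|}$ in the rescaled picture, i.e.\ of radius $\sim\sqrt{|t|\log|t|}$ in the unrescaled picture at time $t$. The bowl's tip at time $t$ sits at distance $\sim|t|$, far outside this region; so your ``large bounded ball'' misses the entire cap and shoulder. Your appeal to Corollaries~\ref{cor:blow-down} and~\ref{Cor_unique_tangent_infinity} for the complement is not sufficient: these give only qualitative blow-down information, not the quantitative separation needed to place $\MM_{t_0}$ strictly on one side of $\MM^\star_\varepsilon$ near the tip. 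Without that, no version of the avoidance principle applies (and for Brakke flows one needs the \emph{strong} avoidance principle of \cite{Choi_Haslhofer_Hershkovits_White_22}, which requires a compact contact set, not global initial disjointness).

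The paper overcomes this by applying Proposition~\ref{Prop_PO_ancient} at \emph{every} axis point $(\bq_0,t_0)$ with $t_0$ chosen so that $(\bq_0,t_0)$ is the tip of $\MM^\star$; this makes $r_0(\bq_0,t_0)$ uniformly bounded, and a compactness argument (Claim~\ref{Cl_rp0_bounded}) bounds $r'_0(\bq_0,t_0)$ as well. The upshot (Lemma~\ref{l:H^10}) is $|v|\le CH^{10}$ everywhere outside a fixed bounded tip-neighborhood, uniformly in $t$. Since the separation between $\MM^\star_t$ and $\MM^\star_{t+\Delta T}$ is $\sim|\Delta T|\,H\gg H^{10}$, this confines $(\spt\MM')_t\cap(\spt\MM)_{t+\Delta T}$ to a fixed compact set, after which the strong avoidance principle yields $\Delta T_+=0$. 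A secondary issue: your ``order-by-order'' matching in Step~2 is not automatic either, because the expansion of Proposition~\ref{Prop_ab_exist} has \emph{two} free parameters $\bar b_i$ and $\bar a$ (the latter at order $e^\tau$); you must also match $\bar a$ via the basepoint/time-shift freedom as in Corollary~\ref{Cor_ab_normalization} before the induction can proceed.
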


This theorem is stated in the codimension-one setting.  
However, the arguments are not restricted to this case and can be adapted to higher codimension without difficulty.  
Since the higher codimension case has already been reduced to codimension one in \cite{Colding_Minicozzi_codim1}, we restrict to codimension one here for convenience.

The proof of Theorem~\ref{Thm_bowl_unique} proceeds in three steps.  
In Subsection~\ref{subsec_asymptotics_bowl}, we study the asymptotic characterization from Proposition~\ref{Prop_PO_ancient} and analyze its dependence on basepoints and its behavior under translations and time-shifts.  
In Subsection~\ref{subsec_approx_bowl}, we use this to show that, up to rescaling and translation, $\MM$ is close to $\IR^{k-1} \times \MM_{\bowl}^{n-k+1}$ at bounded distance from the cap, with the closeness improving rapidly farther away.  
Finally, in Subsection~\ref{subsec_bowl_unique} we apply a comparison principle to conclude that $\MM$ coincides with $\IR^{k-1} \times \MM_{\bowl}^{n-k+1}$.

In the following we will assume that the dimensions $n, k$ are fixed, we will write $M_{\cyl} = M^{n,k}_{\cyl}$, $\MM_{\cyl} = \MM^{n,k}_{\cyl}$ and $\MM_{\bowl}=  \MM_{\bowl}^{n-k+1}$ and omit dependencies on these constants.

\subsection{Asymptotics at $-\infty$} \label{subsec_asymptotics_bowl}
In this subsection, we consider asymptotically cylindrical flows $\MM$ without dominant quadratic mode; such flows are either round shrinking cylinders or have dominant linear mode.
We will analyze their characterization from Proposition~\ref{Prop_PO_ancient} in terms of the leading mode $U^{++} = U_1 + U_{\frac12} + U_0$.
Our goal is to characterize the precise asymptotic behavior of this leading mode.
We will first state all results of this subsection and then carry out their proofs.

The first result shows that these asymptotics are fully determined by a finite set of parameters, denoted by $\ov a$ and $\ov b_i$.

\begin{Proposition} \label{Prop_ab_exist}
Consider the setting of Proposition~\ref{Prop_PO_ancient} for $J \geq 2$.
Suppose that $\MM$ does not have dominant quadratic mode, so $\tau_2 = -\infty$ in Assertion~\ref{Prop_PO_ancient_d}.
Then we can find unique numbers $\ov a, \ov b_i$ such that we have the following asymptotic characterization (using the Hermite polynomials from \eqref{eq_Hermite})
\begin{equation} \label{eq_Up_coeff}
 U^{+}(\tau) = 
\sum_i \ov b_i e^{\tau/2} \mathfrak p^{(1)}_i 
+  \Big(\ov a - \tfrac12 \sum_i \ov b_i^2 \tau \Big) e^\tau \mathfrak p^{(0)}
- \tfrac1{\sqrt{2}} \sum_{i,j} \ov b_i \ov b_j e^{\tau} \mathfrak p^{(2)}_{ij} + O( e^{1.2\tau} ). 
\end{equation}
Here  
$O( e^{1.2\tau} )$ denotes a term whose norm is bounded by $C e^{1.2\tau}$ for a constant $C$, which may depend on the function $U^{+}$ itself, but not on time.
\end{Proposition}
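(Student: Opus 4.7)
The plan is to convert the PDE–ODI for $U^+$ from Proposition~\ref{Prop_PO_ancient}\ref{Prop_PO_ancient_b} into a system of scalar ODEs for the coefficients of $U^{++}$ in the Hermite basis, and then extract the asymptotic constants $\ov a,\ov b_i$ by iterated integration.

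First, I would use Proposition~\ref{Prop_PO_ancient}\ref{Prop_PO_ancient_d} with $\tau_{\frac12}=-\infty$ (which holds by the non-dominant-quadratic assumption and Definition~\ref{Def_dominant_modes}) to establish the crude bound $\|U^{++}(\tau)\|_{L^2_f}\le C e^{(\frac12-\xi)\tau}$ as $\tau\to-\infty$; here $\xi$ may be taken arbitrarily small by varying the parameters in Proposition~\ref{Prop_PO_ancient}. Write $U^{++}=a\fp^{(0)}+\sum_i b_i\fp^{(1)}_i+\sum_{ij}c_{ij}\fp^{(2)}_{ij}$. Applying Proposition~\ref{Prop_PO_ancient}\ref{Prop_PO_ancient_b} with $J\ge 2$ together with the explicit formulas \eqref{eq_Q2p0}--\eqref{eq_Q2p2} for $\PP_{\sV^{++}}Q_2$, and using Assertion~\ref{Prop_PO_ancient_c} to control $\PP_{\sV_{\rot,<0}\oplus\sV_{\osc}}Q_J^+$ by $O(\|U^{++}\|^3)$, yields the scalar system
\begin{align*}
\partial_\tau b_i &= \tfrac12 b_i - ab_i - \sqrt2\sum_l c_{il}b_l + O(e^{3\tau/2}),\\
\partial_\tau c_{ij} &= -\sqrt2\sum_l c_{il}c_{lj} - \tfrac1{\sqrt2}b_ib_j - ac_{ij} + O(e^{3\tau/2}),\\
\partial_\tau a &= a - \tfrac12\bigl(a^2 + \sum_i b_i^2 + \sum_{ij}c_{ij}^2\bigr) + O(e^{3\tau/2}).
\end{align*}

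Next, I would extract $\ov b_i$. Projecting the equation for $b_i$ shows that the nonlinear source is $O(e^{(1-2\xi)\tau})$, so $\partial_\tau(e^{-\tau/2}b_i)=O(e^{(\frac12-2\xi)\tau})$, which is integrable at $-\infty$. Hence $\ov b_i:=\lim_{\tau\to-\infty}e^{-\tau/2}b_i(\tau)$ exists uniquely and $b_i(\tau)=\ov b_i e^{\tau/2}+O(e^{(1-2\xi)\tau})$. Set $\beta:=\sum_i\ov b_i^2$. For $c_{ij}$, integrate the ODE backwards from a fixed time using $c_{ij}\to 0$ at $-\infty$; a first pass gives the improved bound $c_{ij}=O(e^{(1-2\xi)\tau})$, and a second pass (in which $c^2$ and $ac$ are now $O(e^{(\frac32-3\xi)\tau})$ or better) yields $c_{ij}(\tau)=-\tfrac1{\sqrt2}\ov b_i\ov b_j e^\tau+O(e^{(\frac32-2\xi)\tau})$.

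The main difficulty lies in $a$. The linear part has eigenvalue $1$, which resonates with the dominant source $-\tfrac12\beta e^\tau$, so $e^{-\tau}a$ does not converge at $-\infty$. I would treat this by variation of constants: fixing $\tau_0$ and writing
\[a(\tau)=e^{\tau-\tau_0}a(\tau_0)+\int_{\tau_0}^\tau e^{\tau-s}\bigl[-\tfrac12\beta e^s+\tilde F(s)\bigr]ds,\]
the $-\tfrac12\beta e^s$ piece produces the secular term $-\tfrac12\beta\tau e^\tau$, while $\tilde F$ must first be upgraded so that $e^{-s}\tilde F(s)$ is integrable at $-\infty$. This in turn requires first bootstrapping $a=O(e^{(1-2\xi)\tau})$ from a preliminary application of the same formula, and then reusing the improved $c$-bound; the coupled bootstrap terminates once the exponent in $\tilde F$ exceeds $1$, at which point the integral $\int_{-\infty}^{\tau_0}e^{-s}\tilde F(s)ds$ converges to a finite constant that, combined with the rescaled initial value, defines $\ov a$ uniquely, giving $a(\tau)=(\ov a-\tfrac12\beta\tau)e^\tau+O(e^{(\frac32-2\xi)\tau})$.

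Finally, for the non-$V^{++}$ modes I would use that on $\sV_{-i/2}$ the operator $L$ has strictly negative eigenvalue $-i/2$, so the ODE $\partial_\tau U_{-i/2}=-\tfrac{i}{2}U_{-i/2}+\PP_{\sV_{-i/2}}Q_J^+(U^+)$ admits a unique bounded solution given by the forward convolution against the improved-source bound $O(e^{3\tau/2})$ (obtained from the asymptotics of $a,b,c$), yielding $\|U_{-i/2}\|_{L^2_f}=O(e^{3\tau/2})$ for all $i\ge 1$. Assembling these pieces gives the expansion \eqref{eq_Up_coeff} with error $O(e^{(\frac32-2\xi)\tau})$, which is $O(e^{1.2\tau})$ once $\xi$ is chosen small (e.g.\ $\xi<3/20$). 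Uniqueness of $\ov a,\ov b_i$ follows directly since they arise as well-defined limits. The principal obstacle throughout is organising the coupled bootstrap so that the a priori rough exponents on $a$ and $c_{ij}$ exceed $\tfrac12$ before attempting the resonant integration of the $a$-equation.
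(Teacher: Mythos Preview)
Your approach is essentially the same as the paper's: derive scalar ODEs for $a,b_i,c_{ij}$ from the ODI and Lemma~\ref{Lem_Q2}, then bootstrap. Two points deserve comment.

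First, there is no need to ``vary the parameters to make $\xi$ small.'' The setting of Proposition~\ref{Prop_PO_ancient} already fixes $\xi\le\tfrac1{10}$, and the paper simply uses this to get the clean chain $\|U^{++}\|=O(e^{0.4\tau})$, hence $\|U^+-U^{++}\|=O(e^{0.8\tau})$ by Assertion~\ref{Prop_PO_ancient_c}, and all cubic and ODI errors are $O(e^{1.2\tau})$. Your exponent $\tfrac32-2\xi$ should actually be $\tfrac32-3\xi$ (the limiting term is $\|U^{++}\|^3$), but with $\xi\le\tfrac1{10}$ this is exactly $1.2$, so the conclusion is unaffected.

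Second, your treatment of the negative modes glosses over the one nontrivial cancellation. You assert that the source $\PP_{\sV_{-i/2}}Q_J^+(U^+)$ is $O(e^{3\tau/2})$ ``obtained from the asymptotics of $a,b,c$,'' but a priori the dominant contribution $-\tfrac12\big(\sum_j\ov b_j e^{\tau/2}\fp^{(1)}_j\big)^2$ is only $O(e^\tau)$. The point, which the paper states explicitly, is that this square is a polynomial of degree $\le 2$ and therefore lies in $\sV_{\ge 0}$; its projection to $\sV_{-1/2}$ and $\sV_{-1}$ vanishes identically. Only then do the remaining cross terms ($bc$, etc.) give the source bound $O(e^{1.2\tau})$, from which $\|U_{-i/2}\|=O(e^{1.2\tau})$ follows by forward integration as you describe. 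Without this observation your argument would stall at $U_{-1/2}=O(e^\tau)$.
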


The next result characterizes the dependence of the coefficients $\ov a$ and $\ov b_i$ on the basepoint $(\bq, t_0)$, which is supplied by Proposition~\ref{Prop_PO_ancient}.
It also characterizes the location of the point $\bq$ and shows that the coefficients $\ov a$ and $\ov b_i$ are independent of the choices of scale $r_0$ and the auxiliary parameters from Proposition~\ref{Prop_PO_ancient_d}.

\begin{Proposition} \label{Prop_dependence_ab}
Suppose that $\MM$ is an asymptotically cylindrical mean curvature flow in $\IR^{n+1} \times (-\infty,T)$ without dominant quadratic mode.
Apply Proposition~\ref{Prop_PO_ancient} with two different choices of $(\bq_0^{(i)}, t_0^{(i)})$, $r_0^{(i)}$, and constants $J^{(i)} \geq 2$, $m^{(i)}$,  $\eta^{(i)}$ and $\xi^{(i)}$, for $i=0,1$.  
Let $\bq^{(i)}$ and $U^{+,(i)}$ be the point and the function satisfying the assertions of Proposition~\ref{Prop_PO_ancient} for these choices and let $\ov a^{(i)}$ and $\ov b_j^{(i)}$ be as in Proposition~\ref{Prop_ab_exist}.
Then
\[
\ov b_j^{(1)} = \ov b_j^{(0)}, \qquad \ov a^{(1)} =  \ov a^{(0)} + \frac1{\sqrt{2}} \sum_j \ov b_j^{(0)} (\bq^{(1)} - \bq^{(0)})_j - \tfrac12 (t_0^{(1)} - t_0^{(0)}).
\]
and
\begin{equation} \label{eq_proj_qs}
 \proj_{\bO^k \times \IR^{n-k+1}} (\bq^{(1)}) = \proj_{\bO^k \times \IR^{n-k+1}} (\bq^{(0)}). 
\end{equation}
\end{Proposition}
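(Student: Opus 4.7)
The idea is to compare the two graph representations $u_\tau^{(i)}$ of $\MM$ at the same spacetime point $t$ and read off the transformation of the leading Hermite coefficients. Setting $s := t_0^{(1)} - t_0^{(0)}$ and $\mathbf{v} := \bq^{(1)} - \bq^{(0)}$, we have $\tau^{(1)} = \tau^{(0)} - s e^{\tau^{(0)}} + O(e^{2\tau^{(0)}})$ and
\[
  \td\MM^{(1),\reg}_{\tau^{(1)}} \;=\; \lambda\, \td\MM^{(0),\reg}_{\tau^{(0)}} \;-\; e^{\tau^{(1)}/2}\,\mathbf{v}, \qquad \lambda := 1 - \tfrac{s}{2}e^{\tau^{(0)}} + O(e^{2\tau^{(0)}}).
\]
I decompose $\mathbf{v} = \mathbf{v}_\parallel + \mathbf{v}_\perp$ according to the splitting $\IR^{n+1} = (\IR^k \times \bO^{n-k+1}) \oplus (\bO^k \times \IR^{n-k+1})$. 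The plan is first to show $\mathbf{v}_\perp = 0$ using the gauging built into Proposition~\ref{Prop_PO_ancient}, and then to compute the graph representation of the right-hand side above and substitute the expansion \eqref{eq_Up_coeff} into $u_{\tau^{(0)}}^{(0)}$ to read off the transformation law of $(\ov a,\ov b_j)$ via Proposition~\ref{Prop_ab_exist}.

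For \eqref{eq_proj_qs}, translating a near-cylindrical graph in the $\IR^{n-k+1}$-direction by $e^{\tau^{(1)}/2}\mathbf{v}_\perp$ and re-expressing it as a graph over $M_\cyl$ introduces, to linear order in $\mathbf{v}_\perp$, an additional term in $u^{(1)}_{\tau^{(1)}}$ equal to the Jacobi field of that infinitesimal translation, which lies in $\sV_{\osc,\frac12}$ and has size $\asymp e^{\tau^{(1)}/2}|\mathbf{v}_\perp|$ (cf.\ Lemma~\ref{Lem_mode_dec}). On the other hand, Proposition~\ref{Prop_PO_ancient}\ref{Prop_PO_ancient_a} together with $U^{+,(1)} \in \sV_{\rot}$ forces the oscillating part of $u^{(1)}_{\tau^{(1)}}$ to be bounded by $C\Vert U^{++,(1)}(\tau^{(1)})\Vert_{L^2_f}^{J^{(1)}+1}$. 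In the dominant linear mode case Proposition~\ref{Prop_ab_exist} gives $\Vert U^{++,(1)}\Vert_{L^2_f} \asymp e^{\tau^{(1)}/2}$, so this bound is $O(e^{(J^{(1)}+1)\tau^{(1)}/2})$, which decays strictly faster than $e^{\tau^{(1)}/2}$ as $\tau^{(1)} \to -\infty$ since $J^{(1)} \geq 2$; in the homothetic round shrinking cylinder case the axis is unique and $\mathbf{v}_\perp = 0$ directly. Either way $\mathbf{v}_\perp = 0$.

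With $\mathbf{v}_\perp = 0$, the transformation preserves the graph structure and yields
\[
  u_{\tau^{(1)}}^{(1)}(\bx', \by) \;=\; (\lambda - 1) \;+\; \lambda\, u_{\tau^{(0)}}^{(0)}\!\bigl(\lambda^{-1}(\bx' + e^{\tau^{(1)}/2}\mathbf{v}_\parallel),\, \by\bigr).
\]
Substituting the expansion \eqref{eq_Up_coeff} for $u_{\tau^{(0)}}^{(0)}$, using the shift formula $\fp^{(1)}_j(\bx + \mathbf{h}) = \fp^{(1)}_j(\bx) + \mathbf{h}_j/\sqrt{2}$, and converting powers of $e^{\tau^{(0)}}$ to $e^{\tau^{(1)}}$ via $e^{\tau^{(0)}} = e^{\tau^{(1)}} + O(e^{2\tau^{(1)}})$, one collects contributions at order $e^{\tau^{(1)}/2}$ (giving $\ov b_j^{(1)} = \ov b_j^{(0)}$, since the shift by $e^{\tau^{(1)}/2}\mathbf{v}_\parallel$ only modifies the zeroth Hermite coefficient) and at order $e^{\tau^{(1)}}$, where the $\fp^{(0)}$-coefficient combines the scaling contribution from $\lambda - 1$, the shift of $\sum_j \ov b_j^{(0)} e^{\tau^{(0)}/2} \fp^{(1)}_j$ by $e^{\tau^{(1)}/2}\mathbf{v}_\parallel$, and the original $\ov a^{(0)} e^{\tau^{(0)}}$ term. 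Contributions from the quadratic Hermite mode $-(1/\sqrt{2})\sum \ov b_i \ov b_j e^{\tau^{(0)}}\fp^{(2)}_{ij}$ and from the logarithmic correction $-\tfrac12 \sum \ov b_i^2 \tau^{(0)} e^{\tau^{(0)}}$ transform consistently and affect $\ov a^{(1)}$ only at order $O(e^{3\tau^{(1)}/2}) + O(|\tau^{(1)}|e^{2\tau^{(1)}})$, which is absorbed into the $O(e^{1.2\tau^{(1)}})$ error. Uniqueness of the coefficients in Proposition~\ref{Prop_ab_exist} then yields the claimed affine relation for $\ov a^{(1)}$.

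The main obstacle is the careful bookkeeping at order $e^{\tau^{(1)}}$ in the $\fp^{(0)}$-coefficient, since the rescaling $\lambda - 1$, the translation by $e^{\tau^{(1)}/2}\mathbf{v}_\parallel$, the time reparameterization, and the logarithmic term in~\eqref{eq_Up_coeff} all contribute at this order and must be collected together. One must also verify that the quadratic Hermite mode transforms into itself with coefficients $-\tfrac{1}{\sqrt{2}}\ov b_i^{(0)}\ov b_j^{(0)}$ unchanged, so that no quadratic contribution leaks into $\ov a^{(1)}$ or $\ov b_j^{(1)}$ after the pushforward.
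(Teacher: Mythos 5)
Your strategy is sound and, for the coefficient identities, essentially reproduces the paper's computation: both arguments identify the two graph representations of $\MM$ at a common spacetime slice and then match the expansion \eqref{eq_Up_coeff} coefficient by coefficient, using that everything in sight lies in a fixed finite-dimensional space of polynomials, so a pointwise $O(e^{1.5\tau})$ comparison upgrades to a comparison of coefficients and uniqueness in Proposition~\ref{Prop_ab_exist} finishes. Where you genuinely differ is \eqref{eq_proj_qs}: the paper proves $\bq^{\prime\prime,(0)}=\bq^{\prime\prime,(1)}$ geometrically, by noting that the cross-sectional spheres of the \emph{unrescaled} flow have centers within $Ce^{\tau}$ of both $\bq^{\prime\prime,(i)}$ (its bound \eqref{eq_q0q1}) and letting $\tau\to-\infty$; you instead argue that a nonzero perpendicular offset would inject a Jacobi mode in $\sV_{\osc,\frac12}$ of size comparable to $e^{\tau/2}|\mathbf{v}_\perp|$ into one graph representation, contradicting the bound on the non-rotationally-symmetric part of $u^{(1)}_\tau$ coming from Proposition~\ref{Prop_PO_ancient}\ref{Prop_PO_ancient_a} together with $U^{+,(1)}\in\sV_{\rot}$. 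That argument works, and in fact needs no case split: only the upper bound $\Vert U^{++,(i)}\Vert_{L^2_f}\leq Ce^{\tau/2}$ from Proposition~\ref{Prop_ab_exist} is used, so the lower bound implicit in your ``$\asymp$'' (which fails when all $\ov b_j$ vanish) and the separate ``round shrinking cylinder'' case are unnecessary.

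The one step you assert rather than carry out is the final bookkeeping, and that is exactly where care is needed. Running your own transformation law $u^{(1)}_{\tau^{(1)}}(\bx')=(\lambda-1)+\lambda\, u^{(0)}_{\tau^{(0)}}\bigl(\lambda^{-1}(\bx'+e^{\tau^{(1)}/2}\mathbf{v}_\parallel)\bigr)$ with $\lambda-1=-\tfrac12(t_0^{(1)}-t_0^{(0)})e^{\tau^{(0)}}+O(e^{2\tau^{(0)}})$, the constant produced by shifting the linear mode is $+\tfrac1{\sqrt2}\sum_j\ov b_j^{(0)}(\bq^{(1)}-\bq^{(0)})_j\,e^{\tau^{(1)}}$, so the computation yields $\ov a^{(1)}=\ov a^{(0)}+\tfrac1{\sqrt2}\sum_j\ov b_j^{(0)}(\bq^{(1)}-\bq^{(0)})_j-\tfrac12(t_0^{(1)}-t_0^{(0)})$, i.e.\ both increments carry the \emph{opposite} sign to the displayed statement. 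A direct test confirms this: for $\MM=\MM_{\cyl}$, $\bq^{(0)}=\bq^{(1)}=\bO$, $t_0^{(0)}=0$, $t_0^{(1)}=s$ one has $u^{(1)}_\tau\equiv\sqrt{1-se^{\tau}}-1$, hence $\ov a^{(1)}=-\tfrac{s}{2}$, not $+\tfrac{s}{2}$. The discrepancy traces to sign slips in the paper's own final comparison (the minus sign in the constant term of $\sum_j\ov b_j^{(1)}e^{\tau_{1,\tau}/2}(\bz-e^{\tau_{1,\tau}/2}(\bq^{\prime,(1)}-\bq^{\prime,(0)}))_j$, and the sign of the $\tfrac12(t_0^{(1)}-t_0^{(0)})e^{\tau}$ term in \eqref{eq_Up12_1p5}), not to a defect in your method; note the downstream uses (Remark~\ref{Rmk_bowl_constant}, Corollary~\ref{Cor_ab_normalization}) are insensitive to this overall sign. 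So do perform the collection of terms explicitly rather than asserting agreement with the stated formula, and record the corrected signs.
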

\medskip

Proposition~\ref{Prop_dependence_ab} shows that all points $\bq$ supplied by Proposition~\ref{Prop_PO_ancient} must lie on a common \textbf{axis} of the form $\IR^k \times \mathbf v$ for some $\mathbf v \in \IR^{n-k+1}$, which is uniquely determined by $\MM$.
Similarly, the coefficients $\ov b_j$ are independent of the basepoint $(\bq_0, t_0)$, the point $\bq$, the scale $r_0$, and the auxiliary parameters in Proposition~\ref{Prop_PO_ancient}, so we may regard $\ov b_j = \ov b_j(\MM)$ as intrinsic to the flow.
The same is true for the coefficient $\ov a = \ov a(\MM, \bq, t_0)$, except that it also depends on $\bq$ and $t_0$.
For fixed $\MM$, the map $(\bq, t_0) \mapsto \ov a(\MM, \bq, t_0)$ is affine linear with spatial gradient $\frac{1}{\sqrt{2}} \ov b_j(\MM)$ and time derivative $-\frac{1}{2}$.

The third result describes how the parameters $\ov a$ and $\ov b_j$ transform under translations, time shifts, and parabolic rescalings.

\begin{Proposition} \label{Prop_ab_transform}
Suppose that $\MM$ is an asymptotically cylindrical mean curvature flow in $\IR^{n+1} \times (-\infty,T)$ without dominant quadratic mode.
Suppose its axis as defined above is $\IR^k \times \mathbf v$.
If $\bp = (\bp', \bp'') \in \IR^{n+1} = \IR^{k} \times \IR^{n-k+1}$ and $\Delta T \in \IR$ and $\alpha > 0$, then the flow $\MM' = \alpha ( \MM - (\bp, \Delta T))$ has axis $\IR^k \times (\alpha \mathbf v -  \bp'')$ and we have
\begin{equation} \label{eq_ab_transform}
 \ov b_j (\MM') = \alpha \, \ov b_j (\MM), \qquad \ov a (\MM', \bq, t_0) = \alpha^2 \ov a(\MM, \alpha^{-1} \bq + \bp, \alpha^{-2} t_0 + \Delta T) - \sum_j \big( \ov b_j (\MM) \big)^2 \alpha^2 \log \alpha .
\end{equation}
\end{Proposition}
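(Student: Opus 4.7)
The strategy is to observe that the combined parabolic rescaling by $\alpha$ and spacetime translation by $(\bp,\Delta T)$ corresponds, at the level of the rescaled flow of Proposition~\ref{Prop_PO_ancient}, simply to a shift of the rescaled time variable $\tau$ by $2\log\alpha$. The transformation laws for $\ov a$ and $\ov b_j$ then follow by matching the asymptotic expansion \eqref{eq_Up_coeff} under this shift.

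First I would verify that Propositions~\ref{Prop_PO_ancient} and \ref{Prop_ab_exist} apply to $\MM'$: being asymptotically $(n,k)$-cylindrical is preserved under translation and parabolic rescaling by direct inspection, and the absence of dominant quadratic mode is preserved by Proposition~\ref{Prop_Add_tau2}. Fix any basepoint $(\bq,t_0)$ valid for $\MM'$ and set
\[ \bq^* := \alpha^{-1}\bq + \bp, \qquad t_0^* := \alpha^{-2}t_0 + \Delta T; \]
the $\delta$-neck condition for $\MM'$ at $(\bq,t_0)$ transfers to one for $\MM$ at $(\bq^*,t_0^*)$ because neckness is invariant under these transformations. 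Using $\MM'_s = \alpha(\MM_{s/\alpha^2+\Delta T}-\bp)$, a direct computation with $\sigma:=\tau+2\log\alpha$ gives
\[ e^{\tau/2}\bigl(\MM'_{t_0-e^{-\tau}}-\bq\bigr) \;=\; e^{\sigma/2}\bigl(\MM_{t_0^*-e^{-\sigma}}-\bq^*\bigr), \]
so the rescaled flow of $\MM'$ at time $\tau$ equals the rescaled flow of $\MM$ (based at $(\bq^*,t_0^*)$) at time $\sigma$. Consequently the cylindrical graph functions from Proposition~\ref{Prop_PO_ancient} coincide, and the leading-mode projections satisfy
\[ U^+_{\MM'}(\tau) \;=\; U^+_{\MM}(\tau+2\log\alpha) + O(e^{1.5\tau}), \]
the remainder being of higher order than the $O(e^{1.2\tau})$ error in \eqref{eq_Up_coeff} because $\Vert U^{++}\Vert \lesssim e^{\tau/2}$ in the dominant linear regime.

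Next I would expand both sides using \eqref{eq_Up_coeff}, substituting $e^{\sigma/2}=\alpha\,e^{\tau/2}$ and $e^\sigma=\alpha^2 e^\tau$. Matching coefficients of $e^{\tau/2}\fp^{(1)}_i$ yields $\ov b_i(\MM')=\alpha\,\ov b_i(\MM)$; the coefficients of $e^\tau\fp^{(2)}_{ij}$ are automatically consistent with this; and matching coefficients of $e^\tau\fp^{(0)}$ yields
\[ \ov a(\MM',\bq,t_0) - \tfrac12\sum_i \ov b_i(\MM')^2\,\tau \;=\; \alpha^2\Bigl(\ov a(\MM,\bq^*,t_0^*) - \tfrac12\sum_i \ov b_i(\MM)^2\,(\tau+2\log\alpha)\Bigr). \]
The $\tau$-linear parts cancel by the $\ov b$-relation already derived, and the constant-in-$\tau$ remainder is exactly the claimed formula for $\ov a$; the log-correction $-\sum_j\bigl(\ov b_j(\MM)\bigr)^2\alpha^2\log\alpha$ arises precisely from the substitution $\sigma=\tau+2\log\alpha$ inside the $\sigma\,e^\sigma$ factor of \eqref{eq_Up_coeff}.

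Finally, by \eqref{eq_proj_qs} the axis of $\MM$ is $\IR^k\times\mathbf v$ with $\mathbf v=\proj_{\bO^k\times\IR^{n-k+1}}(\bq^*)$, and the affine map $\bx\mapsto\alpha(\bx-\bp)$ carries $\IR^k\times\mathbf v$ to $\IR^k\times\alpha(\mathbf v-\bp'')$, identifying the axis of $\MM'$. The argument so far treats a single pair of compatible basepoints; extension to all admissible basepoints is immediate from the affine dependence of $\ov a(\MM',\cdot,\cdot)$ on $(\bq,t_0)$ established in Proposition~\ref{Prop_dependence_ab}. Since the entire proof reduces to a change of variables on the rescaled time axis, there is no real analytic obstacle; the only substantive step is the exact identity between the two rescaled flows at shifted times, which is pure bookkeeping.
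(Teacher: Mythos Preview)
Your proposal is correct and follows essentially the same approach as the paper's proof: both identify the rescaled flow of $\MM'$ based at $(\bq,t_0)$ at time $\tau$ with the rescaled flow of $\MM$ based at $(\alpha^{-1}\bq+\bp,\alpha^{-2}t_0+\Delta T)$ at time $\tau+2\log\alpha$, and then read off the transformation of $\ov a,\ov b_j$ by comparing the asymptotic expansion \eqref{eq_Up_coeff} under this shift. The paper's version is slightly terser---it simply notes that the \emph{same} $U^+$ function (time-shifted) serves for both flows, avoiding your intermediate $O(e^{1.5\tau})$ error term---but the argument is the same bookkeeping.
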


\begin{Remark} \label{Rmk_bowl_constant}
The last two propositions allow us to calculate $\ov b_j$ of the bowl soliton $\MM := \IR^{k-1} \times \MM_{\bowl}$, which we normalize to move at speed $1$ in the direction $\mathbf e_{k}$.
First, by symmetry, we must have $\ov b_1(\MM) = \ldots = \ov b_{k-1} (\MM) = 0$.
To find the last coefficient, we use the fact $\MM - (\Delta T \mathbf e_k, \Delta T) = \MM$ for all $\Delta T \in \IR$ to conclude
\begin{multline*}
 \ov a (\MM, \bq, t_0) = \ov a \big((\MM - (\Delta T \mathbf e_k, \Delta T), \bq, t_0 \big) = \ov a (\MM, \bq + \Delta T \mathbf e_k, t_0 + \Delta T) \\
 =\ov a (\MM, \bq, t_0)
+ \tfrac1{\sqrt{2}} \ov b_k (\MM) \Delta T - \tfrac12 \Delta T. 
\end{multline*}
It follows that $\ov b_k( \IR^{k-1} \times \MM_{\bowl}) = \frac1{\sqrt{2}}$.
\end{Remark}

The last result is a consequence of Proposition~\ref{Prop_ab_transform}.
Note that it implies that if $\ov b_j (\MM) = 0$, then $\MM$ must be a round shrinking cylinder.

\begin{Corollary} \label{Cor_ab_normalization}
Let $\MM$ be an asymptotically cylindrical mean curvature flow in $\IR^{n+1} \times (-\infty,T)$ with dominant linear mode.
Then there is a rotation $S \in SO(n+1)$ that preserves $\IR^k \times \bO^{n-k+1}$, a vector $\bp \in \IR^{n+1}$ and a factor $\alpha > 0$ such that $\MM' := \alpha (S\MM - (\bp, 0))$ has axis $\IR^k \times \bO^{n-k}$ and satisfies
\begin{equation} \label{eq_same_ab}
 \ov b_j (\MM') = \ov b_j ( \IR^{k-1} \times \MM_{\bowl}), \qquad
\ov a (\MM') = \ov a ( \IR^{k-1} \times \MM_{\bowl}). 
\end{equation}
\end{Corollary}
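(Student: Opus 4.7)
I would prove this corollary in two stages. The crucial preliminary step is to establish that $\vec{\ov b}(\MM) := (\ov b_1(\MM), \ldots, \ov b_k(\MM))$ is nonzero. Suppose, for contradiction, $\vec{\ov b}(\MM) = 0$. Since $\ov a$ depends on $t_0$ linearly with coefficient $\tfrac12$, Proposition~\ref{Prop_ab_transform} permits a time shift that sends $\ov a$ to zero at some basepoint while leaving $\vec{\ov b}$ and the dominant linear mode property intact. The expansion \eqref{eq_Up_coeff} then forces $U^+(\tau) = O(e^{1.2\tau})$ as $\tau \to -\infty$. Because $\tau_{\frac12} = -\infty$ by the dominant linear mode hypothesis, at each $\tau \ll 0$ one lies either in the $U_{\frac12}$-dominated regime (where $\|\partial_\tau U_{\frac12} - \tfrac12 U_{\frac12}\| \leq \xi\|U_{\frac12}\|$) or in the $U_1$-dominated regime (where $\|\partial_\tau U_1 - U_1\| \leq \xi\|U_1\|$) of Proposition~\ref{Prop_PO_ancient}\ref{Prop_PO_ancient_d}. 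A nonvanishing value of $U_{\frac12}$ or $U_1$ at any such $\tau_0$ would then propagate backward as $e^{(1/2\pm\xi)(\tau-\tau_0)}$ or $e^{(1\pm\xi)(\tau-\tau_0)}$, which (using $\xi \leq \tfrac1{10}$) dominates $e^{1.2\tau}$ for $\tau$ sufficiently negative and contradicts the bound. Hence both $U_{\frac12}$ and $U_1$ vanish identically on their respective regimes, and via the dominance inequalities together with continuity across $\tau_1$ one obtains $U^{++} \equiv 0$. Proposition~\ref{Prop_PO_ancient}\ref{Prop_PO_ancient_d} then identifies $\MM$ as a round shrinking cylinder, contradicting the dominant linear mode assumption.

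Given $\vec{\ov b}(\MM) \neq 0$, I would apply four successive normalizations. First, choose $S \in SO(k) \subset SO(n+1)$ acting on the $\IR^k$-factor so that $\vec{\ov b}(S\MM) = |\vec{\ov b}(\MM)|\,\mathbf e_k$; the coefficients $\ov b_j$ transform as a $k$-vector under rotations preserving the cylinder, by the equivariance of the construction underlying \eqref{eq_Up_coeff}. Second, parabolically rescale by $\alpha := (\sqrt{2}\,|\vec{\ov b}(\MM)|)^{-1}$, so that by Proposition~\ref{Prop_ab_transform} one obtains $\ov b_k = \tfrac{1}{\sqrt{2}} = \ov b_k(\IR^{k-1} \times \MM_{\bowl})$ (cf.\ Remark~\ref{Rmk_bowl_constant}). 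Third, letting $\mathbf v \in \IR^{n-k+1}$ denote the axis offset of the rescaled flow, translate by $\bp'' := \mathbf v$ in the $\IR^{n-k+1}$-direction; by Proposition~\ref{Prop_ab_transform} the resulting axis becomes $\IR^k \times \bO^{n-k+1}$, and by \eqref{eq_proj_qs} in Proposition~\ref{Prop_dependence_ab} this translation affects neither $\vec{\ov b}$ nor $\ov a$, since the gradient of $\ov a$ in $\IR^{n-k+1}$-directions vanishes. Fourth, a translation by $\bp_k\,\mathbf e_k \in \IR^k$ shifts $\ov a$ by $-\tfrac12\,\bp_k$ (using $\ov b_k = \tfrac{1}{\sqrt{2}}$) while leaving $\vec{\ov b}$ and the axis intact; choosing $\bp_k$ appropriately then matches $\ov a(\MM') = \ov a(\IR^{k-1} \times \MM_{\bowl})$ at a canonical basepoint.

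The main obstacle is the preliminary nonvanishing $\vec{\ov b}(\MM) \neq 0$, which combines the higher-order asymptotic expansion from Proposition~\ref{Prop_ab_exist}, the ODE inequalities of Proposition~\ref{Prop_PO_ancient}\ref{Prop_PO_ancient_d}, and the rigidity characterization $U^{++} \equiv 0 \Rightarrow \MM = \MM_{\cyl}$, and crucially exploits the gap between the asymptotic exponents $\tfrac12$ and $1$ on one hand and $1.2$ on the other. The subsequent normalizations are straightforward applications of the transformation laws for $\ov a$ and $\vec{\ov b}$ already established in Propositions~\ref{Prop_dependence_ab} and \ref{Prop_ab_transform}.
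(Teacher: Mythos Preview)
Your proposal is correct and follows essentially the same approach as the paper. The paper's proof of the contradiction step (assuming $\vec{\ov b}=0$) is phrased slightly differently---it uses the forward inequality $\partial_\tau \|U_\alpha\| \leq 1.1\|U_\alpha\|$ in every regime and lets the initial time $\tau' \to -\infty$, whereas you propagate a nonzero value backward---but both exploit the identical gap between the exponent $1+\xi \leq 1.1$ and the $e^{1.2\tau}$ bound from \eqref{eq_Up_coeff}. For the normalization, the paper simply writes ``this follows immediately from Proposition~\ref{Prop_ab_transform}''; your four-step breakdown (rotate in $\IR^k$, rescale, translate to center the axis, translate along $\mathbf e_k$) is a correct unpacking of that sentence. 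One small point: you invoke that $\ov b_j$ transforms as a $k$-vector under rotations in $SO(k)$, which the paper does not state explicitly (Proposition~\ref{Prop_ab_transform} covers only translations, time shifts, and scalings); this is of course true by equivariance of the Hermite basis $\mathfrak p^{(1)}_j$, but strictly speaking it is an additional ingredient you are supplying.
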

\bigskip

\begin{proof}[Proof of Proposition~\ref{Prop_ab_exist}.]
The bounds in Proposition~\ref{Prop_PO_ancient}\ref{Prop_PO_ancient_d} imply that for $\alpha = 1$ or $\frac12$ and $\tau \ll 0$
\[ \partial_\tau \Vert U_\alpha (\tau) \Vert \geq \alpha \Vert U_\alpha (\tau) \Vert - \eta \Vert U_\alpha (\tau) \Vert \geq  0.4  \Vert U_\alpha (\tau) \Vert. \]
So $\partial_\tau \big(  e^{-0.4 \tau} \Vert  U_\alpha (\tau) \Vert \big) \geq 0$.
Integrating this bound implies
$U_\alpha (\tau) = O( e^{0.4\tau})$, so
\begin{equation} \label{eq_Upp04}
 U^{++} (\tau) = O(e^{0.4\tau}). 
\end{equation}
It follows from Proposition~\ref{Prop_PO_ancient}\ref{Prop_PO_ancient_c} that
\begin{equation} \label{eq_UpUpp08}
 U^+(\tau) = U^{++}(\tau) + O(e^{0.8\tau}), 
\end{equation}
so
\begin{equation} \label{eq_QJQ2}
 Q_J^+(U^+(\tau)) 
= Q_2^+(U^+(\tau)) + O(e^{1.2\tau})
= Q_2^+(U^{++}(\tau)) + O( e^{1.2\tau}). 
\end{equation}
Therefore, if we write $U^{++}(\tau) = a(\tau) \mathfrak p^{(0)} + \sum_i b_i(\tau) \mathfrak p^{(1)}_i + \sum_{i,j} c_{ij}(\tau) \mathfrak p^{(2)}_{ij}$, then by Lemma~\ref{Lem_Q2}
\begin{alignat}{3}
\partial_\tau a  &= a  &&- \tfrac12 a^2 - \tfrac12 \sum_i  b_i^2 - \tfrac12 \sum_{i,j} c_{ij}^2 &&+ O( e^{1.2\tau})  \label{eq_dtau_a} \\
\partial_\tau b_i  &= \tfrac12 b_i &&- ab_i- \sqrt{2} \sum_l c_{il} b_l &&+ O(e^{1.2\tau}) \\
\partial_\tau c_{ij} &= && - \sqrt{2} \sum_l c_{il} c_{lj} - \tfrac{1}{\sqrt{2}} b_i b_j - a c_{ij} &&+ O( e^{1.2\tau})  \label{eq_dtau_c} 
\end{alignat}
Due to \eqref{eq_Upp04}, these identities imply $\partial_\tau a = a + O( e^{0.8\tau})$, $\partial_\tau b_i = \frac12 b_i + O( e^{0.8\tau})$ and $\partial_\tau c_{ij} =O( e^{0.8\tau})$, so
\[ \partial_\tau \big( e^{-\tau} a \big) = O( e^{-0.2 \tau}), \qquad
\partial_\tau \big( e^{-\tau} b_i \big) = O( e^{0.3 \tau}), \qquad
\partial_\tau c_{ij} =O( e^{0.8\tau}). \]
Integrating these bounds implies that for $\ov b_i, \ov c_{ij} \in \IR$
\begin{equation*}
 a(\tau) = O( e^{\tau} ) + O( e^{0.8\tau}) = O( e^{0.8\tau}), \qquad
b_i(\tau) = \ov b_i e^{\tau/2} + O( e^{0.8\tau}), \qquad 
c_{ij} (\tau) = \ov c_{ij} + O(e^{0.8\tau}).
\end{equation*}
Since $c_{ij} (\tau) = O(e^{0.4\tau})$, we must have $\bar c_{ij} = 0$.
Plugging these bounds back into \eqref{eq_dtau_a}--\eqref{eq_dtau_c} yields
\begin{alignat*}{3}
\partial_\tau a  &= a  && - \tfrac12 \sum_i \bar b_i^2  e^\tau &&+ O(e^{1.2\tau}) \\
\partial_\tau b_i  &= \tfrac12 b_i && &&+ O( e^{1.2\tau}) \\
\partial_\tau c_{ij} &= && - \tfrac{1}{\sqrt{2}} \bar b_i \bar b_j  e^{\tau} &&+ O( e^{1.2\tau}) 
\end{alignat*}
Integrating these bounds once again implies the desired asymptotics for $U^{++}$.

To see the asymptotics for $U^{+}(\tau)$, we need to bound its projections $U_{\frac{i}2}(\tau)$ onto $\sV_{\frac{i}2} \subset \sV_{\geq - J}$ for $\frac{i}2 < 0$.
If $\frac{i}2 < -1$, then Proposition~\ref{Prop_PO_ancient}\ref{Prop_PO_ancient_c} implies that these are bounded by $C\Vert U^{++}(\tau) \Vert^3 \leq C e^{1.2\tau}$.
If $\frac{i}2 \in \{ -\frac12, -1 \}$, then
\[ \partial_\tau \Vert U_{\frac{i}2}(\tau) \Vert \leq -\tfrac{1}2 \Vert U_{\frac{i}2}(\tau) \Vert + \Vert \PP_{\sV_{\frac{i}2}} Q^+_J(U^{+}(\tau)) \Vert + C(J)\Vert U^{++} \Vert^{J+1}. \]
Due to \eqref{eq_UpUpp08}, \eqref{eq_QJQ2} and the fact that $Q_2^+$ is quadratic and homogeneous, the last two terms are bounded by  
\begin{equation} \label{eq_i2PP}
 \Vert \PP_{\sV_{\frac{i}2}} Q^+_2(U^{++}(\tau)) \Vert + O(e^{1.2\tau}) \leq 
\Big\Vert \PP_{\sV_{\frac{i}2}} Q^+_2\Big( \sum_j \ov b_j e^{\tau/2} \fp^{(1)}_j \Big) \Big\Vert+ O(e^{1.2\tau}) . 
\end{equation}
The identity \eqref{eq_Q2} in Lemma~\ref{Lem_Q2} implies that $Q^+_2( \sum_j \ov b_j e^{\tau/2} \fp^{(1)}_j )$ is equal to $-\frac12$ times the square of a linear function, so it is contained in $\sV_{\geq 0}$ and therefore the first term on the right-hand side of \eqref{eq_i2PP} vanishes.
So $\partial_\tau \Vert U_{\frac{i}2}(\tau) \Vert \leq -\frac12 \Vert U_{\frac{i}2}(\tau) \Vert  +  O(e^{1.2\tau})$, which implies the desired bound.
\end{proof}
\bigskip

\begin{proof}[Proof of Proposition~\ref{Prop_dependence_ab}.]
In the following $C$ denotes a generic constant, which may depend on the chosen objects, but is independent of time.
Recall the asymptotic bounds from Proposition~\ref{Prop_PO_ancient}\ref{Prop_PO_ancient_c} for $i = 0,1$
\begin{equation} \label{eq_u_close_U_fromProp}
 \big\| u^{(i)}_\tau - U^{+,(i)}(\tau) \big\|_{C^1(\DD^{(i)}_\tau)} \leq C \Vert U^{++,(i)}(\tau) \Vert^{J+1} \leq C e^{1.5\tau}. 
\end{equation}
Fix some $\bz \in \IR^k$.
Then for $\tau \ll 0$ the intersection
\begin{equation} \label{eq_intersection_MM}
 (\bz \times \IR^{n-k+1}) \cap e^{\tau/2} \big( \MM^{\reg, (i)}_{t^{(i)}_0 - e^{-\tau}} - \bq^{(i)} \big) 
\end{equation}
contains a surface, which is a radial graph of $u^{(i)}_\tau(\bz, \cdot)$.
This surface is uniquely characterized as the component of \eqref{eq_intersection_MM} that bounds a region that contains $\bz \times \bO^{n-k+1}$.
Due to \eqref{eq_u_close_U_fromProp} this surface is $C e^{1.5\tau}$-Hausdorff close to a sphere of radius $\sqrt{2(n-k)} (1 + ( U^{+,(i)} (\tau) ) (\bz))$ centered at the origin; here $( U^{+,(i)} (\tau) ) (\bz)$ denotes the evaluation of $U^{+, (i)}(\tau) \in \sV_{\rot} \subset L^2_f (\IR^k \times \IS^{n-k})$ viewed as a function on $\IR^k$.
So if we write $\bq^{ (i)} = (\bq^{\prime,(i)}, \bq^{\prime\prime, (i)}) \in \IR^k \times \IR^{n-k+1}$, then the corresponding component of
\begin{equation} \label{eq_intersection_rescaled}
 \big((e^{-\tau/2} \bz + \bq^{\prime, (i)} )  \times \IR^{n-k-1}\big) \cap  \MM^{\reg, (i)}_{t^{(i)}_0 - e^{-\tau}} 
\end{equation}
still bounds a region containing the origin if $\tau \ll 0$ and it is $C e^{\tau}$-Hausdorff close to a sphere of radius $\sqrt{2(n-k)} (1 + ( U^{+,(i)} (\tau) ) (\bz)) e^{-\tau/2}$ centered at $ \bq^{\prime\prime, (i)}$.
Since we can characterize the intersections \eqref{eq_intersection_rescaled} in two different ways, for $i=0,1$, this yields the following comparison between $U^{+,(0)}$ and $U^{+,(1)}$.

\begin{Claim}
For two points $\bz_0, \bz_1 \in \IR^k$ and times $\tau_0, \tau_1$, which are sufficiently small depending on an upper bound on $|\bz_0|$ and $|\bz_1|$ the following is true.
If 
\begin{equation} \label{eq_t0t1z0z1}
 t_0^{(0)} - e^{-\tau_0} = t_0^{(1)} - e^{-\tau_1} \qquad \text{and} \qquad e^{-\tau_0/2} \bz_0 + \bq^{\prime, (0)} = e^{-\tau_1/2} \bz_1 + \bq^{\prime, (1)},
\end{equation}
then 
\begin{align}
 \Big| e^{-\tau_0/2} \big( 1 + ( U^{+,(0)} (\tau_0) ) (\bz_0) \big) - e^{-\tau_1/2} \big( 1 + ( U^{+,(1)} (\tau_1) ) (\bz_1) \big) \big| &\leq C  e^{\tau_0}+ C  e^{\tau_1},  \label{eq_U0U1} \\
 | \bq^{\prime\prime,(0)} - \bq^{\prime\prime, (1)}| &\leq  C  e^{\tau_0}+ C  e^{\tau_1}. \label{eq_q0q1}
\end{align}
\end{Claim}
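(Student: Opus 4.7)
My plan is to recognize the two parameterizations as two approximations of the \emph{same} intersection of $\MM$ with a fixed plane at a fixed physical time, and to extract the inequalities from an elementary comparison of the corresponding round spheres in $\IR^{n-k+1}$. First, I will argue that under condition \eqref{eq_t0t1z0z1}, the two components of the intersection \eqref{eq_intersection_rescaled} (one for each $i=0,1$) refer to the same subset of $\MM$. This is because \eqref{eq_t0t1z0z1} equates both the slicing plane and the slicing time in the unrescaled picture; and for $\tau_i$ sufficiently negative (with the smallness depending on $|\bz_i|$ and the approximate axis offsets $|\bq^{\prime\prime,(i)}|$), the asymptotic closeness of $\MM$ to a round shrinking cylinder guarantees that this slice meets $\MM_t$ in exactly one bounded component, which must then coincide with the sphere-like component identified in each parameterization.

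Next, I will transport the $C^1$-closeness $\|u^{(i)}_{\tau_i}-U^{+,(i)}(\tau_i)\|_{C^1}\le Ce^{1.5\tau_i}$ from \eqref{eq_u_close_U_fromProp} into the unrescaled frame. In the rescaled frame, the slice at $\bz_i$ is a radial graph that is $Ce^{1.5\tau_i}$-Hausdorff-close to the round sphere centered at $(\bz_i,\bO)$ with radius $\sqrt{2(n-k)}(1+U^{+,(i)}(\tau_i)(\bz_i))$. Undoing the rescaling $\td\MM^{(i)}_{\tau_i}=e^{\tau_i/2}(\MM_{t_0^{(i)}-e^{-\tau_i}}-\bq^{(i)})$ yields, in the unrescaled slicing plane $\bp'\times\IR^{n-k+1}$ with $\bp'=e^{-\tau_i/2}\bz_i+\bq^{\prime,(i)}$, approximation of the actual slice by a round sphere of center $\bq^{\prime\prime,(i)}$ and radius $R_i:=\sqrt{2(n-k)}(1+U^{+,(i)}(\tau_i)(\bz_i))\,e^{-\tau_i/2}$, with Hausdorff error $Ce^{\tau_i}$. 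Since both spheres approximate the same physical surface, the triangle inequality gives mutual Hausdorff distance at most $C(e^{\tau_0}+e^{\tau_1})$.

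I then invoke the elementary geometric fact that two round spheres in $\IR^{n-k+1}$ within Hausdorff distance $\varepsilon$ have centers at distance $\le C\varepsilon$ and radii differing by $\le C\varepsilon$ (verified directly by evaluating the sphere equation at the nearest and farthest points in the direction of the axis joining their centers). This immediately yields \eqref{eq_q0q1} together with the radius estimate
\[
\bigl|e^{-\tau_0/2}(1+U^{+,(0)}(\tau_0)(\bz_0))-e^{-\tau_1/2}(1+U^{+,(1)}(\tau_1)(\bz_1))\bigr|\le C(e^{\tau_0}+e^{\tau_1}).
\]
To reach the form \eqref{eq_U0U1}, I rewrite $e^{\tau_i/2}(1+U^{+,(i)}(\tau_i)(\bz_i))=e^{\tau_i}\cdot e^{-\tau_i/2}(1+U^{+,(i)}(\tau_i)(\bz_i))$ and combine this with the elementary consequence of $t_0^{(0)}-e^{-\tau_0}=t_0^{(1)}-e^{-\tau_1}$ that $|\tau_0-\tau_1|=O(e^{\tau_0})=O(e^{\tau_1})$; a short triangle-inequality manipulation then upgrades the radius bound above into $C(e^{1.5\tau_0}+e^{1.5\tau_1})$, as claimed.

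The main conceptual obstacle is Step~1: ensuring that the two parameterizations track the same component of the slice. This hinges on the asymptotic cylindrical geometry of $\MM$ preventing spurious components near the slicing plane, which in turn explains why the smallness of $\tau_0,\tau_1$ is permitted to depend explicitly on $|\bz_0|,|\bz_1|$---one must pick $\tau_i$ negative enough that the slicing plane (which sits at offset $\sim|\bz_i|e^{-\tau_i/2}$ from the basepoint $\bq^{\prime,(i)}$ in the unrescaled picture) remains inside the region where $\MM$ is well-approximated by a single round cylinder.
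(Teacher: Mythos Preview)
Your proposal is correct and follows essentially the same approach as the paper's proof: identify that \eqref{eq_t0t1z0z1} forces the two unrescaled slices \eqref{eq_intersection_rescaled} to coincide, then compare the radii and centers of the two approximating spheres. The paper's proof is only two sentences long and leaves all of your details implicit. One point worth noting: the unrescaled radii are indeed $\sqrt{2(n-k)}(1+U^{+,(i)}(\tau_i)(\bz_i))\,e^{-\tau_i/2}$ as you write (the paper's preceding text has an apparent sign typo here), so the direct sphere comparison gives the bound with $e^{-\tau_i/2}$ and right-hand side $C(e^{\tau_0}+e^{\tau_1})$; your upgrade step---using $|e^{\tau_0}-e^{\tau_1}|=O(e^{2\tau})$ together with $|A_i|\le 2e^{-\tau_i/2}$---is the correct way to reach the stated form \eqref{eq_U0U1}, and the paper glosses over this.
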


\begin{proof}
The identities in \eqref{eq_t0t1z0z1} ensure that the intersections in \eqref{eq_intersection_rescaled} with $\bz = \bz_i$ and $\tau = \tau_i$ coincide.
The bound \eqref{eq_U0U1} reflects that the radii of the approximating spheres must be close, while \eqref{eq_q0q1} makes the corresponding statement for their centers.
\end{proof}

Let us now convert this into a more convenient form.

\begin{Claim} \label{Cl_Up12_comp}
We have $\bq^{\prime\prime,(0)} = \bq^{\prime\prime, (1)}$, so \eqref{eq_proj_qs} holds.
Moreover, for any $\bz \in \IR^k$ the following bound holds for $\tau \ll 0$
\begin{equation} \label{eq_Up12_1p5}
 \big| - \tfrac12 (t^{(1)}_0 - t^{(0)}_0) e^{\tau} +  ( U^{+,(0)} (\tau) ) (\bz) -  ( U^{+,(1)} (\tau_{1,\tau}) ) \big(\bz - e^{\tau/2} ( \bq^{\prime, (1)} - \bq^{\prime,(0)} ) \big) \big| \leq C  e^{1.5\tau}, 
\end{equation}
for some function $\tau \mapsto \tau_{1,\tau}$ with
$\tau_{1,\tau} = \tau - (t^{(1)}_1 - t^{(0)}_0) e^\tau + O(e^{2\tau})$.
\end{Claim}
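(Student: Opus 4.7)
The proof has two parts, both specializing the previous claim.

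For Part 1, the identity $\bq^{\prime\prime,(0)} = \bq^{\prime\prime,(1)}$ follows immediately from \eqref{eq_q0q1}. I would fix any $\bz \in \IR^k$ and take a sequence $\tau_0 \to -\infty$; setting $\tau_1 := -\log(e^{-\tau_0} + t_0^{(1)} - t_0^{(0)})$ and $\bz_1$ via \eqref{eq_t0t1z0z1} satisfies the hypotheses of the previous claim, while the right-hand side $C e^{\tau_0} + C e^{\tau_1}$ tends to zero. Since the left-hand side $|\bq^{\prime\prime,(0)} - \bq^{\prime\prime,(1)}|$ is independent of $\tau_0$, the desired equality \eqref{eq_proj_qs} follows.

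For Part 2, I would specialize the previous claim to $\tau_0 = \tau$, $\bz_0 = \bz$ and solve \eqref{eq_t0t1z0z1} explicitly to obtain
\[
\tau_{1,\tau} = \tau - (t_0^{(1)} - t_0^{(0)}) e^\tau + O(e^{2\tau}), \qquad \bz_1 = \bz - e^{\tau/2}(\bq^{\prime,(1)} - \bq^{\prime,(0)}) + O(e^\tau).
\]
A direct, black-box use of the stated bound in the previous claim, divided by $e^{\tau/2}$, only gives an $O(e^\tau)$ estimate, which is too weak for the target $O(e^{1.5\tau})$. To gain the extra order, I would revisit the proof of the previous claim in the unrescaled picture. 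Writing $r_i = \sqrt{2(n-k)}(1 + U^{+,(i)}(\tau_i)(\bz_i)) e^{-\tau_i/2}$ for the unrescaled radii of the two approximating spheres, one has
\[
e^{\tau_0/2}(1+U^{+,(0)}) - e^{\tau_1/2}(1+U^{+,(1)}) = \tfrac{1}{\sqrt{2(n-k)}}\bigl(e^{\tau_0}(r_0-r_1) + (e^{\tau_0}-e^{\tau_1})r_1\bigr).
\]
The first summand is $O(e^{2\tau})$ because the unrescaled Hausdorff distance bound gives $|r_0 - r_1| \le Ce^\tau$, and the second summand can be evaluated in closed form using $e^{\tau_0}-e^{\tau_1} = (t_0^{(1)} - t_0^{(0)}) e^{2\tau} + O(e^{3\tau})$ and $r_1 \sim \sqrt{2(n-k)}\,e^{-\tau/2}$, producing the explicit leading contribution $(t_0^{(1)} - t_0^{(0)}) e^{3\tau/2} + O(e^{2\tau})$.

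Subtracting this explicit $e^{3\tau/2}$ term, dividing by $e^{\tau/2}$, Taylor-expanding $e^{(\tau_{1,\tau}-\tau)/2}$, and absorbing cross-products with $U^{+,(i)}(\tau_{1,\tau}) = O(e^{\tau/2})$ into the error yields the desired comparison between $U^{+,(0)}(\tau)(\bz)$ and $U^{+,(1)}(\tau_{1,\tau})(\bz_1)$ up to $O(e^{1.5\tau})$. Finally, replacing $\bz_1$ by $\bz - e^{\tau/2}(\bq^{\prime,(1)} - \bq^{\prime,(0)})$ costs only $|\nabla U^{+,(1)}(\tau_{1,\tau})| \cdot O(e^\tau) = O(e^{3\tau/2})$, which remains within the target error. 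The main obstacle is precisely this need to refine the previous claim's error by one order, requiring a direct calculation of the subleading term in the radius comparison rather than a black-box invocation of the stated inequality; a careful bookkeeping of the sign and coefficient in $(e^{\tau_0}-e^{\tau_1})r_1$ is then what produces the explicit $\tfrac12(t_0^{(1)} - t_0^{(0)}) e^\tau$ term appearing in \eqref{eq_Up12_1p5}.
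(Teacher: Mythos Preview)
Your approach is correct and is essentially the same as the paper's. Part 1 matches exactly. For Part 2 you set $\tau_0=\tau$, $\bz_0=\bz$, solve \eqref{eq_t0t1z0z1} for $\tau_{1,\tau},\bz_{1,\tau}$, and then compare the unrescaled sphere radii $r_i=\sqrt{2(n-k)}\,(1+U^{+,(i)})\,e^{-\tau_i/2}$; this is precisely what the paper does, just phrased slightly differently.

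Your worry that a ``black-box'' use of \eqref{eq_U0U1} only yields $O(e^{\tau})$ is an artifact of what appears to be a typo in that display: the comparison of \emph{unrescaled} radii (which is what ``the radii of the approximating spheres must be close'' in the paper's proof sketch means) reads
\[
\big|(1+U^{+,(0)})\,e^{-\tau_0/2}-(1+U^{+,(1)})\,e^{-\tau_1/2}\big|\le C\,e^{\tau_0}+C\,e^{\tau_1},
\]
consistent with the $Ce^{\tau}$-Hausdorff bound stated just before the previous claim. Multiplying this by $e^{\tau/2}$ and expanding $e^{(\tau-\tau_{1,\tau})/2}=1+\tfrac12(t_0^{(1)}-t_0^{(0)})e^{\tau}+O(e^{2\tau})$ gives the $O(e^{1.5\tau})$ bound directly, with the $\tfrac12$-coefficient appearing for free. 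Your decomposition $r_0e^{\tau_0}-r_1e^{\tau_1}=e^{\tau_0}(r_0-r_1)+(e^{\tau_0}-e^{\tau_1})r_1$ is an equivalent bookkeeping of the same computation; note that the $(e^{\tau_0}-e^{\tau_1})r_1$ term contributes $(t_0^{(1)}-t_0^{(0)})e^{3\tau/2}$ (no $\tfrac12$), and the missing $\tfrac12$ then enters when you expand $e^{(\tau_{1,\tau}-\tau)/2}(1+U^{+,(1)})$ on the other side, exactly as you outline. The final replacement of $\bz_{1,\tau}$ by $\bz-e^{\tau/2}(\bq^{\prime,(1)}-\bq^{\prime,(0)})$ via the gradient bound is also how the paper closes the argument.
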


\begin{proof}
Set $\tau_0 = \tau$ and $\bz_0 = \bz$ and solve both equations in \eqref{eq_t0t1z0z1} for $\tau_1$ and $\bz_1$.
This leads to
\begin{align}
 \tau_{1,\tau} &= -\log \big( t_0^{(1)} - t_0^{(0)} + e^{-\tau} \big)
= \tau  -\log \big( (t_0^{(1)} - t_0^{(0)} )e^{\tau} + 1 \big)
=
\tau - (t_0^{(1)} - t_0^{(0)}) e^{\tau} + O(e^{2\tau}) , \notag \\ 
\bz_{1,\tau} &= e^{(\tau_{1,\tau} - \tau)/2} \bz  - e^{\tau_{1,\tau}/2} \big( \bq^{\prime, (1)} - \bq^{\prime,(0)} \big) 
= \bz - e^{\tau/2} \big( \bq^{\prime, (1)} - \bq^{\prime,(0)} \big) + O(e^{\tau}).  \label{eq_z1tau}
\end{align}
So
\begin{equation} \label{eq_etaus_diff}
 e^{-\tau/2} - e^{-\tau_{1,\tau}/2} 
 = (1-e^{(\tau-\tau_{1,\tau})/2}) e^{-\tau/2}
 = -\tfrac12 (t^{(1)}_0 - t^{(0)}_0) e^{\tau/2} + O(e^\tau). 
\end{equation}
Note that both $|\tau_{1,\tau} - \tau|$ and $\bz_{1,\tau}$ are uniformly bounded, so \eqref{eq_U0U1} and \eqref{eq_q0q1} hold for $\bz_0 = \bz$, $\bz_1 = \bz_{1,\tau}$ and $\tau_0 = \tau$, $\tau_{1,\tau}$, as long as $\tau \ll 0$ and the right-hand sides of both equations can be bounded by $Ce^{\tau}$.
Letting $\tau \to -\infty$ in \eqref{eq_q0q1} implies $\bq^{\prime\prime,(0)} = \bq^{\prime\prime, (1)}$.

Next, we plug \eqref{eq_z1tau} and \eqref{eq_etaus_diff} into \eqref{eq_U0U1}.
Observe that $U^{+,(i)}(\tau)$ are polynomials of uniformly bounded degree whose coefficients are bounded by $C \Vert U^{+,(i)} \Vert \leq C e^{\tau/2}$.
So the $O(e^\tau)$-term in \eqref{eq_z1tau} causes an error of order $O(e^{1.5\tau})$-term in \eqref{eq_Up12_1p5}.
\end{proof}

Since the maps
\[ \bz \mapsto -\tfrac12 (t^{(1)}_0 - t^{(0)}_0) e^{\tau} + ( U^{+,(0)} (\tau) ) (\bz), \qquad \bz \mapsto ( U^{+,(1)} (\tau_{1,\tau}) ) \big(\bz - e^{\tau/2} ( \bq^{\prime, (1)} - \bq^{\prime,(0)} ) \big) \] 
are both polynomials of uniformly bounded degree, their coefficients can be determined by evaluation at finitely many points.
So Claim~\ref{Cl_Up12_comp} implies that the coefficients of both polynomials differ by a term of the form $O(e^{1.5\tau})$.
It follows that the $L^2_f$-projection of both polynomials, viewed as elements in $\sV_{\rot,> \la}$, to the subspace $\sV_1 \oplus \sV_{\frac12}$ of linear functions differ by a  term of the form $O(e^{1.5\tau})$.
By Proposition~\ref{Prop_dependence_ab}, the projections of these  polynomials are
\begin{alignat*}{3}
 \bz &\mapsto -\tfrac12 (t^{(1)}_0 - t^{(0)}_0) e^{\tau} + \frac1{\sqrt{2}} \sum_j \bar b_j^{(0)} e^{\tau/2} \bz_j &&+  \Big(\bar a^{(0)} - \tfrac12\sum_j (\bar b_j^{(0)})^2 \tau \Big) e^\tau  &&+ O(e^{1.2\tau}), \\
 \bz &\mapsto \frac1{\sqrt 2} \sum_j \bar b_j^{(1)} e^{\tau_{1,\tau}/2} \big(\bz - e^{\tau_{1,\tau}/2} ( \bq^{\prime, (1)} - \bq^{\prime,(0)} ) \big)_j  &&+ \Big(\bar a^{(1)} - \tfrac12 \sum_j (\bar b_j^{(1)})^2 \tau_{1,\tau} \Big) e^{\tau_{1,\tau}}   &&+ O(e^{1.2\tau}). 
\end{alignat*}
Here we have used the fact that the projection of $\bz \mapsto \ov b_i \ov b_j e^\tau \mathfrak p^{(2)}_{ij} (\bz - e^{\tau_{1,\tau}/2} ( \bq^{\prime, (1)} - \bq^{\prime,(0)} ) )$ onto the space of linear polynomials decays like $O(e^{1.5\tau})$.
Comparing coefficients of the polynomials above yields $\ov b_j^{(1)} = \ov b_j^{(0)}$ and
\begin{multline*}
 -\tfrac12 (t^{(1)}_0 - t^{(0)}_0) e^{\tau} +  \Big(\bar a^{(0)} - \tfrac12\sum_j (\bar b_j^{(0)})^2 \tau \Big) e^\tau   \\
 = - \frac1{\sqrt 2} \sum_j \bar b_j^{(1)} e^{\tau_{1,\tau}} ( \bq^{\prime, (1)} - \bq^{\prime,(0)} )_j  +  \Big(\bar a^{(1)} - \tfrac12\sum_j (\bar b_j^{(1)})^2 \tau_{1,\tau} \Big) e^{\tau_{1,\tau}}   + O(e^{1.2\tau}). 
 \end{multline*}
 Dividing by $e^\tau$ and noting that $e^{\tau_{1,\tau}-\tau} = O(e^\tau)$ implies
\begin{equation*}
 -\tfrac12 (t^{(1)}_0 - t^{(0)}_0) +  \bar a^{(0)} - \tfrac12\sum_j (\bar b_j^{(0)})^2 \tau    
 = - \frac1{\sqrt 2} \sum_j \bar b_j^{(1)}  ( \bq^{\prime, (1)} - \bq^{\prime,(0)} )_j  +  a^{(1)} - \tfrac12 \sum_j (\bar b_j^{(1)})^2 \tau   + O( e^{0.2\tau}). 
 \end{equation*}
This proves the desired identity for $a^{(1)}$.
\end{proof}
\bigskip

\begin{proof}[Proof of Proposition~\ref{Prop_ab_transform}.]
Suppose that all assertions of Proposition~\ref{Prop_PO_ancient} hold for $\MM'$ and $(\bq, t_0)$ and for some function $U^+$.
If $\alpha = 1$, then these assertions also hold for $\MM = \MM' + ( \bp,  \Delta T)$ and $(\bq +  \bp, t_0 +  \Delta T)$ and for the \emph{same} function $U^+$.
Since the constants $\ov a$ and $\ov b_j$ characterize the asymptotics of the function $U^+$, this immediately implies to the identity \eqref{eq_ab_transform}.
If $\alpha \neq 1$, then for $\tau \ll 0$ we have
\[ e^{\tau/2}  (\MM^{\prime, \reg}_{ t_0 - e^{-\tau}} - \bq ) =  e^{\tau/2+\log \alpha} (\MM^{\reg}_{\alpha^{-2} t_0 +  \Delta T - e^{-(\tau + 2 \log \alpha)}} -  \alpha^{-1}\bq -  \bp), \]
So the assertions of Proposition~\ref{Prop_PO_ancient} hold for $\MM$ and $(\alpha^{-1} \bq + \bp, \alpha^{-2} t_0 + \Delta T)$, but for $U^{+}(\tau)$ replaced with $U^{+}(\tau - 2 \log \alpha)$.
Comparing the asymptotics in Proposition~\ref{Prop_ab_exist} implies \eqref{eq_ab_transform}.
\end{proof}
\bigskip

\begin{proof}[Proof of Corollary~\ref{Cor_ab_normalization}.]
If $\ov b_j (\MM) \neq 0$ for some $j$, then this follows immediately from Proposition~\ref{Prop_ab_transform}.
So assume that $\ov b_j (\MM) = 0$ for all $j$.
Our goal will be to show that $\MM$ is a round shrinking cylinder, which is a contradiction according to Definition~\ref{Def_dominant_modes}.
After application of a translation, we may assume without loss of generality that the axis of $\MM$ is $\IR^k \times \bO^{n-k}$.
By Proposition~\ref{Prop_ab_transform} we can apply a time-shift so that $\ov a (\MM, \bq, 0) = 0$ for all $\bq \in\IR^k \times \bO^{n-k}$.
Now apply Proposition~\ref{Prop_PO_ancient} for an arbitrary point $\bq_0 \in \IR^{n+1}$, at time $t_0 = 0$ and for arbitrary parameters.
The assumptions of this proposition hold for large enough $r_0$.
Proposition~\ref{Prop_PO_ancient} provides a point $\bq \in \IR^k \times \bO^{n-k}$ at which we can study the rescaled flow in terms of the leading mode $U^{++}(\tau)$.
Proposition~\ref{Prop_ab_exist} implies  the asymptotic bound $\Vert U^{++} (\tau) \Vert \leq C e^{1.2\tau}$.
If $U^{++} \not\equiv 0$, then Proposition~\ref{Prop_PO_ancient}\ref{Prop_PO_ancient_d}, implies a differential inequality of the form $\partial_\tau \Vert U_\alpha (\tau) \Vert \leq 1.1 \Vert U_\alpha (\tau)\Vert$ for $\alpha = 0, \frac12$ or $1$, depending on which component of $U^{++}(\tau)$ dominates.
Fix $\tau < \tau_0$ and let $\tau' < \tau$.
Integrating this differential bound implies $\Vert U^{++}(\tau) \Vert \leq Ce^{1.1 (\tau- \tau')} \Vert U^{++}(\tau') \Vert \leq Ce^{1.1(\tau-\tau')} e^{1.2\tau'} \to 0$ for $\tau' \to -\infty$.
It follows that $U^{++}(\tau) \equiv 0$ and hence $u_\tau \equiv 0$ and $R(\tau) \equiv \infty$.
Thus $\MM = \MM_{\cyl} |_{(-\infty,T)}$.
\end{proof}
\bigskip

\subsection{Approximation by the bowl soliton} \label{subsec_approx_bowl}
In the following we denote by $\MM := \IR^{k-1} \times \MM_{\bowl}$ the bowl soliton normaized to move at speed $1$ in the direction $\mathbf e_k$.
We moreover assume that $\bO \in \MM_{\bowl, 0}$, so the tip is located at the origin at time $0$.
Our main result states that if $\MM'$ is another asymptotically cylindrical mean curvature flow with the same asymptotics as~$\MM$---which, in the case of a dominant linear mode, can always be achieved by Corollary~\ref{Cor_ab_normalization}---then $\MM'$ is close to~$\MM$ away from its cap region and asymptotic to $\MM$ at a high polynomial rate.

\begin{Lemma}\label{l:H^10}
Let $\MM'$ be an asymptotically $(n,k)$-cylindrical flow in $\IR^{n+1} \times (-\infty,T)$ with dominant linear mode and axis $\IR^k \times \bO^{n-k+1}$.
Assume that \eqref{eq_same_ab} holds.
Then there is a $C > 0$ such that for all $t < T$ the set $(\spt \MM')_t \setminus (\IR^{k-1} \times B(t \mathbf e_k, C))$ is a normal graph of a function $v : \MM_{t} \supset \DD_t \to \IR$ over $\MM_t$.
Moreover, $|v| \leq C H^{10}$, where $H$ denotes the mean curvature on $\MM$.
\end{Lemma}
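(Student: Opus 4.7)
The strategy is to use the matched asymptotic data \eqref{eq_same_ab} together with the ODE description of $U^+$ from Propositions~\ref{Prop_PO_ancient} and \ref{Prop_ab_exist} to obtain high-order closeness of the cylindrical-graph representations of $\MM$ and $\MM'$, and then to convert this closeness into a pointwise normal-graph bound over the bowl.

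First, I would apply Proposition~\ref{Prop_PO_ancient} to both $\MM$ and $\MM'$ with the common basepoint $(\bq_0, t_0) = (\bO, 0)$ and a large reference scale $r_0$, which is admissible by Lemma~\ref{Lem_cyl_properties}\ref{Lem_cyl_properties_c}. Since both flows share the axis $\IR^k\times\bO^{n-k+1}$, Proposition~\ref{Prop_dependence_ab} allows taking $\bq^{\MM}=\bq^{\MM'}=\bO$, and then \eqref{eq_same_ab} combined with Proposition~\ref{Prop_ab_exist} gives
\[
U^{+,\MM'}(\tau) - U^{+,\MM}(\tau) = O(e^{1.2\tau}) \qquad (\tau\to-\infty).
\]
Next I would bootstrap. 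Writing $\Delta := U^{+,\MM'} - U^{+,\MM}$, the identical ODE of Proposition~\ref{Prop_PO_ancient}\ref{Prop_PO_ancient_b} and the structure of $Q_J^+$ yield
\[
\partial_\tau \Delta = L\Delta + O(\|U^{++}\|)\cdot \|\Delta\| + O(\|U^{++}\|^{J+1}),
\]
with $\|U^{++}\|\lesssim e^{\tau/2}$. Since $\Delta\to 0$ faster than any of the non-stable modes $\sV_1,\sV_{1/2},\sV_0$ of $L$, iterating the variation-of-constants formula on the finite-dimensional space $\sV_{\rot,\geq -J}$---which improves the decay by $\tfrac12$ at each stage because the forcing is quadratic in $U^+$---upgrades the bound from $e^{1.2\tau}$ to $e^{N\tau/2}$ for any prescribed $N\leq J$. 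Choosing $J$ large, one gets arbitrarily fast decay in $(-t)$.

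By Proposition~\ref{Prop_PO_ancient}\ref{Prop_PO_ancient_a}, this translates to $\|u^{\MM'}_\tau - u^{\MM}_\tau\|_{C^m(\IB^k_{R(\tau)-1}\times\IS^{n-k})} \leq C(N,m)\,e^{N\tau/2}$ with $R(\tau)\sim\sqrt{|\tau|}$. Converting from the cylindrical-graph representation over $M_\cyl$ to a normal-graph representation over $\MM_t$ introduces a factor $e^{-\tau/2}$ (for unscaling) and a factor $\cos\theta\approx 1$ comparing the radial direction to the unit normal of $\MM$ (which differs by an angle $\sim (\bp_k-t)^{-1/2}$ away from the cap). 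This yields a normal displacement
\[
|v(\bp,t)| \leq C(-t)^{(1-N)/2}
\]
throughout the region of $\MM_t$ covered by the rescaled ball, namely $|\bp|\lesssim \sqrt{(-t)\log(-t)}$. Since $H\sim(\bp_k-t)^{-1/2}$ on $\MM$ and $\bp_k-t\lesssim(-t)$ in this region, one has $H^{10}\gtrsim(-t)^{-5}$, and choosing $N\geq 11$ gives $|v|\leq CH^{10}$ there. To cover the remaining points---those with $\IR^{n-k+1}$-radius $r\gg\sqrt{-t}$---I would shift the basepoint: apply Proposition~\ref{Prop_PO_ancient} at $(\bq,t_0)=((\bp_1,\dots,\bp_{k-1},\bp_k,\bO),\,t+Cr^2)$ with $r_0\sim r$. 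Proposition~\ref{Prop_dependence_ab} shows that the matched invariants persist under this shift, so the same bootstrap gives $|u^{\MM'}-u^{\MM}|\lesssim r^{-N}$ on the new rescaled ball at rescaled time $\tau_*\sim-2\log r$, which unscales to $|v|\lesssim r^{1-N}\leq r^{-10}\sim H^{10}$ for $N\geq 11$. Patching these local estimates over a covering of the complement of $\IR^{k-1}\times B(t\mathbf{e}_k,C)$ produces both the normal-graph representation of $\MM'_t$ over $\MM_t$ and the pointwise bound $|v|\leq CH^{10}$.

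The main obstacle is the bootstrap step: one must verify that each iteration genuinely improves the decay rate, which amounts to controlling how the higher-order Taylor tails of the nonlinearity feed back into the lower modes of $\Delta$, and for this Proposition~\ref{Prop_PO_ancient}\ref{Prop_PO_ancient_c} is essential. A related point is that the cylindrical-to-normal graph conversion introduces Jacobian factors growing polynomially in $|\bx|$ (and correspondingly in $|\tau|$), which must be absorbed by the exponential gain---the logarithmic size of $R(\tau)$ makes this comfortable but requires care. Finally, executing the shifted-basepoint argument uniformly in the basepoint requires a quantitative use of Lemma~\ref{Lem_cyl_properties}\ref{Lem_cyl_properties_c} to ensure the closeness of the shifted rescaled flow to $M_\cyl$ at the chosen scale, so that Proposition~\ref{Prop_PO_ancient} genuinely applies there.
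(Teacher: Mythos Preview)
Your strategy matches the paper's: match the invariants $\bar a,\bar b_j$, derive high-order decay of $V=U^{+,\MM'}-U^{+,\MM}$, and slide the basepoint along the bowl's axis. Two points, however, require more than you indicate.

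First, uniformity. The paper does not bootstrap; it takes $J\ge100$ so that the inhomogeneity in the ODI for $V$ is already $\le Ce^{40(\tau-\td\tau)}$ with \emph{uniform} $C$, absorbs the $O(\|U^+\|)\|V\|$ term into the top eigenvalue (giving coefficient $1.1$ rather than $1$), and integrates $\partial_\tau\|V\|\le1.1\|V\|+Ce^{40(\tau-\td\tau)}$ once from $-\infty$. The non-uniform bound $\|V\|\le C'e^{1.2\tau}$ from Proposition~\ref{Prop_ab_exist} enters only to kill the boundary term, and the resulting $\|V(\tau)\|\le Ce^{40(\tau-\td\tau)}$ has $C$ independent of the basepoint. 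Your iteration, by contrast, feeds the previous step's basepoint-dependent constant into the forcing at each stage, so uniformity does not follow. Separately, to apply Proposition~\ref{Prop_PO_ancient} at a shifted basepoint at scale $r$, you need $\MM'$ to already be $\delta$-close to $M_\cyl$ there at a comparable scale; Lemma~\ref{Lem_cyl_properties}\ref{Lem_cyl_properties_c} only propagates closeness in scale at a \emph{fixed} spatial origin and does not supply this. The paper closes this with a compactness argument (Claim~\ref{Cl_rp0_bounded}) showing the scale $r'_0$ at which $\MM'$ becomes cylindrical near $(\bq_0,t_0)$ is uniformly comparable to the corresponding scale $r_0$ for $\MM$.

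Second, your patching produces a graph bound on points of $(\spt\MM')_t$ that you already know lie near $\MM_t$, but does not rule out stray pieces far away. The paper handles this with a separate blow-down argument: assuming points $\bp_i\in(\spt\MM')_{t_i}$ drift arbitrarily far from $\MM_{t_i}$, one rescales by $r_0(\bp_i,t_i)^{-1}\to 0$ so that the limit of $\MM$ is a round shrinking cylinder (possibly shifted), uses the already-established graph bound to identify the limit of $\MM'$ with it before the extinction time $t^*$, and then derives a contradiction from Huisken monotonicity applied to $\MM'_\infty$ at $(\bO,0)$.
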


\begin{proof}
Fix $J, m \geq 100$, let $\eta \leq 0.1$ be a constant whose value we will determine later and let $\delta (J,m, \eta)$ be the constant from Proposition~\ref{Prop_PO_ancient}.
We will omit dependence on the $J,m$ in the following.

For any point $(\bq_0, t_0) \in \IR^{n+1} \times \IR$ define $r_0 (\bq_0, t_0) > 0$ to be the smallest number such that $\MM - (\bq_0, t_0)$  is $\delta$-close to $M_{\cyl}$ at time $-r^2$ and scale $r$ for all $r > r_0$.
Define $r'_0 (\bq_0, t_0) \geq 0$ similarly for the flow $\MM'$.
Note that these constants are finite since $\MM$ and $\MM'$ are asymptotically cylindrical.

\begin{Claim} \label{Cl_r0rp0}
There is a constant $C_0 > 0$ such that the following holds for $\eta \leq \ov\eta$.
For any point $(\bq_0, t_0) \in \IR^{n+1} \times \IR$ there is a point $\bq \in \IR^k \times \bO^{n-k+1}$ with $|\bq - \bq_0| \leq \eta r'_0(\bq_0, t_0)$ such that for all
\begin{equation} \label{eq_SgeqC0}
 r > \max \{ r_0 (\bq, t_0), r'_0 (\bq_0, t_0) \} \qquad \text{with} \qquad  S := \frac{r}{ r_0 (\bq, t_0)} \geq C_0 
\end{equation}
 the intersection
$(\spt \MM')_{t_0 - r^2} \cap B(\bq, r \log S) $
 consists of regular points and is  the normal graph of a function $v : \DD \to \IR$ with
\[ (\spt \MM)_{t_0 - r^2} \cap B(\bq, C_0^{-1} r \sqrt{\log S}) \subset \DD \subset (\spt \MM)_{t_0 - r^2} \cap B(\bq, 2r \sqrt{\log S}). \]
and 
\[ r^{-1} |v| +  |\nabla v| \leq C S^{-40}. \]
\end{Claim}

\begin{proof}
The point $\bq$ is supplied by Proposition~\ref{Prop_PO_ancient} applied to $\MM'$ at scale $r'_0 = r'_0 (\bq_0, t_0)$.
By our assumption on the axis of $\MM'$ we must have $\bq \in \IR^k \times \bO^{n-k+1}$.
So since $\MM$ is rotationally symmetric about this axis, we can repeat this construction and apply Proposition~\ref{Prop_PO_ancient} at $(\bq, t_0)$ with $r_0 = r_0(\bq, t_0)$, without adjusting the basepoint.
We denote the resulting functions for $\MM'$ by $u'_\tau$, $U^{\prime, +}(\tau)$, $U^{\prime, ++}(\tau),R'(\tau)$ and for $\MM$ by $u_\tau$, $U^{+}(\tau)$, $U^{ ++}(\tau),R(\tau)$.
These are defined for $\tau \leq \td\tau' := -2 \log r'_0$ and $\tau \leq \td\tau := -2 \log r_0$, respectively.

Due to \eqref{eq_same_ab} and Proposition~\ref{Prop_ab_exist} the difference $V(\tau) := U^{\prime,+}(\tau) - U^{+}(\tau)$ satisfies the bound
\begin{equation} \label{eq_Vdecay1p2}
 \Vert V(\tau) \Vert \leq C' e^{1.2\tau} ,  
\end{equation}
where $C'$ is allowed to depend on $\MM'$, $\bq_0, t_0, \bq$, etc., but not on $\tau$.
As in the proof of Proposition~\ref{Prop_ab_exist}, we find that for $\tau \leq \td\tau$
\[ \partial_\tau \Vert U^{++}(\tau) \Vert \leq 0.4 \Vert U^{++}(\tau) \Vert, \]
which implies that for some generic uniform constant $C$ (which does \emph{not} depend on $\MM', \bq_0, t_0, \bq$)
\begin{equation} \label{eq_Upp04bound}
 \Vert U^{++} (\tau) \Vert \leq C e^{0.4 ( \tau- \td\tau)}. 
\end{equation}
Therefore, by Proposition~\ref{Prop_PO_ancient}\ref{Prop_PO_ancient_b}, we obtain for $\tau \leq \min\{ \td\tau, \td\tau' \}$
\begin{align*}
 \big\Vert \partial_\tau U^+ - L U^+ - Q^+_J(U^+) \big\Vert &\leq C e^{40(\tau - \td\tau)}, \\
 \big\Vert \partial_\tau U^{\prime,+} - L U^{\prime, +} - Q^+_J(U^{\prime,+}) \big\Vert 
 &\leq C \Vert U^{\prime, ++} \Vert^{100} \leq  C \Vert V \Vert^{100} + C e^{40(\tau - \td\tau)}. 
\end{align*}
Combining these bounds yields
\[ \big\Vert \partial_\tau V - L V - Q^+_J(U^+ + V) + Q^+_J (U^+) \big\Vert \leq C \Vert V \Vert^{100} +  C e^{40(\tau - \td\tau)}. \]
Since the largest eigenvalue of $L$ is $1$ and since we have a uniform bound of the form $\Vert V \Vert \leq C\eta$, this implies that for $\eta \leq \ov\eta$
\begin{multline*}
 \partial_\tau \Vert V(\tau) \Vert \leq1.01 \Vert V(\tau) \Vert + C \Vert U^+(\tau) \Vert \cdot \Vert V(\tau) \Vert + C e^{40(\tau-\td\tau)} \\
\leq \big( 1.01 + C e^{0.4(\tau-\tau_0)} \big) \Vert V(\tau) \Vert + C e^{40(\tau-\td\tau)}. 
\end{multline*}
So if $\tau \leq \min \{ \td\tau - 2 \log C_0, \td\tau' \}$ for some uniform $C_0$, then
\[ \partial_\tau \Vert V(\tau) \Vert\leq 1.1\Vert V(\tau) \Vert + C e^{40(\tau- \td\tau)} \quad \Rightarrow \quad \partial_\tau \big( e^{-1.1(\tau- \td\tau)} \Vert V(\tau) \Vert \big) \leq C e^{38.9(\tau- \td\tau)}. \]
Integrating this bound, for $\tau' \leq \tau \leq \min \{ \td\tau- 2 \log C_0, \td\tau' \}$ yields, using \eqref{eq_Vdecay1p2}, 
\begin{multline*}
 e^{-1.1(\tau- \td\tau)} \Vert V(\tau) \Vert
\leq C\int_{\tau'}^\tau  e^{38.9(\ov\tau- \td\tau)} d\ov\tau + e^{-1.1(\tau'- \td\tau)} \Vert V(\tau') \Vert  \\\xrightarrow[\tau' \to -\infty]{} C\int_{-\infty}^\tau  e^{38.9(\ov\tau- \td\tau)} d\ov\tau
\leq C e^{38.9(\tau- \td\tau)},
\end{multline*}
and hence 
\begin{equation} \label{eq_V40decay}
\Vert V(\tau) \Vert \leq C e^{40(\tau- \td\tau)}.
\end{equation}
So for $\tau \leq \min \{ \td\tau - 2 \log C_0, \td\tau' \}$ we have by Proposition~\ref{Prop_PO_ancient}\ref{Prop_PO_ancient_a} over the common domain 
\begin{multline*}
    \Vert u'_\tau - u_\tau \Vert_{C^{10}} 
    \leq \Vert V(\tau) \Vert + C \Vert U^{++}(\tau) \Vert^{100} +C \Vert U^{\prime, ++}(\tau) \Vert^{100} \\
     \leq \Vert V(\tau) \Vert + C \Vert U^{++}(\tau) \Vert^{100} +C \Vert V(\tau) \Vert^{100}
    \leq C e^{40(\tau- \td\tau)}. 
\end{multline*} 
Note that if $t_0 - r^2 = t_0 - e^{-\tau}$, then $S = e^{-(\tau - \td\tau)/2}$, so the bound $\tau \leq \td\tau -2 \log C_0$ is equivalent to the second bound in \eqref{eq_SgeqC0}.
By the definition of $R(\tau)$ and $R'(\tau)$ from Proposition~\ref{Prop_PO_ancient} and the bounds \eqref{eq_Upp04bound} and \eqref{eq_V40decay} we have $R(\tau), R'(\tau) \geq C^{-1} \sqrt{|\tau-\td\tau|} = C^{-1} \sqrt{2\log S}$.
Hence the claim follows, possibly after adjusting $C_0$. 
\end{proof}

\begin{Claim} \label{Cl_rp0_bounded}
There is a constant $C_1 > 0$ such that the following is true for all points $(\bq_0, t_0) \in \IR^{n+1} \times \IR$.
Let $\bq$ be the point satisfying the assertions of Claim~\ref{Cl_r0rp0}.
Then $r'_0(\bq_0, t_0) \leq C_1 r_0(\bq, t_0)$.
\end{Claim}

\begin{proof}
Assume that such a constant does not exist and pick a sequence of counterexamples $\bq_{0,i}, \bq_i, t_{0,i}$ with $r'_{0,i}/ r_{0,i} \to \infty$, where $r_{0,i} := r_0(\bq^i, t^i_0)$ and $r'_{0,i} := (\bq^i_0, t^i_0)$.
Since $\MM$ does not have cylindrical regions of arbitrary small size, the scale $r_{0,i}$ must be uniformly bounded from below, so $r'_{0,i} \to \infty$.
So if we consider the parabolic rescalings $\MM_i := (r'_{0,i})^{-1} (\MM - (\bq_{i}, t_{0,i}))$ and $\MM'_i := (r'_{0,i})^{-1} (\MM' - (\bq_{i}, t_{0,i}))$, then we have smooth convergence $\MM_i \to \MM_{\cyl}$.
On the other hand, for any $t < 0$, Claim~\ref{Cl_r0rp0} establishes closeness of $\MM'$ to $\MM$ at time $t_{0,i} - t (r'_{0,i})^2$ for sufficiently large $i$ and where $S_i = \sqrt{t} r'_{0,i} / r_{0,i} \to \infty$.
For the rescaled flows this means that we have locally smooth convergence $\MM'_i \to \MM_{\cyl}$ over the time-interval $(-\infty,-1)$.
So any subsequential Brakke limit of $\MM'_i$ must also locally smoothly converge to $\MM_{\cyl}$ over the \emph{entire} time-interval $(-\infty,0)$.
However, this contradicts the minimal choice or $r'_0(\bq_{0,i}, t_{0,i})$ for large $i$. 
\end{proof}

\begin{Claim} \label{Cl_MMpgraph}
There is a constant $C_2 > 0$ and for any $A > 0$ there is a constant $D(A) > 0$ such that the following is true.
Let $t < T$ and let $\bq_0 \in \IR^k \times \bO^{n-k+1}$ be sufficiently far away from the tip of $\MM_t$ in the sense that $x_k (\bq_0) -t \geq  D(A)$ (the left-hand side is the signed distance from the tip of $\MM_t$).
Set $r^2 := x_k (\bq_0) - t$.
Then the intersection $(\spt \MM')_t \cap B(\bq_0, A r)$ consists of regular points and is the normal graph of a function $v : \MM_t \supset \DD \to \IR$ with $|v| \leq C_2 H^{10}$.
\end{Claim}

\begin{proof}
Choose $t_0 = t+r^2$.
So $(\bq_0, t_0)$ lies on the tip of $\MM$, which implies a bound of the form $r_0(\bq_0, t_0) \leq C$.
The claim therefore follows by applying Claim~\ref{Cl_r0rp0} to $(\bq_0, t_0)$.
Note that Claim~\ref{Cl_rp0_bounded} provides an upper bound on $r'_0(\bq, t_0)$.
\end{proof}

Claim~\ref{Cl_MMpgraph} shows that for any $t < T$ the intersection of $(\spt \MM')_t$ with
\begin{equation} \label{eq_WW}
   \mathcal W^A_t := \bigcup_{\substack{\bq_0 \in \IR^k \times \bO^{n-k+1} \\ x_k(\bq_0) \geq t + D(A)}} B \big(\bq_0, A \sqrt{x_k(\bq_0)- t+1} \big). 
\end{equation}
is the normal graph of a function $v : \MM_t \supset \DD_t \to \IR$ over $\MM_t$ with $|v| \leq C_2 H^{10}$.
So to finish the proof of the lemma it suffices to show that, for sufficiently large $A, C_3$, the difference of $(\spt \MM')_t$ and $\mathcal W^A_t$ is contained in a solid cylinder of the form $\IR^{n-1} \times B(t \mathbf e_k, C_3)$.
To see this, note that for any $r > 0$ the $r$-neighborhood around $(\spt \MM)_t$ is contained in the union of the subset \eqref{eq_WW} with $\IR^{n-1} \times B(t \mathbf e_k, C_3)$ for sufficiently large $A, C_3$.
So it is enough to establish a uniform upper bound of the distance of points $\bp \in (\spt \MM')_t$ to $(\spt \MM)_t$.

Suppose  by contradiction that no such bound exists, so there is a sequence of points $\bp_i \in (\spt \MM')_{t_i}$ whose distance to $(\spt \MM)_{t_i}$ diverges.
Then we must also have $r_i := r_0 (\bp_i, t_i) \to \infty$.
Consider the parabolic rescalings $\MM_i := r_i^{ -1} (\MM - (\bp_{i}, t_i))$ and $\MM'_i := r_i^{ -1} (\MM' - (\bp_{i}, t_i))$ and pass to a subsequence such that we have convergence in the Brakke sense $\MM_i \to \MM_\infty$ and $\MM'_i \to \MM'_\infty$.
Since $(\bO,0) \in \spt \MM'_i$, we must also have $(\bO,0) \in \MM'_\infty$.
Since $r_i \to \infty$, the limit $\MM_\infty$ must be a round shrinking cylinder, but due to the definition of $r_0 (\bp_i, 0)$ it cannot be equal to $\MM_{\cyl}$; it must differ from it by a non-trivial translation in time and/or space.
Let $t^*$ be the extinction time of $\MM_\infty$.

Fix some $t < t^*$.
Then the time-slices $(\spt \MM_{i})_t$ of the blow-down sequence become more and more cylindrical, so the corresponding blowdowns $r_i^{-1} (\mathcal W^A_{t_i + tr^2_i} - \bp_i)$ must converge to a cylindrical region $\mathcal W^A_{\infty,t}$ around the axis of the limiting cylinder $(\spt \MM_{\infty})_t$ with $\mathcal W^A_{\infty,t} \to \IR^{n+1}$ as $A \to \infty$.
Since characterization of from Claim~\ref{Cl_MMpgraph} applies within this region and implies that $(\spt \MM'_{i})_t$ becomes closer and closer to $(\spt \MM_{i})_t$.
Therefore $\MM'_{\infty}|_{(-\infty, t^*)} = \MM_{\infty}|_{(-\infty, t^*)}$.

Suppose now that $t^* > 0$.
Then we can apply the same discussion from the last paragraph at time $t = 0$.
This implies that $\bp_i \in \mathcal W^A_{t_i}$ for large enough $A$ and $i$.
So by the bounds from Claim~\ref{Cl_MMpgraph} the points $\bp_i \in (\spt \MM'_i)_{t_i}$ have uniformly bounded distance from $(\spt \MM)_{t_i}$, in contradiction to our assumption.

So we must have $t^* \leq 0$.
Recall that $(\bO, 0) \in \spt \MM'_\infty$.
So by monotonicity of the Gaussian area we have $1 \leq \Theta^{\MM'_\infty}_{(\bO,0)} (-t) = \Theta^{\MM_\infty}_{(\bO,0)} (-t)$ for all $t < t^*$.
However, since $\MM_\infty \neq \MM_{\cyl}$, the last quantity must go to zero as $t \nearrow t^*$, which is a contradiction.
\end{proof}
\bigskip

\subsection{Proof of Theorem~\ref{Thm_bowl_unique}} \label{subsec_bowl_unique}

\begin{proof}[Proof of Theorem \ref{Thm_bowl_unique}]
By Corollary~\ref{Cor_ab_normalization}, it suffices to consider a flow $\MM'$ with axis $\IR^k \times \bO^{n-k+1}$ that satisfies \eqref{eq_same_ab}.
Without loss of generality, we may also assume that $\MM'$ is defined on the time-interval $(-\infty,0]$.
For convenience, we will write $\MM := \IR^{k-1} \times \MM_{\bowl}$.
We will show $\MM' = \MM$ by comparing $\MM'$ with time-translations of $\MM$.
The following claim, which is a consequence of Lemma~\ref{l:H^10}, is the crucial ingredient for this comparison principle.

\begin{Claim} \label{Cl_intersection}
There is a continuous function $D : \IR_+ \to \IR_+$ with the following property.
If $\Delta T \in \IR$, $\Delta T \neq 0$, then for all $t \leq 0$
\[ (\spt \MM')_t \cap (\spt \MM)_{t + \Delta T} \subset \IR^{k-1} \times B(t \mathbf e_k, D(|\Delta T|) ). \] 
Moreover, this intersection is empty if $|\Delta T|$ is large enough.
\end{Claim}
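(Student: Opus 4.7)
The strategy is to combine the graphical control on $\MM'$ provided by Lemma~\ref{l:H^10} with the elementary geometric fact that, since $\MM = \IR^{k-1}\times\MM_{\bowl}$ is a translating soliton in direction $\mathbf{e}_k$ satisfying $H=|\langle\nu,\mathbf{e}_k\rangle|$, the two time-slices $(\spt\MM)_t$ and $(\spt\MM)_{t+\Delta T}$ are disjoint translates of one another, whose normal separation at a point of mean curvature $H$ is $|\Delta T|\,H$ to leading order.

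The argument has three steps. \emph{Step 1 (upper bound for points of $(\spt\MM')_t$).} By Lemma~\ref{l:H^10}, there is a constant $C_1 > 0$ such that every $\bp \in (\spt\MM')_t \setminus \bigl(\IR^{k-1}\times B(t\mathbf{e}_k, C_1)\bigr)$ is the normal image of some $\bp_0 \in (\spt\MM)_t$ with displacement $|v(\bp_0)|\le C_1 H(\bp_0)^{10}$; in particular,
\[
\operatorname{dist}\bigl(\bp, (\spt\MM)_t\bigr) \;\le\; C_1 H(\bp_0)^{10}.
\]
\emph{Step 2 (lower bound for points of $(\spt\MM)_{t+\Delta T}$).} Using rotational symmetry of $\MM_{\bowl}$, express both $(\spt\MM)_t$ and $(\spt\MM)_{t+\Delta T}$ as radial graphs $x_k = u(|y|)+t$ and $x_k = u(|y|)+t+\Delta T$ and minimize the distance squared. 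A direct first-order Taylor expansion, combined with the identity $H=1/\sqrt{1+u'^2}$, shows that for every $\bp\in(\spt\MM)_{t+\Delta T}$ with nearest point $\bp_*\in(\spt\MM)_t$ satisfying $|\Delta T|H(\bp_*)\le c_1$,
\[
\operatorname{dist}\bigl(\bp, (\spt\MM)_t\bigr) \;\ge\; c_0 |\Delta T|\, H(\bp_*),
\]
for universal constants $c_0, c_1 > 0$. \emph{Step 3 (combine).} For $\bp$ in the intersection outside the cap, Steps~1 and~2 together, together with the fact that $H$ is uniformly Lipschitz on $\MM$ (so $H(\bp_0)$ and $H(\bp_*)$ are comparable), force $H(\bp_0)^9 \ge c|\Delta T|$ for a universal $c>0$.

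Since $H$ on $\MM$ is bounded above by $1$ and decays like $r^{-1}$ in the axial distance $r$ from the tip, the inequality $H(\bp_0)^9\ge c|\Delta T|$ confines $\bp_0$, and hence $\bp$, to a region of the form $\IR^{k-1}\times B(t\mathbf{e}_k, D_1(|\Delta T|))$ for some continuous function $D_1$. Taking $D(|\Delta T|):=\max\{C_1,D_1(|\Delta T|)\}$ then absorbs the cap and establishes the first claim. For the second claim, the non-cap portion of the intersection becomes empty once $|\Delta T|$ exceeds $c^{-1}(\max_\MM H)^9 = c^{-1}$, since $H(\bp_0)^9 \ge c|\Delta T|$ then fails. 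The cap portion is empty once $|\Delta T|>C_1$, since $(\spt\MM)_{t+\Delta T}\subset\{x_k\ge t+\Delta T\}$ if $\Delta T>0$ (respectively $\le t+\Delta T$ if $\Delta T<0$), which is disjoint from $\IR^{k-1}\times B(t\mathbf{e}_k,C_1)$.

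The main technical point is Step~2, the lower bound $\operatorname{dist}\ge c_0|\Delta T|H(\bp_*)$. Although intuitive from the translator identity, one must verify it in the full range of $\bp_*$ needed; since the large $|\Delta T|$ regime is handled separately by emptiness of the intersection, it suffices to establish this bound in the quantitative range $|\Delta T|H(\bp_*)\le c_1$, which reduces to a standard first-order geometric estimate on the bowl soliton. The remaining ingredients---Lipschitz control on $H$ and the asymptotic $H\asymp r^{-1}$---are classical properties of the bowl.
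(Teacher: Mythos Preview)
Your approach is essentially the paper's: both compare the $O(H^{10})$ normal displacement of $(\spt\MM')_t$ from $(\spt\MM)_t$ (Lemma~\ref{l:H^10}) against the displacement of $(\spt\MM)_{t+\Delta T}$ from $(\spt\MM)_t$, which is bounded below by a multiple of $H$, forcing $H$ bounded below at any intersection point. The paper phrases the lower bound as $|v_{\Delta T,t}|\ge c_{\Delta T}H$ for the normal-graph height, with $c_{\Delta T}$ continuous in $\Delta T$; your Step~2 bound $c_0|\Delta T|H$ is the same thing restricted to the relevant regime.

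One slip in the emptiness argument: the assertion that $(\spt\MM)_{t+\Delta T}\subset\{x_k\le t+\Delta T\}$ when $\Delta T<0$ is false---the bowl always opens in the $+\mathbf e_k$ direction regardless of the sign of $\Delta T$. For $\Delta T<-C_1$ the translated bowl does pass through the slab $\{|x_k-t|<C_1\}$, but there its radial coordinate satisfies $|y|=F\bigl((x_k-t)+|\Delta T|\bigr)\ge F(|\Delta T|-C_1)\sim\sqrt{|\Delta T|}$, which exceeds $C_1$ once $|\Delta T|$ is large; this is the correct reason the cap intersection is empty in that case.
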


\begin{proof}
Recall that the profile function of the $(n-k+1)$-dimensional bowl soliton satisfies the asymptotics $F(x) \sim \sqrt{x}$, $F'(x) \sim x^{-1/2} \sim H(x)$.
So for a fixed $\Delta T \neq 0$ the time-slice $(\spt \MM)_{t + \Delta T} = \Delta T \mathbf \, \mathbf e_k + (\spt \MM)_{t }$ is the normal graph of a function $v_{\Delta T,t}$ on $(\spt \MM)_{t}$ with $|v_{\Delta T,t}| \geq c_{\Delta T}  H$, where $c_{\Delta T}  > 0$ can be chosen continuously on $\Delta T$.
The claim now follows since Lemma~\ref{l:H^10} establishes closeness of $(\spt \MM')_{t}$ and $(\spt \MM)_{t}$, which decays at a strictly faster rate.
\end{proof}

Let $\Delta T_+ \geq 0$ be minimal with the property that for all $\Delta T \in (\Delta T_+, \infty)$ we have
\begin{equation} \label{eq_sptsdisjoint}
 (\spt \MM')_t \cap (\spt \MM)_{t + \Delta T} = \emptyset \qquad \text{for all} \quad t \leq 0. 
 \end{equation}
 Claim~\ref{Cl_intersection} shows that $\Delta T_+ < \infty$.

\begin{Claim}
$\Delta T_+ = 0$.
\end{Claim}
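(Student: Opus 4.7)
The plan is to argue by contradiction: assume $\Delta T_+>0$, exhibit a genuine spacetime contact between $\MM'$ and the time-translate $\MM^{(\Delta T_+)}:=\MM-(\bO,\Delta T_+)$ (which satisfies $(\spt\MM^{(\Delta T_+)})_t=(\spt\MM)_{t+\Delta T_+}=(\spt\MM)_t+\Delta T_+\be_k$ by the translator property of $\MM$), promote this contact to a global coincidence via a strong maximum principle argument, and then extract a contradiction from the transformation law for the invariant $\ov a$ established in Propositions~\ref{Prop_dependence_ab} and \ref{Prop_ab_transform}.

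First I would produce a contact point at $\Delta T=\Delta T_+$. Because $\MM$ is a translating soliton, the shifted support $(\spt\MM)_t-t\be_k$ is $t$-independent, and by Lemma~\ref{l:H^10} the same shifted picture of $\MM'$ differs from it by $\leq CH^{10}$ outside a fixed ball $\IR^{k-1}\times B(\bO,C)$. Together with Claim~\ref{Cl_intersection} this confines every possible intersection of $(\spt\MM')_t$ with $(\spt\MM^{(\Delta T)})_t$ (after subtracting $t\be_k$) to a fixed compact set, uniformly in $t\leq 0$ and in $\Delta T$ near $\Delta T_+$. Minimality of $\Delta T_+$ then yields sequences $\Delta T_i\nearrow\Delta T_+$, times $t_i\leq 0$ and points $\bp_i\in(\spt\MM')_{t_i}\cap(\spt\MM^{(\Delta T_i)})_{t_i}$. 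The case $t_i\to -\infty$ must be excluded: for $\tau\ll 0$, Proposition~\ref{Prop_dependence_ab} gives the same $\ov b_j$ for $\MM'$ and $\MM^{(\Delta T_+)}$, while $\ov a(\MM^{(\Delta T_+)})-\ov a(\MM')=\tfrac12\Delta T_+>0$, so Proposition~\ref{Prop_ab_exist} yields that the rescaled graphical representations $u'_\tau$ and $u^{(\Delta T_+)}_\tau$ over $M_{\cyl}$ satisfy $u^{(\Delta T_+)}_\tau-u'_\tau=\tfrac12\Delta T_+\,e^\tau\mathfrak p^{(0)}+O(e^{1.2\tau})$, which is strictly positive on any fixed ball in the cylindrical region for $\tau\ll 0$. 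Combined with the bounded intersection region from Claim~\ref{Cl_intersection} this forces emptiness of the intersection for all $t$ sufficiently negative. Hence $t_i$ remains bounded, and a subsequential limit produces a contact point $(\bp_*,t_*)\in(\spt\MM')_{t_*}\cap(\spt\MM^{(\Delta T_+)})_{t_*}$.

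Next I would promote the contact to a global coincidence. Since $\MM^{(\Delta T_+)}$ is a smooth mean curvature flow and $(\spt\MM')_t$ is disjoint from $(\spt\MM^{(\Delta T)})_t$ for every $\Delta T>\Delta T_+$, the flow $\MM'$ lies locally on one side of the smooth barrier $\MM^{(\Delta T_+)}$ near $\bp_*$. Brakke's regularity theorem together with the unit-regularity of $\MM'$ forces the Gaussian density of $\MM'$ at $(\bp_*,t_*)$ to equal $1$, so $(\bp_*,t_*)$ is a regular point of $\MM'$, and near it both flows are smooth hypersurfaces touching tangentially with matching co-orientation. The strong maximum principle for mean curvature flow then yields spacetime coincidence on a neighborhood of $(\bp_*,t_*)$. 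Propagating this coincidence along the common smooth locus---using Lemma~\ref{l:H^10} to ensure $\MM'$ is regular outside the bounded tube around $t\be_k$, and iterating the strong maximum principle (or invoking real analyticity of the flows) across the compact cap region---upgrades it to $\MM'=\MM^{(\Delta T_+)}$ as integral Brakke flows on $(-\infty,0]$.

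Finally I would derive the contradiction. By Proposition~\ref{Prop_ab_transform} applied with $\alpha=1$, $\bp=\bO$, $\Delta T=\Delta T_+$, together with Proposition~\ref{Prop_dependence_ab},
\[
\ov a\big(\MM^{(\Delta T_+)},\bq,t_0\big)=\ov a(\MM,\bq,t_0+\Delta T_+)=\ov a(\MM,\bq,t_0)+\tfrac12\Delta T_+,
\]
while the normalization~\eqref{eq_same_ab} gives $\ov a(\MM')=\ov a(\MM)$. Thus the identity $\MM'=\MM^{(\Delta T_+)}$ is incompatible with $\Delta T_+>0$, forcing $\Delta T_+=0$. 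The main obstacle will be the two regularity issues: (i) ruling out $t_i\to -\infty$, which rests on showing that the leading order separation $\tfrac12\Delta T_+\,e^\tau\mathfrak p^{(0)}$ actually dominates over the uniformly bounded ball confining the intersection---this is what Claim~\ref{Cl_intersection} combined with Proposition~\ref{Prop_ab_exist} delivers; and (ii) verifying that the contact point $(\bp_*,t_*)$ is regular for $\MM'$, for which the one-sided smooth barrier together with the unit-regularity of $\MM'$ suffice via Brakke's regularity theorem.
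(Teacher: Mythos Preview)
Your proposal contains two genuine gaps that together prevent the argument from going through.

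\textbf{Gap 1: obtaining a contact point.} Your claim that the intersection region is ``confined to a fixed compact set'' is not justified. Claim~\ref{Cl_intersection} confines the intersection only to $\IR^{k-1}\times B(t\be_k,D)$, which for $k\geq 2$ is unbounded in the $\IR^{k-1}$-direction; so even if $t_i$ were bounded, $\bp_i$ need not be. More seriously, your argument that $t_i$ stays bounded via Proposition~\ref{Prop_ab_exist} does not work: that expansion is valid only in the cylindrical region $\IB^k_{R(\tau)}\times\IS^{n-k}$ with $R(\tau)\sim\sqrt{|\tau|}$ in the \emph{rescaled} picture centered at the origin, whereas the intersection region from Claim~\ref{Cl_intersection} sits near the tip $t\be_k$, which in rescaled coordinates lies at distance $\sim e^{-\tau/2}\gg R(\tau)$. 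So the positivity of $u^{(\Delta T_+)}_\tau-u'_\tau$ on fixed cylindrical balls says nothing about emptiness of the intersection, which lives entirely in the cap. The paper handles both issues at once by translating $\MM'$ by $\big((\bp'_i,\bO)+t_i\be_k,\,t_i\big)$---a spacetime isometry of $\MM$---so that all contacts are forced to time $0$ and a genuinely compact set, and then passes to a Brakke limit $\MM^{\prime,(\infty)}$.

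\textbf{Gap 2: promoting contact to global coincidence.} Even granting a contact point, your strong-maximum-principle argument requires propagating the local coincidence through the cap region, where Lemma~\ref{l:H^10} gives no information about $\MM'$: we do not know $\MM'$ (or the Brakke limit $\MM^{\prime,(\infty)}$) is regular there, so ``iterating the strong maximum principle across the compact cap region'' is unjustified. The paper avoids this entirely by invoking the \emph{strong avoidance principle} for Brakke flows \cite[Theorem~3.4]{Choi_Haslhofer_Hershkovits_White_22}, which directly yields the containment $(\spt\MM)_{\Delta T_+}\subset(\spt\MM^{\prime,(\infty)})_0$ without any regularity hypothesis on the Brakke flow. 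The contradiction then comes not from your $\ov a$-computation but from pure geometry: Lemma~\ref{l:H^10} passes to the limit, forcing $(\spt\MM^{\prime,(\infty)})_0$ outside the cap to lie in the $CH^{10}$-tube around $(\spt\MM)_0$; yet this tube cannot contain $(\spt\MM)_{\Delta T_+}=(\spt\MM)_0+\Delta T_+\be_k$ far from the cap, since their separation decays only like $H$.
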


\begin{proof}
Suppose by contradiction that $\Delta T_+ > 0$.
By the minimal choice of $\Delta T_+$ we can find a sequence $\Delta T_i \nearrow \Delta T_+$ such that there are times $t_i \leq 0$ with
\[ \bp_i \in (\spt \MM')_{t_i} \cap (\spt \MM)_{t_i + \Delta T_i} \neq \emptyset. \]
Write $\bp_i = (\bp'_i, \bp''_i) \in \IR^{k-1} \times \IR^{n-k+1}$ and $\MM^{\prime, (i)} := \MM' - ((\bp'_i, \bO)+ t_i \mathbf e_k, t_i) $.
Recall that $\MM = \MM - ((\bp'_i, \bO) + t_i \mathbf e_k, t_i )$.
So $\MM^{\prime, (i)}$ still satisfies \eqref{eq_sptsdisjoint} for all $\Delta T > \Delta T_+$, but we have
\[ \ov\bp_i := (\bO, \bp''_i- t_i \mathbf e_k) \in (\spt \MM^{\prime, (i)})_0 \cap (\spt \MM)_{\Delta T_i}. \]
Claim~\ref{Cl_intersection} implies that $\ov\bp_i$ is uniformly bounded, so after passing to a subsequence, we may assume that $\ov\bp_i \to \ov\bp_\infty$.
After passing to another subsequence, we may also assume that $\MM^{\prime, (i)} \to \MM^{\prime, (\infty)}$ in the Brakke sense.
Again, $\MM^{\prime, (\infty)}$ still satisfies \eqref{eq_sptsdisjoint} for all $\Delta T > \Delta T_+$, but $\ov\bp_\infty \in (\spt \MM^{\prime, (\infty)})_0 \cap (\spt \MM)_{\Delta T_+}$.
So the strong avoidance principle \cite[Theorem~3.4]{Choi_Haslhofer_Hershkovits_White_22} (see also \cite[Section~14]{Chodosh_MCF_notes}) implies 
\begin{equation} \label{eq_MMinMMp}
 (\spt \MM)_{\Delta T_+} \subset (\spt \MM^{\prime, (\infty)})_0.
\end{equation}

It follows from Lemma~\ref{l:H^10} that there is a uniform $C > 0$ such that for each $i$ the set $(\spt \MM^{\prime, (i)})_0 \setminus (\IR^{k-1} \times \IB^{n-k+1}_{C})$ is contained in $\bigcup_{\bp \in (\spt \MM)_0} \ov{B}(\bp , C H^{10}(\bp))$.
So the same property must also hold for $(\spt \MM^{\prime, (\infty)})_0$.
As explained in the proof of Claim~\ref{Cl_intersection} the radius $H^{10}(\bp)$ of these balls decays faster than the separation between $(\spt \MM)_0$ and $(\spt \MM)_{\Delta T_+}$, which yields the desired contradiction.
\end{proof}

It follows that \eqref{eq_sptsdisjoint} holds for all $\Delta T > 0$.
Similarly, we can show that it holds for all $\Delta T < 0$.
As the time-slices of $\MM$ sweep out all of $\IR^{n+1}$, this implies $\spt \MM' \subset \spt \MM$, so since the latter is connected, we have $\spt \MM' = \spt \MM$.
Lastly, since $\Theta^{\MM'}(\infty) < 2$, all tangent flows are multiplicity one planes, which implies $\MM' = \MM$.
\end{proof}
\bigskip

\section{Flows with dominant quadratic mode} \label{sec_dom_quadratic}
\subsection{Overview and statement of the main results}
In this section we study flows with dominant quadratic mode.
Our discussion will serve as the foundation for a more detailed classification in \cite{Bamler_Lai_MCF2}. 
Among other things we will define an invariant $\Qu(\MM)$ called the \emph{quadratic mode at $-\infty$,} which characterizes the asymptotic behavior of the quadratic mode as $\tau \to -\infty$.
We establish several key properties of this invariant and show that for every given value of $\Qu$, there exists a rotatinally symmetric flow $\MM$ realizing it.

For the remainder of this section we fix $1 \leq k < n \leq n$ and $n' \geq 0$ and we will omit dependencies on these constants.

We begin by giving a general characterization of flows with a dominant quadratic mode, which follows directly from Proposition~\ref{Prop_PO_ancient}.

\begin{Proposition}[Asymptotics in the case of dominant quadratic mode] \label{Prop_dom_qu_asymp}
Let $\MM$ be an asymptotically $(n,k)$-cylindrical mean curvature flow in $\IR^{n+1+n'} \times (-\infty,T)$ with dominant quadratic mode.
For any integers $J,  m \geq 10$ and $0 < \eta \leq \frac1{10}$ there is a constant $\delta (J,  m,\eta)> 0$ such that if $\MM$ is $\delta$-close to $M_{\cyl}$ at time $-r_0^2$ and scale $r_0$, then the following is true.
Consider the rescaled (but unmodified!) flow $\td\MM$; so $\td\MM^{\reg}_\tau = e^{\tau/2} \MM^{\reg}_{ - e^{-\tau}}$ over the time-interval $(-\infty, \td\tau]$ for $\td\tau := -2\log r_0$.
There is a smooth function
$$ U^+ = U_1 + U_{\frac12}  + U_0   + \ldots + U_{-J} : (-\infty, \td\tau) \lto  \sV_{ \rot, \geq - J}  = \sV_{\rot, 1} \oplus \sV_{\rot, \frac12} \oplus \ldots \oplus \sV_{\rot, -J}$$
such that if we set
\[ 
R(\tau) := J \sqrt{\log (\td\tau - \tau + 10)}, \]
then the following is true for  all $\tau \leq \td\tau$:
\begin{enumerate}[label=(\alph*)]
\item \label{Prop_dom_qu_asymp_a}  There is a smooth function $u_\tau : \DD_\tau \to \IR^{1+n'}$ with $\IB^k_{R(\tau)-1} \times \IS^{n-k} \subset \DD_\tau \subset \IB^k_{R(\tau)} \times \IS^{n-k}$ such that
\[ \Gamma_{\cyl}(u_\tau) = (\spt \td\MM)_\tau \cap \IB^{n+1+n'}_{R(\tau)} \subset \td\MM_\tau^{\reg} \]
and 
\begin{equation} \label{eq_utauUptau_qu}
   \Vert u_\tau - U^{+}(\tau) \Vert_{C^m (\DD_\tau)} \leq \eta  (\td\tau - \tau + 10)^{-J-1}. 
\end{equation}
\item  \label{Prop_dom_qu_asymp_b}  The evolution of $U^+$ is controlled by the following ODI
\[ \big\| \partial_\tau U^+ - L U^+  - Q_J^+ (U^+) \big\|_{L^2_{f}}
\leq  C(J)  (\td\tau - \tau + 10)^{-J-1}   \]
\item  \label{Prop_dom_qu_asymp_c} For $i=2,1, 0, -1, \ldots, -2J$
\[ \Vert U_{\frac12 i} \Vert_{L^2_f} \leq C(J)  (\td\tau - \tau + 10)^{-\lceil |i|/2 \rceil - 1}.  \]
Note that these bounds imply quadratic decay for $U_{1}$ and $U_{\frac12}$. 
We also have
\[ |U_{0,\min}|(\tau), \; |U_{0,\max}|(\tau) \leq \tfrac{1+\eta}{\sqrt 2} (\td\tau - \tau + 10)^{-1}. \]
\item   \label{Prop_dom_qu_asymp_d}
View $U_0(\tau) = \sum_{i,j} c_{ij}(\tau) \fp^{(2)}_{ij}$ as a time-dependent symmetric matrix, using the Hermite polynomials from \eqref{eq_Hermite}.
Then there is a maximal, non-zero solution $\ov U : (-\infty, \ov T) \to \IR^{k \times k}_{\leq 0}$, taking only non-positive definite values, to the ODE
\begin{equation} \label{eq_barU_ODE}
 \partial_\tau \ov U = - \sqrt{2} \ov U^2 + 2 \tr (\ov U^2) \ov U + C^* \ov U^3, 
\end{equation}
where $C^*(n-k) \in \IR$ is a dimensional constant, such that for $\tau \leq \td\tau - C$
\begin{equation} \label{eq_U0_close_ovU}
 \big\| U_0(\tau) - \ov{U}(\tau) \big\| \leq C (\td\tau - \tau+10)^{-3} . 
\end{equation}
Moreover, there is a matrix $\mathsf A \in \IR^{k \times k}_{\geq 0}$ and a constant $C^{**}(n,k,\rank \mathsf A) \in \IR$ such that $\ov U(\tau)$ and $\mathsf A$ have the same nullspace for all $\tau$ and on the range of $\mathsf A$ we have the following asymptotics for $\ov U(\tau)$:
\begin{equation} \label{eq_Ubar_asymp}
 \ov U(\tau) = \bigg( \log(\mathsf A) + \big( \sqrt 2 \, \tau + C^{**} \log (-\tau) \big) \mathsf I_k \bigg)^{-1} + O(|\tau|^{-3} \log |\tau|). 
\end{equation}
Here $\mathsf I_k$ denotes the identity matrix and the error term may not uniform in $J, m, \eta$.
\end{enumerate}
\end{Proposition}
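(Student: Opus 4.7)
The strategy is to reduce the proposition to an application of Proposition~\ref{Prop_PO_ancient}, supplemented by an explicit analysis of the Riccati-type ODE satisfied by the quadratic mode.

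\textbf{Paragraph 1 (Setup).} We apply Proposition~\ref{Prop_PO_ancient} to $\MM$ at basepoint $(\bq_0, t_0) = (\bO, 0)$ and scale $r_0$ with enlarged parameters $J'$, $m'$ and small auxiliary constants $\eta', \xi'$, where $J'$ will be chosen of order $J^2$ and $m' \geq m$. The proposition supplies a point $\bq$ with $|\bq| \leq \eta' r_0$, together with $U^{+,\prime}$ and $u'_\tau$ representing the \emph{shifted} rescaled flow $\td\MM'_\tau = e^{\tau/2}(\MM^{\reg}_{-e^{-\tau}} - \bq)$. To pass to the unshifted flow, we note that the shift $\bq$ corresponds, in rescaled coordinates at time $\tau$, to a translation by $e^{\tau/2}\bq$, which decays \emph{exponentially}. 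By Lemma~\ref{Lem_up_minus_u} applied to the Euclidean motion $\bx \mapsto \bx + e^{\tau/2}\bq$, the unshifted graphing function $u_\tau$ differs from $u'_\tau$ by a term bounded by $C|\bq| e^{\tau/2}$ in $C^m$, which is much smaller than $|\tau|^{-J-1}$. We then set $U^+(\tau) := \PP_{\sV_{\rot, \geq -J}} U^{+,\prime}(\tau)$ (adjusted by the linear shift contribution, itself $O(e^{\tau/2})$). The constraint $J' \geq J^2$ is forced by the radius comparison: Proposition~\ref{Prop_PO_ancient} works with $R'(\tau) \sim \sqrt{J' \log|\tau|}$, while we need $R(\tau) = J\sqrt{\log(\td\tau - \tau + 10)}$.

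\textbf{Paragraph 2 (Polynomial decay of $U^{++}$).} By Definition~\ref{Def_dominant_modes}, dominant quadratic mode means $\tau_{\frac12} > -\infty$ in Assertion~\ref{Prop_PO_ancient_d} of Proposition~\ref{Prop_PO_ancient}. Hence for all $\tau$ sufficiently negative we have the bounds $\|U^{+,\prime}_{\frac12}\|, \|U^{+,\prime}_1\| \leq c_0 \|U^{+,\prime}_0\|$, together with the Riccati ODI
\[
\big| \partial_\tau U^{+,\prime}_{0,\min} + \sqrt 2 (U^{+,\prime}_{0,\min})^2 \big| \leq \xi' (U^{+,\prime}_{0,\min})^2, \qquad U^{+,\prime}_{0,\min} \leq -|U^{+,\prime}_{0,\max}|.
\]
Dividing by $(U^{+,\prime}_{0,\min})^2$ and integrating between $\tau$ and a fixed reference time $\tau^* < \tau_{\frac12}$ yields
\[
U^{+,\prime}_{0,\min}(\tau) = -\frac{1}{\sqrt 2\, |\tau|} \big( 1 + O(\xi') \big) + O(|\tau|^{-2}\log|\tau|),
\]
so $\|U^{++,\prime}(\tau)\| \lesssim |\tau|^{-1}$. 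With $J' \geq J^2$, this yields $R'(\tau)^2 = J' \log\|U^{++,\prime}\|^{-1} \geq J^2\log|\tau| + O(1) \geq R(\tau)^2 + 1$ for $\tau$ sufficiently negative, so $\IB^k_{R(\tau)} \subset \IB^k_{R'(\tau)-1}$ and the graphing function $u_\tau$ is defined on the required domain.

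\textbf{Paragraph 3 (Assertions (a)--(c)).} The $C^m$-closeness in \ref{Prop_dom_qu_asymp_a} follows from Proposition~\ref{Prop_PO_ancient}\ref{Prop_PO_ancient_a}: the error is bounded by $C(J',m')\|U^{++,\prime}\|^{J'+1} \lesssim |\tau|^{-(J'+1)} \ll |\tau|^{-(J+1)}$, and the $U^{+,\prime} \to U^+$ projection error is controlled by $\|U^{+,\prime}_{\rot,\leq -J-\frac12}\|$, which by Proposition~\ref{Prop_PO_ancient}\ref{Prop_PO_ancient_c} is bounded by $C\|U^{++,\prime}\|^{J+2}$ and hence negligible. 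The evolution ODI~\ref{Prop_dom_qu_asymp_b} is inherited from Proposition~\ref{Prop_PO_ancient}\ref{Prop_PO_ancient_b} via $\|U^{+,\prime} - U^+\|$ being small and the fact that $Q^+_{J'}$ and $Q^+_J$ agree up to terms of order $\|U^+\|^{J+1}$. The polynomial decay rates in~\ref{Prop_dom_qu_asymp_c} are a direct consequence of Proposition~\ref{Prop_PO_ancient}\ref{Prop_PO_ancient_c} combined with $\|U^{++}\| \lesssim |\tau|^{-1}$. The sharp constant $\frac{1+\eta}{\sqrt 2}$ follows because the Riccati integration yields the prefactor $\frac{1}{\sqrt 2}$ exactly, with error $O(\xi')$ absorbed into $\eta$ provided $\xi'$ is chosen small enough.

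\textbf{Paragraph 4 (Third-order ODE and asymptotics, the main obstacle).} The heart of the proof lies in deriving the matrix ODE~\eqref{eq_barU_ODE}. From~\ref{Prop_dom_qu_asymp_b} applied to $U_0$, and using $LU_0 = 0$,
\[
\partial_\tau U_0 = \PP_{\sV_0} Q_3^+(U^+) + O(|\tau|^{-4}).
\]
By rotating coordinates to diagonalize $U_0 = \sum_i c_{ii}\fp^{(2)}_{ii}$ and invoking the explicit formula~\eqref{eq_Q3_V0}, the coefficient of $\fp^{(2)}_{ii}$ in $\PP_{\sV_0} Q_3^+(U^+)$ involves the leading term $-\sqrt 2\, c_{ii}^2$, together with contributions from the companion modes $a = U_1$, $v_{ij} \in \sV_{-1}$ (coefficients of $\fp^{(2)}_{ii}\fp^{(2)}_{jj}$ for $i \neq j$), and $w_i \in \sV_{-1}$ (coefficients of $\fp^{(4)}_{iiii}$). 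These modes reach quasi-equilibrium at order $|\tau|^{-2}$: using \eqref{eq_Q2_V1} and \eqref{eq_Q2_Vm1} and the fact that their linear decay rates $-1, -1$ are strictly negative, one finds
\[
a = \tfrac12 \tr(U_0^2) + O(|\tau|^{-3}), \quad v_{ij} = -\tfrac12 c_{ii}c_{jj} + O(|\tau|^{-3}), \quad w_i = -\tfrac{\sqrt 6}{2} c_{ii}^2 + O(|\tau|^{-3}).
\]
Substituting into~\eqref{eq_Q3_V0} collapses the expression to the closed matrix form
\[
\partial_\tau U_0 = -\sqrt 2\, U_0^2 + 2\tr(U_0^2)\, U_0 + C^* U_0^3 + O(|\tau|^{-4}),
\]
where $C^*$ combines the dimensional constants $C_1(n-k)$ and $C_2$. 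We then define $\ov U$ to be the maximal backward solution of~\eqref{eq_barU_ODE} on $(-\infty, \ov T)$ with initial data at some $\tau^* \ll 0$ chosen to match $U_0(\tau^*)$; a Gronwall-type argument using the fact that the linearization of~\eqref{eq_barU_ODE} at $U_0(\tau) \sim |\tau|^{-1}\cdot(\text{matrix})$ has bounded operator norm yields~\eqref{eq_U0_close_ovU}. The asymptotic expansion~\eqref{eq_Ubar_asymp} is derived by noting that $\ov U(\tau)$ commutes with its initial value (since all terms on the right are polynomial in $\ov U$), so its eigenspaces are constant; setting $\mathsf A$ to be the matrix with these eigenspaces and eigenvalues encoded via $\log$, the ODE for each nonzero eigenvalue $\lambda_i(\tau)$ reduces to the scalar Riccati $\lambda_i' = -\sqrt 2 \lambda_i^2 + 2\tr(\ov U^2)\lambda_i + C^*\lambda_i^3$. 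Setting $\mu_i = -1/\lambda_i$ converts this to $\mu_i' = -\sqrt 2 - 2\tr(\ov U^2)\mu_i + C^*\lambda_i$; since $\tr(\ov U^2)\mu_i$ and $\lambda_i$ are both of order $|\tau|^{-1}$, integration yields $\mu_i = -\sqrt 2\,\tau - C^{**}\log|\tau| + \log(\mathsf A)_i + O(|\tau|^{-2}\log|\tau|)$, with $C^{**}$ depending only on $n, k$ and the rank $r = \rank\mathsf A$ (through $\tr(\ov U^2) \sim r/(2\tau^2)$). Exponentiating/inverting gives~\eqref{eq_Ubar_asymp}. The principal difficulty in the whole argument is ensuring that the cubic cancellations between direct contributions and the quasi-equilibrium corrections from $a, v_{ij}, w_i$ assemble precisely into the clean form $2\tr(\ov U^2) \ov U + C^* \ov U^3$; this requires careful bookkeeping of the coefficients from Lemma~\eqref{eq_Q3_V0}.
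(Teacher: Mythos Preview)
Your overall strategy matches the paper's proof closely: apply Proposition~\ref{Prop_PO_ancient} with enlarged $J' \sim J^2$, remove the basepoint shift via Lemma~\ref{Lem_up_minus_u}, integrate the Riccati ODI for the leading-order decay, and then derive the third-order matrix ODE by substituting the quasi-equilibrium values of the companion modes into~\eqref{eq_Q3_V0}. Your Paragraph~4 captures the heart of Assertion~\ref{Prop_dom_qu_asymp_d} correctly, including the right quasi-equilibrium expressions for $a$, $v_{ij}$, $w_i$.

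There is, however, a genuine gap in Paragraph~3. You claim that the decay bounds in Assertion~\ref{Prop_dom_qu_asymp_c} for $i = 1, 2$ (i.e., the quadratic decay $\|U_1\|, \|U_{\frac12}\| \lesssim |\tau|^{-2}$) are a ``direct consequence'' of Proposition~\ref{Prop_PO_ancient}\ref{Prop_PO_ancient_c}. They are not: that assertion only bounds the \emph{stable} modes $U_{-\frac12 i}$ for $i \geq 1$, while $U_1$ and $U_{\frac12}$ are \emph{unstable} modes contained in $U^{++}$ and are a priori only bounded by $\|U^{++}\| \sim |\tau|^{-1}$. The paper obtains the quadratic decay by projecting the ODI onto $\sV_{\frac{i}2}$ and integrating $\partial_\tau(e^{-\frac{i}2\tau}\|U_{\frac{i}2}\|) \geq -C|\tau|^{-2}e^{-\frac{i}2\tau}$ backward from the terminal time; since the inhomogeneity is $O(|\tau|^{-2})$ and the exponential factor kills the initial contribution, one gets $\|U_{\frac{i}2}(\tau)\| \leq C|\tau|^{-2}$. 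This step is short but not automatic.

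Two secondary imprecisions in Paragraph~4. First, your construction of $\ov U$ as ``the maximal backward solution with initial data at some fixed $\tau^*$'' does not directly yield~\eqref{eq_U0_close_ovU} on all of $(-\infty,\td\tau]$; the paper instead solves \emph{forward} from a sequence $\tau_i \to -\infty$ with initial data $U_0(\tau_i)$ and passes to a limit. Second, the Gronwall step hinges on a precise coefficient: the linearization of $\ov Q$ at a matrix of operator norm at most $\frac{1.2}{\sqrt 2}|\tau|^{-1}$ has operator norm at most roughly $2.4|\tau|^{-1}$, and it is essential that $2.4 < 3$ so that $\partial_\tau(|\tau|^{2.3}\|U_0 - \ov U_i\|_{op})$ is integrable against $|\tau|^{-1.7}$. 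Saying only that the linearization ``has bounded operator norm'' hides this. You also omit the verification that $\ov U$ is non-positive definite, which is not obvious from the construction and requires a separate scalar Riccati argument (Claim~\ref{Cl_nonneg} in the paper) ruling out positive eigenvalues.
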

\medskip

The following lemma gives us a convenient way of characterizing the solution $\ov U$ from Assertion~\ref{Prop_dom_qu_asymp_d} via a non-negative definite matrix.

\begin{Lemma} \label{Lem_ODE_UQ}
Consider the set $\mathfrak U^k$ of maximal solutions $\ov U : (-\infty, \ov T) \to \IR^{k \times k}_{\leq 0}$ to the ODE \eqref{eq_barU_ODE} with $\lim_{\tau \to -\infty} \ov U (\tau) = 0$.
There is a well-defined topology on $\mathfrak U^k$ such that convergence within $\mathfrak U$ is equivalent to pointwise convergence.
There are homeomorphisms $\Qu_k : \mathfrak U^k \to \IR^{k \times k}_{\geq 0}$, for all $k \geq 0$, such that the following is true for all $\ov U \in \mathfrak U$:
\begin{enumerate}[label=(\alph*)]
\item \label{Lem_ODE_UQ_a} For any orthogonal matrix $\mathsf S \in \IR^{k \times k}$ we have $\Qu_k (\mathsf S^T \ov U \mathsf S) = \mathsf S^T \Qu(\ov U) \mathsf S$.
\item \label{Lem_ODE_UQ_b} The nullspaces of $\ov U(\tau)$ (for any $\tau$) and $\Qu_k(\ov U)$ are the same.
\item \label{Lem_ODE_UQ_c} If we regard $\ov U \in \mathfrak U^k$ as an element $\ov U' \in \mathfrak U^{k'}$ for $k' > k$ by extending $\ov U(\tau)$ with zero entries, then $\Qu_{k'}(\ov U')$ is obtained from $\Qu_k(\ov U)$ by extending it with zero entries in the same manner.
\item \label{Lem_ODE_UQ_d} If $\ov U' (\tau) = \ov U (\tau - 2\log \la)$, then $\Qu_k(\ov U') = \la \Qu_k(\ov U)$.
\end{enumerate}
\end{Lemma}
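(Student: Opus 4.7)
\emph{Proof plan.} The plan is to carry out three structural reductions that exploit the fact that the right-hand side of \eqref{eq_barU_ODE} is a polynomial in $\ov U$ alone. First, since the right-hand side commutes with $\ov U(\tau)$, the off-diagonal entries of $\ov U(\tau)$ in any eigenbasis of $\ov U(\tau_0)$ satisfy a homogeneous linear ODE with vanishing initial data, so every $\ov U \in \mathfrak{U}^k$ is simultaneously diagonalizable in a fixed orthonormal basis of $\IR^k$. Second, every term on the right-hand side is itself a polynomial multiple of $\ov U$, so the nullspace $V^\perp := \ker \ov U(\tau)$ is constant in $\tau$, and we may restrict attention to the common range $V$ on which $\ov U(\tau)$ is everywhere negative definite. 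Third, the preservation of symmetry and negative semidefiniteness is automatic from these two observations together with $\lim_{\tau \to -\infty} \ov U(\tau) = 0$.

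On $V$ I would introduce $\mathsf V(\tau) := (\ov U(\tau)|_V)^{-1}$, which is well-defined and negative definite. Using $(\mathsf V^{-1})' = -\mathsf V^{-1} \mathsf V' \mathsf V^{-1}$ transforms \eqref{eq_barU_ODE} into
\[
\partial_\tau \mathsf V \;=\; \sqrt 2 \, \mathsf I_V - 2\tr(\ov U^2)\,\mathsf V - C^* \,\ov U.
\]
Integrating backward from a reference time and iterating the crude bound $\mathsf V(\tau) = \sqrt 2\,\tau\,\mathsf I_V + O(\log|\tau|)$ against this equation would yield the refined expansion
\[
\mathsf V(\tau) \;=\; \sqrt 2\,\tau\,\mathsf I_V + C^{**}(\dim V)\,\log(-\tau)\,\mathsf I_V + \mathsf L(\ov U) + O\bigl(|\tau|^{-3}\log|\tau|\bigr),
\]
in which $\mathsf L(\ov U) \in \mathrm{Sym}(V)$ is uniquely determined and $C^{**}$ agrees with the constant in Assertion \ref{Prop_dom_qu_asymp_d} of Proposition~\ref{Prop_dom_qu_asymp}. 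I would then define
\[
\Qu_k(\ov U) \;:=\; \exp\bigl(-\mathsf L(\ov U)/(2\sqrt 2)\bigr) \oplus 0_{V^\perp} \;\in\; \IR^{k\times k}_{\geq 0},
\]
so that the matrix $\mathsf A$ of \eqref{eq_Ubar_asymp} is recovered as $\mathsf A = \Qu_k(\ov U)^{-2\sqrt 2}$ on $V$.

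Injectivity is then immediate from the forward uniqueness of \eqref{eq_barU_ODE} together with the uniqueness of the limit defining $\mathsf L(\ov U)$. For surjectivity I would fix $\mathsf Q \in \IR^{k\times k}_{\geq 0}$ with range $V$, take the ansatz $\mathsf V_0(\tau) := \sqrt 2\,\tau\,\mathsf I_V + C^{**}\log(-\tau)\,\mathsf I_V - 2\sqrt 2 \log(\mathsf Q|_V)$, and solve the $\mathsf V$-equation by a contraction-mapping argument in the Banach space of perturbations $\mathsf V - \mathsf V_0$ decaying like $|\tau|^{-3}\log|\tau|$ on a half-line $(-\infty,\tau_*]$; extending forward by standard ODE existence produces a maximal $\ov U \in \mathfrak{U}^k$ with $\Qu_k(\ov U) = \mathsf Q$. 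Continuity in both directions follows from the uniform-on-compacts dependence of the fixed point on $\mathsf Q$, combined with the quantitative rate in the expansion, which guarantees that pointwise convergence $\ov U^{(i)} \to \ov U^\infty$ passes to $\mathsf L$, and hence to $\Qu_k$. Properties \ref{Lem_ODE_UQ_a}--\ref{Lem_ODE_UQ_c} are built into the definition through the $O(k)$-equivariance of the ODE and the treatment of the nullspace; property \ref{Lem_ODE_UQ_d} follows by substituting $\tau \mapsto \tau - 2\log\lambda$ into the expansion, which shifts $\mathsf L$ by $-2\sqrt 2\,\log\lambda\,\mathsf I_V$ and so multiplies $\Qu_k$ by $\lambda$.

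The main obstacle is obtaining the refined expansion with remainder $O(|\tau|^{-3}\log|\tau|)$: a single integration of the $\mathsf V$-equation only gives $O(|\tau|^{-1})$, and one must iterate twice while carefully tracking the interaction of the coupling term $2\tr(\ov U^2)\mathsf V$ (which links all eigenvalues) with the cubic correction $C^*\ov U$ in order to pin down the dimension-dependent coefficient $C^{**}$ and the pure remainder simultaneously. A secondary subtlety relevant to the homeomorphism claim is the continuity of $\Qu_k$ across strata where $\dim V$ jumps, since $C^{**}$ depends on $\dim V$; this is handled by observing that as eigenvalues of $\mathsf Q$ tend to $0$ the corresponding eigenvalues of $\ov U(\tau)$ tend uniformly to $0$ on any compact time interval, so the discontinuity in $C^{**}(\dim V)$ is absorbed by the diverging $\log\mathsf Q$ contribution and is invisible at the level of $\Qu_k$.
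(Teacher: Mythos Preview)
Your approach differs substantially from the paper's, and the paper itself alludes to your route in the remark following the lemma as the ``more canonical'' but ``more technical'' alternative it chose to avoid. The paper's construction is far more elementary: fix $c := (10C^*)^{-1}$ and show, via the scalar ODE satisfied by the smallest eigenvalue, that every nonzero $\ov U \in \mathfrak{U}^k$ admits a unique time $\tau_U$ at which its smallest eigenvalue equals $-c$; then set $\Qu_k(\ov U) := -c^{-1}e^{-\tau_U/2}\,\ov U(\tau_U)$ and $\Qu_k(0):=0$. Continuity of $\tau_U$ gives continuity of $\Qu_k$, the inverse is written down explicitly (solve the ODE from the value $-c\,a_{\Qu'}^{-1}\Qu'$ at time $-2\log a_{\Qu'}$, where $a_{\Qu'}$ is the largest eigenvalue of $\Qu'$), and properties \ref{Lem_ODE_UQ_a}--\ref{Lem_ODE_UQ_d} are immediate from the equivariance of the ODE and of this evaluation map. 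No asymptotic expansion enters.

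Your proposal has a genuine gap precisely at the point you label a ``secondary subtlety'': with your definition, $\Qu_k$ is \emph{not} continuous across rank strata. The coefficient $C^{**}$ comes from the term $-2\tr(\ov U^2)\,\mathsf V$, and one computes $C^{**}(\dim V) = -\sqrt 2\,\dim V - C^*/\sqrt 2$, so the dependence on $\dim V$ is through the trace and affects the expansion of \emph{every} eigenvalue of $\mathsf V$, not only the degenerating one. Concretely, let $\ov U^{(i)} \to \ov U^\infty$ pointwise with $\rank\ov U^{(i)} = k$ but $\rank\ov U^\infty = k-1$. For large $i$ there is a transition scale $\tau_i^* \to -\infty$ below which all $k$ eigenvalues of $\ov U^{(i)}$ are comparable (so $\tr(\ov U^{(i)2}) \approx k/(2\tau^2)$) and above which only $k-1$ contribute. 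Integrating the $\mathsf V$-equation through this transition shows that for each \emph{surviving} index $j$ one has
\[
\ell_j^{(i)} \;\approx\; \ell_j^\infty + \bigl(C^{**}(k-1)-C^{**}(k)\bigr)\log(-\tau_i^*) \;=\; \ell_j^\infty + \sqrt 2\,\log(-\tau_i^*) \;\longrightarrow\; +\infty,
\]
so $q_j^{(i)} = \exp(-\ell_j^{(i)}/(2\sqrt 2)) \to 0 \neq q_j^\infty$. Your heuristic that the discontinuity is ``absorbed by the diverging $\log\mathsf Q$ contribution'' treats only the degenerating eigenvalue $\ell_k$, not the surviving $\ell_j$. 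Repairing this would require some rank-independent renormalization of $\mathsf L$, which introduces exactly the sort of complication the crossing-time definition sidesteps.

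A smaller point: the remainder in your expansion of $\mathsf V$ is $O(|\tau|^{-1}\log|\tau|)$ after the natural two iterations (matching \eqref{eq_Ubar_asymp}), not $O(|\tau|^{-3}\log|\tau|)$; the $|\tau|^{-2}\log|\tau|$ terms in $\partial_\tau\mathsf V$ integrate to this order. This weaker remainder already suffices to define $\mathsf L(\ov U)$, so the overstatement is not fatal in itself, but your identification of ``the main obstacle'' is misplaced.
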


The precise construction of $\Qu$ in Lemma~\ref{Lem_ODE_UQ} is somehwat ad hoc and not important for our discussion.
A more canonical definition may be obtained via an asymptotic expansion of $\ov U$, however, this would be more technical and the present construction is adequate for our purposes.
Using this identification, we can now define the following invariant of $\MM$.

\begin{Definition} \label{Def_Qu}
Let $\MM$ be an asymptotically $(n,k)$-cylindrical mean curvature flow in $\IR^{n + 1} \times (-\infty, T)$.
If $\MM$ has dominant quadratic mode, then in the context of Proposition~\ref{Prop_dom_qu_asymp}\ref{Prop_dom_qu_asymp_d} we define $\Qu(\MM) := \Qu_k(\ov U) \in \IR^{k \times k}_{\geq 0}$.
If $\MM$ has dominant linear mode or is a round shrinking cylinder, then we set $\Qu(\MM) := 0$.
We call $\Qu(\MM)$ the \textbf{quadratic mode at $-\infty$.}
\end{Definition}
\medskip

The next proposition summarizes basic properties of $\Qu(\MM)$, for example, its behavior under translation and parabolic rescalings.

\begin{Proposition} \label{Prop_Q_basic_properties}
If $\MM$ is an asymptotically cylindrical flow in $\IR^{n+1+n'} \times (-\infty, T)$, then $\Qu(\MM)$ is well defined and does not depend on the choices of the parameters $J, m, \eta, \td\tau$.
For any $(\mathbf v, \Delta T) \in \IR^{n+1+n'} \times \IR$ and $\la > 0$ we have
\begin{equation} \label{eq_Q_identitites}
 \Qu(\MM + (\mathbf v, \Delta T)) = \Qu(\MM), \qquad \Qu(\la \MM) = \la \Qu(\MM) . 
\end{equation}
Moreover if $\mathsf S : \IR^{n+1+k'} \to \IR^{n+1+k'}$ is an orthogonal map, which is block-form with respect to the splitting $\IR^{n+1+k'} = \IR^k \times \IR^{n-k+1} \times \IR^{n'+1}$ and if $\mathsf S_0 \in \IR^{k \times k}$ is its first block, then
\begin{equation} \label{eq_Q_identity_2}
 \Qu( \mathsf S \MM ) = \mathsf S_0^T \Qu(\MM) \mathsf S_0. 
\end{equation}
Lastly, $\Qu(\MM) = 0$ if and only if $\MM$ has dominant linear mode (and hence is homothetic to $\MM^{n-k+1}_{\bowl} \times \IR^{k-1}$) or is a round shrinking cylinder.
\end{Proposition}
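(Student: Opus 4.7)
The overall strategy is to reduce each item to a transformation law for the asymptotic matrix $\ov U \in \mathfrak U^k$ supplied by Proposition~\ref{Prop_dom_qu_asymp}\ref{Prop_dom_qu_asymp_d}, and then invoke the properties of $\Qu_k$ from Lemma~\ref{Lem_ODE_UQ}. By Definition~\ref{Def_Qu}, all assertions are trivial when $\MM$ is a round shrinking cylinder or has dominant linear mode, so I will focus on the dominant quadratic case. For well-definedness, the bound \eqref{eq_utauUptau_qu} pins $U_0(\tau)$ down as the $\sV_{\rot,0}$-projection of the intrinsic graph function $u_\tau$ up to $O(|\tau|^{-J-1})$; two parameter choices therefore produce functions $U_0^{(0)}(\tau)$ and $U_0^{(1)}(\tau)$ agreeing faster than any power of $|\tau|^{-1}$, and both consequently match the same $\ov U \in \mathfrak U^k$ via \eqref{eq_U0_close_ovU} by backward uniqueness for \eqref{eq_barU_ODE} under the asymptotic condition $\ov U(\tau) \to 0$ at $-\infty$.

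For translation invariance, a direct calculation gives $\td\MM^{\prime,\reg}_\tau = e^{(\tau-\tau')/2}\,\td\MM^{\reg}_{\tau'} + e^{\tau/2}\mathbf v$ with $\tau' = -\log(e^{-\tau}+\Delta T) = \tau + O(e^\tau)$, so the graph functions agree up to an $O(e^{\tau/2})$ correction, negligible against the $O(|\tau|^{-3})$ tolerance in \eqref{eq_U0_close_ovU}; hence $\ov U^{\MM'} = \ov U^{\MM}$ and the first identity in \eqref{eq_Q_identitites} follows. For parabolic scaling, an analogous but simpler computation from the definition of rescaled flow yields $\td\MM'_\tau = \td\MM_{\tau + 2\log\la}$, whence $\ov U^{\MM'}(\tau) = \ov U^{\MM}(\tau + 2\log\la)$ is a time-translate of $\ov U^{\MM}$; Lemma~\ref{Lem_ODE_UQ}\ref{Lem_ODE_UQ_d} then converts this into $\Qu(\la\MM) = \la\,\Qu(\MM)$.

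For the orthogonal invariance \eqref{eq_Q_identity_2}, the block decomposition of $\mathsf S$ implies that $\mathsf S$ preserves $M_{\cyl}$, so it commutes with the rescaling procedure and acts on $u_\tau$ by pull-back of the $\bx$-variable via $\mathsf S_0$. A short computation with the basis \eqref{eq_Hermite} shows that the symmetric coefficient matrix $c = (c_{ij})$ of $U_0(\tau) = \sum_{i,j} c_{ij}\mathfrak p^{(2)}_{ij}$ transforms by conjugation by $\mathsf S_0$; applied to $\ov U$ and combined with Lemma~\ref{Lem_ODE_UQ}\ref{Lem_ODE_UQ_a} this yields the claimed conjugation law for $\Qu$. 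For the characterization $\Qu(\MM) = 0$, Propositions~\ref{Prop_PO_ancient}\ref{Prop_PO_ancient_d} and~\ref{Prop_Add_tau2} show that $\MM$ falls into exactly one of three mutually exclusive classes: round cylinder, dominant linear mode, or dominant quadratic mode. The first two give $\Qu = 0$ by Definition~\ref{Def_Qu}. In the dominant quadratic case the differential inequality from Proposition~\ref{Prop_PO_ancient}\ref{Prop_PO_ancient_d} for $U_{0,\min}$ integrates to $|U_{0,\min}(\tau)| \geq c/|\tau|$ for $\tau \ll 0$, forcing $\ov U \not\equiv 0$, and then Lemma~\ref{Lem_ODE_UQ}\ref{Lem_ODE_UQ_b} gives $\Qu(\MM) \neq 0$.

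The main obstacle is bookkeeping the sign and side conventions in the scaling and conjugation laws: the direction of the shift $\tau \mapsto \tau + 2\log\la$ in the rescaled flow must be matched against the convention used in Lemma~\ref{Lem_ODE_UQ}\ref{Lem_ODE_UQ_d}, and the quadratic-form conjugation induced by the pull-back $\bx\mapsto\mathsf S_0\bx$ must be matched against the precise side on which $\mathsf S_0$ appears in \eqref{eq_Q_identity_2}. Both steps are mechanical, but they are easy to get wrong.
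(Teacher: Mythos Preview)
Your proposal is correct and follows essentially the same route as the paper: compare the graph functions (via Lemma~\ref{Lem_up_minus_u} for translations), deduce that the corresponding $U_0$'s and hence $\ov U$'s differ by $O(|\tau|^{-3})$, and then invoke a backward-uniqueness result for the ODE \eqref{eq_barU_ODE} together with Lemma~\ref{Lem_ODE_UQ}. The paper isolates that backward-uniqueness step as Lemma~\ref{Lem_ODE_sol_same}, and for the $\Qu(\MM)=0$ characterization it simply observes that Proposition~\ref{Prop_dom_qu_asymp}\ref{Prop_dom_qu_asymp_d} already asserts $\ov U\not\equiv 0$, so your integration of the ODI for $U_{0,\min}$ is not needed (though it is correct).
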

\medskip

The next proposition shows that the quadratic mode at $-\infty$ determines the asymptotic behavior of the flow as $\tau \to -\infty$ to arbitrarily high polynomial order.

\begin{Proposition} \label{Prop_same_Q_close}
For any integers $J,  m \geq 10$ and $0 < \eta \leq \frac1{10}$ there is a constant $\delta (J,  m,\eta)> 0$ with the following property.
Let $\MM^{(0)}$ and $\MM^{(1)}$ be two asymptotically $(n,k)$-cylindrical mean curvature flows in $\IR^{n+1+n'} \times (-\infty,T)$ with $\Qu(\MM^{(0)}) = \Qu(\MM^{(1)})$.
Suppose that for some $r_0 > 0$ the flows $\MM^{(0)}, \MM^{(1)}$ are $\delta$-close to $M_{\cyl}$ at scale $r_0$ and time $-r_0^2$.

Let $u^{(i)}_\tau : \DD_\tau \to \IR^{1+n'}$ be the corresponding functions from Proposition~\ref{Prop_dom_qu_asymp}, representing the rescaled flows $\td\MM^{(i),\reg}_\tau = e^{\tau/2} \MM^{(i),\reg}_{t_0  - e^{-\tau}}$, for $\tau \leq \td\tau := -2\log r_0$, as a graphs over the round cylinder.
Then
\[ \Vert u^{(1)}_\tau - u^{(0)}_\tau \Vert_{C^m(\IB^k_{R(\tau)-1} \times \IS^{n-k})} \leq C(J,m) (\td\tau - \tau + 10)^{-J}, \]
where $R(\tau) := J \sqrt{\log (\td\tau - \tau + 10)}$ as before.
\end{Proposition}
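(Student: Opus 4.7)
The plan is to reduce the bound on $u^{(1)} - u^{(0)}$ to a bound on $V := U^{+,(1)} - U^{+,(0)}$ and to propagate smallness of $V$ from $\tau = -\infty$ using the ODI in Proposition~\ref{Prop_dom_qu_asymp}\ref{Prop_dom_qu_asymp_b}. First I would apply Proposition~\ref{Prop_dom_qu_asymp} to each flow $\MM^{(i)}$ with a sufficiently large auxiliary parameter $J' \geq J + C$, obtaining approximations $U^{+,(i)}$ satisfying $\|u^{(i)}_\tau - U^{+,(i)}(\tau)\|_{C^m} \leq \eta(\td\tau - \tau + 10)^{-J'-1}$. After this reduction it suffices to bound $\|V(\tau)\|_{L^2_f}$ by $C(\td\tau - \tau + 10)^{-J}$ on a growing ball and then convert to a $C^m$-bound via the polynomial character of $U^{+,(i)}$ in the $\IR^k$-factor (using an estimate analogous to \eqref{eq_Upp_pol} in Claim~\ref{Cl_upUP}). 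The hypothesis $\Qu(\MM^{(0)}) = \Qu(\MM^{(1)})$ combined with Lemma~\ref{Lem_ODE_UQ} yields that the associated ODE solutions satisfy $\ov U^{(0)} = \ov U^{(1)} =: \ov U$, so by \eqref{eq_U0_close_ovU} we already have $\|V_0(\tau)\|_{L^2_f} \leq C(\td\tau - \tau + 10)^{-3}$, and in particular $\|V(\tau)\|_{L^2_f} \to 0$ as $\tau \to -\infty$.

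Subtracting the ODIs from Proposition~\ref{Prop_dom_qu_asymp}\ref{Prop_dom_qu_asymp_b} and Taylor-expanding the nonlinearity around $U^{+,(0)}$ gives
\begin{equation*}
\partial_\tau V = L V + A(\tau) V + O\big(\|V\|^2_{L^2_f}\big) + O\big((\td\tau - \tau + 10)^{-J'-1}\big),
\end{equation*}
where $A(\tau) = DQ_{J'}^+(U^{+,(0)}(\tau))$ satisfies $\|A(\tau)\| = O((\td\tau - \tau + 10)^{-1})$ by Assertion~\ref{Prop_dom_qu_asymp_c}. I would decompose $V = V_{\rot,1} + V_{\rot,\frac12} + V_0 + V_{\rot,-\frac12} + \ldots + V_{\rot,-J'}$ and analyze each component separately. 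For the unstable-forward modes $V_{\rot,1}$ and $V_{\rot,\frac12}$, the eigenvalues $1$ and $\frac12$ of $L$ are stabilizing backward in time, so a Duhamel formula with vanishing initial data at $-\infty$ and inhomogeneous forcing of order $(\td\tau - \tau + 10)^{-J'-1}$ yields decay at this rate. For the stable-forward modes $V_{\rot,-i/2}$ with $i \geq 1$, which are unstable backward, I would use a bootstrap analogous to Proposition~\ref{Prop_Tisosc}: starting from the a priori bounds from Proposition~\ref{Prop_dom_qu_asymp}\ref{Prop_dom_qu_asymp_c} and iterating, one obtains $\|V_{-i/2}(\tau)\|_{L^2_f} \leq C\|V_0\|_{L^2_f}^{\lceil i/2 \rceil + 1} + O((\td\tau - \tau + 10)^{-J'-1})$.

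The crux is the neutral mode $V_0$. The linearization $L + A(\tau)$ restricted to $\sV_0$ coincides at leading order with the linearization of the matrix ODE \eqref{eq_barU_ODE} about $\ov U(\tau)$, which at leading order takes the form $-2\sqrt 2\, \ov U(\tau) W$. Since $\ov U(\tau)$ is non-positive definite with entries of order $|\tau|^{-1}$ on the range of $\mathsf A$, this linearization has effective decay rate $\partial_\tau W \sim \frac{c}{\tau} W$ with $c \geq 2$ on the range of $\mathsf A$ (and exponential damping along the nullspace directions of $\mathsf A$), corresponding to the polynomial decay $|\tau|^{-c}$ backward in time. The Duhamel formula with fundamental solution $\Phi(\tau,s) \lesssim (s/\tau)^2$, initial decay $\|V_0(s)\|_{L^2_f} = O(|s|^{-3})$ from the previous step, and inhomogeneous forcing $O((\td\tau - \tau + 10)^{-J'-1})$, yields
\begin{equation*}
\|V_0(\tau)\|_{L^2_f} \leq |\tau|^{-2} \int_{-\infty}^{\tau} s^2 (\td\tau - s + 10)^{-J'-1} ds \leq C(J)(\td\tau - \tau + 10)^{-J},
\end{equation*}
provided $J' \geq J + 2$. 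Combining the bounds on all modes gives $\|V(\tau)\|_{L^2_f} \leq C(J)(\td\tau - \tau + 10)^{-J}$, and converting to $C^m$ and adding back the $\|u^{(i)} - U^{+,(i)}\|_{C^m}$ error yields the claim. The main obstacle will be the careful verification that the time-dependent linearization $L + A(\tau)$ along $\sV_0$ is indeed stable backward with the correct polynomial decay rates in all eigendirections of $\mathsf A$; the asymptotic expansion \eqref{eq_Ubar_asymp} provides the key control here, but handling the degenerate directions along the nullspace of $\mathsf A$ and coupling between the various blocks requires care.
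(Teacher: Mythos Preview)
Your overall framework—reducing to $V := U^{+,(1)} - U^{+,(0)}$, splitting into spectral pieces, and isolating the neutral mode $V_0$ as the crux—matches the paper's, and your analysis of $V_0$ via the linearized matrix ODE is essentially what the paper does (the paper gets coefficient $2.3|\tau|^{-1}$ in operator norm using $|U_{0,\min}|,|U_{0,\max}| \le \frac{1.1}{\sqrt2}|\tau|^{-1}$, then integrates $\partial_\tau(|\tau|^{2.3}\|V_0\|_{op}) \le C|\tau|^{-J+0.3}$ forward from $-\infty$, where the initial term vanishes because $\ov U^{(0)}=\ov U^{(1)}$ gives $\|V_0\|=O(|\tau|^{-3})$).

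There are, however, two genuine gaps in how you handle the non-neutral modes. First, for the forward-unstable modes $V_{\rot,1},V_{\rot,1/2}$ you write ``Duhamel with vanishing initial data at $-\infty$''; but for eigenvalues $\lambda>0$ \emph{every} solution vanishes at $-\infty$, so this condition carries no information, and the forward Duhamel integral $\int_{-\infty}^\tau e^{\lambda(\tau-s)}f(s)\,ds$ diverges. You must instead integrate backward from $\td\tau$, where only an $O(1)$ bound on $V_+$ is available a priori. Second, the bootstrap claim $\|V_{-i/2}\|\le C\|V_0\|^{\lceil i/2\rceil+1}$ transplants the power hierarchy of Proposition~\ref{Prop_Tisosc} for $U^+$ itself to the \emph{difference} $V$; but the equation for $V$ is linear in $V$ to leading order, so no such powers appear—the coupling is simply $O(|\tau|^{-1}\|V\|)$. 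In both cases the forcing you feed into Duhamel is not $O(|\tau|^{-J'-1})$ but $O(|\tau|^{-1}\|V\|)$, which is circular until you know which component of $V$ dominates.

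The paper closes both gaps at once with a Merle--Zaag argument: setting $f_1=\|V_-\|$, $f_2=\tfrac{C_0}{c}|\tau|^{-1}\|V_0\|+|\tau|^{-J-1}$, $f_3=\|V_+\|$, it shows that the sets where each $f_j$ is maximal form consecutive intervals, that $f_1$ cannot dominate near $-\infty$ (else it would blow up backward), and hence that $\|V_\pm\|\le C|\tau|^{-1}\|V_0\|+C|\tau|^{-J-1}$ on $(-\infty,\tau_0]$. Only after this does the clean $V_0$-ODI (your neutral-mode step) become available with forcing of the correct order. On the complementary interval $[\tau_0,\td\tau]$, $f_3=\|V_+\|$ dominates and satisfies $\partial_\tau f_3\ge 5cf_3$, so backward exponential decay from the trivial bound at $\td\tau$ gives $\|V_+(\tau)\|\le C|\tau|^{-J}$ there.
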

\medskip

In the next lemma we study the behavior of $\Qu(\MM)$ under Brakke convergence.
It states that $\Qu$ is continuous and almost proper.

\begin{Proposition} \label{Prop_Q_continuous}
Let $\MM^{(i)}$ be a sequence of asymptotically $(n,k)$-cylindrical mean curvature flows in $\IR^{n+1+n'} \times (-\infty, T_i)$.
Then the following is true:
\begin{enumerate}[label=(\alph*)]
\item \label{Prop_Q_continuous_a} If $\MM^{(i)} \to \MM^{(\infty)}$ in the Brakke sense and if the limit $\MM^{(\infty)}$ is also an asymptotically $(n,k)$-cylindrical mean curvature flow, then $\Qu(\MM^{(i)}) \to \Qu(\MM^{(\infty)})$.
\item \label{Prop_Q_continuous_b} Suppose that $n' = 0$ and $\Vert \Qu(\MM^{(i)}) \Vert$ is uniformly bounded from above.
Then, after passing to a subsequence, we have convergence $\MM^{(i)} \to \MM^{(\infty)}$ in the Brakke sense and the limit $\MM^{(\infty)}$ is one of the following:
\begin{itemize}
\item an asymptotically $(n,k)$-cylindrical mean curvature flow,
\item an affine plane, or
\item empty.
\end{itemize}
\item \label{Prop_Q_continuous_c} If $n'=0$ and $\Qu(\MM^{(i)}) \to 0$, then after passing to a subsequence, we have convergence $\MM^{(i)} \to \MM^{(\infty)}$ in the Brakke sense and the limit $\MM^{(\infty)}$ is one of the following:
\begin{itemize}
\item a flow homothetic to $\MM_{\bowl} \times \IR^{k-1}$,
\item a round shrinking cylinder,
\item an affine plane, or
\item empty.
\end{itemize}
\end{enumerate}
\end{Proposition}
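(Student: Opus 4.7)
The plan is to derive all three assertions from the PDE-ODI description of asymptotically $(n,k)$-cylindrical flows furnished by Propositions~\ref{Prop_dom_qu_asymp} and~\ref{Prop_PO_ancient}, combined with the homeomorphism $\Qu_k : \mathfrak U^k \to \IR^{k\times k}_{\geq 0}$ of Lemma~\ref{Lem_ODE_UQ} and the basic properties collected in Proposition~\ref{Prop_Q_basic_properties}.

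For Assertion~\ref{Prop_Q_continuous_a}, I would fix $\tau_* \ll 0$ and use Lemma~\ref{Lem_cyl_properties}\ref{Lem_cyl_properties_c} together with Brakke's regularity theorem to conclude that the rescaled time slices $\td\MM^{(i)}_{\tau_*}$ are regular inside any fixed ball for all sufficiently large $i$ and converge locally smoothly to $\td\MM^{(\infty)}_{\tau_*}$. Representing each rescaled flow as a graph $u^{(i)}_{\tau_*}$ over $M_{\cyl}$ as in Proposition~\ref{Prop_dom_qu_asymp}\ref{Prop_dom_qu_asymp_a} and projecting yields $U_0^{(i)}(\tau_*) \to U_0^{(\infty)}(\tau_*)$ in $L^2_f$. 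Choosing a common scale $\td\tau \ll \tau_*$ at which every $\MM^{(i)}$ is $\delta$-close to $M_{\cyl}$ (possible for large $i$ by smooth convergence and Lemma~\ref{Lem_cyl_properties}\ref{Lem_cyl_properties_c}), the error bound \eqref{eq_U0_close_ovU} holds uniformly in $i$, giving $\ov U^{(i)}(\tau_*) \to \ov U^{(\infty)}(\tau_*)$ (with $\ov U^{(\infty)} \equiv 0$ when $\MM^{(\infty)}$ has dominant linear mode or is a round cylinder). Since each $\ov U^{(i)}$ solves the ODE \eqref{eq_barU_ODE}, pointwise convergence at $\tau_*$ propagates to every $\tau$, and continuity of $\Qu_k$ from Lemma~\ref{Lem_ODE_UQ} yields $\Qu(\MM^{(i)}) \to \Qu(\MM^{(\infty)})$.

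For Assertion~\ref{Prop_Q_continuous_b}, Lemma~\ref{Lem_cyl_properties}\ref{Lem_cyl_properties_b} provides the uniform entropy bound $\Theta^{\MM^{(i)}}(\infty)=\Theta_{\IS^{n-k}}<2$, so Huisken monotonicity and Brakke compactness produce a subsequential Brakke limit $\MM^{(\infty)}$, which is a unit-regular integral Brakke flow (possibly empty) of entropy at most $\Theta_{\IS^{n-k}}$. Since $n'=0$, the classification of self-shrinkers of entropy below $\Theta_{\IS^{n-k}}$ forces every tangent flow at $-\infty$ of $\MM^{(\infty)}$ to be an affine plane or a round shrinking $(n,k')$-cylinder with $k'\geq k$. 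If every such tangent flow is a plane, then $\MM^{(\infty)}$ is itself an affine plane by the rigidity/uniqueness argument of Subsection~\ref{subsec_stability_neck} and Corollary~\ref{Cor_unique_tangent_infinity}; otherwise $\MM^{(\infty)}$ is asymptotically $(n,k')$-cylindrical for some $k'\geq k$. The case $k' > k$ would force an eigenvalue of $\ov U^{(i)}(\tau)$ to tend to zero along a fixed direction at every fixed $\tau\ll 0$; via the asymptotic expansion \eqref{eq_Ubar_asymp}, this can only happen if the corresponding eigenvalue of $\Qu(\MM^{(i)})$ either blows up (ruled out by the uniform bound) or tends to zero, and in the latter case Assertion~\ref{Prop_Q_continuous_a} applied in the lower-rank stratum combined with the scaling identity $\Qu(\la\MM)=\la\Qu(\MM)$ reduces us inductively to the case $k'=k$.

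Assertion~\ref{Prop_Q_continuous_c} then follows by combining the previous two: the subsequential limit $\MM^{(\infty)}$ produced by Assertion~\ref{Prop_Q_continuous_b} is either empty, an affine plane, or asymptotically $(n,k)$-cylindrical; in the last case Assertion~\ref{Prop_Q_continuous_a} together with $\Qu(\MM^{(i)})\to 0$ yields $\Qu(\MM^{(\infty)})=0$, and Proposition~\ref{Prop_Q_basic_properties} then identifies $\MM^{(\infty)}$ as a round shrinking cylinder or as homothetic to $\IR^{k-1}\times\MM^{n-k+1}_{\bowl}$. The main obstacle is the concluding step of Assertion~\ref{Prop_Q_continuous_b}: carefully exploiting the uniform $\Qu$-bound together with the scaling identity $\Qu(\la\MM)=\la\Qu(\MM)$ and the asymptotic expansion \eqref{eq_Ubar_asymp} to rule out degeneration of the cylindrical rank in the limit, which requires tracking how the spectrum of $\ov U^{(i)}$ at fixed time responds to variations of $\Qu(\MM^{(i)})$ and may need a separate compactness argument in the space $\mathfrak U^k$.
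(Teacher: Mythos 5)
Your part~(c) is assembled the same way as in the paper, but both (a) and (b) have genuine gaps. In~(a), you claim that smooth convergence of the graphs at a fixed time $\tau_*$ together with \eqref{eq_U0_close_ovU} gives $\ov U^{(i)}(\tau_*)\to\ov U^{(\infty)}(\tau_*)$. It does not: at a fixed $\tau_*$ the triangle inequality only yields $\limsup_i\|\ov U^{(i)}(\tau_*)-\ov U^{(\infty)}(\tau_*)\|\le 2C(\td\tau-\tau_*+10)^{-3}$, an error that does not vanish for fixed $\tau_*$, so the statement ``pointwise convergence at $\tau_*$ propagates to every $\tau$'' has nothing exact to propagate, and continuity of $\Qu_k$ from Lemma~\ref{Lem_ODE_UQ} cannot yet be invoked. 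The missing ingredient is a rigidity statement for the ODE \eqref{eq_barU_ODE}: one passes to a subsequential limit $\ov U^{(\infty),\prime}$ of the solutions $\ov U^{(i)}$, deduces $\|\ov U^{(\infty),\prime}(\tau)-\ov U^{(\infty)}(\tau)\|=O(|\tau|^{-3})$ for \emph{all} $\tau$, and then uses the uniqueness Lemma~\ref{Lem_ODE_sol_same} (two solutions tending to $0$ at $-\infty$ whose difference is $O(|\tau|^{-3})$ coincide) to conclude $\ov U^{(\infty),\prime}=\ov U^{(\infty)}$; this is how the paper closes the argument, and the same lemma also handles the case where $\MM^{(\infty)}$ has no dominant quadratic mode, where the exponential decay from Proposition~\ref{Prop_PO_ancient} forces $\ov U^{(\infty),\prime}\equiv 0$.

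The more serious problem is~(b). Your argument rests on ``the classification of self-shrinkers of entropy below $\Theta_{\IS^{n-k}}$,'' which is not available in general dimension and is neither proved nor cited in the paper; moreover the rank inequality you draw from it points the wrong way: an $(n,k')$-cylinder $\IR^{k'}\times\IS^{n-k'}$ with $k'\ge k$ has entropy $\Theta_{\IS^{n-k'}}\ge\Theta_{\IS^{n-k}}$, so the entropy bound would at best restrict tangent flows to $k'\le k$, and the real danger---degeneration of the limit to a cylinder with \emph{fewer} line factors, or loss of the cylindrical structure altogether---is exactly what the uniform bound on $\Qu$ has to exclude. You acknowledge that this step ``may need a separate compactness argument,'' but that step is the entire content of~(b). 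The paper's route avoids shrinker classification: it takes the maximal scale $\la_i$ at which $\la\MM^{(i)}$ remains $\delta_0$-close to $M_{\cyl}$ at time $-1$ for all $\la<\la_i$. If $\la_i\to\la_\infty>0$, the limit inherits $(n,k,2\delta_0)$-necks at arbitrarily small scales, and Corollary~\ref{Cor_unique_tangent_infinity} combined with Theorem~\ref{Thm_stability_necks} shows $\MM^{(\infty)}$ is asymptotically $(n,k)$-cylindrical. If $\la_i\to 0$, one rescales: $\la_i\MM^{(i)}\to\MM^{(\infty),\prime}$ is asymptotically cylindrical, part~(a) and the scaling law give $\Qu(\MM^{(\infty),\prime})=\lim\la_i\Qu(\MM^{(i)})=0$, Theorem~\ref{Thm_bowl_unique} identifies $\MM^{(\infty),\prime}$ as a round shrinking cylinder or $\MM_{\bowl}\times\IR^{k-1}$, and then $\MM^{(\infty)}$ is read off as an affine plane or empty, the cylinder alternative being excluded by the maximality of $\la_i$. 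Without an argument of this type, your proof of~(b), and hence of~(c) which depends on it, does not go through.
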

\medskip

Lastly, we show that every quadratic mode at $-\infty$ can be realized by a convex, rotationally symmetric and non-collapsed flow with $\IZ^k_2$-symmetry.
Such flows were already constructed in \cite{DuHaslhofer2021} (see also \cite{HaslhoferHershkovits2016,White_03,HoffmanIlmanenMartinWhite2019} for related work), but their parameterization was expressed in terms of a different quantity.
The following theorem establishes the existence in the context of our quantity $\Qu$.

\begin{Theorem} \label{Thm_existence_oval}
Let $\Qu' \in \IR^{k \times k}_{\geq 0}$ be a symmetric, non-negative definite matrix.
Then there is an $(n,k)$-cylindrical mean curvature flow in $\IR^{n+1} \times (-\infty, T)$ with $\Qu(\MM) = \MM'$.
Moreover, $\MM$ is convex, non-collapsed, rotationally symmetric and invariant under reflections perpendicular to every spectral direction of $\Qu'$.
It goes extinct at time $0$ and has uniformly bounded second fundamental form on time-intervals of the form $(-\infty,T]$ for $T < 0$.
Moreover, $\MM$ splits as a product of a compact flow, which becomes extinct in a round singularity, with the nullspace of $\Qu'$.
\end{Theorem}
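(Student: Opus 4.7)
The plan is to construct $\MM$ in three stages: a reduction to the case where $\Qu'$ is positive definite (i.e., the compact case), the construction of a family of candidate ovals with the appropriate symmetry, and a degree-theoretic argument ensuring that every admissible value of $\Qu$ is realized within this family.

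\emph{Reduction step.} By the equivariance identity \eqref{eq_Q_identity_2}, after applying a suitable orthogonal transformation, we may assume $\Qu' = \diag(\la_1, \ldots, \la_k)$ with $\la_1 \geq \ldots \geq \la_{k-l} > 0 = \la_{k-l+1} = \ldots = \la_k$. If we can construct an asymptotically $(n-l, k-l)$-cylindrical flow $\MM'$ in $\IR^{n-l+1}$ with all the claimed geometric properties (convex, compact, non-collapsed, rotationally symmetric, $\IZ^{k-l}_2$-symmetric) and with $\Qu(\MM') = \diag(\la_1, \ldots, \la_{k-l})$, then setting $\MM := \MM' \times \IR^l$ produces the desired flow: all geometric properties are preserved under the product, $\MM$ is asymptotically $(n,k)$-cylindrical, and by Lemma~\ref{Lem_ODE_UQ}\ref{Lem_ODE_UQ_c} we get $\Qu(\MM) = \Qu(\MM') \oplus 0 = \Qu'$. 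So it suffices to treat the positive definite case.

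\emph{Construction of a family of $\IZ^k_2$-symmetric ancient ovals.} Assuming $\Qu'$ positive definite, I would work within the class $\mathcal O$ of compact, convex, non-collapsed ancient mean curvature flows in $\IR^{n+1}$ that are rotationally symmetric about the axis $\IR^k \times \bO^{n-k+1}$ and invariant under the $k$ coordinate reflections on the $\IR^k$-factor, normalized to have extinction time $0$. Existence of such flows follows from the approach of Haslhofer-Hershkovits \cite{HaslhoferHershkovits2016}, White \cite{White_03}, and Du-Haslhofer \cite{DuHaslhofer2021}: start with a large ellipsoid of semiaxes $(R\alpha_1, \ldots, R\alpha_k, R, \ldots, R)$, run mean curvature flow, time-translate the extinction time to $0$, and extract a smooth subsequential Brakke limit as $R \to \infty$. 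The $\IZ^k_2$-symmetry passes to the limit, and such a limit is convex, non-collapsed, compact on each backward time slice, and has uniformly bounded second fundamental form on time intervals of the form $(-\infty, T]$ by standard non-collapsed convex mean curvature flow estimates. The asymptotic cylindrical structure follows because the smallest principal curvature directions must shrink the fastest, giving asymptotically a round $(n,k)$-cylinder. By the $\IZ^k_2$-symmetry and the equivariance \eqref{eq_Q_identity_2}, the matrix $\Qu(\MM^{\vec\alpha})$ is diagonal.

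\emph{Degree argument.} Using scaling invariance \eqref{eq_Q_identities} to fix $\tr \Qu = 1$, consider the continuous map
\[
\Phi : \Delta_{k-1} \longrightarrow \Delta_{k-1}, \qquad \vec\alpha \longmapsto \frac{\diag(\Qu(\MM^{\vec\alpha}))}{\tr \Qu(\MM^{\vec\alpha})},
\]
where $\Delta_{k-1} \subset (0,\infty)^k$ denotes the open simplex $\{(\la_1,\ldots,\la_k) : \la_i > 0, \sum \la_i = 1\}$, and $\vec\alpha$ parameterizes the family $\MM^{\vec\alpha}$ normalized so the trace of $\Qu$ equals $1$. Continuity of $\Phi$ follows from Proposition~\ref{Prop_Q_continuous}\ref{Prop_Q_continuous_a} and a compactness/uniqueness argument for the family. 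The key ingredient is properness of $\Phi$: as $\vec\alpha$ approaches the boundary of $\Delta_{k-1}$ (some $\alpha_i \to 0$), Proposition~\ref{Prop_Q_continuous}\ref{Prop_Q_continuous_b} forces any subsequential Brakke limit of $\MM^{\vec\alpha_j}$ to be either another asymptotically $(n,k)$-cylindrical flow, an affine plane, or empty; the $\IZ^k_2$-symmetry together with the collapse of an axis rules out the first two possibilities and forces the corresponding diagonal entry of $\Qu$ to tend to $0$, so $\Phi(\vec\alpha_j)$ approaches the boundary of $\Delta_{k-1}$. By the permutation symmetries on both domain and target (implemented by the coordinate reflections of $\IR^k$), $\Phi$ is invariant under the $S_k$-action; a degree computation using the obvious fixed point (the fully symmetric configuration $\vec\alpha = (1,\ldots,1)$ maps to the barycenter) shows $\deg \Phi = \pm 1$, hence $\Phi$ is surjective.

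The main obstacle is the properness step: one must precisely identify the geometric degeneration of $\MM^{\vec\alpha}$ as $\vec\alpha$ approaches $\partial \Delta_{k-1}$ and certify that the corresponding $\Qu$ value approaches the boundary of the target simplex. This relies crucially on Proposition~\ref{Prop_Q_continuous}\ref{Prop_Q_continuous_b}, the structure of the possible limits, and an argument combining convexity, non-collapsedness, and the imposed $\IZ^k_2$-symmetry to rule out unwanted limits and relate the direction along which the oval elongates to the shrinking eigenvalue of $\Qu$.
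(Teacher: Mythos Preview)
Your high-level structure (reduction to positive definite $\Qu'$, family of symmetric ovals, topological surjectivity) matches the paper's, but there is a genuine gap in the middle step that the paper's argument is specifically engineered to avoid.

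You take as given a \emph{well-defined, continuous} family $\vec\alpha \mapsto \MM^{\vec\alpha}$ of ancient ovals, obtained as subsequential limits of rescaled ellipsoidal flows, and then define $\Phi$ on this family. But uniqueness of such limits is not available: for a given $\vec\alpha$, different subsequences might yield different ancient flows, and even if one makes a choice, there is no reason the resulting assignment is continuous. The references you cite produce \emph{some} ancient oval from each ellipsoid family, not a canonical one. Your appeal to Proposition~\ref{Prop_Q_continuous}\ref{Prop_Q_continuous_a} for continuity of $\Phi$ presupposes continuity of $\vec\alpha \mapsto \MM^{\vec\alpha}$ in the Brakke sense, which is exactly what is missing. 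Relatedly, your degree claim $\deg\Phi=\pm 1$ is unjustified: knowing a single $S_k$-fixed point is mapped to the barycenter does not compute the degree without further information (local injectivity, or that the preimage is a single point counted with sign).

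The paper avoids this circularity by never forming the ancient ovals until the very end. It works with the \emph{finite-time} ellipsoidal flows $\MM^{\mathbf b}$, which depend continuously on $\mathbf b$ by standard well-posedness, and with their rescalings $a\MM^{\mathbf b}$. Instead of a degree argument on a map into the simplex, it builds an open cover $U_0^{a,\eps},\ldots,U_k^{a,\eps}$ of the parameter simplex $\Delta^k_\delta$, where membership in $U_j^{a,\eps}$ records Brakke-$\eps$-closeness of $a\MM^{\mathbf b}$ to \emph{some} ancient flow in $\mathfrak M$ with $\Qu$ in a prescribed region $X_j$, and $U_0^{a,\eps}$ records being far from all of them. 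The boundary behavior (faces $F_j$ lie in $U_0^{a,\eps}\cup U_j^{a,\eps}$) is verified via a compactness lemma (Claim~\ref{Cl_may_take_limit}), and then Brouwer's fixed point theorem applied to a map built from a partition of unity subordinate to this cover produces a point $\mathbf b_{a,\eps}\in U_0^{a,\eps}\cap\cdots\cap U_k^{a,\eps}$. Only then does one pass to limits $a\to\infty$, $\eps\to 0$, using the properness of $\Qu$ on $\mathfrak M$ (Claim~\ref{Cl_Q_cont_proper}), to extract an ancient flow whose $\Qu$ lies in $X_1\cap\cdots\cap X_k\setminus\Int(X_1\cup\cdots\cup X_k)=\{\Qu'\}$. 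The point is that this covering/fixed-point scheme only needs compactness and continuity of $\Qu$ on $\mathfrak M$, never a continuous parametrization of $\mathfrak M$ itself.
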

\medskip

The proof of Theorem~\ref{Thm_existence_oval} uses a general strategy of fixed point theorem in the context of a new topological framework.
\bigskip

\subsection{Proof of Proposition~\ref{Prop_dom_qu_asymp}}

\begin{proof}[Proof of Proposition~\ref{Prop_dom_qu_asymp}.]
Observe that the proposition is invariant under parabolic scaling.
Specifically, rescaling $\MM$ and $r_0$ by a factor $\la> 0$ changes $\td\tau$ by an additive constant $2\log \la$ and $U^+, \ov U$ are shifted by the same constant so that the bounds in Assertions~\ref{Prop_dom_qu_asymp_a}--\ref{Prop_dom_qu_asymp_d} remain preserved.
So we may assume in the following without loss of generality that $\td\tau = -10$, resulting in the simplification $\td\tau - \tau + 10 = -\tau$.

Let $\eta' \in (0, \eta)$ be a constant whose value we will determine later and apply Proposition~\ref{Prop_PO_ancient} for $J \leftarrow 100 J^2$, $m \leftarrow m+1$, $\eta \leftarrow \eta'$ and $(\bq_0 , t_0 ) \leftarrow (\bO,0)$.
Call the resulting functions $U^{\prime,+} : (-\infty, \td\tau] \to \sV_{\rot, \geq -J-1}$ and $R' : (-\infty, \td\tau] \to \IR$ and express $e^{\tau/2} (\MM^{\reg}_{-e^{-\tau}} - \bq)$ intersected with $\IB^{n+1+n'}_{R'(\tau)}$ as the graph of a function $u'_\tau$ whose domain contains $\IB^k_{R'(\tau)-1} \times \IS^{n-k}$.
Recall that $|\bq| \leq \eta' e^{\td\tau/2}$.
By Lemma~\ref{Lem_up_minus_u}, we can express the subset of $e^{\tau/2} \MM^{\reg}_{-e^{-\tau}}$ described by $u'_\tau$ as the graph of a function $u_\tau$ whose domain contains $\IB^k_{\frac12 (R'(\tau)-1)} \times \IS^{n-k}$ and over this domain we have
\begin{equation}  \label{eq_uptauutau_qu}
\Vert u'_\tau - u_\tau \Vert_{C^m} \leq C \eta' e^{\tau/2}. 
\end{equation}
Let $U^{+} := \PP_{\sV_{\rot, \geq -J}} U^{\prime,+}(\tau)$ and $U^{++}(\tau) := \PP_{\sV^{++}} U^{\prime,+}(\tau)$.

\begin{Claim} \label{Cl_Upp_tau_m1}
If $\eta' \leq \ov\eta' (J, m, \eta)$, then for some dimensional constant $C$,
\[ \Vert U^{++} (\tau) \Vert \leq C  \min\{|\tau|^{-1}, \eta' \}, \qquad 
|U_{0,\min}|(\tau), \; |U_{0,\max}|(\tau) \leq \frac{1+\eta}{\sqrt 2} |\tau|^{-1}  . \]
\end{Claim}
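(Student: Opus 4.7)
My plan is to split the analysis into two regimes separated by the transition time $\tau_{\frac12}$ from Proposition~\ref{Prop_PO_ancient}\ref{Prop_PO_ancient_d}. Since $\MM$ has dominant quadratic mode, $\tau_{\frac12}>-\infty$ by Proposition~\ref{Prop_Add_tau2}. I will choose the auxiliary parameter $\xi$ in Proposition~\ref{Prop_PO_ancient} sufficiently small, in particular $\xi<\sqrt 2\,\eta/(2(1+\eta))$; this is the key input for the sharp bound. The trivial estimate $\Vert U^{++}(\tau)\Vert_{L^2_f}\leq \Vert U^{+}(\tau)\Vert_{L^2_f}\leq \eta'$ from Proposition~\ref{Prop_PO_ancient}\ref{Prop_PO_ancient_a} immediately accounts for the $\eta'$-side of $\min\{|\tau|^{-1},\eta'\}$.

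On the outer region $(\tau_{\frac12},\td\tau]$, I will integrate the one-sided growth bounds for $\Vert U_{\frac12}\Vert$ and $\Vert U_1\Vert$ backward from $\td\tau$, combining them across the two sub-intervals $(\tau_1,\td\tau]$ and $(\tau_{\frac12},\tau_1)$ together with $\Vert U_1(\td\tau)\Vert\leq\eta'$, to obtain the exponential decay $\Vert U^{++}(\tau)\Vert\leq C\eta'\,e^{-(1/2-\xi)(\td\tau-\tau)}$. Since the map $s\mapsto (10+s)e^{-(1/2-\xi)s}$ attains its maximum on $[0,\infty)$ at $s=0$ (a dimensional constant), this gives $\Vert U^{++}(\tau)\Vert\leq C|\tau|^{-1}$ on this interval once $\eta'$ is small. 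On the inner region $(-\infty,\tau_{\frac12}]$, I will set $y(\tau):=|U_{0,\min}(\tau)|$; the ODI $|\partial_\tau U_{0,\min}+\sqrt 2\,U_{0,\min}^2|\leq\xi U_{0,\min}^2$ together with $U_{0,\min}<0$ yields $\partial_\tau(1/y)\leq -(\sqrt 2-\xi)$. Integrating from $\tau$ to $\tau_{\frac12}$ and using the boundary value $y(\tau_{\frac12})\leq C\eta'$ from the previous step gives $y(\tau)\leq C|\tau|^{-1}$; the dominance $|U_{0,\max}|\leq|U_{0,\min}|$ on this interval (by Proposition~\ref{Prop_PO_ancient}\ref{Prop_PO_ancient_d}) then transfers the bound to $|U_{0,\max}|$, and hence to all of $\Vert U^{++}\Vert\leq C\Vert U_0\Vert$.

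For the sharp constant $(1+\eta)/\sqrt 2$, the same integration yields
\[\frac{1}{y(\tau)}\;\geq\;(\sqrt 2-\xi)(\tau_{\frac12}-\tau)+\frac{1}{y(\tau_{\frac12})}.\]
I will introduce $G(\tau):=1/y(\tau)-\sqrt 2\,|\tau|/(1+\eta)$ and compute $\partial_\tau G\leq -\sqrt 2\,\eta/(1+\eta)+\xi<0$ by our choice of $\xi$; hence $G$ is decreasing in $\tau$, so if $G(\tau_{\frac12})\geq 0$ then $G(\tau)\geq 0$ for all $\tau\leq\tau_{\frac12}$, i.e.\ $y(\tau)\leq(1+\eta)/(\sqrt 2\,|\tau|)$. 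The hard part will be verifying the boundary inequality $G(\tau_{\frac12})\geq 0$ uniformly in the a priori unknown location of $\tau_{\frac12}$. For this I will refine the boundary bound to $y(\tau_{\frac12})\leq C\eta'\,e^{-(1/2-\xi)(\td\tau-\tau_{\frac12})}$ via backward integration of the linear-regime growth bounds; the function $t\mapsto t\,e^{-(1/2-\xi)(t-10)}$ is decreasing on $[10,\infty)$, so its supremum there is the dimensional value $10$. Choosing $\bar\eta'(J,m,\eta)$ below this dimensional threshold yields $G(\tau_{\frac12})\geq 0$ for every admissible $\tau_{\frac12}$, completing the proof.
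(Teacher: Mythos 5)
Your proposal is correct and follows essentially the same route as the paper: integrate the linear-regime growth inequalities backward from $\td\tau$ to get exponential decay of $\Vert U^{++}\Vert$ on $(\tau_{\frac12},\td\tau]$, then integrate $\partial_\tau |U_{0,\min}|^{-1} \leq -(\sqrt2-\xi)$ on $(-\infty,\tau_{\frac12}]$ with $\xi$ chosen small relative to $\eta$ and with the boundary value at $\tau_{\frac12}$ supplied by the exponential bound, finally shrinking $\eta'$. One small remark: the crude boundary value $y(\tau_{\frac12})\leq C\eta'$ invoked in your second paragraph does not by itself give $y\leq C|\tau|^{-1}$ when $|\tau_{\frac12}|$ is large (and the sharp $\tfrac{1+\eta}{\sqrt2}|\tau|^{-1}$ bound should also be recorded on the outer interval $(\tau_{\frac12},\td\tau]$), but the refined exponential boundary estimate in your last paragraph -- which is exactly the ingredient the paper uses -- handles both points by the same elementary maximum computation.
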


\begin{proof}
We use Proposition~\ref{Prop_PO_ancient}\ref{Prop_PO_ancient_d} for $\eps = 0.1 \eta \sqrt{2}$.
Integrating the differential bounds on $( \tau_{\frac12}, -10]$ implies that $\Vert U^{++} (\tau) \Vert \leq C e^{\frac12 \tau} \leq C |\tau|^{-1}$, which in combination with $\Vert U^{++} \Vert \leq C \eta'$ implies the first asserted bound on this time-interval.
The second asserted bound follows similarly as long as $\tau$ is sufficiently small.
If $\tau$ is bounded from below, then it follows from the first bound for small $\eta'$.
On the time-interval $(-\infty, \tau_{\frac12}]$ we have $\partial_\tau |U_{0,\min}|^{-1}(\tau) \leq - (1- 0.1 \eta) \sqrt 2$, so 
\[ |U_{0,\min}|^{-1}(\tau) \geq C^{-1} e^{-\frac12 \tau_{\frac12}} +  (1- 0.1 \eta) \sqrt 2 (\tau_{\frac12} - \tau) \geq -(1- 0.1 \eta) \sqrt 2 \, \tau -C , \]
 which implies the asserted bounds for sufficiently small $\tau \in (-\infty, \tau_{\frac12}]$.
 If $\tau$ is bounded from below, then these bounds hold trivially for a sufficiently small choice of $\eta'$.
\end{proof}

Recall that 
\[ e^{-(R'(\tau))^2} = \| U^{++}(\tau) \|^{100 J^2} \leq C |\tau|^{-100 J^2}. \]
So there is a uniform $T^* \leq -10$ such that for $\tau \leq T^*$ we have
\[ R'(\tau) \geq 10 J \log {\log|\tau| + C } \geq 2 J \sqrt{\log |\tau|} = 2 R(\tau). \]
On the other hand, the same bound holds for $\tau \in [T^*,-10]$ as long as we choose $\eta'\leq\ov\eta'(J,T^*)$.
Combining this with the claim and \eqref{eq_uptauutau_qu} implies that for a domain $\DD_\tau$ as specified in Assertion~\ref{Prop_dom_qu_asymp_a}
\begin{equation} \label{eq_utauUppCJM} 
\Vert u_\tau - U^{\prime,+}(\tau) \Vert_{C^m} \leq C(J,m) |\tau|^{-100 J^2 - 1}. 
\end{equation}
On the other hand, Lemma~\ref{Lem_polynomial_bounds} and Proposition~\ref{Prop_PO_ancient}\ref{Prop_PO_ancient_b} imply that for $\eta' \leq \ov\eta'(J,m,\eta)$
\begin{align*} 
\Vert U^{\prime,+}(\tau) - U^{+}(\tau) \Vert_{C^m(\DD)} 
&\leq C(J,m) R^{C(J,m)}(\tau) \Vert U^{\prime,+}(\tau) - U^{+}(\tau) \Vert_{L^2_f} \displaybreak[1] \\
&\leq C(J,m) R^{C(J,m)}(\tau) \Vert U^{++}(\tau) \Vert_{L^2_f}^{J+2} \displaybreak[1] \\
&\leq C(J,m) (\log|\tau|)^{C(J,m)} |\tau|^{-J-1.5} \| U^{++}(\tau) \|_{L^2_f}^{1/2}\displaybreak[1] \\
&\leq C(J,m) (\eta')^{1/2} |\tau|^{-J-1} 
\leq \eta |\tau|^{-J-1}.
\end{align*}
Combining this with \eqref{eq_utauUppCJM} implies 
Assertion~\ref{Prop_dom_qu_asymp_a}.

Assertion~\ref{Prop_dom_qu_asymp_b} now follows immediately from Proposition~\ref{Prop_PO_ancient}\ref{Prop_PO_ancient_b} combined with Claim~\ref{Cl_Upp_tau_m1}.
The bounds in Assertion~\ref{Prop_dom_qu_asymp_c}, for $i \geq 0$, follow similarly from Proposition~\ref{Prop_PO_ancient}\ref{Prop_PO_ancient_c}.
Consider now the case $i = 1, 2$ and observe that by Assertion~\ref{Prop_dom_qu_asymp_b} we have
\[ \big\Vert \partial_\tau U^+_{\frac{1}2 i} - \tfrac{i}2 U^+_{\frac{1}2 i} - \PP_{\sV_{\frac12 i}} Q^+_J(U^+) \big\Vert \leq C(J) |\tau|^{-J-1}, \] 
which implies that
\[ \partial_\tau \Vert U^+_{\frac{1}2 i} \Vert \geq \tfrac{i}2 \Vert U^+_{\frac{1}2 i} \Vert - C |\tau|^{-2} \qquad \Rightarrow \qquad
\partial_\tau \big( e^{-i/2 \cdot \tau} \Vert U^+_{\frac{1}2 i} \Vert \big) \geq -C |\tau|^{-2}e^{-i/2 \cdot \tau}. \]
Integrating this backwards in time implies that
\begin{multline*}
 \Vert U^+_{\frac{1}2 i} (\tau) \Vert 
\leq C e^{i/2 \cdot \tau} +  C e^{i/2 \cdot \tau} \int_{\tau}^{-10} |\tau'|^{-2}e^{-i/2 \cdot \tau'} d\tau' \\
\leq C|\tau|^{-2} +  C |\tau|^{-2} \int_{0}^{|\tau|-10} \bigg|\frac{\tau}{\tau+s}\bigg|^2 e^{-s/2} ds \leq C|\tau|^{-2}. 
\end{multline*}
The last bound follows by splitting the last integral into integrals over $[0,\frac12 |\tau|]$ and $[\frac12|\tau|, |\tau|-10]$.
This concludes the proof of Assertion~\ref{Prop_dom_qu_asymp_c}.

Assertion~\ref{Prop_dom_qu_asymp_d} is a consequence of the next lemma.
To see that the constant $C$ in \eqref{eq_U0_close_ovU} is independent of $J$ note that we can apply the proposition for different choices $J_1, J_2$ of $J$, resulting in different functions $U^+$, which must differ by at most $C(\min\{ J_1, J_2 \}) |\tau|^{-\min\{ J_1, J_2 \}}$.
\end{proof}
\medskip

\begin{Lemma} \label{Lem_ODE_neutral_mode}
Consider a solution $U^+ : (-\infty, -10] \to \sV_{ \rot,\geq - 1}$ to the ODI
\begin{equation} \label{eq_ODI_J3}
 \big\| \partial_\tau U^+ - L U^+  - Q_3^+ (U^+) \big\|
\leq   C_0 (- \tau )^{-4} 
\end{equation}
such that\footnote{The second bound in \eqref{eq_Uptaum1} could be omitted, but it is available in the application of Lemma~\ref{Lem_ODE_neutral_mode} and hence simplifies the proof.}
\begin{equation} \label{eq_Uptaum1}
 \Vert U^+ \Vert \leq C_0 |\tau|^{-1}, \quad \| U^+ - U_0 \| \leq C_0 |\tau|^{-2}, \quad |U_{0,\min}|(\tau), \; |U_{0,\max}|(\tau) \leq  \frac{1.1}{\sqrt{2}}  |\tau|^{-1} , 
\end{equation}
where $C_0 > 0$.
Then $U_0 = \PP_{\sV_0} U^+$ satisfies Assertion~\ref{Prop_dom_qu_asymp_d} in Proposition~\ref{Prop_dom_qu_asymp}, where the constant $C$ in \eqref{eq_U0_close_ovU} can be chosen depending only on $n, k$ and $C_0$.
\end{Lemma}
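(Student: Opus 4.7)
The plan is to prove Lemma~\ref{Lem_ODE_neutral_mode} in three stages: (i) use the ODI \eqref{eq_ODI_J3} for $U^+$ together with the pointwise bounds \eqref{eq_Uptaum1} to derive a closed ODE for $U_0$ with a higher-order error term; (ii) analyze the limit ODE $\partial_\tau \bar U = F(\bar U)$, where $F(U) = -\sqrt 2 U^2 + 2\tr(U^2) U + C^* U^3$, establishing existence of maximal solutions together with the asymptotic expansion \eqref{eq_Ubar_asymp}; and (iii) match $U_0$ to the appropriately chosen $\bar U$ and bound the difference.

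For step (i), I would project the ODI \eqref{eq_ODI_J3} onto each eigenspace $\sV_{i/2}$ of $L$ for $i \in \{2,1,-1,-2\}$. Since $L$ acts by the scalar $i/2$ on $\sV_{i/2}$, each projected equation is a scalar ODI with a source term coming from $\PP_{\sV_{i/2}} Q_3^+(U^+)$. Using the parity observation that $U_0$ is even in each coordinate function, the formulas \eqref{eq_Q2_V1}, \eqref{eq_Q2_V12}, and \eqref{eq_Q2_Vm1} give $\PP_{\sV_{\pm 1/2}} Q_2^+(U_0) = 0$, $\PP_{\sV_1} Q_2^+(U_0) = -\frac12 \tr(U_0^2)\mathfrak p^{(0)}$, and an explicit formula for $\PP_{\sV_{-1}} Q_2^+(U_0)$. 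Backward integration of the stable/unstable scalar equations (using that $\|U^+ - U_0\| \leq C_0|\tau|^{-2}$ from \eqref{eq_Uptaum1}) then yields the leading-order expansions $U_{1}(\tau) = \frac12 \tr(U_0^2)\mathfrak p^{(0)} + O(|\tau|^{-3})$, $U_{\pm 1/2}(\tau) = O(|\tau|^{-3})$, and $U_{-1}(\tau) = \PP_{\sV_{-1}} Q_2^+(U_0) + O(|\tau|^{-3})$. Substituting these back into $\PP_{\sV_0} Q_3^+(U^+)$ via \eqref{eq_Q3_V0}, and collecting the contributions of the $v_{ij}$ and $w_i$ components identified with $\PP_{\sV_{-1}} Q_2^+(U_0)$, yields after a bookkeeping calculation the reduced ODE $\partial_\tau U_0 = F(U_0) + O(|\tau|^{-4})$ with $F$ of the required form and a suitable dimensional constant $C^*(n-k)$.

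For step (ii), I would exploit the $O(k)$-equivariance of $F$: for any initial symmetric matrix, the entire solution to $\partial_\tau \bar U = F(\bar U)$ remains diagonal in the initial eigenbasis, and the nullspace is preserved. Hence the problem decouples, on the range of $\bar U$, into a scalar system $\partial_\tau c_{ii} = -\sqrt 2 c_{ii}^2 + 2(\sum_j c_{jj}^2) c_{ii} + C^* c_{ii}^3$. Letting $l := \rank \bar U$ and applying the matrix substitution $\bar D := \bar U^{-1}$ on the range, one computes $\partial_\tau \bar D = \sqrt 2 \mathsf I_l - 2\tr(\bar U^2)\bar D - C^*\bar U$, which in leading order gives $\bar D(\tau) \sim \sqrt 2 \tau\, \mathsf I_l$; a one-step iterative refinement produces $\bar D(\tau) = \sqrt 2 \tau\,\mathsf I_l + C^{**}(n,k,l)\log|\tau|\,\mathsf I_l + \log\mathsf A + O(|\tau|^{-1}\log|\tau|)$, where $\mathsf A\in\IR^{l\times l}_{>0}$ is a symmetric positive definite matrix that parametrizes the family of solutions via the well-defined limit $\log \mathsf A = \lim_{\tau \to -\infty}(\bar D(\tau) - \sqrt 2 \tau\,\mathsf I_l - C^{**}\log|\tau|\,\mathsf I_l)$. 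Extending by zero on the kernel direction yields maximal non-positive definite solutions $\bar U : (-\infty, \bar T) \to \IR^{k\times k}_{\leq 0}$ with the asymptotic formula \eqref{eq_Ubar_asymp}.

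Step (iii), the comparison of $U_0$ with the correct $\bar U$, is the main obstacle. The difficulty is twofold: first, the ODE for $U_0$ is only approximately solved, so the eigenframe of $U_0(\tau)$ rotates slightly with $\tau$ and its limiting frame at $\tau=-\infty$ must be constructed; second, the linearization of $F$ at $\bar U$ acts only at scale $|\tau|^{-1}$, so a naive Gronwall argument is insufficient and one must use integrated (Duhamel) estimates. I would proceed as follows: using the reduced ODE and the eigenvalue bound in \eqref{eq_Uptaum1}, a standard slow-rotation argument shows that the unit eigenframe of $U_0(\tau)$ is Cauchy with respect to a convergent parameter as $\tau \to -\infty$, producing a limit orthonormal frame and a limit kernel of dimension $k-l$. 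Passing to this frame and letting $D_0$ denote the inverse of $U_0$ restricted to the non-kernel block, the reduced ODE transforms to
\begin{equation*}
\partial_\tau D_0 = \sqrt 2 \, \mathsf I_l - 2\tr(U_0^2) D_0 - C^* U_0 + D_0 \cdot O(|\tau|^{-4}) \cdot D_0 = \sqrt 2 \, \mathsf I_l + O(|\tau|^{-1}),
\end{equation*}
since $D_0^2 \cdot O(|\tau|^{-4}) = O(|\tau|^{-2})$. Integrating backward yields the existence of the limit $\log \mathsf A_0 := \lim_{\tau \to -\infty}(D_0(\tau) - \sqrt 2 \tau\,\mathsf I_l - C^{**}\log|\tau|\,\mathsf I_l)$. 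Letting $\bar U$ be the solution from Step (ii) corresponding to $\mathsf A := \mathsf A_0$ and $\bar D := \bar U^{-1}$, the difference $\delta D := D_0 - \bar D$ satisfies $\partial_\tau \delta D = \bigl[-2\tr(U_0^2) + 2\tr(\bar U^2)\bigr]D_0 + \text{(similar terms)} + O(|\tau|^{-2})$, where the linearized operator acts at scale $O(|\tau|^{-2})$, and $\delta D(\tau) \to 0$ as $\tau \to -\infty$ by the choice of $\mathsf A_0$. A Duhamel argument, combined with the fact that the integrating factor $\exp\bigl(\int O(|\tau|^{-2})\,d\tau\bigr)$ is bounded, gives $\|\delta D(\tau)\| \leq C |\tau|^{-1}$. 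Transforming back via the identity $U_0 - \bar U = -U_0\,\delta D\,\bar U$ converts this to $\|U_0 - \bar U\| \leq C|\tau|^{-3}$, which is the required bound \eqref{eq_U0_close_ovU}.
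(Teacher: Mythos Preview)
Your Steps (i) and (ii) track the paper's argument closely (the paper packages (i) as two claims, expressing $U_{i/2}$ as $-\tfrac{2}{i}\PP_{\sV_{i/2}}Q_2^+(U_0)+O(|\tau|^{-3})$ and then computing $\PP_{\sV_0}Q_3^+$ on this refined ansatz via \eqref{eq_Q3_V0}).

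Step (iii), however, has a genuine gap, and the premise behind it is where you go wrong. Your ``slow-rotation argument'' for a limiting eigenframe of $U_0$ needs lower bounds on eigenvalue gaps, which are not available: nothing rules out two eigenvalues of $U_0(\tau)$ differing by $o(|\tau|^{-3})$ or coinciding, and then the frame rotation rate $\sim |\tau|^{-4}/(\text{gap})$ is not integrable. More concretely, your bound $D_0\cdot O(|\tau|^{-4})\cdot D_0=O(|\tau|^{-2})$ requires $\|D_0\|\le C|\tau|$, i.e.\ every nonzero eigenvalue of $U_0$ satisfies $|\lambda_j|\ge c|\tau|^{-1}$; this is not given (an eigenvalue could be $O(|\tau|^{-3})$ and oscillate in sign), so neither the kernel/range splitting nor $D_0$ is a priori defined. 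The missed simplification is that a direct Gronwall \emph{is} sufficient once you track the constant: from $\|F(U_0)-F(\bar U)\|_{op}\le\sqrt{2}\,\|U_0+\bar U\|_{op}\|U_0-\bar U\|_{op}+O(|\tau|^{-2})\|U_0-\bar U\|_{op}$ and the eigenvalue bound in \eqref{eq_Uptaum1}, the linearization coefficient is $\le 2.4\,|\tau|^{-1}$, and since $2.4<3$ the weight $|\tau|^{2.3}$ yields $\partial_\tau(|\tau|^{2.3}\|U_0-\bar U_i\|_{op})\le C|\tau|^{-1.7}$, which integrates to $\|U_0-\bar U_i\|\le C|\tau|^{-3}$. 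The paper therefore avoids all diagonalization: take $\bar U_i$ solving the exact ODE with $\bar U_i(\tau_i)=U_0(\tau_i)$, $\tau_i\to-\infty$, apply this Gronwall bound, and pass to a limit $\bar U$. Non-positive-definiteness and \eqref{eq_Ubar_asymp} are proved only afterward for the \emph{exact} solution $\bar U$, where the eigenframe is genuinely invariant.
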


\begin{proof}
In the following, $C$ will denote a generic constant that may depend on $n, k, C_0$ and we will write $U^+ = U_{1} + U_{\frac12} + U_0 + U_{-\frac12} + U_{-1} \in \sV_{\rot, 1} \oplus \ldots \oplus \sV_{\rot,-1}$.
We first estimate $U_{\frac{i}2}$ for $i \neq 0$ in terms of $U_0$ via the following claim.

\begin{Claim}
Consider the degree 2 homogeneous polynomials of $Q'_1,  Q'_{\frac12},  Q'_{-\frac12}, Q'_{-1} :  \sV_{\rot,0} \to \sV_{\rot,0}$ defined by
\[ Q'_{\frac{i}2} (  U' )  := - \tfrac{2}i \PP_{\sV_{\frac{i}2}} \big( Q_2^+ (  U' ) \big). \]
An explicit formula for these functions can be obtained from \eqref{eq_Q2_V1}--\eqref{eq_Q2_Vm1}.
For $i \in \{ -2, -1, 1, 2 \}$ we have
\begin{equation} \label{eq_Ui2QpU0}
 \big\| U_{\frac{i}2}  - Q'_{\frac{i}2} \big(  U_0) \big) \big\| \leq C |\tau|^{-3}. 
\end{equation}
Moreover, if we set $Q'( U') := Q'_1 ( U') + Q'_{\frac12} (U') + U' + Q'_{-\frac12}(U') + Q'_{-1}(U')$, then
\begin{equation} \label{eq_UmQp}
 \Vert U^+ - Q'(U_0) \Vert \leq C  |\tau|^{-3}. 
\end{equation}
\end{Claim}

\begin{proof}
Set $X_i := U_{\frac{i}2}  - Q'_{\frac{i}2} (  U_0)$ and note that
\begin{align*}
 \partial_\tau X_i
&= \tfrac{i}2    U +\PP_{\sV_{\frac{i}2}} \big( Q_3^+ (U^+) \big) - \partial_\tau \big( Q'_{\frac{i}2} ( U_0)  \big) + \PP_{\sV_{\frac{i}2}} \big( \partial_\tau U^+ - LU^+ - Q_3^+(U^+) \big)  \\
&= \tfrac{i}2 X_i  + \PP_{\sV_{\frac{i}2}} \big( Q_3^+ (U^+) - Q_2^+ (U_0 ) \big)  - \partial_\tau \big( Q'_{\frac{i}2} (  U_0)  \big)  + \PP_{\sV_{\frac{i}2}} \big( \partial_\tau U^+ - LU^+ - Q_3^+(U^+) \big).
\end{align*}
We estimate that
\begin{align*}
 \big\| \PP_{\sV_{\frac{i}2}} \big( Q_3^+ (U) - Q_2^+ (U_0 ) \big) \big\|
&\leq \| Q_3^+ (U) - Q_2^+ (U) \| +  \| U  - U_0 \| \, \| U \| \leq C |\tau|^{-3} \\
 \big\| \partial_\tau \big( Q'_{\frac{i}2} (  U_0 ) \big) \big\| &\leq C \Vert \partial_\tau U_0 \Vert \, \Vert U_0  \Vert
\leq C |\tau|^{-3}, \\
\big\|  \PP_{\sV_{\frac{i}2}} \big( \partial_\tau U^+ - LU^+ - Q_3^+(U^+) \big) \big\| &\leq
\|  \partial_\tau U^+ - LU^+ - Q_3^+(U^+) \big\| \leq C|\tau|^{-3}.
\end{align*}
Hence
\[ \big| \partial_\tau \big( e^{-\frac{i}2 \tau} \Vert X_i(\tau) \Vert \big) \big|
\leq C e^{-\frac{i}2 \tau} |\tau|^{-3}. \]
If $i < 0$, then this implies that for $\tau^* < \tau \leq \tau_2$
\[ \Vert X_i (\tau) \Vert \leq e^{\frac{i}2 (\tau - \tau^*)} \Vert X_i(\tau^*) \Vert + C \int_{\tau^*}^\tau |\tau'|^{-3} e^{\frac{i}2 (\tau- \tau')}  d\tau' . \] 
Letting $\tau^* \to -\infty$ implies
\[ \Vert X_i(\tau) \Vert \leq  C \int_{-\infty}^\tau |\tau'|^{-3} e^{\frac{i}2 (\tau- \tau')}  d\tau'
\leq  C|\tau|^{-3} \int_{-\infty}^\tau  e^{\frac{i}2 (\tau- \tau')}  d\tau' \leq  C|\tau|^{-3}. \]
On the other hand, if $i > 0$, then
\begin{align*}
 \Vert X_i(\tau) \Vert 
 &\leq e^{\frac{i}2 (\tau + 10)} \Vert X_i(-10) \Vert + C \int^{-10}_\tau |\tau'|^{-3} e^{\frac{i}2 (\tau- \tau')}  d\tau'  \\
 &\leq Ce^{\tau/2}   + C \int^{\tau/2}_\tau |\tau'|^{-3} e^{\frac{i}2 (\tau- \tau')}  d\tau'  + C \int^{-10}_{\tau/2} |\tau'|^{-3} e^{\frac{i}2 (\tau- \tau')}  d\tau' \\
  &\leq C|\tau|^{-3}   + C |\tau|^{-3} \int^{\tau/2}_\tau  e^{\frac{i}2 (\tau- \tau')}  d\tau'  + C \int^{-10}_{\tau/2}  e^{\frac{i}2 (\tau- \tau')}  d\tau' \\
 &\leq C|\tau|^{-3}.
\end{align*}
This proves \eqref{eq_Ui2QpU0}.
The bound \eqref{eq_UmQp} follows immediately.
\end{proof}

The next claim characterizes the evolution of $U_0$ to higher order.

\begin{Claim} \label{Cl_ovQ}
There is a constant $C^*(n-k) \in \IR$ with the following property.
Identify $\sV_{\rot, 0} \cong \IR^{k \times k}_{\sym}$ by expressing elements $U' \in \sV_{\rot, 0}$ as $U'= \sum_{i,j} c_{ij} \mathfrak p_{ij}^{(2)}$ for symmetric matrices $(c_{ij})$ and define the map $\ov Q : \sV_{\rot, 0} \to \sV_{\rot, 0}$ by
\[ \ov Q(U') := -\sqrt{2}  U^{\prime 2} + 2 \tr (\ov U^{\prime 2})  U' + C^* \ov U^{\prime 3} . \] 
Then the following is true:
\begin{enumerate}[label=(\alph*)]
\item\label{Cl_ovQ_a} $\Vert  \ov Q(U') - \PP_{\sV_0}(Q_3^+ (Q'(U')))  \Vert \leq C \Vert U' \Vert^4$ for all $ U' \in \sV_{\rot,0}$ with $\Vert  U' \Vert \leq 1$.
\item \label{Cl_ovQ_b} $\Vert \partial_\tau U_0 - \ov Q(U_0) \Vert \leq C |\tau|^{-4}$.
\end{enumerate}
\end{Claim}

\begin{proof}
For Assertion~\ref{Cl_ovQ_a} we may assume without loss of generality that $U' = \sum_i c_{ii} \mathfrak p^{(2)}_{ii}$ is in diagonal form.
Then using \eqref{eq_Q2_V1}--\eqref{eq_Q2_Vm1}, we obtain that
\[ Q'(U') =  \tfrac12 \sum_i c_{ii}^2 \mathfrak p^{(0)} + \sum_i c_{ii} \mathfrak p^{(2)}_{ii} -\tfrac12 \sum_{i \neq j} c_{ii} c_{jj} \mathfrak p_{ii}^{(2)} \mathfrak p_{jj}^{(2)} - \tfrac{\sqrt{6}}2 \sum_i c_{ii}^2 \mathfrak p^{(4)}_{iiii}.  \]
So by \eqref{eq_Q3_V0} we have for some $C^*(n-k) \in \IR$
\begin{align*}
 \PP_{\sV_0} \big( Q_3^+(Q'(U')) \big)
&= \sum_i \Big( - \sqrt{2} c_{ii}^2
- \tfrac12 \sum_j c_{jj}^2 c_{ii}
 + \tfrac32 \sum_j c_{jj}^2 c_{ii} + C_1(n-k) c_{ii}^3 \\
&\qquad\qquad + \sum_{j\neq i} c_{jj} c_{jj} c_{ii} 
 - \tfrac{\sqrt{6}}2 C_2 c_{ii}^3 \Big) \mathfrak p^{(2)}_{ii} + O( \Vert U' \Vert^4 ) \\
&= \sum_i \Big( - \sqrt{2} c_{ii}^2
+2 \sum_j c_{jj}^2 c_{ii} + C^*(n-k) c_{ii}^3 \Big) \mathfrak p_{ii}^{(2)} + O( \Vert U' \Vert^4 ) \\
&= \ov Q (U') + O( \Vert U' \Vert^4 ). 
\end{align*}
For Assertion~\ref{Cl_ovQ_b}, we combine this with the bounds \eqref{eq_ODI_J3}, \eqref{eq_UmQp} and \eqref{eq_Uptaum1} to obtain
\begin{align*}
 \Vert \partial_\tau U_0 - \ov Q (U_0) \Vert
&\leq 
\big\Vert \PP_{\sV_0} \big( \partial_\tau U^+ - LU^+ - Q_3^+ (U^+) \big) \big\Vert
+ \big\Vert \PP_{\sV_0} \big( Q_3^+ (U^+) -  Q_3 (Q'(U_0))\big)  \big\Vert \\
&\qquad + \big\Vert \PP_{\sV_0} \big(   Q_3 (Q'(U_0)) - \ov Q(U_0) \big)  \big\Vert \\
&\leq C_0 |\tau|^{-4} + C|\tau|^{-4} + C \Vert U_0 \Vert^4  \leq C|\tau|^{-4}.
\end{align*}
This finishes the proof of the claim.
\end{proof}

Fix a sequence $\tau_i \to \infty$ and consider maximal solutions $\ov U_{i} : [\tau_i, \ov T_i) \to \IR^{k \times k}_{\sym}$ to the ODE $\partial_\tau \ov U_i = \ov Q(\ov U_i)$ with initial condition $\ov U_i (\tau_i) = U_0(\tau_i)$.
Choose $\tau^*_i \in [\tau_i, \ov T_i]$ maximal that we have the operator norm bound $\Vert \ov U_i (\tau) \Vert_{op} \leq \frac{1.2}{\sqrt{2}}|\tau|^{-1}$ for all $\tau \in (\tau_i, \tau^*_i]$.
Due to the second bound in \eqref{eq_Uptaum1} we must have $\tau_i^* > \tau_i$ for sufficiently large $i$.
We have the following bound on $[\tau_i, \tau_i^*]$
\begin{align}
 \partial_\tau \Vert U_0 - \ov U_{i} \Vert_{op}
&\leq  \Vert \ov Q(U_0) - \ov Q(\ov U_i) \Vert_{op} + \Vert \partial_\tau U_0 - \ov Q(U_0) \Vert_{op} \notag \\
&\leq \sqrt{2} \Vert  U_0^2 - \ov U_i^2 \Vert_{op} 
+ \Vert  U_0 * U_0 * U_0 - \ov U_i * \ov U_i * \ov U_i  \Vert_{op} 
 +C |\tau|^{-4} \notag \\
 &\leq \tfrac12 \sqrt{2} \big\Vert (U_0 + \ov U_i ) (U_0 - \ov U_i ) \big\Vert_{op}
 + \tfrac12 \sqrt{2} \big\Vert (U_0 - \ov U_i ) (U_0 + \ov U_i ) \big\Vert_{op} \notag \\
 &\qquad
 + C |\tau|^{-2} \Vert U_0 - \ov U_i \Vert_{op}  +C|\tau|^{-4} \notag \\
 &\leq \sqrt{2} \Vert U_0 -\ov U_i \Vert_{op} \Vert U_0 + \ov U_i \Vert_{op}+ C |\tau|^{-2} \big\Vert U_0 - \ov U_i \big\Vert_{op}  + C|\tau|^{-4} \notag \\
 &\leq \bigg( \sqrt{2} \cdot 2 \cdot  \frac{1.2 }{\sqrt{2}} + C|\tau|^{-1} \bigg) |\tau|^{-1} \Vert U_0 -\ov U_i \Vert_{op}   + C|\tau|^{-4}. \label{eq_dtau_U_0Ui} 
\end{align}
Choose $T = T(n,k, C_0) \geq 10$ such that the term in the parentheses is $\leq 2.3$ for $\tau \in [\tau_i, \min\{ \tau^*_i, -T \}]$, so for such $\tau$ we have
\[ \partial_\tau \big( |\tau|^{2.3} \Vert U_0(\tau) - \ov U_i (\tau) \Vert \big) \leq C |\tau|^{-1.7}. \]
Integrating this starting from $\tau_i$ implies that for all $\tau \in [\tau_i, \min\{ \tau^*_i, -T \}]$
\begin{equation} \label{eq_U0ovUi}
 \Vert U_0 (\tau) - \ov U_i (\tau) \Vert \leq C |\tau|^{-3}. 
\end{equation}

\begin{Claim}
There is a $T' = T'(n,k, C_0) \geq T$ such that $\tau^*_i \geq - T'$ for large $i$.
\end{Claim}

\begin{proof}
Choose $T' >T$ large enough such that whenever $\tau \leq -T'$, then the right-hand side in \eqref{eq_U0ovUi} is $< \frac{0.1}{\sqrt 2} |\tau|^{-1}$.
Suppose that $\tau^*_i \leq T'$.
Then $\Vert \ov U_i (\tau^*_i) \Vert_{op} = \frac{1.2}{\sqrt 2} |\tau^*_i|^{-1}$.
On the other hand, by second bound in \eqref{eq_Uptaum1} we have $\Vert  U_0 (\tau^*_i) \Vert_{op} \leq \frac{1.1}{\sqrt 2} |\tau^*_i|^{-1}$.
So $\Vert U_0 (\tau^*_i) - \ov U_i (\tau^*_i) \Vert_{op} \geq \frac{0.1}{\sqrt 2} |\tau^*_i|^{-1}$, in contradiction to \eqref{eq_U0ovUi}.
\end{proof}

Letting $i \to -\infty$, we obtain convergence $\ov U_i \to \ov U$ such that $\Vert U_0 (\tau) - \ov U (\tau) \Vert_{op} \leq C |\tau|^{-3}$ for $\tau \leq - T'$.

\begin{Claim} \label{Cl_nonneg}
$\ov U$  takes only non-positive definite values and its nullspace is invariant in time.
\end{Claim}

\begin{proof}
We have $\Vert \ov U \Vert \leq \Vert U_0 \Vert + C |\tau|^{-3} \leq C |\tau|^{-1}$ for small enough $\tau$.
Since the ODE \eqref{eq_barU_ODE} preserves a spectral basis, it reduces to an ODE system for the spectral values $\la_i$, which is of the form
\begin{equation} \label{eq_laievol1}
 \partial_\tau \la_i = - \sqrt 2 \la_i^2 + 2 \tr(\ov U^2) \la_i + C^* \la_i^3  = - \sqrt 2 \la_i^2 + O(|\tau|^{-2} ) \la_i. 
\end{equation}
Since non-negativity is preserved under this ODE, for each $i$ we either have $\la_i(\tau) > 0$, $\la_i (\tau) < 0$ or $\la_i(\tau)$ for all $\tau$ .
Suppose by contradiction that the former is true for some $i$.

We first claim that $\liminf_{\tau \to -\infty} |\tau| \la_i(\tau) > c > 0$.
Suppose not.
Then for small enough $\tau$, we have the following bound whenever $|\tau| \la_i(\tau) \leq a \leq 1$
\[ \partial_\tau (|\tau| \la_i) = - \la_i - \sqrt 2 |\tau| \la_i^2 + O(|\tau|^{-2}) |\tau| \la_i < 0. \]
So $|\tau| \la_i(\tau) \leq a$ remains preserved.
Letting $a \to 0$ implies that $\la_i \equiv 0$, which contradicts our assumption.
Therefore, $\la_i(\tau) \geq c |\tau|^{-1}$ for $\tau \ll 0$.
Plugging this back into \eqref{eq_laievol1} implies that for some $C' > 0$ and $\tau \ll 0$
\[ \partial_\tau \la_i^{-1} = \sqrt 2 + O(|\tau|^{-2}) \la_i^{-1} \geq \sqrt 2 - C'|\tau|^{-2} \cdot c^{-1} |\tau| \geq \sqrt 2 - C' c^{-1} |\tau|^{-1} \geq 1, \]
which contradicts the positivity of $\la_i^{-1}$.
\end{proof}

It remains to derive the asymptotic characterization \eqref{eq_Ubar_asymp}.
By Claim~\ref{Cl_nonneg} it suffices to consider the case in which $\ov U$ has trivial nullspace, which allows us to rewrite the ODE as
\begin{equation} \label{eq_dtaubarUm1}
\partial_\tau \ov U^{-1} = \sqrt{2} \, \mathsf I_k - 2 \tr(\ov U^2) \ov U^{-1} - C^* \ov U. 
\end{equation}
So the smallest spectral value $\la_{\min}(\tau)$ of $\ov U(\tau)$ satisfies
\[ \partial_\tau \la_{\min}^{-1} = \sqrt{2}  - 2 \tr(\ov U^2) \la_{\min}^{-1} - C^* \la_{\min} \geq \sqrt{2}  - 2 k  \la_{\min} - C^* \la_{\min}. \]
So for sufficiently small $\tau$ we have $\partial_\tau \la_{\min}^{-1} \geq 1$, which implies that $\ov U (\tau)  = O(|\tau|^{-1})$.
On the other hand, the maximal spectral value satisfies for sufficiently small $\tau$
\[ \partial_\tau \la_{\max}^{-1} = \sqrt{2}  - 2 \tr(\ov U^2) \la_{\max}^{-1} - C^* \la_{\max} \geq \sqrt{2}  - C^* \la_{\max}. \]
Thus all spectral values of $\ov U^{-1}(\tau)$ must go to $-\infty$ linearly as $\tau \to -\infty$.
It follows that, implying
\[ \partial_\tau \ov U^{-1}(\tau) = \sqrt{2}  \, \mathsf I_k + O(|\tau|^{-1}). \]
Integrating this bound implies
\[ \ov U(\tau) = \tfrac1{\sqrt{2}} \tau^{-1} \, \mathsf I_k + O(|\tau|^{-2} \log |\tau|). \]
Plugging this back into \eqref{eq_dtaubarUm1} gives us for some dimensional constant $C^{**}$
\[ \partial_\tau \ov U^{-1}(\tau) = \big(\sqrt{2} + C^{**}\tau^{-1} \big) \mathsf I_k + O( |\tau|^{-2} \log |\tau|) . \]
Integrating this again implies that for some matrix $\mathsf B \in \IR^{k \times k}_{\sym}$
\[ \ov U^{-1}(\tau) = \mathsf B + \big( \sqrt 2 \tau + C^{**} \log (-\tau) \big) \mathsf I_k + O(|\tau|^{-1} \log |\tau|), \]
which implies
\[ U(\tau) = \bigg( \mathsf B + \big( \sqrt 2 \, \tau + C^{**} \log |\tau| \big) \mathsf I_k \bigg)^{-1} + O(|\tau|^{-3} \log |\tau|), \]
and thus \eqref{eq_Ubar_asymp} for $\mathsf A := \exp (\mathsf B)$.
\end{proof}
\bigskip

\subsection{Proof of Lemma~\ref{Lem_ODE_UQ}}

\begin{proof}[Proof of Lemma~\ref{Lem_ODE_UQ}.]
For simplicity, we will express the ODE \eqref{eq_barU_ODE} as $\partial_\tau U = Q(U)$, where $Q(U) := - \sqrt 2 U^2 + 2 \tr (U^2) U + C^* U^3$ and $U : (-\infty, T_U) \to \IR^{k \times k}_{\leq 0}$ is always assumed to be defined on a maximal time-interval.
Note that $C^*$ only depends on $n-k$; define $c := (10 C^*)^{-1}$.

\begin{Claim} \label{Cl_unique_tau_U}
For every $0 \neq U \in \mathfrak U^k$ there is a unique $\tau_U \in (-\infty, T_U)$ such that the smallest spectral value of $U(\tau_U)$ equals $-c$.
Moreover, the smallest spectral value for $U(\tau)$ is $> -c$ for all $\tau < \tau_U$ and $< -c$ for all $\tau > \tau_U$.
\end{Claim}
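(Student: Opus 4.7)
The plan is to reduce the matrix ODE \eqref{eq_barU_ODE} to a system of decoupled scalar ODEs and then study the dynamics of the minimum eigenvalue at the critical level $-c$. The right-hand side $Q(U) = -\sqrt{2}\,U^2 + 2\tr(U^2)\,U + C^* U^3$ is a polynomial in $U$ with a scalar coefficient $\tr(U^2)$, so it commutes with $U$. Consequently the spectral projectors of $U(\tau)$ are independent of $\tau$, and I can write $U(\tau)=\sum_i \lambda_i(\tau) P_i$ with time-independent orthogonal projectors $P_i$. Labeling so that $\lambda_1(\tau)\le\cdots\le\lambda_k(\tau)\le 0$ (hence $\lambda_{\min}=\lambda_1$), each eigenvalue solves the smooth scalar ODE
\begin{equation*}
    \partial_\tau \lambda_i = \lambda_i\bigl(-\sqrt{2}\,\lambda_i + 2\tr(U^2) + C^*\lambda_i^2\bigr),
\end{equation*}
and the problem becomes one of tracking when an individual $\lambda_i$ first reaches $-c$.

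The key computation will be the sign of $\partial_\tau \lambda_i$ at any time $\tau_0$ when $\lambda_i(\tau_0)=-c$ and $\lambda_{\min}(\tau_0)=-c$. At such a moment every eigenvalue lies in $[-c,0]$, hence $\tr(U(\tau_0)^2)\le kc^2$, and the parenthesized factor equals $\sqrt{2}\,c + 2\tr(U^2) + C^*c^2$, which is strictly positive (the leading term $\sqrt{2}\,c$ dominates the $C^*c^2$ correction by the choice $c=(10C^*)^{-1}$). Thus $\partial_\tau \lambda_i(\tau_0)=(-c)\cdot(\text{positive})<0$ strictly, for every eigenvalue that attains the value $-c$ at $\tau_0$.

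From this pointwise strict decrease I will deduce both uniqueness and existence of $\tau_U$. For uniqueness, set $\Psi(\tau):=\lambda_{\min}(\tau)+c$ and suppose for contradiction that $\Psi$ has two zeros $\tau_0<\tau_1$, taken to be consecutive. The strict-decrease statement at $\tau_0$ forces $\Psi<0$ immediately after $\tau_0$, while the analogous statement at $\tau_1$ forces $\Psi>0$ immediately before $\tau_1$; the intermediate value theorem then produces a further zero of $\Psi$ strictly between $\tau_0$ and $\tau_1$, contradicting consecutiveness. For existence, I use that $U\not\equiv 0$ together with the invariance of the nullspace of $U$ established in Claim~\ref{Cl_nonneg}: this supplies an eigenvalue $\lambda_{i_0}$ with $\lambda_{i_0}(\tau)<0$ for every $\tau\in(-\infty,T_U)$. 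So long as $\lambda_{i_0}\in[-c,0)$ all three terms $-\sqrt{2}\lambda_{i_0}^2,\ 2\tr(U^2)\lambda_{i_0},\ C^*\lambda_{i_0}^3$ are nonpositive, giving $\partial_\tau \lambda_{i_0}\le -\sqrt{2}\lambda_{i_0}^2$, or equivalently $\partial_\tau |\lambda_{i_0}|^{-1}\le -\sqrt{2}$. Since $|\lambda_{i_0}|^{-1}$ cannot decrease linearly forever while remaining positive, $\lambda_{i_0}$ must exit $[-c,0)$ in finite time, necessarily through $-c$; combined with $\lambda_{\min}(\tau)\to 0$ as $\tau\to-\infty$ (from $\lim_{\tau\to-\infty}U(\tau)=0$), the intermediate value theorem then supplies the desired crossing time $\tau_U$.

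The main obstacle I anticipate is the possible nonsmoothness of $\lambda_{\min}$ at times where several eigenvalues cross. This will be handled cleanly by working with the individual eigenvalues $\lambda_i$ throughout, each of which is smooth on $(-\infty,T_U)$ precisely because the spectral projectors $P_i$ are constant in time; the strict-decrease statement is then applied simultaneously to every $\lambda_i$ achieving $-c$ at the crossing. Any $\lambda_j(\tau_0)>-c$ at that time automatically stays strictly above $-c$ in a small neighborhood by continuity, so the behavior of $\lambda_{\min}$ near a crossing is controlled purely by the eigenvalues that achieve the minimum, each of which passes transversally from above to below $-c$.
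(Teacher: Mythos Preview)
Your argument is correct and follows the same route as the paper: diagonalize using that $Q(U)$ commutes with $U$, then show the minimum eigenvalue crosses the level $-c$ transversally and hence exactly once. The paper organizes the bookkeeping a bit more economically by working directly with $\lambda_{\min}$ in the barrier sense: from $\partial_\tau\lambda_{\min}\le -(\sqrt2-0.1)\lambda_{\min}^2<0$ whenever $\lambda_{\min}\ge -c$, it concludes that the set $\{\lambda_{\min}\ge -c\}$ is a single interval $(-\infty,\tau_U]$, which sidesteps both your consecutive-zeros argument and the appeal to nullspace invariance from Claim~\ref{Cl_nonneg}. (Incidentally, your concern about nonsmoothness of $\lambda_{\min}$ is unnecessary here: since the scalar ODE for each $\lambda_i$ depends on the others only through $\tr(U^2)$, two eigenvalues that agree at one time agree for all time, so the ordering never changes and $\lambda_{\min}$ is in fact smooth.)

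One small gap to patch in your existence step: the assertion that $|\lambda_{i_0}|^{-1}$ ``cannot decrease linearly forever'' tacitly assumes the solution persists for all forward time. You should observe that if $\lambda_{\min}>-c$ on all of $[\tau^*,T_U)$ then $\|U\|_{op}<c$ is bounded, forcing $T_U=\infty$ by maximality, after which your linear-decrease argument applies; otherwise some eigenvalue (hence $\lambda_{\min}$) already crosses $-c$ before $T_U$.
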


\begin{proof}
Since the ODE for $U$ preserves the spectral decomposition, we may assume without loss of generality that $U(\tau)$ is diagonal with entries $\la_i(\tau)$ for all $i$.
These entries satisfy the ODE
\[ \partial_\tau \la_i = -\sqrt 2 \la_i^2 + 2 \tr\Big( \sum_i \la_i^2 \Big) \la_i + C^* \la_i^3. \]
It follows that $\la_{\min} (\tau) := \min_i \la_i(\tau)$ satisfies the following bound whenever $\la_{\min}(\tau) \geq -c$
\begin{equation} \label{eq_partial_tau_la_min}
 \partial_\tau \la_{\min} \leq - \sqrt 2 \la_{\min}^2  + C^* \la_{\min}^3 \leq - (\sqrt 2 - 0.1 ) \la_{\min}^2 < 0. 
\end{equation}
It follows that $\la_{\min}^{-1}([-c,0))$ is a disjoint union of closed intervals that doesn't have right endpoints, so it must be either empty, all of $(-\infty, T_U)$ or of the form $(-\infty,\la_{\min}^{-1}(\{-c\})]$.
The first case cannot occur since $\la_{\min} (\tau) \to 0$ as $\tau \to -\infty$.
In the second case, we would have $\la_{\min}(\tau) \geq -c$ for all $\tau$ and $T_U = \infty$.
However, \eqref{eq_partial_tau_la_min} implies $\partial_\tau \la_{\min}^{-1} \geq \sqrt 2 - 0.1$, which contradicts this bound.
So the third case must occur, which shows the unique existence.
\end{proof}

Claim~\ref{Cl_unique_tau_U} implies that the map $\mathfrak U^k \to (-\infty,\infty]$, $U \mapsto \tau_U$ is continuous if we set $\tau_{U} := \infty$ for $U \equiv 0$.
If $U \equiv 0$, we define $\Qu_k(U) := 0$ and if $U \not\equiv 0$, then we set
\[ \Qu_k (U) := - c^{-1} e^{-\frac12 \tau_U} U(\tau_U) \in \IR^{k \times k}_{\geq 0}. \]
It is clear that $\Qu_k$ is continuous and that it satisfies the Assertions~\ref{Lem_ODE_UQ_a}--\ref{Lem_ODE_UQ_d}.
Moreover, $\Qu_k$ is invertible; its inverse function can be described as follows: If $0 \neq \Qu' \in \IR^{k \times k}_{\geq 0}$, let $a_{\Qu'} > 0$ be its largest spectral value.
Then $\Qu_k^{-1} (\Qu')$ is the unique solution $\partial_\tau U = Q(U)$ with the initial condition $U (-2\log a_{\Qu'}) = - c a_{\Qu'}^{-1} \Qu'$.
It is clear that $\Qu_k^{-1} : \IR^{k \times k}_{\geq 0} \to \mathfrak U^k$ extends to a continuous function with $\Qu_k^{-1}(0) = 0$.
\end{proof}
\bigskip

\subsection{Proof of Proposition~\ref{Prop_Q_basic_properties}}

We will use the following lemma.

\begin{Lemma} \label{Lem_ODE_sol_same}
Let $\ov U_1, \ov U_2$ be two solutions to the ODE \eqref{eq_barU_ODE} in Assertion~\ref{Prop_dom_qu_asymp_d} of Proposition~\ref{Prop_dom_qu_asymp}.
Suppose that $\ov U_1(\tau), \ov U_2 (\tau) \to 0$ as $\tau \to -\infty$ and that $\Vert \ov U_1(\tau) - \ov U_2(\tau) \Vert \leq C |\tau|^{-3}$ for $\tau \ll 0$.
Then $\ov U_1 = \ov U_2$.
\end{Lemma}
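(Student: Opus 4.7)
The strategy is to linearize the ODE \eqref{eq_barU_ODE} for the difference $W := \ov U_1 - \ov U_2$ and apply Gronwall backward in time, using the asymptotic expansion \eqref{eq_Ubar_asymp} to identify the leading-order coefficient.

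First I would reduce to the case in which $\ov U_1$ and $\ov U_2$ have a common range $V$. Indeed, in view of \eqref{eq_Ubar_asymp}, a direction in the symmetric difference of the nullspaces of the associated matrices $\mathsf A_1, \mathsf A_2$ would produce a difference of order $|\tau|^{-1}$ between the two solutions, contradicting the hypothesis $\|W(\tau)\| \leq C|\tau|^{-3}$. Since the nullspace of $\ov U_i(\tau)$ is time-invariant (as in the spectral analysis behind \eqref{eq_Ubar_asymp}), we may restrict to $V$, on which both solutions satisfy $\ov U_i(\tau) = \tfrac{1}{\sqrt 2\,\tau}\,\mathsf I_V + O(|\tau|^{-2}\log|\tau|)$.

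Expanding the difference using the identities
\begin{equation*}
\ov U_1^2-\ov U_2^2 = \ov U_1 W + W \ov U_2, \qquad \ov U_1^3-\ov U_2^3 = \ov U_1^2 W + \ov U_1 W \ov U_2 + W\ov U_2^2,
\end{equation*}
\begin{equation*}
\tr(\ov U_1^2)\ov U_1 - \tr(\ov U_2^2)\ov U_2 = \tr(\ov U_1^2)\,W + \tr\!\big((\ov U_1+\ov U_2)W\big)\ov U_2,
\end{equation*}
every term other than $-\sqrt 2\,(\ov U_1 W + W\ov U_2)$ contains at least two factors of $\ov U_i$ and is therefore a linear operator on $W$ of size $O(|\tau|^{-2})$. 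Substituting the asymptotic of $\ov U_i$ into the leading quadratic term and absorbing the logarithmic error,
\begin{equation*}
\partial_\tau W \;=\; -\frac{2}{\tau}\,W \;+\; \mathcal L(\tau)[W], \qquad \|\mathcal L(\tau)\|_{op} \leq C|\tau|^{-2}\log|\tau|.
\end{equation*}
The change of variable $g(\tau) := \tau^2\,W(\tau)$ is designed precisely to cancel the resonant term: a direct computation gives $\partial_\tau g = \mathcal L(\tau)[g]$.

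Since $|g(\tau)| = \tau^2|W(\tau)| \leq C|\tau|^{-1}\to 0$ as $\tau\to-\infty$ and $\int_{-\infty}^{\tau_*}\|\mathcal L(s)\|_{op}\,ds<\infty$, Gronwall applied backward gives, for each sufficiently negative $\tau_*$,
\begin{equation*}
|g(\tau_*)| \;\leq\; |g(\tau)|\exp\!\Big(\!\int_\tau^{\tau_*}\!\|\mathcal L(s)\|_{op}\,ds\Big) \xrightarrow[\tau\to-\infty]{} 0,
\end{equation*}
so $W(\tau_*) = 0$ for all $\tau_*$ sufficiently negative, and forward ODE uniqueness then propagates this to all times. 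The main delicate point is the algebraic bookkeeping needed to put the equation into the normal form above: one must track that the natural decay rate of the linearization around $\ov U_i \sim \tfrac{1}{\sqrt 2\,\tau}\mathsf I$ is exactly $|\tau|^{-2}$, while the hypothesis forces a strictly faster rate $|\tau|^{-3}$; it is this mismatch, captured by the $g$-substitution, that rules out all nonzero solutions. A secondary subtlety is the preliminary rank reduction — one must rule out off-diagonal blocks of $W$ between $V$ and $N$ as well, which satisfy the analogous linearized equation $\partial_\tau W_{NV} = -\tfrac{1}{\tau}W_{NV}+O(|\tau|^{-2})W_{NV}$ and can be handled by the parallel change of variable $g := \tau\,W_{NV}$.
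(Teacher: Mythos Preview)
Your argument is correct but more elaborate than the paper's. The paper avoids both the rank reduction and the use of the refined asymptotic \eqref{eq_Ubar_asymp}: it observes directly from the ODE (via $\partial_\tau \la_{\min}^{-1} \geq \sqrt 2 - 0.1$ for $\tau\ll 0$, as in \eqref{eq_partial_tau_la_min}) that $\|\ov U_i(\tau)\|_{op} \leq \tfrac{1.2}{\sqrt 2}|\tau|^{-1}$, and then bounds
\[
\partial_\tau\|W\|_{op} \;\leq\; \sqrt 2\,\|\ov U_1+\ov U_2\|_{op}\,\|W\|_{op} + C|\tau|^{-2}\|W\|_{op} \;\leq\; 2.5\,|\tau|^{-1}\|W\|_{op},
\]
so that $|\tau|^{2.5}\|W\|_{op}$ is nonincreasing and tends to $0$ at $-\infty$ by the hypothesis. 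The point is that one does not need the sharp linearization coefficient $2/|\tau|$ you extract from \eqref{eq_Ubar_asymp}; any coefficient strictly below $3$ suffices given the $|\tau|^{-3}$ hypothesis, and the cruder $2.4$ comes for free from the operator-norm bound on each $\ov U_i$ without ever splitting into range and nullspace. Your route buys a sharper exponent and would in principle yield uniqueness under the weaker hypothesis $\|W\|=o(|\tau|^{-2})$, at the cost of invoking the full asymptotic expansion. Note also that once the nullspaces coincide, the off-diagonal blocks $W_{NV}$ vanish identically (both $\ov U_i$ annihilate $N$ and preserve $V$, hence so does their difference), so your final ``secondary subtlety'' is in fact vacuous.
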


\begin{proof}
By \eqref{eq_partial_tau_la_min} in the proof of Claim~\ref{Cl_unique_tau_U} above, the smallest spectral value of $\ov U_1(\tau)$ or $\ov U_2(\tau)$ satisfies the following differential inequality for $\tau \ll 0$
\[ \partial_\tau \la_{\min}^{-1} \geq \sqrt 2 - 0.1. \]
Hence we have the following bound on the operator norm
\[ \Vert \ov U_i (\tau) \Vert_{op} \leq \frac1{\sqrt{2} - 0.1} |\tau|^{-1} \leq \frac{1.2}{\sqrt 2} |\tau|^{-2}. \]
We can now compute similarly as in \eqref{eq_dtau_U_0Ui} that for $\tau \ll 0$
\begin{align*}
 \partial_\tau \Vert \ov U_1 - \ov U_2 \Vert_{op}
&\leq \sqrt 2 \Vert \ov U_1^2 - \ov U_2^2 \Vert_{op} + C |\tau|^{-2} \Vert \ov U_1 - \ov U_2 \Vert_{op}  \\
&\leq \sqrt 2 \Vert \ov U_1 + \ov U_2 \Vert_{op} \Vert \ov U_1 - \ov U_2 \Vert_{op}  + C |\tau|^{-2} \Vert \ov U_1 - \ov U_2 \Vert_{op} \\
&\leq 2.4 |\tau|^{-1} \Vert \ov U_1 - \ov U_2 \Vert_{op}  + C |\tau|^{-2} \Vert \ov U_1 - \ov U_2 \Vert_{op} \\
&\leq 2.5 |\tau|^{-1} \Vert \ov U_1 - \ov U_2 \Vert_{op} ,
\end{align*}
which implies that for $\tau \ll 0$
\[ \partial_\tau \big( |\tau|^{2.5} \Vert \ov U_1 (\tau) - \ov U_2 (\tau) \Vert_{op} \big) \leq 0. \]
On the other hand, the term within the parentheses goes to $0$ as $\tau \to -\infty$, which implies that $\ov U_1 (\tau) = \ov U_2 (\tau)$ for $\tau \ll 0$, so $\ov U_1  = \ov U_2 $.
\end{proof}
\bigskip

\begin{proof}[Proof of Proposition~\ref{Prop_Q_basic_properties}.]
We first establish the well-definedness of $\Qu(\MM)$ and the first identity in \eqref{eq_Q_identitites} for $\Delta T = 0$, assuming that $\MM$ has dominant quadratic mode.
Set $\MM' := \MM + (\mathbf v, 0)$, where for the purpose of proving well-definedness we may set $\mathbf v = \bO$.
Apply Proposition~\ref{Prop_dom_qu_asymp} to both flows $\MM, \MM'$, possibly with different choices of parameters $(J,m, \eta, \td\tau)$ and $(J',m',\eta', \td\tau')$.
We will decorate the resulting objects for the second flow with a primes and we will show that $\ov U = \ov U'$.

Since the time-slices of the rescaled flows $e^{\tau/2} \MM_{- e^{-\tau}}$ and $e^{\tau/2} \MM'_{-e^{-\tau}}$ differ only by a translation by the vector $e^{\tau/2} \mathbf v$, we can use Lemma~\ref{Lem_up_minus_u} to conclude that for some uniform constant $C' > 0$
\[ \Vert u'_\tau - u_\tau \Vert_{C^{10}(\IB^k_1 \times \IS^{n-k})} \leq C' e^{\tau/2}. \]
So by Proposition~\ref{Prop_dom_qu_asymp}\ref{Prop_dom_qu_asymp_a} we have
\[ \Vert U^+(\tau) - U^{+,\prime}(\tau) \Vert \leq \tfrac1{10} (\td\tau - \tau + 10)^{-10} + \tfrac1{10} (\td\tau' - \tau + 10)^{-10} + C' e^{\tau/2} = O((-\tau)^{-10}). \]
So by \eqref{eq_U0_close_ovU} we obtain that
\[ \Vert \ov U (\tau) - \ov U'(\tau) \Vert \leq O((-\tau)^{-3}), \]
which implies $\ov U = \ov U'$ via Lemma~\ref{Lem_ODE_sol_same}.
This shows the first identity of \eqref{eq_Q_identitites} for $\Delta T = 0$ and establishes well-definedness of $\Qu(\MM)$ if $\MM$ has dominant quadratic mode.
If $\MM$ does not have dominant quadratic mode, then $\Qu(\MM) = 0$ and by switching the roles of $\MM$ and $\MM'$ in the previous argument, we find that $\MM'$ can't have dominant quadratic mode, so $\Qu(\MM') =0 = \Qu(\MM)$.

To finish the proof of the first identity in \eqref{eq_Q_identitites}, it suffices to consider the case $\MM' := \MM + (\mathbf v, \Delta T)$.
It is not hard to see that whenever $-e^{-\tau} + \Delta T = - e^{-\tau'}$, then the functions $u_{\tau}$ and $u_{\tau'}$ only differ by a constant function in the first component whose norm is $\leq C' e^{-\tau}$.
So as in the previous discussion, we obtain that 
\[ \Vert U^+(\tau) - U^{+,\prime}(\tau') \Vert \leq \tfrac1{10} (\td\tau - \tau + 10)^{-10} + \tfrac1{10} (\td\tau' - \tau' + 10)^{-10} + C' e^{\tau} = O((-\tau)^{-10}), \]
which implies
\[  \big\Vert \ov U (\tau) - \ov U'\big({- \log(e^{-\tau}-t_0) }\big) \big\Vert \leq O((-\tau)^{-3}) \]
Since $\ov U'$ is Lipschitz, this implies that
\[ \big\Vert \ov U (\tau) - \ov U'(\tau) \big\Vert \leq O((-\tau)^{-3}) + O(e^{\tau}) = O((-\tau)^{-3}), \]
so again $\ov U = \ov U'$ by Lemma~\ref{Lem_ODE_sol_same}.

The second identity \eqref{eq_Q_identitites} and \eqref{eq_Q_identity_2} follow directly from Lemma~\ref{Lem_ODE_UQ}.
Lastly, if $\Qu(\MM) = 0$, then by Proposition~\ref{Prop_dom_qu_asymp}\ref{Prop_dom_qu_asymp_d} the flow $\MM$ cannot have dominant quadratic mode.
In this case we have exponential convergence by Propositions~\ref{Prop_ab_exist}, which is preserved under translation and parabolic scaling.
\end{proof}
\bigskip

\subsection{Proof of Proposition~\ref{Prop_same_Q_close}}

\begin{proof}[Proof of Proposition~\ref{Prop_same_Q_close}.]
As in the proof of Proposition~\ref{Prop_dom_qu_asymp} we may assume again without loss of generality that $\td\tau = -10$.
Apply Proposition~\ref{Prop_dom_qu_asymp} with $J \leftarrow J+2$.
In the following we will show the bound
\begin{equation} \label{eq_desired_diff_bound}
 \Vert U^{(1),+}(\tau) - U^{(0),+}(\tau) \Vert_{L^2_f} \leq C(J) |\tau|^{-J-1}. 
\end{equation}
Indeed, due to Assertions~\ref{Prop_dom_qu_asymp_a} of this proposition it suffices to prove the same bound for the corresponding functions $U^{(1),+}$ and $U^{(0),+}$ and if \eqref{eq_desired_diff_bound} is true, then by Lemma~\ref{Lem_polynomial_bounds} we have
\begin{multline*}
    \Vert U^{(1),+}(\tau) - U^{(0),+}(\tau) \Vert_{L^2_f} 
\leq C(J,m) R^{C(J,m)}(\tau) \Vert U^{(1),+}(\tau) - U^{(0),+}(\tau) \Vert_{L^2_f} \\
\leq C(J,m) (\log (-\tau))^{C(J,m)} |\tau|^{-J-1} 
\leq C(J,m) |\tau|^{-J}.
\end{multline*}

In the following $C$ will denote a generic uniform constants, which may depend on $J$.
Let $V(\tau) := U^{(1),+}(\tau) - U^{(0),+}(\tau)$ and write $V(\tau) = V_+(\tau) + V_0(\tau) + V_-(\tau) \in \sV_{\rot,> 0} \oplus \sV_{\rot,0} \oplus \sV_{\rot,< 0}$.
By Proposition~\ref{Prop_dom_qu_asymp}\ref{Prop_dom_qu_asymp_c} we have $\Vert U^{(i),+}(\tau)\Vert \leq C |\tau|^{-1}$, so by combining Proposition~\ref{Prop_dom_qu_asymp}\ref{Prop_dom_qu_asymp_b} for both flows we get for some $C_0 = C_0(J) > 0$, which we will henceforth fix,
\begin{align}
 \Vert \partial_\tau V_{\pm} - L V_{\pm} \Vert &\leq C_0|\tau|^{-1} \Vert V \Vert + C_0 |\tau|^{-J-2},  \label{eq_partialtauVpm}\\
  \Vert \partial_\tau V_{0}  \Vert &\leq C_0|\tau|^{-1} \Vert V \Vert + C_0 |\tau|^{-J-2}, \label{eq_partialatauV0} \\
 \Vert \partial_\tau V_0  - \PP_{\sV_{0}} ( Q_2^+ (U^{(1),+}) - Q_2^+ (U^{(0),+}) ) \Vert &\leq C_0|\tau|^{-2} \Vert V \Vert + C_0 |\tau|^{-J-2}.  \label{eq_evol_V0}
\end{align}

\begin{Claim}
There is a time $\tau_0 \in [-\infty, -10]$ such that a bound of the form \eqref{eq_desired_diff_bound} holds on $[\tau_0, -10]$ and for $\tau \leq \tau_0$ we have
\begin{equation} \label{eq_VpV0}
 \Vert V_{\pm} (\tau) \Vert \leq C |\tau|^{-1}   \Vert V_0 (\tau) \Vert + C |\tau|^{-J-1}. 
\end{equation}

\end{Claim}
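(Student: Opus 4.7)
The plan is to prove the claim by a maximality argument followed by a separate analysis of the stable, unstable and neutral modes in the ODE system. Let $C_1 = C_1(J)$ be a constant, to be chosen later, large enough that the trivial bound $\Vert V(-10)\Vert \leq C_1 \cdot 10^{-J-1}$ holds (which is possible since $V$ is bounded at $\tau = -10$). Define
\[
\tau_0 := \inf\bigl\{\tau^* \in [-\infty,-10] \,:\, \Vert V(\tau)\Vert \leq C_1|\tau|^{-J-1} \text{ for all } \tau \in [\tau^*,-10]\bigr\}.
\]
By continuity the infimum is attained as an interval, so the bound $\Vert V(\tau)\Vert \leq C_1|\tau|^{-J-1}$ holds on $[\tau_0,-10]$, which is precisely \eqref{eq_desired_diff_bound} with $C = C_1$. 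If $\tau_0 = -\infty$ the second conclusion of the claim is vacuous, so assume $\tau_0 > -\infty$; then the minimality forces $\Vert V(\tau_0)\Vert = C_1|\tau_0|^{-J-1}$.

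For $\tau \leq \tau_0$ I would decompose $V_\pm$ into its eigencomponents $V_\pm^{(\lambda)}$ and apply Duhamel's formula to \eqref{eq_partialtauVpm}. For the unstable modes ($\lambda \in \{\tfrac12,1\}$) I would integrate \emph{backward} from $\tau_0$, getting
\[
\Vert V_+(\tau)\Vert \leq e^{(\tau-\tau_0)/2} C_1|\tau_0|^{-J-1} + \int_\tau^{\tau_0} e^{(\tau-\tau')/2}\bigl(C_0|\tau'|^{-1}\Vert V(\tau')\Vert + C_0|\tau'|^{-J-2}\bigr)\,d\tau'.
\]
Exponential decay of the kernel beats the polynomial factor $|\tau_0|^{-J-1}$ uniformly (provided $|\tau_0| \geq \ov{R}(J)$, which can be arranged by choosing $\delta$ small in the hypothesis of the proposition), and the $|\tau'|^{-J-2}$ part of the forcing yields a term of order $|\tau|^{-J-2}$. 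For the stable modes ($\lambda \leq -\tfrac12$), backward integration is impossible, so I would instead integrate \emph{forward} from $-\infty$, exploiting the a priori bound $\Vert V_-(\tau')\Vert = O(|\tau'|^{-1})$ from Proposition~\ref{Prop_dom_qu_asymp}\ref{Prop_dom_qu_asymp_c} to kill the boundary contribution at $-\infty$ (since $e^{\lambda(\tau-\tau')}$ grows exponentially as $\tau' \to -\infty$ while $V_-$ decays only polynomially, one must verify that $e^{\lambda(\tau-\tau')}\Vert V_-(\tau')\Vert \to 0$, which follows since $\lambda < 0$). This gives
\[
\Vert V_-(\tau)\Vert \leq C\,|\tau|^{-1}\sup_{\tau'\leq\tau}\Vert V(\tau')\Vert + C|\tau|^{-J-2},
\]
after using the exponential kernel to produce the gain of $|\tau|^{-1}$.

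Putting the two estimates together I obtain a bound of the form $\Vert V_\pm(\tau)\Vert \leq C|\tau|^{-1}\Vert V(\tau)\Vert + C|\tau|^{-J-1}$ (after some care with the supremum in the $V_-$ estimate, which is controlled since $|\tau'|^{-1}$ is monotone). Combining with $\Vert V\Vert \leq \Vert V_0\Vert + \Vert V_+\Vert + \Vert V_-\Vert$ and absorbing $2C|\tau|^{-1}\Vert V_\pm\Vert$ into the left-hand side (valid for $|\tau|$ sufficiently large, i.e. $|\tau_0|$ large, which we again arrange by choosing $\delta$ small) yields $\Vert V(\tau)\Vert \leq 2\Vert V_0(\tau)\Vert + O(|\tau|^{-J-1})$, and feeding this back into the $V_\pm$ estimate gives \eqref{eq_VpV0}.

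The main obstacle is the treatment of the stable mode $V_-$: because backward evolution of stable modes is ill-posed, the argument cannot use the smallness at $\tau_0$ in the same way as for $V_+$, and must instead justify a forward Duhamel formula from $-\infty$. The delicate step is to propagate the supremum appearing in the bound through to a clean pointwise estimate involving $\Vert V_0\Vert$ rather than $\Vert V\Vert$. I anticipate that the cleanest route is to first establish the weaker inequality with $\Vert V\Vert$ on the right-hand side and then perform the absorption/bootstrap described above; alternatively one can argue componentwise and iterate, exploiting monotonicity of $|\tau|^{-1}$ together with the a priori polynomial bound $\Vert V(\tau)\Vert \leq C|\tau|^{-1}$ to handle the supremum.
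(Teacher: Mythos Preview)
Your approach via Duhamel's formula is genuinely different from the paper's, which instead runs a Merle--Zaag style dominance argument: the paper sets
\[
f_1(\tau) = \Vert V_-(\tau)\Vert, \qquad f_2(\tau) = \tfrac{C_0}{c}|\tau|^{-1}\Vert V_0(\tau)\Vert + |\tau|^{-J-1}, \qquad f_3(\tau) = \Vert V_+(\tau)\Vert,
\]
and shows directly from \eqref{eq_partialtauVpm}, \eqref{eq_partialatauV0} that on the set $I_i$ where $f_i$ is maximal one has $f_1' \leq -5c f_1$, $|f_2'| \leq 2c f_2$, $f_3' \geq 5c f_3$. This forces $I_1, I_2, I_3$ to be consecutive intervals; $I_1$ is then empty (otherwise $f_1$ would blow up backward), and the claim follows with $\tau_0 = \sup I_2$: on $I_2$ the inequality $f_1, f_3 \leq f_2$ is exactly \eqref{eq_VpV0}, while on $I_3$ the exponential growth of $f_3$ gives \eqref{eq_desired_diff_bound}.

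Your proposal has a real gap at the step where you pass from the Duhamel integral to the claimed pointwise bound $\Vert V_\pm(\tau)\Vert \leq C|\tau|^{-1}\Vert V(\tau)\Vert + C|\tau|^{-J-1}$. The Duhamel formula for $V_+$ produces an integral $\int_\tau^{\tau_0} e^{\lambda(\tau-\tau')}|\tau'|^{-1}\Vert V(\tau')\Vert\,d\tau'$, and the contribution near $\tau' \approx \tau$ involves $\Vert V(\tau')\Vert$ for $\tau' \in [\tau, \tau/2]$, not $\Vert V(\tau)\Vert$ alone; you cannot collapse this to a pointwise quantity without knowing something about how $\Vert V\Vert$ varies on that interval. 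Your proposed fix via monotonicity of $|\tau'|^{-1}$ helps only for the $V_-$ integral (where $\tau' \leq \tau$, so $|\tau'|^{-1} \leq |\tau|^{-1}$), but even there you are left with $\sup_{\tau' \leq \tau}\Vert V(\tau')\Vert$ rather than $\Vert V(\tau)\Vert$. The absorption step you describe --- subtracting $2C|\tau|^{-1}\Vert V_\pm\Vert$ from both sides --- requires the bound to be genuinely pointwise, and a sup on the right-hand side blocks it. A Duhamel route can probably be made to work, but it would need a careful continuity/bootstrap argument (e.g.\ a barrier on $|\tau|(\Vert V_+\Vert + \Vert V_-\Vert)/(\Vert V_0\Vert + |\tau|^{-J})$), which is essentially the Merle--Zaag argument in integral form and considerably more involved than what you wrote. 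The paper's differential-inequality approach sidesteps this entirely by working pointwise from the start.
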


\begin{proof}
Let $c > 0$ such that the only eigenvalue of $L$ contained in $(-10c,10c)$ is $0$ and consider the functions
\[ f_1 (\tau) := \Vert V_- (\tau) \Vert, \qquad f_2(\tau) :=  \tfrac{C_0}{c} |\tau|^{-1}   \Vert V_0 (\tau) \Vert +  |\tau|^{-J-1}, \qquad
f_3(\tau) := \Vert V_+(\tau) \Vert.   \]
Note that
\[ \Vert V (\tau) \Vert \leq f_1(\tau) + \tfrac{c}{C_0} |\tau| f_2(\tau) + f_3(\tau). \]
Define $I_i \subset (-\infty,-10]$ to be the subset of times at which $\max \{ f_1, f_2, f_3 \} = f_i$.
Then for $\tau \in I_1$ with $\tau \leq -C(C_0, J)$ we obtain from \eqref{eq_partialtauVpm}
\begin{multline} \label{eq_f1p}
 f'_1 (\tau) \leq -10 c \Vert V_-(\tau) \Vert + C_0 |\tau|^{-1} \Vert V(\tau) \Vert + C_0|\tau|^{-J-2} \\
 \leq -10 c f_1(\tau) + C_0 |\tau|^{-1} \big( 2 + \tfrac{c}{C_0} |\tau| \big) f_1(\tau) + cf_1(\tau) 
 \leq - 5c f_1(\tau).
 \end{multline}
Similarly, for $\tau \in I_3$ with  $\tau \leq -C(C_0, J)$
\[  f'_3(\tau) \geq 5c f_3(\tau). \]
We also obtain that if $\tau \in I_2$ with $\tau \leq -C(C_0, J)$, then by \eqref{eq_partialatauV0}
\begin{align*}
 |f'_2(\tau)| &\leq \tfrac{C_0}{c} |\tau|^{-2}   \Vert V_0 (\tau) \Vert + \tfrac{C_0}{c} |\tau|^{-1} \cdot C_0 |\tau|^{-1} \Vert V(\tau) \Vert + \tfrac{C_0}{c} |\tau|^{-1} \cdot |\tau|^{-J-2} + (J+1) |\tau|^{-J-2} \\
 &\leq |\tau|^{-1} f_2(\tau) + \tfrac{C_0^2}{c} |\tau|^{-2}  \big( 2 + \tfrac{c}{C_0}|\tau| \big) f_2(\tau) + \tfrac{C_0}{c} |\tau|^{-1} f_2(\tau) + (J+1)|\tau|^{-1} f_2(\tau) 
 \leq 2cf_2(\tau).
\end{align*}
So there is a time $T = T(C_0, J) \geq 10$ such that whenever $\tau \in I_i \cap I_j \cap (-\infty,-T]$ with $i < j$, then then $f'_i (\tau) < f'_j(\tau)$.
It follows that $I_1 \cap (-\infty, -T], I_2\cap (-\infty, -T], I_3\cap (-\infty, -T]$ decompose $(-\infty,-T]$ into (possibly empty) consecutive sub-intervals in this order.
So if $I_1 \cap (-\infty,-T]$ is non-empty, then it is of the form $(-\infty, -\tau_1]$.
However, by \eqref{eq_f1p} the function $f_1$ decreases exponentially on this interval, which contradicts the fact that it is uniformly bounded and non-zero.
So $I_1 \cap (-\infty,-T] = \emptyset$ and we must have $I_2 \cap (-\infty, -T] = (-\infty, \tau_0]$ and $I_3 \cap (-\infty, -T] = [\tau_0, -T]$ for some $\tau_0 \in [-\infty, -T]$.
The bound \eqref{eq_VpV0} holds on $(-\infty, -\tau_0]$ by definition.
The bound \eqref{eq_desired_diff_bound} holds on $[-T,-10]$ for a suitable $C(J)$, because the left-hand side is bounded from above by a uniform constant and $T$ only depends on $J$.
For $\tau \in [\tau_0, -T]$ the bound \eqref{eq_desired_diff_bound} implies $$f_3(\tau) \leq e^{-5c(-T-\tau)} f_3(-T) \leq C(J) e^{-5c |\tau|} \leq C(J) |\tau|^{-J},$$ which finishes the proof of the claim.
\end{proof}

So it remains to show \eqref{eq_desired_diff_bound} on $(-\infty, \tau_0]$.
Due to \eqref{eq_VpV0} it suffices to show that $\Vert V_0 \Vert \leq C(J) |\tau|^{-J}$.
To do this let us now consider the evolution inequality \eqref{eq_evol_V0} and write $\PP_{\sV_{\rot,0}} ( Q_2^+ (U')) = Q(U', U')$ as a symmetric bilinear form $Q : \sV_{\rot, \geq -J-2} \times \sV_{\rot, \geq -J-2} \to \sV_{\rot, 0}$, so
\begin{equation} \label{eq_Pv0Qp2}
  \PP_{\sV_{0}} ( Q_2^+ (U^{(1),+}) - Q_2^+ (U^{(0),+}) ) =   Q( U^{(0),+}, V) + Q( U^{(1),+}, V).  
\end{equation}
By Proposition~\ref{Prop_dom_qu_asymp}\ref{Prop_dom_qu_asymp_c} and \eqref{eq_VpV0} we have for $i = 0,1$
\begin{multline} \label{eq_UpIk}
 \big\Vert Q( U^{(i),+}, V) - Q( U^{(i)}_0, V_0)  \big\Vert
 \leq \big\Vert Q( U^{(i),+}, V) - Q( U^{(i)}_0, V)  \big\Vert + \big\Vert Q( U^{(i)}_0, V) - Q( U^{(i)}_0, V_0)  \big\Vert  \\
\leq 
C |\tau|^{-2} \Vert V \Vert + C|\tau|^{-1} \Vert V - V_0 \Vert \leq C |\tau|^{-2} \Vert V_0 \Vert + C |\tau|^{-J-2}. 
\end{multline}
Combining this bound with \eqref{eq_evol_V0} and \eqref{eq_Pv0Qp2} implies that on $(-\infty, \tau_0]$
\[ \Vert \partial_\tau V_0  - Q( U^{(0)}_0, V_0) - Q( U^{(1)}_0, V_0) \Vert \leq C|\tau|^{-2} \Vert V_0 \Vert +C |\tau|^{-J-2}.  \]
Using the fact that $Q(U', U'') = -\sqrt{2} U' U''$ from Lemma~\ref{Lem_Q2} and the bound from Proposition~\ref{Prop_dom_qu_asymp}\ref{Prop_dom_qu_asymp_c} for $\eta = \frac1{10}$, we obtain using the operator norm $\Vert \cdot \Vert_{op}$
\[ \partial_\tau \Vert V_0 \Vert_{op} \leq 2 \cdot \sqrt 2 \cdot \frac{1.1}{\sqrt{2}} |\tau|^{-1} \Vert V_0 \Vert_{op} + C |\tau|^{-2} \Vert V_0 \Vert_{op} + C|\tau|^{-J-2}. \]
It follows that there is a constant $T = T(J)$ such that on $(-\infty, \min \{ \tau_0, -T \} ]$
\[ \partial_\tau \Vert V_0 \Vert_{op} \leq 2.3 |\tau|^{-1} \Vert V_0 \Vert_{op} +  C |\tau|^{-J-2}. \]
So
\[ \partial_\tau \big( |\tau|^{2.3} \Vert V_0 \Vert_{op} \big) \leq C |\tau|^{-J-2+2.3}. \]
Since $\Qu(\MM^{(0)}) = \Qu(\MM^{(1)})$, the term in the parantheses must go to zero as $\tau \to -\infty$, which implies that on $(-\infty, \min \{ \tau_0, -T \} ]$
\[ \Vert V_0(\tau) \Vert_{op} \leq C |\tau|^{-J-1}. \]
This proves the desired bound \eqref{eq_desired_diff_bound} for $\tau \leq -T$.
On the other hand, since $T$ only depends on $J$, the bound for $\tau \geq -T$ holds trivially for a sufficiently large choice of $C(J)$.
\end{proof}
\bigskip

\subsection{Proof of Proposition~\ref{Prop_Q_continuous}}

\begin{proof}[Proof of Proposition~\ref{Prop_Q_continuous}.]
In all three assertions, we may pass to subsequences without loss of generality.
In fact, this is already part of the statements of Assertions~\ref{Prop_Q_continuous_b}, \ref{Prop_Q_continuous_c}.
For Assertion~\ref{Prop_Q_continuous_a} recall that it suffices to establish subsequential convergence for every given subsequence.

So in all assertions, may assume that we have convergence $\MM^{(i)} \to \MM^{(\infty)}$ in the Brakke sense, possibly after passing to a subsequence.
By the same argument, we may also assume that all flows $\MM^{(i)}$, $i < \infty$, have either dominant quadratic mode or none do so.
In the second case the assumptions follow using Theorem~\ref{Thm_bowl_unique} and Proposition~\ref{Prop_ab_exist}.
So assume that all flows $\MM^{(i)}$, $i < \infty$, have  dominant quadratic mode.
\medskip

Let us first prove Assertion~\ref{Prop_Q_continuous_a}, so assume that $\MM^{(\infty)}$ is also asymptotically $(n,k)$-cylindrical.
Then Proposition~\ref{Prop_dom_qu_asymp} can be applied for $J=m=10$ and $\eta = \frac1{10}$ and for a uniform choice of $\td\tau$, as long as $i$ is large enough.
Let $U^{+, (i)} : (-\infty, \td\tau) \to \sV_{\rot,\geq -10}$ be the corresponding functions and  and $\ov U^{(i)}$ the corresponding ODE solutions from Proposition~\ref{Prop_dom_qu_asymp}\ref{Prop_dom_qu_asymp_d}.
Combining the bounds of Assertions~\ref{Prop_dom_qu_asymp_c}, \ref{Prop_dom_qu_asymp_d} from Proposition~\ref{Prop_dom_qu_asymp} implies uniform bounds on $\ov U^{(i)} (\td\tau)$.
So after passing to another subsequence, we may assume that $\ov U^{(i)}$ converges to another solution $\ov U^{\prime, (\infty)}$ of the ODE \eqref{eq_barU_ODE}.

Suppose first that $\MM^{(\infty)}$ also has dominant quadratic mode and repeat the previous construction for $i = \infty$.
Taking \eqref{eq_utauUptau_qu} to the limit implies that for any fixed $\tau \leq \td\tau$ we have for large $i$
\[ \Vert U^{+,(i)}(\tau) - U^{+,(\infty)}(\tau) \Vert \leq 2\eta(\td\tau - \tau + 10)^{-J-1} . \]
So by combining this with \eqref{eq_U0_close_ovU} yields for large $i$
\[ \big\Vert \ov U^{(i)}(\tau) - \ov U^{(\infty)}(\tau) \big\Vert \leq 2C(\td\tau - \tau + 10)^{-3} + 2\eta(\td\tau - \tau + 10)^{-J-1} \leq 3C (\td\tau - \tau + 10)^{-3} . \]
Taking this to the limit implies that for all $\tau \leq \td\tau$
\[  \big\Vert \ov U^{(\infty),\prime}(\tau) - \ov U^{(\infty)}(\tau) \big\Vert \leq 3C (\td\tau - \tau + 10)^{-3} . \]
So by Lemma~\ref{Lem_ODE_sol_same} we have  $\ov U^{(\infty),\prime} = \ov U^{(\infty)}$, which implies that $\ov U^{(i)} \to \ov U^{(\infty)}$ and therefore $\Qu(\MM^{(i)}) \to \Qu(\MM^{(\infty)})$.

Next suppose that $\MM^{(\infty)}$ does \emph{not} have dominant quadratic mode and we need to show that $\Qu(\MM^{(i)}) \to 0$, which is equivalent to $\ov U^{(\infty),\prime} \equiv 0$.
In this case, we can apply Proposition~\ref{Prop_PO_ancient} to deduce exponential convergence of the rescaled flow to a cylinder as $\tau \to -\infty$.
(If $n'=0$, then we can also apply Theorem~\ref{Thm_bowl_unique} and use basic knowledge of the evolution of both model metrics.)
This implies that for every fixed $\tau \leq \td\tau$ the  functions $u^{(i)}_\tau$ from Proposition~\ref{Prop_dom_qu_asymp} satisfy the following bound for for large $i$:
\[ \big\Vert u_\tau^{(i)} \big\Vert_{C^m(\IB_1^k \times \IS^{n-k})} \leq C' e^{\tau/2}. \]
Here $C' > 0$ is a constant that is independent of $\tau$.
Arguing as before, we obtain that for any fixed $\tau \leq \td\tau$ we have the following bound for large $i$
\[ \Vert U^{+,(i)}(\tau)  \Vert \leq 2\eta(\td\tau - \tau + 10)^{-J-1} + C' e^{\tau/2} , \]
which implies
\[ \big\Vert \ov U^{(i)} (\tau) \big\Vert \leq 3C(\td\tau- \tau + 10)^{-3} + C' e^{\tau/2} . \]
Taking $i \to \infty$ implies 
\[  \big\Vert \ov U^{(\infty),\prime}(\tau)  \big\Vert \leq 3C (\td\tau - \tau + 10)^{-3} + C' e^{\tau/2} , \]
so $\ov U^{(\infty),\prime} \equiv 0$ by Lemma~\ref{Lem_ODE_sol_same}.
This concludes the proof of Assertion~\ref{Prop_Q_continuous_a}.
\medskip

To prove Assertion~\ref{Prop_Q_continuous_b}, fix a constant $\delta_0 > 0$, whose value we will choose later and recall that $\MM^{(i)} \to \MM^{(\infty)}$ in the Brakke sense.
For each $i$ choose $\la_i > 0$ maximal with the property that $\la \MM^{(i)}$ is $\delta_0$-close to $M_{\cyl}$ at time $-1$ for all $\la \in (0, \la_i)$.
After passing to a subsequence, we may assume that $\la_i \to \la_\infty \in [0, \infty]$.
If $\la_\infty > 0$, then for all $\la \leq \la_\infty$ the flow $\la \MM^{(\infty)}$ is $2\delta_0$-close to $M_{\cyl}$ at time $-1$.
So if we choose $\delta_0$ smaller than some dimensional constant, then by Corollary~\ref{Cor_unique_tangent_infinity} there is an $S \in O(n+1)$ such that $S\MM^{(\infty)}$ is asymptotically cylindrical.
So if $\delta' > 0$ is a constant, whose value we will determine later, then $S\MM^{(\infty)}$ is $\delta'$-close to $M_{\cyl}$ at scale $\sqrt{-t'_{\delta'}}$ at some time $t'_{\delta'} < 0$ and hence $S\MM^{(i)}$ is $2\delta'$close to $M_{\cyl}$ at scale $\sqrt{-t'_{\delta'}}$ at time $t'_{\delta'}$ for large $i$.
But $\MM^{(i)}$ is $\delta'$-close to $M_{\cyl}$ at scale $\sqrt{-t''_{\delta'}}$ at some time $t''_{\delta'} < t'_{\delta'}$.
So if $\delta'$ is small enough, then we obtain a contradiction using Theorem~\ref{Thm_stability_necks} unless $S M_{\cyl}=M_{\cyl}$, which implies that $\MM^{(\infty)}$ is asymptotically cylindrical.

Assume now that $\la_\infty = 0$.
After passing to a subsequence, we may assume that $\la_i \MM^{(i)} \to \MM^{(\infty), \prime}$ in the Brakke sense.
By the same argument as before, we obtain that for all $\la \leq 1$ the flow $\la \MM^{(\infty), \prime}$ is $2\delta_0$-close to $M_{\cyl}$ at time $-1$ and hence $\MM^{(\infty), \prime}$ is asymptotically cylindrical.
Assertion~\ref{Prop_Q_continuous_a} and Proposition~\ref{Prop_Q_basic_properties} imply that
\[ \Qu(\MM^{(\infty), \prime}) = \lim_{i \to \infty} \Qu (\la_i \MM^{(i)}) = \lim_{i \to \infty} \la_i \Qu(\MM^{(i)}) = 0. \] 
So by Theorem~\ref{Thm_bowl_unique}, the flow $\MM^{(\infty), \prime}$ is homothetic to $\MM_{\cyl}$ or $\MM_{\bowl} \times \IR^{k-1}$.
If $\MM^{(\infty), \prime}$ is regular near $(\bO, 0)$, then $\MM^{(\infty)}$ is either empty or equal to the tangent space of $\MM^{(\infty), \prime}$ at this point.
Suppose now that $\MM^{(\infty), \prime}$ is not regular near $(\bO, 0)$, so it must be equal to $\MM_{\cyl}$.
By construction we can find $\la'_i \to 1$ such that $\la'_i \la_i \MM^{(i)}$ is \emph{not} $\delta_0$-close to $M_{\cyl}$ at time $-1$.
However, this contradicts the local smooth convergence $\la_i \MM^{(i)} \to \MM_{\cyl} = \MM^{(\infty), \prime}$.
This concludes the proof of Assertion~\ref{Prop_Q_continuous_b}.
\medskip

Assertion~\ref{Prop_Q_continuous_c} is a direct consequence of Assertions~\ref{Prop_Q_continuous_a} and \ref{Prop_Q_continuous_b} and Theorem~\ref{Thm_bowl_unique}.
\end{proof}
\bigskip

\subsection{Proof of Theorem~\ref{Thm_existence_oval}}

\begin{proof}[Proof of Theorem~\ref{Thm_existence_oval}.]
We fix a Brakke distance $d_{Brakke}$ between convex, rotationally symmetric and $\IZ_2$-symmetric flows defined on time-intervals of the form $[-T,0)$.
The precise choice of this distance will be irrelevant; it will only be important that for any sequence of such mean curvature flows $\MM_i$, defined over time-intervals $[-T_i, 0)$ or $(-\infty,0)$, and a flow $\MM_\infty$, defined over $(-\infty, 0)$ the statement $d_{Brakke}(\MM_i,\MM_\infty) \to 0$ is equivalent to $T_i \to \infty$ and $\MM_i \to \MM_\infty$ in the Brakke sense.

Let $\mathfrak M$ be the space of all rotationally symmetric, $\IZ_2$-symmetric, convex, asymptotically cylindrical flows that go extinct at time $0$ and consider the map $\Qu : \mathfrak M \to \IR^{k \times k}$, whose image only consists of diagonal maps due to the $\IZ_2$-symmetry (see Proposition~\ref{Prop_Q_basic_properties}).
So we may view this map as a map of the form
\[ \Qu : \mathfrak M \to \IR^{k}_{\geq 0} \]
and we need to show that this map is surjective.

\begin{Claim} \label{Cl_Q_cont_proper}
$\Qu$ is a continuous proper map.
\end{Claim}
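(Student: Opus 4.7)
The plan is to derive both continuity and properness directly from Proposition~\ref{Prop_Q_continuous}, with the main effort going into excluding the degenerate limits allowed by part~\ref{Prop_Q_continuous_b} and verifying that an asymptotically cylindrical Brakke limit still lies in $\mathfrak{M}$. For continuity, suppose $\MM_i \to \MM_\infty$ in $d_{Brakke}$ with all flows in $\mathfrak{M}$. By the specification of $d_{Brakke}$ this is equivalent to Brakke convergence on every compact subinterval of $(-\infty,0)$, and since both $\MM_i$ and $\MM_\infty$ are asymptotically $(n,k)$-cylindrical, Proposition~\ref{Prop_Q_continuous}\ref{Prop_Q_continuous_a} applies directly to give $\Qu(\MM_i) \to \Qu(\MM_\infty)$.

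For properness, I will take a sequence $\MM_i \in \Qu^{-1}(K)$ for a compact set $K \subset \IR^{k}_{\geq 0}$. The uniform bound on $\Vert \Qu(\MM_i)\Vert$ lets me invoke Proposition~\ref{Prop_Q_continuous}\ref{Prop_Q_continuous_b} and pass to a subsequence with Brakke limit $\MM_\infty$ that is either asymptotically $(n,k)$-cylindrical, an affine plane, or empty. The two degenerate options are then excluded using the monotonicity formula at the origin. By the combined convexity, rotational symmetry, and $\IZ_2$-symmetry of each $\MM_i \in \mathfrak{M}$, the origin $\bO \in \IR^{n+1}$ lies in the extinction set at time $0$ (this holds even when $\Qu(\MM_i)$ has nontrivial nullspace and the flow splits off an $\IR^{l_i}$-factor). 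The tangent flow at $(\bO,0)$ is then a shrinking sphere $\IS^{n-l_i}$ times $\IR^{l_i}$ with $l_i := \dim \ker \Qu(\MM_i) \leq k$, so its Gaussian density satisfies
\[
\Theta_{\IS^{n-l_i}} \geq \Theta_{\IS^n} > 1.
\]
Huisken monotonicity gives $\Theta^{\MM_i}_{(\bO,0)}(s) \geq \Theta_{\IS^n}$ for all $s>0$, and continuity of the Gaussian density under Brakke convergence (guaranteed by uniform area ratio bounds coming from Lemma~\ref{Lem_cyl_properties}) transfers this to $\Theta^{\MM_\infty}_{(\bO,0)}(s) > 1$, excluding both the empty flow and any affine plane. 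Hence $\MM_\infty$ is asymptotically $(n,k)$-cylindrical.

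To conclude $\MM_\infty \in \mathfrak{M}$, I will verify the remaining defining properties. Rotational symmetry, $\IZ_2$-symmetry, and the vanishing of the measures for $t>0$ all pass to Brakke limits, and the density bound at $(\bO,0)$ forces $(\spt \MM_\infty)_t \neq \emptyset$ for every $t<0$, so extinction occurs exactly at time $0$. Finally, the closed asymptotic cylindrical behavior together with $\Qu(\MM_\infty) = \lim \Qu(\MM_i) \in K$ shows $\MM_\infty \in \Qu^{-1}(K)$, giving convergence in $d_{Brakke}$ and hence compactness.

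The main technical obstacle will be showing that the ``boundary of a convex body'' structure genuinely persists in the Brakke limit near the singular extinction point $(\bO,0)$, since Brakke convergence alone only controls measures and could in principle introduce higher multiplicity. I expect this to be handled by invoking unit-regularity of the limit (inherited from the $\MM_i$) together with Brakke's regularity theorem applied to a tangent flow of $\MM_\infty$ at $(\bO,0)$: the multiplicity-one spherical tangent flow forces local smooth convergence away from a small singular set, which is sufficient to propagate the convex body structure from the sequence to the limit. A secondary minor issue is verifying that the $d_{Brakke}$ convergence implied by Brakke convergence plus the bound $\Qu(\MM_i) \to \Qu(\MM_\infty)$ follows from the chosen definition of $d_{Brakke}$, but this is essentially built into the framework.
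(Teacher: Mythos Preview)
Your proposal is correct and follows the same overall scaffold as the paper---continuity from Proposition~\ref{Prop_Q_continuous}\ref{Prop_Q_continuous_a}, properness from Proposition~\ref{Prop_Q_continuous}\ref{Prop_Q_continuous_b}---but you take a heavier route to exclude the affine plane and empty cases. The paper dispatches both in one line: since every $\MM_i$ goes extinct exactly at time $0$, the Brakke limit does too, and neither an affine plane (which is eternal) nor the empty flow (which has no extinction time) satisfies this. Your Gaussian density argument is a legitimate alternative, but your identification of the tangent flow at $(\bO,0)$ as $\IS^{n-l_i}\times\IR^{l_i}$ with $l_i=\dim\ker\Qu(\MM_i)$ is overstated: this splitting structure is asserted in Theorem~\ref{Thm_existence_oval} only for the specific flows constructed there, not for arbitrary elements of $\mathfrak M$. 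What you actually need is merely that $(\bO,0)$ is singular (immediate from extinction) and hence has density uniformly bounded above $1$ by Brakke regularity; the precise cylinder is irrelevant.

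Your careful verification that $\MM_\infty\in\mathfrak M$---symmetries, extinction time, convexity---is more explicit than the paper, which compresses this to ``so $\MM_\infty\in\mathfrak M$''. Your concern about convexity persisting through Brakke convergence is reasonable but standard, and in fact the metric $d_{Brakke}$ is, by its definition in the surrounding proof, already restricted to convex, rotationally symmetric, $\IZ_2$-symmetric flows, so this is effectively built into the setup.
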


\begin{proof}
The continuity follows from Proposition~\ref{Prop_Q_continuous}.
For the properness, we use Assertion~\ref{Prop_Q_continuous_b} of that proposition: if $\Qu(\MM_i)$ is uniformly bounded, then we have subsequential convergence $\MM_i \to \MM_\infty$, which may be asymptotically cylindrical, empty or an affine plane.
Since all $\MM_i$ go extinct exactly at time $0$, the same is true for the limit, which rules out the last two cases; so $\MM_\infty \in \mathfrak M$.
\end{proof}

Fix $\Qu' = (Q'_1, \ldots, Q'_k) \in \IR^k_{\geq 0}$ for the remainder of the proof.
We will show that $\Qu' \in \Qu(\MM)$.
By induction on $k$, we may assume that $Q'_1, \ldots, Q'_k > 0$.

Let $\delta \in (0,1)$ be a constant whose value we will determine later and let $\Delta^k_\delta := \{ x_i \geq 0, x_1+\ldots + x_k \leq \delta\} \subset \IR^k$ be a simplex of size $\delta$.
For any $\mathbf b = (\mathbf b_1, \ldots, \mathbf b_k) \in \Delta^k_\delta$ consider the maximal mean curvature flow $\MM^{\mathbf b}$ on $[-T_{\mathbf b}, 0)$ starting from the ellipsoid
\[ \MM^{\mathbf b}_{-T_{\mathbf b}} =  \big\{ (\mathbf b_1 x_1)^2 + \ldots + (\mathbf b_k x_k)^2  +  x_{k+1}^2 + \ldots +  x_{n+1}^2 = 1 \big\}, \] 
where $T_{\mathbf b}$ is chosen such that the extinction time is $0$.
Note that $T_{\mathbf b}$ and $\MM^{ \mathbf b}$ depend continuously on the parameter $\mathbf b$.

\begin{Claim} \label{Cl_may_take_limit}
There are constants $\ov\eps, \delta  > 0$ with the following property.
Suppose that $a_i \to \infty$, $\mathbf b_i \in \Delta^k_\delta$ and $d_{Brakke}(a_i \MM^{ \mathbf b_i},\MM'_i) \leq \ov\eps$ for some sequence $\MM'_i \in \mathfrak M$ with $|\Qu(\MM'_i)| \leq |\Qu'|$.
Then after passing to a subsequence, we have $a_i \MM^{\mathbf b_i} \to \MM_\infty \in \mathfrak M$ in the Brakke sense and $\mathbf b_i \to \bO$.
\end{Claim}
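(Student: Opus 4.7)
The plan is to extract subsequential Brakke limits of both families, identify $\MM_\infty$ via a tangent-flow analysis of the compact ellipsoidal flows, and use a Brakke-distance gap coming from properness of $\Qu$ to rule out $\mathbf{b}_i \not\to \bO$.

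First, by Claim~\ref{Cl_Q_cont_proper} the map $\Qu$ is proper, so the bound $|\Qu(\MM'_i)|\le|\Qu'|$ allows extraction of a subsequence along which $\MM'_i\to\MM'_\infty$ in $d_{Brakke}$ with $\MM'_\infty\in\mathfrak{M}$. By Lemma~\ref{Lem_cyl_properties}\ref{Lem_cyl_properties_b}, $\MM'_\infty$ has uniformly bounded area ratios at all scales, and the hypothesis $d_{Brakke}(a_i\MM^{\mathbf{b}_i},\MM'_i)\le\ov\eps$ transports these bounds to $a_i\MM^{\mathbf{b}_i}$ over every compact spacetime region (for $\ov\eps$ sufficiently small). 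Standard Brakke compactness then yields, along a further subsequence, $a_i\MM^{\mathbf{b}_i}\to\MM_\infty$ in $d_{Brakke}$, and the triangle inequality gives $d_{Brakke}(\MM_\infty,\MM'_\infty)\le\ov\eps$. The limit $\MM_\infty$ inherits convexity, rotational symmetry about $\IR^k\times\bO^{n-k+1}$, and the reflection symmetries from the approximants, these being closed under Brakke convergence.

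Second, I would prove $\mathbf{b}_i\to\bO$ by contradiction. Suppose along a subsequence $\mathbf{b}_i\to\mathbf{b}^*\ne\bO$, and let $l\in\{0,\ldots,k-1\}$ be the number of zero entries of $\mathbf{b}^*$. Then the flow $\MM^{\mathbf{b}^*}$ is well defined and splits as $\IR^l\times\td\MM$, where $\td\MM$ is a compact convex rotationally symmetric MCF in $\IR^{n+1-l}$ extincting at $(\bO,0)$. By Huisken's theorem $\td\MM$ shrinks to a round point, so the unique tangent flow of $\MM^{\mathbf{b}^*}$ at its spacetime extinction point is the round shrinking $(n,l)$-cylinder $\IR^l\times \IS^{n-l}$. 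A diagonal argument combining this tangent-flow convergence with continuous dependence of $\MM^{\mathbf{b}}$ on $\mathbf{b}$ near $\mathbf{b}^*$ and $a_i\to\infty$ then shows $a_i\MM^{\mathbf{b}_i}\to\IR^l\times \IS^{n-l}$ in $d_{Brakke}$, so $\MM_\infty$ is a round shrinking $(n,l)$-cylinder with $l<k$; in particular $\MM_\infty\notin\mathfrak{M}$. The set of subsequential limits $\MM'_\infty$ forms a compact subset of $\mathfrak{M}$ by properness of $\Qu$, yielding a uniform $d^*>0$ with $d_{Brakke}(\MM'_\infty,\MM'')\ge d^*$ for every such $\MM'_\infty$ and every symmetry-compatible round shrinking $(n,l)$-cylinder $\MM''$ with $0\le l<k$; choosing $\ov\eps<d^*$ produces the contradiction.

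Third, since $\mathbf{b}_i\to\bO$, the extinction times $T_{\mathbf{b}_i}\to 1/(2(n-k))>0$, so the rescaled lifetimes $a_i^2T_{\mathbf{b}_i}\to\infty$ and $\MM_\infty$ is ancient, defined on $(-\infty,0)$, with extinction at $0$ (extinction passes to the limit for convex flows with uniformly bounded area ratios). Asymptotic $(n,k)$-cylindricality of $\MM_\infty$ follows from its $\ov\eps$-closeness to $\MM'_\infty\in\mathfrak{M}$: for any prescribed $\delta'>0$, there is a time $t^*<0$ at which $\MM'_\infty$ is $\delta'/2$-close to $M_{\cyl}^{n,k}$ at scale $\sqrt{-t^*}$, and hence (for $\ov\eps$ small enough) the same closeness up to $\delta'$ holds for $\MM_\infty$. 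Theorem~\ref{Thm_stability_necks} then propagates this closeness backward to every earlier time at scale $\sqrt{-t}$, and Corollary~\ref{Cor_unique_tangent_infinity} identifies the blow-down as $\MM_{\cyl}^{n,k}$, giving $\MM_\infty\in\mathfrak{M}$ as required.

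The main obstacle will be the diagonal argument identifying $\MM_\infty$ as the tangent flow of $\MM^{\mathbf{b}^*}$ at its extinction point when $\mathbf{b}^*\ne\bO$: one must coordinate the two limits $a_i\to\infty$ and $\mathbf{b}_i\to\mathbf{b}^*$ so that the rescaled perturbed flows $a_i\MM^{\mathbf{b}_i}$ converge to the fixed tangent flow of $\MM^{\mathbf{b}^*}$, combining Huisken monotonicity (and uniqueness of tangent flows at round extinction points) with a stability statement for this convergence under smooth perturbations of the initial ellipsoidal data. A secondary technical point is obtaining the uniform Brakke-distance gap $d^*>0$, for which one needs compactness of the family of subsequential limits $\MM'_\infty$ inside $\mathfrak{M}$, which in turn relies crucially on properness of $\Qu$ established in Claim~\ref{Cl_Q_cont_proper}.
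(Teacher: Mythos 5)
Your overall frame (extract limits, show $\mathbf b_i\to\bO$, then upgrade closeness to membership in $\mathfrak M$) is reasonable, but the central step has a genuine gap. When $\mathbf b_i\to\mathbf b^*\neq\bO$ and some entries of $\mathbf b^*$ vanish, it is not true that $a_i\MM^{\mathbf b_i}$ must converge to the round shrinking $(n,l)$-cylinder, i.e.\ to the tangent flow of $\MM^{\mathbf b^*}$ at its extinction set. The limit depends on how fast the vanishing entries of $\mathbf b_i$ tend to $0$ relative to $a_i$: for coordinated rates the rescaled ellipsoid flows converge to lower-dimensional ancient ovals (possibly times Euclidean factors), not to a self-similar cylinder --- indeed this diagonal mechanism is exactly how Theorem~\ref{Thm_existence_oval} produces ovals in the first place, so the ``stability statement under smooth perturbations of the initial ellipsoidal data'' that you flag as the main obstacle cannot hold in the form you need. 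Consequently the family of possible ``bad'' limits is not the finite set of symmetry-compatible round shrinking $(n,l)$-cylinders but a non-compact family containing, for instance, asymptotically $(n,l)$-cylindrical ovals with arbitrary lower-dimensional $\Qu$-values; your uniform gap $d^*>0$ between this family and the compact set $K=\{\MM''\in\mathfrak M:\ |\Qu(\MM'')|\le|\Qu'|\}$ then no longer follows, and establishing it would itself require a classification or compactness argument of essentially the same difficulty as the claim.

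The paper avoids identifying the diagonal limit altogether. It chooses $\delta$ so that every initial ellipsoid is $\delta_1$-close to $M_{\cyl}$ at its initial time, defines $\theta_i$ as the first time at which $\MM^{\mathbf b_i}$ stops being $\tfrac12\delta_0$-close to $M_{\cyl}$ at scale $\sqrt{-t}$, and uses Theorem~\ref{Thm_stability_necks} to show that this closeness never recovers on $[-\theta_i,0)$. If $a_i^2\theta_i$ stays bounded, the limit is close to the cylinder at all sufficiently negative times, hence lies in $\mathfrak M$ by Corollary~\ref{Cor_unique_tangent_infinity}, and $\theta_i\to0$ forces $\mathbf b_i\to\bO$, since otherwise $\MM^{\mathbf b_\infty}$ would stay $\delta_0$-close to the $(n,k)$-cylinder up to a singularity modeled on a sphere or a cylinder with fewer line factors. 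If $a_i^2\theta_i\to\infty$, then $\MM_{\infty,t}$ fails to be $\tfrac12\delta_1$-close to $M_{\cyl}$ at scale $\sqrt{-t}$ for every $t<0$, and the contradiction with $d_{Brakke}(\MM_\infty,\MM'_\infty)\le\ov\eps$ is derived by a compactness argument along a sequence $\ov\eps_j\to0$; note that this also repairs the uniformity issue hidden in your third step, where ``for $\ov\eps$ small enough'' a priori depends on $\MM'_\infty$ and on how Brakke closeness is upgraded to smooth closeness of a fixed time-slice at a fixed scale.
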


\begin{proof}
After passing to a subsequence, we may always assume that we have convergence $a_i \MM^{\mathbf b_i} \to \MM_\infty$ to an ancient flow, convex, rotationally symmetric and $\IZ_2$-symmetric flow.
As in the proof of Claim~\ref{Cl_Q_cont_proper}, it is clear that $\MM_\infty$ must go extinct at time $0$.
So it remains to prove that $\MM_\infty$ is asymptotically cylindrical.
Moreover, by Corollary~\ref{Cor_unique_tangent_infinity} and the symmetries it is enough to show that $\MM_{\infty, t}$ is $\delta_0$-close to $M_{\cyl}$ at scale $\sqrt{-t}$ for $t \ll 0$, where $\delta_0 > 0$ is a dimensional constant.

Let $\delta_1 \in (0,\frac12 \delta_0)$ be a constant whose value we will determine later and choose $\delta \leq \ov\delta(\delta_1)$ such that for all $\mathbf b \in \Delta^k_\delta$ the initial time-slice $\MM^{\mathbf b}_{-T_{\mathbf b}}$ is $ \delta_1$-close to $M_{\cyl}$ at scale $\sqrt{T_{\mathbf b}}$.
For each $i$ choose $\theta_i \in [0,T_{\mathbf b_i}]$ minimal with the property that $ \MM^{\mathbf b_i}_{t}$ is $\frac12\delta_0$-close to $M_{\cyl}$ at scale $\sqrt{-t}$ for all $t\in [-T_{\mathbf b_i}, -\theta_i)$.
We now claim that if $\delta_1 \leq \ov\delta_1(\delta_0)$, then $\MM^{\mathbf b_i}_{t}$ is \emph{not} $\delta_1$-close to $M_{\cyl}$ at scale $\sqrt{-t}$ for all $t \in [-\theta_i, 0)$.
Indeed if $\MM^{\mathbf b_i}_{t}$ was $\delta_1$-close to $M_{\cyl}$ at scale $\sqrt{-t}$ for some $t \in [-\theta_i, 0)$, then we could apply Theorem~\ref{Thm_stability_necks} to the time-interval $[-T_{\mathbf b_i}, t]$ and conclude that $\MM^{\mathbf b_i}_{t}$ was $\frac12 \delta_0$-close  $M_{\cyl}$ at scale $\sqrt{-t'}$ for $t' \approx -\theta_i$, in contradiction to the minimal choice of $\theta_i$.

Suppose first that $a_i^2 \theta_i$ remains bounded.
Then by our discussion in the last paragraph we have $\MM_\infty \in \mathfrak M$.
{Moreover, since $\theta_i \to 0$, we must have $\mathbf b_i \to \bO$.
Specifically, if for a subsequence we would have $\mathbf b_i \to \mathbf b_\infty \neq \bO$, then $\MM^{\mathbf b_i} \to \MM^{\mathbf b_\infty}$, where the limit must develop a singularity modeled on a sphere or on a cylinder with strictly less $\IR$ factors than $M_{\cyl}$, which contradicts the choice of $\theta_i$ and the fact that $\theta_i \to 0$.}

Suppose now by contradiction that $a_i^2 \theta_i \to \infty$.
Then $\MM_{\infty, t}$ is \emph{not} $\frac12 \delta_1$-close to $M_{\cyl}$ at scale $\sqrt{-t}$ for all $t < 0$.
By the properness from Claim~\ref{Cl_Q_cont_proper} we may pass to a subsequence and assume that $\MM'_i \to \MM'_\infty \in \mathfrak M$ with $d_{Brakke}(\MM_\infty, \MM'_\infty) \leq \ov\eps$ and $|\Qu(\MM'_\infty)| \leq |\Qu'|$.

So, in summary, if the claim was false, then we would find a sequence $\ov\eps_j \to 0$, ancient flows $\MM^*_j$ with the property that $ \MM^*_{j,t}$ is \emph{not} $\frac12 \delta_1$-close to $M_{\cyl}$ at scale $\sqrt{-t}$ for all $j$ and $t < 0$, as well as flows $\MM^{*, \prime}_j \in \mathfrak M$ such that $d_{Brakke}(\MM^*_j, \MM^{*, \prime}_j) \leq \ov\eps_j$ and $|\Qu(\MM^{*, \prime}_j)| \leq |\Qu'|$.
By the properness from Claim~\ref{Cl_Q_cont_proper} we may again pass to a subsequence such that $\MM^{*, \prime}_j \to \MM^{*, \prime}_\infty$.
It follows that $\MM^*_j \to \MM^{*, \prime}_\infty$.
However, since $\MM^{*, \prime}_\infty$ is asymptotically cylindrical, this contradicts the property of $\MM^*_j$ for large $j$.
\end{proof}

For $j = 1, \ldots, k$ let
\[ X_j := \IR^k_{\geq 0} \cap \Big\{  \frac{x_j}{Q'_j} \leq  \max_{i\neq j}\frac{x_i}{Q'_i} \Big\} \cap \ov\IB^k_{|\Qu'|}. \]
These sets are chosen such that
\[ \{ \Qu' \} = X_1 \cap \ldots \cap X_k \cap \partial(X_1 \cup \ldots \cup X_k), \]
where the boundary is taken within $\IR^k_{\geq 0}$; so $\partial(X_1 \cup \ldots \cup X_k)$ is equal to the intersection of a sphere with $\IR^k_{\geq 0}$.
For any $a$ and $\eps \in (0,\ov\eps]$ and $j = 1, \ldots, k$ we define the following covering $U_0^{a,\eps} \cup \ldots \cup U_k^{a,\eps} = \Delta^k_\delta$:
\begin{align*}
 U_j^{a,\eps} &:= \big\{ \mathbf b \in \Delta^k_\delta \;\; : \;\; \text{there is $\MM' \in \mathfrak M$ such that $\Qu(\MM') \in X_j$ and $d_{Brakke}( a\MM^{\mathbf b}, \MM') < \eps$} \big\} \\
 U_0^{a, \eps} &:= \big\{ \mathbf b \in \Delta^k_\delta \;\; : \;\; \text{$d_{Brakke}( a\MM^{\mathbf b}, \MM) > 
 \tfrac12\eps$ for all $\MM' \in \mathfrak M$ with $\Qu(\MM') \in X_1 \cup \ldots \cup X_k$} \big\} 
\end{align*}
Let $F_j := \Delta_\delta^k \cap \{ x_j = 0 \}$ be the face of $\Delta_\delta^k$ corresponding to the $j$-th coordinate plane for $j = 1, \ldots, k$, and let $F_0 := \Delta_\delta^k \cap \{ x_1 + \ldots + x_k = \delta \}$ be the face opposite the origin.

\begin{Claim}
If $a \geq \underline{a}(\eps)$, then
\[ F_0 \subset U_0^{a,\eps} \qquad \text{and} \qquad F_j \subset   U_0^{a,\eps} \cup  U_j^{a,\eps} \quad \text{for all} \quad j = 1, \ldots, k. \]
\end{Claim}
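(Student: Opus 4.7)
My plan is to prove both containments by contradiction, combining Claim~\ref{Cl_may_take_limit} with the continuity and properness of $\Qu$ established in Claim~\ref{Cl_Q_cont_proper} and Proposition~\ref{Prop_Q_continuous}. The first containment $F_0 \subset U_0^{a,\eps}$ is the short one: if it failed, there would exist $a_i \to \infty$, $\mathbf{b}_i \in F_0$, and $\MM'_i \in \mathfrak{M}$ with $|\Qu(\MM'_i)| \leq |\Qu'|$ and $d_{Brakke}(a_i\MM^{\mathbf{b}_i}, \MM'_i) \leq \eps/2 \leq \ov\eps$; Claim~\ref{Cl_may_take_limit} would then force $\mathbf{b}_i \to \bO$ along a subsequence, which contradicts $\sum_\ell(\mathbf{b}_i)_\ell = \delta > 0$ on $F_0$.

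For the second containment, fix $j$ and a point $\mathbf{b} \in F_j \setminus U_0^{a,\eps}$, witnessed by some $\MM' \in \mathfrak{M}$ with $\Qu(\MM') \in X_1 \cup \ldots \cup X_k$ and $d_{Brakke}(a\MM^{\mathbf{b}},\MM') \leq \eps/2 < \eps$. If $\Qu(\MM') \in X_j$, this same flow already certifies $\mathbf{b} \in U_j^{a,\eps}$, so I may assume $\Qu(\MM') \notin X_j$ and derive a contradiction for $a$ sufficiently large. If no uniform $\underline a(\eps)$ can be found, I run a diagonal argument with $\ov\eps_m, \delta_m \to 0$ to produce sequences $a_m \to \infty$, $\mathbf{b}_m \in F_j$, and $\MM'_m \in \mathfrak{M}$ with $\Qu(\MM'_m) \in (X_1 \cup \ldots \cup X_k) \setminus X_j$ and $d_{Brakke}(a_m\MM^{\mathbf{b}_m}, \MM'_m) \to 0$. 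Claim~\ref{Cl_may_take_limit} supplies $a_m\MM^{\mathbf{b}_m} \to \MM_\infty \in \mathfrak{M}$, and Claim~\ref{Cl_Q_cont_proper} supplies $\MM'_m \to \MM'_\infty \in \mathfrak{M}$; the vanishing Brakke distance forces $\MM'_\infty = \MM_\infty$. Since $(\mathbf{b}_m)_j = 0$, each $a_m\MM^{\mathbf{b}_m}$ is translation invariant in $x_j$, so $\MM_\infty$ splits off an $\IR$-factor in that direction, and Lemma~\ref{Lem_ODE_UQ}\ref{Lem_ODE_UQ_c} combined with Proposition~\ref{Prop_Q_basic_properties} yield $\Qu(\MM_\infty)_j = 0$. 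Continuity of $\Qu$ (Proposition~\ref{Prop_Q_continuous}\ref{Prop_Q_continuous_a}) applied to the limit of the inequality $\Qu(\MM'_m) \notin X_j$ gives
\[ 0 = \frac{\Qu(\MM_\infty)_j}{Q'_j} \geq \max_{\ell \neq j} \frac{\Qu(\MM_\infty)_\ell}{Q'_\ell}, \]
so in the generic subcase $\Qu(\MM_\infty) \neq 0$ one positive coordinate delivers the required contradiction.

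The main obstacle is the degenerate subcase $\Qu(\MM_\infty) = 0$, which by Proposition~\ref{Prop_Q_basic_properties} forces $\MM_\infty$ to be the round shrinking cylinder. I would handle it by a second parabolic rescaling: set $\la_m := |\Qu(\MM'_m)|^{-1} \to \infty$ and renormalise the pair to $(\la_m a_m \MM^{\mathbf{b}_m}, \la_m \MM'_m)$. The identity $\Qu(\la\MM) = \la\Qu(\MM)$ from Proposition~\ref{Prop_Q_basic_properties} gives $|\Qu(\la_m \MM'_m)| = 1$, and the rescaled flows remain translation invariant in $x_j$ with $\la_m a_m \to \infty$. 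Choosing $d_{Brakke}$ at the outset to be compatible with parabolic rescaling (which we are free to do when setting up the proof of Theorem~\ref{Thm_existence_oval}), the rescaled distances still tend to $0$, so Claim~\ref{Cl_may_take_limit} produces a common limit $\widetilde{\MM}_\infty \in \mathfrak{M}$ satisfying $\Qu(\widetilde{\MM}_\infty)_j = 0$, $|\Qu(\widetilde{\MM}_\infty)| = 1$, and the inherited $X_j$-inequality $\Qu(\widetilde{\MM}_\infty)_j/Q'_j \geq \max_{\ell \neq j}\Qu(\widetilde{\MM}_\infty)_\ell/Q'_\ell$. These force $\Qu(\widetilde{\MM}_\infty) = 0$, contradicting $|\Qu(\widetilde{\MM}_\infty)| = 1$. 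Verifying compatibility of $d_{Brakke}$ with parabolic rescaling (or rerunning the argument with explicit scaling bookkeeping) is the main technical step I anticipate.
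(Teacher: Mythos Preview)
Your argument for $F_0 \subset U_0^{a,\eps}$ is correct and coincides with the paper's.

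For $j \geq 1$ there is a genuine gap. The claim is stated for a \emph{fixed} $\eps$: if it fails you obtain a sequence $a_m \to \infty$, $\mathbf b_m \in F_j \setminus (U_0^{a_m,\eps} \cup U_j^{a_m,\eps})$, and witnesses $\MM'_m$ with $d_{Brakke}(a_m\MM^{\mathbf b_m}, \MM'_m) \leq \eps/2$ --- a bound that does \emph{not} tend to zero. Your ``diagonal argument with $\ov\eps_m, \delta_m \to 0$'' therefore cannot produce $d_{Brakke}(a_m\MM^{\mathbf b_m}, \MM'_m) \to 0$: the constants $\ov\eps$ and $\delta$ are fixed once and for all by Claim~\ref{Cl_may_take_limit}, and sending $\eps \to 0$ proves a different statement that does not imply the one at hand (observe that $U_0^{a,\eps}$ is \emph{decreasing} in $\eps$, so the claim for small $\eps$ does not formally imply it for larger $\eps$). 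Without $\MM'_\infty = \MM_\infty$ the chain of inequalities you extract from ``$\Qu(\MM'_m) \notin X_j$'' collapses. Note also that even granting the identification, that chain always forces $\Qu(\MM_\infty)=0$ --- your ``generic subcase'' is in fact empty --- so you are committed to the second rescaling and the extra hypothesis that $d_{Brakke}$ be compatible with parabolic dilation.

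The paper sidesteps all of this with a one-line observation you overlooked: use $\MM_\infty$ \emph{itself} as the witness for $\mathbf b_m \in U_j^{a_m,\eps}$. Since $(\mathbf b_m)_j = 0$, every $a_m\MM^{\mathbf b_m}$ splits off a line in the $j$-th direction, hence so does the limit $\MM_\infty \in \mathfrak M$ supplied by Claim~\ref{Cl_may_take_limit}; thus $\Qu(\MM_\infty)_j = 0$, which places $\Qu(\MM_\infty)$ in $X_j$. Brakke convergence $a_m\MM^{\mathbf b_m} \to \MM_\infty$ then gives $d_{Brakke}(a_m\MM^{\mathbf b_m}, \MM_\infty) < \eps$ for large $m$, so $\mathbf b_m \in U_j^{a_m,\eps}$, contradiction. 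The auxiliary flows $\MM'_m$ are needed only to feed Claim~\ref{Cl_may_take_limit}; after that they play no role, and neither the degenerate case nor any rescaling is required.
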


\begin{proof}
Fix $\eps > 0$ and suppose that one of the inclusions fails for some face $F_j$ and a sequence $a_i \to \infty$.
So there is a sequence $\mathbf b_i \in F_j$, but $\mathbf b_i \not\in U_0^{a_i,\eps} \cup  U_j^{a_i,\eps}$.
The fact $\mathbf b_i \not\in U_0^{a_i,\eps}$ implies that there is a $\MM'_i \in \mathfrak M$ with $|\Qu(\MM'_i)| \leq |\Qu'|$ such that $d_{Brakke}(a_i \MM^{\mathbf b_i} , \MM'_i) \leq \eps$.
So by Claim~\ref{Cl_may_take_limit} we get that $a_i \MM^{\mathbf b_i} \to \MM_\infty \in \mathfrak M$ and $\mathbf b_i \to 0$.
This rules out the case $j =0$.

Consider now the case $j > 0$, so the $j$-th component of $\mathbf b_i$ vanishes for all $i$.
So $a_i \MM^{\mathbf b_i}$ and thus also its limit split off a line.
It follows that the $j$-th component of $\Qu(\MM_\infty)$ vanishes as well.
But this implies that $\Qu(\MM_\infty) \in X_j$ and hence $\mathbf b_i \in U_j^{a_i,\eps}$ for large $i$.
\end{proof}

\begin{Claim} \label{Cl_0_in_U}
If $a \geq \underline a(\eps)$, then $\bO$ is not contained in the closure of $U^{a,\eps}_0$.
\end{Claim}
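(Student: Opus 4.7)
The plan is to argue by contradiction. Suppose that $\bO\in\ov{U_0^{a,\eps}}$, so we can find a sequence $\mathbf b_i\in U_0^{a,\eps}$ with $\mathbf b_i\to\bO$ and $\mathbf b_i\neq\bO$. By definition of $U_0^{a,\eps}$, for each $i$ and every $\MM'\in\mathfrak M$ with $\Qu(\MM')\in X_1\cup\ldots\cup X_k$, we have $d_{Brakke}(a\MM^{\mathbf b_i},\MM')>\tfrac12\eps$.

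The first step is to extract a Brakke limit. Because $\MM^{\mathbf b_i}$ is convex and compact, it has uniform area ratio bounds, and the extinction times satisfy $T_{\mathbf b_i}\to\infty$ as $\mathbf b_i\to\bO$ (since a long thin ellipsoid takes longer to become extinct: its cylindrical neck needs time to contract against the stabilizing effect of the ends). Hence $a^2 T_{\mathbf b_i}\to\infty$. Passing to a subsequence, I would obtain $a\MM^{\mathbf b_i}\to\MM_\infty$ in the Brakke sense, where $\MM_\infty$ is an ancient, convex, $\IZ_2$-symmetric, rotationally symmetric, unit-regular integral Brakke flow on $(-\infty,0)$ that goes extinct at time $0$.

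The heart of the argument is to identify $\MM_\infty$. I would show $\MM_\infty\in\mathfrak M$ with $\Qu(\MM_\infty)=0$. For inclusion in $\mathfrak M$, the key observation is that as $\mathbf b_i\to\bO$, the rescaled initial conditions $a\MM^{\mathbf b_i}_{-a^2 T_{\mathbf b_i}}$ become increasingly cylindrical: the semi-axes in the first $k$ directions diverge while the cross-sectional radius stays of order $a$. Combined with the classification of convex ancient rotationally symmetric $\IZ_2$-symmetric flows extinct at $t=0$, this forces the blow-down of $\MM_\infty$ at $-\infty$ to be $M_{\cyl}^{n,k}$ (rather than a lower-dimensional cylinder or a point), so $\MM_\infty$ is asymptotically $(n,k)$-cylindrical. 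For the vanishing of $\Qu(\MM_\infty)$, I would combine the continuity of $\Qu$ under Brakke convergence (Proposition~\ref{Prop_Q_continuous}\ref{Prop_Q_continuous_a}) with the scaling $\Qu(\la\MM)=\la\Qu(\MM)$ from Proposition~\ref{Prop_Q_basic_properties}: the natural scales of $\MM^{\mathbf b_i}$ (measured, for example, by $1/\min_j\mathbf b_{i,j}$) diverge as $\mathbf b_i\to\bO$, which forces any approximating oval invariant to tend to $0$, and the fixed rescaling by $a$ preserves this vanishing in the limit. Since $|\Qu'|>0$, we have $0\in X_1\cup\ldots\cup X_k=\IR^k_{\geq 0}\cap\ov\IB^k_{|\Qu'|}$.

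With $\MM_\infty\in\mathfrak M$ and $\Qu(\MM_\infty)\in X_1\cup\ldots\cup X_k$, the Brakke convergence yields $d_{Brakke}(a\MM^{\mathbf b_i},\MM_\infty)<\tfrac12\eps$ for large $i$. But $\MM_\infty$ is itself a valid choice of $\MM'$, contradicting $\mathbf b_i\in U_0^{a,\eps}$; this is the desired contradiction, and it simultaneously determines the choice of $\underline a(\eps)$. The hard part will be the identification of $\MM_\infty$: one must rule out degenerations to lower-dimensional cylinders $\MM_{\cyl}^{n,k'}$ for $k'<k$, to a round shrinking sphere, or to the empty flow, all of which could \emph{a priori} occur if the $\mathbf b_{i,j}$ approach zero at drastically different rates or if the near-extinction ``spherical'' regime of each flow dominates the rescaled limit at $t=-1$. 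Handling these degenerate cases carefully, using the full rotational and $\IZ_2$-symmetry, convexity, and the stability of the cylindrical blow-down (via Theorem~\ref{Thm_stability_necks} applied to the flows $a\MM^{\mathbf b_i}$ at very negative times), is where the main technical work lies.
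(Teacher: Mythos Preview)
Your argument contains a genuine error: the claim that $T_{\mathbf b_i}\to\infty$ as $\mathbf b_i\to\bO$ is false. The ellipsoid $\{(\mathbf b_1 x_1)^2+\ldots+(\mathbf b_k x_k)^2+x_{k+1}^2+\ldots+x_{n+1}^2=1\}$ is contained in the solid cylinder $\{x_{k+1}^2+\ldots+x_{n+1}^2\leq 1\}$, so by the avoidance principle $T_{\mathbf b}\leq (2(n-k))^{-1}$ for every $\mathbf b$. In fact $T_{\mathbf b_i}\to (2(n-k))^{-1}$ from below. Since $a$ is \emph{fixed} in the claim, the rescaled flows $a\MM^{\mathbf b_i}$ are defined on intervals whose left endpoints converge to $-a^2/(2(n-k))$, not to $-\infty$; there is no ancient limit to extract and nothing to classify.

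The paper's proof is a one-line observation you overlooked: when $\mathbf b=\bO$, the initial ``ellipsoid'' is already the unit cylinder, so $\MM^{\bO}$ is precisely the round shrinking cylinder $\MM_{\cyl}$ restricted to $[-(2(n-k))^{-1},0)$. Thus $a\MM^{\bO}$ is $\MM_{\cyl}$ restricted to $[-a^2/(2(n-k)),0)$, and for $a\geq\underline a(\eps)$ one has $d_{Brakke}(a\MM^{\bO},\MM_{\cyl})<\tfrac12\eps$. Since $\MM_{\cyl}\in\mathfrak M$ and $\Qu(\MM_{\cyl})=0\in X_1\cup\ldots\cup X_k$, this shows $\bO\notin U_0^{a,\eps}$, and the already-stated continuity of $\mathbf b\mapsto\MM^{\mathbf b}$ gives a neighborhood of $\bO$ disjoint from $U_0^{a,\eps}$. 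All of the ``hard part'' you anticipated---ruling out degenerations to lower cylinders, spheres, or the empty flow---simply does not arise.
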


\begin{proof}
This follows from the fact that $\MM^{\bO}$ is the round shrinking cylinder restricted to the time-interval $[-(2(n-k))^{-1},0)$. 
So for large $a$ we have $d_{Brakke}( a\MM^{\bO}, \MM_{\cyl}) < 
 \tfrac12\eps$.
\end{proof}

\begin{Claim}
If $\eps \leq\ov\eps$ and $a \geq \underline a (\eps)$ there is a $\mathbf b_{a,\eps} \in U_0^{a,\eps} \cap \ldots \cap U_k^{a, \eps}$.
\end{Claim}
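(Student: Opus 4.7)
The approach is to deduce the common intersection point via a topological fixed-point argument on the simplex $\Delta_\delta^k$, based on the open cover structure established in the preceding claims.

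One first observes that $\{U_j^{a,\eps}\}_{j=0}^k$ is indeed an open cover of $\Delta_\delta^k$: for any $\mathbf b$, either there exists $\MM'\in\mathfrak{M}$ at Brakke distance below $\tfrac12\eps$ from $a\MM^{\mathbf b}$ with $\Qu(\MM')\in\bigcup_j X_j$ (placing $\mathbf b$ in $U_j^{a,\eps}$ for the appropriate $j\geq 1$), or no such $\MM'$ exists (placing $\mathbf b$ in $U_0^{a,\eps}$). Combined with the face conditions $F_0\subset U_0^{a,\eps}$, $F_j\subset U_0^{a,\eps}\cup U_j^{a,\eps}$ for $j\geq 1$, and $\bO\notin\overline{U_0^{a,\eps}}$ (established in the previous claims), the problem reduces to a topological lemma: any open cover of the $k$-simplex $\Delta_\delta^k$ with vertices $\bO,\delta\mathbf e_1,\ldots,\delta\mathbf e_k$ satisfying these three conditions has non-empty common intersection.

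To prove the lemma I would build the partition of unity $\psi_j(\mathbf b):=d(\mathbf b,\Delta_\delta^k\setminus U_j^{a,\eps})/\sum_id(\mathbf b,\Delta_\delta^k\setminus U_i^{a,\eps})$ subordinate to the cover, and the continuous map $\Phi:\Delta_\delta^k\to\Delta_\delta^k$ sending $\mathbf b$ to the point with barycentric coordinates $(\psi_0(\mathbf b),\ldots,\psi_k(\mathbf b))$ relative to the vertices. A fixed point $\mathbf b^*$ of $\Phi$ satisfies $\psi_j(\mathbf b^*)=t_j(\mathbf b^*)$, where $t_j$ denotes the $j$-th barycentric coordinate, so that any interior fixed point automatically lies in $\bigcap_jU_j^{a,\eps}$. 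The condition $F_0\subset U_0^{a,\eps}$ rules out fixed points on $F_0$ (since $t_0=\psi_0=0$ there would contradict membership in $U_0^{a,\eps}$), and $\bO\notin\overline{U_0^{a,\eps}}$ rules out a fixed point at $\bO$ (since $t_0=1$ would force $\psi_0=1$).

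The hard part will be excluding boundary fixed points on $F_j$ for $j\geq 1$ with $\mathbf b^*\neq\bO$, which are a priori consistent with $\mathbf b^*\in U_0^{a,\eps}\setminus U_j^{a,\eps}$ under the face condition $F_j\subset U_0^{a,\eps}\cup U_j^{a,\eps}$. I would overcome this via a perturbation argument: replace $\Phi$ by $\Phi_\mu(\mathbf b)_j:=(\psi_j(\mathbf b)+\mu v_j)/(1+\mu\sum_iv_i)$ with $\mathbf v=(0,1,\ldots,1)$ and small $\mu>0$, so that $\Phi_\mu(\mathbf b)_j\geq\mu/(1+k\mu)>0$ for each $j\geq 1$ and hence no fixed point of $\Phi_\mu$ lies on any $F_j$, while $F_0\subset U_0^{a,\eps}$ still excludes fixed points on $F_0$. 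Brouwer's theorem then yields an interior fixed point $\mathbf b_\mu^*$ automatically satisfying $\mathbf b_\mu^*\in U_0^{a,\eps}$; a compactness-plus-degree analysis of the limit $\mu\to 0^+$, exploiting the closedness of the cover complements, then produces a fixed point of $\Phi$ that is strictly interior with $\psi_j>0$ for all $j$, yielding the desired $\mathbf b_{a,\eps}\in\bigcap_jU_j^{a,\eps}$.
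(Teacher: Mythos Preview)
Your overall strategy matches the paper's---build a partition of unity subordinate to the cover and apply Brouwer on the simplex---and you correctly identify the hard case as ruling out boundary fixed points on $F_j$ for $j\geq 1$. However, your proposed fix (the perturbation $\Phi_\mu$ followed by a ``compactness-plus-degree analysis'' as $\mu\to 0$) has a genuine gap. The interior fixed points $\mathbf b_\mu^*$ of $\Phi_\mu$ do lie in $U_0^{a,\eps}$, but nothing prevents their limit $\mathbf b^*$ from landing on $F_j\cap U_0^{a,\eps}$ with $\psi_j(\mathbf b^*)=0$: this is perfectly consistent with the face condition $F_j\subset U_0^{a,\eps}\cup U_j^{a,\eps}$ and with $\bO\notin\overline{U_0^{a,\eps}}$, so no contradiction arises. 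The degree of $\id-\Phi$ is not well-defined once $\Phi$ has boundary fixed points, and you provide no mechanism to force $\psi_0(\mathbf b^*)=0$ at the limit, which is what would be needed to push $\mathbf b^*$ into $U_j^{a,\eps}$.

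The paper closes this gap with a different self-map: $f_A(\mathbf b):=\delta(1-\varphi_0(\mathbf b))^A\,(\varphi_1(\mathbf b),\ldots,\varphi_k(\mathbf b))$, taking $A\to\infty$. The amplification factor $(1-\varphi_0)^A$ is the key idea you are missing. If the Brouwer fixed points $\mathbf b_A=f_A(\mathbf b_A)$ had $\varphi_{j_A}(\mathbf b_A)=0$ for some $j_A\geq 1$, then $\mathbf b_A\in F_{j_A}\cap U_0^{a,\eps}$; passing to a subsequential limit $\mathbf b_\infty\neq\bO$ (using $\bO\notin\overline{U_0^{a,\eps}}$), the relation $\mathbf b_A=f_A(\mathbf b_A)$ forces $\varphi_0(\mathbf b_\infty)=0$, since otherwise $|f_A(\mathbf b_A)|\leq\delta(1-\varphi_0(\mathbf b_A))^A\to 0$. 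Hence $\mathbf b_\infty\notin U_0^{a,\eps}$, and the face condition then gives $\mathbf b_\infty\in U_j^{a,\eps}$, contradicting $\varphi_j(\mathbf b_{A_i})=0$ for large $i$. Your perturbation $\Phi_\mu$ lacks any analogue of this amplification, which is why the limiting step does not close.
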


\begin{proof}
We may choose continuous functions $\varphi_0, \ldots, \varphi_k : \Delta^k_\delta \to [0,1]$ such that $U_j^{a,\eps} = \{ \varphi_j > 0 \}$.
After dividing these functions by $\sum_{j=0}^k \varphi_j > 0$, we may assume that $\sum_{j=0}^k \varphi_j = 1$.
For $A \geq 1$ consider the map $f_A : \Delta^k_\delta \to \Delta^k_\delta$ defined by $f_A(\mathbf b) = \delta (1-\varphi_0(\mathbf b))^A \cdot (\varphi_1(\mathbf b), \ldots, \varphi_k(\mathbf b))$.
By Brouwer's fixed point theorem there is a $\mathbf b_A \in \Delta^k_\delta$ such that $f_A(\mathbf b_A) = \mathbf b_A$.
It suffices to show that all entries of $\varphi_j (\mathbf b_A ) > 0$ for all $j = 0, \ldots, k$ and sufficiently large $A$.
Suppose first that $\varphi_0(\mathbf b_A) = 0$.
Then $\mathbf b_A = f_A(\mathbf b_A) \in F_0 \subset \{ \varphi_0 > 0 \}$, which is a contradiction.
Next suppose by contradiction that for any $A \geq 1$ we have $\varphi_{j_A}(\mathbf b_A) = 0$ for some $j_A \in \{ 1, \ldots, k \}$.
Then that $j$-th component of $\mathbf b_A = f_A(\mathbf b_A)$ vanishes, so $\mathbf b_A \in F_{j_A} \subset  \{ \varphi_0 > 0 \} \cup \{ \varphi_{j_A} > 0 \}$, which implies $\mathbf b_A \in \{ \varphi_0 > 0 \} = U^{a,\eps}_0$ and $\mathbf b_A \in F_{j_A}$.
So we can find a  sequence $A_i \to \infty$ such that $\mathbf b_{A_i} \to \mathbf b_\infty$ and $\mathbf b_{A_i}  \in F_j \cap (U^{a,\eps}_0 \setminus U^{a,\eps}_j)$ for a fixed $j \in \{ 1, \ldots, k \}$.
Since $\mathbf b_{A_i} \in  U^{a,\eps}_0$, we must have $\mathbf b_\infty \neq \bO$ due to Claim~\ref{Cl_0_in_U}.
But since $f_{A_i} (\mathbf b_{A_i}) = \mathbf b_{A_i}  \to \mathbf b_\infty$, this implies that $\varphi_0(\mathbf b_\infty) = 0$.
Since $F_j \subset   U_0^{a,\eps} \cup  U_j^{a,\eps}$, we hence must have $\mathbf b_\infty \in U_j^{a,\eps}$ and therefore $\mathbf b_{A_i} \in U_j^{a,\eps}$, which is a contradiction.%
\end{proof}

Now fix a small $\eps \in (0,\ov\eps]$  and consider a sequence $a_i \to \infty$.
Since $\mathbf b_{a_i,\eps}\in U_j^{a, \eps}$ for each $j=1,...,k$, we can find flows $\MM'_{j,\eps} \in \mathfrak M$ with $\Qu(\MM'_{j,\eps}) \in X_j$ such that
\[ d_{Brakke} (a_i \MM^{\mathbf b_{a_i,\eps}}, \MM'_{j,\eps}) < \eps. \]
So by Claim~\ref{Cl_may_take_limit}, we may pass to a subsequence such that  we have convergence in the Brakke sense $a_i \MM^{\mathbf b_{a_i,\eps}} \to \MM_{\infty,\eps} \in \mathfrak M$ with
\begin{equation} \label{eq_brakke_leqeps}
  d_{Brakke} (\MM_{\infty,\eps}, \MM'_{j,\eps}) \leq \eps. 
\end{equation}
In addition, the fact that $\mathbf b_{a_i,\eps} \in U^{a_i, \eps}_0$ implies that $\Qu(\MM_{\infty,\eps}) \not\in X_1 \cup \ldots \cup X_k$ (here we have taken $\MM' = \MM_{\infty,\eps}$ in the definition of $U^{a_i,\eps}_0$).

By the properness from Claim~\ref{Cl_Q_cont_proper} we can find a sequence $\eps_i \to 0$ such that $\MM'_{j,\eps_i} \to \MM'_{j}\in \mathfrak M$.
Taking \eqref{eq_brakke_leqeps} to the limit implies that $\MM'_{1} = \ldots = \MM'_{k} =: \MM'$ and $\MM_{\infty,\eps_i} \to \MM'$.
So by the continuity of $\Qu$ we have $\Qu(\MM') \in X_1 \cap \ldots \cap X_k \setminus \Int (X_1 \cup \ldots \cup X_k) = \{ \Qu' \}$, as desired.

To see that $\MM'$ has uniformly bounded second fundamental form on time-intervals of the form $(-\infty,T]$ for $T < 0$, suppose by contradiction that there is a sequence $(\bp_i,t_i) \in \spt \MM'$ with $|\mathbf H|(\bp_i, t_i) \to \infty$. {Since $\MM'$ has compact time-slices by  Lemma~\ref{Lem_time_slice_compact} below below, this implies that $t_i \to -\infty$.}
By Proposition~\ref{Prop_Q_continuous} we can pass to a subsequence such that we have convergence $\MM' - (\bp_i, t_i) \to \MM''$ in the Brakke sense, where the limit must be convex asymptotically $(n,k)$-cylindrical with $\Qu(\MM'') = \Qu'$.
Again by Lemma~\ref{Lem_time_slice_compact} below, $\MM''$ must have compact time-slices, which would imply that $\diam (\spt \MM'_{t_i - 1})$ is uniformly bounded.
{However, this contradicts the fact that $\MM'$ is asymptotically cylindrical.}
\end{proof}
\medskip

\begin{Lemma} \label{Lem_time_slice_compact}
Let $\MM$ be a smooth, convex asymptotically $(n,k)$-cylindrical flow in $\IR^{n+1} \times (-\infty,T)$ such that $\Qu(\MM)$ is invertible.
Then $\MM$ must have compact time-slices.
\end{Lemma}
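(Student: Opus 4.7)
The plan is to argue by contradiction: suppose some time-slice $\MM_{t_0}$ is non-compact.  I will extract a splitting $\MM^{(\infty)}=\IR\mathbf{v}\times\MM''$ for some unit vector $\mathbf{v}\in\IR^k\times\bO^{n-k+1}$ of a suitable Brakke limit of translates of $\MM$.  By Lemma~\ref{Lem_ODE_UQ}\ref{Lem_ODE_UQ_c} together with the translation invariance of $\Qu$ from Proposition~\ref{Prop_Q_basic_properties}, such a splitting places $\mathbf{v}$ in $\ker\Qu(\MM)$, contradicting invertibility.

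To identify $\mathbf{v}$, I would pick $\bp_i\in\MM_{t_0}$ with $|\bp_i|\to\infty$ and set $\mathbf{v}:=\lim\bp_i/|\bp_i|$ along a subsequence.  By convexity of the closed convex region $C_{t_0}$ bounded by $\MM_{t_0}$, $\mathbf{v}$ lies in the recession cone of $C_{t_0}$.  The asymptotic cylindricity $(-t)^{-1/2}\MM_t\to\IR^k\times\IS^{n-k}$ on compact sets, combined with the monotonicity $C_{t_0}\subset C_t$ for $t\le t_0$ (mean convex enclosures shrink), forces $\mathbf{v}\in\IR^k\times\bO^{n-k+1}$: any non-zero $\IR^{n-k+1}$-component would produce rays in the rescaled convex body $(-t)^{-1/2}C_t$ escaping the bounded solid cylinder $\IR^k\times\IB^{n-k+1}_{\sqrt{2(n-k)}}$, contradicting the Hausdorff convergence.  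In particular the $\IR^{n-k+1}$-components of points of $\MM_{t_0}$ remain bounded.

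Next I consider $\MM^{(i)}:=\MM-(\bp_i,t_0)$.  Proposition~\ref{Prop_Q_basic_properties} gives $\Qu(\MM^{(i)})=\Qu(\MM)$, so by Proposition~\ref{Prop_Q_continuous}\ref{Prop_Q_continuous_b} a subsequence converges in the Brakke sense to $\MM^{(\infty)}$, which is either asymptotically $(n,k)$-cylindrical, an affine plane, or empty; emptiness is excluded since $\bO\in\spt\MM^{(i)}_0$.  Assuming WLOG $\bO\in C_{t_0}$, the convex bodies $C^{(i)}_{t_0}:=C_{t_0}-\bp_i$ contain $\bO$, the ray $\{s\mathbf{v}:s\ge 0\}$, and the points $-\bp_i\to\infty$ in direction $-\mathbf{v}$.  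By convexity, the Hausdorff limit contains the full line $\IR\mathbf{v}$ through $\bO$, so $\MM^{(\infty)}_{t_0}$ is $\IR\mathbf{v}$-translation invariant; by MCF uniqueness, $\MM^{(\infty)}$ splits globally as $\IR\mathbf{v}\times\MM''$.  If $\MM^{(\infty)}$ is asymptotically cylindrical, Proposition~\ref{Prop_Q_continuous}\ref{Prop_Q_continuous_a} yields $\Qu(\MM^{(\infty)})=\Qu(\MM)$, and Lemma~\ref{Lem_ODE_UQ}\ref{Lem_ODE_UQ_c} places $\mathbf{v}$ in $\ker\Qu(\MM)$, the desired contradiction.

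The main obstacle is the case where $\MM^{(\infty)}$ is an affine plane, since Proposition~\ref{Prop_Q_continuous}\ref{Prop_Q_continuous_a} does not apply.  I would rule it out by comparing Gaussian densities at $(\bO, 0)$.  A plane has $\Theta^{\MM^{(\infty)}}_{(\bO,0)}(\tau)\equiv 1$, whereas for each $i$ the monotone limit $\lim_{\tau\to\infty}\Theta^{\MM^{(i)}}_{(\bO,0)}(\tau)=\Theta^{\MM}_{(\bp_i,t_0)}(\infty)=\Theta_{\IR^k\times\IS^{n-k}}>1$ holds by asymptotic cylindricity and Lemma~\ref{Lem_cyl_properties}\ref{Lem_cyl_properties_b}.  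Because the $\IR^{n-k+1}$-components of the $\bp_i$ remain bounded (by the second paragraph), this monotone convergence is uniform in $i$—this is visible directly from the explicit formula on the cylinder and from the graphical description of Proposition~\ref{Prop_dom_qu_asymp}.  Combining this uniformity with the continuity of Gaussian densities at finite $\tau$ under Brakke convergence, one may interchange the limits $i\to\infty$ and $\tau\to\infty$, producing the forbidden jump from $\Theta_{\IR^k\times\IS^{n-k}}>1$ to $1$, the desired contradiction.
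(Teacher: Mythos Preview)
Your proposal is correct in outline but takes a substantially different and more circuitous route than the paper.  The paper's argument is entirely local and uses only Proposition~\ref{Prop_dom_qu_asymp}: from non-compactness it extracts, for each $\tau\ll 0$, a unit vector $\mathbf v_\tau\in\IR^k\times\bO^{n-k+1}$ such that the ray $\{s\mathbf v_\tau:s\ge 0\}$ lies in the convex body bounded by $(\spt\td\MM)_\tau$.  Convexity forces $\sum_j v_j(\tau)\,\partial_{x_j}u_\tau\ge 0$, which by Assertions~\ref{Prop_dom_qu_asymp_a}--\ref{Prop_dom_qu_asymp_c} transfers to $(-\tau)\sum_j v_j(\tau)\,\partial_{x_j}U_0(\tau)(\bx)\ge -O(|\tau|^{-1})$.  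Since $\Qu(\MM)$ is invertible, Assertion~\ref{Prop_dom_qu_asymp_d} gives $(-\tau)U_0(\tau)\to -\tfrac{1}{\sqrt 2}\sum_j\mathfrak p^{(2)}_{jj}$, hence $(-\tau)\partial_{x_j}U_0(\tau)(\bx)\to -\tfrac12 x_j$; choosing $\bx$ along any subsequential limit of $\mathbf v_\tau$ yields a contradiction.  This is a few lines once Proposition~\ref{Prop_dom_qu_asymp} is in hand.

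Your approach instead passes to a Brakke limit of translates and invokes Proposition~\ref{Prop_Q_continuous}.  There is no circularity---that proposition is proved without the present lemma---but two points require real work that the paper's direct route avoids.  First, ruling out the affine plane via the uniform Gaussian-density estimate is correct but needs the uniform closeness to the cylinder plus the $\IR^k$-translation invariance of $\Theta_{M_{\cyl}}$; you sketch this adequately.  Second, and more seriously, the implication ``$\MM^{(\infty)}=\IR\mathbf v\times\MM''\Rightarrow\mathbf v\in\ker\Qu(\MM^{(\infty)})$'' is not the content of Lemma~\ref{Lem_ODE_UQ}\ref{Lem_ODE_UQ_c}, which speaks only about ODE solutions.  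To bridge the gap you must show that $\MM''$ is asymptotically $(n-1,k-1)$-cylindrical, that its ODE solution $\ov U''$ zero-extends to a solution of the $k\times k$ ODE (here it is essential that $C^*$ depends only on $n-k$), and then invoke Lemma~\ref{Lem_ODE_sol_same} to identify this extension with the $\ov U$ of $\MM^{(\infty)}$.  All of this works, but it is considerably longer than the paper's monotonicity-of-$u_\tau$ argument.
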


\begin{proof}
Apply Proposition~\ref{Prop_dom_qu_asymp} to the rescaled flow $\td\MM$ for $J, m \geq 10$.
Assume by contradiction that $\MM$ does not have compact time-slices.
For every $\tau \ll 0$ there is a unit vector $\mathbf v_\tau \in \IR^k \times \bO^{n-k+1}$ such that the ray $\{ s \mathbf v_\tau \; : \; s \geq 0 \}$ that is contained in the region bounded by $(\spt \td\MM)_\tau$.
By convexity this implies that the same is true for the rays $\{ \bp + s \mathbf v_\tau \; : \; s > 0 \}$ whenever $\bp \in (\spt\td\MM)_\tau$.
Therefore, if we write $\mathbf v_\tau = (v_1(\tau), \ldots, v_k(\tau))$, then the function $u_\tau$ from Proposition~\ref{Prop_dom_qu_asymp} satisfies
\[ \sum_{j=1}^k v_j(\tau) \frac{\partial u_\tau}{\partial x^j} \geq 0. \]
By Assertion~\ref{Prop_dom_qu_asymp_a} of Proposition~\ref{Prop_dom_qu_asymp} this implies the asymptotic bound for every fixed $\bx \in \IR^k$
\[ \sum_{j=1}^k v_j(\tau) \frac{\partial U^+(\tau)}{\partial x^j} (\bx) \geq - O(|\tau|^{-10}) \]
and by Assertion~\ref{Prop_dom_qu_asymp_c} we obtain
\[ \sum_{j=1}^k v_j(\tau) \frac{\partial U_0(\tau)}{\partial x^j} (\bx) \geq - O(|\tau|^{-2}). \]
But by Assertion~\ref{Prop_dom_qu_asymp_d} we have local smooth convergence $(-\tau) U_0(\tau) \to -\sum_{j=1}^k \frac1{\sqrt{2}} \mathfrak p^{(2)}_{jj}$ as $\tau \to -\infty$, which gives a contradiction.
\end{proof}
\bigskip

\bibliography{references}	

\newcommand{\etalchar}[1]{$^{#1}$}
\providecommand{\bysame}{\leavevmode\hbox to3em{\hrulefill}\thinspace}
\providecommand{\MR}{\relax\ifhmode\unskip\space\fi MR }
% \MRhref is called by the amsart/book/proc definition of \MR.
\providecommand{\MRhref}[2]{%
  \href{http://www.ams.org/mathscinet-getitem?mr=#1}{#2}
}
\providecommand{\href}[2]{#2}
\begin{thebibliography}{CHHW22}

\bibitem[ADS19]{ADS_2019}
Sigurd Angenent, Panagiota Daskalopoulos, and Natasa Sesum, \emph{Unique
  asymptotics of ancient convex mean curvature flow solutions}, J. Differential
  Geom. \textbf{111} (2019), no.~3, 381--455. \MR{3934596}

\bibitem[ADS20]{ADS_2020}
\bysame, \emph{Uniqueness of two-convex closed ancient solutions to the mean
  curvature flow}, Ann. of Math. (2) \textbf{192} (2020), no.~2, 353--436.
  \MR{4151080}

\bibitem[ADv25]{ADS_2025}
Sigurd Angenent, Panagiota Daskalopoulos, and Nata\v{s}a \v{S}e\v{s}um,
  \emph{Dynamics of convex mean curvature flow}, J. Reine Angew. Math.
  \textbf{825} (2025), 185--235. \MR{4939942}

\bibitem[BC19]{Brendle_Choi_2019}
Simon Brendle and Kyeongsu Choi, \emph{Uniqueness of convex ancient solutions
  to mean curvature flow in {$\Bbb R^3$}}, Invent. Math. \textbf{217} (2019),
  no.~1, 35--76. \MR{3958790}

\bibitem[BC21]{Brendle_Choi_higher_dim}
\bysame, \emph{Uniqueness of convex ancient solutions to mean curvature flow in
  higher dimensions}, Geom. Topol. \textbf{25} (2021), no.~5, 2195--2234.
  \MR{4310889}

\bibitem[BL25]{Bamler_Lai_MCF2}
Richard~H. Bamler and Yi~Lai, \emph{{Classification of ancient cylindrical mean
  curvature flows and the Mean Convex Neighborhood Conjecture}},
  arXiv:2512.24524 (2025).

\bibitem[Bra78]{Brakke_1978}
Kenneth~A. Brakke, \emph{The motion of a surface by its mean curvature},
  Mathematical Notes, vol.~20, Princeton University Press, Princeton, NJ, 1978.
  \MR{485012}

\bibitem[CDD{\etalchar{+}}25]{ChoiDaskalopoulosDuHaslhoferSesum2025}
Beomjun Choi, Panagiota Daskalopoulos, Wenkui Du, Robert Haslhofer, and
  Nata{\v{s}}a {\v{S}}e{\v{s}}um, \emph{Classification of bubble-sheet ovals in
  $\mathbb{R}^4$}, Geometry \& Topology \textbf{29} (2025), 931--1016.

\bibitem[CDZ25]{ChoiDuZhu2025}
Beomjun Choi, Wenkui Du, and Jingze Zhu, \emph{Rigidity of ancient ovals in
  higher dimensional mean curvature flow}, arXiv preprint arXiv:2504.09741
  (2025).

\bibitem[CH24]{ChoiHaslhofer2024}
Kyeongsu Choi and Robert Haslhofer, \emph{Classification of ancient
  noncollapsed flows in $\mathbb{R}^4$}, arXiv preprint arXiv:2412.10581
  (2024).

\bibitem[CH25]{Choi_Haslhofer_2025}
\bysame, \emph{{Revisiting ancient noncollapsed flows in $\mathbb R^3$}},
  https://arxiv.org/abs/2501.06542 (2025).

\bibitem[CHH22]{Choi_Haslhofer_Hershkovits_2022}
Kyeongsu Choi, Robert Haslhofer, and Or~Hershkovits, \emph{Ancient low-entropy
  flows, mean-convex neighborhoods, and uniqueness}, Acta Math. \textbf{228}
  (2022), no.~2, 217--301. \MR{4448681}

\bibitem[CHH23a]{ChoiHaslhoferHershkovits2023}
\bysame, \emph{Classification of noncollapsed translators in $\mathbb{R}^4$},
  Cambridge Journal of Mathematics \textbf{11} (2023), no.~3, 563--698.

\bibitem[CHH23b]{CHH_2023}
\bysame, \emph{{Enhanced profile estimates for ovals and translators}},
  https://arxiv.org/abs/2305.19137 (2023).

\bibitem[CHH24]{ChoiHaslhoferHershkovits2024}
\bysame, \emph{A nonexistence result for wing-like mean curvature flows in
  $\mathbb{R}^4$}, Geometry \& Topology \textbf{28} (2024), 3095--3134.

\bibitem[CHH25a]{CHH_2025}
\bysame, \emph{{A gradient estimate for the linearized translator equation}},
  https://arxiv.org/abs/2509.07629 (2025).

\bibitem[CHH25b]{ChoiHaslhoferHershkovits2025}
\bysame, \emph{The linearized translator equation and applications}, arXiv
  preprint arXiv:2509.06667 (2025).

\bibitem[CHHW22]{Choi_Haslhofer_Hershkovits_White_22}
Kyeongsu Choi, Robert Haslhofer, Or~Hershkovits, and Brian White, \emph{Ancient
  asymptotically cylindrical flows and applications}, Invent. Math.
  \textbf{229} (2022), no.~1, 139--241. \MR{4438354}

\bibitem[Cho]{Chodosh_MCF_notes}
Otis Chodosh, \emph{Mean curvature flow}, Notes on lectures by Brian White,
  available at \url{https://web.stanford.edu/~ochodosh/MCFnotes.pdf}.

\bibitem[CIM15]{Colding_Ilmanen_Minicozzi}
Tobias~Holck Colding, Tom Ilmanen, and William~P. Minicozzi, II, \emph{Rigidity
  of generic singularities of mean curvature flow}, Publ. Math. Inst. Hautes
  \'{E}tudes Sci. \textbf{121} (2015), 363--382. \MR{3349836}

\bibitem[CM12]{Colding_Minicozzi_12}
Tobias~Holck Colding and William~P. Minicozzi, II, \emph{Smooth compactness of
  self-shrinkers}, Comment. Math. Helv. \textbf{87} (2012), no.~2, 463--475.
  \MR{2914856}

\bibitem[CM15]{colding_minicozzi_uniqueness_blowups}
\bysame, \emph{Uniqueness of blowups and \l ojasiewicz inequalities}, Ann. of
  Math. (2) \textbf{182} (2015), no.~1, 221--285. \MR{3374960}

\bibitem[CM20]{Colding_Minicozzi_codim1}
\bysame, \emph{Complexity of parabolic systems}, Publ. Math. Inst. Hautes
  \'{E}tudes Sci. \textbf{132} (2020), 83--135. \MR{4179832}

\bibitem[CM22]{Choi_Mantoulidis_22}
Kyeongsu Choi and Christos Mantoulidis, \emph{Ancient gradient flows of
  elliptic functionals and {M}orse index}, American Journal of Mathematics
  \textbf{144} (2022), no.~2, 541--573.

\bibitem[CM25]{Colding_Minicozzi_RF_2025}
Tobias~Holck Colding and William~P. Minicozzi, II, \emph{Singularities of
  {R}icci flow and diffeomorphisms}, Publ. Math. Inst. Hautes \'{E}tudes Sci.
  \textbf{142} (2025), 75--152. \MR{4992850}

\bibitem[CS21]{CS2021_uniqueness}
Otis Chodosh and Felix Schulze, \emph{Uniqueness of asymptotically conical
  tangent flows}, Duke Mathematical Journal \textbf{170} (2021), no.~16,
  3601--3657.

\bibitem[DH21]{DuHaslhofer2021}
Wenkui Du and Robert Haslhofer, \emph{On uniqueness and nonuniqueness of
  ancient ovals}, arXiv preprint arXiv:2105.13830 (2021), To appear in
  \emph{American Journal of Mathematics}.

\bibitem[DH23]{Du_Haslhofer_2023}
\bysame, \emph{A nonexistence result for rotating mean curvature flows in
  {$\Bbb R^4$}}, J. Reine Angew. Math. \textbf{802} (2023), 275--285.
  \MR{4635341}

\bibitem[DH24]{Du_Haslhofer_2024}
\bysame, \emph{Hearing the shape of ancient noncollapsed flows in {$\Bbb
  R^4$}}, Comm. Pure Appl. Math. \textbf{77} (2024), no.~1, 543--582.
  \MR{4666631}

\bibitem[DZ25]{Du_Zhu_2025}
Wenkui Du and Jingze Zhu, \emph{Spectral quantization for ancient
  asymptotically cylindrical flows}, Adv. Math. \textbf{479} (2025), no.~part
  A, Paper No. 110422, 70. \MR{4927440}

\bibitem[Eck04]{Ecker_regularity_book}
Klaus Ecker, \emph{Regularity theory for mean curvature flow}, Progress in
  Nonlinear Differential Equations and their Applications, vol.~57,
  Birkh\"{a}user Boston, Inc., Boston, MA, 2004. \MR{2024995}

\bibitem[GH20]{Gianniotis_Haslhofer_2020}
Panagiotis Gianniotis and Robert Haslhofer, \emph{Diameter and curvature
  control under mean curvature flow}, Amer. J. Math. \textbf{142} (2020),
  no.~6, 1877--1896. \MR{4176547}

\bibitem[Gho25]{Ghosh2025Cylindrical}
Sourav Ghosh, \emph{Cylindrical tangent flows in mean curvature flow},
  https://arxiv.org/abs/2508.05517 (2025).

\bibitem[Has21]{Haslhofer_lecture_MCF}
Robert Haslhofer, \emph{Lectures on mean curvature flow of surfaces},
  https://arxiv.org/abs/2105.10485v2 (2021).

\bibitem[HH16]{HaslhoferHershkovits2016}
Robert Haslhofer and Or~Hershkovits, \emph{Ancient solutions of the mean
  curvature flow}, Communications in Analysis and Geometry \textbf{24} (2016),
  no.~3, 593--604.

\bibitem[HIMW19]{HoffmanIlmanenMartinWhite2019}
David Hoffman, Tom Ilmanen, Francisco Mart{\'\i}n, and Brian White,
  \emph{Graphical translators for mean curvature flow}, Calculus of Variations
  and Partial Differential Equations \textbf{58} (2019), no.~117, 1--29.

\bibitem[Ilm95]{Ilmanen_sing_MCF_surf}
Tom Ilmanen, \emph{Singularities of mean curvature flow of surfaces},
  \url{https://people.math.ethz.ch/~ilmanen/papers/sing.ps} (1995).

\bibitem[LSS22]{lotay2022neck}
Jason~D Lotay, Felix Schulze, and G{\'a}bor Sz{\'e}kelyhidi, \emph{Neck pinches
  along the {L}agrangian mean curvature flow of surfaces}, arXiv preprint
  arXiv:2208.11054 (2022).

\bibitem[LZ24]{LZ2024_uniqueness}
Tang-Kai Lee and Xinrui Zhao, \emph{Uniqueness of conical singularities for
  mean curvature flows}, Journal of Functional Analysis \textbf{286} (2024),
  no.~1, 110200.

\bibitem[RS78]{Reed_Simon_IV}
Michael Reed and Barry Simon, \emph{Methods of modern mathematical physics.
  {IV}. {A}nalysis of operators}, Academic Press [Harcourt Brace Jovanovich,
  Publishers], New York-London, 1978. \MR{493421}

\bibitem[Sch14]{schulze2014}
Felix Schulze, \emph{Uniqueness of compact tangent flows in mean curvature
  flow}, Journal f{\"u}r die reine und angewandte Mathematik (Crelles Journal)
  \textbf{2014} (2014), no.~690, 163--172.

\bibitem[Sch21]{Schulze_intro_Brakke}
\bysame, \emph{{An introcution to Brakke flows}}, Lecture notes for summer
  school on Curvature Constrains and Spaces of Metrics, Institut Fourier,
  Grenoble, available at
  \url{https://if-summer2021.sciencesconf.org/data/mcf_notes_grenoble_lecture_1_4.pdf}
  (2021).

\bibitem[Sto23]{stolarski2023structure}
Maxwell Stolarski, \emph{On the structure of singularities of weak mean
  curvature flows with mean curvature bounds}, arXiv preprint arXiv:2311.16262
  (2023).

\bibitem[Whi03]{White_03}
Brian White, \emph{The nature of singularities in mean curvature flow of
  mean-convex sets}, J. Amer. Math. Soc. \textbf{16} (2003), no.~1, 123--138.
  \MR{1937202}

\bibitem[Whi05]{White_local_regularity}
\bysame, \emph{A local regularity theorem for mean curvature flow}, Ann. of
  Math. (2) \textbf{161} (2005), no.~3, 1487--1519. \MR{2180405}

\bibitem[Zhu22]{zhu2022so}
Jingze Zhu, \emph{{${\rm SO}(2)$} symmetry of the translating solitons of the
  mean curvature flow in {${\mathbb R}^4$}}, Annals of PDE \textbf{8} (2022),
  no.~1, 6.

\bibitem[Zhu25]{Zhu25_CylindricalSelfShrinkers}
Jonathan~J. Zhu, \emph{{\L}ojasiewicz inequalities, uniqueness and rigidity for
  cylindrical self-shrinkers}, Cambridge Journal of Mathematics \textbf{13}
  (2025), no.~1, 173--224.

\end{thebibliography}
\bibliographystyle{amsalpha}

\end{document}